\def\@settitle{\begin{center}%
    \bfseries
 \normalfont\LARGE\@title
  \end{center}%
}
\def\@setauthors{\begin{center}%
 \normalsize\@author
  \end{center}%
}
\numberwithin{equation}{section}
\renewcommand{\cal}{\mathcal}
\newcommand\cA{{\mathcal A}}
\newcommand\cB{{\mathcal B}}
\newcommand{\cC}{{\cal C}}
\newcommand{\cD}{{\cal D}}
\newcommand{\cZ}{{\cal Z}}
\newcommand{\cE}{{\cal E}}
\newcommand{\cF}{{\cal F}}
\newcommand{\cG}{{\cal G}}
\newcommand\cH{{\mathcal H}}
\newcommand{\cK}{{\cal K}}
\newcommand{\cM}{{\cal M}}
\newcommand{\cN}{{\cal N}}
\newcommand{\cP}{{\cal P}}
\newcommand{\cR}{{\mathcal R}}
\newcommand{\cS}{{\mathcal S}}
\newcommand{\cU}{{\mathcal U}}
\newcommand{\cV}{{\mathcal V}}
\newcommand\cW{{\mathcal W}}
\newcommand{\cX}{{\mathcal X}}
\newcommand{\cY}{{\mathcal Y}}
\newcommand{\sfJ}{{\mathsf J}}
\newcommand{\sfA}{{\mathsf A}}
\newcommand{\bfS}{{\bf S}}
\newcommand{\fa}{{\mathfrak a}}
\newcommand{\fb}{{\mathfrak b}}
\newcommand{\fc}{{\mathfrak c}}
\newcommand{\fo}{{\mathfrak o}}
\newcommand{\fp}{{\mathfrak p}}
\newcommand{\fq}{{\mathfrak q}}
\newcommand{\fg}{{\mathfrak g}}
\newcommand{\bmh}{{\bm{h}}}
\newcommand{\bmr}{{\bm{r}}}
\newcommand{\bmu}{{\bm{u}}}
\newcommand{\bmx}{{\bm{x}}}
\newcommand{\bmz} {{\bm {z}}}
\newcommand{\bms}{{\bm s}}
\newcommand{\bfi}{{\bf i}}
\newcommand{\fC}{{\mathfrak C}}
\newcommand{\be}{\begin{equation}}
\newcommand{\ee}{\end{equation}}
\newcommand{\rd}{{\rm d}}
\newcommand{\ri}{\mathrm{i}}
\newcommand{\bC}{{\mathbb C}}
\newcommand{\bE}{\mathbb{E}}
\newcommand{\bH}{\mathbb{H}}
\newcommand{\bP}{\mathbb{P}}
\newcommand{\bR}{{\mathbb R}}
\newcommand{\bX}{{\mathbb X}}
\newcommand{\bT}{\mathbb T}
\newcommand{\bV}{\mathbb V}
\newcommand{\bW}{\mathbb W}
\newcommand{\bZ}{\mathbb{Z}}
\newcommand{\bI}{\mathbb{I}}
\newcommand{\al}{\alpha}
\newcommand{\la}{\lambda}
\DeclareMathOperator{\Tr}{Tr}
\DeclareMathOperator{\dist}{dist}
\DeclareMathOperator{\Adm}{{Adm}}
\DeclareMathOperator{\Av}{\mathsf{Av}}
\DeclareMathOperator{\OO}{O}
\DeclareMathOperator{\oo}{o}
\DeclareMathOperator{\spec}{spec}
\renewcommand{\Re}{\mathop{\mathrm{Re}}}
\renewcommand{\Im}{\mathop{\mathrm{Im}}}
\newcommand{\ft}{\mathfrak t}
\newcommand{\fX}{{\mathfrak X}}
\newcommand{\deq}{\mathrel{\mathop:}=} 
\renewcommand{\leq}{\leqslant}
\renewcommand{\geq}{\geqslant}
\newcommand{\del}{\partial}
\newcommand{\wh}{\widehat}
\newcommand{\wt}{\widetilde}
\newcommand{\qq}[1]{[\![{#1}]\!]}
\newcommand{\beq}{\begin{equation}}
\newcommand{\eeq}{\end{equation}}
\newcommand{\h}[1]{\{{#1}\}}
\theoremstyle{plain} 
\newtheorem{theorem}{Theorem}[section]
\newtheorem*{theorem*}{Theorem}
\newtheorem{lemma}[theorem]{Lemma}
\newtheorem*{lemma*}{Lemma}
\newtheorem{corollary}[theorem]{Corollary}
\newtheorem*{corollary*}{Corollary}
\newtheorem{proposition}[theorem]{Proposition}
\newtheorem*{proposition*}{Proposition}
\newtheorem*{assumption*}{Assumption}
\newtheorem{definition}[theorem]{Definition}
\newtheorem*{definition*}{Definition}
\newtheorem*{example*}{Example}
\newtheorem{remark}[theorem]{Remark}
\newtheorem*{remark*}{Remark}
\newtheorem*{remarks*}{Remarks}
\newcommand{\col}{\vcentcolon}
\newcommand{\rhosc}{\rho_{\mathrm{sc}}}
\newcommand{\msc}{m_{\rm sc}}
\newcommand{\md}{m_d}
\newcommand{\sfS}{{\sf S}}
\newcommand{\sfF}{{\sf F}}
\def\author#1{\par
    {\centering{\authorfont#1}\par\vspace*{0.05in}}
}
\def\titlefont{\fontsize{13}{15}\bfseries\boldmath\selectfont\centering{}}
\def\authorfont{\fontsize{13}{15}}
\let\affiliationfont\rhfont
\def\address#1{\par
    {\centering{\affiliationfont#1\par}}\par\vspace*{11pt}
}
\def\body{
\setcounter{footnote}{0}
\def\thefootnote{\alph{footnote}}
\def\@makefnmark{{$^{\rm \@thefnmark}$}}
}
\def\title#1{
    \thispagestyle{plain}
    \vspace*{-14pt}
    \vskip 79pt
    {\centering{\titlefont #1\par}}%
    \vskip 1em
}
\def\rhosc{\rho_{\text{sc}}}
\newcommand{\cT}{{\mathcal T}}
\definecolor{forestgreen}{RGB}{34, 139, 34}
\newcommand{\bld}[1]{\boldsymbol{\mathrm{#1}}} 
\newcommand{\pbb}[1]{\biggl(#1\biggr)}
\newcommand{\GG}{{\mathcal G}}
\newcommand{\fR}{{\mathfrak R}}
\newcommand{\tG}{{\widetilde G}}
\newcommand{\tL}{{\widetilde L}}
\newcommand{\tcG}{{\widetilde \cG}}
\newcommand{\oOmega}{\overline{\Omega}}
\begin{document}

\title{Ramanujan Property and Edge Universality of Random Regular Graphs}

\vspace{1.2cm}

\noindent \begin{minipage}[c]{0.34\textwidth}
 \author{Jiaoyang Huang}
\address{University of Pennsylvania\\
   huangjy@wharton.upenn.edu}
 \end{minipage}
  \begin{minipage}[c]{0.32\textwidth}
 \author{Theo McKenzie}
\address{Stanford University\\
   theom@stanford.edu }
 \end{minipage}
\begin{minipage}[c]{0.32\textwidth}
 \author{Horng-Tzer Yau}
\address{Harvard University \\
   htyau@math.harvard.edu}

 \end{minipage}

\begin{abstract}
 We consider the normalized adjacency matrix of a random $d$-regular graph on $N$ vertices with any fixed degree $d\geq 3$ and denote its eigenvalues as $\lambda_1=d/\sqrt{d-1}\geq \la_2\geq\la_3\cdots\geq \la_N$. We establish the following two results as $N\rightarrow \infty$. (i) With high probability, all eigenvalues are optimally rigid, up to an additional $N^{\oo(1)}$ factor. 
Specifically, the fluctuations of bulk eigenvalues are bounded by $N^{-1+\oo(1)}$, and the fluctuations of edge eigenvalues are bounded by $N^{-2/3+\oo(1)}$. 
    (ii) Edge universality holds for random $d$-regular graphs. That is, the distributions of $\lambda_2$ and $-\lambda_N$ converge to the Tracy-Widom$_1$ distribution associated with the Gaussian Orthogonal Ensemble. 
   As a consequence, for sufficiently large $N$, approximately $69\%$ of $d$-regular graphs on $N$ vertices
are Ramanujan, meaning $\max\{\lambda_2,|\lambda_N|\}\leq 2$.
\end{abstract}

\bigskip
{
\hypersetup{linkcolor=black}
\setcounter{tocdepth}{1}
\tableofcontents
}

\newpage
\section{Introduction}\label{s:intro}

 Studying the spectrum of adjacency matrices of $d$-regular graphs is a fundamental problem in analysis and combinatorics, with numerous applications to computer science, statistical physics, and number theory \cite{alon1986eigenvalues,chung1997spectral, colin1998spectres, sarnak1990some}.
Equivalently, we can consider the normalized adjacency matrix denoted as $H$,
\begin{equation} \label{def_H}
H\deq \frac{A}{\sqrt{d-1}}.
\end{equation} Normalizing in this manner facilitates comparisons across different values of $d$ and with other random matrix ensembles. The eigenvalues of $H$ are denoted as $\lambda_1 \geq \lambda_2 \geq \cdots \geq \lambda_N$.

According to the Perron-Frobenius theorem, as all vertices have degree $d$, $\lambda_1=d/\sqrt{d-1}$ is always the largest modulus eigenvalue. The Alon-Boppana bound \cite{nilli1991second} asserts that the second largest eigenvalue $\lambda_2$ of \emph{any} infinite family of $d$-regular graphs satisfies $\lambda_2\geq 2-\oo(1)$ as the size of graph goes to infinity, as $2$ is the spectral radius of the adjacency operator on the infinite tree; see \cite{kesten1959symmetric}.
The precise location of the second largest eigenvalue is a crucial statistic with applications in computer science and number theory. For example, it governs the mixing time of the associated random walk \cite{hoory2006expander}.
Lubotzky, Phillips, and Sarnak introduced the concept of a \emph{Ramanujan graph}---a graph for which, besides the ``trivial'' eigenvalues $\lambda_1$ and, if the graph is bipartite, $\lambda_N=-\lambda_1$, all others lie in the interval $[-2,2]$
\cite{lubotzky1988ramanujan}. By the Alon-Boppana bound, this is the smallest possible interval containing these eigenvalues. 
Lubotzky, Phillips, and Sarnak \cite{lubotzky1988ramanujan} and Margulis \cite{margulis1988explicit} independently constructed infinite families of Ramanujan graphs with a fixed degree $d\geq 3$, where $d-1$ is prime. These constructions are Cayley graphs built using advanced algebraic techniques. 

A small number of other algebraic constructions have followed since then \cite{jordan1997ramanujan,morgenstern1994existence,pizer1990ramanujan,chiu1992cubic}, all of which apply only when $d-1$ is a prime or a prime power. Proving the existence of infinite families of Ramanujan graphs for any degree $d\geq 3$ has been a major open problem (see \cite[Problem 10.7.3]{lubotzky1994discrete} and \cite{sarnak2004expander}). Marcus, Spielman, and Srivastava fully solved the existence problem for \emph{bipartite} graphs, showing that for any $d\geq 3$ and $N$, there exists a bipartite $d$-regular graph of size $N$ with $\lambda_1=-\lambda_N=d/\sqrt{d-1}$, and $\lambda_2,|\lambda_{N-1}|\leq 2$ \cite{marcus2013interlacing,marcus2018interlacing}. Instead of using Cayley graphs, their approach involves analyzing families of expected characteristic polynomials associated with these graphs. However, the non-bipartite version of this question has remained open.  It is worth noting that any non-bipartite Ramanujan graph immediately yields a bipartite one, since the bipartite double cover of a $d$-regular non-bipartite Ramanujan graph is a $d$-regular bipartite Ramanujan graph.

In this paper, we resolve this question, proving the existence of infinite families of  non-bipartite Ramanujan graphs for any degree $d\geq 3$. We accomplish this by precisely characterizing the distributions of $\lambda_2$ and $\lambda_N$ within the \emph{random regular graph ensemble}, where a graph is chosen uniformly at random from all simple $d$-regular graphs on $N$ vertices.

Studying the typical spectrum of graphs sampled from the random regular graph ensemble is  a well-established and active area of research. Particularly, there are applications in constructing optimally expanding networks \cite{hoory2006expander}, analyzing graph $\zeta$-functions \cite{terras2010zeta}, and studying quantum chaos \cite{smilansky2013discrete}.
Alon conjectured that random regular graphs are ``almost Ramanujan'', meaning that with high probability over the random regular graph distribution, $\lambda_2\leq 2+\oo(1)$ \cite{alon1986eigenvalues}. Friedman proved this conjecture   \cite{friedman2008proof}, and later Bordenave gave an alternative proof \cite{Bordenave2015ANP}. Both proofs rely on bounding high moments of the adjacency matrix by classifying walks of different lengths and cycle types. Specifically, they show that $\lambda_2\leq 2+\OO((\log \log N/ \log N)^2)$.
In \cite{huang2024spectrum}, the first and third authors of the present paper, building upon joint work with Bauerschmidt \cite{bauerschmidt2019local}, presented a new proof of Friedman's Theorem with a polynomially small error, demonstrating $\lambda_2\leq 2+\OO(N^{-c})$, for some small constant $c>0$. This proof is based on a careful analysis of the Green's function and more generally establishes concentration of all eigenvalues, not just $\lambda_2$. More recently, Chen, Garza-Vargas, Tropp and van Handel introduced a new approach to strong convergence of random matrices, providing an alternative proof of Friedman's Theorem with $\lambda_2\leq 2+\OO((\log N/N)^{1/6})$ \cite{chen2024new,chen2024new2}.

 Given that random regular graphs are typically almost Ramanujan, it has long been suggested that analyzing their typical behavior could produce examples of Ramanujan graphs \cite[Problem 10.7.4]{lubotzky1994discrete}.
Numerical simulations have appeared to show that the distribution of the second largest eigenvalue of random $d$-regular graphs after normalizing is the
    same as that of the largest eigenvalue of a matrix sampled from the Gaussian Orthogonal Ensemble (GOE).
    Sarnak \cite{ sarnak2004expander} and Miller, Novikoff, and Sabelli \cite{miller2008distribution} conjectured that, for every $d\geq 3$, there exist constants $C_{1}^d$ and $C_{2}^d$ such that  $C_1^{d} N^{2/3} (\lambda_2-2)-C_2^{d} $ converges to the Tracy-Widom$_1$ distribution associated with the GOE (see \cite{tracy1996orthogonal}).
    A similar statement holds for $\lambda_N$. This would imply that the fluctuations of extreme eigenvalues are of order $N^{-2/3+\oo(1)}$, and a positive portion of  all $d$-regular graphs are Ramanujan.

In this work, we establish this edge universality conjecture for random $d$-regular graphs for every $d\geq 3$, with the numerical constants $C_1^d=(d(d-1)/(d-2)^2)^{2/3}$ and $C_2^d=0$. A direct consequence is that for sufficiently large $N$, approximately $69\%$ of $d$-regular graphs on $N$ vertices are Ramanujan. As an intermediate step, we derive an optimal bound (up to an additional $N^{\oo(1)}$ factor) on the fluctuations of all eigenvalues. Specifically, the fluctuations of bulk eigenvalues are bounded by $N^{-1+\oo(1)}$, while those of edge eigenvalues are bounded by $N^{-2/3+\oo(1)}$.

\subsection{Main Results}
Before stating our main results, we introduce some necessary notation. By local weak convergence, the empirical eigenvalue density of random $d$-regular graphs converges to that of the infinite $d$-regular tree, which is known as the Kesten-McKay distribution; see \cite{kesten1959symmetric,mckay1981expected}. This density is given by 
\begin{align}\label{e:KMdistribution}
\varrho_d(x):=\mathbf1_{x\in [-2,2]} \left(1+\frac1{d-1}-\frac {x^2}d\right)^{-1}\frac{\sqrt{4-x^2}}{2\pi}.
\end{align}
Note that close to the spectral edge $\pm 2$, the Kesten-Mckay distribution has square root behavior:
\begin{align}\label{e:edge_behavior}
x\rightarrow\pm 2,\quad \varrho_d(x)=\frac{\cA\sqrt{2\mp x}}{\pi} +\OO(|2\mp x|), \quad \cA:=\frac{d(d-1)}{(d-2)^2}.
\end{align}
We denote by $\md(z)$ the Stieltjes transform of the Kesten--McKay distribution $\varrho_d(x)$,
\begin{align*}
    \md(z)=\int_\bR \frac{\varrho_d(x)\rd x}{x-z},\quad z\in \bC^+:=\{w\in \bC \col \Im[w]>0\}.
\end{align*}
We recall the semicircle distribution $\varrho_{\rm sc}(x)$ and its Stieltjes transform $\msc(z)$:
\begin{align}\begin{split}\label{e:msc_equation}
 \varrho_{\rm sc}(x)=\bm1_{x\in[-2,2]}\frac{\sqrt{4-x^2}}{2\pi},
 \quad 
 \msc(z)=\int_\bR \frac{\varrho_{\rm sc}(x)\rd x}{x-z}=\frac{-z+\sqrt{z^2-4}}{2}.
\end{split}\end{align}
Explicitly, the Stieltjes transform of the Kesten--McKay distribution $\md(z)$ can be expressed in terms of the Stieltjes transform $\msc(z)$:
\begin{align}\label{e:md_equation}
    \md(z)=\frac{1}{-z-\frac{d}{d-1}\msc(z)},\quad \md(z)=(d-1)\frac{-(d-2)z+ d\sqrt{z^2-4}}{2(d^2-(d-1)z^2)}.
\end{align}
 
For $2\leq i \leq N$, we expect the $i$-th largest eigenvalue of the normalized adjacency matrix $H$ to be close to the classical eigenvalue locations $\gamma_i$, where $\gamma_i$ is defined so that 
\be\label{eq:gammadef}
\int_{\gamma_i}^2 \varrho_d(x)\rd x=\frac{i-1/2}{N-1},\quad 2\leq i\leq N.
\ee
Our first result gives optimal concentration for each eigenvalue of random $d$-regular graphs.

\begin{theorem}\label{thm:eigrigidity}
Fix $d\geq 3$, and an arbitrarily small $\fa>0$. There exists a positive integer $\omega_d \geq 1$, depending only on $d$, such that with probability $1 - N^{-(1-\fa)\omega_d}$, the eigenvalues $\lambda_1 = {d}/{\sqrt{d-1}} \geq \lambda_2 \geq \cdots \geq \lambda_N$ of the normalized adjacency matrix $H$ of random $d$-regular graphs satisfy:
\begin{align}\label{e:optimal_rigidity}
|\lambda_i-\gamma_i|\leq N^{-2/3+\fa}(\min\{i,N-i+1\})^{-1/3},
\end{align}
for every $2\leq i\leq N$ and $\gamma_i$ are the classical eigenvalue locations, as defined in \eqref{eq:gammadef}.
\end{theorem}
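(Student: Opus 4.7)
The plan is to reduce the rigidity bound \eqref{e:optimal_rigidity} to an \emph{optimal local law} for the normalized Stieltjes transform
\[
m_N(z) \deq \frac{1}{N}\sum_{k=1}^N \frac{1}{\lambda_k - z}, \qquad z = E + \i\eta \in \bC^+,
\]
and then to prove that local law down to the optimal spectral scale. The target estimate is that, with high probability and uniformly over a lattice of spectral parameters with $|E|\leq 3$ and $\eta \geq N^{-1+\fa}$,
\[
|m_N(z) - \md(z)| \lec \frac{N^{\fa}}{N\eta}.
\]
Given such a local law, \eqref{e:optimal_rigidity} follows by a now-standard Helffer--Sj\"ostrand argument: control of $m_N$ on the optimal lattice translates into control of the counting function $\mathcal{N}(E) = \#\{k : \lambda_k \geq E\}$ against its classical value $(N-1)\int_E^2 \varrho_d(x)\rd x$, and inversion of this bound, combined with the square-root behavior \eqref{e:edge_behavior} of $\varrho_d$, produces the $\min\{i,N-i+1\}^{-1/3}$ improvement near the edges.

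The main task is therefore the local law itself. Because the entries of the adjacency matrix of a random $d$-regular graph are highly dependent (every row sums to $d$), the cumulant or Schur expansions used in the Wigner case are unavailable. Following the Green's function framework developed in \cite{bauerschmidt2019local} and refined in \cite{huang2024spectrum}, I would exploit the \emph{simple switching} operation: two disjoint edges $(a,b), (c,d)$ are replaced by $(a,c), (b,d)$ whenever the result is still a simple $d$-regular graph, and the uniform measure on $d$-regular graphs is invariant under uniformly random switchings. Expanding $G(z) = (H-z)^{-1}$ to sufficiently high order in the rank-four perturbation produced by one switching, then averaging, yields an approximate self-consistent equation of the form
\[
1 + zm_N(z) + \tfrac{d}{d-1}\msc(z)\, m_N(z) = \cE(z),
\]
whose deterministic version is \eqref{e:md_equation}. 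The error $\cE(z)$ is a polynomial in Green's function entries that can be controlled recursively via a fluctuation-averaging argument and the Ward identity. A bootstrap in the spectral parameter $\eta$, starting from $\eta \sim 1$ (where trivial bounds suffice) and descending in multiplicative steps, then delivers the local law at the optimal scale $\eta \sim N^{-1+\fa}$; the integer $\omega_d$ in the theorem encodes the number of switchings retained in the expansion, and governs both the probability and the implicit constants in this bootstrap.

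The main obstacle is the behavior at the spectral edge. The deterministic self-consistent equation becomes degenerate at $\pm 2$, where $\Im \md(E+\i\eta) \asymp \sqrt{\kappa + \eta}$ with $\kappa = \bigl||E|-2\bigr|$; the stability constant deteriorates precisely on the scale $\eta \sim N^{-2/3}$ that we need to reach, so the edge-optimal estimate cannot be obtained from the bulk bootstrap alone. Achieving the sharp $N^{\fa}$ error in this regime requires matching the switching error estimates to the square-root singularity of $\md$, which in practice means tracking fluctuation-averaging gains carefully near the edge and retaining higher-order switching corrections. A separate technicality is that the Perron eigenvalue $\lambda_1 = d/\sqrt{d-1}$ lies outside the support of $\varrho_d$ and must be projected out of the Green's function (for instance by subtracting a rank-one correction at $\lambda_1$) before the local law is applied, so that the remaining eigenvalues $\lambda_2, \ldots, \lambda_N$ are governed by the Kesten--McKay law at the optimal rate claimed in \eqref{e:optimal_rigidity}.
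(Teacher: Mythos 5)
Your overall architecture (a local law obtained from the switching invariance plus a self-consistent equation, then a standard Helffer--Sj\"ostrand/counting-function argument) is the same as the paper's, but as written the proposal has two genuine gaps. First, the target estimate $|m_N(z)-\md(z)|\lec N^{\fa}/(N\eta)$ down to $\eta\geq N^{-1+\fa}$ is \emph{not} sufficient for the edge cases of \eqref{e:optimal_rigidity}. That bound only yields counting-function control with error $N^{\fa}\gg 1$, which is consistent with $N^{\fa}$ eigenvalues sitting anywhere to the right of $2$; to get $|\lambda_2-\gamma_2|\leq N^{-2/3+\fa}$ you must rule out eigenvalues beyond $2+N^{-2/3+\fa}$, and for that you need an improved local law \emph{outside} the support, of the form $|m_N-\md|\lec \frac{1}{\sqrt{\kappa+\eta}}\bigl(\frac{1}{N\sqrt{\eta}}+\frac{1}{(N\eta)^2}\bigr)$ (the paper's \eqref{e:mtimproved}), since at $E=2+\kappa$, $\kappa\gtrsim N^{-2/3+\fa}$, an outlying eigenvalue produces $\Im m_N\gtrsim 1/(N\eta)$, which your error bound cannot distinguish from zero. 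Your proposal never states or derives this improved bound.

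Second, the mechanism that actually produces the optimal error at the edge is missing, and the step you do describe would fail for fixed $d$. Expanding $G$ ``to sufficiently high order in the rank-four perturbation produced by one switching'' is precisely the resolvent expansion \eqref{e:resolvent}; for fixed $d$ each entry of the switching perturbation is of size $(d-1)^{-1/2}=\OO(1)$, so the terms do not decay and the truncation is uncontrolled -- the paper replaces it by a Woodbury-formula expansion in powers of $G-P$ (the tree extension), whose terms decay like $N^{-\fb}$ per order. Moreover, the relation $1+zm_N+\frac{d}{d-1}\msc m_N=\cE$ is linear in $m_N$ with a deterministic coefficient, so it has no degenerate stability to analyze; the quantity that satisfies a usable self-consistent equation is $Q(z)$ (the average of $G^{(i)}_{jj}$ over edges), with $Q\approx Y_\ell(Q,z)$, and the edge-optimal error comes from high-moment estimates of $Q-Y_\ell(Q,z)$ in which the error carries the factor $\Upsilon\asymp|1-\del_1 Y_\ell|\asymp\sqrt{\kappa+\eta}$, obtained through an iteration of local resamplings each gaining a factor $(d-1)^{-\ell/2}$. ``Tracking fluctuation-averaging gains carefully'' around a single switching only reproduces the suboptimal $N^{-\fb}$ error of the earlier weak local law, which is far from $N^{\fa}/(N\eta)$ at the edge. (A minor point: the Perron eigenvalue needs no rank-one subtraction; its contribution to $m_N$ is $\OO(1/N)$ for $z$ away from $d/\sqrt{d-1}$, and the normalization by $N-1$ in \eqref{eq:gammadef} already accounts for it.)
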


The aforementioned theorem implies that $\lambda_2,|\lambda_N|=2+N^{-2/3+\oo(1)}$. In the bulk, for fixed $\delta>0$ and $\delta N\leq i\leq (1-\delta)N$, the fluctuation of $\lambda_i$ is of order $N^{-1+\oo(1)}$. This level of fluctuation aligns with that observed for the eigenvalues of matrices from the GOE, and more broadly, Wigner matrices; see \cite{erdHos2012rigidity}.

Conditioned on the event 
$\oOmega$, which ensures that the $d$-regular graphs are locally tree-like (see \Cref{def:omegabar} for a precise definition), our proof of \Cref{thm:eigrigidity} implies that the optimal rigidity estimates \eqref{e:optimal_rigidity} hold with probability at least $1-\OO(N^{-\fC})$ for any $\fC>0$, provided $N$ is sufficiently large.

Our second result verifies the edge universality conjecture for random $d$-regular graphs by Sarnak \cite{sarnak2004expander} and Miller, Novikoff and Sabelli \cite{miller2008distribution}.
\begin{theorem}
\label{t:universality}

Fix $d\geq 3$, $k\geq 1$ and $s_1,s_2,\cdots, s_k \in \bR$, and recall $\cA=d(d-1)/(d-2)^2$ from \eqref{e:edge_behavior}. There exists a small $\fa>0$ such that the eigenvalues $\lambda_1 = {d}/{\sqrt{d-1}} \geq \lambda_2 \geq \cdots \geq \lambda_N$ of the normalized adjacency matrix $H$ of random $d$-regular graphs satisfy: 
\begin{align*}\begin{split}
 \bP_{H}\left( (\cA N)^{2/3} ( \lambda_{i+1} - 2 )\geq s_i,1\leq i\leq k \right)= \bP_{\mathrm{GOE}}\left( N^{2/3} ( \mu_i - 2  )\geq s_i,1\leq i\leq k \right) +\OO(N^{-\fa}),
\end{split}\end{align*}
where $\mu_1\geq \mu_2\geq \cdots \geq\mu_N$ are the eigenvalues of the GOE.
The analogous statement holds for the smallest eigenvalues $-\lambda_N,\ldots,-\lambda_{N-k+1}$.
\end{theorem}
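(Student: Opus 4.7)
The plan is to use the three-step dynamical approach to edge universality, adapted to the constrained random regular graph ensemble, with Theorem \ref{thm:eigrigidity} supplying the essential rigidity input. The target is to compare $H$ with a ``Gaussian divisible'' version $H_t$ for which the edge statistics can be directly analyzed, then transfer Tracy-Widom$_1$ statistics back to $H$.

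Step 1 (Gaussian-divisible approximation via local resampling). Because $H$ has $0/1$ entries and constrained row sums, one cannot form $H+\sqrt{t}\,W$ directly with $W$ a GOE matrix. Instead, I would construct an auxiliary matrix $H_t$ through a local resampling/switching scheme on the underlying $d$-regular graph, in which a controlled number of edges at a random vertex are rerandomized uniformly among admissible configurations. The switching preserves $d$-regularity exactly, while producing, on the spectral side, an effective Gaussian-type perturbation whose variance plays the role of the Dyson Brownian motion (DBM) time parameter $t$.

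Step 2 (Short-time DBM for $H_t$). Using the rigidity bound $|\lambda_i-\gamma_i|\leq N^{-2/3+\fa}(\min\{i,N-i+1\})^{-1/3}$ from Theorem \ref{thm:eigrigidity} as initial data, I would analyze DBM for a short time $t=N^{-1/3-\fb}$ with some small $\fb>0$. Homogenization theory for DBM at the edge, developed for general initial data with square-root density and optimal rigidity (à la Landon-Yau, Bourgade-Erd\H{o}s-Yau-Yin, Adhikari-Landon-Yau), implies that the edge eigenvalues of $H_t$, rescaled by $(\cA N)^{2/3}$ about $\pm 2$, converge in distribution to those of the GOE at the corresponding edges, i.e.\ to the Tracy-Widom$_1$ law. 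The prefactor $\cA=d(d-1)/(d-2)^2$ originates from the square-root behavior \eqref{e:edge_behavior} of the Kesten-McKay density, which differs from the semicircle edge only by the constant $\cA$.

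Step 3 (Comparison of $H$ and $H_t$). Finally, I would show that the edge statistics of $H$ and $H_t$ agree up to $N^{-\fa}$ error via a Green's function comparison for observables of the form $F(\Tr\, \chi(H))$ with $\chi$ a smooth cutoff localized near $\pm 2$ on scale $N^{-2/3+\oo(1)}$, or equivalently via resolvent observables at spectral parameters $z=E+\i\eta$ with $|E\mp 2|\lesssim N^{-2/3+\oo(1)}$ and $\eta$ slightly above $N^{-2/3}$. The switching construction in Step 1 admits a natural interpolation (a discrete Markov chain of switchings combined with the DBM flow) between $H$ and $H_t$, along which the evolution of these observables can be tracked and the net change controlled.

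The main obstacle is Step 3: entries of $H$ are neither independent nor centered, so the classical Lindeberg/resolvent-swap technique for Wigner matrices is unavailable. The substitute is the switching interpolation, but obtaining cancellations sharp enough to control accumulated resolvent errors at the precision $N^{-2/3}$ required for edge universality is delicate. The analysis will hinge on a strong local law for the resolvent of $H$ valid up to the optimal scale at the spectral edge, together with the edge rigidity from Theorem \ref{thm:eigrigidity}, which together permit the switching-generated errors to be absorbed into a controlled Gaussian-divisible evolution.
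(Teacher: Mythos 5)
There is a genuine gap, and it sits exactly where you flag the ``main obstacle.'' First, your premise in Step 1 is not needed and is in fact how the paper avoids your difficulty: one \emph{can} add Gaussian noise directly. The paper sets $H(t)=H+\sqrt{t}\,Z$ with $Z$ the GOE constrained to have vanishing row and column sums, so the Perron direction is preserved; there is no requirement that the flow stay inside adjacency matrices of $d$-regular graphs. With the optimal rigidity of Theorem \ref{thm:eigrigidity} as input, the results of Landon--Yau and Adhikari--Huang apply verbatim to this matrix flow at $t=N^{-1/3+\mathfrak{t}}$, giving Tracy--Widom$_1$ statistics for $H(t)$ (the constant $\cA$ enters only through the Kesten--McKay edge density, as you say). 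Your proposed substitute --- a switching scheme whose spectral effect is ``an effective Gaussian-type perturbation of variance $t$'' --- is not an established construction, and no argument is offered for why the rerandomized graph ensemble should be Gaussian-divisible at the precision needed to invoke DBM edge universality; as written this step does not exist.

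Second, and more importantly, your Step 3 is precisely the step that cannot be carried out by a switching-interpolation/Lindeberg-type Green's function comparison at edge precision: there is no moment matching between a discrete switching increment and a Gaussian increment, and the accumulated resolvent errors at spectral scale $\eta\approx N^{-2/3}$ do not cancel for free. The paper replaces this entirely. It computes, via It\^o's formula for the DBM flow \eqref{e:DBM_mt2}, that the $t$-derivative of the multi-point correlation functions $\bE[\prod_j(m_t(z_j)-m_d(z_j,t))]$ at the edge is \emph{exactly} the microscopic loop equation, and then proves (Theorem \ref{t:correlation_evolution}, built on the refined estimate \eqref{e:Qrefined_bound}, whose whole point is identifying the next-order correction $\partial_z m_t(z)/N$ in the self-consistent equation) that random regular graphs satisfy these loop equations at the edge up to an error $\ll N^{-(p+1)/3}$. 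Integrating over $t\in[0,N^{-1/3+\mathfrak{t}}]$ shows the edge correlation functions are essentially invariant along the flow, which is what yields Proposition \ref{thm:comp} and hence the theorem. Your proposal names the obstacle but supplies no mechanism playing the role of this loop-equation cancellation, so the comparison between $H$ and the Gaussian-divisible ensemble --- the heart of the proof --- remains open in your outline.
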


When the degree $d$ grows with the size of the graph, edge universality for random $d$-regular graphs has been established previously for $N^{2/3 + \oo(1)} \leq d \leq N/2$, by He \cite{he2022spectral}, and for  $N^{\oo(1)} \leq d \leq N^{1/3 - \oo(1)}$, by the first and third authors of this paper \cite{huang2023edge}, which generalized a result for $N^{2/9 + \oo(1)} \leq d \leq N^{1/3 - \oo(1)}$, by Bauerschmidt, Knowles and the first and third authors of this paper \cite{bauerschmidt2020edge}.

As emphasized in \cite{miller2008distribution,sarnak2004expander},
the Tracy--Widom$_1$ distribution has  about $83\%$ of its mass on the set $\{x:x<0\}$.
Therefore \Cref{t:universality} implies $83\%$ of
$d$-regular graphs have the second eigenvalue \emph{less than} $2$.
The proof of \Cref{t:universality} can be extended to show 
that  the largest and smallest nontrivial eigenvalues converge in distribution to \emph{independent} Tracy--Widom$_1$ distributions; see Remarks \ref{rem:independence} and \ref{rem:ind2} below.
As a consequence, we have the following result.

\begin{corollary}\label{c:rate}
Fix $d\geq 3$ and $N$ sufficiently large.  With probability approximately $69\%$, a randomly sampled $d$-regular graph has $\max\{\lambda_2,|\lambda_N|\}\leq 2$, and is therefore Ramanujan. 
\end{corollary}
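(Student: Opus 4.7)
The plan is to reduce the Ramanujan event $\{\max\{\lambda_2,|\lambda_N|\}\leq 2\}$ to the joint edge event $\{\lambda_2\leq 2\}\cap\{\lambda_N\geq -2\}$, control each of the two single-edge probabilities via \Cref{t:universality} specialized to $s_1=0$, and then combine them using the asymptotic independence of the top and bottom edges announced in Remarks~\ref{rem:independence} and~\ref{rem:ind2}. The numerical value $69\%$ will come out as $F_1(0)^2$, where $F_1$ denotes the Tracy--Widom$_1$ distribution function.

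For a single edge, inserting $k=1$ and $s_1=0$ into \Cref{t:universality} gives
\begin{equation*}
\bP_H(\lambda_2 \leq 2)
= 1-\bP_H\big((\cA N)^{2/3}(\lambda_2-2)\geq 0\big)
= \bP_{\mathrm{GOE}}(\mu_1\leq 2)+\OO(N^{-\fa})
= F_1(0)+\OO(N^{-\fa}),
\end{equation*}
using the classical fact that $N^{2/3}(\mu_1-2)$ converges in distribution to Tracy--Widom$_1$ for the GOE (see \cite{tracy1996orthogonal}) together with the continuity of $F_1$. The same argument applied to $-\lambda_N$ yields $\bP_H(\lambda_N\geq -2)=F_1(0)+\OO(N^{-\fa})$. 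Standard tables give $F_1(0)\approx 0.8319$, so the single-edge Ramanujan condition $\lambda_2\leq 2$ already holds with probability $\approx 83\%$, matching the prediction of \cite{miller2008distribution, sarnak2004expander}.

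To upgrade to the joint probability one invokes the strengthening of \Cref{t:universality} described in Remarks~\ref{rem:independence} and~\ref{rem:ind2}: the rescaled pair $\big((\cA N)^{2/3}(\lambda_2-2),\;(\cA N)^{2/3}(-\lambda_N-2)\big)$ converges jointly in distribution to two \emph{independent} Tracy--Widom$_1$ random variables. Combined with the previous step this yields
\begin{equation*}
\bP_H\big(\lambda_2\leq 2 \text{ and } \lambda_N\geq -2\big)
= F_1(0)^2+\OO(N^{-\fa})
\approx 0.692,
\end{equation*}
which is the advertised $\approx 69\%$.

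The only content not directly contained in \Cref{t:universality} as stated is the asymptotic independence of the two spectral edges; this is where essentially all the work of the corollary is hidden. On the GOE side, independence of the top and bottom edges under the $N^{2/3}$ rescaling is classical, so the main obstacle is to transport this independence to random $d$-regular graphs. This is precisely what the joint version of edge universality extracted in Remarks~\ref{rem:independence} and~\ref{rem:ind2} provides, after which the numerical conclusion is an immediate evaluation of $F_1(0)^2$.
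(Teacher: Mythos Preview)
Your proposal is correct and follows essentially the same approach as the paper: apply \Cref{t:universality} at each edge with $s_1=0$ to get the single-edge probability $F_1(0)\approx 0.83$, then invoke the asymptotic independence of the two edges from Remarks~\ref{rem:independence} and~\ref{rem:ind2} to obtain $F_1(0)^2\approx 0.69$. The paper's discussion surrounding the corollary is exactly this, with the independence step deferred to those remarks just as you identify.
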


\subsection{Proof ideas}

We define the {Green's function} and the Stieltjes transform of the empirical eigenvalue distribution of the normalized adjacency matrix $H$ of a $d$-regular graph $\GG$ on $N$ vertices as
\begin{align*}\label{e:m}
  G(z) \deq  (H-z)^{-1},\quad   m_N(z) \deq \frac{1}{N} \sum_i G_{ii}(z)=\frac1N\sum_{i=1}^N\frac{1}{\la_i-z},\quad z\in \bC^+.
\end{align*}

We establish \Cref{thm:eigrigidity} and \Cref{t:universality} through a detailed analysis of the Green's function and its variations, a method that has been highly effective in studying the spectral properties of random matrices (see \cite{erdHos2017dynamical} for an overview of this approach). In the remainder of this section, we outline the key ideas behind the proofs of our main results.


\subsubsection{Loop Equations}
The loop equation, also known as the Dyson-Schwinger equation, is a fundamental tool for studying both macroscopic and microscopic properties of interacting particle systems, particularly in the context of random matrix theory and $\beta$-ensembles (see \cite{guionnet2019asymptotics} for an overview). At its core, the loop equation describes a recursive structure satisfied by the correlation functions of the system. Specifically, let $s_N(z)$ denote the Stieltjes transform of the empirical eigenvalue distribution of the GOE matrix of $N$, defined for $z\in \bC^+$. The first order loop equation associated with the GOE matrix is given by
\begin{align}
\bE\left[s_N^2(z)+zs_N(z)+1+\frac{\del_z s_N(z)}{N}\right]=0.
\end{align}
More generally, the $p$-th order loop equations state that for $z, z_1, z_2,\cdots, z_{p-1}\in \bC^+$
\begin{align}\begin{split}\label{e:loop0}
&\phantom{{}={}}\bE\left[\left(s_N(z)^2+zs_N(z)+1+\frac{\del_z s_N(z)}{N}\right)\prod_{j=1}^{ p-1}s_N(z_j)\right]+\frac{2}{N^2}\sum_{j=1}^{p-1}
    \bE\left[\del_{z_j} \frac{s_N(z)-s_N(z_j)}{z-z_j}\prod_{ i\neq j}s_N(z_i)\right]=0.
\end{split}\end{align}

On the macroscopic scale, the loop equation is crucial in deriving global properties of the system, such as the density of states and global fluctuations. For instance, loop equations have been employed to establish macroscopic central limit theorems for $\beta$-ensembles \cite{johansson1998fluctuations, borot2024asymptotic, borot2013asymptotic, shcherbina2013fluctuations}, demonstrating that fluctuations of the empirical spectral distribution around the equilibrium measure converge to Gaussian limits.

On intermediate (mesoscopic) scales, loop equations play a pivotal role in establishing rigidity. Rigidity refers to the phenomenon where particles (eigenvalues, in the context of random matrix ensembles) exhibit highly deterministic behavior, staying close to their classical locations as predicted by the equilibrium measure. Loop equations provide a robust framework for proving optimal rigidity. For the GOE matrix, loop equations \eqref{e:loop0} can be combined to estimate the higher moments of the self-consistent equation $s_N^2(z)+zs_N(z)+1$:
\begin{align}
   \bE[|s_N^2(z)+zs_N(z)+1|^{2p}]\lesssim \bE\left[\left(\frac{\Im[s_N(z)](\Im[s_N(z)]+|z+2s_N(z)|)}{(N\eta)^2}\right)^{p}\right]. 
\end{align}
Then the Markov's inequality together with a continuity or bootstrapping argument leads to an optimal estimate for $|s_N(z)-\msc(z)|$.
More generally, we refer the reader to results for $\beta$-ensembles \cite{bourgade2022optimal,bourgade2014universality,bourgade2012bulk,bourgade2014edge,guionnet2019rigidity} and generalizations of Wigner matrices \cite{lee2018local, he2018isotropic,ajanki2019stability}.

On the microscopic scale, the role of loop equations has only recently started to attract attention. It was observed in \cite{bourgade2022optimal} that local eigenvalue statistics satisfy a microscopic version of the loop equation. In particular, the Airy$_1$ point process, which describes the edge scaling limit of the GOE matrix, satisfies such loop equations. Let $S(w)$ denote the normalized Stieltjes transform of the Airy$_1$ point process, defined for $w\in \bC^+$. The first order loop equation for the Airy$_1$ point process is given by
\begin{align}\label{e:loop2}
    \bE\left[{(S(w)-\sqrt{w})^2+ 2\sqrt{w}(S(w)-\sqrt w) +\del_w S(w)}\right]=0,
\end{align}
which can be derived from \eqref{e:m} by examining a window of size $N^{-2/3}$ around the spectral edge. Specifically, $S(w)-\sqrt w=\lim_{N\rightarrow \infty}N^{1/3} (s_N(2+w N^{-2/3})-\msc(2+wN^{-2/3}))$,
where  $\msc$ denotes the Stieltjes transform of the semicircle law.


In this work, we show that random $d$-regular graphs satisfy the loop equations on the microscopic scale at the edge. As a consequence, we establish optimal eigenvalue rigidity. 
While satisfying the loop equations is a necessary condition for Airy statistics, it is unclear whether the loop equations \emph{uniquely} characterize the Airy$_1$ point process. 
To derive Airy statistics, we follow the three-step strategy developed in \cite{erdHos2010bulk,erdHos2012rigidity,erdHos2011universality,erdHos2017dynamical} and analyze the normalized adjacency matrix with a small Gaussian component $H(t)=H+\sqrt t Z$, where $Z$ is a GOE matrix constrained to have vanishing row and column sums.
Using eigenvalue rigidity as input, it was proven in \cite{landon2017edge,adhikari2020dyson} that for 
$t\geq N^{-1/3+\oo(1)}$, edge universality holds for 
$H+\sqrt t Z$.
A key observation is that the time derivative of the multi-point correlation functions of the Stieltjes transform  of $H(t)$ is \emph{precisely governed by the microscopic loop equations}, see \eqref{e:DBM_mt3}. 
We prove that the eigenvalue distributions of 
$H+\sqrt{t} Z$ satisfy the microscopic loop equations at the edge for small $t$, see \eqref{e:Qrefined_bound} and \eqref{e:key_cancel}. 
 These equations ensure that the multi-point correlation functions of the edge statistics remain invariant as $t$ evolves. 
 In particular, by setting $t\asymp N^{-1/3+\oo(1)}$, we conclude that $H$ and $H+\sqrt t Z$ have the same edge statistics, both of which are described by the Airy$_1$ point process. 

In our proof, the loop equation serves a dual purpose: it establishes both the rigidity of eigenvalue locations and edge universality. This novel perspective opens up possibilities for extending the proof of edge universality to other random matrix ensembles and related systems.

\subsubsection{Self-consistent equation}
 
To establish Theorem \ref{thm:eigrigidity}, we start with the self-consistent equation of a modified Green's function quantity, as introduced in \cite{bauerschmidt2019local, huang2024spectrum}. Given a $d$-regular graph $\GG$ on $N$ vertices, let $\cG^{(i)}$ denote the graph obtained by removing vertex $i$, and let  $G^{(i)}(z)$ represent its Green's function. We  consider the average of $G_{jj}^{(i)}(z)$ over all pairs of adjacent vertices $i\sim j$:
\begin{align}\label{e:Qsum0}
Q(z):=\frac{1}{Nd}\sum_{i\sim j}G_{jj}^{(i)}(z).
\end{align}
For $d$-regular graphs, while $Q(z)$ is slightly more intricate than the Stieltjes transform of the empirical eigenvalue distribution $m_N(z)$, it proves to be a more useful quantity. One key reason is that $Q(z)$ approximately satisfies a \emph{self-consistent equation}, which is exactly satisfied by $\msc(z)$, the Stieltjes transform of the semicircle distribution. 
To approximate the Green's function $G_{jj}^{(i)}(z)$ in \eqref{e:Qsum0}, we consider a  neighborhood of vertex $j$ in $\GG^{(i)}$ with a large radius $\ell\asymp\log_{d-1} N$. The Schur complement formula allows us to express $G^{(i)}_{jj}(z)$ as the Green's function of the radius-$\ell$ neighborhood, with weights added to boundary vertices. These boundary weights, derived  from the Green's function of the graph with the radius-$\ell$ neighborhood removed, can be approximated by the averaged quantity $Q(z)$. For almost all vertices, this radius-$\ell$ neighborhood is a depth-$\ell$ $(d-1)$-ary tree (a tree where the root has degree $d-1$, and all other vertices have degree $d$). In this case, the relevant function is $Y_\ell(Q(z), z)$, which is the Green's function at the root vertex of a truncated $(d-1)$-ary tree of depth $\ell$, with boundary weights $Q(z)$. This structure gives rise to the following self-consistent equation for 
$Q(z)$:
\begin{align}\label{e:selfeq}
Q(z)=\frac{1}{Nd}\sum_{i\sim j}G_{jj}^{(i)}(z)\approx Y_\ell(Q(z),z).
\end{align}

 The fixed point of the above self-consistent equation \eqref{e:selfeq} is given by the Stieltjes transform of the semicircle distribution $\msc(z)$. Indeed, $Y_\ell(\msc(z),z)=\msc(z)$ represents the Green's function at the root vertex of an infinite $(d-1)$-ary tree, whose spectral density is governed by the semicircle distribution. Consequently, the self-consistent equation \eqref{e:selfeq} was utilized in \cite{bauerschmidt2019local, huang2024spectrum} to demonstrate that $Q(z)$ is closely approximated by $\msc(z)$ with high probability:
\begin{align*}
  Q(z)\approx \msc(z).
\end{align*}
The Stieljes transform $m_N(z)$ of the empirical eigenvalue distribution of $H$ can also be recovered from the quantity $Q(z)$, through the following approximation
\begin{align}\label{e:mz_approx}
    m_N(z)=\frac{1}{N}\sum_{i=1}^N G_{ii}(z)\approx X_\ell(Q(z),z),
\end{align}
where $X_\ell(Q(z),z)$ denotes the Green's function at the root vertex of a truncated  $d$-regular tree of depth $\ell$, with boundary weights $Q(z)$.

Utilizing the self-consistent equations \eqref{e:selfeq} and \eqref{e:mz_approx}, \cite{bauerschmidt2019local, huang2024spectrum} established that with high probability, uniformly for any $z$ in the upper half-plane with $\Im[z]\geq N^{-1+\oo(1)}$, the Stieltjes transform of the empirical eigenvalue distribution closely approximates $\md(z)$:
\begin{align}\label{e:weak}
|m_N(z)-\md(z)|\leq N^{-\fb},
\end{align}
for some small $\fb>0$ and that eigenvectors are completely delocalized. Achieving optimal rigidity of eigenvalue locations, as stated in Theorem \ref{thm:eigrigidity}, necessitates an optimal error bound much stronger than $N^{-\fb}$ in \eqref{e:weak}.  This constitutes a standard problem in random matrix theory, which requires an optimal error bound for the high moments of the self-consistent equation:
\begin{align}\label{e:hmm}
\bE[|Q(z)-Y_\ell(Q(z),z)|^{2p}],
\end{align}
 for large integers $p>0$. Analogous estimates have been established for Wigner matrices \cite{erdHos2012rigidity,erdHos2013local, he2018isotropic}, Erd\H{o}s-R\'enyi graphs \cite{lee2018local,erdHos2013spectral,erdHos2012spectral, he2021fluctuations,huang2020transition,huang2022edge,lee2024higher}, $d$-regular graphs with growing degrees \cite{bauerschmidt2020edge, huang2023edge, he2022spectral}, and $\beta$-ensembles \cite{bourgade2022optimal}.

Unlike Wigner matrices, the primary challenge in analyzing the adjacency matrices of random $d$-regular graphs lies in the correlations between matrix entries, where row and column sums are fixed at $d$.  To address this constraint, the technique of local resampling has proven essential. This method was first applied to derive spectral statistics for random  $d$-regular graphs with $d\gg \log N$ in \cite{bauerschmidt2017local}, and was later extended in \cite{bauerschmidt2019local, huang2024spectrum} to establish the self-consistent equation \eqref{e:selfeq} and bound its high moments \eqref{e:hmm}. In this approach, local resampling randomizes the boundary of a radius-$\ell$ neighborhood, rather than randomizing edges near a vertex as in  \cite{bauerschmidt2017local}. Specifically, it achieves this by exchanging the edge boundary of a radius-$\ell$ neighborhood with randomly chosen edges elsewhere in the graph. A notable feature of this resampling method is its reversibility, meaning the law governing the random regular graph and its switched counterpart is exchangeable.

However, the error bound for the high moments of the self-consistent equation \eqref{e:hmm} provided in \cite{huang2024spectrum} is suboptimal, as are the results concerning eigenvalue rigidity. One of the primary contributions of this work is to establish an \emph{optimal error bound for the high moments of the self-consistent equation} \eqref{e:hmm}.

The self-consistent equation \eqref{e:selfeq} represents the leading order term in the loop equations. To derive the complete loop equation, it is essential to account for the next-order term. We establish that for $z$ near the spectral edge $\pm 2$, the following estimate holds 
\begin{align}\label{e:next_order}
\bE\left[\frac{\cA^2}{(\ell+1)}\left(Q(z)-Y_\ell(Q(z),z)\right) +\frac{\del_z m_N(z)}{N}\right]\ll N^{-2/3}.
\end{align}
In combination with a similar equation for $m_N(z)-X_\ell(Q(z),z)$, this allows us to rewrite the above expression in terms of the Stieltjes transform of the empirical eigenvalue distribution $m_N(z)$
\begin{align}\label{e:loop3}
\bE\left[(m_N(z)-\md(z)) ^2+ 2\cA \sqrt{z-2}(m_N(z)-\md(z)) +\frac{\del_z m_N(z)}{N}\right]\ll N^{-2/3}.
\end{align}
The {scaling limit $N^{1/3} (m_N(2+w/(\cA N)^{2/3})-\md(2+w/(\cA N)^{2/3}))/\cA^{2/3}$} converges to $S(w)-\sqrt w$ in the microscopic version of the loop equation \eqref{e:loop2}. For this convergence to hold, the error in \eqref{e:loop3} must be much smaller than $N^{-2/3}$, ensuring that its scaling limit vanishes.

The second contribution of this work is the \emph{identification of the next-order term for the self-consistent equation near the spectral edge}, as described in \eqref{e:next_order}, along with its multi-point analogue in \eqref{e:key_cancel}. These relations correspond to the loop equations at the microscopic scale near the spectral edge for random $d$-regular graphs.

\subsubsection{Iteration scheme}\label{s:newstrategy}

To illustrate the ideas that will lead to an optimal error bound for \eqref{e:hmm} and \eqref{e:next_order},  we begin with the first moment of the self-consistent equation: 
\begin{align}\label{e:sample}
\bE[Q(z)-Y_\ell(Q(z),z)]
    =\bE[(G_{oo}^{(i)}(z)-Y_\ell(Q(z),z)]
    =\bE[\widetilde G_{oo}^{(i)}(z)-Y_\ell(Q(z),z)],
\end{align}
where in the first statement we exploit the permutation invariance of vertices, so the expectation of $Q(z)$ is the same as the expectation of $G_{oo}^{(i)}(z)$ for any pair of adjacent vertices $i\sim o$. Given that almost all 
 vertices have large tree neighborhoods, we can focus on cases where $o$ has a large tree neighborhood.  The second statement arises from the local resampling process. Instead of computing the expectation of $G_{oo}^{(i)}(z)$, we perform a local resampling around vertex $o$, by switching the boundary edges of the radius-$\ell$ neighborhood $\cT=\cB_\ell(o, \cG)$ of vertex $o$, with randomly selected edges $\{(b_\al, c_\al)\}_{\al\in \qq{\mu}}$ from $\cG$, where $\mu\asymp (d-1)^\ell$ is the total number of boundary edges.
 We denote the resulting graph as $\tcG$, and its Green's function as $\widetilde G(z)$. The new boundary vertices of $\cT$ after local resampling is $\{c_\al\}_{\al\in\qq{\mu}}$, which are typically well spaced in terms of graph distance (see Section \ref{s:local_resampling} for further details). As local resampling is reversible, $G_{oo}^{(i)}(z)$ and $\widetilde G_{oo}^{(i)}(z)$ share the same law and expectation, which gives the last expression in \eqref{e:sample}.

To demonstrate the smallness of the final expression in \eqref{e:sample}, we expand  $\widetilde G_{oo}^{(i)}(z)$ using the Schur complement formula.
The radius-$\ell$ neighborhood of $o$ in $\tcG^{(i)}$ (where vertex $i$ is removed) is  $\cT^{(i)}$, a truncated $(d-1)$-ary tree at level $\ell$. 
The Schur complement formula states that $\widetilde G_{oo}^{(i)}(z)$ is the same as the Green's function of $\cT^{(i)}$ with boundary weights given by $\widetilde G_{c_\al c_\beta}^{(\bT)}(z)$, which are the Green's functions of $\tcG^{(\bT)}$ (with the vertex set $\bT$ of $\cT$ removed). With high probability, the new boundary vertices $\{c_\al\}_{\al\in\qq{\mu}}$ are far from each other, and have large tree neighborhoods. Consequently, the neighborhoods of $c_\al$ in $\cG^{(\bT)}$ are given by the truncated $(d-1)$-ary trees, and the boundary weights can be approximated by the Green's function of $\cG$ (the graph before switching) as
\begin{align}\label{e:GtoQ}
   \widetilde G_{c_\al c_\beta}^{(\bT)}(z)
   \approx  G_{c_\al c_\beta}^{(b_\al b_\beta)}(z)\approx \delta_{\al\beta} Q(z),\quad 1\leq \al, \beta\leq \mu.
\end{align}
As a result, the leading-order term of 
 $\widetilde G_{oo}^{(i)}(z)$ is given by $Y_\ell(Q(z),z)$, and \eqref{e:sample} is small. This strategy, utilized in  \cite{huang2024spectrum}, provided a weak bound for \eqref{e:hmm}, by bounding all approximation errors, such as those in\eqref{e:GtoQ}, to be  $N^{-\fb}$ for some small $\fb >0$. 

 To achieve optimal estimates for Green's functions, we need to analyze the approximation errors from the Schur complement formula \eqref{e:GtoQ} more carefully. These errors comprise weighted sums of terms involving factors such as:
 \begin{align}\label{e:error_factor}
     (G^{(b_\al)}_{c_\al c_\al}(z)-Q(z)),  \quad G_{c_\al c_\beta}^{(b_\al b_\beta)}(z),\quad Q(z)-\msc(z).
 \end{align}
For a more precise description of these error terms, we refer to \Cref{lem:diaglem}. Instead of simply bounding each term in \eqref{e:error_factor} by $N^{-\fb}$, we carefully examine all possible error terms. 
The crucial observation  is that the expectation of the first factor in \eqref{e:error_factor} with respect to the randomness of the simple switching data $\bfS$ is very small:
\begin{align*}
\mathbb{E}_{\mathbf{S}}[G^{(b_{\alpha})}_{c_{\alpha} c_{\alpha}}(z)-Q(z)]=\OO\left(\frac{1}{N^{1-\oo(1)}}\right);
\end{align*}
and, up to negligible error, the expectation of the second factor in \eqref{e:error_factor} can be expressed to include the first factor,
\begin{align*}
   \bE_\bfS[G_{c_\al c_\beta}^{(b_\al b_\beta)}(z)]=\frac{1}{d-1}\bE_\bfS\left[G_{b_\al b_\beta}(z) (G_{c_\al c_\al}^{(b_\al)}(z)-Q(z))(G_{c_\beta c_\beta}^{(b_\beta)}(z)-Q(z))\right]+\text{``negligible error"}.
\end{align*}
For more precise statements, we refer the reader to \Cref{p:upbb}.
In this way, we demonstrate that either the error term is negligible or contains one ``diagonal factor" akin to the first factor in \eqref{e:error_factor}. Such a factor, $(G^{(b_\al)}_{c_\al c_\al}(z)-Q(z))$, is in the same form as the initial expression \eqref{e:sample}. To evaluate the expectation, we will perform another local resampling around the vertex $c_\alpha$, chosen randomly from the last local switching step. 

Our new strategy to derive the optimal error bound for \eqref{e:hmm} is an iteration scheme. At each step, we perform local resampling and rewrite the Green's function of the switched graph in terms of the original graph. Next, we show that each term contains at least one ``diagonal factor" $G_{c_\alpha c_\alpha}^{(b_\alpha)}(z)$ where $(b_\alpha, c_\alpha)$ is an edge selected during local resampling; otherwise it is negligible. Then, we can perform a local resampling around $c_\alpha$, and repeat this procedure.
The terms in this iterative scheme are intricate, depending on the vertices involved in local resampling and their relative positions. To encode this information, we introduce a sequence of forests comprising the radius-$\ell$ neighborhoods of the vertices where local resampling is performed, along with all switching edges at each step. For a detailed description of these forests and the terms involved in the iteration scheme, we refer to \Cref{s:forest} and \Cref{t:admissible}. 
Crucially, we demonstrate that each iteration of local resampling introduces an additional multiplicative factor of size at most $(d-1)^{-\ell/2}$. This factor arises from the averaging effect across the $\mu\asymp (d-1)^\ell$ new boundary vertices introduced during local resampling.
After a finite number of steps, all error terms become negligible, leading to an optimal bound for the high moments of the self-consistent equation \eqref{e:hmm}. This iteration is detailed in \Cref{p:add_indicator_function}, \Cref{p:iteration} and \Cref{p:general}.

To derive the full loop equation, it is essential to identify the next-order correction term, specifically  $\del_z m_N(z)/N$ as in \eqref{e:next_order}. This correction arises from two primary sources. The first is the replacement  
$\widetilde G_{c_\al c_\beta}^{(\bT)}(z)- G_{c_\al c_\beta}^{(b_\al b_\beta)}(z)$ as described in \eqref{e:GtoQ}. 
By carefully analyzing the changes in the Green's function induced by local resampling, we demonstrate in \eqref{e:first_term} and \Cref{l:diffG1} that
\begin{align}\label{c1}
   \bE[\widetilde G_{c_\al c_\beta}^{(\bT)}(z)- G_{c_\al c_\beta}^{(b_\al b_\beta)}(z)]=\frac{1}{\cA^2 N}\left(\frac{d(d-1)^\ell}{d-2}-\frac{d}{d-2}\right)\bE[\del_z m_N(z)]+\text{``negligible error"}.
\end{align}
The second source stems from errors introduced by the Schur complement formula as in \eqref{e:error_factor}, which involves two ``off-diagonal factors" $(G_{c_\al c_\beta}^{(b_\al b_\beta)}(z))^2$. We show that the contribution of these terms can also be reduced to $\del_z m_N(z)$, as detailed in \Cref{l:error_term}
\begin{align}\label{c2}
    \bE[(G_{c_\al c_\beta}^{(b_\al b_\beta)}(z))^2]=\frac{1}{\cA^2 N}\bE[\del_z m_N(z)]+\text{``negligible error"}.
\end{align}
By combining all these corrections as in \eqref{c1} and \eqref{c2}, the coefficients cancel out to yield $1$. This results in the correction $\del_z m_N(z)/N$ in \eqref{e:next_order}.

\subsubsection{New Technical Ideas}

Similar to \eqref{e:sample}, we can compute higher moments of the self-consistent equation through local resampling: 
\begin{align}\begin{split}\label{e:sample2}
&\phantom{{}={}}\bE[(Q(z)-Y_\ell(Q(z),z))^{p}]
    =\bE[(G_{oo}^{(i)}(z)-Y_\ell(Q(z),z))(Q(z)-Y_\ell(Q(z),z))^{p-1}]\\
    &=\bE[(\wt G_{oo}^{(i)}(z)-Y_\ell( Q(z),z))(\wt Q(z)-Y_\ell(\wt Q(z),z))^{p-1}] +\text{``negligible error"}.
\end{split}\end{align}
To handle the new expression, we  must rewrite the Green's function of the switched graph in terms of the original graph. While the Schur complement formula suffices for expressions like $(\wt G_{oo}^{(i)}(z)-Y_\ell( Q(z),z))$, for higher moments, as in \eqref{e:sample2}, we also need to carefully track changes of the following form:
\begin{align*}
    ((\wt Q(z)-Y_\ell (\wt Q(z),z))-(Q(z)-Y_\ell(Q(z),z)))=(1-\del_1 Y_\ell(Q(z),z))(\wt Q(z)-Q(z))+\OO(|\wt Q(z)-Q(z)|^2).
\end{align*}
Since $\widetilde H-H$ (the difference of the normalized adjacency matrices of $\cG$ and $\tcG$) is low rank,  one can use the Ward identity \eqref{eq:wardex} to show that with high probability
\begin{align*}
    |\widetilde Q(z)-Q(z)|\leq \frac{N^{\oo(1)}\Im[m_N(z)]}{N\eta}, \quad \eta=\Im[z],
\end{align*}
as was done  in previous work \cite{huang2024spectrum}. However this bound is insufficient for achieving optimal rigidity.

To achieve improved accuracy for $\wt Q(z)-Q(z)$, it is essential  to compute the difference of the Green's functions $\wt G(z)$ and $G(z)$ for the graphs $\tcG$ and $\cG$. The local resampling around vertex $o$ can be represented by a matrix $\xi:=\wt H-H$. Thus, using the resolvent identity, we can write
\begin{align}\label{e:resolvent}
    \wt G(z)-G(z)=-G(z)\xi G(z) -\sum_{k\geq 1}G(z)\xi (-G(z)\xi)^k G(z).
\end{align}
The aforementioned expansion was utilized in \cite{huang2023edge} to prove the edge universality of random $d$-regular graphs, when the degree $d=N^{\oo(1)}$ grows with the size the graph. In this case, owing to our normalization, each entry of $\xi$ scales as $\OO(1/\sqrt{d})$. Therefore, the terms in \eqref{e:resolvent} exhibit exponential decay in $1/\sqrt d$. However, in our scenario where $d$ is fixed, this decay is too slow. 

For $d\geq 3$ with $d$ fixed, instead of relying on the  resolvent expansion \eqref{e:resolvent}, we introduce a novel new expansion based on the Woodbury formula, taking advantage of the local tree structure as detailed in \Cref{lem:woodbury}. Specifically, since the rank of the matrix $\xi=\wt H-H$ is at most $\OO((d-1)^\ell)$ (the number of edges involved in local resampling), we denote the low rank decomposition of $\xi$ as $\xi=UV^\top$, where the number of rows of $U$ and $V$ is $N$, and the number of columns is the rank of $\xi$. The Woodbury formula yields the  difference of the Green's functions $\wt G(z)-G(z)$ as 
\begin{align}\label{e:WB}
\wt G(z)-G(z)&=-G(z)U(\mathbb I+V^\top G(z)U)^{-1}V^\top G(z).
\end{align}
Here, the nonzero rows of $U$ and $V$ correspond to the vertices involved in local resampling. Therefore, the  term $V^\top G(z)U$ in \eqref{e:WB} depends solely on the Green's function entries restricted to the subgraph $\cF:=\cB_{\ell+1}(o,\cG)\cup \{(b_\al, c_\al)\}_{\al\in\qq{\mu}}$. With high probability, the randomly chosen edges $(b_\al, c_\al)$ have tree neighborhoods and are far apart from each other and the vertex $o$. Let $P(z)$ denote the Green's function of the tree extension of $\cF$ (extending each connected component to an infinite $d$-regular tree). Then $V^\top G(z)U-V^\top P(z)U$ is small, leading to the following expansion
\begin{align}\label{e:tG-G}
\wt G(z)-G(z) = G(z)F(z)G(z)+\sum_{k\geq 1}G(z)F(z)((G(z)-P(z))F(z))^kG(z).
\end{align}
Here, $F(z)=-U(\mathbb I+V^\top P(z)U)^{-1}V^\top$ is an explicit matrix. When restricted to the subgraph $\cF$, each entry of $G(z)-P(z)$ is smaller than $N^{-\fb}$ for some small $\fb>0$. Hence, the terms in \eqref{e:tG-G} exhibit exponential decay in $N^{-\fb}$ which is much faster than \eqref{e:resolvent}. Moreover, up to negligible error, we can truncate the expansion \eqref{e:tG-G} at some finite $\fp$. 

Another technical idea involves a Ward identity-type bound for the entries of the Green's function, which serves to constrain various error terms. The Ward identity plays a crucial role in mean-field random matrix theory. It states that the average over the  Green's function entries can be controlled by the Stieltjes transform of the empirical eigenvalue distribution, thus ensuring smallness:
\begin{align}\label{eq:wardex}
\frac{1}{N^2}\sum_{ij}|G_{ij}(z)|^2= \frac{\Im[m_N(z)]}{N\eta},\quad \eta=\Im[z].
\end{align}
Consequently, when selecting two vertices randomly from our graph, the Green's function entries are expected to have small modulus. While some error terms adhere to this form, we also encounter Green's function entries such as 
$G_{ij}^{(o)}$,
where $i,j$ are two adjacent vertices of a vertex $o$, i.e. $o\sim i,  o \sim j$. Namely, we take two vertices of distance two, delete their common neighbor, then take the Green's function. In the initial graph $\cG$, $i$ and $j$ have distance two, so $G_{ij}$ is not small.
In \Cref{lem:deletedalmostrandom}, we establish a Ward identity type result for the expectation of $|G_{ij}^{(o)}|^2$, in a similar way as  \eqref{eq:wardex}. The proof again leverages the idea of local resampling. By local resampling around vertex $o$, we reduce the computation to 
\begin{align}\label{e:Gijo}
   \bE[| G_{ij}^{(o)}(z)|^2]=\bE[|\widetilde G_{ij}^{(o)}(z)|^2],
\end{align}
for the switched graph. We then expand it using the Schur complement formula, in a similar way as \eqref{e:sample}. Crucially, we can bound \eqref{e:Gijo}, by itself times a small factor, and errors as in \eqref{eq:wardex}, leading to the desired bound given by the right-hand side of \eqref{eq:wardex}.


\subsection{Related Work}

The eigenvalue statistics of  random graphs have been intensively studied in the past decade. Thanks to a series of papers \cite{bauerschmidt2017bulk, huang2015bulk, lee2018local,erdHos2013spectral,erdHos2012spectral, he2021fluctuations,huang2020transition,huang2022edge,lee2024higher, bauerschmidt2020edge, huang2023edge, he2022spectral}, the bulk and edge statistics of Erd{\H o}s--R{\'e}nyi graphs $G(N,p)$ with $Np\geq N^{\oo(1)}$ and random $d$-regular graphs with $d\geq N^{\oo(1)}$ are now well understood. Universality holds; namely, after proper normalization and shifts, spectral statistics agree with those from GOE. 

The situation is dramatically different for very sparse Erd{\H o}s--R{\'e}nyi graphs. In the very sparse regime $Np=\OO(\ln N)$, for Erd{\H o}s--R{\'e}nyi graphs, there exists a critical value $b_*=1/(\ln 4-1)$ such that  if $Np\geq b_*\ln N/N$, the extreme eigenvalues of the normalized adjacency matrix converge to $\pm 2$ \cite{benaych2019largest, alt2021extremal, tikhomirov2021outliers, benaych2020spectral}, and all the eigenvectors are delocalized \cite{alt2022completely, erdHos2013spectral}. For $(\ln  \ln  N)^4\ll Np<b_*\ln N/N$, there exist outlier eigenvalues \cite{tikhomirov2021outliers, alt2021extremal}.
The spectrum splits into three phases: a delocalized phase
in the bulk, a fully localized phase near the spectral edge, and a semilocalized phase in between \cite{alt2023poisson,alt2021delocalization}. Moreover, the joint fluctuations of the eigenvalues near the spectral edges form a Poisson point process. For constant degree Erd\H{o}s-R\'enyi graphs, it was proven in \cite{hiesmayr2023spectral} that the largest eigenvalues are determined by small neighborhoods around vertices of close to maximal degree and the corresponding eigenvectors are localized.

Random $d$-regular graphs can also be constructed from $d$ copies of random perfect matchings, or random lifts of a base graph containing two vertices and $d$ edges between them. This class of random graphs obtained from random lifts and in  particular  their extremal eigenvalues have been extensively studied 
\cite{amit2002random, amit2006random,friedman2003relative, bilu2006lifts, friedman2014relativized, puder2015expansion,lubetzky2011spectra, bordenave2019eigenvalues, chen2024new}. It would be interesting to explore if the approach in this paper can be applied to analyze  extremal eigenvalues in this setting.

This paper focuses on the eigenvalue statistics. The eigenvectors of random $d$-regular graphs are also important. The complete delocalization of eigenvectors in sup norm was proven in previous works \cite{bauerschmidt2019local,huang2024spectrum}. For sparse random regular graphs, several results provide information on the structure of eigenvectors without relying on a local law.  For example, random regular graphs are quantum ergodic
\cite{anantharaman2015quantum}, their local eigenvector statistics converge to those of multivariate Gaussians \cite{backhausz2019almost}, and their high-energy eigenvalues have many nodal domains \cite{ganguly2023many}. Gaussian statistics have been conjectured in broad generality for chaotic systems in both the manifold and graph setting \cite{berry1977regular}. A rich line of research exists toward this idea. For an overview in the manifold setting, see the book \cite{zelditch2017eigenfunctions}, and for the graph setting, refer to \cite{smilansky2013discrete}.


\subsection{Outline of the Paper}
In \Cref{s:preliminary}, we introduce the Gaussian divisible ensemble, recall the concept of free convolution, and present results from \cite{huang2024spectrum} on local resampling and the estimation of the Green's function of random $d$-regular graphs. In \Cref{s:outline}, we state our main results, \Cref{t:recursion} and \Cref{t:correlation_evolution}, which concern the high moment estimates of the self-consistent equation and the microscopic version of the loop equations at the edge. Using \Cref{t:recursion} and \Cref{t:correlation_evolution} as inputs, we proceed to prove both \Cref{thm:eigrigidity} and \Cref{c:rigidity}. Additionally, we outline an iteration scheme for proving \Cref{t:recursion} where we iteratively express the Green's functions after local resampling in terms of the original Green's functions before resampling. In \Cref{sec:expansions}, we collect estimates on the differences between Green's functions before and after local resampling using either the Schur complement formula or the Woodbury formula. In \Cref{e:error_term}, we gather bounds on various error terms that arise in the iteration scheme. Finally, the crude bound for the high moment self-consistent equation in \Cref{t:recursion} is proven in \Cref{s:proof_main}, while the refined estimates identifying the next-order correction for the loop equations are established in \Cref{s:refined}.

\subsection{Notation}
We reserve letters in mathfrak mode, e.g. $\fb,  \fc,\fo,\cdots$, to represent universal constants, and $\fC$ for a large universal constant, which
may be different from line by line. We use letters in mathbb mode, e.g. $ \bT, \mathbb X$, to represent set of vertices. 
Given a graph $\cG$, we denote the graph distance as $\dist_\cG(\cdot, \cdot)$, and the radius-$r$ neighborhood of a vertex $i$ in $\cG$ as $\cB_r(i,\cG)$. Given a vertex set $\mathbb X$ of $\cG$, let $\cG^{(\bX)}$ denote the graph obtained from 
$\cG$ by removing the vertices in $\bX$. 
For two vertices $i$ and $j$ in $\cG$, we write $i\sim j$ to indicate that $i$ and $j$ are adjacent in $\cG$.
Given any matrix $A$ and index sets $\bX, \mathbb Y$, let $A^{(\bX)}$ denote the matrix obtained from $A$ by removing the rows and columns indexed by $\bX$. The restriction of 
$A$ to the submatrices corresponding to $\bX\times \bX, \bX\times \mathbb Y$ are denoted as $A_\bX, A_{\bX \mathbb Y}$ respectively.

For two quantities $X_N$ and $Y_N$ depending on $N$, 
we write that $X_N = \OO(Y_N )$ or $X_N\lesssim Y_N$ if there exists some universal constant such
that $|X_N| \leq \fC Y_N$ . We write $X_N = \oo(Y_N )$, or $X_N \ll Y_N$ if the ratio $|X_N|/Y_N\rightarrow 0$ as $N$ goes to infinity. We write
$X_N\asymp Y_N$ if there exists a universal constant $\fC>0$ such that $ Y_N/\fC \leq |X_N| \leq  \fC Y_N$. We remark that the implicit constants may depend on $d$. 
With a slight abuse of notation, for numbers independent of  $N$, we write $\fa\ll \fb$ to indicate that $\fa/\fb\leq 0.01$. 
We denote $\qq{a,b} = [a,b]\cap\bZ$ and $\qq{N} = \qq{1,N}$. We say an event $\Omega$ holds with high probability if $\bP(\Omega)\geq 1-\oo(1)$. We say an event $\Omega$ holds with overwhelmingly high probability, if for any $\fC>0$, 
$\bP(\Omega)\geq 1-N^{-\fC}$ holds provided $N$ is large enough. Given a random variable $Z$, we say an event $\Omega$ holds with overwhelmingly high probability over $Z$, if for any $\fC>0$, 
$\bP_{Z}(\Omega)\geq 1-N^{-\fC}$ holds provided $N$ is large enough. Given an event $\Omega_0$, we say conditioned on $\Omega_0$, an event $\Omega$ holds with overwhelmingly high probability, if for any $\fC>0$, 
$\bP(\Omega|\Omega_0)\geq 1-N^{-\fC}$ holds provided $N$ is large enough.

 \subsection*{Acknowledgements.}
The research of J.H. is supported by NSF grants DMS-2331096 and DMS-2337795,  and the Sloan Research Award. 
The research of T.M. is supported by NSF Grant DMS-2212881.
The research of H-T.Y. is supported by NSF grants DMS-1855509 and DMS-2153335. J.H. wishes to thank Paul Bourgade and Alice Guionnet for enlightening discussions on loop equations, as well as Peter Sarnak for valuable discussions on the edge universality of random $d$-regular graphs during his time as a member at IAS in 2019–2020.

\section{Preliminaries}\label{s:preliminary}

In this section, we will first introduce the Gaussian divisible ensemble in \Cref{s:GDE} and then recall the concept of free convolution in \Cref{s:fc}. We then present results from \cite{huang2024spectrum} on local resampling and its properties in \Cref{s:local_resampling}, Green's function extension with general weights in \Cref{s:pre},  and the estimation of the Green's function of random $d$-regular graphs in \Cref{s:local_law}.

In this paper we fix the parameters as follows
\begin{align}\label{e:parameters}
    0<\fo\ll \ft\ll \fb\ll  \fc\ll \fg\ll 1,
\end{align} 
set $\fR=(\fc/4)\log_{d-1}N$ and choose $\ell $ such that $\ft\ll \ell/\log_{d-1}N\ll \fb$.
Below, we describe their meanings and where they are introduced:
\begin{itemize}
    \item $\fo$ arises from the delocalization of eigenvectors \eqref{e:eig_delocalization}. Many estimates involve bounds containing $N^\fo$ factors, which are harmless.
    \item The time $t$ in the Gaussian division ensemble $H(t)$ from \eqref{e:Ht} satisfies $0\leq t\leq N^{-1/3+\ft}$. Throughout all sections except \Cref{s:universality}, the reader may treat $t$ as a fixed, small number.
    \item $\fb$ relates to the concentration of Green's function entries, with errors bounded by $N^{-\fb}$, see \eqref{eq:infbound0} and \eqref{eq:infbound}.
    \item For the spectral parameter $z\in \bC^+$ in Green's functions and Stieltjes transforms, we restrict it to $\Im[z]\geq N^{-1+\fg}$, see \eqref{e:D}.
    \item $\ell$ comes from local resampling in \Cref{s:local_resampling}, we resample  boundary edges of balls with radius $\ell$.
    \item $\fc$ defines $\fR$, and with high probability, random $d$-regular graphs are tree-like within radius $\fR$ neighborhoods, see \Cref{def:omegabar}.
\end{itemize}
We restrict our analysis to the spectral domain 
\begin{align} \label{e:D}
    \mathbf D=\{z\in \bC^+:  N^{-1+\fg}\leq \Im[z]\leq N^{-\fo}, |\Re[z]|\leq 2+N^{-\fo}  \}.
\end{align}
In the spectral domain $\bf D$,  we impose the conditions $\Im[z]\leq N^{-\fo}$ and $|\Re[z]|\leq 2+N^{-\fo}$. These constraints ensure that  $|\msc(z)|$ is close to $1$, specifically satisfying $||\msc(z)|-1|\lesssim N^{-\fo/2}$.

\subsection{Gaussian Divisible Ensemble}
\label{s:GDE}

We recall the constrained GOE matrix $Z\in \bR^{N\times N}$ as introduced in \cite[Section 2.1]{bauerschmidt2017bulk}. It may be viewed as the usual GOE matrix restricted to matrices with vanishing row and column sums. 
Formally, the GOE matrix $M\in \bR^{N\times N}$ is the centered Gaussian process on the space of symmetric matrices $\cal M \deq \h{A \in \bR^{N \times N} \col A = A^\top= 0}$ with inner product $\langle A, B\rangle = \Tr[AB]$. The covariances of the GOE matrix are given by $\bE[\langle A, M\rangle \langle B, M\rangle]=(2/N)\langle A, B\rangle$, for any $A,B\in \cM$.

The constrained GOE matrix $Z$ can be obtained from the projection of the GOE matrix $M$ on the subspace $\cal M_{\perp} \deq \h{A \in \bR^{N \times N} \col A = A^\top, A \bld 1 = 0}$.  
More concretely, for any $i,j\in \qq{N}$
 \begin{align}\label{e:GOEproject}
    Z_{ij} \stackrel{d}{=}M_{ij}- (g_i+g_j), \quad  g_i:=\frac{1}{N}\left(\sum_{j=1}^N M_{ij}-\frac{\sum_{j,k=1}^N M_{jk}}{2N}\right),
 \end{align}
where $Z$ and $\{g_1, g_2,\cdots, g_N\}$ are independent. 
The covariances of $Z$ are given by
\begin{equation} \label{W_ibp}
\bE [Z_{ij} Z_{kl}] = \frac{1}{N} \pbb{\delta_{ik} - \frac{1}{N}} \pbb{\delta_{jl} - \frac{1}{N}}
+ \frac{1}{N} \pbb{\delta_{il} - \frac{1}{N}} \pbb{\delta_{jk} - \frac{1}{N}}.
\end{equation}


The following result is a straightforward consequence of \eqref{W_ibp} and Gaussian integration by parts.

\begin{lemma}\label{p:intbypart}
For the constrained GOE matrix $Z$, and any bounded differentiable function $F:\bR^{N\times N}\mapsto \bR$, we have the integration by parts formula: for $i,j\in \qq{N}$
\begin{align}\begin{split}\label{e:intbypart}
&\phantom{{}={}}\bE[Z_{ij}F(Z)]
=\sum_{k,l\in \qq{N}}\bE[Z_{ij}Z_{kl}]\bE[\del_{Z_{kl}}F(Z)]
=\frac{1}{N}\bE[\del_{Z_{ij}} F(Z)+\del_{Z_{ji}} F(Z)]\\
&-\frac{1}{N^2}\sum_{k\in \qq{N}}\bE[\del_{Z_{ik}} F(Z)+\del_{Z_{ki}} F(Z)+\del_{Z_{jk}} F(Z)+\del_{Z_{kj}} F(Z)]+\frac{2}{N^3}\sum_{k.l\in \qq{N}}\bE[\del_{Z_{kl}} F(Z)].
\end{split}\end{align}
\end{lemma}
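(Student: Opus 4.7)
The statement has two equalities; my plan is to handle them separately.

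For the first equality, the plan is to apply the standard Gaussian integration by parts formula (Stein's identity) to the centered Gaussian family $\{Z_{kl}\}_{k,l\in\qq{N}}$. The key point is that Stein's identity, $\bE[X\,F(X)] = \sum_j \mathrm{Cov}(X,X_j)\bE[\partial_{X_j}F(X)]$, holds for \emph{any} centered jointly Gaussian family, even when the covariance matrix is degenerate (as is the case here since $Z\bld 1=0$ and $Z=Z^\top$ impose linear relations). One way to make this rigorous is to use the representation \eqref{e:GOEproject}: write $Z_{ij}=M_{ij}-(g_i+g_j)$ with $M$ a GOE matrix, apply classical Gaussian integration by parts for $M$, and then repackage the result using the covariance in \eqref{W_ibp}. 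Alternatively, one may simply invoke the fact that for any centered Gaussian vector and any polynomially bounded $C^1$ function, Stein's identity in the form $\bE[X_\alpha F] = \sum_\beta \bE[X_\alpha X_\beta]\,\bE[\partial_\beta F]$ is valid provided the partial derivatives are understood as formal derivatives with respect to the $N^2$ formal matrix-entry coordinates.

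For the second equality, I would just substitute the explicit covariance \eqref{W_ibp}. Expanding
\begin{align*}
\bE[Z_{ij}Z_{kl}] = \frac{1}{N}\bigl[\delta_{ik}\delta_{jl}+\delta_{il}\delta_{jk}\bigr] - \frac{1}{N^2}\bigl[\delta_{ik}+\delta_{jl}+\delta_{il}+\delta_{jk}\bigr] + \frac{2}{N^3},
\end{align*}
and summing term by term against $\bE[\partial_{Z_{kl}}F]$ over $k,l\in\qq{N}$, the first bracket collapses to $\frac{1}{N}\bE[\partial_{Z_{ij}}F + \partial_{Z_{ji}}F]$, the second bracket produces exactly the eight single-sum terms $-\frac{1}{N^2}\sum_k \bE[\partial_{Z_{ik}}F+\partial_{Z_{ki}}F+\partial_{Z_{jk}}F+\partial_{Z_{kj}}F]$, and the constant $2/N^3$ yields the final term $\frac{2}{N^3}\sum_{k,l}\bE[\partial_{Z_{kl}}F]$.

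The only real obstacle is the justification of Stein's identity for the degenerate Gaussian law of $Z$; once that is in hand, the rest is a one-line substitution and rearrangement. I would handle the justification most cleanly by invoking a limiting argument (add $\varepsilon$ times an independent non-degenerate Gaussian, apply the classical formula, and let $\varepsilon\to 0$ using boundedness of $F$ and its derivative), or equivalently by the $M$-to-$Z$ projection route sketched above, which keeps the computation entirely within the classical, non-degenerate setting.
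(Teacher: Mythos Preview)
Your proposal is correct and matches the paper's approach exactly: the paper offers no proof beyond the sentence ``The following result is a straightforward consequence of \eqref{W_ibp} and Gaussian integration by parts,'' and your two-step plan (Stein's identity for the centered Gaussian family, then substitution of the explicit covariance) is precisely what is intended. Your additional care about the degeneracy of the law of $Z$ is a welcome refinement the paper does not spell out; note only the minor slip that the $1/N^2$ bracket yields four single-index sums, not eight.
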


Next, we define the matrix-valued process
\begin{align}\label{e:Ht}
H(t) \deq H+\sqrt{t} \, Z,
\end{align}
where $H$ was defined in \eqref{def_H} as the normalized adjacency matrix of the $d$-regular graph. Thus, $H(0)=H$. The matrix $H(t)$ has a trivial largest eigenvalue $\la_1(t)=d/\sqrt{d-1}$ with eigenvector $\bmu_1(t)={\bf1}/\sqrt{N}$. We denote the remaining eigenvalues of $H(t)$ by $\la_2(t)\geq \la_{3}(t)\geq \cdots \la_{N-1}(t)\geq \la_N(t)$, and corresponding normalized eigenvectors $\bmu_2(t), \bmu_3(t),\cdots, \bmu_N(t)$.

For $z \in \bC^+ =\{z\in \bC\col \Im[z]>0\}$, we define the \emph{time-dependent Green's function} by
\begin{align}\label{e:Gt}
  G(z,t) \deq  (H(t)-z)^{-1}=\sum_{\al=1}^N \frac{\bmu_\al(t)\bmu_\al(t)^\top}{\la_\al(t)-z},
\end{align}
For any vertex set $\bX\subset \qq{N}$, we denote the Green's function with vertex set $\bX$ removed as $G^{(\bX)}(z,t)=(H^{(\bX)}(t)-z)^{-1}$.
We denote the Stieltjes transform of the empirical eigenvalue distribution of $H(t)$ by $m_t(z)$,
\begin{align} \label{def_mtz}
m_t(z)
=\frac{1}{N}\Tr G(z,t)=\frac{1}{N}\sum_{\al=1}^N\frac{1}{\la_\al(t)-z}.
\end{align}

\subsection{Free convolution with semicircle distribution}\label{s:fc}
We recall the Kesten-McKay distribution $\varrho_d(x)$ and semicircle distribution $\varrho_{\rm sc}(x)$ from \eqref{e:KMdistribution} and \eqref{e:msc_equation}. 
The asymptotic empirical eigenvalue distribution of $H$ converges to the Kesten--McKay distribution $\rho_d(x)$ from \eqref{e:KMdistribution}, and the asymptotic empirical eigenvalue distribution of $ Z$ is given by the semicircle distribution from $\varrho_{\rm sc}(x)$ \eqref{e:msc_equation}. 
The asymptotic empirical eigenvalue distribution of the matrix $H(t)=H+\sqrt t Z$ can be described by the free additive convolution, from free probability theory.

In this section, we recall some properties of measures obtained by the free convolution with a semicircle distribution from \cite{MR1488333}. We denote the semicircle distribution of variance $t$ as $t^{-1/2}\rhosc(t^{-1/2}x)$.
Given a probability measure $\mu$ on $\bR$, we denote its free convolution with a semicircle distribution of variance $t$ by $\mu_t:=\mu\boxplus t^{-1/2}\rhosc(t^{-1/2}x)$. The Stieltjes transforms of $\mu$ and $\mu_t$ are given by 
\begin{align*}
s_\mu(z)=\int (x-z)^{-1}\rd \mu(x),\quad s_{\mu_t}(z)=\int (x-z)^{-1}\rd \mu_{t}(x).
\end{align*} 
Then the following holds.
\begin{lemma}[{\cite[Lemma 4 and Proposition 2]{MR1488333}}]\label{l:free_convolution}
 We denote the set $U_t = \{z\in\bC^+: \int |z-x|^{-2}\rd \mu(x)<t^{-1}\}$. Then $z \mapsto z-t s_\mu(z)$ is a homeomorphism from $\overline{U}_t$ to $\bC^+\cup \bR$ and conformal from $U_t$ to $\bC^+$. Moreover, the Stieltjes transform of $\mu_t$ is characterized by 
\begin{align}\label{e:Gfreeconvolution}
s_\mu(z)=s_{\mu_t}(z-ts_\mu(z)), \text{ for any } z\in U_t.
\end{align}
We can also rearrange \eqref{e:Gfreeconvolution} as
\begin{align}\label{e:Gfreeconvolution2}
s_\mu(z+ts_{\mu_t}(z))=s_{\mu_t}(z),\text{ for any } z\in \bC^+.
\end{align}
\end{lemma}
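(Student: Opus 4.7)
My plan is to establish the lemma by combining Voiculescu's $R$-transform linearization of free additive convolution with a direct analytic study of the putative subordination map $\omega(z) := z - t s_\mu(z)$. I would work with the Cauchy transform $G_\mu(z) := -s_\mu(z) = \int (z-x)^{-1} \rd\mu(x)$, which is the standard object in free probability satisfying $G_\mu(z) \sim 1/z$ at infinity; denote its functional inverse (on a neighborhood of $0$, obtained by Lagrange inversion) by $K_\mu$, and set $R_\mu(w) := K_\mu(w) - 1/w$. Starting from the rescaled semicircle $t^{-1/2}\rhosc(t^{-1/2}\cdot)$, whose Stieltjes transform is $(-z+\sqrt{z^2-4t})/(2t)$, a short algebraic inversion gives $K(w) = 1/w + tw$, and hence $R(w) = tw$ for this measure.

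Applying Voiculescu's identity $R_{\mu\boxplus\nu} = R_\mu + R_\nu$ yields $K_{\mu_t}(w) = K_\mu(w) + tw$. Substituting $w = G_\mu(z) = -s_\mu(z)$ and using $K_\mu(G_\mu(z)) = z$ gives $K_{\mu_t}(-s_\mu(z)) = z - t s_\mu(z)$, which after applying $G_{\mu_t}$ to both sides and flipping signs is precisely \eqref{e:Gfreeconvolution}. The companion identity \eqref{e:Gfreeconvolution2} follows by substituting $z \mapsto z + t s_{\mu_t}(z)$ into \eqref{e:Gfreeconvolution} and simplifying, once we know that $\omega$ is bijective from $U_t$ to $\bC^+$.

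For the homeomorphism and conformality claim I would analyze $\omega$ directly. A short calculation gives $\Im[\omega(z)] = \Im[z]\bigl(1 - t \int |x-z|^{-2}\rd\mu(x)\bigr)$, which is strictly positive exactly on $U_t$, vanishes on $\partial U_t \cap \bC^+$, and shows that $\omega$ extends continuously to $\overline{U_t}$ with boundary values in $\bC^+\cup\bR$. Differentiating, $\omega'(z) = 1 - t \int (x-z)^{-2}\rd\mu(x)$, and the bound $\bigl|\int (x-z)^{-2}\rd\mu(x)\bigr| \le \int |x-z|^{-2}\rd\mu(x) < 1/t$ on $U_t$ gives $|\omega'(z) - 1| < 1$; in particular $\omega$ is locally conformal with nonvanishing Jacobian.

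The main obstacle, and the only non-routine step, is promoting local conformality to a global homeomorphism $\overline{U_t} \to \bC^+ \cup \bR$. I would argue via properness and a covering-degree count: since $\omega(z) = z + O(1/|z|)$ as $|z|\to\infty$, $\omega$ is injective near infinity and proper on $U_t$ with boundary values in $\bR$, and a proper holomorphic map between simply connected planar domains with everywhere nonzero derivative must be a biholomorphism by standard covering-space arguments. Continuity up to the boundary then upgrades this to a homeomorphism on the closures. A minor technical ingredient is the smoothing property of free convolution with a semicircle, which ensures $s_{\mu_t}$ has enough regularity on $\bR$ to justify the boundary analysis; this can be taken as input.
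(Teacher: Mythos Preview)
The paper does not give its own proof of this lemma; it is quoted directly from Biane's paper (the cited reference), so there is nothing to compare against. Your proposal is an independent reconstruction of that result.

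On the merits, your outline is essentially the standard argument and is correct in shape. A few points deserve more care than you give them. First, the $R$-transform identity $K_{\mu_t}(w)=K_\mu(w)+tw$ is only defined on a neighborhood of $0$ (equivalently, the subordination relation is a priori valid only for $z$ near infinity); extending \eqref{e:Gfreeconvolution} to all of $U_t$ requires the analytic continuation that your conformality analysis is meant to supply, so the logical order should be: establish that $\omega$ maps $U_t$ into $\bC^+$, then continue the identity. Second, your covering-space step silently assumes $U_t$ is connected; this is true but not entirely obvious from the definition, and is usually handled either by noting that $U_t$ contains $\{z:\Im z>C\}$ for large $C$ and that the level sets of $z\mapsto\int|x-z|^{-2}\rd\mu(x)$ are well-behaved, or by arguing that a proper local biholomorphism onto the simply connected $\bC^+$ must have connected domain since the degree is $1$ near infinity. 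Third, upgrading to a homeomorphism on the closures requires showing injectivity of $\omega$ along $\partial U_t\cap\bC^+$, where $\Im\omega=0$; this typically uses a monotonicity argument for $\Re\omega$ along that curve, which you do not mention. None of these are serious obstructions, but ``standard covering-space arguments'' is doing more work than the phrase suggests.
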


The asymptotic eigenvalue distribution of $H$ converges to the Kesten--McKay distribution $\rho_d( x)$, and the asymptotic eigenvalue distribution of $\sqrt{t} Z$ is given by the semicircle distribution of variance $t$. Thus, the asymptotic eigenvalue distribution of $H(t)=H+\sqrt{t}Z$ is the free convolution of the Kesten--McKay distribution $\rho_d( x)$ and the semicircle distribution of variance $t$. We denote this density and its Stieltjes transform by 
\begin{align}\label{e:defrhodt}
    \rho_d(x,t):=\varrho_d(x) \boxplus t^{-1/2}\rhosc(t^{-1/2}x),\quad \md(z,t) = \int \frac{\rho_d(x,t)}{x - z} \rd x,\quad z\in \bC^+.
\end{align}
We deduce from \Cref{l:free_convolution} that
\begin{align}\label{e:selffc}
\md(z-t\md(z),t)=\md(z),
\end{align}
where
$z-t\md(z)$
is a homeomorphism from the set $\{z\in \bC^+: \int |x-z|^{-2}\rho_d(x)\rd x\leq t^{-1}\}$ to $\bC^+\cup \bR$.  From \Cref{l:free_convolution},  $z-t\md(z)$ maps $\{z\in \bC^+: \int |x-z|^{-2}\rho_d(x)\rd x=t^{-1}\}$ to the support of the measure $\rho_d(x,t)$. Since $\varrho_d$ is symmetric, we denote by $z=\pm \xi_{t} \in\bR$  the largest and smallest real solutions to
\begin{align}\label{e:edgeeqn1}
\int \frac{\rho_d(x)}{|x-z|^2}\rd x=\frac{1}{t}\quad \Leftrightarrow \quad  m_d'(z)=\frac{1}{t}.
\end{align}
As a consequence,  the right and left edges of the measure $\rho_d(x,t)$ are given by
\begin{align}\label{e:edgeeqn2}
\pm E_{t} =\pm \xi_t -t\md(\pm\xi_t).
\end{align}
If we differentiate with respect to $t$ on both sides of \eqref{e:edgeeqn2}, 
\begin{align}\label{e:Etshift}
    \pm\del_t E_t=\pm\del_t \xi_t-\md(\pm\xi_t)\mp t\md'(\pm \xi_t)\del_t \xi_t= -\md(\pm\xi_t)=-\md(\pm E_t,t),
\end{align}
where we used \eqref{e:edgeeqn1} that $\md'(\pm \xi_t)=1/t$.

By rearranging the expression of $\msc(z)$ and $m_d(z)$ from \eqref{e:msc_equation} and \eqref{e:md_equation}, we have for $|z-2|\ll 1$,
\begin{align}\begin{split}\label{e:medge_behavior}
&\msc(z)=-1+\sqrt{z-2}+\OO(|z-2|), \quad 1-\msc(z)^2=2\sqrt{z-2}+\OO(|z-2|),
\\
&m_d(z)=-\frac{d-1}{d-2}+\cA\sqrt{z-2}+\OO(|z-2|),\quad \md'(z)=\frac{\cA}{2\sqrt{z-2}}+\OO(1),
\end{split}\end{align}
where $\cA=d(d-1)/(d-2)^2$ is from \eqref{e:edge_behavior}.
We can then solve for $\xi_t$ and the edges $E_t$ of $\varrho_d(x,t)$ using \eqref{e:edgeeqn1}, \eqref{e:edgeeqn2} and \eqref{e:medge_behavior},
\begin{align}\label{e:xi_behavior}
\xi_t=2+\frac{\cA^2 t^2}{4}+\OO(t^3),\quad E_t=2+\frac{d-1}{d+2}t-\frac{\cA^2 t^2}{4}+\OO(t^3).
\end{align}
For any $z\in \bC^+$, we denote 
$
    z_t=z+t\md(z,t),
$
then we can rewrite \eqref{e:selffc} as 
\begin{align}\label{e:mdzt}
    \md(z,t)=\md(z_t).
\end{align}
In the following lemma, we present some estimates on $z_t$, which will be used in subsequent sections. The proof is deferred to \Cref{app:Green}.
\begin{lemma}\label{l:relation_zt_z}
 For any $z\in \bC^+$, we denote $z_t=z+t\md(z,t)$. If $|z-E_t|\ll 1$, then 
 \begin{align}\begin{split}\label{e:relation_zt_z}
    \sqrt{z_t-2}&=\sqrt{\xi _t-2}+\sqrt{z-E _t}+\OO\left(t\sqrt{|z-E _t|}+t^2\right)\\
    &=\frac{\cA t}{2}+\sqrt{z-E _t}+\OO\left(t\sqrt{|z-E _t|}+t^2\right)
    =\OO\left(t+\sqrt{|z-E_t |}\right),
\end{split}\end{align}
and 
\begin{align}\label{e:square_root_behavior}
    \Im[\sqrt{z_t-2}]\asymp \Im[\sqrt{z-E_t}].
\end{align}
 Similar statements hold for when $z$ is close to the left edge $-E_t$. As a consequence, let $z\in \bf D$ with $\eta:=\Im[z]$ and $\kappa=\min\{|z-E_t|, |z+E_t|\}$, then
\begin{align}\begin{split}\label{eq:square_root_behave}
&\Im[\msc(z_t)],\Im[m_d(z_t)]\asymp\left\{\begin{array}{ll}
\sqrt{\kappa+\eta}&\text{ for }  |\Re[z]|\leq E_t,\\
\eta/\sqrt{\kappa+\eta}&\text{ for }  |\Re[z]|\geq E_t.
\end{array}
\right. 
\end{split}
\end{align}
\end{lemma}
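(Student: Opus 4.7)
The strategy is to parametrize everything by the edge square roots $u := \sqrt{z_t - 2}$, $v := \sqrt{\xi_t - 2}$, and $w := \sqrt{z - E_t}$, choosing principal branches so that $u, w \in \overline{\mathbb{C}^+}$ when $z \in \mathbb{C}^+$, while $v > 0$ is real (since $\xi_t > 2$). The entire analysis reduces to a perturbative study of the relationship among these three quantities near the edge.

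First I would subtract the defining identity $E_t = \xi_t - t m_d(\xi_t)$ from \eqref{e:edgeeqn2} from $z_t = z + t m_d(z,t) = z + t m_d(z_t)$ (the latter by \eqref{e:mdzt}) to obtain $u^2 = w^2 + v^2 + t(m_d(z_t) - m_d(\xi_t))$. Applying the edge expansion \eqref{e:medge_behavior} of $m_d$ around $2$ yields $m_d(z_t) - m_d(\xi_t) = \mathcal{A}(u - v) + O(u^2 + v^2)$, and the asymptotic $v^2 = \xi_t - 2 = \mathcal{A}^2 t^2/4 + O(t^3)$ from \eqref{e:xi_behavior} gives $t\mathcal{A} = 2v + O(t^2)$. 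Substituting these recasts the identity as
\begin{align*}
(u - v)^2 = w^2 + O\bigl(t(u^2 + v^2) + t^2|u - v|\bigr).
\end{align*}
A short bootstrap using $|z - E_t| \ll 1$ and $t$ small then establishes the preliminary bound $u = O(t + \sqrt{|z - E_t|})$, which is exactly the third line of \eqref{e:relation_zt_z}. With this in hand $u^2 + v^2 = O(t^2 + |z - E_t|)$, so the right-hand side above is $O(t^3 + t|z - E_t|)$. Taking square roots and picking the branch keeping $u \in \overline{\mathbb{C}^+}$ for $z \in \mathbb{C}^+$, one obtains $u - v = w + \delta$ with $\delta = O(t^2 + t\sqrt{|z - E_t|})$; combined with $v = \mathcal{A} t/2 + O(t^2)$ this gives both the first and second lines of \eqref{e:relation_zt_z}. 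The symmetric case near $-E_t$ follows immediately from $\varrho_d(-x) = \varrho_d(x)$.

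For the imaginary-part comparison \eqref{e:square_root_behavior}, taking imaginary parts in $w^2 = z - E_t$ and $u^2 = z_t - 2$ gives $\Im[z] = 2\Re[w]\Im[w]$ and $\Im[z] + t\Im[m_d(z_t)] = 2\Re[u]\Im[u]$ respectively. Substituting $\Im[m_d(z_t)] = \mathcal{A}\Im[u] + O(|z_t - 2|\,\Im[u]/|u|)$ together with $t\mathcal{A} = 2v + O(t^2)$ rearranges the latter into $2(\Re[u] - v)\Im[u] = \Im[z](1 + O(t))$, and the already-proven $\Re[u] - v = \Re[w] + O(t\sqrt{|z-E_t|} + t^2)$ then forces $\Im[u] \asymp \Im[w]$. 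The final two-case display \eqref{eq:square_root_behave} follows from the edge expansions \eqref{e:medge_behavior}, which give $\Im[m_{\rm sc}(z_t)], \Im[m_d(z_t)] \asymp \Im[u]$, combined with \eqref{e:square_root_behavior} and the standard two-regime computation of $\Im[\sqrt{z - E_t}]$ depending on whether $\Re[z]$ lies inside or outside the spectral interval.

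The main obstacle will be the imaginary-part step: the absolute bound $|\delta| = O(t^2 + t\sqrt{|z - E_t|})$ does not immediately deliver the multiplicative equivalence $\Im[u] \asymp \Im[w]$, because in the thin regime where $\Re[z] < E_t$ and $\Re[w]$ is small while $\Im[w]$ is comparatively large, a naive subtraction $\Im[u] - \Im[w] = \Im[\delta]$ loses too much. This is precisely the point where the rearranged identity $2(\Re[u] - v)\Im[u] = \Im[z](1 + o(1))$ must be invoked to recover the asymptotic equivalence in all regimes.
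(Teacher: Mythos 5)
Your setup and most of the reductions are sound, but there is a genuine quantitative gap at the square-root step, and it is exactly where the claimed error $\OO(t\sqrt{|z-E_t|}+t^2)$ in \eqref{e:relation_zt_z} is decided. Writing $u=\sqrt{z_t-2}$, $v=\sqrt{\xi_t-2}$, $w=\sqrt{z-E_t}$, your expansion $m_d(z_t)-m_d(\xi_t)=\cA(u-v)+\OO(u^2+v^2)$ discards the fact that this difference is divisible by $u-v$: the error term $\OO(u^2+v^2)$ does not vanish as $u\to v$, so after multiplying by $t$ your identity $(u-v)^2=w^2+\epsilon$ carries an irreducible additive error of size $t v^2\asymp t^3$ even when $w$ is tiny. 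From $(u-v)^2=w^2+\OO(t^3+t|w|^2)$ alone one cannot do better than $|u-v-w|\lesssim |w|+\sqrt{|\epsilon|}\asymp t^{3/2}$ in the regime $|z-E_t|\lesssim t^2$ (consider $(u-v)^2=w^2+ct^3$ with $w=0$), which is strictly weaker than the claimed $\OO(t^2)$; the quotient formula $u-v-w=\epsilon/((u-v)+w)$ does not rescue this because $|(u-v)+w|\asymp|w|$ there, and $\epsilon$ has the $t^3$ floor. So the first display of \eqref{e:relation_zt_z}, as you derive it, is not established in the near-edge regime.

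The missing idea is to keep the error multiplicative in $u-v$. The paper does this by viewing $F(s):=(2+s^2)-t\,m_d(2+s^2)$ as an analytic function of the square-root variable and using that $v$ is a critical point of $F$ (this is the defining relation \eqref{e:edgeeqn1}--\eqref{e:edgeeqn2}), so that $z-E_t=F(u)-F(v)=\tfrac{F''(v)}{2}\bigl((u-v)G(u-v)\bigr)^2$ exactly, with $G=1+\OO(t|u-v|)$; taking square roots of a perfect square with only $(1+\OO(t))$ multiplicative corrections gives $u-v-w=\OO(t|u-v|+t|u-v|^2)$, which after your bootstrap $|u-v|\lesssim t+|w|$ is the claimed bound. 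Equivalently, you can repair your own route by expanding $m_d(z_t)-m_d(\xi_t)=(u-v)h(u,v)$ with $h=\cA+\OO(|u|+v)$ (analyticity in the square-root variable), so that $(u-v)\bigl(u+v-th(u,v)\bigr)=w^2$ and every error carries a factor of $u-v$ or $w$. Your treatment of \eqref{e:square_root_behavior} via the rearranged identity $2(\Re[u]-v)\Im[u]=\Im[z]+\OO\bigl((t^2+t|u|)\Im[u]\bigr)$ is workable and close in spirit to the paper's use of real-analyticity of $G$, but it too consumes the corrected first part, so the fix above is needed before that step closes.
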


\subsection{Local Resampling}  
\label{s:local_resampling}

In this section, we recall the local resampling and its properties. This gives us the framework to talk about resampling from the random regular graph distribution as a way to get an improvement in our estimates of the Green's function.

For any graph $\cG$, we denote the set of unoriented edges by $E(\cG)$,
and the set of oriented edges by $\vec{E}(\cG):=\{(u,v),(v,u):\{u,v\}\in E(\cG)\}$.
For a subset $\vec{S}\subset \vec{E}(\cG)$, we denote by $S$ the set of corresponding non-oriented edges.
For a subset $S\subset E(\cG)$ of edges we denote by $[S] \subset \qq{N}$ the set of vertices incident to any edge in $S$.
Moreover, for a subset $\bV\subset\qq{N}$ of vertices, we define $E(\cG)|_{\bV}$ to be the subgraph of $\cG$ induced by $\bV$.

\begin{figure}
    \centering
    \includegraphics[scale=0.7]{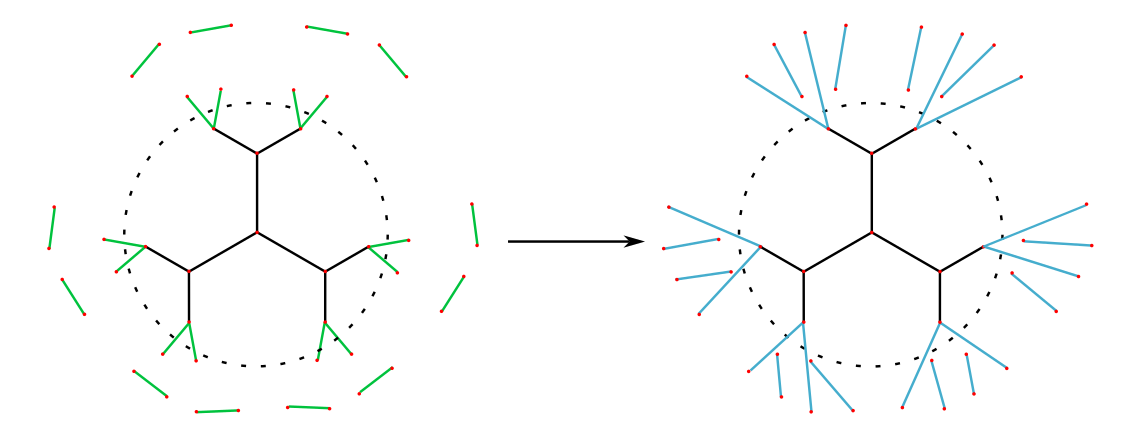}
    \caption{
    An example of the local resampling performed on the graph is as follows. We replace the green edges, located on the boundary of the radius-$\ell$ neighborhood of a vertex $o$, with randomly chosen edges from the graph. Together, these edges constitute the resampling data, denoted by $\mathbf{S}$. This operation creates new blue edges and establishes a new boundary.}
    \label{fig:switchingproc}
\end{figure}

\begin{definition}
A (simple) switching is encoded by two oriented edges $\vec S=\{(v_1, v_2), (v_3, v_4)\} \subset \vec{E}$.
We assume that the two edges are disjoint, i.e.\ that $|\{v_1,v_2,v_3,v_4\}|=4$.
Then the switching consists of
replacing the edges $\{v_1,v_2\}, \{v_3,v_4\}$ with the edges $\{v_1,v_4\},\{v_2,v_3\}$.
We denote the graph after the switching $\vec S$ by $T_{\vec S}(\cG)$,
and the new edges $\vec S' = \{(v_1,v_4), (v_2,v_3)\}$ by
$
  T(\vec S) = \vec S'
$.
\end{definition}

The local resampling involves a fixed center vertex, which we now assume to be vertex $o$,
and a radius $\ell$.
Given a $d$-regular graph $\cG$, we write $\cT\deq\cB_{\ell}(o,\cG)$ to denote the radius-$\ell$ neighborhood of $o$ (which may not necessarily be a tree) and write $\bT$ for its vertex set.\index{$\cT, \bT$}
The edge boundary $\del_E \cT$ of $\cT$ consists of the edges in $\cG$ with one vertex in $\bT$ and the other vertex in $\qq{N}\setminus\bT$.
We enumerate the edges of $\del_E \cT$ as $ \del_E \cT = \{e_1,e_2,\dots, e_\mu\}$, where $e_\al=\{l_\al, a_\al\}$ with $l_\al \in \bT$ and $a_\al \in \qq{N} \setminus \bT$. We orient the edges $e_\al$ by defining $\vec{e}_\al=(l_\al, a_\al)$.
We notice that $\mu$ and the edges $e_1,e_2, \dots, e_\mu$ depend on $\cG$. The edges $e_\al$ are distinct, but
the vertices $a_\al$ are not necessarily distinct and neither are the vertices $l_\al$. Our local resampling switches the edge boundary of $\cT$ with randomly chosen edges in $\cG^{(\bT)}$
if the switching is admissible (see below), and leaves them in place otherwise.
To perform our local resampling, see \Cref{fig:switchingproc}, we choose $(b_1,c_1), \dots, (b_\mu,c_\mu)$ to be independent, uniformly chosen oriented edges from the graph $\cG^{(\bT)}$, i.e.,
the oriented edges of $\cG$ that are not incident to $\bT$,
and define 
\begin{equation}\label{e:defSa}
  \vec{S}_\al= \{\vec{e}_\al, (b_\al,c_\al)\},
  \qquad
  {\bf S}=(\vec S_1, \vec S_2,\dots, \vec S_\mu).
\end{equation}
The sets $\bf S$ will be called the \emph{resampling data} for $\cG$. We remark that repetitions are allowed in the data $(b_1, c_1), (b_2, c_2),\cdots, (b_\mu, c_\mu)$.
We define an indicator that will be crucial to the definition of the switch.

\begin{definition}
For $\al\in\qq{\mu}$,
we define the indicator functions
$I_\al \equiv I_\al(\cG,{\bf S})=1$\index{$I_\alpha$} 
\begin{enumerate}
\item
 the subgraph $\cB_{\fR/4}(\{a_\al, b_\al, c_\al\}, \cG^{(\bT)})$ after adding the edge $\{a_\al, b_\al\}$ is a tree;
\item 
and $\dist_{\cG^{(\bT)}}(\{a_\al,b_\al,c_\al\}, \{a_\beta,b_\beta,c_\beta\})> {\fR/4}$ for all $\beta\in \qq{\mu}\setminus \{\al\}$.
\end{enumerate}
\end{definition}
 The indicator function $I_\alpha$ imposes two conditions. The first one is a ``tree" condition, which ensures that 
 $a_\al$ and $\{b_\al, c_\al\}$ are far away from each other, and their neighborhoods are trees. 
The second one imposes an ``isolation" condition, which ensures that we only perform simple switching when the switching pair is far away from other switching pairs. In this way, we do not need to keep track of the interaction between different simple switchings. 

We define the \emph{admissible set}
\begin{align}\label{Wdef}
{\mathsf W}_{\bf S}:=\{\al\in \qq{\mu}: I_\al(\cG,{\bf S}) \}.
\end{align}
We say that the index $\al \in \qq{\mu}$ is \emph{switchable} if $\al\in {\mathsf W}_{\bf S}$. We denote the set $\bW_{\bf S}=\{b_\al:\al\in {\mathsf W}_{\bf S}\}$\index{$\bW_{\bf S}$}. Let $\nu:=|{\mathsf W}_{\bf S}|$ be the number of admissible switchings and $\al_1,\al_2,\dots, \al_{\nu}$
be an arbitrary enumeration of ${\mathsf W}_{\bf S}$.
Then we define the switched graph by
\begin{equation} \label{e:Tdef1}
T_{\bf S}(\cG) := \left(T_{\vec S_{\al_1}}\circ \cdots \circ T_{\vec S_{\al_\nu}}\right)(\cG),
\end{equation}
and the resampling data by
\begin{equation} \label{e:Tdef2}
  T({\bf S}) := (T_1(\vec{S}_1), \dots, T_\mu(\vec{S}_\mu)),
  \quad
  T_\al(\vec{S}_\al) \deq
  \begin{cases}
    T(\vec{S}_\al) & (\al \in {\mathsf W}_{\bf S}),\\
    \vec{S}_\al & (\al \not\in {\mathsf W}_{\bf S}).
  \end{cases}
\end{equation}

To make the structure more clear, we introduce an enlarged probability space.
Equivalent to the definition above, the sets $\vec{S}_\al$ as defined in \eqref{e:defSa} are uniformly distributed over 
\begin{align*}
{\sf S}_{\al}(\cG)=\{\vec S\subset \vec{E}: \vec S=\{\vec e_\al, \vec e\}, \text{$\vec{e}$ is not incident to $\cT$}\},
\end{align*}
i.e., the set of pairs of oriented edges in $\vec{E}$ containing $\vec{e}_\al$ and another oriented edge in $\cG^{(\bT)}$.
Therefore ${\bf S}=(\vec S_1,\vec S_2,\dots, \vec S_\mu)$ is uniformly distributed over the set
${\sf S}(\cG)=\sf S_1(\cG)\times \cdots \times \sf S_\mu(\cG)$.

We introduce the following notation on the probability and expectation with respect to the randomness of the $\bfS\in \sf S(\cG)$.
\begin{definition}\label{def:PS}
    Given any $d$-regular graph $\cG$, we 
    denote $\bP_\bfS(\cdot)$ the uniform probability measure on ${\sf S}(\cG)$;
 and $\bE_\bfS[\cdot]$ the expectation  over the choice of $\bfS$ according to $\bP_\bfS$. 
\end{definition}

 The following claim from \cite[Lemma 7.3]{huang2024spectrum} states that this switch is invariant under the random regular graph distribution.

\begin{lemma}[{\cite[Lemma 7.3]{huang2024spectrum}}] \label{lem:exchangeablepair}
Fix $d\geq 3$. We recall the operator $T_\bfS$ from \eqref{e:Tdef1}. Let $\cG$ be a random $d$-regular graph  and $\bfS$ uniformly distributed over $\sfS(\cG)$, then the graph pair $(\cG, T_{\bf S}(\cG))$ forms an exchangeable pair:
\begin{align*}
(\cG, T_{\bf S}(\cG))\stackrel{law}{=}(T_{\bf S}(\cG), \cG).
\end{align*}
\end{lemma}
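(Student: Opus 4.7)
The plan is to establish exchangeability by a direct counting argument. Since $\cG$ is uniform on the set $\mathcal{G}_{N,d}$ of simple $d$-regular graphs on $\qq{N}$, and $\bfS$ is uniform on $\sfS(\cG)$ given $\cG$, for any two $d$-regular graphs $\cG_1,\cG_2$ we have
\begin{equation*}
\bP(\cG=\cG_1, T_{\bf S}(\cG)=\cG_2)=\frac{1}{|\mathcal{G}_{N,d}|}\cdot\frac{\#\{\bfS\in\sfS(\cG_1): T_{\bf S}(\cG_1)=\cG_2\}}{|\sfS(\cG_1)|},
\end{equation*}
so the task reduces to proving that $|\sfS(\cG_1)|=|\sfS(\cG_2)|$ and that there is a bijection between the two numerator sets. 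My plan is to exhibit this bijection explicitly via the map $\bfS\mapsto T(\bfS)$ defined in \eqref{e:Tdef2}.

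First I would record the invariants of a single admissible simple switching $\vec S_\al=\{(l_\al,a_\al),(b_\al,c_\al)\}\in\bfS$ with $\al\in\mathsf{W}_{\bfS}$. The switch removes $\{l_\al,a_\al\}$ and $\{b_\al,c_\al\}$ and inserts $\{l_\al,c_\al\}$ and $\{a_\al,b_\al\}$. In particular: (i) the induced subgraph on $\bT$, and hence $\cT=\cB_\ell(o,\cG)$, is unchanged; (ii) the multiset of edges in $\cG^{(\bT)}$ is preserved in size, and the set $\del_E\cT$ has the same cardinality $\mu$ in $\cG_1$ and $\cG_2=T_\bfS(\cG_1)$. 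Consequently $|\sfS(\cG_1)|=\mu\,|\vec E(\cG_1^{(\bT)})|=\mu\,|\vec E(\cG_2^{(\bT)})|=|\sfS(\cG_2)|$. This step is purely bookkeeping.

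The core of the argument is to verify three properties of the map $\bfS\mapsto T(\bfS)$: (a) $T(\bfS)\in\sfS(\cG_2)$; (b) $T_{T(\bfS)}(\cG_2)=\cG_1$; and (c) $T(T(\bfS))=\bfS$. For (a), note that in $\cG_2$ the boundary edge corresponding to index $\al\in\mathsf W_\bfS$ is $(l_\al,c_\al)$, and $T_\al(\vec S_\al)=\{(l_\al,c_\al),(a_\al,b_\al)\}$ indeed has $(a_\al,b_\al)$ as an oriented edge of $\cG_2$ disjoint from $\bT$. For non-switchable indices there is nothing to check. Property (c) is a tautology once one knows that the admissibility indicator is preserved, i.e. $I_\al(\cG_1,\bfS)=I_\al(\cG_2,T(\bfS))$ for every $\al$; then $\mathsf W_{T(\bfS)}=\mathsf W_\bfS$ and $T$ is an involution on pair-sets. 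Given (c), property (b) follows by performing the reversed switchings one at a time.

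I expect the only nontrivial step to be the preservation of the admissibility indicator $I_\al$. The ``tree'' clause says that $\cB_{\fR/4}(\{a_\al,b_\al,c_\al\},\cG^{(\bT)})\cup\{a_\al,b_\al\}$ is a tree; after switching, the analogous ball in $\cG_2^{(\bT)}\cup\{l_\al,c_\al\}$-context is built from the same vertices with the roles of $a_\al$ and $c_\al$ swapped in a symmetric way (the radius $\fR/4$ is much smaller than the pairwise distances between distinct switching triples enforced by the isolation condition, so the local geometry around $\{a_\al,b_\al,c_\al\}$ is not disturbed by switchings at other indices $\beta\ne\al$). A careful local analysis, using that the only edge modifications inside the relevant ball arise from the single switching at index $\al$, will show the same tree structure after the switch. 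The ``isolation'' clause $\dist_{\cG^{(\bT)}}(\{a_\al,b_\al,c_\al\},\{a_\beta,b_\beta,c_\beta\})>\fR/4$ is symmetric in $\cG^{(\bT)}$ versus $T_{\bfS}(\cG)^{(\bT)}$ once the tree condition is established, because the switched edges lie inside these small tree balls and hence cannot create any shorter path between two widely separated triples. Combining (a)--(c) with the equality $|\sfS(\cG_1)|=|\sfS(\cG_2)|$ yields the claimed symmetry of the joint law and finishes the proof.
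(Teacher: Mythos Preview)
The paper does not prove this lemma; it simply imports it from \cite[Lemma~7.3]{huang2024spectrum}. Your approach---reducing to a bijection between $\{\bfS\in\sfS(\cG_1):T_\bfS(\cG_1)=\cG_2\}$ and $\{\bfS'\in\sfS(\cG_2):T_{\bfS'}(\cG_2)=\cG_1\}$ via the involution $\bfS\mapsto T(\bfS)$, together with $|\sfS(\cG_1)|=|\sfS(\cG_2)|$---is exactly the standard argument used there, and your identification of the preservation of $I_\al$ as the only non-formal step is correct.

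A few small corrections. Your formula $|\sfS(\cG)|=\mu\,|\vec E(\cG^{(\bT)})|$ should read $|\vec E(\cG^{(\bT)})|^{\mu}$; the conclusion $|\sfS(\cG_1)|=|\sfS(\cG_2)|$ is unaffected. More substantively, ``for non-switchable indices there is nothing to check'' is not quite right: for $\al\notin\mathsf W_\bfS$ you must still verify that $(b_\al,c_\al)$ remains an oriented edge of $\cG_2^{(\bT)}$ (it does, because every switchable $\beta$ satisfies the isolation condition relative to $\al$, so no switching touches $\{b_\al,c_\al\}$), and you must also check that such $\al$ stay non-switchable in $(\cG_2,T(\bfS))$ (otherwise $T$ would not be an involution). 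The latter follows by the same locality reasoning: the ball $\cB_{\fR/4}(\{a_\al,b_\al,c_\al\},\cG^{(\bT)})$ is untouched by switchings at isolated $\beta\in\mathsf W_\bfS$, so whichever of the tree or isolation clauses failed in $\cG_1$ still fails in $\cG_2$. With these additions the sketch is complete.
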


\subsection{Green's function extension with general weights}\label{s:pre}
Fix degree $d\geq 3$, we recall the integer $\omega_d\geq 1$ from \cite[Definition 2.6]{huang2024spectrum}, which represents the maximum number of cycles a $d$-regular graph can have while still ensuring that its Green's function exhibits exponential decay. Instead of delving into the technical definition, it suffices to note two key properties: $\omega_d\geq 1$ and $\omega_d$ is nondecreasing with respect to $d$.


\begin{definition}\label{def:omegabar}
Fix $d\geq 3$ and a sufficiently small $0<\fc<1$, $\fR=(\fc /4)\log_{d-1}N$ as in \eqref{e:parameters}. We define the event $\oOmega$,  where the following occur: 
    \begin{enumerate}
        \item 
The number of vertices that do not have a  tree neighborhood of radius $\fR$ is at most $N^{\fc}$.
        \item 
        The radius $\fR$ neighborhood of each vertex has an excess (i.e., the number of independent cycles) of at most $\omega_d$. 
    \end{enumerate}
\end{definition}

The event $\oOmega$ is a typical event. The following proposition from \cite[Proposition 2.1]{huang2024spectrum} states that $\oOmega$ holds with high probability. 
\begin{proposition}[{\cite[Proposition 2.1]{huang2024spectrum}}]\label{lem:omega}
$\oOmega$ occurs with probability $1-\OO(N^{-(1-\fc)\omega_d})$.
\end{proposition}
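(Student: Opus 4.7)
The plan is to establish both clauses of $\oOmega$ by moment computations on short cycles in the random regular graph, exploiting the configuration model. The key numerical input is that with $\fR=(\fc/4)\log_{d-1}N$, one has $(d-1)^{2\fR}=N^{\fc/2}$, so the expected number of cycles of length at most $2\fR$ passing through a fixed vertex $v$ is bounded by $\sum_{k=3}^{2\fR}\OO((d-1)^k/N)=\OO(N^{\fc/2-1})$. More generally, a switching argument (or contiguity with the configuration model) shows that the expected number of ordered $j$-tuples of distinct cycles all contained in $\cB_\fR(v,\cG)$ is bounded by $\fC^j(N^{\fc/2-1})^j$ for a constant $\fC$ depending only on $d$.

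For the second clause I would fix a vertex $v$ and bound the probability that the excess of $\cB_\fR(v,\cG)$ exceeds $\omega_d$. An excess of $k$ corresponds to at least $k$ independent extra edges beyond a spanning tree of the ball, so the moment bound above gives a single-vertex failure probability of $\OO(N^{(\omega_d+1)(\fc/2-1)})$. A union bound over the $N$ vertices yields $\OO(N^{1+(\omega_d+1)(\fc/2-1)})=\OO(N^{-\omega_d+(\omega_d+1)\fc/2})$. Since $\omega_d\geq 1$, the inequality $(\omega_d+1)\fc/2\leq \fc\omega_d$ holds, so this is at most $\OO(N^{-(1-\fc)\omega_d})$, as required.

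For the first clause, let $X$ count the vertices whose radius-$\fR$ neighborhood is not a tree. One has $\bE[X]=\OO(N^{\fc/2})$, but the direct Markov bound $\bP(X>N^\fc)\leq \OO(N^{-\fc/2})$ is too weak for the target exponent. Instead I would apply Markov to the $k$-th moment with $k=\lceil 2(1-\fc)\omega_d/\fc\rceil$, a constant depending only on $\fc$ and $d$. For vertices at pairwise graph distance greater than $2\fR$ the badness events are essentially independent, and tuples with repeated or clustered vertices contribute lower-order terms, so $\bE[X^k]\leq \OO(N^{k\fc/2})$. Hence $\bP(X>N^\fc)\leq \OO(N^{-k\fc/2})\leq \OO(N^{-(1-\fc)\omega_d})$, and combining with the second-clause bound via a union bound completes the proof.

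The main obstacle is to justify these moment bounds uniformly in cycle length up to $2\fR=\Theta(\log N)$, rather than only for fixed length where Poisson approximations for short-cycle counts in random regular graphs are most classical. The standard remedy is to work directly in the configuration model, absorb the $\OO(1)$ ratio to the uniform simple random regular graph measure, and carry out a switching argument analogous to those used later in the paper (see \Cref{s:local_resampling}) to control the joint occurrence of several ``extra'' edges in the same ball without accumulating logarithmic prefactors that would spoil the single-vertex bound $\OO(N^{\fc/2-1})$.
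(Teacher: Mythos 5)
First, note that the paper does not actually prove this proposition: it is quoted verbatim from \cite[Proposition 2.1]{huang2024spectrum}, so there is no in-paper argument to compare against. Your overall skeleton (per-vertex excess bound via cycle/witness counting plus a union bound for the second clause, and a high-moment Markov bound for the count of non-tree vertices for the first clause) is the standard route and the exponent bookkeeping is right: $(\omega_d+1)(\fc/2-1)+1\le -(1-\fc)\omega_d$ exactly because $\omega_d\ge 1$, and taking $k\gtrsim 2(1-\fc)\omega_d/\fc$ moments handles the first clause.

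However, the central counting input is asserted in a form that is not correct as stated, and this is where the real work lies. A cycle merely \emph{contained} in $\cB_\fR(v,\cG)$ need not have length $\le 2\fR$ (it can wander inside the ball, whose size is $\asymp N^{\fc/4}$), and even for short cycles the naive first-moment cost of ``a cycle of length $\ell$ at distance $a\le\fR$ from $v$'' is $\asymp (d-1)^{a+\ell}/N$, which can be as large as $N^{3\fc/4-1}$, not $N^{\fc/2-1}$. To get the bound $\fC^j\bigl((d-1)^{2\fR}/N\bigr)^j$ you must use the specific BFS witness: excess $\ge k$ forces $k$ surplus edges whose endpoints are joined to $v$ by geodesics of length $\le\fR$, so each witness cycle satisfies $a+\lceil\ell/2\rceil\le\fR$ and the $k$ witnesses form a single connected excess-$k$ structure rooted at $v$; the expected number of such structures is then bounded by geometric sums over the path lengths. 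This precision is not optional: at $\omega_d=1$ the inequality $(\omega_d+1)\fc/2\le\fc\omega_d$ is an equality, so the per-vertex bound must be $\OO(N^{(\omega_d+1)(\fc/2-1)})$ with \emph{no} polylogarithmic loss (e.g.\ from $\fR^{\OO(k)}$ shape sums), which your sketch flags as an ``obstacle'' but does not resolve. Two smaller points: in the $\bE[X^k]$ computation, clustered tuples contribute the \emph{same} order $N^{k\fc/2}$ (trading a factor $N$ of vertex entropy for $N^{\fc/2}\cdot N^{\fc/2-1}$), not a lower order, so the bound survives but your justification should be corrected; and ``essentially independent for distant vertices'' is a statement about a random separation event and is best replaced by a joint first-moment count of the several witness structures in the configuration model, which is the same machinery as the second clause rather than an independence claim.
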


As we will see, for graphs $\cG\in \oOmega$,  their Green's functions can be approximated by tree extensions with overwhelmingly high probability. 
For the infinite $d$-regular tree and the infinite $(d-1)$-ary tree (trees where the root has degree $d-1$ and all other vertices have degree $d$),
the following proposition computes their Green's function explicitly.
\begin{proposition}[{\cite[Proposition 2.2]{huang2024spectrum}}]\label{greentree}
Let $\cX$ be the infinite $d$-regular tree.
For all $z \in \bC^+$, its Green's function is
\begin{equation} \label{e:Gtreemkm}
  G_{ij}(z)=m_{d}(z)\left(-\frac{\msc(z)}{\sqrt{d-1}}\right)^{\dist_{\cX}(i,j)}.
\end{equation}
Let $\cY$ be the infinite $(d-1)$-ary tree with root vertex $o$.
Its Green's function is
\begin{equation} \label{e:Gtreemsc}
  G_{ij}(z)=m_{d}(z)\left(1-\left(-\frac{\msc(z)}{\sqrt{d-1}}\right)^{2{\rm anc}(i,j)+2}\right)\left(-\frac{\msc(z)}{\sqrt{d-1}}\right)^{\dist_{\cY}(i,j)},
\end{equation}
where ${\rm anc}(i,j)$ is the distance from the common ancestor of the vertices $i,j$ to the root $o$. 
In particular,
\begin{align}\label{e:Gtreemsc2}
G_{oi}(z)=\msc(z)\left(-\frac{\msc(z)}{\sqrt{d-1}}\right)^{\dist_{\cY}(o,i)}.
\end{align}
\end{proposition}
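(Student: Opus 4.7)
The plan is to establish \eqref{e:Gtreemkm} and \eqref{e:Gtreemsc} in three stages: (i) compute the diagonal Green's functions via Schur complement self-consistency, (ii) propagate to off-diagonals on $\cX$ and to root-to-vertex entries on $\cY$ by iterating the standard peeling recursion, and (iii) handle the general off-diagonal formula on $\cY$ by a method-of-images argument that reduces it to the $\cX$ case. All resolvents are well-defined because $H$ is bounded self-adjoint, so $(H-z)^{-1}$ exists for $z\in\bC^+$, and the scalar self-consistent equations below have a unique solution with positive imaginary part.

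For the diagonals, I would apply Schur complement at the root $o$ of $\cY$. Removing $o$ yields $d-1$ disjoint copies of $\cY$, so with the normalization $H_{ij}=1/\sqrt{d-1}$,
\begin{align*}
G_{oo}^{\cY}(z)=\frac{1}{-z-\frac{1}{d-1}\sum_{k\sim o}G_{kk}^{\cY^{(o)}}(z)}=\frac{1}{-z-G_{oo}^{\cY}(z)},
\end{align*}
which is the semicircle equation; hence $G_{oo}^{\cY}(z)=\msc(z)$, and by the same argument every rooted $(d-1)$-ary subtree has diagonal $\msc(z)$ at its root. On $\cX$, removing any vertex produces $d$ disjoint rooted $(d-1)$-ary trees, so Schur complement gives $G_{ii}^{\cX}(z)=1/(-z-\tfrac{d}{d-1}\msc(z))=m_d(z)$.

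For off-diagonals I would use the tree identity $G_{ij}=-G_{ii}\, G_{i_1 j}^{(i)}/\sqrt{d-1}$, valid for $j\ne i$ with $i_1$ the unique neighbor of $i$ on the path to $j$ (no other neighbor of $i$ is connected to $j$ in $\cG^{(i)}$). Iterating on $\cY$ from $o$ down the path to $j$, each step produces a factor $-\msc/\sqrt{d-1}$ because the residual component is again a rooted $(d-1)$-ary tree with diagonal $\msc$, and the last peeling leaves $G_{jj}=\msc$; this yields \eqref{e:Gtreemsc2}. Applying the identity once on $\cX$ to peel off $i$, and then substituting \eqref{e:Gtreemsc2} into the resulting $G^{\cX^{(i)}}_{i_1 j}$ (Green's function from the root of a $(d-1)$-ary subtree), produces \eqref{e:Gtreemkm}.

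The main obstacle is \eqref{e:Gtreemsc} when neither $i$ nor $j$ is the root of $\cY$, since peeling no longer preserves the $(d-1)$-ary structure. I would resolve this by a method of images: realize $\cY$ as the component of $o$ after deleting from $\cX$ one distinguished neighbor $o^-$ of $o$. The rank-one Schur identity then gives, for $i,j\in\cY$,
\begin{align*}
G_{ij}^{\cY}(z)=G_{ij}^{\cX^{(o^-)}}(z)=G_{ij}^{\cX}(z)-\frac{G_{io^-}^{\cX}(z)\, G_{o^- j}^{\cX}(z)}{G_{o^- o^-}^{\cX}(z)}.
\end{align*}
Setting $r=-\msc(z)/\sqrt{d-1}$ and inserting the values from \eqref{e:Gtreemkm}, together with $\dist_{\cX}(i,o^-)=\dist_{\cY}(i,o)+1$, $\dist_{\cX}(i,j)=\dist_{\cY}(i,j)$, and the tree path identity $\dist_{\cY}(i,o)+\dist_{\cY}(o,j)-\dist_{\cY}(i,j)=2\,\mathrm{anc}(i,j)$, the two terms combine to $m_d(z)\,(1-r^{2\,\mathrm{anc}(i,j)+2})\, r^{\dist_{\cY}(i,j)}$, which is exactly \eqref{e:Gtreemsc}. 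The derivation includes the diagonal case $i=j$, for which $\mathrm{anc}(i,i)=\dist_{\cY}(o,i)$ and the same computation gives $G_{ii}^{\cY}(z)=m_d(z)(1-r^{2\,\dist_{\cY}(o,i)+2})$.
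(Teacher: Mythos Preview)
Your argument is correct. The paper does not actually prove this proposition; it is imported verbatim from \cite[Proposition~2.2]{huang2024spectrum}, so there is no in-paper proof to compare against. Your three-step plan---Schur self-consistency for the diagonals, the peeling identity $G_{ij}=-G_{ii}\,G^{(i)}_{i_1j}/\sqrt{d-1}$ for \eqref{e:Gtreemkm} and \eqref{e:Gtreemsc2}, and the method-of-images reduction $G^{\cY}_{ij}=G^{\cX}_{ij}-G^{\cX}_{io^-}G^{\cX}_{o^-j}/G^{\cX}_{o^-o^-}$ for the general entry \eqref{e:Gtreemsc}---is a clean and complete derivation. The only point worth making explicit is that $G^{\cX^{(o^-)}}$ is block-diagonal over the components of $\cX^{(o^-)}$, so its restriction to the component of $o$ coincides with $G^{\cY}$; you use this implicitly, and it is immediate.
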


Next, we recall the idea of a Green's function extension with general weight $\Delta$ from {\cite[Section 2.3]{huang2024spectrum}}.

\begin{definition}\label{def:pdef}
    Fix degree $d\geq 3$, and a graph $\cT$ with degrees bounded by $d$. We define the function $P(\cT,z,\Delta)$ as follows. We denote $A(\cT)$  the adjacency matrix of $\cT$, $D(\cT)$ the diagonal matrix of degrees of $\cT$, and $\bI$ the diagonal matrix indexed by the vertex set $\bT$ of $\cT$. Then 
    \begin{align}\label{e:defP}
    P(\cT,z,\Delta):=\frac{1}{-z+A(\cT)/\sqrt{d-1}-(d\mathbb I -D(\cT))\Delta/(d-1)}.
    \end{align}
\end{definition}
The matrix $P(\cT,z,\Delta)$ is the Green's function of the matrix obtained from $A(\cT)/\sqrt{d-1}$ by attaching to each vertex $i\in \cT$ a weight $-(d-D_{ii}(\cT))\Delta/(d-1)$. When $\Delta=\msc(z)$, \eqref{e:defP} is the Green's function of the tree extension of $\cT$, i.e. extending $\cT$ by attaching copies of infinite $(d-1)$-ary trees to $\cT$ to make each vertex degree $d$. If $\cT$ is a tree, then in this case, the Green's function agrees with the Green's function of the infinite $d$-regular tree, as in \eqref{e:Gtreemkm}. 
For any vertex set $\bX$ in $\cT$, we define the following Green's function with vertex $\bX$ removed. Let $A^{(\bX)}(\cT)$ and $D^{(\bX)}(\cT)$ denote the matrices obtained from $A(\cT), D(\cT)$ by removing the row and column associated with vertex $\bX$. The Green's function is then defined as:
 \begin{align}\label{e:defPi}
    P^{(\bX)}(\cT,z,\Delta):=\frac{1}{-z+A^{(\bX)}(\cT)/\sqrt{d-1}-(d\mathbb I-D^{(\bX)}(\cT))\Delta/(d-1)}.
    \end{align}

We recall from \cite[Proposition 2.12]{huang2024spectrum}, the following estimates on the Green's function extension with general weights $\Delta$, which are sufficiently close to $\msc(z)$.
\begin{proposition}[{\cite[Proposition 2.12]{huang2024spectrum}}]
If the graph $\cT$ has excess at most $\omega_d$, and ${\rm diam}(\cT)|\Delta-\msc(z)|\ll 1$, then for any vertices $i,j\in \cT$
\begin{align}\label{e:Pijbound}
    |P_{ii}(\cT, z, \Delta)|\asymp 1,
    \quad |P_{ij}(\cT, z, \Delta)|\lesssim \left(\frac{|\msc(z)|}{\sqrt{d-1}}\right)^{\dist_\cT(i,j)}.
\end{align}
\end{proposition}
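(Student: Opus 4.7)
My approach has two stages. First, I establish the bounds at the base point $\Delta = \msc(z)$ using the explicit Green's function formulas from \Cref{greentree}. Then I extend to general $\Delta$ via a Neumann-series perturbation argument that exploits the smallness of $|\Delta - \msc(z)|$ relative to $1/{\rm diam}(\cT)$.

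For the base case, the key observation is that $P(\cT, z, \msc(z))$ coincides with the Green's function, restricted to $\cT$, of the \emph{tree extension} of $\cT$: the $d$-regular graph obtained by attaching $(d - D_{ii}(\cT))$ copies of infinite $(d-1)$-ary subtrees at each $i \in \cT$. This identification follows from the Schur complement formula, since each attached subtree contributes exactly $\msc(z)/(d-1)$ to the on-site weight at $i$ (using \eqref{e:Gtreemsc2}), precisely matching the $-(d - D_{ii}(\cT))\msc(z)/(d-1)$ term in \eqref{e:defP}. When $\cT$ is a tree, the extension is the infinite $d$-regular tree $\cX$, and \eqref{e:Gtreemkm} gives $|P_{ij}(\cT, z, \msc(z))| = |m_d(z)|\,(|\msc(z)|/\sqrt{d-1})^{\dist(i,j)}$; since $|m_d(z)| \asymp 1$ on $\mathbf{D}$ by \eqref{e:medge_behavior}, both claimed bounds follow. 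For $\cT$ with excess up to $\omega_d$, I would reduce to the tree case by removing the at most $\omega_d$ excess edges one at a time via a resolvent identity; each such step is a rank-two perturbation of $A(\cT)$ combined with a rank-two correction to the on-site weights, and incurs only an $O(1)$ multiplicative factor since $\omega_d$ is fixed.

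For general $\Delta$, set $P_0 := P(\cT, z, \msc(z))$. From \eqref{e:defP}, $P_\Delta^{-1} = P_0^{-1} - V$ where $V$ is the diagonal matrix with entries $V_{jj} = (d - D_{jj}(\cT))(\Delta - \msc(z))/(d-1)$, supported only on the boundary vertices of $\cT$. The resolvent identity produces the Neumann expansion
\begin{equation*}
P_\Delta \;=\; \sum_{k \geq 0} P_0 (V P_0)^k,
\end{equation*}
whose $(i,j)$ entry expands as a sum over walks $i \to v_1 \to \cdots \to v_k \to j$ with boundary intermediate vertices. Bounding each $P_0$ factor via the base case and using the triangle inequality, each such walk contributes at most $|\Delta - \msc(z)|^k \cdot (|\msc(z)|/\sqrt{d-1})^{\dist(i,j) + L}$, where $L \geq 0$ measures the walk's excess length relative to a geodesic between $i$ and $j$.

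The main obstacle is controlling the combinatorial sum over boundary walks. I expect to prove the key estimate
\begin{equation*}
\sum_{v \in \partial \cT} (|\msc(z)|/\sqrt{d-1})^{2\dist(v,\,[a,b])} \;\lesssim\; {\rm diam}(\cT)
\end{equation*}
for any geodesic $[a,b] \subset \cT$, by exploiting the near-tree structure: for each vertex $u$ on the geodesic, the boundary vertices in the subtree hanging off $u$ contribute a geometrically convergent series in depth, since the branching factor $(d-1)^r$ cancels against $(\sqrt{d-1})^{-2r}$ with $|\msc(z)| \leq 1$ keeping the tail bounded, and summing over the $O({\rm diam}(\cT))$ geodesic vertices yields the estimate. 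Iterating $k$ times, the $k$-th Neumann term is bounded by $\bigl({\rm diam}(\cT)|\Delta - \msc(z)|\bigr)^k (|\msc(z)|/\sqrt{d-1})^{\dist(i,j)}$. The hypothesis ${\rm diam}(\cT)|\Delta - \msc(z)| \ll 1$ then ensures geometric convergence to the claimed off-diagonal bound, while for $i = j$ the total perturbation is $o(1)$ and preserves $|P_{ii}(\cT,z,\Delta)| = |m_d(z)| + o(1) \asymp 1$.
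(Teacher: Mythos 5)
There is a genuine gap, and it sits exactly at the step you flag as "the main obstacle". Your justification of the key estimate $\sum_{v\in\partial\cT}(|\msc(z)|/\sqrt{d-1})^{2\dist(v,[a,b])}\lesssim {\rm diam}(\cT)$ — "the branching factor $(d-1)^r$ cancels against $(\sqrt{d-1})^{-2r}$ with $|\msc(z)|\le 1$ keeping the tail bounded" — does not work in the regime where the proposition is actually used: for $z$ near the spectrum, $|\msc(z)|$ is arbitrarily close to $1$, so after the cancellation the "geometric series in depth" is $\sum_{r\le{\rm diam}(\cT)}|\msc(z)|^{2r}\asymp{\rm diam}(\cT)$, not $O(1)$. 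Counting all vertices at depth $r$ off the geodesic therefore only gives ${\rm diam}(\cT)^2$, and with a per-step cost ${\rm diam}(\cT)^2|\Delta-\msc(z)|$ the Neumann series does not converge under the stated hypothesis ${\rm diam}(\cT)|\Delta-\msc(z)|\ll 1$ (take $|\Delta-\msc(z)|\asymp 1/{\rm diam}(\cT)$). The estimate you want is in fact true, but for a reason you have not supplied: the sum runs only over degree-deficient vertices, and deficiency reduces branching. Writing $n_r$, $\delta_r$ for the numbers of vertices and deficient vertices at depth $r$ in a subtree hanging off the geodesic, one has $n_{r+1}\le (d-1)n_r-\delta_r$, and a telescoping argument with weight $w=|\msc|^2/(d-1)\le 1/(d-1)$ gives $\sum_r\delta_r w^r\le (d-1)w\le 1$ per hanging subtree, uniformly as $|\msc(z)|\to 1$; summing over the $O({\rm diam}(\cT))$ hanging subtrees and geodesic vertices then yields the ${\rm diam}(\cT)$ bound and closes your induction. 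Without this (or an equivalent) input, your proof does not establish the proposition as stated.

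A second, smaller gap is the base case with excess. Removing an excess edge is indeed a rank-two change of $P(\cT,z,\msc(z))^{-1}$, but its entries are $O(1)$, so there is no small parameter: a resolvent/Woodbury expansion need not converge, and an $O(1)$ perturbation of this kind can destroy both the diagonal lower bound and the off-diagonal decay (the tree extension of a graph with many cycles can have outlier spectrum near $z$). That decay survives up to $\omega_d$ excess edges is precisely the nontrivial content packed into the definition of $\omega_d$ in the cited work, so it cannot be dispatched as "an $O(1)$ multiplicative factor since $\omega_d$ is fixed"; you must either invoke that definition/result explicitly or prove invertibility of the relevant $2\times 2$ Woodbury block for the tree-plus-few-edges extension. (Also note the implicit constants require restricting $z$ to a bounded spectral domain, as you tacitly do by working on $\mathbf D$; the first-stage identification of $P(\cT,z,\msc(z))$ with the tree-extension Green's function and the use of \eqref{e:Gtreemkm} are fine.)
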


For any integer $\ell\geq 1$, we define the functions $X_\ell(\Delta,z), Y_\ell(\Delta,z)$ as
\begin{align}\label{def:Y}
X_\ell(\Delta,z)=P_{oo}(\cB_\ell(o,\cX),z,\Delta),\quad Y_\ell(\Delta,z)=P_{oo}(\cB_\ell(o,\cY),z,\Delta),
\end{align}
where $\cX$ is the infinite $d$-regular tree with root vertex $o$, and $\cY$ is the infinite $(d-1)$-ary tree with root vertex $o$. 
Then $\msc(z)$ is a fixed point of the function $Y_\ell$, i.e. $Y_\ell(\msc(z),z)=\msc(z)$. Also, $X_\ell(\msc(z),z)=\md(z)$. 
The following proposition states that if $\Delta$ is sufficiently close to $\msc(z)$ and $w$ is close to $z$, then $Y_\ell(\Delta,w)$ is close to $\msc(z)$, and $X_\ell(\Delta,w)$ is close to $\md(z)$. We postpone its proof to \Cref{app:Green}.

\begin{proposition}\label{p:recurbound}
Given $z, w, \Delta\in \bC^+$ such that $\ell^2|w-z|, \ell^2|\Delta-\msc(z)|\ll 1$, then the derivatives of $Y_\ell(\Delta,w)$ satisfies
\begin{align}\label{e:Yl_derivative}
   |\del_1 Y_\ell(\Delta,  w)|, |\del_2 Y_\ell(\Delta,  w)|\lesssim \ell, 
   \quad |\del_1^2 Y_\ell(\Delta,  w)|, |\del_1\del_2 Y_\ell(\Delta,  w)|,|\del_2^2 Y_\ell(\Delta,  w)|\lesssim \ell^3,
\end{align}
and the same statement holds for $X_\ell(\Delta,w)$.
Moreover, the functions $X_\ell(\Delta,w), Y_\ell(\Delta,w)$ 
satisfy
\begin{align}\begin{split}\label{e:recurbound}
Y_\ell(\Delta,w)-\msc(z)
&=\msc^{2\ell+2}(z)(\Delta-\msc(z))+(\msc^2(z)+\msc^4(z)+\cdots+\msc^{2\ell+2}(z))(w-z)\\
&+\msc^{2\ell+2}(z)\md(z)\left(\frac{1-\msc^{2\ell+2}(z)}{d-1}+\frac{d-2}{d-1}\frac{1-\msc^{2\ell+2}(z)}{1-\msc^2(z)}\right)(\Delta-\msc(z))^2\\
&+\OO(\ell^5(|w-z|^2 +|\Delta-\msc(z)||w-z|+|\Delta-\msc(z)|^3)).
\end{split}\end{align}
and
\begin{align}\begin{split}\label{e:Xrecurbound}
X_\ell(\Delta,w)-\md(z)
&=\frac{d}{d-1}\md^2(z)\msc^{2\ell}(z)(\Delta-\msc(z))+\OO\left(\ell^3(|w-z|+|\Delta-\msc(z)|^2)\right).
\end{split}\end{align}
\end{proposition}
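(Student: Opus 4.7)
The plan is to prove both expansions by setting up an explicit scalar recursion for the Green's function of a truncated tree and linearizing around the fixed point $(\Delta,w)=(\msc(z),z)$. Using the Schur complement formula at vertices of $\cB_\ell(o,\cY)$, together with the fact that the subtrees at distinct children are disconnected, one obtains a one-variable recursion of the form
\[
y_k(\Delta,w)=\frac{1}{-w-y_{k-1}(\Delta,w)},\qquad y_0(\Delta,w)=\frac{1}{-w-\Delta},
\]
for an auxiliary Green's function $y_k$ at the root of a depth-$k$ subtree. The identity $\msc(z)=1/(-z-\msc(z))$ implies $y_k(\msc(z),z)=\msc(z)$ for all $k$. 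The functions $Y_\ell$ and $X_\ell$ are then expressed in terms of $y_{\ell-1}$ by one additional Schur step at the actual root, which accounts for the root's degree ($d-1$ in $\cY$ versus $d$ in $\cX$) and reproduces the fixed-point values $Y_\ell(\msc(z),z)=\msc(z)$ and $X_\ell(\msc(z),z)=\md(z)$ stated in the paragraph before the proposition.

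To extract the linear and quadratic expansions, set $\delta\Delta=\Delta-\msc(z)$, $\delta w=w-z$, and $\delta y_k=y_k-\msc(z)$, and substitute $-z-\msc(z)=1/\msc(z)$ into the recursion. This gives
\[
\delta y_k=\frac{\msc^2(z)(\delta w+\delta y_{k-1})}{1-\msc(z)(\delta w+\delta y_{k-1})}=\msc^2(z)(\delta w+\delta y_{k-1})+\msc^3(z)(\delta w+\delta y_{k-1})^2+O\bigl(|\delta w+\delta y_{k-1}|^3\bigr),
\]
with the analogous expansion for $\delta y_0$ after replacing $\delta y_{k-1}$ by $\delta\Delta$. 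Iterating the linear part telescopes the geometric series and yields
\[
\delta y_\ell=\bigl(\msc^2(z)+\msc^4(z)+\cdots+\msc^{2\ell+2}(z)\bigr)\delta w+\msc^{2\ell+2}(z)\,\delta\Delta+\text{(quadratic)},
\]
which produces \eqref{e:recurbound} at linear order after the final Schur step at the root of $\cY$; an analogous computation, with the root contribution inflated by the extra factor $d/(d-1)$ coming from the root of $\cX$ having degree $d$, yields \eqref{e:Xrecurbound}. The derivative bounds \eqref{e:Yl_derivative} are then read off: since $|\msc(z)|$ is close to $1$ in the allowed spectral range, the length-$(\ell+1)$ geometric sums are bounded by $\ell$, giving first derivatives $\lesssim \ell$; differentiating once more contributes an extra factor of $\ell^2$ from nested sums, producing $\lesssim\ell^3$.

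The main obstacle is tracking the quadratic $\delta\Delta^2$ coefficient in \eqref{e:recurbound} precisely. One substitutes the linear expression for $\delta y_{k-1}$ into the second-order term $\msc^3(z)(\delta w+\delta y_{k-1})^2$ and iterates, producing nested sums of $\msc^{2k}(z)$ whose telescoping invokes the identity $\sum_{k=0}^{\ell}\msc^{2k}(z)=(1-\msc^{2\ell+2}(z))/(1-\msc^2(z))$ appearing in the claimed coefficient. The final Schur step at the actual root must also be Taylor expanded to second order: its two distinct sources of $\delta\Delta^2$ contributions—one from the root's own boundary weight correction (of size $1/(d-1)$, reflecting the single ``missing edge'' at the root of $\cY$), and one from the collective quadratic effect of the $d-1$ child subtrees (producing the $(d-2)/(d-1)$ factor after combining with an $\md(z)$-type prefactor from the outer Schur inversion)—must be combined carefully to reproduce the stated formula. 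All remaining mixed and cubic terms are absorbed into the $O(\ell^5(\cdots))$ remainder using the already-established derivative bounds, and \eqref{e:Xrecurbound} needs only the linear-in-$\Delta$ coefficient, which is simpler to extract.
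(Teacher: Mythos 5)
Your route is genuinely different from the paper's, and it is viable. The paper works at the matrix level: it writes $Y_\ell(\Delta,w)=\bigl(H-w-\Delta\,\mathbb I^{\partial}\bigr)^{-1}_{oo}$ on $\cB_\ell(o,\cY)$, performs a resolvent expansion around the reference $P=\bigl(H-z-\msc(z)\mathbb I^{\partial}\bigr)^{-1}$, evaluates the first- and second-order terms exactly via the explicit entries of the $(d-1)$-ary tree Green's function (the displayed two-term form of the $(\Delta-\msc)^2$ coefficient comes from splitting the double sum over boundary vertices according to coincident versus non-coincident pairs), and controls all higher orders and the derivative bounds through the inductive estimate $(|P|^k)_{oo}\lesssim(C\ell)^{2k-3}$. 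Your scalar continued-fraction recursion $y_k=1/(-w-y_{k-1})$, $y_0=1/(-w-\Delta)$, with $Y_\ell=y_\ell$ and $X_\ell=1/\bigl(-w-\tfrac{d}{d-1}y_{\ell-1}\bigr)$, is more elementary: the linear coefficients, and the coefficient $\tfrac{d}{d-1}\md^2(z)\msc^{2\ell}(z)$ in \eqref{e:Xrecurbound}, fall out of the telescoped linearization exactly as you say, and the remainder and derivative bounds reduce to controlling products $\prod_k y_k^2$, which is where the hypothesis $\ell^2(|\Delta-\msc(z)|+|w-z|)\ll1$ is used.

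The one step that does not work as described is your bookkeeping of the $(\Delta-\msc(z))^2$ coefficient. You cannot simultaneously have a root weight ``of size $1/(d-1)$'' at the root of $\cY$ and the fixed point $Y_\ell(\msc(z),z)=\msc(z)$: if the root carried the weight $\Delta/(d-1)$ (a literal reading of Definition \ref{def:pdef}), then at $(\Delta,w)=(\msc(z),z)$ the tree extension is the $d$-regular tree and $Y_\ell(\msc(z),z)=\md(z)\neq\msc(z)$, contradicting already the constant and linear terms of \eqref{e:recurbound}; the expansion requires the root step to be the same map as every interior step (only boundary vertices carry $\Delta$), which is also what your recursion implicitly assumes. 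Consequently there is no separate ``root correction'' producing the $\tfrac{1-\msc^{2\ell+2}}{d-1}$ piece: iterating the quadratic term of the M\"obius map gives the single telescoped sum
\begin{align*}
\msc^{2\ell+3}(z)\sum_{k=0}^{\ell}\msc^{2k}(z)=\msc^{2\ell+3}(z)\,\frac{1-\msc^{2\ell+2}(z)}{1-\msc^{2}(z)}
\end{align*}
as the full $(\Delta-\msc)^2$ coefficient, and you must then check that this equals the displayed expression, which follows from $\md(z)=\tfrac{(d-1)\msc(z)}{d-1-\msc^2(z)}$, i.e.\ $\md\bigl(\tfrac1{d-1}+\tfrac{d-2}{d-1}\tfrac1{1-\msc^2}\bigr)=\tfrac{\msc}{1-\msc^2}$. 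With that replacement your plan goes through; as written, the two-source decomposition you propose to ``combine carefully'' is not the one your recursion generates.
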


\subsection{Local Law of $H(t)$}
\label{s:local_law}

In this section we collect some estimates on the Green's function $G(z,t)$ and the Stieltjes transform $m_t(z)$ of $H(t)$. For any $z\in \bC^+$, we recall the following relations from \eqref{e:mdzt}
\begin{align}\label{e:defw}
    \md(z,t)=\md(z_t), \quad z_t=z_t(z)=z+t\md(z,t). 
\end{align} 
We also introduce $Q_t(z)$, a quantity that was first defined in \cite{bauerschmidt2019local, huang2024spectrum}, and plays a crucial role in the proof of local law. The quantity is the average of $G_{jj}^{(i)}(z,t)$ over all pairs of adjacent vertices $i\sim j$:
\begin{align}\label{e:Qsum}
Q_t(z):=\frac{1}{Nd}\sum_{i\sim j}G_{jj}^{(i)}(z,t).
\end{align}


For any vertex set $\bX\subset \qq{N}$, and integer $r\geq 1$, we denote
$\cB_r(\bX,\cG)=\{i\in \qq{N}: \dist_\cG(i, \bX)\leq r\}$ the ball of radius-$r$ around vertices $\bX$ in $\GG$. 
For $t=0$, the weak local law of $H=H(0)$ has been proven in {\cite[Theorem 4.2]{huang2024spectrum}}, which is recalled below (by taking $(\fa, \fb,\fc, \mathfrak r)=(12,300,\fc,\fc/32)$). We denote $G(z)=G(z,0)=(H-z)^{-1}$, $m_N(z)=m_0(z)$ and $Q(z)=Q_0(z)$.

\begin{theorem}[{\cite[Theorem 4.2]{huang2024spectrum}}] \label{thm:prevthm0}
Fix any sufficiently small $0<\fb\ll\fc<1$, $\fR=(\fc/4)\log_{d-1}(N)$ and any $z\in \bC^+$ , we define $\eta(z)=\Im[z], \kappa(z)=\min\{|\Re[z]-2|, |\Re[z]+2|\}$, and the error parameters
\begin{align}\label{e:defepsilon}
\varepsilon'(z):=(\log N)^{100}\left(N^{-10\fb} +\sqrt{\frac{\Im[m_d(z)]}{N\eta(z)}}+\frac{1}{(N\eta(z))^{2/3}}\right),\quad \varepsilon(z):=\frac{\varepsilon'(z)}{\sqrt{\kappa(z)+\eta(z)+\varepsilon'(z)}}.
\end{align}
For any $\fC\geq 1$ and $N$ large enough, with probability at least $1-\OO(N^{-\fC})$ with respect to the uniform measure on $\oOmega$, 
\be\label{eq:infbound0}
|G_{ij}(z)-P_{ij}(\cB_{\fR/100}(\{i,j\},\cG),z,\msc(z))|,\quad |Q(z)-\msc(z)|,\quad |m_N(z)-m_d(z)|\lesssim \varepsilon(z).
\ee
uniformly for every $i,j\in \qq{N}$, and any $z\in \bC^+$ with $\Im[z]\geq (\log N)^{300}/N$. We denote the event that \eqref{eq:infbound0} holds as $\Omega$. 
\end{theorem}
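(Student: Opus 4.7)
The plan is to derive an approximate self-consistent equation for $Q(z)$ of the form $Q(z) \approx Y_\ell(Q(z), z)$ and then exploit the stability of this fixed-point equation around $\msc(z)$ to conclude. I will work with a bootstrap on $\eta := \Im[z]$: at $\eta \asymp 1$ the bounds $|Q - \msc|, |m_N - \md| = o(1)$ are essentially trivial (since $\|G(z)\| \leq 1/\eta$ gives stability), and I propagate them down to $\eta \geq (\log N)^{300}/N$ using Lipschitz continuity in $\eta$ on a fine grid.

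The core estimate is to show $Q(z) = Y_\ell(Q(z), z) + \text{error}$ with overwhelming probability. By exchangeability of vertices, it suffices to estimate $\mathbb{E}[G_{oo}^{(i)}]$ for a fixed edge $(o,i)$. Restricting to $\oOmega$, the radius-$\fR$ neighborhood of $o$ is a tree for all but $N^{\fc}$ vertices, and I may assume $\cB_\ell(o,\cG)$ is a $(d-1)$-ary tree after deleting $i$. The Schur complement formula expresses $G_{oo}^{(i)}$ as the Green's function of this local tree with boundary weights given by the Green's function of $\cG$ with the whole ball removed. I then apply local resampling (\Cref{lem:exchangeablepair}) with switching data $\mathbf S$: in the switched graph $T_\bfS(\cG)$, which has the same law as $\cG$, the boundary of $\cB_\ell(o,\cdot)$ after switching consists of $\mu \asymp (d-1)^\ell$ vertices $\{c_\alpha\}$ that, conditional on $I_\alpha = 1$, sit in mutually distant tree-like regions. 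Using \Cref{greentree}, the diagonal boundary entries each average to $Q(z)$ while off-diagonal entries decay exponentially in tree distance, so by the definition \eqref{def:Y} the expression collapses to $Y_\ell(Q(z), z)$ up to a controllable error. The error has three sources: exceptional vertices violating $I_\alpha$ (bounded by $N^{\fc - 1}$ after using $\oOmega$), fluctuation of the empirical average over $\mu$ boundary contributions (controlled by Ward's identity $N^{-2}\sum_{ij}|G_{ij}|^2 = \Im[m_N]/(N\eta)$, giving $\sqrt{\Im[\md]/(N\eta)}$), and the $N^{-10\fb}$ slack from bootstrap hypothesis. Summing yields an error of size $\varepsilon'(z)$.

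To convert the approximate fixed-point equation into the bound $|Q - \msc| \lesssim \varepsilon$, I expand $Y_\ell(Q,z) - \msc(z)$ using \Cref{p:recurbound}. In the bulk where the gap to the spectral edge $\kappa$ is of order one, the linear coefficient $\msc^{2\ell+2}(z)$ is bounded away from $1$, so the fixed-point equation is contracting and immediately gives $|Q - \msc| \lesssim \varepsilon'$. Near the edge where $|\msc(z)| \to 1$ the linearization degenerates and the quadratic correction in \eqref{e:recurbound} becomes essential: the equation reduces schematically to $(Q - \msc)^2 + c\sqrt{z \mp 2}\,(Q - \msc) = O(\varepsilon')$, whose stable branch satisfies $|Q - \msc| \lesssim \varepsilon'/\sqrt{\kappa + \eta + \varepsilon'} = \varepsilon(z)$. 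Once $|Q - \msc| \lesssim \varepsilon$ is in hand, the Schur complement formula applied to individual $G_{ij}$ and the identity $X_\ell(\msc,z) = \md(z)$ combined with \eqref{e:Xrecurbound} yield the remaining approximations for $G_{ij}$ and for $m_N = X_\ell(Q,z) + O(\text{error})$.

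The main obstacle is the edge regime $|\Re[z]| \to 2$, where stability of the self-consistent equation degenerates and the quadratic term drives the effective estimate; controlling this requires careful analysis of the correct branch of the quadratic to capture the $\sqrt{\kappa + \eta + \varepsilon'}$ improvement factor, and tracking exactly how the cubic and higher-order remainders in \Cref{p:recurbound} remain subdominant. A secondary difficulty is making the bootstrap uniform over $z \in \bC^+$ with $\eta \geq (\log N)^{300}/N$ and over all vertex pairs $i,j$, which I handle by a polynomial grid in $z$ combined with the deterministic Lipschitz bound $\|\partial_z G(z)\| \leq \eta^{-2}$, absorbing the union bound into the overwhelming-probability estimate.
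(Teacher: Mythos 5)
This theorem is not proved in the present paper; it is quoted verbatim from \cite[Theorem 4.2]{huang2024spectrum}, so the relevant comparison is with that work's argument, whose outline (local resampling around a vertex, Schur complement reduction of $G_{oo}^{(i)}$ to boundary weights, the approximate fixed-point equation $Q\approx Y_\ell(Q,z)$, and the degenerate quadratic stability analysis near the edge producing the $\varepsilon'/\sqrt{\kappa+\eta+\varepsilon'}$ improvement, combined with a grid-and-Lipschitz bootstrap in $\eta$) your sketch reproduces faithfully.

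The genuine gap is the concentration step. You write that ``by exchangeability of vertices, it suffices to estimate $\mathbb{E}[G_{oo}^{(i)}]$,'' and your error budget (exceptional vertices, Ward identity over the resampling data $\mathbf S$, bootstrap slack) only controls the \emph{expectation} of $Q(z)-Y_\ell(Q(z),z)$, or at best its size after averaging over $\mathbf S$ inside a fixed graph. The theorem asserts the bound with probability $1-\OO(N^{-\fC})$ for \emph{every} $\fC$, uniformly in $z$ and $i,j$; a first-moment bound plus Markov gives only a polynomially weak probability estimate and cannot be boosted by the union bound over the $z$-grid and vertex pairs. What is actually needed — and what constitutes the bulk of the work in \cite{huang2024spectrum} — is a high-moment estimate $\mathbb{E}\bigl[|Q(z)-Y_\ell(Q(z),z)|^{2p}\bigr]\lesssim (C\varepsilon'(z))^{2p}$ for $p$ growing with $N$ (or at least $p\asymp \fC/\fb$), obtained by applying the exchangeability/local-resampling argument to a product of $2p$ factors. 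This forces you to control how the \emph{other} factors change under the switching, i.e. to track $\widetilde Q-Q$, $\widetilde m_N-m_N$ and the resulting cross terms, and to show the off-diagonal boundary contributions $G^{(b_\alpha b_\beta)}_{c_\alpha c_\beta}$ are small not merely on average over $\mathbf S$ but in a way compatible with the moment expansion. None of this appears in your plan, and Ward's identity within one graph is not a substitute for it. A secondary symptom of the same omission is that your error list never produces the $(N\eta)^{-2/3}$ term in $\varepsilon'(z)$, which arises precisely from the moment/fluctuation bookkeeping rather than from the deterministic Schur-complement expansion. The stability analysis and bootstrap you describe are fine once the high-moment self-consistent estimate is available, but as written the proposal does not reach a statement of overwhelming probability.
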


The following theorem gives bounds for the Green's functions and the Stieltjes transform of the empirical eigenvalue distribution of $H(t)$, for $t>0$. 
Recall from \eqref{e:Ht}, $H(t)$ is a Gaussian divisible ensemble. Using \Cref{thm:prevthm0} as input, the claims follow from applying {\cite[Lemma 4.2]{bauerschmidt2017bulk}}, \cite[Theorem 3.1, Corollary 3.2]{huang2019rigidity} and \cite[Theorem 2.1]{bourgade2017eigenvector}.
We postpone the detailed proof to \Cref{app:Green}.

\begin{theorem} \label{thm:prevthm}
Recall parameters $\fo\ll\ft\ll \fb\ll\fc\ll \fg$ from \eqref{e:parameters}, and fix $0\leq t\leq N^{-1/3+\ft}$.  Condition on $\cG\in \Omega$, for $N$ large enough,  with overwhelmingly high probability over $Z$, the following holds
\begin{enumerate}
\item The eigenvectors of $H(t)$ are delocalized
\begin{align}\label{e:eig_delocalization}
    \|\bmu_\al(t)\|^2_\infty \lesssim N^{\fo/2-1}, \quad 1\leq \al \leq N.
\end{align}

\item For any $z\in \bC^+$ we denote $z_t=z_t(z)=z+t\md(z,t)$. Then uniformly for every $i,j\in \qq{N}$, and any $z\in \bC^+$ with $|z|\leq 1/\fg, \Im[z]\geq N^{-1+\fg}$, we have the following bounds: 
\begin{align}\begin{split}\label{eq:infbound}
&|G_{ij}(z,t)-P_{ij}(\cB_{\fR/100}(\{i,j\},\cG),z_t,\msc(z_t)|\lesssim N^{-2\fb} ,\\
 &|Q_t(z)-\msc(z_t)|,\quad |m_t(z)-m_d(z_t)|\lesssim N^{-2\fb} .
\end{split}\end{align}
\end{enumerate}
\end{theorem}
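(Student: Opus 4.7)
The plan is to transfer the $t=0$ Green's function estimate of \Cref{thm:prevthm0} to positive times $t \in (0, N^{-1/3+\ft}]$ by exploiting the fact that $H(t) = H + \sqrt{t}\,Z$ is a Gaussian divisible ensemble with a constrained GOE perturbation. The key structural input is the free-convolution identity $\md(z,t) = \md(z_t)$ from \eqref{e:mdzt} with $z_t = z + t\md(z,t)$. By \Cref{l:relation_zt_z}, for $z \in \mathbf D$ the shifted point $z_t$ still lies at height $\Im[z_t] \gtrsim N^{-1+\fg}$, so applying the $t=0$ law at $z_t$ yields $\varepsilon(z_t) \ll N^{-2\fb}$, which is the target accuracy.

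First I would condition on $\cG \in \Omega$, obtaining the entry-wise input
\begin{align*}
|G_{ij}(z_t,0) - P_{ij}(\cB_{\fR/100}(\{i,j\},\cG), z_t, \msc(z_t))| \lesssim \varepsilon(z_t), \qquad |Q(z_t)-\msc(z_t)|, \ |m_N(z_t)-\md(z_t)| \lesssim \varepsilon(z_t),
\end{align*}
uniformly for all $z \in \mathbf D$. This serves as the starting point for a characteristic flow argument.

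Next, I would propagate these estimates from time $0$ to time $t$ via the standard Dyson-type flow machinery. Applying Gaussian integration by parts in $Z$ through \Cref{p:intbypart}, one derives a self-consistent relation that forces $G_{ij}(z,t) \approx G_{ij}(z_t,0)$ and $m_t(z) \approx m_N(z_t)$ up to fluctuations of size $N^{-2\fb}$ with overwhelmingly high probability over $Z$. The averaged statements for $m_t(z)$ and $Q_t(z)$ in \eqref{eq:infbound} then follow by combining \cite[Lemma 4.2]{bauerschmidt2017bulk} with the step-one input, while the entry-wise statement follows from \cite[Theorem 3.1 and Corollary 3.2]{huang2019rigidity}, noting that $P_{ij}(\cB_{\fR/100}(\{i,j\},\cG), \cdot, \msc(\cdot))$ varies smoothly in the spectral parameter near $z_t$. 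Finally, the eigenvector bound \eqref{e:eig_delocalization} is produced by the mechanism of \cite[Theorem 2.1]{bourgade2017eigenvector}: once a weak local law is available at scale $\Im[z] \gtrsim N^{-1+\fg}$, the Gaussian component in $H(t)$ upgrades any baseline delocalization to $\|\bmu_\al(t)\|_\infty^2 \lesssim N^{\fo/2-1}$ uniformly in $\al$.

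The main obstacle is the entry-wise propagation of $G_{ij}(z,t)$, which is significantly more delicate than the averaged Stieltjes transform. Gaussian integration by parts in $Z$ produces many off-diagonal cross terms, and naïve bounds on them would lose polynomial factors in $N$. Controlling these requires the exponential decay of $P_{ij}(\cT, z_t, \msc(z_t))$ in $\dist_\cT(i,j)$ from \eqref{e:Pijbound} to suppress far-field contributions, together with a Ward-type identity of the form $\sum_j |G_{ij}(z,t)|^2 = \Im[G_{ii}(z,t)]/\Im[z]$ to absorb the diagonal sums. The conditions $|z| \leq 1/\fg$ and $\Im[z] \geq N^{-1+\fg}$ ensure that the denominators appearing in the self-consistent equation stay non-degenerate and that $\varepsilon(z_t)$ remains much smaller than $N^{-2\fb}$ throughout the time interval $[0, N^{-1/3+\ft}]$, so that the bootstrapping continuity argument closes.
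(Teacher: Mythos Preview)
Your high-level strategy is right: condition on $\cG\in\Omega$, use the free-convolution characteristic $z_t = z + t\md(z,t)$, and transfer the $t=0$ local law to time $t$ via known black-box results for Gaussian divisible ensembles. However, you have permuted the roles of the three cited references, and this matters because each does something different.

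In the paper's proof, \cite[Lemma 4.2]{bauerschmidt2017bulk} is used for \emph{eigenvector delocalization} (it says delocalization of $H(0)$ is preserved under constrained DBM), not for the averaged quantities $m_t, Q_t$. The averaged Stieltjes-transform bound comes from \cite[Theorem 3.1, Corollary 3.2]{huang2019rigidity}, which controls $|m_t(z) - \widehat m_t(z)|$ where $\widehat m_t$ is the free convolution of the \emph{empirical} spectral measure of $H$ with the semicircle of variance $t$; one then compares $\widehat m_t$ to $\md(z,t)$ via the self-consistent equations and the $t=0$ law. The entry-wise statement $G_{ij}(z,t)\approx G_{ij}(z + t\widehat m_t(z),0)$ is the content of \cite[Theorem 2.1]{bourgade2017eigenvector}, applied as a black box (with the remark that its hypotheses on $\Im[m_0]$ and $t\gg\eta_*$ are only used to get $|m_t - \widehat m_t|$ small, which you already have from the rigidity input). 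Finally, $Q_t(z)\approx \msc(z_t)$ is not obtained from DBM machinery at all but by direct computation: expand $G_{jj}^{(i)}$ via Schur complement, insert the entry-wise bound, and note that for edges with tree neighborhoods the resulting expression collapses to $\msc(z_t)$.

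Your paragraph on the ``main obstacle'' (fresh integration by parts in $Z$, Ward identities, exponential decay of $P_{ij}$) is therefore misplaced: the paper does not redo the entry-wise propagation from scratch, and \Cref{p:intbypart} plays no role here. The only genuine work beyond citing the three black boxes is the two-line free-convolution comparison $|\widehat m_t - \md(z,t)| \lesssim N^{-4\fb} + \sqrt{t|\widehat m_t - \md(z,t)|}$ and the Schur-complement computation for $Q_t$.
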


\begin{remark}
    We remark the relations \eqref{eq:infbound} are essentially the same as those in \eqref{eq:infbound0}, but with $z$ in $P_{ij}(\cB_{\fR/100}(\{i,j\},\cG),z,\msc(z))$, $\msc(z)$, and $m_d(z)$ replaced by $z_t=z+t\md(z,t)$.
\end{remark}

Fix an edge $\{i,o\}\subset \cG$, we recall the resampling data ${\bf S}=\{(l_\al, a_\al), (b_\al, c_\al)\}_{\al\in\qq{\mu}}$ around $o$ from \Cref{s:local_resampling}, denote  $\wt \cG=T_\bfS \cG$. In the remainder of the paper, we denote by $\wt H$ the normalized adjacency matrix of $\tcG$. Its Green's function and the Stieltjes transform of its empirical eigenvalue distribution are denoted as follows:
\begin{align}\label{e:tGtm}
    \wt G(z,t)=(\widetilde H+\sqrt t Z-z)^{-1}, \quad \wt m_t(z)=\frac{1}{N}\Tr\wt G(z,t).
\end{align}

In the rest of this section we collect some basic estimates of the Green's functions of $G(z,t), \wt G(z,t)$. Their proofs follow from analyzing Green's functions using the Schur complement formula \eqref{e:Schur1}. We postpone their proofs to \Cref{app:Green}.   
\begin{lemma}\label{l:basicG}
Recall parameters $\fo\ll\ft\ll \fb\ll \fc\ll \fg$ from \eqref{e:parameters}, and fix $0\leq t\leq N^{-1/3+\ft}$.
Fix a $d$-regular graph $\cG\in \Omega$ (recall from \Cref{thm:prevthm}),  edges $\{i,o\}, \{b,c\}, \{b',c'\}$ in $\cG$, and let $\cT=\cB_\ell(o,\cG)$ with vertex set $\bT$. We recall the resampling data ${\bf S}=\{(l_\al, a_\al), (b_\al, c_\al)\}_{\al\in\qq{\mu}}$ around $o$ from \Cref{s:local_resampling}, and let  $\bW=\{b_\al\}_{\al \in \qq{\mu}}$.
Assume the following holds
\begin{align}\label{e:indicator0}
        A_{io}A_{bc}A_{b'c'}\prod_{\al\in \qq{\mu}}A_{b_\al c_\al}\prod_{x\neq y\in \{o,c,c'\}\cup\{c_\al\}_{\al\in \qq{\mu}}}\bm1(\cB_{\fR/2}(x,\cG) \text{is a tree})\bm1(\dist_\cG(x,y)\geq \fR/2)=1.
    \end{align}
    Let $z\in \bC^+$ with $|z|\leq 1/\fg, \eta:=\Im[z]\geq N^{-1+\fg}$, and denote $z_t=z_t(z)=z+t\md(z,t)$. 
    
    Then for any vertex set $\bX\subset \bW\cup \{b,b'\}$, or $\bX=\bT\cup \bX'$ with $\bX'\subset \bW\cup \{b,b'\}$, the following holds with overwhelmingly high probability over $Z$:
\begin{align}\begin{split}\label{eq:local_law}
&|G^{(\bX)}_{xy}(z,t)-P^{(\bX)}_{xy}(\cB_{\fR/100}(\{x,y\}\cup \bX, \GG),z_t,\msc(z_t))|\lesssim N^{-\fb} ,
\end{split}\end{align}
and 
    \begin{align}\label{e:Gest}
        &\max_{x,y\not\in \bX}|\Im[ G^{(\bX)}_{xy}(z,t)]\lesssim N^\fo \Im[ m_t(z)],\quad \frac{1}{N}\sum_{x\notin \bX}|G_{xy}^{(\bX)}|^2\lesssim \frac{N^\fo \Im[m_t(z)]}{N\eta},\\
   \label{e:Trace-change}
       & \left|\frac{1}{N}\Tr[G^{(\bX)}(z,t)]-m_t(z)\right|\lesssim  \frac{(d-1)^\ell N^\fo \Im[m_t(z)]}{N\eta},\\
    &\label{e:Ghao0}
        \sum_{x=1}^NG_{xy}(z,t)=\frac{1}{d/\sqrt{d-1}-z}, \quad 
        \left|\sum_{x\not\in \bX}G_{xy}^{(\bX)}(z,t)\right|\lesssim \ell,\\
        &\label{e:use_Ward}
 \frac{1}{Nd}\sum_{v\sim u\not\in \bX}|G_{u y}^{(v \bX)}(z,t)|^2\lesssim \frac{ N^\fo \Im[m_t(z)]}{N\eta}.
    \end{align}
    
\end{lemma}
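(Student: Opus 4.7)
The plan is to deduce all five bounds from the local law of Theorem~\ref{thm:prevthm} by combining the Schur complement formula with spectral decomposition and the Ward identity. The key structural input is the indicator condition \eqref{e:indicator0}: every vertex appearing in the resampling data has a radius-$\fR/2$ tree neighborhood, and these neighborhoods are pairwise disjoint. Since any admissible $\bX$ satisfies $|\bX|\leq |\bT|+|\bW|+2\lesssim (d-1)^\ell = N^{\oo(1)}$, quantitative errors from removing $\bX$ remain controllable.

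For \eqref{eq:local_law}, I would start from the Schur complement formula
$G^{(\bX)}_{xy}(z,t) = G_{xy}(z,t) - G_{x\bX}(z,t)(G_{\bX\bX}(z,t))^{-1}G_{\bX y}(z,t)$,
substitute the entry-wise local law \eqref{eq:infbound}, and use that by the pairwise $\fR/2$ separation in \eqref{e:indicator0} and \eqref{e:Pijbound}, the off-diagonal blocks of $G_{\bX\bX}$ are of size $(|\msc(z_t)|/\sqrt{d-1})^{\fR/2}\ll 1$, so $G_{\bX\bX}$ is essentially block-diagonal with $\asymp 1$ diagonal. Comparing this expansion with the identical Schur expansion for the tree-extended Green's function $P^{(\bX)}$ yields the stated $N^{-\fb}$ approximation. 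For the first part of \eqref{e:Gest}, the diagonal bound $|\Im G^{(\bX)}_{xx}|\lesssim 1$ from \eqref{eq:local_law} implies eigenvector delocalization for $H^{(\bX)}(t)$ through the inequality $(\bmu^{(\bX)}_\al(x))^2 \leq \eta\,\Im G^{(\bX)}_{xx}(\la^{(\bX)}_\al+i\eta)$ at $\eta = N^{-1+\fg}$; plugging this into the spectral representation of $G^{(\bX)}$ gives $|\Im G^{(\bX)}_{xy}|\lesssim N^\fo\,\Im m_t(z)$. The standard Ward identity $\sum_x |G^{(\bX)}_{xy}|^2 = \Im G^{(\bX)}_{yy}/\eta$ then yields the averaged bound.

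The trace bound \eqref{e:Trace-change} follows by telescoping $\Tr G -\Tr G^{(\bX)}$ into $|\bX|\lesssim (d-1)^\ell$ single-vertex removals, each contributing a term of imaginary part $O(N^\fo\,\Im m_t(z))$ by \eqref{e:Gest}; dividing by $N$ gives the claim. For \eqref{e:Ghao0}, the first identity is immediate since $\mathbf{1}/\sqrt{N}$ is an eigenvector of $H$ with eigenvalue $d/\sqrt{d-1}$ and $Z\mathbf{1}=0$ by the construction \eqref{e:GOEproject}. For the second identity, applying $\sum_{x\notin\bX}$ to $(H^{(\bX)}(t)-z)G^{(\bX)} = \mathbb{I}$ produces
\[
(d/\sqrt{d-1}-z)\sum_{x\notin\bX} G^{(\bX)}_{xy}(z,t) = 1 + \sum_{u\notin \bX,\,x\in\bX}H(t)_{xu}\,G^{(\bX)}_{uy}(z,t),
\]
and the right-hand side is bounded using the exponential decay of $G^{(\bX)}$ from \eqref{eq:local_law} around the $\fR/2$-isolated vertices of $\bX$, together with the $L^2$ Ward bound and Gaussian concentration for the $\sqrt{t}Z$ piece. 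Finally, \eqref{e:use_Ward} follows from the one-step Schur identity $G^{(v\bX)}_{uy}=G^{(\bX)}_{uy} - G^{(\bX)}_{uv}G^{(\bX)}_{vy}/G^{(\bX)}_{vv}$ with $|G^{(\bX)}_{vv}|\asymp 1$, reducing the sum to $\frac{1}{Nd}\sum_{v\sim u}|G^{(\bX)}_{uy}|^2$ up to negligible terms, after which the Ward bound from \eqref{e:Gest} gives the stated estimate.

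The principal obstacle will be keeping all estimates uniform over the combinatorial family of admissible $\bX$ while absorbing the size $|\bX|\lesssim (d-1)^\ell$ into the $N^\fo$ slack. This is made feasible by the distance-$\fR/2$ isolation in \eqref{e:indicator0}, which decouples contributions from different vertices of $\bX$ and reduces every Schur manipulation to an essentially local tree-based computation; without this isolation, the accumulated errors from the polylogarithmically many vertex removals would overwhelm the target bounds.
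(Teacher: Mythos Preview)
Your overall strategy is sound and parallels the paper's approach closely, but there is a genuine gap in your argument for \eqref{e:Gest}. You propose to deduce eigenvector delocalization for $H^{(\bX)}(t)$ from the diagonal bound $|\Im G^{(\bX)}_{xx}|\lesssim 1$ at $\eta=N^{-1+\fg}$, obtaining $(\bmu^{(\bX)}_\al(x))^2\lesssim N^{-1+\fg}$. Plugging this into the spectral decomposition gives only $|\Im G^{(\bX)}_{xy}|\lesssim N^{\fg}\,\Im m_t(z)$, not the claimed $N^{\fo}$ factor; since $\fo\ll\fg$ in the hierarchy \eqref{e:parameters}, this is strictly too weak for the lemma as stated. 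The local law \eqref{eq:local_law} is only available down to $\Im z\geq N^{-1+\fg}$, so no smaller $\eta$ can be inserted to sharpen the delocalization exponent.

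The paper avoids this by not re-deriving delocalization for $H^{(\bX)}(t)$. Instead it uses the already-established bound $\|\bmu_\al(t)\|_\infty^2\lesssim N^{\fo/2-1}$ for $H(t)$ itself from \eqref{e:eig_delocalization} (proved in \cite{bauerschmidt2017bulk} independently of the local law), obtaining $|\Im G_{xy}|\lesssim N^{\fo/2}\Im m_t$ directly. It then transfers this to $G^{(\bX)}$ via the Schur complement identity $G^{(\bT)}=G-G(G|_\bT)^{-1}G$, taking imaginary parts and controlling each piece. The key auxiliary estimates are $\sum_{j\in\bT}|(G(G|_\bT)^{-1})_{xj}|\lesssim\ell$ and $\max_{j,k}|\Im[(G|_\bT)^{-1}_{jk}]|\lesssim\ell^2 N^{\fo/2}\Im m_t$, both obtained by approximating $(G|_\bT)^{-1}$ by the sparse matrix $P^{-1}=H_\bT-z_t-\msc(z_t)\mathbb I^\del$ via resolvent expansion. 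This route preserves the $N^\fo$ factor throughout.

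Your treatments of \eqref{e:Trace-change} (telescoping single removals) and \eqref{e:Ghao0} (summing the resolvent equation) are correct alternatives to the paper's direct Schur-complement manipulations; they work once \eqref{e:Gest} is in hand, and your handling of \eqref{e:use_Ward} matches the paper exactly.
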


\begin{lemma}
    \label{c:expectationbound}
Adopt the notation and assumptions from \Cref{l:basicG}, and further assume $\tcG\in \Omega$. 
 Then for any vertex set $\bX\subset \bW\cup \{b,b'\}$, or $\bX=\bT\cup \bX'$ with $\bX'\subset \{b,b'\}$, the following holds with overwhelmingly high probability over $Z$:
\begin{align}\label{e:tmmdiff}
    |\wt m_t(z)-m_t(z)|\lesssim \frac{(d-1)^\ell N^\fo \Im[m_t(z)]}{N\eta},
\end{align}
and 
\begin{align}\begin{split}\label{e:use_Ward2}
 &\frac{1}{N}\sum_{x\notin\bX} |\wt G^{(\bX)}_{xy}(z,t)|^2\lesssim \frac{ N^\fo \Im[m_t(z)]}{N\eta}, \quad   \left|\frac{1}{N}\Tr[\wt G^{(\bX)}(z,t)]-m_t(z)\right|\lesssim  \frac{(d-1)^\ell N^\fo \Im[m_t(z)]}{N\eta},\\
    &
        \sum_{x=1}^N\wt G_{xy}(z,t)=\frac{1}{d/\sqrt{d-1}-z}, \quad 
        \left|\sum_{x\not\in \bX}\wt G_{xy}^{(\bX)}(z,t)\right|\lesssim \ell.
\end{split}\end{align}
\end{lemma}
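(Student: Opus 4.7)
The plan decomposes into two parts: the Stieltjes transform comparison \eqref{e:tmmdiff}, which I prove via a low-rank resolvent identity; and the estimates on $\tilde G^{(\bX)}$ in \eqref{e:use_Ward2}, which I deduce by transferring the proof of \Cref{l:basicG} to $\tcG$ using the added hypothesis $\tcG\in\Omega$.

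For \eqref{e:tmmdiff}, set $D:=\tilde H-H$ and observe that $D$ is supported on the $O(\mu)=O((d-1)^\ell)$ edge-slots modified by $T_\bfS$, with entries of magnitude $\lesssim 1/\sqrt{d-1}$. The resolvent identity yields
\[
\tilde m_t(z)-m_t(z)=-\frac{1}{N}\sum_{(i,j)}D_{ij}\bigl(G(z,t)\tilde G(z,t)\bigr)_{ji},
\]
with sum over the changed pairs. Cauchy--Schwarz together with the Ward identity $\sum_x|G_{xi}(z,t)|^2=\Im[G_{ii}(z,t)]/\eta$ (and its $\tilde G$-analogue obtained from \Cref{thm:prevthm} on $\tcG\in\Omega$) give $|(G\tilde G)_{ji}|\leq\sqrt{\Im[G_{jj}]\Im[\tilde G_{ii}]}/\eta$. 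Combining \eqref{e:Gest} for $G$ with $\Im[\tilde G_{ii}]\lesssim N^\fo\Im[\tilde m_t(z)]\asymp N^\fo\Im[m_t(z)]$ (the last equivalence from \eqref{eq:square_root_behave}, since both sides are $\asymp\Im[m_d(z_t)]$), each factor is $\lesssim N^\fo\Im[m_t(z)]/\eta$. Summing over the $O((d-1)^\ell)$ contributing pairs produces \eqref{e:tmmdiff}.

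For the remaining inequalities in \eqref{e:use_Ward2}, I would repeat the proof of \Cref{l:basicG} with $\tcG$ in place of $\cG$. The local law on $\tilde G$ follows from \Cref{thm:prevthm} applied to $\tcG\in\Omega$. The structural conditions needed on $\tcG$---tree neighborhoods at radius $\fR/2$ around $\{o,c,c'\}\cup\{c_\al\}$ and pairwise $\tcG$-distance separation at scale $\fR/2$---carry over from \eqref{e:indicator0} on $\cG$ because the switching $T_\bfS$ only modifies edges within distance $\fR/4$ of the switched pairs (by the isolation clause of $I_\al$ defining $\mathsf{W}_\bfS$), so $\cG$ and $\tcG$ agree outside a union of $\fR/4$-radius balls around the switched edges. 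This yields the exact analogues of \eqref{e:Gest}, \eqref{e:Trace-change}, \eqref{e:Ghao0}, and \eqref{e:use_Ward} for $\tilde G^{(\bX)}$, which give the first and third bounds of \eqref{e:use_Ward2} directly. The trace comparison with $m_t$ follows from the $\tilde G$-analogue of \eqref{e:Trace-change} combined with \eqref{e:tmmdiff} by the triangle inequality. The identity $\sum_x\tilde G_{xy}(z,t)=(d/\sqrt{d-1}-z)^{-1}$ is algebraic: $\tcG$ is $d$-regular and $Z\mathbf 1=0$ by \eqref{e:GOEproject}, hence $(\tilde H(t)-z)\mathbf 1=(d/\sqrt{d-1}-z)\mathbf 1$, yielding the claim upon inversion. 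The last bound $|\sum_{x\notin\bX}\tilde G^{(\bX)}_{xy}|\lesssim\ell$ follows by summing the Schur complement identity $\tilde G^{(\bX)}_{xy}=\tilde G_{xy}-\tilde G_{x\bX}(\tilde G_{\bX\bX})^{-1}\tilde G_{\bX y}$ and using the identity just established for $\sum_x\tilde G_{xy}$ together with the entrywise control of $\tilde G_{x\bX}$ from \Cref{thm:prevthm}.

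The main obstacle is the careful verification that the switching preserves the tree structure and distance separations required to invoke the proof of \Cref{l:basicG} in $\tcG$: specifically, one must confirm that for each relevant vertex $x\in\{o,c,c'\}\cup\{c_\al\}$ the ball $\cB_{\fR/2}(x,\tcG)$ remains a tree and that pairwise $\tcG$-distances between these vertices remain at least $\fR/2$, together with the well-conditioning of $\tilde G_{\bX\bX}$ (comparable by the local law to the tree-extension Green's function $P_{\bX\bX}(\cB_{\fR/100}(\bX,\tcG),z_t,\msc(z_t))$, which is controlled by \eqref{e:Pijbound}). These properties rest on the isolation clauses built into $I_\al$, but the bookkeeping near the boundary of $\bT$---where the topology of $\cG$ and $\tcG$ genuinely differs---must be tracked carefully to ensure the Schur-complement-based estimates of \Cref{l:basicG} carry through unchanged.
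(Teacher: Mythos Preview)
Your approach matches the paper's: resolvent identity plus Ward for \eqref{e:tmmdiff}, then transfer of \Cref{l:basicG} to $\tcG\in\Omega$ for \eqref{e:use_Ward2}. There is, however, one genuine gap.

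In the proof of \eqref{e:tmmdiff}, you justify $\Im[\wt m_t(z)]\asymp\Im[m_t(z)]$ by asserting both are $\asymp\Im[m_d(z_t)]$ via \eqref{eq:square_root_behave}. This fails on part of $\mathbf D$: when $|\Re[z]|\geq E_t$ and $\eta$ is small, \eqref{eq:square_root_behave} gives $\Im[m_d(z_t)]\asymp\eta/\sqrt{\kappa+\eta}$, which can be far smaller than the local-law error $N^{-2\fb}$ in \eqref{eq:infbound}. In that regime the local law does \emph{not} pin $\Im[m_t]$ or $\Im[\wt m_t]$ to $\Im[m_d(z_t)]$, so your equivalence is circular. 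The paper avoids this: after Cauchy--Schwarz and Ward it leaves $\Im[\wt m_t]$ on the right-hand side,
\[
|\wt m_t - m_t|\;\lesssim\;\frac{(d-1)^\ell N^\fo}{N\eta}\sqrt{\Im[\wt m_t]\,\Im[m_t]},
\]
and then closes the estimate self-consistently using $\Im[\wt m_t]\leq\Im[m_t]+|\wt m_t-m_t|$ together with $N\eta\gg(d-1)^\ell N^\fo$ (this yields a quadratic inequality in $\sqrt{|\wt m_t-m_t|}$ that rearranges to \eqref{e:tmmdiff}). Once \eqref{e:tmmdiff} is in hand, $\Im[\wt m_t]\lesssim\Im[m_t]$ holds a posteriori, and your transfer of the $\tcG$-bounds in \eqref{e:use_Ward2} back to $\Im[m_t]$ then goes through exactly as you describe.
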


In the following lemma, we gather some estimates related to the constrained GOE matrix $Z$. Its proof follows from standard Gaussian concentration inequalities. We postpone their proofs to \Cref{app:Green}.

\begin{lemma}\label{p:WtGbound}
For any $x,y\in \qq{N}$, $| Z_{xy}|\leq N^\fo /\sqrt N$ holds with overwhelmingly high probability over $Z$.

Adopt the notation and assumptions from \Cref{l:basicG}.
Then for any vertex set $\bX\subset \bW\cup \{b,b'\}$ or $\bX=\bT\cup \bX'$ with $\bX'\subset \bW\cup \{b,b'\}$, the following holds with overwhelmingly high probability over $Z$:
\begin{align}\begin{split}\label{e:Werror}
&
| (Z G^{(\bX)}(z,t))_{x y} |\lesssim N^{\fo}\sqrt{\frac{\Im[m_t(z)]}{N\eta}} , \text{ for } x\in \bX, y\not\in\bX; \\ 
&|(Z G^{(\bX)}(z,t)Z)_{x y}- m_t(z)\delta_{xy}|\leq 
N^\fo\sqrt{\frac{\Im[m_t(z)]}{N\eta}}, \text{ for } x,y\in \bX;  \\
&\sum_{y\not\in \bX} (Z G^{(\bX)}(z,t))_{x y}\lesssim N^\fo,\text{ for }  x\in \bX;\quad \frac{1}{N}\sum_{y\not\in \bX}|(Z G^{(\bX)}(z,t))_{x y} |^2
\lesssim
N^{2\fo}\frac{\Im[m_t(z)]}{N\eta}, \text{ for }  x\in \qq{N}.
\end{split}\end{align}
We further assume $\tcG\in \Omega$. Then for any vertex set $\bX=\bT\cup \bX'$ with $\bX'\subset \{b,b'\}$, the following holds with overwhelmingly high probability over $Z$:
\begin{align}\begin{split}\label{e:Werror2}
&
| (Z \wt G^{(\bX)}(z,t))_{x y} |\lesssim N^{\fo}\sqrt{\frac{\Im[m_t(z)]}{N\eta}}, \text{ for } x\in \bX, y\not\in\bX;\\
&
|(Z \wt G^{(\bX)}(z,t)Z)_{x y}- m_t(z)\delta_{xy}|\lesssim 
N^\fo\sqrt{\frac{\Im[m_t(z)]}{N\eta}}, \text{ for } x,y\in \bX;  \\
&\sum_{y\not\in \bX} (Z \wt G^{(\bX)}(z,t))_{x y}\lesssim N^\fo,\text{ for }  x\in \bX.
\end{split}\end{align}
\end{lemma}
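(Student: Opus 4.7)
The plan is to combine direct Gaussian tail bounds for the entries of $Z$ with standard conditional-Gaussian concentration and a Hanson--Wright type argument for the quadratic form, using the Ward-type estimates already established in \Cref{l:basicG} and \Cref{c:expectationbound} to control the relevant $L^2$ norms. By \eqref{e:GOEproject}, $Z$ is a deterministic linear projection of an ambient GOE matrix, hence its entries are jointly Gaussian with covariances of order $\OO(1/N)$ given by \eqref{W_ibp}. The pointwise bound $|Z_{xy}|\leq N^{\fo}/\sqrt{N}$ follows immediately from the Gaussian tail on each entry (variance $\leq 2/N$) together with a union bound over $x,y\in\qq{N}$.

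For the linear-form estimate in \eqref{e:Werror}, I would condition on $Z^{(\bX)}$, noting that $G^{(\bX)}(z,t)$ is a deterministic function of this sub-matrix. Conditionally, $(Z G^{(\bX)})_{xy} = \sum_{k\notin\bX} Z_{xk}\, G^{(\bX)}_{ky}$ is a linear combination of the Gaussians $\{Z_{xk}\}_k$ whose variances remain of order $1/N$ after conditioning, so its conditional variance is at most $(C/N)\sum_{k\notin\bX}|G^{(\bX)}_{ky}|^2 \lesssim N^{\fo}\Im[m_t(z)]/(N\eta)$ by the Ward-type bound in \eqref{e:Gest}. The Gaussian tail combined with a union bound over $x,y$ then gives the first line of \eqref{e:Werror}. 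The averaged variants are handled the same way: for $\sum_{y\notin\bX}(ZG^{(\bX)})_{xy}$ the Gaussian coefficient is $\sum_{y\notin\bX} G^{(\bX)}_{ky}=\OO(\ell)$ by \eqref{e:Ghao0}, yielding a variance $\lesssim \ell^2$; for $N^{-1}\sum_{y\notin\bX}|(ZG^{(\bX)})_{xy}|^2$, the conditional expectation reduces via \eqref{W_ibp} to $N^{-2}\sum_{k,y}|G^{(\bX)}_{ky}|^2$ plus negligible cross terms, which is $\lesssim \Im[m_t]/(N\eta)$ by \eqref{e:Gest}, and high-moment Gaussian concentration (derived by iterating Gaussian integration by parts as in \Cref{p:intbypart}) followed by Markov upgrades this to the pointwise probabilistic bound.

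For the quadratic form $(ZG^{(\bX)}Z)_{xy}$ with $x,y\in\bX$, I would apply a Hanson--Wright inequality conditional on $Z^{(\bX)}$. A direct computation of the conditional mean using \eqref{W_ibp} produces $N^{-1}\Tr[G^{(\bX)}]\,\delta_{xy}$ plus corrections of order $N^{-2}\sum_k |G^{(\bX)}_{kk}| + N^{-2}|\sum_k G^{(\bX)}_{kk}|$ that are controlled by \eqref{e:Ghao0} and the Ward bound, and by \eqref{e:Trace-change} the main term equals $m_t(z)\delta_{xy}$ up to $\OO((d-1)^\ell N^\fo \Im[m_t]/(N\eta))$, which is absorbed into the target bound since $(d-1)^\ell \lesssim N^\fb$. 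The conditional fluctuation is bounded by the Hilbert--Schmidt norm $\|G^{(\bX)}\|_{\text{HS}}/N \lesssim \sqrt{\Im[m_t]/(N\eta)}$, again via \eqref{e:Gest}, producing the desired bound after a union bound over $x,y\in\bX$.

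The analogous statements in \eqref{e:Werror2} are proved by exactly the same template, observing that for $\bX\supset \bT$ the switched matrix $\wt H^{(\bX)}(t)$ depends on $Z$ only through the sub-matrix $Z^{(\bX)}$, since all edges perturbed by the local resampling around $o$ are incident to $\bT$; thus the conditional Gaussian analysis applies verbatim, with the Ward-type bounds on $\wt G^{(\bX)}$ now supplied by \eqref{e:use_Ward2} in \Cref{c:expectationbound} under the hypothesis $\tcG\in\Omega$. The main subtlety in executing this plan is bookkeeping the $-1/N$ corrections in \eqref{W_ibp} arising from the vanishing row/column-sum constraint on $Z$; however, each such correction carries an extra factor of $1/N$ and, after contraction against one or two copies of $G^{(\bX)}$, reduces to a factor of the form $\sum_y G^{(\bX)}_{ky}=\OO(\ell)$ or its higher analogue, all of which are safely absorbed by the target bounds.
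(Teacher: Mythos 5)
Your opening steps (the entrywise bound on $|Z_{xy}|$, and the reduction of all estimates to Gaussian concentration against the Ward-type bounds of \Cref{l:basicG}) are fine, but the central step — conditioning on $Z^{(\bX)}$ and treating $(ZG^{(\bX)})_{xy}=\sum_{k\notin\bX}Z_{xk}G^{(\bX)}_{ky}$ as a centered Gaussian linear form with deterministic coefficients — has a genuine gap. Because of the vanishing row/column sums, the minor $Z^{(\bX)}$ determines the column sums of the removed block: $\sum_{x'\in\bX}Z_{x'k}=-\sum_{j\notin\bX}Z_{jk}$ for every $k\notin\bX$. Combined with the exchangeability of the rows indexed by $\bX$ given $Z^{(\bX)}$, this forces $\bE[Z_{xk}\mid Z^{(\bX)}]=-\frac{1}{|\bX|}\sum_{j\notin\bX}Z_{jk}$, and hence
\begin{align*}
\bE\big[(ZG^{(\bX)})_{xy}\mid Z^{(\bX)}\big]=\frac{1}{|\bX|}\sum_{x'\in\bX}(ZG^{(\bX)})_{x'y},
\end{align*}
i.e.\ the conditional mean is exactly an average of the quantities you are trying to bound. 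Your argument only controls the conditional fluctuation; the mean is not one of the ``$-1/N$ corrections absorbed by $\sum_y G^{(\bX)}_{ky}=\OO(\ell)$''. The crude bounds actually available for it are far too weak: Cauchy--Schwarz gives $\frac{1}{|\bX|}\|Z^{(\bX)}\mathbf 1\|_2\,\|G^{(\bX)}e_y\|_2\lesssim N^{2\fo}\sqrt{N\,\Im[m_t]/(N\eta)}/\sqrt{|\bX|}$, and the identity $\sqrt t\,Z^{(\bX)}G^{(\bX)}=I-(H^{(\bX)}-z)G^{(\bX)}$ only gives order $(\ell+|\bX|)/(|\bX|\sqrt t)$ — both enormously larger than the target $N^\fo\sqrt{\Im[m_t(z)]/(N\eta)}$. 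So the plan is circular as stated. The same defect undermines your treatment of the quadratic form, where you compute the ``conditional mean'' with the unconditional covariance \eqref{W_ibp} as if the removed rows were independent of $G^{(\bX)}$, and it persists verbatim for the $\wt G^{(\bX)}$ estimates.

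This dependence between the removed rows of $Z$ and the minor $Z^{(\bX)}$ is precisely what the paper's proof is built to break, and that device is missing from your proposal. For each fixed $x$ the paper introduces $\widehat Z$, the projection of $Z$ onto symmetric matrices whose $x$-th row and column vanish (still with zero row sums), which is genuinely independent of $(Z_{x1},\dots,Z_{xN})$; it first compares $G^{(\bX)}$ with $\widehat G^{(\bX)}=(H+\sqrt t\,\widehat Z-z)^{-1}$ (minor) through the Woodbury formula applied to the rank-two perturbation $Z^{(\bX)}-\widehat Z^{(\bX)}=\bmz\mathbf 1^\top+\mathbf 1\bmz^\top$, then applies Gaussian concentration to sums like $\sum_{x'}h_{x'}\widehat G^{(\bX)}_{x'y}$ via the explicit coupling \eqref{e:GOEproject3}, and finally transfers the bounds back to $G^{(\bX)}$ with a second Woodbury expansion, using the Ward-type estimates to control all matrix elements. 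Without this decoupling-and-transfer step (or an equivalent way to control the conditional mean), your proof does not close; with it, your remaining steps essentially reproduce the paper's argument.
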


\section{Main Results and Proof Outlines}\label{s:outline}

In this section, we present our main results: \Cref{t:recursion} and \Cref{t:correlation_evolution}, which address the high-moment estimates of the self-consistent equation and the microscopic version of the loop equations at the edge. Using these results as inputs, we proceed to prove both \Cref{thm:eigrigidity} (optimal eigenvalue rigidity) and \Cref{t:universality} (edge universality).

The proof of optimal eigenvalue rigidity (\Cref{thm:eigrigidity}), based on \Cref{t:recursion}, follows a standard argument in random matrix theory, which we defer to \Cref{s:proofrigidity}. For edge universality (\Cref{t:universality}), we provide the proof in \Cref{s:universality}, leveraging \Cref{t:correlation_evolution} as the key input.

The proof of \Cref{t:recursion} is developed through an iteration scheme. Foundational concepts are introduced in \Cref{s:forest} and \Cref{t:admissible}, while the detailed proof is outlined in \Cref{s:proofoutline}.

\subsection{Setting and notation}\label{s:setting3}

Fix $d\geq 3$. We recall the spectral domain $\bf D$ from \eqref{e:D}, and parameters $\fo\ll \ft\ll\fb\ll\fc\ll\fg$ from \eqref{e:parameters}. Fix time $t\leq N^{-1/3+\ft}$. We recall $\varrho_d(x,t)$, $\md(z,t)$ and $E_t$ from \eqref{e:defrhodt} and \eqref{e:edgeeqn2}. For any parameter $z\in \bf D$, we denote $\eta=\Im[z]$, $\kappa=\min\{|\Re[z]-E_t|, |\Re[z]+E_t|\}$, and $z_t=z+t\md(z,t)=z+t\md(z_t)$ (recall from \eqref{e:defw}). 
We also recall the matrix $H(t)$, its Green's function $G(z,t)$, its Stieltjes transform $m_t(z)$, the functions $Y_\ell, X_\ell$ and the quantity  $Q_t(z)$ from \eqref{e:Ht} , \eqref{e:Gt}, \eqref{def_mtz},  \eqref{def:Y} and \eqref{e:Qsum}. For simplicity of notation, we write 
\begin{align}\label{e:defYt}
 Y_t(z)=Y_\ell(Q_t(z), z+tm_t(z)),\quad X_t(z)=X_\ell(Q_t(z), z+tm_t(z)).
\end{align}
We recall the sets $\Omega$ and $\oOmega$ of $d$-regular graphs from \Cref{def:omegabar} and \Cref{thm:prevthm0}. Recall that for $\cG\in \Omega$,  the statements in \Cref{thm:prevthm} hold with overwhelmingly high probability over $Z$. 

Next, we introduce some more error terms. We notice that they depend on the graph $\cG$ and thus are random quantities. 
\begin{definition}\label{def:phidef}
For any $z\in \bC^+$ in the upper half plane with $\eta=\Im[z]$, we introduce the control parameter 
\begin{align}\begin{split}\label{e:defPhi}
 \Phi(z):&=\frac{\Im[m_t(z)]}{N\eta}+\frac{1}{N^{1-2\fc}},\\ 
 \Upsilon(z):&=|1-\del_1Y_\ell(Q_t(z), z+tm_t(z))|+t|\del_2 Y_\ell(Q_t(z), z+tm_t(z))|+(d-1)^{8\ell}\Phi(z).
\end{split}\end{align}
For any $p\geq 1$, we define the error parameter 
\begin{align}\begin{split}\label{eq:phidef}
&\Psi_p(z):= \bm1(\cG\in \Omega)
\Bigg[\frac{|Q_t(z)-Y_t(z)|}{N^{\fb/8}}(|Q_t(z)-Y_t(z)|+N^\fb\Upsilon(z)\Phi(z))^{p-1}\\
&+\Phi(z)\left(|Q_t(z)-Y_t(z)|+(1+N^\fb \Upsilon(z))\Phi(z)
+\frac{\Upsilon(z) }{N\eta}\right)(|Q_t(z)-Y_t(z)|+N^\fb\Upsilon(z)\Phi(z))^{p-2}
 \Bigg],
\end{split} \end{align} 
and more generally for any function $F_t(z)=F(Q_t(z), m_t(z),t)$, we define the error parameter
\begin{align}\begin{split}
\Xi_p(z;F_t(z))\label{def:Xip}
:= \bm1(\cG\in \Omega)\Bigg[&\frac{|Q_t(z)-Y_t(z)|}{N^{\fb/8}}(|F_t(z)|+N^\fb\Phi(z))^{p-1}\\
&+\Phi(z)\left(|F_t(z)|+(1+N^\fb)\Phi(z)
+\frac{1 }{N\eta}\right)(|F_t(z)|+N^\fb\Phi(z))^{p-2}\Bigg].
\end{split}\end{align}
 \end{definition}
 \begin{remark}
The quantity $\Phi(z)$ will be used to bound errors as in \Cref{l:basicG}, \Cref{c:expectationbound} and \Cref{p:WtGbound}.  The quantity $\Upsilon(z)$ will be used to bound the derivatives of $Y_\ell(Q_t(z), z+tm_t(z))$ with respect to $Q_t(z)$ and $m_t(z)$. The quantity $\Xi_p(z;F)$ in \eqref{def:Xip} is derived from $\Psi_p(z)$ in \eqref{eq:phidef} by replacing $|Q_t(z)-Y_t(z)|$ and  $\Upsilon(z)$ with $|m_t(z)-X_t(z)|$ and $1$, except for the first $|Q_t(z)-Y_t(z)|/N^{\fb/8}$ factor. The error parameters $\Psi_p(z)$ and $\Xi_p(z;F_t(z))$ each consists of two terms. The first term (containing the factor $|Q_t(z)-Y_t(z)|/N^{\fb/8}$) essentially arises from the error introduced in our iteration process, where, at the end of each step, a copy of $Q_t(z)$ is replaced by $Y_t$. For further discussion, see \Cref{r:QtoY}. The remaining errors are bound by the second term.
 \end{remark}

 \subsection{Main Theorems}\label{s:mainresult}
The following is our main result on the high moments estimate of the self-consistent equation.
\begin{theorem}
\label{t:recursion}Adopt the notation of \Cref{s:setting3}. Fix an integer $p\geq 1$, a spectral parameter $z\in \bf D$, and $z_1, z_2, \cdots, z_{p-1}\in \{z,\overline{z}\}$, and let $\fX_p=\Xi_p(z; m_t(z)-X_t(z))$ (recall from \eqref{def:Xip}). Then  the following holds
\begin{align}
\label{e:QY}&\bE\left[\bm1(\cG\in \Omega)(Q_t(z)-Y_t(z))\prod_{1\leq j\leq p-1}(Q_t(z_j)-Y_t(z_j))\right]\lesssim (d-1)^{2\ell}\bE[\Psi_p(z)],\\
\label{e:mz} &\bE\left[\bm1(\cG\in \Omega)(m_t(z)-X_t(z))\prod_{1\leq j\leq p-1}(m_t(z_j)-X_t(z_j))\right]\lesssim (d-1)^{2\ell}\bE[\fX_p(z)].
\end{align}
Moreover, if we further assume that $|z-E_t|\leq N^{-\fg}$ we have a refined estimate for $Q_t-Y_t$
\begin{align}\begin{split}\label{e:Qrefined_bound}
  & \phantom{{}={}}\frac{\cA^2}{\ell+1}\bE\left[\bm1(\cG\in \Omega)(Q_t(z)-Y_t(z))\prod_{1\leq j\leq p-1}(Q_t(z_j)-Y_t(z_j))\right]\\
  &+\bE\left[\bm1(\cG\in \Omega)\frac{\del_z m_t(z)}{N}\prod_{1\leq j\leq p-1}(Q_t(z_j)-Y_t(z_j))\right] \\
&+   \sum_{1\leq j\leq p-1}\frac{2}{ N^2}\bE\left[\left(\frac{1-\del_1Y_\ell(Q_t(z_j),z_j+tm_t(z_j))}{\cA }-t\del_2 Y_\ell(Q_t(z_j),z_j+tm_t(z_j))\right)\right.\\
&\left.\times\bm1(\cG\in \Omega)\del_{z_j}\left(\frac{m_t(z)-m_t(z_j)}{z-z_j} \right)\prod_{1\leq i\neq j\leq p-1}(Q_t(z_i)-Y_t(z_i))\right]=\OO\left(\frac{N^\fo\bE[\Psi_p(z)]}{(d-1)^{\ell/2}}\right),
\end{split}\end{align}
where the constant $\cA$ is from \eqref{e:edge_behavior}. If $|z+E_t|\leq N^{-\fg}$, an analogous statement holds after multiplying the first term in \eqref{e:Qrefined_bound} by $-1$. 
\end{theorem}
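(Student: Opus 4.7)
My plan is to prove the three bounds through a unified local-resampling iteration, with the refined estimate \eqref{e:Qrefined_bound} requiring one to retain rather than discard the next-order terms that source the microscopic loop equation.

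I would start with the first-moment case $\bE[Q_t(z)-Y_t(z)]$ and, by vertex-permutation invariance, rewrite $\bE[Q_t(z)] = \bE[G^{(i)}_{oo}(z,t)]$ for a fixed adjacent pair $(i,o)$ with $o$ having a tree neighborhood $\cT=\cB_\ell(o,\cG)$ (the non-tree contribution is controlled by $\cG\in\Omega$ from \Cref{lem:omega}). Applying local resampling $T_\bfS$ around $o$ and invoking exchangeability (\Cref{lem:exchangeablepair}) moves the analysis to the switched graph $\tcG$, on which the tree structure of $\cT^{(i)}$ is preserved. The Schur complement formula then expresses $\wt G^{(i)}_{oo}(z,t)$ as $Y_\ell(Q_t(z), z+tm_t(z))$ plus a boundary remainder organized into weighted products of ``diagonal'' factors $G^{(b_\al)}_{c_\al c_\al}(z,t)-Q_t(z)$ and ``off-diagonal'' factors $G^{(b_\al b_\beta)}_{c_\al c_\beta}(z,t)$ with $\al\ne\beta$. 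For higher moments, the additional $\wt Q_t(z_j)-Q_t(z_j)$ and $\wt m_t(z_j)-m_t(z_j)$ variations induced on the remaining factors are expanded through the Woodbury identity \eqref{e:WB}, whose successive terms decay geometrically in $N^{-\fb}$ because $|G-P|\ll 1$ on the relevant local neighborhood.

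The core is then the iteration described informally in \Cref{s:newstrategy}. By the factorisation identity referenced in the introduction (\Cref{p:upbb}), taking expectation over $\bfS$ reduces any isolated off-diagonal factor, up to a gain of $(d-1)^{-\ell/2}$ from averaging over the $\mu\asymp(d-1)^\ell$ uniformly chosen $c_\al$'s, to an expression carrying two diagonal factors. Consequently, every surviving term after one round either shrinks by $(d-1)^{-\ell/2}$ or retains a diagonal factor $G^{(b_\al)}_{c_\al c_\al}-Q_t$ that is structurally identical to the starting expression with $c_\al$ in place of $o$; I then perform a fresh local resampling around $c_\al$ and repeat. The admissible forests of \Cref{s:forest} and the bookkeeping of \Cref{t:admissible} record this recursion. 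After $\OO(p)$ iterations, the accumulated gains dominate the geometric factor $(d-1)^{2\ell}$ and yield \eqref{e:QY}. The estimate \eqref{e:mz} follows from the same template starting from $\bE[G_{oo}(z,t)]$ and using \eqref{e:Xrecurbound} in place of \eqref{e:recurbound}; the absence of a root removal makes the combinatorics lighter and produces the simpler error $\fX_p$.

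For the refined estimate \eqref{e:Qrefined_bound}, I would retain the $\OO((d-1)^{-\ell/2})$-terms rather than bound them. Two specific next-order contributions arise. The first is the Woodbury correction for $\wt G^{(\bT)}_{c_\al c_\beta} - G^{(b_\al b_\beta)}_{c_\al c_\beta}$: summed over $\al,\beta$ it yields $\tfrac{1}{\cA^2 N}\bigl(\tfrac{d(d-1)^\ell}{d-2}-\tfrac{d}{d-2}\bigr)\bE[\del_z m_t(z)]$ via \eqref{c1}. The second is the squared off-diagonal contribution, producing $\tfrac{1}{\cA^2 N}\bE[\del_z m_t(z)]$ times the appropriate multiplicity via \eqref{c2}. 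Expanding $Y_\ell(Q_t,z+tm_t)-Q_t$ near the edge via \eqref{e:recurbound}, together with $\msc(z_t)^{2\ell+2}-1\approx -(\ell+1)\cdot 2\sqrt{z_t-2}\asymp -(\ell+1)/\cA^2$ from the edge asymptotics \eqref{e:medge_behavior} and \Cref{l:relation_zt_z}, these align precisely to produce the coefficient $\cA^2/(\ell+1)$ in front of $Q_t-Y_t$ and the clean $\del_z m_t/N$ term. The divided-difference cross-term $\del_{z_j}\bigl((m_t(z)-m_t(z_j))/(z-z_j)\bigr)$ arises when the Woodbury perturbation acts on a different factor $Q_t(z_j)-Y_t(z_j)$: linearising $Y_\ell$ in its two arguments produces exactly the prefactor $(1-\del_1 Y_\ell)/\cA - t\del_2 Y_\ell$, and the resulting resolvent difference at parameters $z$ and $z_j$ assembles into the divided difference displayed.

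The main obstacle is the combinatorial control of the iteration: each resampling round enlarges the collection of switched edges, and one must propagate the tree/isolation hypotheses underlying \Cref{thm:prevthm}, \Cref{l:basicG}, and \Cref{p:upbb} with overwhelming probability through every level of recursion. Equally delicate is the cancellation in \eqref{e:Qrefined_bound}: the exact matching of the $\tfrac{d(d-1)^\ell}{d-2}-\tfrac{d}{d-2}$ coefficient from the Woodbury step with the $1$ from the squared off-diagonal step, combined with the $\cA^2/(\ell+1)$ scale from the edge Taylor coefficient of $Y_\ell$, is the only reason the loop equation emerges in such a clean form, and ensuring that every other error sits strictly below $(d-1)^{-\ell/2}N^\fo\bE[\Psi_p(z)]$ is the most subtle point of the argument.
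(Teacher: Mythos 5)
Your proposal is correct and follows essentially the same route as the paper: permutation invariance plus local resampling and exchangeability, Schur-complement expansion of $\wt G^{(i)}_{oo}$ into diagonal factors $G^{(b_\al)}_{c_\al c_\al}-Q_t$ and off-diagonal factors $G^{(b_\al b_\beta)}_{c_\al c_\beta}$, Woodbury expansion for the variations of $Q_t,m_t$, the forest-indexed iteration with a $(d-1)^{-\ell/2}$ gain per round via \Cref{p:upbb}, and for \eqref{e:Qrefined_bound} the retention of the corrections \eqref{c1} and \eqref{c2} together with the cross-term producing the divided difference with prefactor $(1-\del_1 Y_\ell)/\cA-t\del_2 Y_\ell$. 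One minor heuristic in your sketch is off: near the edge $\msc(z_t)^{2\ell+2}-1=\OO(\ell\sqrt{|z_t-2|})$ is small, so it is not $\asymp -(\ell+1)/\cA^2$; in the paper the factor $(\ell+1)/\cA^2$ instead comes from explicit tree Green's function sums (as in \Cref{l:second_term}) combined with identities like $\frac{1}{(Nd)^2}\sum (G^{(bb')}_{cc'})^2\approx \frac{1}{\cA^2}\frac{\del_z m_t}{N}$ (as in \Cref{l:error_term}), after which one divides by $(\ell+1)/\cA^2$ to reach the stated normalization.
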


For simplicity of notation, in this article, we prove \Cref{t:recursion} only for the case where $z=z_1=z_2=\cdots=z_{p-1}$. The general case can be proven using the same argument and is therefore omitted.

\begin{remark}
    Inside the bulk of spectrum, the size of the quantities in \Cref{def:phidef} and $\fX_p=\Xi_p(z; m_t(z)-X_t(z))$ are given by
\begin{align*}
    \Phi(z), |Q_t(z)-Y_t(z)|, |m_t(z)-X_t(z)|\lesssim \frac{N^{\oo(1)}}{N\eta}, \quad \Upsilon(z)\lesssim 1, \quad \Psi_p(z), \fX_p(z)\lesssim \left(\frac{N^{\oo(1)}}{N\eta}\right)^p.
\end{align*}
They imply $|Q_t(z)-\msc(z_t)|, |m_t(z)-\md(z_t)|\lesssim N^{\oo(1)}/N\eta$.

Close to the spectral edge, i.e. $|z\pm E_t|\lesssim N^{-2/3+\oo(1)}$ with $\Im[z]\gtrsim N^{-2/3+\oo(1)}$, the sizes of the quantities defined in \Cref{def:phidef} are given by:
\begin{align}\begin{split}\label{e:edge}
    &\Phi(z), |Q_t(z)-Y_t(z)|\lesssim \frac{N^{\oo(1)}}{N^{2/3}},  \quad |m_t(z)-X_t(z)|\lesssim \frac{N^{\oo(1)}}{N^{1/2}},\\
    &\Im[m_t(z)], \Upsilon(z)\lesssim \frac{N^{\oo(1)}}{N^{1/3}}, \quad \Psi_p(z)\lesssim \left(\frac{N^{\oo(1)}}{N^{2/3}}\right)^p, \quad \fX_p(z)\lesssim \left(\frac{N^{\oo(1)}}{N^{1/2}}\right)^p.
\end{split}\end{align}
The correction terms in \eqref{e:Qrefined_bound} are also of size $\OO((N^{\oo(1)}/N^{2/3})^p)$.
The estimates for $|m_t(z)-X_t(z)|$ and $\fX_p(z)$ in \eqref{e:edge} are worse than those for $|Q_t(z)-Y_t(z)|$ and $\Psi_p(z)$, because $\fX_p(z)$ (from \eqref{def:Xip}) does not include the additional $\Upsilon(z)$ factor, which is small near the spectral edge. Nevertheless, these estimates are sufficient to derive optimal bounds for  $Q_t(z)-\msc(z_t)$ and $m_t(z)-\md(z_t)$. Because the self-consistent equation is not singular with respect to $m_t-\md(z_t)$ at the spectral edge.
\end{remark}

\begin{remark}
For any $F_t(z)=F(Q_t(z), m_t(z),t)$ (recall from \eqref{def:Xip}) provided its derivatives satisfy bounds similar to those in \eqref{e:Yl_derivative}, our argument for \Cref{t:recursion} can be applied to show: 
\begin{align}\begin{split}\label{e:general_form}
&\bE\left[\bm1(\cG\in \Omega)(Q_t(z)-Y_t(z))\prod_{1\leq j\leq p-1}F_t(z_j)\right]\lesssim (d-1)^{2\ell}\bE[\Xi_p(z; F_t(z))],\\
&\bE\left[\bm1(\cG\in \Omega)(m_t(z)-X_t(z))\prod_{1\leq j\leq p-1}F_t(z_j)\right]\lesssim (d-1)^{2\ell}\bE[\Xi_p(z; F_t(z))].
\end{split}\end{align}
The statement \eqref{e:mz} is a special case of \eqref{e:general_form}, obtained by taking $F_t(z)=m_t(z)-X_t(z)$. We emphasize that near the spectral edge \eqref{e:QY} is stronger than simply setting $F_t(z)=Q_t(z)-Y_t(z)$ in \eqref{e:general_form}. This is because we can leverage the fact that the derivatives of $Q_t(z)-Y_t(z)$ with respect to $Q_t(z)$ and $m_t(z)$ are small, which gives the $\Upsilon(z)$ factors in \eqref{eq:phidef}.
\end{remark}

The following corollary provides optimal concentration of the Stieltjes transform $m_t(z)$ of the emprical eigenvalue distribution of $H(t)$. Given \Cref{t:recursion}, the proof of
\Cref{c:rigidity}
and the proof of optimal eigenvalue rigidity (\Cref{thm:eigrigidity}) based on \Cref{c:rigidity} follow a standard argument in random matrix theory, which we defer to \Cref{s:proofrigidity}.

\begin{corollary}\label{c:rigidity}
Adopt the notation of \Cref{s:setting3}. We recall the parameters $\fo\ll \ft\ll\fc\ll\fg$ from  \eqref{e:parameters}, and fix time $t\leq N^{-1/3+\ft}$.
Conditioned on $\cG\in \Omega$ (recall from \Cref{thm:prevthm0}), with overwhelmingly high probability the following holds:
For any $z\in \bf D$ (recall from \eqref{e:D}), with $\eta=\Im[z]$,
\begin{align}\label{e:mbond}
|Q_t(z)-\msc(z_t)|, |m_t(z)-m_d(z_t)|\lesssim \frac{N^{8\fc}}{N\eta},
\end{align}
and \begin{align}\label{e:equation_est}
|m_t(z)-X_t(z)|\lesssim  \frac{N^{2\fc}(\kappa+\eta)^{1/4}}{N\eta}+\frac{N^{6\fc}}{(N\eta)^{3/2}}.
\end{align}
If we further assume that $\min\{|z-E_t|, |z+E_t|\}\leq N^{-\fg}$, we have the following improved estimates
\begin{align}\label{e:mtmddiff}
&|Q_t(z)-\msc(z_t)|, |m_t(z)-m_d(z_t)|\lesssim   \frac{N^{8\fo}}{N\eta},
\end{align}
and 
\begin{align}\label{e:QQYY}
|Q_t(z)-Y_t(z)|\lesssim  \frac{N^{2\fo}(\kappa+\eta)^{1/2}}{N\eta}+\frac{N^{10\fo}}{(N\eta)^2}.
\end{align}
Moreover, if $|\Re[z]|\geq E_t$, and $\kappa=\min\{|\Re[z]-E_t|, |\Re[z]+E_t|\}$ satisfies  $N\eta\sqrt{\kappa+\eta}\geq N^{6\fo}$, then we have
\begin{align}\label{e:mtimproved}
&|Q_t(z)-\msc(z_t)|, |m_t(z)-m_d(z_t)|\lesssim   \frac{N^{2\fo}}{\sqrt{\kappa+\eta}}\left(\frac{1}{N\sqrt{\eta}}+\frac{1}{(N\eta)^2}\right).
\end{align}
\end{corollary}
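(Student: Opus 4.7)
The plan is to convert the high-moment estimates from Theorem~\ref{t:recursion} into high-probability bounds via Markov's inequality, and then invert the self-consistent equations furnished by Proposition~\ref{p:recurbound} to extract estimates on $|Q_t-\msc(z_t)|$ and $|m_t-\md(z_t)|$. Fix a large integer $p$ depending on $\fg,\fc$. Applying Markov's inequality to \eqref{e:QY} and \eqref{e:mz}, together with a union bound over an $N^{-100}$-net of $\mathbf D$ (the $z$-Lipschitz constants of $Q_t-Y_t$ and $m_t-X_t$ are bounded by $O(N^2/\eta^2)$ via resolvent identities), we obtain with overwhelmingly high probability, simultaneously for every $z\in\mathbf D$,
\begin{equation*}
|Q_t(z)-Y_t(z)|\lesssim N^{\fc}\,\Psi_p(z)^{1/p},\qquad |m_t(z)-X_t(z)|\lesssim N^{\fc}\,\Xi_p(z;m_t-X_t)^{1/p}.
\end{equation*}

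We then run a continuity/bootstrap argument decreasing $\eta=\Im[z]$ from $N^{-\fo}$ down to $N^{-1+\fg}$. At the top of $\mathbf D$, Theorem~\ref{thm:prevthm} yields $|Q_t-\msc(z_t)|,|m_t-\md(z_t)|\lesssim N^{-2\fb}$, which seeds the induction since $N\eta\geq N^{1-\fo}$. Assuming \eqref{e:mbond} holds at a given $\eta$ and inserting the bound into \eqref{eq:phidef} and \eqref{def:Xip}, the inductive hypothesis collapses $|Q_t-Y_t|$ and $|m_t-X_t|$ to be dominated by $N^{O(\fc)}\Phi(z)$, so the displayed inequality sharpens to $|Q_t-Y_t|,|m_t-X_t|\lesssim N^{2\fc}\Phi(z)$. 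Using \eqref{e:recurbound}, we expand
\begin{equation*}
Q_t-Y_t=(1-\msc^{2\ell+2}(z_t))(Q_t-\msc(z_t))-\Big[\sum_{k=1}^{\ell+1}\msc^{2k}(z_t)\Big]\,t\,(m_t-\md(z_t))+\text{quadratic},
\end{equation*}
together with the companion expansion of $m_t-X_t$ from \eqref{e:Xrecurbound}, obtaining a $2\times 2$ linear system in $(Q_t-\msc(z_t),m_t-\md(z_t))$ whose coefficient matrix has determinant bounded below by a positive constant in the bulk. Inverting this system delivers \eqref{e:mbond} on the bulk region, and tracking the singular factor $1-\msc^{2\ell+2}(z_t)\asymp \ell\sqrt{z_t-2}$ yields \eqref{e:equation_est} uniformly on $\mathbf D$; continuity in $\eta$ then closes the bootstrap.

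For the edge regime $\min\{|z-E_t|,|z+E_t|\}\leq N^{-\fg}$, this determinant degenerates and the naive inversion loses a factor of $\sqrt{\kappa+\eta}$. We instead invoke the refined bound \eqref{e:Qrefined_bound} (applied at large even $p$, and upgraded to a high-probability statement by the same Markov/net procedure), which upon substitution of the $X_\ell$-expansion furnishes the microscopic loop equation whose leading content is the effective quadratic
\begin{equation*}
(m_t-\md(z_t))^2+2\cA\sqrt{z-E_t}\,(m_t-\md(z_t))+\frac{\partial_z m_t(z)}{N}=\text{negligible}.
\end{equation*}
Solving this quadratic and selecting the correct branch via the already-established crude bound \eqref{e:mbond} as an a priori input yields \eqref{e:mtmddiff}, and the relation between $Q_t-Y_t$ and $Q_t-\msc(z_t)$ via \eqref{e:recurbound} then converts it to \eqref{e:QQYY}. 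Outside the spectrum, Lemma~\ref{l:relation_zt_z} gives $\Im[\msc(z_t)]\asymp\eta/\sqrt{\kappa+\eta}$, so (combined with the already-established $m_t-\md(z_t)$ estimate) the parameter $\Phi(z)$ is correspondingly smaller; repeating the argument with this improved $\Phi$ yields \eqref{e:mtimproved}.

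The main obstacle is the near-degeneracy of the self-consistent system at the edge: naive linear inversion overestimates the error by $1/\sqrt{\kappa+\eta}$, giving sub-optimal square-root behaviour rather than the required $1/(N\eta)$ scale. Overcoming this is precisely the role of the microscopic loop equation \eqref{e:Qrefined_bound}; it supplies the quadratic balance $2\cA\sqrt{z-E_t}(m_t-\md)+\partial_z m_t/N$ that makes the edge inversion possible at the optimal scale.
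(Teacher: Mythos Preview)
Your approach is essentially the paper's: Markov on the moment bounds of Theorem~\ref{t:recursion} plus a lattice/continuity bootstrap in $\eta$, then inversion of the self-consistent equation coming from Proposition~\ref{p:recurbound}. The paper packages the inversion into a single stability result (a quadratic in $Q_t-\msc(z_t)$ rather than $m_t-\md(z_t)$) that covers bulk and edge uniformly, instead of your linear-in-bulk/quadratic-at-edge split, but both organisations work. One correction to your last paragraph: the quadratic balance at the edge is a \emph{deterministic} algebraic identity already present in \eqref{e:recurbound} (and made explicit in Lemma~\ref{l:Ftbiaoda}), not something furnished by the loop equation \eqref{e:Qrefined_bound}; what the refined bound actually buys (via Corollary~\ref{cor:QYQbound}) is only the reduction of the error prefactor from $N^{O(\fc)}$ down to $N^{O(\fo)}$, which is precisely the improvement of \eqref{e:mtmddiff} over \eqref{e:mbond}.
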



\begin{remark}
    We recall from \eqref{e:parameters} that $\fo\ll \ft\ll \ell/\log_{d-1} N\ll \fb\ll \fc\ll\fg$. While both statements \eqref{e:mbond} and \eqref{e:mtmddiff} can be interpreted as $|m_t(z)-m_d(z_t)|\leq N^{\oo(1)}/(N\eta)$, the improved version \eqref{e:mtmddiff} asserts that the $\oo(1)$ quantity in the exponent can be made arbitrarily small, in particularly, independent from the time $t$ and the radius $\ell$.  
\end{remark}

Finally, we record the following microscopic version of the loop equations for random $d$-regular graphs near the spectral edge.
As we will show in \Cref{l:Ftbiaoda}, for $z$ close to the edge $E_t$,  the term $(\cA^2 /(\ell+1))(Q_t(z)-Y_t(z))$ in \eqref{e:key_cancel} is, up to a negligible error,  equal to the quadratic 
\begin{align}
    (m_t(z)-\md(z,t)) ^2+ 2(m_d(z,t)-m_d(E_t,t))(m_d(z)-\md(z,t)).
\end{align}
In the scaling limit, the rescaled Stieltjes transform $N^{1/3} (m_t(2+w/(\cA N)^{2/3})-\md(2+w/(\cA N)^{2/3}),t)/\cA^{2/3}$ converges to $S(w)-\sqrt w$ in the microscopic loop equation \eqref{e:loop2}.

\begin{theorem}\label{t:correlation_evolution}
Adopt the notation of \Cref{s:setting3}. We recall the parameter $\ft$ from  \eqref{e:parameters}, and fix time $t\leq N^{-1/3+\ft}$.
We introduce the microscopic window:
 \begin{align}\label{e:micro_window}
     {\bf M}:=\{w\in \bC:N^{-2/3-\ft}\leq |\Im[w]|\leq N^{-2/3+\ft}, -N^{-2/3+\ft}\leq \Re[w]\leq N^{-2/3+\ft}\}.
 \end{align}
  Take any $p\geq 1$, $w, w_1, w_2,\cdots, w_{p-1}\in {\bf M}$, $z= E_t+w$ and $z_j=\pm E_t+w_j$ for $1\leq j\leq p-1$, the following holds
\begin{align}\begin{split}\label{e:key_cancel}
    & \phantom{{}={}}\frac{\cA^2 }{\ell+1}\bE\left[\bm1(\cG\in \Omega)(Q_t(z)-Y_t(z))\prod_{1\leq j\leq p-1}(m_t(z_j)-\md(z_j,t))\right]\\
    &
+\bE\left[\bm1(\cG\in \Omega)\frac{\del_z m_t(z)}{N} \prod_{1\leq j\leq p-1} (m_t(z_j)-\md(z_j,t))\right]\\
&+  \sum_{1\leq j\leq p-1}\frac{2}{N^2 }\bE\left[\bm1(\cG\in \Omega)\del_{z_j}\left(\frac{m_t(z)-m_t(z_j)}{z-z_j}\right)\prod_{1\leq i\neq j\leq p-1}(m_t(z_i)-\md(z_i,t))\right]\\
&=\OO\left(\frac{N^{2(p+1)\ft}}{N^{(p+1)/3}(d-1)^{\ell/2}}\right).
\end{split}\end{align}
If $z=-E_t+w$, an analogous statement holds after multiplying the first term in \eqref{e:key_cancel} by $-1$. 
\end{theorem}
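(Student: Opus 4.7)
The plan is to deduce Theorem \ref{t:correlation_evolution} from the refined self-consistent estimate \eqref{e:Qrefined_bound} of Theorem \ref{t:recursion} by converting the factors $(Q_t(z_j)-Y_t(z_j))$ in the product to $(m_t(z_j)-\md(z_j,t))$ factors. Three ingredients drive the argument: (i) an edge-scale identity expressing $(\cA^2/(\ell+1))(Q_t-Y_t)$ as a quadratic polynomial in $m_t - \md$; (ii) the leading-order linear relation $m_t(z) - \md(z,t) \approx \cA(Q_t(z) - \msc(z_t))$ obtained from $m_t \approx X_t$; and (iii) a cancellation of coefficients in the correction term that reduces it to the clean form appearing in \eqref{e:key_cancel}.

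For (i), applying Proposition \ref{p:recurbound} together with the fixed-point identity $Y_\ell(\msc(z_t),z_t) = \msc(z_t)$ and the edge expansion $\msc^{2\ell+2}(z_t) = 1 - 2(\ell+1)\sqrt{z_t - 2} + \OO(\ell^2(z_t-2))$ yields
\begin{align*}
Q_t(z) - Y_t(z) = \bigl(1 - \msc^{2\ell+2}(z_t)\bigr)(Q_t - \msc(z_t)) - t\,\partial_2 Y_\ell \cdot (m_t - \md(z,t)) + \tfrac{\ell+1}{\cA^2}(m_t - \md(z,t))^2 + \text{lower order}.
\end{align*}
Combining this with (ii) from \eqref{e:Xrecurbound}, $\sqrt{z_t - 2} = \cA t/2 + \sqrt{z - E_t} + \OO(\cdot)$ from \eqref{e:relation_zt_z}, and $\md(z,t) - \md(E_t,t) \approx \cA\sqrt{z - E_t}$, produces the key edge identity
\begin{align*}
\frac{\cA^2}{\ell+1}\bigl(Q_t(z) - Y_t(z)\bigr) = (m_t(z) - \md(z,t))^2 + 2(\md(z,t) - \md(E_t,t))(m_t(z) - \md(z,t)) + \text{error},
\end{align*}
with error $\oo(N^{-2/3})$ uniformly in $z = \pm E_t + w$ for $w \in \bf M$.

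The first terms of \eqref{e:key_cancel} and \eqref{e:Qrefined_bound} coincide, so only the product factors and the coefficient in the correction term require conversion. Using the linear relation in (ii), each factor $(Q_t(z_j) - Y_t(z_j))$ contributes, to leading order, $\tfrac{2(\ell+1)}{\cA}\sqrt{z_j \mp E_t}\cdot(m_t(z_j) - \md(z_j,t))$ plus a quadratic remainder of the same size. The coefficient $[(1-\partial_1 Y_\ell)/\cA - t\,\partial_2 Y_\ell]$ multiplying the correction term in \eqref{e:Qrefined_bound} simplifies, via $\partial_1 Y_\ell|_{(\msc,z_t)} = \msc^{2\ell+2}(z_t)$ and $\partial_2 Y_\ell|_{(\msc,z_t)} \approx \ell+1$, to $2(\ell+1)\sqrt{z - E_t}/\cA$ at the edge. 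When the linearization factors are distributed across the $p-1$ product factors, the leading piece of this edge coefficient cancels exactly against the linearization factor for the $j$-th term, producing the bare coefficient $1$ in the correction term of \eqref{e:key_cancel}.

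The scaling of the error matches the claimed bound: at the edge $|Q_t - Y_t|\lesssim N^{-2/3+\oo(1)}$ and $|m_t - \md|\lesssim N^{-1/3+\oo(1)}$, so replacing $p-1$ factors inflates the bound of \eqref{e:Qrefined_bound} from $N^{-2p/3+\oo(1)}(d-1)^{-\ell/2}$ to $N^{-(p+1)/3+\oo(1)}(d-1)^{-\ell/2}$, which is precisely the right-hand side of \eqref{e:key_cancel}. The main obstacle is rigorously accounting for each secondary term generated by the substitutions in (i) and (ii) — the quadratic pieces, the subleading terms in the edge expansion, and cross terms from the product — and verifying they all fit within this error budget. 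The central technical step is the exact coefficient cancellation in (iii), which is the edge-scale manifestation of the universal microscopic loop equation \eqref{e:loop2} for the Airy$_1$ point process.
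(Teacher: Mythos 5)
Your proposal takes a genuinely different route from the paper, and it has a gap that I do not think can be repaired at the level of algebraic substitution. The paper does \emph{not} derive \eqref{e:key_cancel} from \eqref{e:Qrefined_bound} by converting the factors $(Q_t(z_j)-Y_t(z_j))$ into $(m_t(z_j)-\md(z_j,t))$. Instead it re-runs the entire iteration argument with the product observable replaced by $m_t(z_j)-\md(z_j,t)$ (this is exactly what the flexibility of \eqref{e:general_form} and the error parameter $\fX_p=\Xi_p(z;m_t-\md(z,t))$ in \eqref{e:defXi2} are for), obtaining \eqref{e:expbound}, and then bounds $\fX_p$ using the edge rigidity \eqref{e:mtmddiff} and \eqref{e:QQYY}. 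In particular, the bare coefficient $1$ in the correction term of \eqref{e:key_cancel} is not produced by a cancellation between the prefactor $\bigl[(1-\del_1Y_\ell)/\cA-t\del_2 Y_\ell\bigr]$ of \eqref{e:Qrefined_bound} and any linearization factor: that prefactor records the chain-rule sensitivity of the observable $Q_t(z_j)-Y_t(z_j)$ to the resampling (through $Q_t$ and $m_t$), and it becomes $1$ simply because the derivative of $m_t(z_j)-\md(z_j,t)$ with respect to $m_t$ is $1$ when the proof is re-run with the new observable — the paper says this explicitly right after the theorem. Your step (iii) therefore misidentifies the mechanism.

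The quantitative obstruction to your substitution is the following. At the edge, Lemma \ref{l:Ftbiaoda} gives $(Q_t(z_j)-Y_t(z_j))=\frac{\ell+1}{\cA^2}\bigl[2(\md(z_j,t)-\md(E_t,t))(m_t(z_j)-\md(z_j,t))+(m_t(z_j)-\md(z_j,t))^2\bigr]+\OO(\ell N^{-5/6+10\fc})$, and the linear and quadratic pieces are \emph{both} of size $N^{-2/3+\oo(1)}$ (since $\sqrt{|z_j\mp E_t|}\asymp N^{-1/3+\oo(1)}$ and $|m_t-\md|\lesssim N^{-1/3+\oo(1)}$). So when you expand $\prod_j(Q_t(z_j)-Y_t(z_j))$ the cross terms containing quadratic remainders are of exactly the same order as the term you want to keep, and dividing by the deterministic factors $\prod_j c_j$ does not push them into the error budget $N^{-(p+1)/3+\oo(1)}(d-1)^{-\ell/2}$; you acknowledge the quadratic remainder is "of the same size" but then assume it fits in the error, which it does not. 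Controlling those contributions would require loop-equation identities with mixed factors $(m_t-\md)^{a_j}$, i.e., a hierarchy not contained in \eqref{e:Qrefined_bound} (which, as stated, also restricts $z_j\in\{z,\bar z\}$ rather than the general microscopic points needed here). Knowing the loop equation tested against the quadratic observables $(Q_t-Y_t)$ does not determine it tested against the linear observables $(m_t-\md)$; the only way the paper obtains the latter is by repeating the proof of \eqref{e:Qrefined_bound} with the new factors, which is the step your proposal replaces with an inadmissible shortcut.
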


The equation \eqref{e:key_cancel} is essentially the same as \eqref{e:Qrefined_bound}, except for replacing $(Q_t(z_j)-Y_t(z_j))$ with $(m_t(z_j)-\md(z_j,t))$. Additionally, the  factor 
$(1-\del_1Y_\ell(Q_t(z_j),z_j+tm_t(z_j)))/\cA -t\del_2 Y_\ell(Q_t(z_j),z_j+tm_t(z_j))$ in \eqref{e:Qrefined_bound}, which arises from derivatives of $Q_t(z_j)-Y_t(z_j)$ with respect to $Q_t(z_j)$ and $m_t(z_j)$, is absent in \eqref{e:key_cancel}. This is because, for $(m_t(z_j)-\md(z_j,t))$, such derivatives are simply $1$. The proofs of \eqref{e:key_cancel} and \eqref{e:Qrefined_bound} are identical, we provide a sketch of the proof for  \eqref{e:key_cancel} in \Cref{s:proofoutline}.

\subsection{Edge Universality}
\label{s:universality}
In this section we prove edge universality as stated in \Cref{t:universality}, using \Cref{c:rigidity} and \Cref{t:correlation_evolution} as input.

For sufficiently regular initial data, it has been proven in \cite{landon2017edge}, after short time the eigenvalue statistics at the spectral edge of \eqref{e:Ht} agree with GOE. A modified version of this theorem was proven in \cite{adhikari2020dyson}, which assumes that the initial data is sufficiently close to a nice profile. To use these results, we need to restrict  $H(0)$ to a subset, on which the optimal rigidity holds.
We denote $\cM$ to be the set of normalized adjacency matrices $H$, such that \eqref{e:mtmddiff} and \eqref{e:mtimproved} hold:
\begin{align}\label{e:defA}
\cM\deq \{H: \text{\eqref{e:mtmddiff} and \eqref{e:mtimproved} hold for $t=0$.} \}.
\end{align}
By Theorem \ref{c:rigidity}, we know that, conditioned on $\cG\in \Omega$, the event that our randomly sampled matrix lies in $\cM$ holds with overwhelmingly high probability.

The following theorem from \cite[Theorem 2.2]{landon2017edge} states that for time $t\gg N^{-1/3}$ the fluctuations of extreme eigenvalues of $H(t)$ conditioning on $H(0)\in \cM$ are given by the Tracy-Widom$_1$ distribution.

\begin{theorem}
\label{t:universalityHt}
Adopt the notation of \Cref{s:setting3}. We recall the parameter $\ft$ from  \eqref{e:parameters}, fix time $t= N^{-1/3+\ft}$, and recall $E_t$ from \eqref{e:edgeeqn2}.
Let $H(t)=H+\sqrt{t}Z$ be as in \eqref{e:Ht}, with $H\in \cM$ (recall from \eqref{e:defA}).   Fix $k\geq 1$ and $s_1,s_2,\cdots, s_k \in \bR$. There exists a small $\fa>0$, such that  the eigenvalues  $\la_1(t)=d/\sqrt{d-1}\geq \la_2(t)\geq \la_{3}(t)\geq \cdots \la_{N-1}(t)\geq \la_N(t)$ of $H(t)$ satisfy
\begin{align*}\begin{split}
&\phantom{{}={}} \bP_{H(t)}\left( (\cA N)^{2/3} ( \lambda_{i+1}(t) - E_t )\geq s_i,1\leq i\leq k \right)\\
&= \bP_{\mathrm{GOE}}\left( N^{2/3} ( \mu_i - 2  )\geq s_i,1\leq i\leq k \right) +\OO(N^{-\fa}),
\end{split}\end{align*}
where $\mu_1\geq \mu_2\geq \cdots \geq\mu_N$ are the eigenvalues of the GOE.
The analogous statement holds for the smallest eigenvalues.
\end{theorem}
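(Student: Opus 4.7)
The plan is to apply the short-time edge universality theorem for Dyson Brownian motion, established in \cite[Theorem 2.2]{landon2017edge} and refined in \cite{adhikari2020dyson}, directly to the Gaussian-divisible matrix process $H(t) = H + \sqrt{t}Z$. Since $Z$ is a constrained GOE with vanishing row and column sums, the process preserves the Perron eigenpair $(\lambda_1, \bm{1}/\sqrt{N})$, and the remaining $N-1$ eigenvalues evolve as a Dyson Brownian motion on the orthogonal complement $\bm{1}^\perp$. The rank-two correction in \eqref{e:GOEproject} that distinguishes $Z$ from an unconstrained GOE on $\bm{1}^\perp$ is an $O(1/N)$ perturbation and, by standard eigenvalue interlacing, shifts edge statistics at a scale subleading to the universality error $N^{-\fa}$.

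The cited DBM edge universality theorem has two hypotheses, both of which I would verify here. The first is that the initial data satisfies optimal rigidity at the edge together with a local law at mesoscopic scales. This is precisely encoded in the set $\cM$: by definition \eqref{e:defA}, $H(0) = H \in \cM$ satisfies the Stieltjes transform bounds \eqref{e:mtmddiff} and \eqref{e:mtimproved} of Corollary \ref{c:rigidity}. A standard Helffer-Sjöstrand argument (the same one used in Section \ref{s:proofrigidity} to derive Theorem \ref{thm:eigrigidity} from Corollary \ref{c:rigidity}) converts these bounds into eigenvalue rigidity of the form $|\lambda_{i+1}(0) - \gamma_{i+1}(0)| \leq N^{-2/3 + o(1)} (\min\{i, N-i\})^{-1/3}$, with $\gamma_{i+1}(0)$ the classical quantile of $\varrho_d$.

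The second hypothesis is that the free-convolved limiting density $\varrho_d(x,t)$ has a square-root profile near its edges $\pm E_t$, with a known and non-degenerate edge constant. From the expansion \eqref{e:medge_behavior} of $m_d$ near $z=2$, together with the subordination identity \eqref{e:selffc} and the estimates \eqref{e:relation_zt_z} and \eqref{e:xi_behavior}, a direct computation yields that as $x \to E_t^{-}$,
\begin{equation*}
\varrho_d(x,t) = \frac{\cA}{\pi}\sqrt{E_t - x} + O(E_t - x),
\end{equation*}
so the edge constant $\cA = d(d-1)/(d-2)^2$ is preserved by the free convolution at this order. Consequently, the classical quantiles near $E_t$ satisfy $E_t - \gamma_{i+1}(t) \asymp (i/(\cA N))^{2/3}$, which pins down the rescaling factor $(\cA N)^{2/3}$ appearing in the statement.

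With these two hypotheses in hand, the main theorem of \cite{landon2017edge, adhikari2020dyson} yields the claimed convergence of the joint distribution of $\{(\cA N)^{2/3}(\lambda_{i+1}(t) - E_t)\}_{i \leq k}$ to the corresponding edge statistics of the GOE, with polynomial error $O(N^{-\fa})$; the analogous statement for the lower edge follows from the symmetry of $\varrho_d$. The main point requiring care—rather than a hard obstruction—is the transition from Wigner initial data (as in \cite{landon2017edge}) to regular-graph initial data: this is handled by the generalization in \cite{adhikari2020dyson}, whose hypotheses require only the initial local law on sub-edge mesoscopic scales with the correct density profile, which is precisely what membership in $\cM$ guarantees via Corollary \ref{c:rigidity}. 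The only remaining subtlety is tracking the constrained-GOE correction, which, as noted above, contributes at a scale $O(1/N)$ much smaller than the $N^{-\fa}$ universality error.
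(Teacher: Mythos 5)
Your proposal is correct and follows essentially the same route as the paper, which states Theorem \ref{t:universalityHt} as a direct citation of \cite[Theorem 2.2]{landon2017edge} (in the form modified in \cite{adhikari2020dyson}), with the hypotheses on the initial data supplied exactly by membership in $\cM$, i.e.\ by the rigidity estimates of \Cref{c:rigidity}. The extra verifications you spell out (square-root edge of $\varrho_d(\cdot,t)$ with constant $\cA$ via \eqref{e:relation_zt_z}--\eqref{e:xi_behavior}, and the fact that the constrained GOE restricted to $\bm 1^\perp$ leaves the nontrivial eigenvalues evolving by standard DBM) are the same ingredients the paper relies on implicitly.
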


\begin{remark} \label{rem:independence}
By an appropriate modification of the analysis of Dyson Brownian motion from~\cite{landon2017edge, adhikari2020dyson}, Proposition~\ref{t:universalityHt} also holds for the joint distribution of the $k$ largest and smallest eigenvalues. In particular, this implies that, under the same assumption as in the previous proposition, the asymptotic joint distribution of $(\cA N)^{2/3}(\lambda_2(t) - E_t, -\lambda_N(t) + E_t)$ is a pair of independent Tracy--Widom$_1$ distributions. 
\end{remark}

In this section we prove the following short-time comparison result for the edge eigenvalue statistics of $H(t)$. 
\begin{proposition}\label{thm:comp}
Adopt the notation of \Cref{s:setting3}.
We recall the parameter $\ft$ from  \eqref{e:parameters}, fix time $t= N^{-1/3+\ft}$, and recall $E_t$ from \eqref{e:edgeeqn2}.
Let $H(t)=H+\sqrt{t}Z$ be as in \eqref{e:Ht}.   Fix $k\geq 1$ and $s_1,s_2,\cdots, s_k \in \bR$. There exists a small $\fa>0$, such that  the eigenvalues  $\la_1(t)=d/\sqrt{d-1}\geq \la_2(t)\geq \la_{3}(t)\geq \cdots \la_{N-1}(t)\geq \la_N(t)$ of $H(t)$ satisfy
\begin{align}\begin{split}\label{e:comp1}
&\phantom{{}={}} \bP_{H(0)}\left( (\cA N)^{2/3} ( \lambda_{i+1}(0) - 2 )\geq s_i,1\leq i\leq k \right)\\
&= \bP_{H(t)}\left( (\cA N)^{2/3} ( \lambda_{i+1}(t) - E_t )\geq s_i,1\leq i\leq k \right) +\OO(N^{-\fa}).
\end{split}\end{align}
 The analogous statement holds for the smallest eigenvalues.
\end{proposition}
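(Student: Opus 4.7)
The plan is to show that the joint distribution of the rescaled top eigenvalues of $H(t)$ is essentially invariant in $t$ for $0 \leq t \leq N^{-1/3+\ft}$, using the microscopic loop equations of \Cref{t:correlation_evolution} as the central input. The proof uses a Gaussian interpolation along $H(t)=H+\sqrt{t}Z$ and identifies the time derivative of correlation functions of the Stieltjes transform with the loop-equation quantity on the left of \eqref{e:key_cancel}.

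\textbf{Step 1 (Reduction to Stieltjes transforms).} I would first smoothly approximate the joint indicator $\prod_{i=1}^k \mathbf{1}\bigl((\cA N)^{2/3}(\lambda_{i+1}(t)-E_t)\geq s_i\bigr)$ by a bounded smooth functional $F(\mathbf{m}_t)$ of $m_t$ evaluated at $L$ microscopic spectral parameters $z_j = E_t + w_j$ with $w_j \in \mathbf{M}$ (and symmetrically near $-E_t$). This is the standard reduction via optimal rigidity \Cref{c:rigidity} combined with a Helffer--Sj\"ostrand / contour representation of indicator functions; the approximation error is controlled uniformly in $t$ by \eqref{e:mtmddiff} and \eqref{e:mtimproved}.

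\textbf{Step 2 (Time derivative via Gaussian integration by parts).} Using \Cref{p:intbypart} applied to the entries of $Z$, compute $\partial_t \mathbb{E}[F(\mathbf{m}_t)]$ along $H(t)=H+\sqrt{t}Z$. A direct calculation mirroring the derivation of the GOE loop equations yields
\begin{align*}
\partial_t \mathbb{E}[F(\mathbf{m}_t)]
&= \sum_{j} \mathbb{E}\!\left[(\partial_j F)\, \Bigl(m_t(z_j)^2 + z_j\, m_t(z_j) + 1 + \tfrac{\partial_{z_j} m_t(z_j)}{N}\Bigr)\right] \\
&\quad + \sum_{j \neq j'} \tfrac{2}{N^2}\, \mathbb{E}\!\left[(\partial^2_{jj'} F)\, \partial_{z_{j'}}\!\tfrac{m_t(z_j)-m_t(z_{j'})}{z_j - z_{j'}}\right] + \text{constraint corrections},
\end{align*}
where the constraint corrections arise from the row/column-sum projection in \eqref{e:GOEproject} and are controlled by \Cref{p:WtGbound}; they are $\OO(1/N)$ smaller than the main terms.

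\textbf{Step 3 (Matching with the $d$-regular loop equation).} Using the near-edge expansion of $Y_\ell - \msc$ from \eqref{e:recurbound}, combined with \eqref{e:mdzt}, the asymptotics \eqref{e:medge_behavior}--\eqref{e:xi_behavior}, and the bound \eqref{e:equation_est} relating $m_t - X_t$ to $Q_t - Y_t$, one can show that for $z_j = E_t + w_j$ with $w_j \in \mathbf{M}$,
\begin{align*}
m_t(z_j)^2 + z_j\, m_t(z_j) + 1 = \tfrac{\cA^2}{\ell+1}\bigl(Q_t(z_j) - Y_t(z_j)\bigr) + \text{(terms negligible on the $N^{-2/3}$ scale)}.
\end{align*}
Substituting into Step 2, the expression for $\partial_t \mathbb{E}[F(\mathbf{m}_t)]$ becomes exactly the left-hand side of the loop equation \eqref{e:key_cancel}, tested against appropriate derivatives of $F$.

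\textbf{Step 4 (Integration).} \Cref{t:correlation_evolution} bounds this loop-equation quantity by $\OO(N^{2(L+1)\ft}/(N^{(L+1)/3}(d-1)^{\ell/2}))$, and since $\ell \asymp \log_{d-1} N$, this is $\OO(N^{-(L+1)/3-\delta})$ for some $\delta>0$. Integrating in $t$ over $[0,N^{-1/3+\ft}]$ produces a total change of order $N^{-L/3-\delta+\ft}$ in $\mathbb{E}[F(\mathbf{m}_t)]$; combining with the Step 1 approximation error and choosing $\ft, \fo$ small yields \eqref{e:comp1} for some $\fa>0$.

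\textbf{Main obstacle.} The delicate point is the algebraic identification in Step 3. The DBM generator naturally produces the self-consistent quadratic $m_t^2 + z m_t + 1$ (i.e., in terms of $m_t$ and the semicircle equation), while the random regular graph loop equation \eqref{e:key_cancel} is phrased in terms of $Q_t - Y_t$ with the specific coefficient $\cA^2/(\ell+1)$. Bridging these requires converting the $Q_t - Y_t$ expansion of \Cref{p:recurbound} into an expansion of $m_t - m_d(z_t)$ using \eqref{e:Xrecurbound}, then verifying that the higher-order corrections of size $|Q_t - \msc(z_t)|^2, |m_t - m_d(z_t)|^2$ are indeed negligible on the microscopic edge scale $N^{-2/3}$ with the help of the refined bound \eqref{e:QQYY} and the edge-specific estimate \eqref{e:mtimproved}. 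A secondary technical point is tracking the constraint corrections in Step 2 produced by the projection \eqref{e:GOEproject}: these must be shown to be subleading uniformly in the microscopic window, which follows from the moment bounds in \Cref{p:WtGbound}.
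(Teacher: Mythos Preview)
Your overall architecture (reduce to products of Stieltjes transforms at microscopic scale, compute a time derivative, match with \Cref{t:correlation_evolution}, integrate in $t$) is the right one and matches the paper. But the explicit formula you write in Step~2 is wrong, and Step~3 fails because of it.

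The quantity $m_t(z_j)^2 + z_j m_t(z_j) + 1$ is the \emph{static} semicircle self-consistent expression; it is not what the time derivative along $H(t)=H+\sqrt{t}Z$ produces. Whether you compute via Gaussian integration by parts or (equivalently) via the DBM/It\^o route, the drift of $m_t(z)$ is $\tfrac{1}{2}\partial_z\bigl(m_t^2(z)+\partial_z m_t(z)/N\bigr)$, and after subtracting the Burgers equation $\partial_t m_d(z,t)=\tfrac12\partial_z m_d^2(z,t)$ and accounting for the moving edge $z_j=E_t+w_j$ (via $\partial_t E_t=-m_d(E_t,t)$), one obtains $\tfrac12\partial_{z_j}F_t(z_j)$ with
\[
F_t(z)=(m_t(z)-m_d(z,t))^2+2(m_d(z,t)-m_d(E_t,t))(m_t(z)-m_d(z,t))+\tfrac{\partial_z m_t(z)}{N},
\]
together with the cross (quadratic-variation) terms; this is exactly \eqref{e:DBM_mt2}--\eqref{e:DBM_mt3} in the paper. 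Your expression cannot be correct: its deterministic part $m_d^2+z m_d+1$ equals $(d-2)^{-2}+\OO(N^{-\fg/2})$ at the Kesten--McKay edge and is $\OO(1)$, so it certainly does not match $(\cA^2/(\ell+1))(Q_t-Y_t)$, which vanishes when $m_t=m_d$. The paper's identification is \Cref{l:Ftbiaoda}, which shows $F_t(z)=(\cA^2/(\ell+1))(Q_t-Y_t)+\partial_z m_t/N+\OO(N^{-5/6+10\fc})$ and this is where the expansions \eqref{e:recurbound}, \eqref{e:Xrecurbound} are used in the way you outline --- just applied to the correct object $F_t$, not to $m_t^2+z m_t+1$.

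There is a second, smaller gap: because the drift enters as $\partial_{z_j}F_t(z_j)$ rather than $F_t(z_j)$, one must pass from the bound on the \emph{undifferentiated} loop-equation quantity \eqref{e:key_cancel} to a bound on its $\partial_{z_j}$-derivative. The paper handles this by a Cauchy estimate over a small contour of radius $\sim N^{-2/3-\ft}$ around $z_j$ (see \eqref{e:dz_exp}--\eqref{e:dz_exp3}); this costs a harmless power of $N^{\ft}$ and should be mentioned explicitly. Your treatment of the constrained-GOE corrections is fine in spirit; in fact the paper sidesteps the issue entirely by working at the eigenvalue level, where the constrained DBM coincides with standard DBM on $\mathbf{1}^\perp$.
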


\begin{proof}[Proof of  \Cref{t:universality}]
Combine \Cref{t:universalityHt} and \Cref{thm:comp}.
\end{proof}

\begin{remark}\label{rem:ind2}
The proof of Proposition \ref{thm:comp} can be modified to show that the joint distributions of the $k$ largest and smallest eigenvalues of $H(0)$ and $H(t)$ are asymptotically the same. Also, Corollary \ref{c:rate} follows from combining with Remark \ref{rem:independence}.
\end{remark}

Proposition \ref{thm:comp} follows from the following comparison theorem for the multi-point correlation functions of edge statistics (equivalently the product of Stieltjes transform on microscopic scale), see \cite[Section 17]{erdHos2017dynamical}.

A key observation is that the time derivative of the multi-point correlation functions of the Stieltjes transform  of $H(t)$ is precisely governed by the microscopic loop equations, see \eqref{e:DBM_mt3}.

 \begin{proposition}\label{p:small_change}
 Adopt the notation of \Cref{s:setting3}. We recall the parameters $\fo\ll \ft$ from  \eqref{e:parameters}, take time $t\leq N^{-1/3+\ft}$, and recall $E_t$ from \eqref{e:edgeeqn2}. 
  For any $p\geq 1$,  $w_1, w_2,\cdots, w_p\in {\bf M}$ (recall from \eqref{e:micro_window}), and $z_j=E_t+w_j$ for $1\leq j\leq p$, the following holds
   \begin{align}\label{e:small_change}
     \del_t \bE\left[\bm1(\cG\in \Omega)\prod_{1\leq j\leq p} N^{1/3}(m_t(z_j)- \md(z_j,t))\right]\lesssim \frac{N^{2(p+2)\ft} N^{1/3}}{(d-1)^{\ell/2}}.
 \end{align}
By integrating \eqref{e:small_change} from $0$ to $t=N^{-1/3+\ft}$ we have 
 \begin{align*}
    \left.\bE\left[\bm1(\cG\in \Omega)\prod_{1\leq j\leq p} N^{1/3}(m_t(z_j)- \md(z_j,t))\right]\right|_{t=0}^{t=N^{-1/3+\ft}}\lesssim {(d-1)^{-\ell/4}},
 \end{align*}
provided that $(d-1)^{\ell/4}\gg N^{2(p+3)\ft}$.
 \end{proposition}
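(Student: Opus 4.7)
The plan is to directly compute the time derivative in \eqref{e:small_change} via It\^o's formula (equivalently, Gaussian integration by parts, \Cref{p:intbypart}), and identify the resulting expression with the left-hand side of the microscopic loop equation \eqref{e:key_cancel}, which \Cref{t:correlation_evolution} already controls. The indicator $\bm1(\cG\in\Omega)$ depends only on the initial graph $\cG$ and so commutes with $\partial_t$.

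Concretely, I will decompose $\partial_t \bE[\prod_j (m_t(z_j) - \md(z_j,t))]$ into three sources of $t$-dependence: (i) each $m_t(z_j)$ picks up a DBM drift from the GOE increment $\sqrt{t}\,Z$ in $H(t) = H + \sqrt{t}\,Z$; via Gaussian IBP with the constrained covariance \eqref{W_ibp}, this produces the leading single-point drift involving $m_t(z_j) \partial_{z_j} m_t(z_j)$ and $\partial_{z_j} m_t(z_j)/N$, together with the It\^o cross-pairings against each $m_t(z_i)$ ($i \ne j$) yielding the $(2/N^2)\,\partial_{z_i}[(m_t(z_j) - m_t(z_i))/(z_j - z_i)]$ terms appearing in \eqref{e:key_cancel}; (ii) each $\md(z_j,t)$ evolves by the Burgers flow $\partial_t \md = \md\,\partial_z\md$, obtained by implicitly differentiating \eqref{e:mdzt} in $t$; (iii) the shift of $z_j = E_t + w_j$ via $\partial_t E_t = -\md(E_t,t)$, from \eqref{e:Etshift}, contributes $-\md(E_t,t)\,\partial_z(m_t(z_j) - \md(z_j,t))$. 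After combining (i)--(iii) and performing a short algebraic manipulation, the single-factor drift reduces (modulo negligible terms absorbed into the $\Psi_p$-bound) to the near-edge quadratic $(m_t(z_j) - \md(z_j,t))^2 + 2(\md(z_j,t) - \md(E_t,t))(m_t(z_j) - \md(z_j,t))$ plus the $\partial_{z_j} m_t(z_j)/N$ correction, which \Cref{l:Ftbiaoda} identifies with $(\cA^2/(\ell+1))(Q_t(z_j) - Y_t(z_j)) + \partial_{z_j} m_t(z_j)/N$. Consequently $\partial_t \bE[\prod_j (m_t - \md)]$ reorganizes exactly into $\sum_k [\text{LHS of \eqref{e:key_cancel} with } z = z_k]$, and \eqref{e:key_cancel} bounds each summand by $\OO(N^{2(p+1)\ft}/[N^{(p+1)/3}(d-1)^{\ell/2}])$. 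Including the $N^{p/3}$ prefactor in \eqref{e:small_change} and summing over the $p$ differentiated factors yields the stated bound on $\partial_t$; the integral statement then follows by integration over $t \in [0, N^{-1/3+\ft}]$ under the hypothesis $(d-1)^{\ell/4} \gg N^{2(p+3)\ft}$.

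The main obstacle will be the algebraic bookkeeping: the It\^o drift in (i) naturally produces terms of the form $(1/2)\partial_z$ of a quadratic in $(m_t - \md)$, whereas the loop equation \eqref{e:key_cancel} features the quadratic itself (without an outer $\partial_z$). Reconciling these requires using the loop equation as an identity to trade $\partial_z[\text{quadratic}]$ for the bare quadratic plus the $\partial_z m_t/N$ correction, together with the near-edge expansion \eqref{e:medge_behavior} and \Cref{l:Ftbiaoda} to match coefficients with $Q_t - Y_t$. A secondary technicality is that the constrained-GOE modifications (the additional sum-over-$k$ terms in \eqref{e:intbypart}) must be shown to contribute only subleading errors, which is handled via the Ward-type bounds \eqref{e:use_Ward}, \eqref{e:use_Ward2} and the eigenvector delocalization \eqref{e:eig_delocalization}; these corrections are ultimately absorbed into the $\Psi_p$-bound on the RHS of \eqref{e:key_cancel}.
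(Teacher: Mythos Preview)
Your overall strategy---compute $\partial_t$ via It\^o's formula and identify the result with the loop equation of \Cref{t:correlation_evolution}---is exactly the paper's approach. But your resolution of what you correctly flag as ``the main obstacle'' is where the argument breaks. The It\^o drift, combined with the Burgers evolution of $\md$ and the shift $\partial_t E_t=-\md(E_t,t)$, produces $\tfrac12\partial_{z_i}F_t(z_i)$ (see \eqref{e:DBM_mt2}--\eqref{e:DBM_mt3}), not $F_t(z_i)$ itself; similarly the It\^o cross terms carry an extra $\partial_{z_i}$. So the time derivative organizes as $\sum_i \partial_{z_i}[\text{LHS of }\eqref{e:key_cancel}]$, not as the bare loop-equation expression. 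Your proposed ``trade''---using the loop equation as an identity to exchange $\partial_z[\text{quadratic}]$ for the quadratic---does not work: \eqref{e:key_cancel} is an \emph{estimate}, and differentiating an inequality does not yield a bound on the derivative.

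The paper's fix is a Cauchy estimate: since \eqref{e:key_cancel} holds uniformly for all $z$ in the microscopic window ${\bf M}$, one takes a contour of radius $\asymp N^{-2/3-\ft}$ around each $z_i$ and bounds $\partial_{z_i}$ of the loop-equation expression by $N^{2/3+\ft}$ times its uniform bound (see \eqref{e:dz_exp3}). This loss of $N^{2/3+\ft}$ is precisely why the exponent in \eqref{e:small_change} is $2(p+2)\ft$ rather than $2(p+1)\ft$. Indeed, your final arithmetic---$N^{p/3}$ times the loop-equation bound $N^{2(p+1)\ft}/[N^{(p+1)/3}(d-1)^{\ell/2}]$---gives $N^{2(p+1)\ft}N^{-1/3}/(d-1)^{\ell/2}$, which is a factor $N^{-2/3}$ \emph{smaller} than the target \eqref{e:small_change}; this discrepancy is the signature of the missing contour step. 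Your secondary concern about constrained-GOE corrections is a non-issue here: the paper works directly with the DBM SDE for the eigenvalues, where the constraint is already built in.
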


Towards proving \Cref{p:small_change}, we make some preliminary observations. The non-trivial eigenvalues of $H(t)$ have the law of Dyson Brownian motion $(\lambda_2(t)\geq \lambda_3(t)\geq \cdots\geq  \lambda_N(t))$ starting from the empirical eigenvalue of $H(0)$:
\begin{align*}
    \rd \lambda_\al(t)=\sqrt{\frac{2}{N}} \rd B_\al(t)+\frac{1}{N}\sum_{\beta\in \qq{2,N} \setminus\{\al\}}\frac{\rd t}{\lambda_\al(t)-\lambda_\beta(t)},\quad 2\leq \al\leq N,
\end{align*}
and its Stieltjes transfrom $m_t(z)$ satisfies the following stochastic differential equation
\begin{align}\label{e:eq1}
\rd m_t(z)=-\sqrt{\frac{2}{N^3}}\sum_{\al\in\qq{2, N}}\frac{\rd B_\al(t)}{(\la_\al(t)-z)^2}+\frac{1}{2}\del_z \left(m_t^2(z) +\frac{\del_z m_t(z)}{N}\right)\rd t.
\end{align}
We also recall the free convolution equation \eqref{e:selffc} for $\md(z,t)$. By taking time derivative on both sides, we get the following complex Burgers equation
\begin{align}\label{e:eq2}
\del_t \md(z,t)=\frac{1}{2}\del_z (\md(z,t)^2).
\end{align}

By taking the difference of \eqref{e:eq1} and \eqref{e:eq2}, we get
\begin{align}\label{e:DBM_mt}
\rd (m_t(z)- \md(z,t))
&=-\sqrt{\frac{2}{N^3}}\sum_{\al\in\qq{2,N}}\frac{\rd B_\al(z)}{(\la_\al(t)-z)^2}+\frac{1}{2}\del_z\left(m^2_t(z)-\md^2(z,t)+ \frac{\del_z m_t(z)}{N}\right)\rd t.
\end{align}
Let $z=E_t+w$ with $w\in \bf M$ (recall from \eqref{e:micro_window}). We also recall from \eqref{e:Etshift} that the spectral edge $E_t$  satisfies the differential equation $\del_t E_t=-\md(E_t,t)$. By plugging $z=E_t+w$ into \eqref{e:DBM_mt}, we get
\begin{align}\begin{split}\label{e:DBM_mt2}
&\rd (m_t(z)- \md(z,t))
=-\sqrt{\frac{2}{N^3}}\sum_{\al\in\qq{2,N}}\frac{\rd B_\al(t)}{(\la_\al(t)-z)^2}+\frac{1}{2}\del_z F_t(z_j)\rd t,\\
&F_t(z):=(m_t(z)-\md(z,t))^2+2(\md(z,t)-\md(E_t,t))(m_t(z)-\md(z,t))+ \frac{\del_z m_t(z)}{N}.
\end{split}\end{align}
The following lemma rewrites $F_t(z)$ in terms of $Q_t(z)-Y_t(z)$, which will be used later in \Cref{p:small_change}. 
\begin{lemma}\label{l:Ftbiaoda}
Adopt the notation of \Cref{s:setting3}. We recall the parameters $\fo\ll \ft\ll \fc$ from  \eqref{e:parameters}, take time $t\leq N^{-1/3+\ft}$, and recall $E_t$ from \eqref{e:edgeeqn2}.  For any $z=E_t+w$ with $w\in \bf M$ (recall from \eqref{e:micro_window}), conditioned on $ \Omega$, the following holds with overwhelmingly high probability:
    \begin{align}\label{e:Ftbiaoda}
    F_t(z)=
    \frac{\cA^2}{\ell+1}(Q_t(z)-Y_t(z))+\frac{\del_z m_t(z)}{ N}+\OO(N^{-5/6+10\fc}).
\end{align}
\end{lemma}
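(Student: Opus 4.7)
The strategy is to expand both sides of \eqref{e:Ftbiaoda} around the spectral edge and observe a precise cancellation. Write $a:=Q_t(z)-\msc(z_t)$ and $b:=m_t(z)-\md(z,t)=m_t(z)-\md(z_t)$ (the second equality by \eqref{e:mdzt}). By \Cref{c:rigidity}, conditional on $\Omega$ with overwhelmingly high probability, $|a|,|b|\lesssim N^{-1/3+O(\fo+\ft)}$ (via \eqref{e:mtmddiff} and $N\eta\gtrsim N^{1/3-\ft}$), and $|m_t(z)-X_t(z)|\lesssim N^{-1/2+O(\fc+\ft)}$ (via \eqref{e:equation_est}). I apply \eqref{e:recurbound} with reference point $z_t$, weight $\Delta=Q_t(z)$, and spectral argument $w=z+tm_t(z)$, so that $\Delta-\msc(z_t)=a$ and $w-z_t=tb$. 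Using the edge asymptotics $\msc(z_t)=-1+\sqrt{z_t-2}+O(|z_t-2|)$ and $\md(z_t)=-\tfrac{d-1}{d-2}+\cA\sqrt{z_t-2}+O(|z_t-2|)$ from \eqref{e:medge_behavior}, the coefficients of $a$, $tb$, and $a^2$ in $Q_t(z)-Y_t(z)$ expand as $(2\ell+2)\sqrt{z_t-2}+O(\ell^2|z_t-2|)$, $-(\ell+1)+O(\ell^2\sqrt{z_t-2})$, and $(\ell+1)+O(\ell^2\sqrt{z_t-2})$, respectively. Dividing by $\ell+1$ and multiplying by $\cA^2$,
\begin{align*}
\frac{\cA^2}{\ell+1}\bigl(Q_t(z)-Y_t(z)\bigr)=2\cA^2\sqrt{z_t-2}\,a-\cA^2 t b+\cA^2 a^2+O\bigl(N^{-1+O(\fo+\ft)}\bigr).
\end{align*}

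Next, I relate $b$ to $a$ via \eqref{e:Xrecurbound}: the leading coefficient $\tfrac{d}{d-1}\md^2(z_t)\msc^{2\ell}(z_t)$ equals $\cA+O(\ell\sqrt{z_t-2})$ by \eqref{e:medge_behavior}, and combining with $|m_t(z)-X_t(z)|\lesssim N^{-1/2+O(\fc+\ft)}$ gives $b=\cA a+r$ with $|r|\lesssim N^{-1/2+O(\fc+\ft)}$. Substituting this and $\sqrt{z_t-2}=\tfrac{\cA t}{2}+\sqrt{z-E_t}+O(t\sqrt{|z-E_t|}+t^2)$ from \eqref{e:relation_zt_z}, the two $\cA^3 t a$ contributions coming from $2\cA^2\sqrt{z_t-2}\,a$ and $-\cA^2 t b$ cancel exactly, so
\begin{align*}
\frac{\cA^2}{\ell+1}\bigl(Q_t(z)-Y_t(z)\bigr)=2\cA^2\sqrt{z-E_t}\,a+\cA^2 a^2+O\bigl(N^{-5/6+O(\fc)}\bigr).
\end{align*}
Separately, $F_t(z)-\del_z m_t(z)/N=b^2+2(\md(z,t)-\md(E_t,t))b$ is computed using $\md(z,t)-\md(E_t,t)=\md(z_t)-\md(\xi_t)=\cA\sqrt{z-E_t}+O(N^{-2/3+O(\ft)})$ (from \eqref{e:medge_behavior} and \eqref{e:relation_zt_z}) together with $b=\cA a+r$; all cross-terms involving $r$ or the $O(N^{-2/3+O(\ft)})$ remainder are bounded by $N^{-5/6+O(\fc)}$ because $|a|\lesssim N^{-1/3+O(\fo+\ft)}$. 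This yields $\cA^2 a^2+2\cA^2\sqrt{z-E_t}\,a+O(N^{-5/6+O(\fc)})$, matching the display above and establishing \eqref{e:Ftbiaoda}.

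The main obstacle is the exact cancellation of the $\cA^3 t a$ terms in the second step, which requires three independent ingredients to line up: the coefficients $(2\ell+2)\sqrt{z_t-2}$ and $-(\ell+1)$ extracted from \eqref{e:recurbound}, the leading identity $b=\cA a+\text{(small)}$ derived from \eqref{e:Xrecurbound}, and the edge asymptotic $\sqrt{z_t-2}=\cA t/2+\sqrt{z-E_t}+\cdots$ from \eqref{e:relation_zt_z}. Without this triple alignment, a spurious contribution of order $ta\sim N^{-2/3}$ would survive and the microscopic loop equation \eqref{e:key_cancel} would fail to take its correct form. The remaining bookkeeping—tracking that every term neglected along the way is indeed $O(N^{-5/6+10\fc})$—is routine once the bounds on $|a|,|b|,|r|$ above are in hand.
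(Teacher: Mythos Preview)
Your proposal is correct and follows essentially the same approach as the paper: both expand $Q_t-Y_t$ via \eqref{e:recurbound}, relate $m_t-\md$ to $Q_t-\msc$ via \eqref{e:Xrecurbound}, and use the edge expansion \eqref{e:relation_zt_z} to effect the key cancellation. The only cosmetic difference is that the paper substitutes $Q_t-\msc$ in terms of $m_t-\md$ and compares the resulting linear coefficient $\cA(1-\msc^2(z_t))-t\cA^2$ directly with $2(\md(z,t)-\md(E_t,t))$ (their \eqref{e:xishu}), whereas you substitute in the opposite direction and match both sides in the variable $a$; your ``$\cA^3 t a$ cancellation'' is exactly their \eqref{e:xishu} rewritten.
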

\begin{proof}
Denote $z_t=z+t\md(z,t)$. Thanks to \eqref{e:relation_zt_z} and $|z-E_t|=|w|\leq N^{-2/3+\ft}$, we have 
\begin{align}\begin{split}\label{e:mthaha0}
    \sqrt{z_t-2}&=\sqrt{\xi_t-2}+\sqrt{z-E_t}+\OO\left(t\sqrt{|z-E_t|}+t^2\right)\\
    &=\sqrt{\xi_t-2}+\sqrt{z-E_t}+\OO(N^{-2/3+2\ft})=\OO(N^{-1/3+\ft/2}),
\end{split}\end{align}
where in the last statement we used that $\xi_t-2=\cA^2 t^2/2+\OO(t^3)$ from \eqref{e:xi_behavior}. We also recall from \eqref{e:medge_behavior}
\begin{align}\label{e:mthaha}
    \msc(z_t)=-1+\OO(\sqrt{|z_t-2|}),\;
    1-\msc^2(z_t)=\OO(\sqrt{|z_t-2|}),\;
    \md(z_t)=-\frac{d-1}{d-2}+\OO(\sqrt{|z_t-2|}),
\end{align}
and, conditioned on $\Omega$, the following estimates from \eqref{e:equation_est} and \eqref{e:mtmddiff} hold with overwhelmingly high probability,
\begin{align}\begin{split}\label{e:crude}
&|m_t(z)-X_t(z)|\leq \frac{N^{2\fc}\sqrt{\kappa+\eta}}{N\eta}+\frac{N^{6\fc}}{(N\eta)^{3/2}}\leq \frac{N^{8\fc}}{N^{1/2}},\\
&|Q_t(z)-\msc(z_t)|, |m_t(z)-m_d(z_t)|\leq  \frac{N^{8\fo}}{N\eta}\lesssim \frac{N^{2\ft}}{N^{1/3}}.
\end{split}\end{align}
We also recall the expansion of $Y_t$ from \Cref{p:recurbound} (by taking $(\Delta, w,z)$ as $(Q_t(z), z+tm_t(z),z+t\md(z_t))$, 
\begin{align}\begin{split}\label{e:Q-YQ6}
&\phantom{{}={}}Q_t-Y_t=(Q_t-\msc(z_t))-(Y_t-\msc(z_t))\\
&= 
(1-\msc^{2}(z_t))(\ell+1)(Q_t-\msc(z_t))+(\ell+1)(Q_t-\msc(z_t))^2-(\ell+1)t (m_t-\md(z_t))\\
&+\OO(\ell^5(|z_t-2||Q_t-\msc(z_t)|+\sqrt{|z_t-2|}(t|m_t-\md(z_t)|+|Q_t-\msc(z_t)|^2))\\
&+\OO(\ell^5(|Q_t-\msc(z_t)|^3 + t^2|m_t-\md(z_t)|^2+ t|m_t-\md(z_t)||Q_t-\msc(z_t)|))\\
&= 
(\ell+1)\left((1-\msc^{2}(z_t))(Q_t-\msc(z_t))+(Q_t-\msc(z_t))^2-t (m_t-\md(z_t))\right)+\OO(N^{-1+8\ft}),
\end{split}
\end{align}
where we used \eqref{e:mthaha0}, \eqref{e:mthaha} and \eqref{e:crude} to bound the errors. 

Thanks to the expansion of $X_t$ from \eqref{e:Xrecurbound}, together with \eqref{e:mthaha}, we have
\begin{align*}
X_t-\md(z_t)&=X_t-\md(z,t)= \cA(Q_t-\msc(z_t)) 
\\
&+\OO(\ell^3(t|m_t-\md(z_t)|+\sqrt{|z_t-2|}|Q_t-\msc(z_t)|+|Q_t-\msc(z_t)|^2)).
\end{align*}
It follows by rearranging that
\begin{align}\begin{split}\label{e:mQrelation}
m_t-\md(z,t) &=m_t-X_t(Q)+\cA(Q_t-\msc(z_t)) +\OO(N^{-2/3+6\ft})\\
&=\cA(Q_t-\msc(z_t)) +\OO(N^{-1/2+8\fc}),
\end{split}\end{align}
where we used \eqref{e:mthaha0} and \eqref{e:crude} to bound the errors. 
By plugging \eqref{e:mQrelation} into \eqref{e:Q-YQ6}, we conclude
\begin{align}\label{e:AQ-Y}
    \frac{\cA^2(Q_t-Y_t)}{\ell+1}
    =(m_t-m_d(z,t))^2+(\cA (1-\msc^2(z_t))-t\cA^2)(m_t-m_d(z,t))+\OO(N^{-5/6+10\fc}).
\end{align}
For the factor in front of $(m_t-\md(z,t))$, we claim we can replace it with $2(\md(z_t)-\md(E_t,t))$. Indeed,
\begin{align}\begin{split}\label{e:xishu}
&\phantom{{}={}}2(\md(z_t)-\md(E_t,t))-\cA(1-\msc(z_t)^2)+t\cA^2\\
&=2(\md(z_t)-\md(\xi_t))-\cA(1-\msc(z_t)^2)+t\cA^2\\
&=2\cA(\sqrt{z_t-2}-\sqrt{\xi_t-2})-\cA 2\sqrt{z_t-2}+t\cA^2+\OO(|z_t-2|+|\xi_t-2|)\\
&=\OO(|z_t-2|+|\xi_t-2|)=\OO(N^{-2/3+2\ft}),
\end{split}\end{align}
where the third line follows from \eqref{e:mthaha}, and for the last line we used that $\xi_t-2=\cA^2 t^2/4+\OO(t^3)$ from \eqref{e:xi_behavior}.
By plugging \eqref{e:xishu} into \eqref{e:AQ-Y}, we conclude
\begin{align*}
    \frac{\cA^2(Q_t(z)-Y_t(z))}{\ell+1}=(m_t(z)-\md(z,t))^2+2(\md(z,t)-\md(E_t,t))(m_t(z)-\md(z,t))+\OO\left(\frac{N^{10\fc}}{N^{5/6}}\right),
\end{align*}
and the claim \eqref{e:Ftbiaoda} follows. 
\end{proof}

 \begin{proof}[Proof of \Cref{p:small_change}]
 Thanks to \eqref{e:DBM_mt2} and It{\^ o}'s formula, we have
\begin{align}\begin{split}\label{e:DBM_mt3}
   &\phantom{{}={}} \del_t \bE\left[\bm1(\cG\in \Omega)\prod_{1\leq j\leq p} (m_t(z_j)- \md(z_j,t))\right]\\
    &=\frac{1}{2}\sum_{1\leq i\leq p} \bE\left[\bm1(\cG\in \Omega)\del_z F_t(z_i)\prod_{j:j\neq i}(m_t(z_j)- \md(z_j,t))\right]\\
    &+\sum_{i\neq j}\bE\left[\frac{\bm1(\cG\in \Omega)}{N^3}\sum_{\al\in \qq{N}} \frac{1}{(\la_\al(t)-z_i)^2(\la_\al(t)-z_j)^2}\prod_{k:k\neq i,j} (m_t(z_k)- \md(z_k,t))\right].
\end{split}\end{align}
We notice that 
\begin{align*}
    \del_{z_j}\del_{z_i}\frac{m_t(z_i)-m_t(z_j)}{z_i-z_j}=\frac{1}{N}\sum_{\al\in \qq{N}} \frac{1}{(\la_\al(t)-z_i)^2(\la_\al(t)-z_j)^2}.
\end{align*}
The right-hand side of \eqref{e:DBM_mt3} is a sum for $1\leq i\leq p$ of the following quantity
\begin{align}\begin{split}\label{e:dz_exp}
&\del_{z_i}\bE\left[\bm1(\cG\in \Omega)F_t(z_i)\prod_{j:j\neq i}(m_t(z_j)- \md(z_j,t))\right.+\\
&\left.+\bm1(\cG\in \Omega)\sum_{j:j\neq i}\frac{2}{N^3}\del_{z_j}\frac{m_t(z_i)-m_t(z_j)}{z_i-z_j}\prod_{k:k\neq i,j} (m_t(z_k)- \md(z_k,t))\right].
\end{split}\end{align}

Thanks to \Cref{l:Ftbiaoda} and our main result \Cref{t:correlation_evolution} (with $z$ taken to be $z_i$), we get
\begin{align}\begin{split}\label{e:dz_exp2}
&\del_{z_i}\bE\left[\bm1(\cG\in \Omega)F_t(z_i)\prod_{j:j\neq i}(m_t(z_j)- \md(z_j,t))\right.+\\
&\left.+\bm1(\cG\in \Omega)\sum_{j:j\neq i}\frac{2}{N^3}\del_{z_j}\frac{m_t(z_i)-m_t(z_j)}{z_i-z_j}\prod_{k:k\neq i,j} (m_t(z_k)- \md(z_k,t))\right]\\
&\lesssim \frac{N^{10\fc}}{N^{5/6}}\bE\left[\bm1(\cG\in \Omega)\prod_{j:j\neq i}|m_t(z_j)- \md(z_j,t)|\right]+\OO\left(\frac{N^{2(p+1)\ft}}{N^{(p+1)/3}(d-1)^{\ell/2}}\right)\\
&=\OO\left(\frac{N^{2(p+1)\ft}}{N^{(p+1)/3}(d-1)^{\ell/2}}\right),
\end{split}\end{align}
where in the last statement we used \eqref{e:crude}.
To estimate \eqref{e:dz_exp}, we can take a small contour $\cC_i$ of radius $N^{-2/3-\ft}/10$ around $z_i$ and perform a contour integral on both sides of \eqref{e:dz_exp2}. It follows that
\begin{align}\label{e:dz_exp3}
    |\eqref{e:dz_exp}|\lesssim \frac{1}{2\pi\ri}\oint_{\cC_i}\frac{N^{2(p+1)\ft}}{N^{(p+1)/3}(d-1)^{\ell/2}}\frac{|\rd z|}{|z-z_i|^2}\lesssim \frac{N^{2(p+2)\ft}}{N^{(p-1)/3}(d-1)^{\ell/2}}.
\end{align}
The claim \eqref{e:small_change} follows from summing over \eqref{e:dz_exp3} for $1\leq i\leq p$. 
\end{proof}

\subsection{Switching Edges and the Forest}\label{s:forest}
As explained in the introduction \Cref{s:newstrategy}, our new strategy to prove \Cref{t:recursion} is an iteration scheme. At each iteration, we perform a local resampling and express the Green's function of the switched graph in terms of the original graph. 
Because almost all neighborhoods in the graph $\cG$ have no cycles, with high probability, the edges involved in local resamplings have large tree neighborhoods, and are typically far from each other. Therefore, we think of the edges involved as a forest. In this section we formalize this iterative scheme using a sequence of forests, see \Cref{fig:forest}, which encode all the edges involved in local resamplings. 

\begin{figure}
\centering
\includegraphics[scale=0.15, trim=12cm 0cm 0cm 0cm, clip]{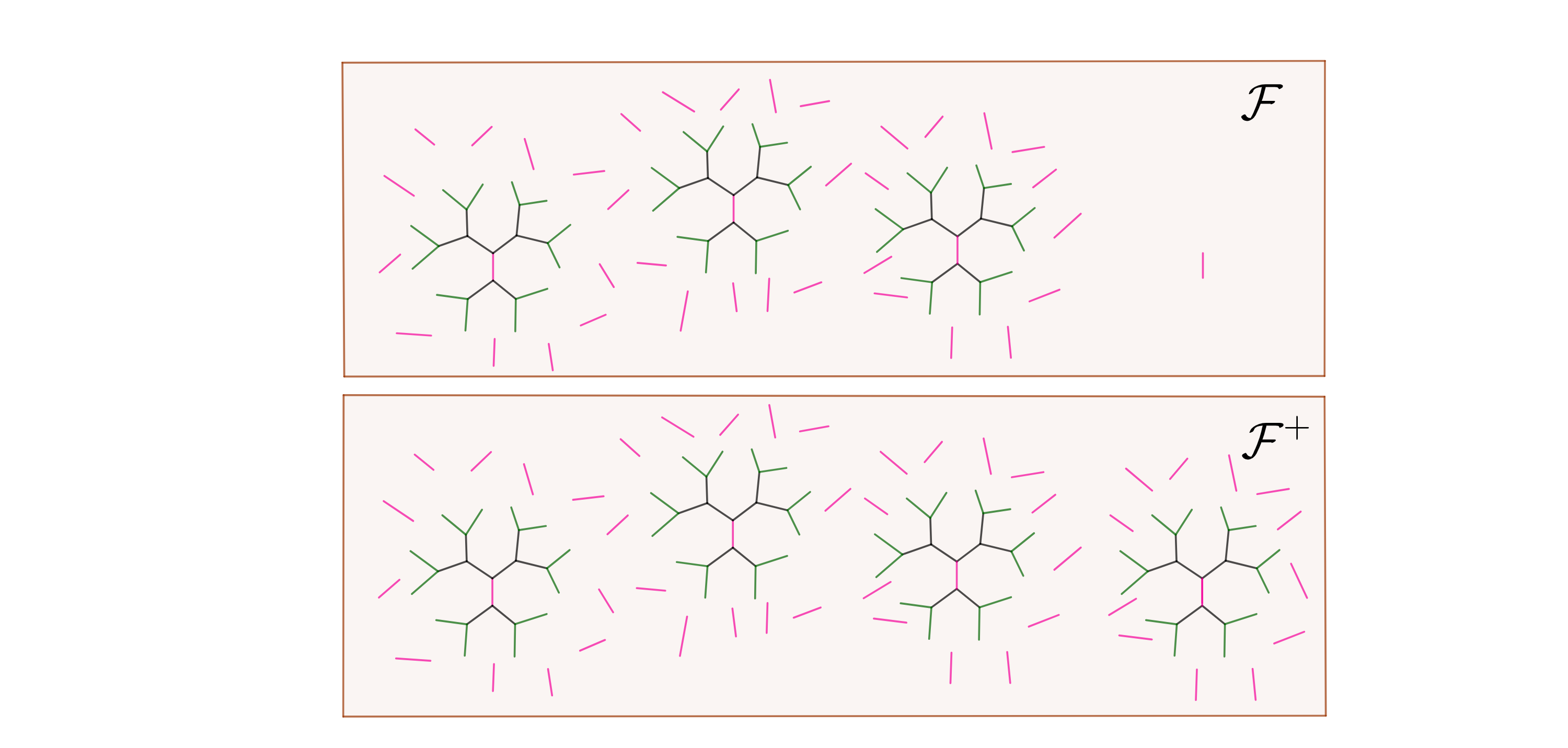}
\caption{Top Panel:
In the forest \(\mathcal{F}\), red edges represent core edges \(\mathcal{C}\). The used core edges belong to radius-\((\ell+1)\) balls, while each unused core edge \(\mathcal{C}^\circ\) forms its own connected component. Together, the red and green edges constitute the switching edges \(\mathcal{K}\).  Bottom Panel: We construct \(\mathcal{F}^+\) from \(\mathcal{F}\) by selecting an unused core edge (the rightmost red edge), expanding it into a radius-\((\ell+1)\) ball, and adding \(\mu\) new switching edges.
\label{fig:forest}}
\end{figure}

Fix $\mu:=d(d-1)^\ell$, which is the number of boundary edges for a $d$-regular tree truncated at level $\ell+1$. We start from a graph $\cF_0:=(\bfi_0=\{i_0,o_0\}, E_0:=\{i_0,o_0\})$.

We then construct $\cF_1$ from $\cF_0$, by 
\begin{itemize}
    \item extending the (directed) edge $(i_0,o_0)$ to a $d$-regular tree truncated at level $\ell$, $\cT_\ell(o_0)$ (with $o_0$ as the root vertex);
    \item adding $\mu$ boundary edges $\{ e'_\al\}_{\al\in\qq{\mu}}$ to $\cT_\ell(o_0)$, and get $\cT_{\ell+1}(o_0)$, which is the truncated $d$-regular tree at level $\ell+1$;
    \item adding $\mu$ new (directed) edges $\{ e_\al\}_{\al\in\qq{\mu}}$.
\end{itemize}
We denote $\cM'_1=\{ e'_\al\}_{\al\in\qq{\mu}}$,
$\cM_1=\{e_\al\}_{\al\in\qq{\mu}}$, so
\begin{align}\label{eq:forestdef1}
    \cF_1:=\cT_{\ell+1}(o_0)\cup \cM_1, \quad \cT_\ell(o_0)\bigcup\cM'_1=\cT_{\ell+1}(o_0).
\end{align}

In general, given the forest $\cF_s=(\bfi_s,E_s)$, with edge sets $\cM'_s, \cM_s$ constructed in last step, 
we construct $\cF_{s+1}=(\bfi_{s+1},E_{s+1})$ by 
\begin{itemize}
    \item picking one of the edges (from last step) $e=(i_s, o_s)\in \cM_s$  and extending $e$ to a $d$-regular tree  truncated at level $\ell$, $\cT_\ell(o_s)$ (with $o_s$ as the root vertex);
    \item adding $\mu$ boundary edges $\cM_{s+1}'=\{ e_\al'\}_{\al\in\qq{\mu}}$ to $\cT_\ell(o_s)$, and get $\cT_{\ell+1}(o_s)$, which is the truncated $d$-regular tree at level $\ell+1$;
    \item adding another $\mu$ new (directed) edges $\cM_{s+1}=\{e_\al\}_{\al\in\qq{\mu}}$.
\end{itemize} 
Explicitly, the new forest $\cF_{s+1}=(\bfi_{s+1}, E_{s+1})$ is 
\begin{align}\begin{split}\label{eq:forestdef2}
    &\cF_{s+1}=\cF_s\cup \cT_{\ell+1}(o_s)\cup \cM_{s+1},
\end{split}\end{align}
where $\bfi_{s+1}, E_{s+1}$ are its vertex set and edge set, respectively.

In the context of local resampling as described in \Cref{s:local_resampling}, the procedure above involves performing a local resampling around the edge $(i_{s}, o_s)$. Here, $\cT_\ell(o_s)$ represents the radius-$\ell$ neighborhood of $o_s$. The boundary edges are defined by
$\cM_{s+1}'=\{l_\al, a_\al\}_{\al\in \qq{\mu}}$,
and the new edges are given by
$\cM_{s+1}'=\{(b_\al, c_\al)\}_{\al\in \qq{\mu}}$.

The forest $\cF_s$ encodes the local resamplings up to the $s$-th step. Along with this, we denote
\begin{enumerate}
\item 
the set of switching edges $\cK_s=\{(i_0,o_0)\}\cup \cM_1\cup  \cM'_1\cup \cdots \cup \cM_s\cup  \cM'_s$; 
\item the
core edges of the forest $\cF_s$ as $\cC_s=(i_0,o_0)\cup \cM_1\cup \cM_2\cdots \cup\cM_s$. Each connected component of $\cF_s$ contains exactly one core edge;
\item the set of used core edges $\{(i_0,o_0), \cdots, (i_{s-1},o_{s-1})\}$ and the set of unused core edges
$\cC_s^\circ=\cC_s\setminus \{(i_0,o_0), \cdots, (i_{s-1},o_{s-1})\}$. Connected components containing a used core edge are truncated $d$-regular trees with radius $\ell + 1$; Other connected components consist of a single unused core edge.
\end{enumerate}

For the most of the later analysis, we will focus on one step, and simplify the notation as 
\begin{align*}\begin{split}
    &\cF=\cF_s, \quad \cK=\cK_s, \quad \cC=\cC_s,\quad \cC^\circ =\cC^\circ_s,\\
    &\cF^+=\cF_{s+1}, \quad \cK^+=\cK_{s+1}, \quad \cC^+=\cC_{s+1},\quad (\cC^\circ)^+ =\cC^\circ_{s+1}.
\end{split}\end{align*}
 More generally, see \Cref{fig:forest}, a forest $\cF$ associated with core edges $\cC$, and unused core edges $\cC^\circ$ is
\begin{align}\label{e:cFtocF+}
    \cF=\{e\}_{e\in \cC^\circ}\bigcup_{(i',o')\in \cC\setminus \cC^\circ}\cT_{\ell+1}(o'), \quad \cK=\cC\cup \bigcup_{(i',o')\in \cC\setminus \cC^\circ}(\cT_{\ell+1}(o')\setminus\cT_{\ell}(o')).
\end{align}
We construct $\cF^+$ from $\cF$ by picking an unused core edge $(i,o)\in \cC^\circ$, expanding it to $\cT_{\ell+1}(o)=\cT_\ell(o)\cup \cM'$, where $\cM'=\{(l_\al, a_\al)\}_{\al\in\qq{\mu}}$ represents $\mu$ boundary edges of $\cT_\ell(o)$, and adding $\mu$ new  edges $\cM=\{(b_\al, c_\al)\}_{\al\in\qq{\mu}}$. Then 
\begin{align}\label{e:cF++}
   \cC^+=\cC\cup \cM,\quad  (\cC^\circ)^+ =\cC^\circ\cup \cM\setminus \{(i,o)\},\quad \cF^+=\cF\cup \cT_{\ell+1}(o)\cup \cM, \quad
   \cK^+=\cK\cup \cM\cup \cM'. 
\end{align}

The forest $\cF$ encodes the edges involved in all previous local resamplings. To ensure these edges are sufficiently spaced apart and have large tree neighborhoods, the following indicator functions can be utilized.

\begin{definition}
    \label{def:indicator}
We consider a forest $\cF=(\bfi, E)$ (as in \eqref{e:cFtocF+}) with core edges $\cC$, viewed as a subgraph of a $d$-regular graph $\cG$. We write  $I(\cF, \cG)=1$ to denote the indicator function on the event that vertices close to core edges have radius $\fR$ tree neighborhoods, and core edges are distance $3\fR$ away from each other. Explicitly,  $I(\cF, \cG)$ is given by
\begin{align*}
   I(\cF, \cG):= \prod_{(x,y)\in \cF}A_{xy}\prod_{(b,c)\in \cC}\left(\prod_{x\in \cB_\ell(c,\cG)}\bm1(\cB_{\fR}(x, \cG) \text{ is a tree})\right) \prod_{(b,c)\neq (b',c')\in \cC}\bm1(\dist_\cG(c,c')\geq 3\fR).
\end{align*}
\end{definition}

Given a forest $\cF=(\bfi, E)$, we view it as a subgraph of a $d$-regular graph $\cG\in \Omega$. Then we perform a local resampling around an unused core edge $(i,o)$. 
In the following lemma, we show that with high probability with respect to the randomness of $\bfS$, the randomly selected edges $(b_\al, c_\al)$ are far away from each other, and have large tree neighborhood. In particular $\wt \cG=T_\bfS \cG\in \oOmega$.
\begin{lemma}\label{lem:configuration}
Adopt the notation of \Cref{s:setting3}.
Fix a $d$-regular graph $\cG\in \oOmega$, and a forest $\cF=(\bfi, E)$ (as in \eqref{e:cFtocF+}) viewed as a subgraph of $\cG$. 
Assume that $I(\cF,\cG)=1$ and $|\bfi|\leq N^{\fc/2}$. 
We consider the local resampling around an unused core edge $(i,o)\in \cC^\circ$, with resampling data $\{(l_\al, a_\al), (b_\al, c_\al)\}_{\al\in \qq{\mu}}$. 
We denote the set of resampling data $\sfF(\cG)\subset \sfS(\cG)$ such that the following holds 
\begin{enumerate}
    \item 
for any $\al\neq \beta\in \qq{\mu}$, $\dist_\cG(\{b_\al, c_\al\}\cup \bfi,\{b_\beta, c_\beta\} )\geq3\fR$; 
\item 
for any  $v\in \cB_{\ell}(\{b_\al, c_\al\}_{\al\in \qq{\mu}}, \cG)$, the radius $\fR$ neighborhood of $v$ is a tree.
\end{enumerate}
Then $\bP_\bfS(\sfF(\cG))\geq 1-N^{-1+2\fc}$ (where $\bP_{\bfS}(\cdot)$ is the probability with respect to the randomness of $\bfS$ as in \Cref{def:PS}). Also, for $\bfS\in \sfF(\cG)$ the following holds
\begin{enumerate}
    \item $\mu=d(d-1)^\ell$, ${\mathsf W}_{\bf S}=\qq{\mu}$ (recall from \eqref{Wdef}), and $\tcG=T_\bfS(\cG)\in \oOmega$; 
\item $I(\cF^+, \cG)=1$ and $I(\cF, \wt\cG)=1$.
\end{enumerate}
\end{lemma}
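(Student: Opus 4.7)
\textbf{Proof proposal for \Cref{lem:configuration}.}

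The plan is to establish $\bP_\bfS(\sfF(\cG)) \geq 1 - N^{-1+2\fc}$ by a union bound, and then to verify the four structural consequences by tracing through the definitions. For the probability estimate, I will count bad configurations and divide by the total number of oriented edges in $\cG^{(\bT)}$, which is $\Theta(Nd)$. Since $\cG \in \oOmega$, there are at most $N^\fc$ vertices whose radius-$\fR$ neighborhood fails to be a tree, and each such vertex has at most $d(d-1)^{\ell+1} \leq N^{\fb + o(1)}$ oriented edges within distance $\ell$; so by a union bound over $\mu = d(d-1)^\ell \leq N^\fb$ indices $\beta$ and over bad vertices, the probability that some $\cB_\ell(\{b_\beta,c_\beta\},\cG)$ contains a bad vertex is at most $\mu \cdot N^\fc \cdot N^{\fb + o(1)}/(Nd) \leq N^{-1 + 2\fb + \fc + o(1)}$. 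For the isolation condition, note that any vertex has at most $d(d-1)^{3\fR+1} \leq N^{3\fc/4 + o(1)}$ vertices within graph-distance $3\fR$; thus given one edge, the number of oriented edges in $\cG^{(\bT)}$ within $3\fR$ of a set of size $M$ is at most $M \cdot N^{3\fc/4 + o(1)}$. Summing over $\beta$ against the other $(\mu-1)$ edges and against $\bfi$ (which has $|\bfi| \leq N^{\fc/2}$) yields a failure probability of at most $\mu^2 \cdot N^{-1 + 3\fc/4 + o(1)} + \mu \cdot N^{\fc/2} \cdot N^{-1 + 3\fc/4 + o(1)} \leq N^{-1 + 2\fc}$ after absorbing lower-order terms.

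Next, for $\bfS \in \sfF(\cG)$ I will verify property (1). The equality $\mu = d(d-1)^\ell$ is immediate because $I(\cF,\cG)=1$ forces $\cB_\fR(o,\cG)$ to be a tree (as $o \in \cB_\ell(o,\cG)$), so $\cT=\cB_\ell(o,\cG)$ is a truncated $d$-regular tree with exactly $d(d-1)^\ell$ boundary edges. For the admissibility ${\mathsf W}_\bfS = \qq{\mu}$, I check both parts of $I_\al$. The vertex $a_\al$ lies at distance $\ell+1$ from $o \in \bfi$, while condition~1 of $\sfF(\cG)$ gives $\dist_\cG(\bfi,\{b_\al,c_\al\}) \geq 3\fR$, hence $\dist_\cG(a_\al,\{b_\al,c_\al\}) \geq 3\fR - (\ell+1) \geq 2\fR$. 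Combined with the tree-neighborhood condition (from $I(\cF,\cG)=1$ for vertices near $a_\al$, and from condition~2 of $\sfF(\cG)$ for vertices near $b_\al,c_\al$) and $\ell \ll \fR$, this implies that $\cB_{\fR/4}(a_\al,\cG^{(\bT)})$ and $\cB_{\fR/4}(\{b_\al,c_\al\},\cG^{(\bT)})$ are disjoint trees, and adding $\{a_\al,b_\al\}$ glues them into a single tree. The separation between different triples $\{a_\al,b_\al,c_\al\}$ and $\{a_\beta,b_\beta,c_\beta\}$ follows similarly from the same distance estimates after noting that in $\cG^{(\bT)}$ the boundary vertices $a_\al, a_\beta$ are disconnected through $\bT$.

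For $\tcG \in \oOmega$, I will use locality of the switching: outside the set $\bigcup_\al \cB_{\fR+1}(\{l_\al,a_\al,b_\al,c_\al\},\cG)$ the graphs $\cG$ and $\tcG$ have identical radius-$\fR$ neighborhoods, so the only new bad vertices or new short cycles can appear near the switched edges. The total number of such vertices is at most $(|\bfi| + 3\mu) \cdot d(d-1)^{\fR+1} \leq N^{\fc/2 + \fc/4 + o(1)} \ll N^\fc$, and within these neighborhoods the combined tree structure provided by $I(\cF,\cG)=1$ and by $\sfF(\cG)$ ensures the switching merges two disjoint trees along a single new edge, producing no new bad vertex and no new cycles beyond those already counted by $\omega_d$. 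Finally, for property (2), I will verify $I(\cF^+,\cG)=1$ directly by checking each clause against \Cref{def:indicator}: edges of $\cT_{\ell+1}(o)\cup \cM$ lie in $\cG$, core-edge separation for new edges comes from condition~1 of $\sfF(\cG)$ (applied both to new-new pairs and to old-new pairs using $\bfi$), and the tree-neighborhood clause for new core edges comes from condition~2 of $\sfF(\cG)$. The verification $I(\cF,\tcG)=1$ is the mirror image: the edges of $\cF$ lie far from the switching region (by condition~1 of $\sfF(\cG)$, since all core vertices of $\cF$ are in $\bfi$), so both $\cB_\ell(c,\tcG) = \cB_\ell(c,\cG)$ and $\cB_\fR(v,\tcG) = \cB_\fR(v,\cG)$ for $v \in \cB_\ell(c,\cG)$, preserving the tree property and the pairwise distance $\geq 3\fR$.

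The only delicate step is bookkeeping the various distance inequalities among $a_\al,b_\al,c_\al,l_\al$ and $\bfi$ simultaneously while keeping track of which distances are measured in $\cG$ versus $\cG^{(\bT)}$ versus $\tcG$; the probability estimate is otherwise a standard union bound, and the structural consequences reduce to book-keeping the radii $\ell \ll \fR$ and using $|\bfi|, \mu \leq N^{\fc/2 + o(1)}$.
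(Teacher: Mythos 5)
Your overall route is the same as the paper's: a union bound over the resampling data for the probability estimate, distance-plus-tree arguments for $\mu=d(d-1)^\ell$ and ${\mathsf W}_{\bf S}=\qq{\mu}$, a locality/gluing argument showing the switch does not increase the excess of any radius-$\fR$ ball (hence $\tcG\in\oOmega$), and a direct check of the two indicator functions. In that sense the proposal is essentially the paper's proof, and the probability bookkeeping is fine.

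Two points in your write-up need repair, though. First, in the last paragraph you justify $I(\cF,\tcG)=1$ by claiming that all edges of $\cF$ lie far from the switching region, so that $\cB_\fR(v,\tcG)=\cB_\fR(v,\cG)$ for every $v\in\cB_\ell(c,\cG)$ with $(b,c)$ a core edge of $\cF$. This is false for the component of the resampled edge $(i,o)$ itself: the switch removes the boundary edges $\{l_\al,a_\al\}$ and inserts $\{l_\al,c_\al\}$, and these modified edges are within distance $2\ell\ll\fR$ of any $v\in\cB_\ell(o,\cG)$, so those radius-$\fR$ balls genuinely change. The tree property of $\cB_\fR(v,\tcG)$ for such $v$ must instead come from the gluing argument (the ball in $\tcG$ is a truncated piece of $\cB_\fR(v,\cG)$ with the disjoint trees $\cB_{\fR-r-1}(c_\al,\cG)$ attached along single new edges), which is exactly what you already invoked in the $\oOmega$ paragraph and is how the paper argues; the "far from the switching region" justification applies only to the other components of $\cF$. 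Second, the counting $(|\bfi|+3\mu)\,d(d-1)^{\fR+1}\ll N^\fc$ cannot by itself give $\tcG\in\oOmega$: the bad-vertex count in $\cG$ may already be as large as $N^\fc$, so adding even a few new non-tree vertices could break the defining bound of $\oOmega$. What carries the conclusion is your stronger assertion (no new bad vertex, no increase of excess at any vertex), i.e. the per-vertex excess comparison between $\cB_\fR(v,\tcG)$ and $\cB_\fR(v,\cG)$; the cardinality estimate is redundant and should be dropped or demoted to motivation.
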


\begin{proof}[Proof of \Cref{lem:configuration}]
    We sequentially select $(b_\al, c_\al)$ uniformly random from $\cG^{(\bT)}$. For any fixed $\alpha$, we consider all edges that would break the requirements of the lemma. For the first requirement, we have
    \begin{align}\label{e:small1}
        \bP_{\bfS}(\dist_\cG(\bfi\cup_{1\leq \beta\leq \al-1}\{b_\beta, c_\beta\},\{b_\al, c_\al\} )\leq3\fR)\lesssim N^{-1}(|\bfi|+2\mu)d(d-1)^{3\fR}\leq N^{-1+3\fc/2}.
    \end{align}
    For the second requirement, we recall that  $\cG\in \oOmega$, in which all vertices except for $N^\fc$ many have radius $\fR$ tree neighborhood. Thus  
    \begin{align}\label{e:small2}
        \bP_{\bfS}(\cB_\fR(v,\cG) \text{ is not a tree for some } v\in \cB_\ell(\{b_\al, c_\al\}, \cG))\leq N^{-1} N^\fc d(d-1)^\ell\leq N^{-1+3\fc/2}.
    \end{align}
   The claim $\bP_\bfS(\sfF(\cG))\geq 1-N^{-1+2\fc}$ follows from union bounding over all $\alpha$ using \eqref{e:small1} and \eqref{e:small2}. 
   
    Under our assumption $I(\cF,\cG)=1$, the radius $\fR$ neighborhood of $o$ is a tree. Thus $\mu=d(d-1)^\ell$. Moreover, the neighborhoods $\cB_{3\fR/2}(o, \cG)$, and $\cB_{\fR}(\{b_\al, c_\al\}, \cG)$ for $\al\in \qq{\mu}$ are disjoint. It follows that $\dist_{\cG^{(\bT)}}(\{a_\al,b_\al,c_\al\}, \{a_\beta,b_\beta,c_\beta\})> {\fR/4}$ for all $\al\neq \beta\in \qq{\mu}$, and the subgraph $\cB_{\fR/4}(\{a_\al, b_\al, c_\al\}, \cG^{(\bT)})$ after adding the edge $\{a_\al, b_\al\}$ is a tree for all $\al \in \qq{\mu}$. We conclude that ${\mathsf W}_{\bf S}=\qq{\mu}$.

    Next we show that for any vertex $v\in \qq{N}$, the excess of $\cB_{\fR}(v, \tcG)$ is no bigger than that of $\cB_{\fR}(v, \cG)$. Then it follows that $\tcG\in \oOmega$. 
    If $\dist(v, \{a_\al, b_\al, c_\al\}_{\al\in \qq{\mu}})\geq \fR$, then $\cB_{\fR}(v, \tcG)=\cB_{\fR}(v, \cG)$, and the statement follows.
    Otherwise either $v\in \cB_{\fR}(\{a_\al\}_{\al\in \qq{\mu}}, \cG)\subset\cB_{3\fR/2}(o, \cG)$ or $v\in \cB_{\fR}(\{b_\al, c_\al\}, \cG)$ for some $\al\in \qq{\mu}$. We will discuss the first case. The second case can be proven in the same way, so we omit its proof. 
    If $v\in \cB_\ell(o, \cG)$, we denote $\min_{\al \in \qq{\mu}}\dist_\cG(v, \{l_\al\})=r\leq \fR$. Then $\cB_{\fR}(v, \tcG)$ is a subgraph of 
    $\cB_{\fR}(v, \cG)\cup_{\al\in \qq{\mu}} \cB_{\fR-r-1}(c_\al,\cG)$ after removing $\{(b_\al,c_\al)\}_{\al\in \qq{\mu}}$ and adding $\{(l_\al, c_\al)\}_{\al\in \qq{\mu}}$. By our construction of $\sfF(\cG)$, $\cB_{\fR-r-1}(c_\al,\cG)$ are disjoint trees. We conclude that $\cB_{\fR}(v, \tcG)$ is a tree.
    If $v\not\in \cB_\ell(o, \cG)$,
    we denote $\min_{\al \in \qq{\mu}}\dist_\cG(v, a_\al)=r\leq \fR$, then $\cB_{\fR}(v, \tcG)$ is a subgraph of 
    $\cB_{\fR}(v, \cG)\cup_{\al\in \qq{\mu}} 
    \cB_{\fR-r-1}(b_\al,\cG)$ 
    after removing $\{(b_\al,c_\al)\}_{\al\in \qq{\mu}}$ and adding $\{(a_\al, b_\al)\}_{\al\in \qq{\mu}}$. 
   Again by our construction of $\sfF(\cG)$, $\cB_{\fR-r-1}(c_\al,\cG)$ are disjoint trees, we conclude the excess of $\cB_{\fR}(v, \tcG)$ is at most that of $\cB_{\fR}(v, \cG)$.

The claim $I(\cF^+, \cG)=1$ follows from the construction of $\sfF(\cG)$. It also follows from  the above discussion that $\cB_\fR(v,\tcG)$ is a tree for any $v\in \cB_\ell(o, \cG)$. One can then check that $I(\cF,\tcG)=1$. This finishes the proof of the second statement in  \Cref{lem:configuration}.

\end{proof}

From \eqref{e:cFtocF+}, each connected component of $\cF$ is either an unused core edge or a radius-$(\ell+1)$ ball corresponding to a used core edge. The following proposition states that the total number of embeddings where $I(\cF,\cG)=1$ is approximately equal to that of choosing each connected component independently.

\begin{proposition}\label{p:sumA}
Given a forest $\cF=(\bfi, E)$ with core edges $\cC$ and unused core edges $\cC^\circ$ as in \eqref{e:cFtocF+}, as well as a $d$-regular graph $\cG\in \oOmega$, we have 
\[
\sum_{\bfi} I(\cF,\cG)=Z_{\cF}\left(1+\OO\left(\frac{1}{N^{1-2\fc}}\right)\right),
\]
where
\begin{align}\label{e:sumA2}\begin{split}
Z_{\cF}:=(Nd)^{|\cC|}\left([(d-1)!]^{1+d+d(d-1)+\cdots+d(d-1)^{\ell-1}}\right)^{|\cC\setminus\cC^\circ|}.
\end{split}\end{align}
Here $|\cC|$ is the number of core edges; and $|\cC\setminus\cC^\circ|$ is the number of used core edges. We remark that $Z_\cF$ depends only on the forest $\cF$ but not $\cG$.
\end{proposition}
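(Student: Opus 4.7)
The plan is to evaluate $\sum_{\bfi} I(\cF,\cG)$ by partitioning $\cF$ into its connected components, showing that the count factors over components up to a small correction, and then accounting for the pairwise distance condition on core edges by a union bound. By the construction in \eqref{e:cFtocF+}, each connected component contains exactly one core edge: an unused core edge in $\cC^\circ$ forms a single-edge component, while each used core edge $(i',o')\in \cC\setminus\cC^\circ$ yields a tree component $\cT_{\ell+1}(o')$.

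For a single-edge component corresponding to $(i,o)\in\cC^\circ$, the total number of directed edges in $\cG$ is exactly $Nd$. The indicator demands that $\cB_{\fR}(x,\cG)$ be a tree for every $x\in\cB_\ell(o,\cG)$. Since $\cG\in\oOmega$ has at most $N^{\fc}$ vertices lacking a tree $\fR$-neighborhood, and each such vertex lies in the $\ell$-ball of at most $d(d-1)^{\ell-1}\leq N^{\fc}$ vertices $o$, at most $N^{2\fc}$ edges $(i,o)$ fail this tree condition. Hence the count per single-edge component is $Nd\bigl(1+O(N^{-1+2\fc})\bigr)$. For a tree component $\cT_{\ell+1}(o')$, I first fix the image of the root: the number of $o'$ whose $\ell$-ball consists entirely of vertices with tree $\fR$-neighborhoods is $N\bigl(1+O(N^{-1+2\fc})\bigr)$ by the same counting. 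Given such a root, embedding the rest of $\cT_{\ell+1}(o')$ into $\cG$ reduces to choosing, at each internal vertex, an ordering of its children among the available neighbors in $\cG$. This yields $d!$ orderings at the root and $(d-1)!$ orderings at each of the $d+d(d-1)+\cdots+d(d-1)^{\ell-1}$ internal vertices of depth $1$ through $\ell$. Since $d!=d\cdot(d-1)!$, the count per tree component is
\begin{equation*}
Nd\cdot \bigl[(d-1)!\bigr]^{1+d+d(d-1)+\cdots+d(d-1)^{\ell-1}}\bigl(1+O(N^{-1+2\fc})\bigr).
\end{equation*}

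Multiplying over all $|\cC|$ components produces the leading factor $Z_{\cF}$ from \eqref{e:sumA2}, up to a combined multiplicative error $\bigl(1+O(|\cC|N^{-1+2\fc})\bigr)$. To enforce the remaining pairwise distance condition $\dist_\cG(c,c')\geq 3\fR$ for distinct core edges $(b,c),(b',c')\in\cC$, I subtract those embeddings in which some pair $(c,c')$ lies within distance $3\fR$. For each ordered pair of core edges, once $c$ is placed there are at most $d(d-1)^{3\fR-1}=O(N^{3\fc/4})$ choices of $c'$ within distance $3\fR$; dividing by the $N$ that would otherwise be available introduces a multiplicative loss of $O(|\cC|^2 N^{-1+3\fc/4})$, again absorbed into the $O(N^{-1+2\fc})$ error term.

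The main technical point is bookkeeping: the indicator $I(\cF,\cG)$ couples the choices in different components through the tree condition and the pairwise distance condition, so one must verify by inclusion--exclusion that the errors from these coupling constraints are additive at the claimed order rather than compounding, which relies critically on the fact that $(d-1)^{\fR}\leq N^{\fc/4}$ and on the sparsity of non-tree vertices guaranteed by $\cG\in\oOmega$. Given the parameter hierarchy in \eqref{e:parameters} and the implicit assumption that $|\cC|$ is of at most polylogarithmic or small-power size (as is the case in every application to the iteration scheme), the total error is $O(N^{-1+2\fc})$, completing the argument.
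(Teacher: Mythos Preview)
Your proof is correct and follows essentially the same approach as the paper: the paper organizes the argument as an induction on the number of connected components (summing over one component with the others fixed, absorbing both the tree-neighborhood and distance constraints into a single $(1+O(N^{-1+3\fc/2}))$ factor at each step), while you multiply the per-component counts and then correct for the pairwise distance constraint via a union bound, but these are equivalent. Your caveat that $|\cC|$ must be bounded by a small power of $N$ is well taken and is also relied on implicitly in the paper (cf.\ the hypothesis $|\bfi|\leq N^{\fc/2}$ in Lemma~\ref{lem:configuration}).
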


\begin{proof}
We notice that $|\cC|$ is also the number of connected components of $\cF$, and $|\cC\setminus\cC^\circ|$ is the number of connected components in $\cF$ which are balls of radius $\ell+1$.

We can prove \eqref{e:sumA2} by 
induction on the number of connected components. If $\cF$ consists of a single edge $\cF=\{b,c\}$ which is an unused core edge, then 
\begin{align}\label{e:single_edge}
    \sum_{\bfi}I(\cF,\cG)=\sum_{b,c} A_{bc}\prod_{v\in \cB_\ell(c,\cG)}\bm1(\cB_{\fR}(v, \cG) \text{ is a tree}) )=Nd\left(1+\OO\left(\frac{1}{N^{1-3\fc/2}}\right)\right),
\end{align}
where we used the definition of $\overline\Omega$ from \Cref{def:omegabar}. 
If $\cF$ consists of a radius $(\ell+1)$-ball, corresponding to one used core edge, then we can also first sum over its core edge. The number of choices of this is the same as \eqref{e:single_edge}.  Then we sum over the remaining vertices. Each interior vertex of the radius-$(\ell+1)$ ball contributes a factor $(d-1)!$, since there are $(d-1)!$ ways to embed its children vertices. We get
\begin{align}\label{e:ball}
    \sum_{\bfi}I(\cF,\cG)=Nd[(d-1)!]^{1+d+d(d-1)+\cdots+d(d-1)^{\ell-1}}\left(1+\OO\left(\frac{1}{N^{1-3\fc/2}}\right)\right).
\end{align}

If the statement holds for $\cF$ with $\theta$ connected components, next we show it for $\cF$ with $\theta+1$ connected components. We can first sum over the indices corresponding to a connected component, fixing the other indices. 
 If it is a single edge, we get a factor similar to \eqref{e:single_edge}; if it is a radius-$(\ell+1)$ ball, we get a factor similar to \eqref{e:ball}. Next we can sum over the remaining $\theta$ connected components of $\cF$, which gives \eqref{e:sumA2}.

\end{proof}

\subsection{Admissible Functions} \label{t:admissible}
As discussed in \Cref{s:forest}, at each iteration, we have a forest $\cF = (\bfi, E)$ (recall from \eqref{e:cFtocF+}) consisting of switching edges $\cK$, core edges $\cC$, and unused core edges $\cC^\circ$. We need to estimate expectations of the following form: \begin{align}\label{e:R_ifirst}
\bE\left[I(\cF,\cG)\bm1(\cG\in \Omega) (G_{oo}^{(i)}-Y_t)R_\bfi\right],
\end{align}
where ${(i,o)} \in \cC^\circ$ is an unused core edge, and $R_\bfi$ is a monomial of (averaged) Green's function terms that depend on the forest $\cF$. In this section, we introduce the set of admissible functions in \Cref{def:pgen}, which classifies all possible terms $R_\bfi$ as in \eqref{e:R_ifirst}.

We also define the set of special edges 
\begin{align}\label{e:special_edge}
    \cV = \{\{u_1, v_1\}, \{u_2, v_2\}, \cdots, \{u_{p-1}, v_{p-1}\}\},
\end{align} 
where the elements are dummy variables. In \Cref{def:pgen}, $R_\bfi$ contains $p-1$ factors, each involves a summation over $u_i \sim v_j$ (i.e., a summation over all the edges of $\cG$).

Before stating \Cref{def:pgen} on admissible functions, we first introduce the local Green's function and the averaged Green's function.

\begin{definition}[Local Green's Function]
  We introduce the local Green's function $L(z,t,\cF,\cG)$: for $w,w'\in \qq{N}$, $z_t=z+t\md(z,t)$
\begin{align}\begin{split}\label{e:local_Green}
    &L_{ww'}(z,t,\cF,\cG)=P_{ww'}(\cB_{\fR}(\cF,\cG),z_t,\msc(z_t))\bm1(w,w'\in \cB_{\fR}(\cF,\cG)).
\end{split}\end{align}
 We also denote the centered version of the Green's function as
\begin{align}\label{e:G-L}
G^\circ(z,t)=G(z,t)-L(z,t,\cF,\cG).
\end{align}
When the context is clear, we will simply write $G^\circ(z,t), L(z,t,\cF,\cG)$ as $G^\circ, L$ for simplicity.

\end{definition}
\begin{remark}

 At each step, we expand the forest $\cF$ to a new forest $\cF^+$ by including local resampling data. This change also affects the local Green's function. However, given the event $I(\cF^+,\cG)=1$, the local Green's functions are compatible 
     \begin{align*}
        L_{sw}(z,t,\cF,\cG)=L_{sw}(z,t,\cF^+,\cG),\quad s\in \cB_\fR(\cF,\cG), \quad w\in \qq{N}.
    \end{align*}

Later, we need the local Green's function with one vertex removed: Let $(i,o)\in \cC\setminus\cC^\circ$, and recall $P^{(i)}$ from \eqref{e:defPi},
\begin{align}\label{e:defLi}
    L^{(i)}_{ww'}:= L^{(i)}_{ww'}(z,t,\cF, \cG):=P^{(i)}_{ww'}(\cB_{\fR}(\cF,\cG),z_t,\msc(z_t))\bm1(w,w'\in \cB_{\fR}(\cF,\cG)).
\end{align}
We also write the local Green's function of the switched graph, 
\begin{align}\label{e:deftL}
    \wt L_{ww'}:= L_{ww'}(z,t,\cF^+,\wt \cG)=P_{ww'}(\cB_{\fR}(\cF^+,\wt\cG),z_t,\msc(z_t))\bm1(w,w'\in \cB_{\fR}(\cF^+,\wt\cG)).
\end{align}
\end{remark}
\begin{definition}[Averaged Green's Function]
\label{def:av_Green}
For any used core edge $(i',o')\in \cC\setminus \cC^\circ$ with switching data $\{(l'_\al, a'_\al)\}_{\al\in \qq{\mu}}$,  $\sfA_{i'}=\{\al: \dist_\cG(l'_\al, i')=\ell+1\}$ (these are the indices for which, in $\cB^{(i')}(o',\cG)$, $l'_\alpha$ is connected to $o'$), and $w\in \qq{N}$, we define  $(\Av G^{\circ})_{o'w}$ as one of the following expressions:
\begin{align}\label{e:defAvGL}
    \sum_{\al\in \qq{\mu}}\frac{G^{\circ}_{l'_\al w}}{(d-1)^{\ell/2}},\quad  \sum_{\al\in \qq{\mu}}\frac{G^{\circ}_{a'_\al w}}{(d-1)^{\ell/2}},\quad \sum_{\al\in \sfA_{i'}}\frac{G^{\circ}_{l'_\al w}}{(d-1)^{\ell/2}},
    \quad \sum_{\al\in \sfA_{i'}}\frac{G^{\circ}_{l'_\al w}}{(d-1)^{\ell/2}}.
\end{align}
Similarly, we define $(\Av L)_{o'w}$ as one of the following:
\begin{align}\label{e:defAvL}
    \sum_{\al\in \qq{\mu}}\frac{L_{l'_\al w}}{(d-1)^{\ell/2}}, \quad\sum_{\al\in \qq{\mu}}\frac{L_{a'_\al w}}{(d-1)^{\ell/2}}, \quad \sum_{\al\in \sfA_{i'}}\frac{L_{l'_\al w}}{(d-1)^{\ell/2}},\quad
    \sum_{\al\in \sfA_{i'}}\frac{L_{l'_\al w}}{(d-1)^{\ell/2}}.
\end{align}
\end{definition}
\begin{remark}
As $\mu=\OO((d-1)^\ell)$, and $|\sfA_{i'}|=\OO((d-1)^\ell)$, the quantities in \eqref{e:defAvGL} and \eqref{e:defAvL} are not true averages,  as the denominators are $(d-1)^{\ell/2}$ rather than $(d-1)^{\ell}$. Instead, they should be interpreted as $\OO((d-1)^{\ell/2})$-weighted sums of (local) Green's functions. Fortunately, thanks to the local tree-like structure, these quantities are of size $\OO(1)$ (see \Cref{lem:boundaryreduction}).
\end{remark}
\begin{definition}[Admissible Function]
    \label{def:pgen}
Fix a large integer $p\geq 1$. Consider a forest $\cF=(\bfi, E)$ as defined in \eqref{e:cFtocF+}, with switching edges $\cK$, core edges $\cC$, unused core edges $\cC^\circ$, and special edges $\cV=\{\{u_j, v_j\}\}_{1\leq j\leq p-1}$. For any nonnegative integers $r\geq0$, we consider the set of vectors 
$\bmr=[r_{jk}]_{1\leq j\leq p-1, 0\leq k\leq 2}$, such that for any $1\leq j\leq p-1$
\begin{align}\label{e:defr}
    (r_{j0},r_{j1},r_{j2})\in\{(1,0,0), (3,0,0)\}\cup \{(2,r_1,r_2): r_1+r_2\geq 2 \text{ is even}\}.
\end{align}
We denote the set of admissible functions $\Adm(r, \bmr, \cF,\cG)$ where a function $R_{\bfi}\in \Adm(r,\bmr,\cF,\cG)$ is given by the formula 
\begin{align}\label{e:defRi'}
    R_\bfi=R
    \prod_{j=1}^{p-1}\cW_j.
\end{align}
Here is a breakdown of the components:
\begin{enumerate}
\item 
$R$ contains $r$ factors of the form 
\begin{align}\begin{split}\label{e:defcE1}
    &\{(G_{c c}^{(b)}-Q_t)\}_{(b,c)\in \cC^\circ}, \quad  \{ G_{c c'}^{(bb')}, G_{b c'}^{(b')}, G_{bb'},G_{cb'}\}_{(b,c)\neq (b',c')\in \cC^\circ},\\
&\{ G^{\circ}_{ss'}\}_{s,s'\in \cK},\quad \{(\Av G^{\circ})_{o'w}\}_{(i',o')\in \cC\setminus\cC^\circ, w\in\cK},\quad (Q_t-\msc(z_t)),\quad t(m_t-\md(z_t)).
\end{split}\end{align}

\item For $1\leq j\leq p-1$,  $\cW_j$ is associated with the special edge $\{u_j,v_j\}$ (as defined in \eqref{e:special_edge}), and there are three cases to consider: 
\begin{enumerate}
    \item 
If $r_{j0}=1$, then $\cW_j=Q_t-Y_t$.

\item If $r_{j_0}=2$, then $\cW_j$ is given by
\begin{align}\label{e:Scase2}
    \cW_j=\{1-\del_1 Y_\ell, -t\del_2 Y_\ell\}\times \frac{1}{Nd}\sum_{u_j\sim v_j\in\qq{N}}F_{u_j v_j},
\end{align}
where $F_{u_j v_j}$ is a product of $r_{j1}$ factors of the form 
\begin{align}\label{e:defcE0}
    \{G^{\circ}_{su_j},G^{\circ}_{sv_j}\}_{s\in \cK},\quad  \{(\Av G^{\circ})_{o'u_j},(\Av G^{\circ})_{o'v_j}\}_{(i',o')\in \cC\setminus \cC^\circ}, 
\end{align} 
$r_{j2}$ factors of the form 
\begin{align}\label{e:defLerror}
    \{L_{su_j},L_{sv_j}\}_{s\in \cK}, \quad \{(\Av L)_{o'u_j},(\Av L)_{o'v_j}\}_{(i',o')\in \cC\setminus \cC^\circ}, 
\end{align} 
and an arbitrary number of factors of the form $G_{u_jv_j}, 1/G_{u_ju_j}$.
\item 
If $r_{j0}=3$, then 
\begin{align}\label{e:Scase3}
    \cW_j= \frac{\{1-\del_1 Y_\ell, -t\del_2 Y_\ell\}\times (d-1)^\ell}{N}.
\end{align}
\end{enumerate}
 \end{enumerate}
\end{definition}  
\begin{remark}

   The three cases of $\cW_j$ in \Cref{def:pgen} correspond to $r_{j0}=1,2,3$. 
   Furthermore, except for the case $r_{j0} = 2$, the values $r_{j1} = r_{j2} = 0$ do not have any effect, nor does the special edge ${u_j, v_j}$. However, we can still interpret them as an average over the special edges.
   \begin{align*}
       \cW_j=\frac{1}{Nd}\sum_{u_j\sim v_j} (Q_t-Y_t) \text{ or } \cW_j=\frac{1}{Nd}\sum_{u_j\sim v_j} \frac{\{1-\del_1 Y_\ell, -t\del_2 Y_\ell\}(d-1)^\ell }{N}.
   \end{align*}
\end{remark}

\begin{remark}
    \label{r:r_fenlei}
    The array $\bmr$ in \Cref{def:pgen} falls into one of the following three categories:  
    \begin{enumerate}
        \item  For all $1\leq j\leq p-1$, we have $r_{j0}=1$ and $r_{j1}=r_{j2}=0$. In this case $\cW_j=(Q_t-Y_t)$ for each $1\leq j\leq p-1$.
        \item There exists some $1\leq j'\leq p-1$ such that 
        \begin{align}\label{e:1_expand}
           r_{j'0}=2, \quad  (r_{j'1}, r_{j'2})=(2,0), \quad  r_{j0}=1 \text{ for } j\in \qq{p-1}\setminus\{j'\}.
        \end{align}
        \item In all other cases, $\bmr$ satisfies \begin{align}\begin{split}\label{e:feasible}
            &\phantom{\text{ or }}|\{j\in\qq{p-1}: r_{j0}=3\}|\geq 1, \\&\text{ or } |\{j\in\qq{p-1}: r_{j0}=2\}|\geq 2,\\
            &\text{ or } |\{j\in\qq{p-1}: r_{j0}=2, (r_{j1},r_{j2})\neq (2,0)\}|\geq 1.
        \end{split}\end{align}
    \end{enumerate}
    
\end{remark}

We now give the general ways of bounding the terms involved in the admissible functions (recall from \Cref{def:pgen}). We postpone its proof to \Cref{s:adm_bound}.
\begin{lemma}\label{l:adm_term_bound}
Adopt the notation of \Cref{s:setting3}. Condition on that $\cG\in \Omega $ and $I(\cF,\cG)=1$, the following holds with overwhelmingly probability over $Z$
\begin{enumerate}
    \item Let $B$ be any factor in \eqref{e:defcE1} or \eqref{e:defcE0}, then
    \begin{align}\label{e:Bsmall}
        |B|\lesssim N^{-\fb}.
    \end{align} 
    \item For any $s\in \cK$:
    \begin{align}\label{e:naive-Ward}
        \frac{1}{N}\sum_{w\in \qq{N}} | G^{\circ}_{sw}|^2\lesssim N^\fo \Phi,\quad
        \frac{1}{N}\sum_{w\in \qq{N}} |L_{sw}|^2\lesssim \frac{\mathfrak R}{N}.
    \end{align}
    For any used core edge $(i',o')\in \cC\setminus \cC^\circ$,
    \begin{align}\label{e:av_naive-Ward}
        \frac{1}{N}\sum_{w\in \qq{N}} |(\Av G^{\circ})_{o'w}|^2\lesssim N^\fo \Phi,\quad
        \frac{1}{N}\sum_{w\in \qq{N}} |(\Av L)_{o'w}|^2\lesssim \frac{\mathfrak R}{N}.
    \end{align}
    \item For each $\cW_j$ as in \Cref{def:pgen}, the following holds: 
    
\begin{enumerate}
    \item If $r_{j0}=1$, then $r_{j1}=0$, and  $|\cW_j|=|Q_t-Y_t|$. 
    \item If $r_{j0}=2$, then
\begin{align}\label{e:Sjbound}
  |\cW_j|
   &\lesssim N^\fo \Upsilon\times
   \left\{
    \begin{array}{ll}
     (d-1)^{-(r_{j1}-2)\fb} \Phi & \text{ if } r_{j2}=0, \\
       (d-1)^{-\max\{r_{j1}-1,0\}\fb} \sqrt{\Phi/N}  & \text{ if } r_{j2}\geq 1. 
    \end{array}
    \right.
\end{align}
\item If $r_{j0}=3$, then $r_{j1}=0$, and  $|\cW_j|\lesssim (d-1)^\ell\Upsilon/N$.
  \end{enumerate}  
The cases above can be summarized as
\begin{align}\label{e:1_Sbound}
    (d-1)^{3\ell r_{j1}}|\cW_j|
    \lesssim \left\{\begin{array}{ll}
      |Q_t-Y_t|& \text{if } r_{j0}=1;\\
         (d-1)^{8\ell}\Upsilon\Phi& \text{if }(r_{j0},r_{j1}, r_{j2})=(2,2,0);\\
         N^{-\fb}\Upsilon\Phi& \text{remaining cases}. 
    \end{array} 
     \right.
\end{align}

\end{enumerate}
\end{lemma}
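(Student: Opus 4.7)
The plan is to establish the three parts sequentially, relying on the local law of \Cref{thm:prevthm}, the tree Green's function estimates from \Cref{greentree} and \eqref{e:Pijbound}, and the tree-like structure enforced by $I(\cF,\cG)=1$. Throughout, I use the bounds $|Q_t-\msc(z_t)|, |m_t-\md(z_t)|\lesssim N^{-2\fb}$ and $|G_{ij}(z,t)-P_{ij}(\cB_{\fR/100}(\{i,j\},\cG),z_t,\msc(z_t))|\lesssim N^{-2\fb}$ provided by \Cref{thm:prevthm}, together with the parameter hierarchy $\fo\ll\ft\ll\ell/\log_{d-1}N\ll\fb\ll\fc$ from \eqref{e:parameters}.

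\textbf{Part 1 (pointwise bounds).} For $G_{cc}^{(b)}-Q_t$, combine \eqref{eq:local_law}, which expresses $G_{cc}^{(b)}$ as $P_{cc}^{(b)}(\cB_{\fR/100}(\{b,c\},\cG),z_t,\msc(z_t))$ up to error $N^{-\fb}$ (and this $P$-quantity equals $\msc(z_t)$ up to a tree-decay correction of size $(d-1)^{-\fR/2}$ by \Cref{greentree}), with $|Q_t-\msc(z_t)|\lesssim N^{-2\fb}$. For the far off-diagonals $G_{cc'}^{(bb')}$, $G_{bc'}^{(b')}$, $G_{bb'}$, $G_{cb'}$ between two distinct unused core edges, the condition $I(\cF,\cG)=1$ forces $\dist_\cG(c,c')\geq 3\fR$, so \eqref{eq:local_law} together with the tree-decay in \eqref{e:Pijbound} gives a bound of order $(|\msc(z_t)|/\sqrt{d-1})^{3\fR}\lesssim N^{-\fc}$. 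For $G^\circ_{ss'}=G_{ss'}-L_{ss'}$ with $s,s'\in\cK$, applying \eqref{eq:local_law} and noting that the tree extensions associated to $\cB_{\fR/100}$ and $\cB_\fR$ agree on the common ball yields $|G^\circ_{ss'}|\lesssim N^{-2\fb}$. Finally, each averaged expression in \eqref{e:defAvGL} is a sum of $\mu\leq d(d-1)^\ell$ such terms divided by $(d-1)^{\ell/2}$, giving by the triangle inequality $|(\Av G^\circ)_{o'w}|\lesssim(d-1)^{\ell/2} N^{-2\fb}\lesssim N^{-\fb}$.

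\textbf{Part 2 (Ward-type bounds).} The key identity is $GG^*=\Im G/\eta$. For $s\in\cK$, $\frac1N\sum_w|G_{sw}|^2=\Im G_{ss}/(N\eta)\lesssim N^\fo\Im m_t/(N\eta)\lesssim N^\fo\Phi$ by \eqref{e:Gest}. The local Green's function satisfies $\sum_w|L_{sw}|^2\lesssim\sum_{r=0}^{\fR} d(d-1)^{r-1}\cdot(d-1)^{-r}\lesssim\fR$ via \eqref{e:Pijbound}, yielding the $\fR/N$ bound; triangle inequality then controls $G^\circ$. For the averaged version, view $(\Av G^\circ)_{o'\cdot}$ as the row $u^\top G^\circ$, where $u$ is supported on one of the index sets appearing in \eqref{e:defAvGL} with entries $(d-1)^{-\ell/2}$, so $\|u\|_2^2\lesssim 1$. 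Then $\sum_w|(\Av G^\circ)_{o'w}|^2=u^\top G^\circ(G^\circ)^\top u$; the cross terms $\Im G_{l'_\al l'_\beta}$ for $\alpha\neq\beta$ are small because $\dist_\cG(l'_\al,l'_\beta)\geq 2\ell$ under $I(\cF,\cG)=1$ (by comparison with the tree Green's function) so only the diagonal contributes meaningfully, giving $u^\top\Im G u\lesssim N^\fo\Im m_t$. The $L$ version is analogous.

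\textbf{Part 3 ($\cW_j$ bounds).} The cases $r_{j0}\in\{1,3\}$ are immediate: $\cW_j=Q_t-Y_t$ and $\cW_j=\{1-\del_1Y_\ell,-t\del_2Y_\ell\}(d-1)^\ell/N$, respectively, and $\Upsilon$ dominates both $|1-\del_1Y_\ell|$ and $t|\del_2Y_\ell|$ by \eqref{e:defPhi}. For $r_{j0}=2$, the prefactor contributes $\Upsilon$, the factors $G_{u_jv_j}$ and $1/G_{u_ju_j}$ are $\OO(N^\fo)$ via \eqref{eq:infbound}, and each factor from \eqref{e:defcE0} or \eqref{e:defLerror} is pointwise $\OO(N^{-\fb})$ or $\OO((d-1)^{-\dist/2})$ respectively by Part 1. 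To obtain the averaging gain from $\frac{1}{Nd}\sum_{u_j\sim v_j}$, single out two of the factors and bound their edge-average by Cauchy--Schwarz using Part 2: two $G^\circ$-factors give $N^\fo\Phi$, one $G^\circ$ and one $L$ give $\sqrt{\fR N^\fo\Phi/N}\lesssim N^\fo\sqrt{\Phi/N}$, and two $L$'s give $\fR/N\lesssim N^\fo\sqrt{\Phi/N}$. The remaining $r_{j1}+r_{j2}-2$ factors are bounded pointwise, producing the claimed $(d-1)^{-(r_{j1}-2)\fb}$ or $(d-1)^{-\max\{r_{j1}-1,0\}\fb}$ decay. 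Combining with $r_{j1}\leq r_{j1}+r_{j2}$ and the parameter choices yields \eqref{e:1_Sbound}, where the prefactor $(d-1)^{3\ell r_{j1}}$ absorbs the trivial $\OO((d-1)^{\ell/2})$ losses incurred by each averaged quantity.

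The main obstacle is the averaged-Green's-function bound in Part 2: a naive application of Cauchy--Schwarz would produce a factor of $\mu\asymp(d-1)^\ell$, which is too large, so one must exploit the near-diagonality of $\Im G$ at the well-separated boundary vertices $\{l'_\al\}_{\al\in\qq{\mu}}$ (a consequence of tree decay of the Green's function together with $I(\cF,\cG)=1$) to show that only diagonal contributions to $u^\top\Im G u$ survive. Once this is in place, the remaining estimates in Part 3 are a straightforward exercise in bookkeeping with the parameter hierarchy \eqref{e:parameters}.
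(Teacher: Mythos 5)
Your Parts 1 and 3 follow essentially the paper's argument (pointwise bounds from \eqref{eq:infbound}/\eqref{eq:local_law}, then Cauchy--Schwarz on two factors of $\cW_j$ via the Ward-type bounds, pointwise $N^{-\fb}$ on the rest). The problem is the step you yourself flag as the main obstacle: the averaged bound \eqref{e:av_naive-Ward}. First, your structural claim is false: the boundary vertices $l'_\al$ of a \emph{single} radius-$\ell$ ball are not $2\ell$-separated --- two of them can share a parent and be at distance $2$ (the $3\fR$-separation in $I(\cF,\cG)$ holds between different core edges, not within one ball). Second, even granting tree decay for the deterministic part, the mechanism "only the diagonal of $\Im G$ contributes" cannot be justified at the needed precision. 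Writing $\frac1N\sum_w|(u^\top G)_w|^2=\frac{u^\top\Im G\,u}{N\eta}$ with $u_\al=(d-1)^{-\ell/2}$ on $\asymp(d-1)^\ell$ boundary indices, you need $u^\top\Im G\,u\lesssim N^{\fo}\Im m_t+N^{\fo+2\fc}\eta$. The only control on off-diagonal entries beyond the explicit tree Green's function is the local law $|G_{xy}-P_{xy}|\lesssim N^{-\fb}$, and the fluctuation part of the cross terms is then only bounded by $\bigl(\sum_\al u_\al\bigr)^2 N^{-\fb}\asymp(d-1)^{\ell}N^{-\fb}$, which near the edge (where the target is $\asymp N^{\fo-1/3}$, and for small $\eta$ can be as small as $N^{\fo+2\fc}\eta$ with $\eta\ge N^{-1+\fg}$) exceeds the target by a huge margin since $\fb\ll 1$. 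There is no decay of $\Im G_{xy}$ off the diagonal available at that accuracy, so the cancellation you need cannot come from decorrelation/decay.

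The paper closes exactly this gap by a different mechanism, \Cref{lem:boundaryreduction}: summing the equation $((H(t)-z)G)_{yw}=\delta_{yw}$ over the spheres $\cS_k$ of the tree around $o'$ produces a two-term transfer-matrix recursion with eigenvalues $\msc(z_t)^{\pm1}$, so the normalized boundary sums $(d-1)^{-\ell/2}\sum_\al G_{l'_\al w}$ (and the $\sfA_{i'}$-restricted versions) are \emph{identically} bounded linear combinations of $G_{o'w}$ and $G_{i'w}$, up to error terms coming from $t\md(z,t)G_{yw}$, $\sqrt t(ZG)_{yw}$ and $\delta_{yw}$ whose $\ell^2$-averages over $w$ are controlled by $(d-1)^{2\ell}t\Phi+\ell/N$. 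The Ward identity is then applied only to the single rows $G_{o'\cdot}$, $G_{i'\cdot}$, which gives \eqref{e:av_naive-Ward} directly (and similarly for $L$ via the same recursion with $\wt\cE_y=\delta_{yw}$). Without this boundary-reduction identity (or an equivalent exact cancellation), your Part 2 estimate for $(\Av G^\circ)_{o'w}$, and hence the $r_{j0}=2$ cases of \eqref{e:Sjbound} that rely on it, does not go through.
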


\begin{remark}
    At each step, we expand the forest $\cF$ to a new forest $\cF^+$ by including local resampling data. This change also affects the admissible set of functions, which now expands as follows:
    \begin{align*}
        \Adm(r,\bmr,\cF,\cG)\subset \Adm(r,\bmr,\cF^+,\cG).
    \end{align*}
\end{remark}


\subsection{Proof outline for \Cref{t:recursion}} \label{s:proofoutline}
As discussed in \Cref{s:forest}, at each iteration, we estimate \eqref{e:R_ifirst} by performing a local resampling around $(i,o)$. We will show that the expectation breaks down into an $\OO(1)$-weighted sum of terms in the same form, in the following sense.

\begin{definition}\label{def:O1sum}
    We say $\cU$ is an $\OO(1)$-weighted sum of terms in the set $\cR$, if 
    \begin{align*}
    \cU=\sum_{j\geq 1} \fc_j R_j, \quad R_j\in \cR,
    \end{align*}
    and the total weights $\sum_{j\geq 1}|\fc_j|=\OO(1)$.
\end{definition}

We begin with a weighted version of \eqref{e:R_ifirst}, as presented on the left-hand side of \eqref{e:maint} in the following proposition. Here, the additional factor $(d-1)^{(6r + 3\sum_{j=1}^{p-1} r_{j1})\ell}$ depends on the admissible function $R_\bfi$. The reader can interpret this as follows: each term in \eqref{e:defcE1} (contributing \(1\) to \(r\)) is associated with a factor \((d-1)^{6\ell}\), and each term in \eqref{e:defcE0} (contributing \(1\) to \(r_{j1}\)) is associated with a factor \((d-1)^{3\ell}\). Thanks to \Cref{l:adm_term_bound}, even with these factors, the size of the terms remains small.
These factors are introduced to ensure that all the expansions in this paper are \(\OO(1)\)-weighted sums of terms, as defined in \Cref{def:O1sum}. Specifically, combinatorial factors are absorbed into \((d-1)^{(6r + 3\sum_{j=1}^{p-1} r_{j1})\ell}\). In most cases, a factor of \((d-1)^{(3r + 3\sum_{j=1}^{p-1} r_{j1})\ell}\) would suffice. However, for \eqref{e:tG-Gexp1}, a factor in \eqref{e:defcE0} may transform into a term in \eqref{e:defcE1} and introduce an additional factor of \((d-1)^{3\ell}\).

The proposition below expresses the expectation of Green's functions of the graph $\cG$ in terms of the quantities of the new graph $\widetilde \cG$ after local resampling.

\begin{proposition}\label{p:add_indicator_function}
    Adopt the notation of \Cref{s:setting3}. Consider a forest $\cF=(\bfi, E)$ as in \eqref{e:cFtocF+} and a function $(G_{oo}^{(i)}-Y_t)R_\bfi$ with $R_\bfi\in \Adm(r,\bmr,\cG,\cF)$. We perform a local resampling around $(i, o) \in \cF$ using the resampling data ${\bf S}=\{(l_\al, a_\al), (b_\al, c_\al)\}_{\al\in\qq{\mu}}$, denoting the new graph as $\widetilde \cG = T_\bfS(\cG)$, with its corresponding Green's function $\widetilde G$. Then 
    \begin{align}\begin{split}\label{e:maint}
    &\phantom{{}={}}\frac{(d-1)^{(6r+3\sum_{j=1}^{p-1} r_{j1})\ell}}{Z_\cF}\sum_{\bfi}\bE\left[I(\cF,\cG)\bm1(\cG\in \Omega) (G_{oo}^{(i)}-Y_t)R_\bfi\right]\\
    &=\frac{(d-1)^{(6r+3\sum_{j=1}^{p-1} r_{j1})\ell}}{Z_{\cF^+}}\sum_{\bfi^+}\bE\left[I(\cF^+,\cG)\bm1(\cG,\tcG\in \Omega)(\widetilde G_{oo}^{(i)}-Y_t) \wt R_\bfi\right]+\OO(N^{-\fb/2}\bE[\Psi_p]).
\end{split}\end{align}
Here, $\widetilde R_\bfi$ is obtained by computing $R_\bfi$ for the graph $\widetilde \cG$.
\end{proposition}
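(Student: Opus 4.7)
The plan rests on two pillars: the exchangeable-pair property of Lemma~\ref{lem:exchangeablepair}, combined with the concentration of the good switching event from Lemma~\ref{lem:configuration} and the combinatorial identity of Proposition~\ref{p:sumA}. Since the left-hand side of~\eqref{e:maint} does not depend on the resampling data $\bfS$, I freely insert the average $\bE_\bfS$ with $\bfS$ uniform on $\sfS(\cG)$, and split it via $\bm1(\bfS\in\sfF(\cG))+\bm1(\bfS\notin\sfF(\cG))$. On the complementary event, Lemma~\ref{lem:configuration} gives $\bP_\bfS(\sfF(\cG)^c)\leq N^{-1+2\fc}$; combined with the crude bounds on the factors of $R_\bfi$ from Lemma~\ref{l:adm_term_bound}, the resulting contribution is absorbed into the target error $\OO(N^{-\fb/2}\bE[\Psi_p])$, using the hierarchy $\fo\ll\ft\ll\fb\ll\fc$ and $\ell/\log_{d-1}N\ll\fb$ so that the $(d-1)^{\OO(\ell)}$ prefactors do not overwhelm the gain.

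On $\bfS\in\sfF(\cG)$, Lemma~\ref{lem:configuration} guarantees $\tcG\in\oOmega$, $I(\cF^+,\cG)=1$, and $I(\cF,\tcG)=1$; Theorem~\ref{thm:prevthm} then yields $\tcG\in\Omega$ with overwhelming probability, which allows me to insert $\bm1(\tcG\in\Omega)$ for free. I now apply the exchangeable-pair property $(\cG,\tcG)\stackrel{\text{law}}{=}(\tcG,\cG)$ of Lemma~\ref{lem:exchangeablepair}, swapping the roles of the two graphs inside the expectation. Under this swap, every graph-dependent object acquires a tilde: $G^{(i)}_{oo}$ is replaced by $\wt G^{(i)}_{oo}$, the quantity $Y_t$ (built from $Q_t,m_t$ of $\cG$) by its $\tcG$-counterpart, the admissible monomial $R_\bfi$ by $\wt R_\bfi$, and the indicator $\bm1(\cG\in\Omega)$ by $\bm1(\tcG\in\Omega)$. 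Reversibility of the switching (encoded in the map $T$) ensures that the event $\sfF(\cG)$ is symmetric under the exchange (via $T(\bfS)\in\sfS(\tcG)$), and that the indicator $I(\cF^+,\cG)$ transforms consistently, still being $1$ on the good event.

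It remains to re-identify the sum $\sum_\bfi\bE_\bfS$ as the sum $\sum_{\bfi^+}$ on the right-hand side. Given $\bfi$ and a good $\bfS$, the extension $\bfi^+$, which adjoins the internal vertices of $\cT_\ell(o)$ (determined by $\cG$) together with the vertices $\{b_\al,c_\al\}_{\al\in\qq{\mu}}$ (determined by $\bfS$), is canonical up to orderings of the children in $\cT_\ell(o)$. By Proposition~\ref{p:sumA} one has $|\sfS(\cG)|=(Nd)^\mu(1+\OO(N^{-1+2\fc}))$ and $Z_{\cF^+}/Z_\cF=(Nd)^\mu[(d-1)!]^{1+d+\cdots+d(d-1)^{\ell-1}}(1+\OO(N^{-1+2\fc}))$, so the factor $[(d-1)!]^{1+d+\cdots+d(d-1)^{\ell-1}}$ precisely records the multiplicity from the child orderings. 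Consequently $(1/Z_\cF)\sum_\bfi\bE_\bfS[\,\cdot\,]=(1/Z_{\cF^+})\sum_{\bfi^+}[\,\cdot\,](1+\OO(N^{-1+2\fc}))$, and recognizing that $I(\cF,\cG)\bm1(\bfS\in\sfF(\cG))$ coincides on the good event with $I(\cF^+,\cG)$ completes the identity modulo the claimed error.

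The main obstacle I expect is the last-step bookkeeping: carefully verifying the $[(d-1)!]$ combinatorial factor and the identification $I(\cF,\cG)\bm1(\bfS\in\sfF(\cG))=I(\cF^+,\cG)$ on the good event, together with confirming the symmetry of $\sfF$ under the pair exchange (which requires tracking the reverse switching $T(\bfS)\in\sfS(\tcG)$). A secondary concern is propagating errors through the $(d-1)^{\OO(\ell)}$ prefactor in front of the LHS and RHS of~\eqref{e:maint}, which is exactly why the parameter hierarchy $\ell/\log_{d-1}N\ll\fb\ll\fc$ is crucial for the bad-event contribution to beat the prefactor.
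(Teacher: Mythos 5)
Your skeleton — inserting the $\bfS$-average, discarding the bad switching event via \Cref{lem:configuration}, invoking the exchangeable pair of \Cref{lem:exchangeablepair}, and re-identifying the $(\bfi,\bfS)$-average with the $\bfi^+$-average through $Z_{\cF^+}/Z_\cF$ — is the same as the paper's. But there is a genuine gap at the exchange step. As you yourself note, swapping $(\cG,\tcG)$ turns \emph{every} graph-dependent object into its tilde version, in particular $Y_t\mapsto\wt Y_t$. The right-hand side of \eqref{e:maint}, however, carries the \emph{un-tilded} $Y_t$: the target is $\bE[(\wt G_{oo}^{(i)}-Y_t)\wt R_\bfi]$, not $\bE[(\wt G_{oo}^{(i)}-\wt Y_t)\wt R_\bfi]$. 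Your argument silently identifies the two, so you still owe a bound on
\begin{align*}
\frac{(d-1)^{(6r+3\sum_{j} r_{j1})\ell}}{Z_\cF}\sum_{\bfi}\bE\Bigl[I(\cF,\cG)I(\cF,\wt\cG)\bm1(\cG,\tcG\in \Omega)\,(\wt Y_t-Y_t)\,\wt R_\bfi\Bigr].
\end{align*}

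Why this is not routine: the only available estimate is $|\wt Y_t-Y_t|\lesssim (d-1)^{8\ell}\Phi$ (from \eqref{e:Yl_derivative} and \eqref{e:Qtmtbound}). When $R_\bfi$ carries an extra small factor ($r\geq 1$ or some $r_{j0}\in\{2,3\}$), this together with \eqref{e:Sbound} and \eqref{e:bPi2} indeed gives $\OO(N^{-\fb/2}\bE[\Psi_p])$. But in the base case $R_\bfi=(Q_t-Y_t)^{p-1}$ the crude bound produces $(d-1)^{8\ell}\Phi\,|Q_t-Y_t|^{p-1}$, which is \emph{not} bounded by $N^{-\fb/2}\Psi_p$ (the $N^{-\fb/2}$ gain is simply absent, since $(d-1)^{8\ell}\gg 1$). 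The paper closes this by using exchangeability a \emph{second} time to symmetrize,
\begin{align*}
\frac{1}{Z_\cF}\sum_{\bfi}\bE\bigl[\cdots(Y_t-\wt Y_t)R_\bfi\bigr]
=\frac{1}{2Z_\cF}\sum_{\bfi}\bE\bigl[\cdots(Y_t-\wt Y_t)(R_\bfi-\wt R_\bfi)\bigr],
\end{align*}
and then invoking \eqref{e:tilde_bound} from \Cref{c:Q-Ylemma} to get $|R_\bfi-\wt R_\bfi|\lesssim (d-1)^{8\ell}\Upsilon\Phi\,(|Q_t-Y_t|+(d-1)^{8\ell}\Upsilon\Phi)^{p-2}$; the product then carries two small factors and is $\OO(N^{-\fb/2}\bE[\Psi_p])$. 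Without this step your identity is not established. Two smaller imprecisions: inserting $\bm1(\tcG\in\Omega)$ is not ``free'' via \Cref{thm:prevthm} — it uses $\bP(\oOmega\setminus\Omega)\leq N^{-\fC}$ from \Cref{thm:prevthm0} together with a H\"older argument on the admissible factors; and $I(\cF,\cG)\bm1(\bfS\in\sfF(\cG))$ does not literally coincide with $I(\cF^+,\cG)$ — one replaces $I(\cF,\cG)I(\cF,\wt\cG)$ by $I(\cF^+,\cG)$ at the cost of an error controlled by $\bE_\bfS[|I(\cF,\wt\cG)-I(\cF^+,\cG)|]$, which is small but must be stated.
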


The right-hand side of \eqref{e:maint} involves the Green's function of the switched graph $\tcG$. The following two propositions help evaluate them, and express them as $\OO(1)$-weighted sums of terms involving only the Green's function of the original graph $\cG$, with negligible error. More importantly, these terms match the structure of the left-hand side of \eqref{e:maint}. 

\Cref{p:iteration} addresses the first iteration step, stating that \eqref{e:IFIF} can be rewritten as an $\OO(1)$-weighted sum of three types of terms, as shown in \eqref{e:case1}, \eqref{e:case2} and \eqref{e:case3}.
For subsequent iteration steps, we start from one of terms from \eqref{e:case1}, \eqref{e:case2} and \eqref{e:case3}. \Cref{p:general} handles each case separately, as outlined in \eqref{e:higher_case1}, \eqref{e:higher_case2} and \eqref{e:higher_case3}. It shows that after further expansion, both \eqref{e:higher_case1} and \eqref{e:higher_case2} either maintain the same form with an additional factor of $(d-1)^{-\ell/2}$, or reduce to \eqref{e:case3}.
Importantly,  after further expansion,  \eqref{e:higher_case3} remains in the same form, either gaining a factor of  $(d-1)^{-\ell/2}$ factor, or an extra term in the form of \eqref{e:defcE1}. Therefore, with each iteration step, we either gain an additional factor of $(d-1)^{-\ell/2}$,  or introduce a term of the form \eqref{e:defcE1}, which are bounded by $N^{-\fb}$ thanks to \eqref{eq:infbound} and \eqref{eq:local_law}. After a finite number of iterations, all terms become negligible.

\begin{proposition}\label{p:iteration}
    Adopt the notation of \Cref{s:setting3}. 
 Given a forest $\cF=(\bfi=\{i,o\}, E=\{\{i,o\}\})$, $\cK=\cC=\{(i,o)\}$. We construct $\cF^+=(\bfi^+, E^+)$ (as given by \eqref{e:cF++}) by performing a local resampling around  $(i,o)\in \cF$ with resampling data ${\bf S}=\{(l_\al, a_\al), (b_\al, c_\al)\}_{\al\in\qq{\mu}}$, and denote $\wt \cG=T_\bfS(\cG)$. Then 
    \begin{align}\label{e:IFIF}
        \frac{1}{Z_{\cF^+}}\sum_{\bfi^+}\bE\left[I(\cF^+,\cG)\bm1(\cG,\tcG\in \Omega)(\widetilde G_{oo}^{(i)}-Y_t) (\wt Q_t-\wt Y_t)^{p-1}\right]
        =I_1+I_2+I_3+\cE,
    \end{align}
    where $|\cE|=\OO((d-1)^{2\ell}\bE[\Psi_p])$, and   
  \begin{enumerate}
  \item  $I_1$ is an $\OO(1)$-weighted sum of terms of the following form
    \begin{align}\label{e:case1}
       \frac{1}{(d-1)^{\fq^+\ell/2}Z_{\cF^+}}\sum_{\bfi^+}  \bE[\bm1(\cG\in \Omega)I(\cF^+, \cG)(G_{c_\al c_\al}^{(b_\al)}-Y_t)R_{\bfi^+}],
    \end{align}
    where $\fq^+\geq 0$ and $R_{\bfi^+}=(G_{c_\al c_\al}^{(b_\al)}-Q_t)(Q_t-Y_t)^{p-1}$; 
     \item  $I_2$ is an $\OO(1)$-weighted sum of terms of the following form
    \begin{align}\label{e:case2}
       \frac{1}{(d-1)^{\fq^+\ell/2}Z_{\cF^+}}\sum_{\bfi^+}  \bE[\bm1(\cG\in \Omega)I(\cF^+, \cG)(G_{c_\al c_\al}^{(b_\al)}-Y_t)R_{\bfi^+}],
    \end{align}
    where $\fq^+\geq 0$ and $R_{\bfi^+}$ is of the form: for $s,s'\in \{b_\al,c_\al\}$, $j\in\qq{p-1}$ and $w,w'\in \{u_j, v_j\}$ 
    \begin{align*}
        \frac{\{1-\del_1 Y_\ell, \del_2 Y_\ell\}}{Nd}\sum_{u_j\sim v_j\in \qq{N}} G^{\circ}_{s w} G^{\circ}_{s' w'}\times (Q_t-Y_t)^{p-2}.
    \end{align*}
    
      \item $I_3$ is an $\OO(1)$-weighted sum of terms of the following form
    \begin{align}\label{e:case3}
       \frac{(d-1)^{(6r^++3\sum_{j=1}^{p-1} r^+_{j1})\ell}}{(d-1)^{\fq^+\ell/2}Z_{\cF^+}}\sum_{\bfi^+}  \bE[\bm1(\cG\in \Omega)I(\cF^+, \cG)(G_{c_\al c_\al}^{(b_\al)}-Y_t)R_{\bfi^+}].
    \end{align}
Here, $R_{\bfi^+}\in \Adm(r^+,\bmr^+,\cF^+,\cG)$, which satisfies $\fq^+\geq 0$ and one of the following conditions:
    (a) $r^+\geq 2$ and $\{j: r^+_{j0}=2,3\}=0$; 
    or (b) $r^+\geq 1$ and $\bmr^+$ satisfies \eqref{e:1_expand};
    or (c) $\bmr^+$ satisfies \eqref{e:feasible}.

  \end{enumerate}
\end{proposition}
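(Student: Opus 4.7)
The plan is to combine the Schur complement formula for $\widetilde G_{oo}^{(i)}$ with the reversibility of local resampling (Lemma \ref{lem:exchangeablepair}), and then match the resulting pieces one by one to the three prescribed forms $I_1, I_2, I_3$. First, I would invoke Lemma \ref{lem:configuration} to restrict to the admissible set $\sfF(\cG)$, on which the new boundary vertices $\{c_\al\}_{\al\in\qq{\mu}}$ are well-separated and have tree neighborhoods of radius $\fR$, and both $\cG,\tcG\in\oOmega$; the probability of failure is $\OO(N^{-1+2\fc})$, which is easily absorbed into $\cE$.

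Next, I would use the Schur complement formula on the tree $\cT^{(i)}$ sitting inside $\tcG^{(i)}$: on the event $I(\cF^+,\cG)=1$, the radius-$\ell$ neighborhood of $o$ in $\tcG^{(i)}$ is precisely the truncated $(d-1)$-ary tree $\cT^{(i)}$ with boundary vertices $\{c_\al\}$, so $\wt G_{oo}^{(i)}$ equals the tree Green's function with boundary weight matrix $(\wt G^{(\bT)}_{c_\al c_\beta})_{\al,\beta}$. By exchangeability, $\wt G^{(\bT)}_{c_\al c_\beta}$ has the same law as $G^{(b_\al b_\beta)}_{c_\al c_\beta}$ (this is where the expansions in Section \ref{sec:expansions}, in particular the estimates recorded in Lemma \ref{lem:diaglem}, quantify the small remaining difference). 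Splitting the diagonal entries as $G^{(b_\al)}_{c_\al c_\al}=Q_t+(G^{(b_\al)}_{c_\al c_\al}-Q_t)$ and Taylor expanding the tree Green's function in its boundary weights around the constant value $Q_t$ produces (a) the zeroth-order term $Y_t$, which cancels against $-Y_t$; (b) a first-order linear piece $\frac{\del_1 Y_\ell}{\mu}\sum_\al(G^{(b_\al)}_{c_\al c_\al}-Q_t)$ plus off-diagonal linear terms $\sum_{\al\ne\beta}(\cdots)G^{(b_\al b_\beta)}_{c_\al c_\beta}$; and (c) quadratic corrections bilinear in the fluctuations.

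The factor $(\wt Q_t-\wt Y_t)^{p-1}$ is handled simultaneously by a Woodbury-type expansion (based on Lemma \ref{lem:woodbury} and the expansion \eqref{e:tG-G}) to write $\wt Q_t-\wt Y_t = (Q_t-Y_t) + \Delta$, where $\Delta$ collects low-rank corrections controlled via the $F(z)$-expansion in \eqref{e:tG-G}. Distributing and classifying the resulting terms: (i) a single diagonal fluctuation $(G^{(b_\al)}_{c_\al c_\al}-Q_t)$ paired with $(Q_t-Y_t)^{p-1}$ reproduces the $I_1$-template in \eqref{e:case1}; (ii) a quadratic term from the Taylor expansion that has been averaged over the switching index $\al$ together with an averaged pair arising from $\Delta$ on one factor of $(\wt Q-\wt Y)$ fits the $I_2$-template in \eqref{e:case2}, with the prefactor $\{1-\del_1 Y_\ell,\,\del_2 Y_\ell\}$ coming from differentiating $Y_\ell$ with respect to its first or second argument; (iii) all remaining products of admissible factors, after replacing the freshly-created $Q_t$'s by $Y_t$ using the identity $Q_t=Y_t+(Q_t-Y_t)$ (cf.\ Remark \ref{r:QtoY}) and using the bound \eqref{e:Bsmall} to control the new $(Q_t-Y_t)$, land in the $I_3$-template \eqref{e:case3} with $R_{\bfi^+}\in\Adm(r^+,\bmr^+,\cF^+,\cG)$ satisfying one of the three alternatives in Remark \ref{r:r_fenlei}.

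The main obstacle is bookkeeping: I must verify that every term produced fits \emph{exactly} one of the admissible forms and that the coefficient satisfies $\fq^+\ge 0$. The non-negativity of $\fq^+$ is the crucial place where averaging gains enter: each summation over a switching index $\al\in\qq{\mu}$ with $\mu\asymp(d-1)^\ell$ contributes a factor $(d-1)^{-\ell/2}$ after a Ward-identity-type bound from Lemma \ref{l:basicG} and the $(\Av G^\circ)$-bounds in Lemma \ref{l:adm_term_bound}, while any accompanying combinatorial factor is at worst $(d-1)^{\OO(\ell)}$ and is absorbed into the prefactor $(d-1)^{(6r^++3\sum r^+_{j1})\ell}$. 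Errors that cannot be fit into $I_1,I_2,I_3$ -- for example, terms of cubic or higher order in the Taylor expansion, or where the Schur-complement/Woodbury approximation itself fails -- are bounded by $(d-1)^{2\ell}\bE[\Psi_p]$ using \Cref{l:adm_term_bound} together with the deterministic bounds $|G^\circ|, |Q_t-\msc(z_t)|, |m_t-\md(z_t)|\lesssim N^{-\fb}$ from Theorem \ref{thm:prevthm}, yielding the desired error estimate for $\cE$.
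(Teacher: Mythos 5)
There is a genuine gap, and it sits at the heart of the proposition. You classify ``a single diagonal fluctuation $(G^{(b_\al)}_{c_\al c_\al}-Q_t)$ paired with $(Q_t-Y_t)^{p-1}$'' as an $I_1$-term, but the template \eqref{e:case1} is $(G_{c_\al c_\al}^{(b_\al)}-Y_t)(G_{c_\al c_\al}^{(b_\al)}-Q_t)(Q_t-Y_t)^{p-1}$, i.e.\ it carries \emph{two} fluctuation factors at $c_\al$; the paper's $I_1$ comes from the \emph{quadratic} part of the Schur expansion (the term $J_1$ in \eqref{e:defI1}, with coefficient $\msc^{2\ell}L^{(i)}_{l_\al l_\al}/(d-1)^{\ell+2}$), not from the linear part. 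The linear terms $\frac{\msc^{2\ell}}{(d-1)^{\ell+1}}\sum_{\al\in\sfA_i}(G^{(b_\al)}_{c_\al c_\al}-Q_t)$ and $\frac{\msc^{2\ell}}{(d-1)^{\ell+1}}\sum_{\al\neq\beta}G^{(b_\al b_\beta)}_{c_\al c_\beta}$ fit none of $I_1,I_2,I_3$, and they are too large for $\cE$ if you only use $|G^{(b_\al)}_{c_\al c_\al}-Q_t|,|G^{(b_\al b_\beta)}_{c_\al c_\beta}|\lesssim N^{-\fb}$: that gives $N^{-\fb}|Q_t-Y_t|^{p-1}$, which is not $\OO((d-1)^{2\ell}\Psi_p)$ since $\Phi$ can be as small as $N^{-1+2\fc}\ll N^{-\fb}$. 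The missing idea is Proposition \ref{p:upbb}: because $(b_\al,c_\al)$ is a uniformly chosen edge and $Q_t$ is by definition the edge-average of $G^{(b)}_{cc}$, the identity \eqref{e:single_index_sum} makes the averaged diagonal linear term $\OO(N^{-1+3\fc/2})$ (this is where the trivial eigenvector $\bm 1$ and $\sum_x G_{xy}=(d/\sqrt{d-1}-z)^{-1}$ enter), and \eqref{e:Gccbb}/\eqref{e:final_replace1} converts the off-diagonal linear term into $G_{b_\al b_\beta}(G^{(b_\al)}_{c_\al c_\al}-Q_t)(G^{(b_\beta)}_{c_\beta c_\beta}-Q_t)/(d-1)$, which is what produces the $I_3$-terms with $r^+\geq 2$. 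Ward-type bounds, which is all you invoke, control only squares like $(G^{(b_\al b_\beta)}_{c_\al c_\beta})^2$ (that term is $J_2$ and is absorbed into $\cE$), not these linear averages.

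Two further points. First, exchangeability does not give that $\wt G^{(\bT)}_{c_\al c_\beta}$ and $G^{(b_\al b_\beta)}_{c_\al c_\beta}$ are equal in law; exchangeability of $(\cG,\tcG)$ has already been spent in Proposition \ref{p:add_indicator_function}, and inside \eqref{e:IFIF} one works with a fixed joint realization. The replacement error $\wt G^{(\bT)}_{c_\al c_\beta}-G^{(b_\al b_\beta)}_{c_\al c_\beta}$, summed over the $\asymp(d-1)^{2\ell}$ pairs, is exactly the dominant contribution to $\cE$ in \eqref{e:error_E}, and bounding it by $\OO((d-1)^{2\ell}\bE[\Psi_p])$ requires the refined estimate of Lemma \ref{lem:task2}, whose proof rests on Proposition \ref{lem:deletedalmostrandom} (a Ward-identity-type bound for $|G^{(o)}_{ij}|^2$ with $i,j\sim o$, itself proven by a second local resampling); the deterministic $N^{-\fb}$ bound you cite is insufficient. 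Second, the $Z$-dependent terms produced by the Schur complement (the $\cZ_0$ part of Lemma \ref{lem:diaglem}) must be removed by Gaussian integration by parts as in Proposition \ref{p:WRbound}; your proposal never accounts for them. Without these three ingredients the decomposition into $I_1+I_2+I_3+\cE$ with $|\cE|=\OO((d-1)^{2\ell}\bE[\Psi_p])$ does not go through.
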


\begin{proposition}\label{p:general}
 Adopt the notation of \Cref{s:setting3}. Given a forest $\cF=(\bfi, E)$ and a function $(G_{oo}^{(i)}-Y_t)R_\bfi$ with $R_\bfi\in \Adm(r,\bmr,\cF,\cG)$. We construct $\cF^+=(\bfi^+, E^+)$ (as given by \eqref{e:cF++}) by performing a local resampling around  $(i,o)\in \cF$ with resampling data ${\bf S}=\{(l_\al, a_\al), (b_\al, c_\al)\}_{\al\in\qq{\mu}}$, and denote $\wt \cG=T_\bfS(\cG)$. 
 \begin{enumerate}
    
\item Let $R_\bfi=( G_{oo}^{(i)}- Q_t)( Q_t-Y_t)^{p-2}$. Then, up to an error of size $\OO((d-1)^{-\fq\ell/2}N^\fo\bE[\Psi_p])$,  
\begin{align}\label{e:higher_case1}
       \frac{1}{(d-1)^{\fq\ell/2}Z_{\cF^+}}\sum_{\bfi}  \bE[I(\cF^+,\cG)\bm1(\cG,\tcG\in \Omega)(\wt G_{oo}^{(i)}-Y_t)\wt R_\bfi]
    \end{align}
 can be rewritten as an $\OO(1)$-weighted sum of terms in the form \eqref{e:case3},  or \eqref{e:case2} with $\fq^+\geq \fq+1$.
 \item  
Let $R_{\bfi}$ be a term of the following form: for $s,s'\in \{i,o\}$, $j\in \qq{p-1}$ and $w, w'\in \{u_j, v_j\}$,
    \begin{align*}
        \frac{\{1-\del_1 Y_\ell, \del_2 Y_\ell\}}{Nd}\sum_{u_j\sim v_j\in \qq{N}} G^{\circ}_{s w} G^{\circ}_{s' w'}(Q_t-Y_t)^{p-2}.
    \end{align*}
Then, up to an error of size $\OO((d-1)^{-\fq\ell/2}N^\fo\bE[\Psi_p])$,  
 \begin{align}\label{e:higher_case2}
       \frac{1}{(d-1)^{\fq\ell/2}Z_{\cF^+}}\sum_{\bfi}  \bE[I(\cF^+,\cG)\bm1(\cG,\tcG\in \Omega)(\wt G_{oo}^{(i)}-Y_t)\wt R_{\bfi}]
    \end{align}
with $\fq\geq 0$ can be rewritten as an $\OO(1)$-weighted sum of terms in the form \eqref{e:case3}, or \eqref{e:case2} with $\fq^+\geq \fq+1$. 

 \item Let  $(r,\bmr)$ satisfy one of the conditions (a) $r\geq 2$ and $\{j: r_{j0}=2,3\}=0$;
 or (b) $r\geq 1$ and $\bmr$ satisfies \eqref{e:1_expand}; or (c) $\bmr$ satisfies \eqref{e:feasible}. Take $R_{\bfi}\in \Adm(r,\bmr,\cF,\cG)$. Then, up to an error of size $\OO(N^{-\fb/4}\bE[ \Psi_p])$,  
    \begin{align}\label{e:higher_case3}
       \frac{(d-1)^{(6r+3\sum_{j=1}^{p-1} r_{j1})\ell}}{(d-1)^{\fq \ell/2}Z_{\cF^+}}\sum_{\bfi^+}\bE\left[I(\cF^+,\cG)\bm1(\cG,\tcG\in \Omega)(\widetilde G_{oo}^{(i)}-Y_t) \wt R_\bfi\right]
    \end{align}
    can be rewritten as an $\OO(1)$-weighted sum of terms in the following form
     \begin{align}\label{e:case3_copy}
       \frac{(d-1)^{(6r^+ +3\sum_{j=1}^{p-1} r^+_{j1})\ell}}{(d-1)^{\fq^+\ell/2}Z_{\cF^+}}\sum_{\bfi^+}  \bE[\bm1(\cG\in \Omega)I(\cF^+, \cG)(G_{c_\al c_\al}^{(b_\al)}-Y_t)R_{\bfi^+}],
    \end{align}
    where $R_{\bfi^+}\in \Adm(r^+,\bmr^+,\cF^+,\cG)$, where either $\fq^+\geq \fq+1$, $r^+\geq r$; or  $\fq^+\geq \fq$, $r^+\geq r+1$. Also, for $1\leq j\leq p-1$, either  $r^+_{j0}>r_{j0}$ or $r^+_{j0}=r_{j0}$, $r^+_{j1}+r^+_{j2}\geq r_{j1}+r_{j2}$ and $r^+_{j2}\geq r_{j2}$.
 \end{enumerate}
\end{proposition}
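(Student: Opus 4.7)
The plan is to prove \Cref{p:general} by iterating the one-step resampling mechanism: apply \Cref{p:add_indicator_function} to trade the expectation on $\tcG$ for an expectation on $\cG$ with the enlarged forest $\cF^+$, then expand both $\wt G_{oo}^{(i)}$ (via Schur complement, as in \Cref{p:iteration}) and every switched Green's function inside $\wt R_\bfi$ (via the Woodbury expansion \eqref{e:WB}--\eqref{e:tG-G}, truncated at finite order $\fp$ so the remainder is $\OO(N^{-\fC})$). The \Cref{p:add_indicator_function} step introduces an error of size $\OO(N^{-\fb/2}\bE[\Psi_p])$, which is compatible with the budget in all three parts once the prefactor $(d-1)^{(6r+3\sum_j r_{j1})\ell}$ is controlled via \eqref{e:1_Sbound}; the Schur expansion produces $Y_t$ plus an $\OO(1)$-weighted sum whose leading factor is the ``diagonal'' $(G_{c_\al c_\al}^{(b_\al)}-Q_t)$ appearing in \eqref{e:case3_copy}.

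For Parts~(1) and (2), the input $R_\bfi$ already carries the characteristic leftover factor produced by the previous iteration step, namely either $(G_{oo}^{(i)}-Q_t)$ or the paired off-diagonal product $G^{\circ}_{sw}G^{\circ}_{s'w'}$ summed over a special edge. Substituting the Woodbury expansion into this factor and performing the boundary sums, each resulting term falls into one of two types: either the averaging over the $\mu\asymp (d-1)^\ell$ new boundary edges produces a single $(d-1)^{-\ell/2}$ gain and leaves the structure of \eqref{e:case2} intact with $\fq^+\geq \fq+1$; or one of the off-diagonal factors, after expansion through a Woodbury term, cascades into a genuine admissible product fitting \eqref{e:case3}. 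A finite case analysis, keyed on which Woodbury factor acts on which slot, exhausts the possibilities.

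For Part~(3), the central observation is that each admissible factor in $R_\bfi$ behaves in one of two predictable ways under the switch-and-re-expand: either it is essentially unchanged (it involves only vertices far from the switching region, in which case $(r,\bmr)$ are preserved and we gain $\fq\to \fq+1$ from the boundary averaging), or it is acted on by a Woodbury factor that replaces it by a new admissible factor from \eqref{e:defcE1}--\eqref{e:defLerror}, in which case $r^+\geq r+1$ (and the corresponding $r^+_{j1},r^+_{j2}$ are adjusted in the claimed monotone way). Combinatorial weights generated by the boundary sums are absorbed into the updated prefactor $(d-1)^{(6r^++3\sum_j r^+_{j1})\ell}$, so the output remains an $\OO(1)$-weighted sum in the sense of \Cref{def:O1sum}, and \Cref{l:adm_term_bound} ensures that any term falling outside the templates of \eqref{e:case3_copy} is already smaller than the error budget $N^{-\fb/4}\bE[\Psi_p]$.

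The main obstacle will be the bookkeeping in Part~(3): verifying that every expansion of every admissible factor produces a term that respects the compatibility conditions on $(r^+,\bmr^+,\fq^+)$ simultaneously, rather than merely one at a time. The Woodbury expansion is not a one-to-one rewriting; a single slot can produce many distinct daughter terms, each of which must be matched against one of the three templates \eqref{e:case3_copy}. The $(d-1)^{-\ell/2}$ gain driving the entire iteration comes purely from averaging over the $\mu\asymp (d-1)^\ell$ new boundary vertices, a mechanism already isolated in \Cref{p:iteration}; Part~(3) amounts to certifying that the same mechanism continues to operate, and that it is the \emph{only} mechanism by which the parameters can fail to improve, leaving $r^+$ as the backup improvement axis when $\fq^+$ does not advance.
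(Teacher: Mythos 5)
Your skeleton (Schur expansion for the factors attached to $\{i,o\}$, Woodbury expansion for everything else, truncation at finite order, then regrouping into the templates \eqref{e:case2}--\eqref{e:case3}) matches the paper's strategy, but the proposal has a genuine gap at exactly the point you flag as "bookkeeping": your claimed dichotomy — either the boundary averaging gives a $(d-1)^{-\ell/2}$ gain, or a Woodbury factor upgrades a slot so that $r^+\geq r+1$ — is not what happens, and the assertion that the gain "comes purely from averaging over the $\mu\asymp(d-1)^\ell$ new boundary vertices, a mechanism already isolated in \Cref{p:iteration}" is false in the critical cases. After expansion one gets sums over words of boundary indices $(\bm\al,\bm\beta,\bm\gamma,\bm\theta)$ with normalization $(d-1)^{-(k_1+(k_2+k_3+k_4)/2)\ell}$, while an equivalence class with $f_0$ distinct indices contributes $(d-1)^{f_0\ell}$ terms; the averaging gain fails precisely when $k_1=0$ and every index occurs exactly twice (cf.\ the exponent $\fq^+$ in \Cref{l:erbu}). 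In those borderline configurations the improvement must be manufactured, and the paper does this with ingredients absent from your plan: the vertex-sum identities of \Cref{p:upbb} (exploiting that $\bm 1$ is the trivial eigenvector), which convert a singly-occurring off-diagonal factor such as $G_{c_\al c_\beta}^{(b_\al b_\beta)}$ into a product containing the diagonal factor $(G_{c_\al c_\al}^{(b_\al)}-Q_t)$ via the substitutions \eqref{e:final_replace1}/\eqref{e:final_replace}, thereby raising $r^+$; and, for doubly-occurring off-diagonal pairs like $(G_{c_\al c_\beta}^{(b_\al b_\beta)})^2$ or the mixed terms \eqref{e:sumtt}, the Ward-identity and spectral-norm estimates of \Cref{p:small_Ri} and \Cref{lem:deletedalmostrandom} (e.g.\ \eqref{e:refined_bound}, \eqref{e:refined_bound2}, \eqref{e:refined_bound2_special}), which show these terms are already within the error budget. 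Without these two mechanisms the finite case analysis you describe cannot close: there are terms with no $(d-1)^{-\ell/2}$ gain and no automatic increase of $r^+$.

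Two further ingredients are missing. First, the Schur expansions produce terms containing the constrained GOE matrix (the $\cZ$-terms of \Cref{lem:diaglem}); these fit neither of your two types and are handled in the paper by Gaussian integration by parts, \Cref{p:WRbound}. Second, the templates \eqref{e:case3_copy} carry the factor $(G_{c_\al c_\al}^{(b_\al)}-Y_t)$, not $(G_{c_\al c_\al}^{(b_\al)}-Q_t)$; the final replacement of $Q_t$ by $Y_t$ is not free — its error $|Q_t-Y_t|\,|R_{\bfi^+}|$ is only $\OO(N^{-\fb/2}\bE[\Psi_p])$ because of the hypotheses (a)/(b)/(c) on $(r,\bmr)$ together with the structure of $\Pi(z,\bmr)$ and \eqref{e:bPi2}. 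You should incorporate the equivalence-class counting of \Cref{l:erbu}, the identities of \Cref{p:upbb}, the Ward-type bounds of \Cref{p:small_Ri}, the $Z$-term estimates of \Cref{p:WRbound}, and the controlled $Q_t\to Y_t$ swap before the argument can be considered complete.
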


The statement \eqref{e:QY} in \Cref{t:recursion} follows from iterating \Cref{p:add_indicator_function}, \Cref{p:iteration} and \Cref{p:general}.  To prove \eqref{e:Qrefined_bound}, we need to identify the leading order error terms from  \Cref{p:iteration} and \Cref{p:general}. These refined estimates are presented in the following three propositions.

\begin{proposition}\label{p:track_error1}
Adopt the notation and assumptions in \Cref{p:iteration}, and define the index set $\sfA_i := \{ \alpha \in \qq{\mu} : \dist_{\cT}(i, l_\al) = \ell+1 \}$ (these are the indices for which, in $\cT^{(i)}$, $l_\alpha$ is connected to $o$). We recall the local Green's functions $L$ and $ L^{(i)}$ (with vertex $i$ removed) from \eqref{e:local_Green} and \eqref{e:defLi}.
$I_1$ in \eqref{e:IFIF} is as follows
\begin{align}\label{e:case1_term}
    I_1=\sum_{\al\in \sfA_i}\sum_{\bfi^+}\frac{\msc^{2\ell}(z_t)L_{l_\al l_\al}^{(i)}}{(d-1)^{\ell+2}Z_{\cF^+}}\bE\left[I(\cF^+,\cG)\bm1(\cG\in \Omega) (G_{c_\al c_\al}^{(b_\al)}-Y_t)(G_{c_\al c_\al}^{(b_\al)}-Q_t)(Q_t-Y_t)^{p-1}\right].
\end{align}
$I_2$ in \eqref{e:IFIF} is as follows
\begin{align}\begin{split}\label{e:case2_term}
     I_2&=\sum_{\al\in\sfA_i}\sum_{\bfi^+}\frac{\msc^{2\ell}(z_t)}{(d-1)^{\ell+1} Z_{\cF^+}}\bE\left[I(\cF^+,\cG)\bm1(\cG\in \Omega)(G_{c_\al c_\al}^{(b_\al)}-Q_t)(Q_t-Y_t)^{p-2}\times \right.\\ &\times \left.\frac{1}{Nd}\sum_{u\sim v}\left((1-\del_1Y_\ell)\left(F_{vv}^{(\al)}-\frac{2G_{uv}}{G_{uu}}F^{(\al)}_{uv}+\frac{G^2_{uv}}{G^2_{uu}}F^{(\al)}_{uu}\right)+(-t\del_2 Y_\ell) F_{vv}^{(\al)} \right)\right],
\end{split}\end{align}
where 
\begin{align}\begin{split}\label{e:Fuv}
    F_{uv}^{(\al)}
    &=\left(\frac{1}{\sqrt{d-1}}+\frac{2\md(z_t) \msc(z_t)}{(d-1)\sqrt{d-1}}\right)( G^{\circ}_{uc_\al} G^{\circ}_{vb_\alpha}+ G^{\circ}_{ub_\al} G^{\circ}_{vc_\alpha})\\
    &+\frac{2\md(z_t)}{d-1}\left( G^{\circ}_{ub_\al} G^{\circ}_{ub_\alpha}+ G^{\circ}_{uc_\al} G^{\circ}_{uc_\alpha}\right).
\end{split}\end{align}
We have the following refined expression for the error term $\cE$ in \eqref{e:IFIF}
\begin{align}\label{e:daerrorE}
    \cE=\eqref{e:first_term0}+\eqref{e:second_term0}+\eqref{e:third_term0}+\OO(N^{-\fb/4} \Psi_p),
\end{align}
where 
\begin{align}
     \eqref{e:first_term0}   &\label{e:first_term0}=\sum_{\al,\beta\in \sfA_i}\sum_{\bfi^+}\frac{\msc^{2\ell}(z_t)}{(d-1)^{\ell+1}Z_{\cF^+}}\bE\left[I(\cF^+,\cG)\bm1(\cG,\wt \cG\in \Omega)(\wt G_{c_\al c_\beta}^{(\bT)}-G_{c_\al c_\beta}^{(b_\al b_\beta)}) (Q_t-Y_t)^{p-1}\right],\\
       \eqref{e:second_term0} &\label{e:second_term0}=\sum_{\al\in \sfA_i,\beta\in \qq{\mu}\atop 
       \al\neq  \beta}\sum_{\bfi^+}\frac{\msc^{2\ell}(z_t)(L_{l_\beta l_\beta}^{(i)}+L_{l_\al l_\beta}^{(i)})}{(d-1)^{\ell+2}Z_{\cF^+}}\bE\left[I(\cF^+,\cG)\bm1(\cG\in \Omega)(G_{c_\al c_\beta}^{(b_\al b_\beta)})^2(Q_t-Y_t)^{p-1}\right],
\end{align}
and
\begin{align}
        \begin{split}
\eqref{e:third_term0}&\label{e:third_term0}=\sum_{\al\neq \beta\in \sfA_i}\sum_{\bfi^+}\frac{2(p-1)\msc^{2\ell}(z_t)L_{l_\al l_\beta}}{ (d-1)^{\ell+2}Z_{\cF^+}} \Bigg[ I(\cF^+,\cG)\bm1(\cG\in \Omega)
 G_{c_\al c_\beta}^{(b_\al b_\beta)}(Q_t-Y_t)^{p-2}\\
 &\times \frac{1}{Nd}\sum_{u\sim v}\left((-t\del_2 Y_\ell)\left( G^{\circ}_{u c_\al}+\frac{\msc(z_t)  G^{\circ}_{u b_\al}}{\sqrt{d-1}}\right)\left( G^{\circ}_{u c_\beta}+\frac{\msc(z_t)  G^{\circ}_{u b_\beta}}{\sqrt{d-1}}\right)
    \right.\\
    &+(1-\del_1 Y_\ell)\left( G^{\circ}_{v c_\al}+\frac{\msc(z_t)  G^{\circ}_{v b_\al}}{\sqrt{d-1}}-\frac{G_{uv}}{G_{uu}}\left( G^{\circ}_{u c_\al}+\frac{\msc(z_t)  G^{\circ}_{u b_\al}}{\sqrt{d-1}}\right)\right) \\
    &\times\left.\left( G^{\circ}_{v c_\beta}+\frac{\msc(z_t)  G^{\circ}_{v b_\beta}}{\sqrt{d-1}}-\frac{G_{uv}}{G_{uu}}\left( G^{\circ}_{u c_\beta}+\frac{\msc(z_t)  G^{\circ}_{u b_\beta}}{\sqrt{d-1}}\right)\right)\right)\Bigg].
    \end{split}
    \end{align}
\end{proposition}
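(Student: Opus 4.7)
The plan is to derive the three explicit formulas in \eqref{e:case1_term}, \eqref{e:case2_term} and the refined error decomposition \eqref{e:daerrorE} by tracking the leading-order contributions in the expansion carried out in Proposition \ref{p:iteration}. The starting point is the Schur complement identity applied to $\wt G^{(i)}_{oo}(z,t)$. On the event $I(\cF^+,\cG)=1$, the ball $\cT = \cB_\ell(o,\cG)$ is a tree, and in the switched graph $\tcG^{(i)}$ the only edges between $\bT\setminus\{i\}$ and $\bT^c$ are the new boundary edges $\{(l_\al,c_\al)\}_{\al\in\qq{\mu}}$. So, modulo the negligible contribution of the GOE part that is handled via \Cref{p:WtGbound},
\[
\wt G_{oo}^{(i)}(z,t) \;=\; P^{(i)}_{oo}\bigl(\cT,\; z+tm_t(z),\; \wt\Delta\bigr),
\]
where $\wt\Delta$ is the matrix of generalised boundary weights $\wt\Delta_{\al\beta} = \wt G^{(\bT)}_{c_\al c_\beta}(z,t)$. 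Taylor-expanding in $\wt\Delta$ around the constant diagonal weight $Q_t\mathbb I$ gives the identification of $Y_t$ as the zero-th order term, a first-order diagonal correction $\sum_\al (\wt\Delta_{\al\al}-Q_t)\,\del_{\Delta_{\al\al}}Y_\ell$, a first-order off-diagonal correction $\sum_{\al\neq\beta}\wt\Delta_{\al\beta}\,\del_{\Delta_{\al\beta}}Y_\ell$, and a remainder quadratic in $\wt\Delta - Q_t\mathbb I$.

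The derivatives $\del_{\Delta_{\al\beta}}Y_\ell$ are explicit: differentiating $P^{(i)}_{oo}(\cT,w,\Delta)^{-1}$ in the perturbation direction $\frac{1}{d-1}e_{l_\al}e_{l_\beta}^\top$ (this is the coefficient coming from the Schur identity, since the inter-block coupling is $\frac{1}{\sqrt{d-1}}e_{l_\al}e_{c_\al}^\top$), one obtains the weight $\frac{1}{d-1}P^{(i)}_{ol_\al}P^{(i)}_{l_\beta o}$. Evaluating at the constant weight $\msc(z_t)$ (which reproduces the infinite $(d-1)$-ary tree), \eqref{e:Gtreemsc2} gives $P^{(i)}_{ol_\al} = \msc^{\ell+1}(z_t)(-1)^\ell/(d-1)^{\ell/2}$ whenever $\al\in\sfA_i$ and zero otherwise; combined with the one-remaining-factor of local Green's function on $\cT^{(i)}$ this is exactly the prefactor $\msc^{2\ell}(z_t)L^{(i)}_{l_\al l_\al}/(d-1)^{\ell+2}$ appearing in \eqref{e:case1_term}. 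The off-diagonal analogue yields the prefactor in \eqref{e:second_term0}, and the restriction $\al\in\sfA_i$ reflects that the tree path from $o$ in $\cT^{(i)}$ to $l_\al$ only exists through the side opposite to $i$.

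Next, we substitute $\wt G^{(\bT)}_{c_\al c_\beta}\mapsto G^{(b_\al b_\beta)}_{c_\al c_\beta}$. The incurred discrepancy, paired against $(Q_t-Y_t)^{p-1}$ and the diagonal derivative weight, is precisely \eqref{e:first_term0}; it is treated separately in subsequent lemmas. After substitution, the diagonal first-order contribution becomes $(G^{(b_\al)}_{c_\al c_\al}-Q_t)$, which together with the leftover $(G^{(b_\al)}_{c_\al c_\al}-Y_t)$ external factor produces \eqref{e:case1_term}. The squares of the off-diagonal entries $G^{(b_\al b_\beta)}_{c_\al c_\beta}$ for $\al\neq\beta$ (which have size $N^{-1/2+\oo(1)}$ by the Ward-type bound \eqref{e:use_Ward}) survive only through the second-order Taylor remainder and, after replacement of the two derivative factors by their tree values $\msc^{\ell+1}(z_t)/(d-1)^{\ell/2}$ multiplied by the residual local factor $L^{(i)}_{l_\beta l_\beta}+L^{(i)}_{l_\al l_\beta}$, give \eqref{e:second_term0}. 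All remaining second-order and higher terms are controlled by the factor bounds in \Cref{l:adm_term_bound} and the Ward-type estimates \eqref{e:Gest}--\eqref{e:use_Ward}, and are absorbed into the error $\OO(N^{-\fb/4}\Psi_p)$.

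Finally, the contributions $I_2$ and \eqref{e:third_term0} arise from expanding the second factor $(\wt Q_t-\wt Y_t)^{p-1}$. Using the chain rule at the boundary, $\wt Q_t - \wt Y_t - (Q_t - Y_t) = (1-\del_1 Y_\ell)(\wt Q_t-Q_t) - t\,\del_2 Y_\ell\,(\wt m_t - m_t) + \OO(\text{negligible})$, and the Woodbury expansion of $\wt G - G$ in \Cref{lem:woodbury} yields $\wt Q_t - Q_t$ as an explicit quadratic expression in $\{G^\circ_{sw}\}_{s\in\{b_\al,c_\al\}}$. Substituting this back and pairing with the first-order diagonal expansion of $\wt G^{(i)}_{oo}$ produces \eqref{e:case2_term}, where the inner quadratic form $F^{(\al)}_{uv}$ of \eqref{e:Fuv} is exactly what the Woodbury resolvent expansion gives after using the tree-local values $\msc(z_t)/\sqrt{d-1}$ for the short-range Green's functions connecting $b_\al$ and $c_\al$; the subtracted combination $F^{(\al)}_{vv} - 2(G_{uv}/G_{uu})F^{(\al)}_{uv} + (G_{uv}/G_{uu})^2 F^{(\al)}_{uu}$ is the standard way to express $(Q_t$-like averages$)$ in $d$-regular graphs as averages over adjacent pairs $u\sim v$, using the identity $G^{(u)}_{vv} = G_{vv} - G_{uv}^2/G_{uu}$. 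Pairing the off-diagonal (squared) contributions from $\wt G^{(i)}_{oo}$ with the first-order expansion of $(\wt Q_t - \wt Y_t)^{p-1}$ similarly produces \eqref{e:third_term0}. The main obstacle is the careful combinatorial bookkeeping of the Schur and Woodbury coefficients: verifying that the tree-local simplifications produce exactly the prefactor $\msc^{2\ell}(z_t)/(d-1)^{\ell+2}$ in each of $I_1$, $I_2$, and that all residual cross terms not captured by \eqref{e:case1_term}--\eqref{e:third_term0} genuinely fit into the error class $\OO(N^{-\fb/4}\Psi_p)$ using the uniform bounds of \Cref{l:adm_term_bound}.
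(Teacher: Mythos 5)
There is a genuine gap in how you account for the term $I_1$, and it propagates to the error budget. In the quantity actually being expanded, $(\wt G_{oo}^{(i)}-Y_t)(\wt Q_t-\wt Y_t)^{p-1}$, there is no ``leftover external factor $(G_{c_\al c_\al}^{(b_\al)}-Y_t)$'': both $(G_{c_\al c_\al}^{(b_\al)}-\cdot)$ factors in \eqref{e:case1_term} must come out of the expansion of $\wt G_{oo}^{(i)}-Y_t$ itself. Concretely, the first-order diagonal contribution $\frac{\msc^{2\ell+2}(z_t)}{(d-1)^{\ell+1}}\sum_{\al\in\sfA_i}(G_{c_\al c_\al}^{(b_\al)}-Q_t)$, paired with $(Q_t-Y_t)^{p-1}$, is \emph{not} $I_1$: after summing over the embedding it is negligible, because averaging $G_{c_\al c_\al}^{(b_\al)}-Q_t$ over the uniformly chosen edge $(b_\al,c_\al)$ gives $\OO(N^{-1+3\fc/2})$ by the trivial-eigenvector identity (\Cref{p:upbb}, cf.\ \eqref{e:single_index_sum} and \eqref{e:huanG-Q}); likewise the first-order off-diagonal term is converted by \eqref{e:Gccbb}/\eqref{e:final_replace1} into a product of two diagonal factors (a term of type \eqref{e:case3}), not into \eqref{e:second_term0}. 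The term $I_1$ instead originates from the \emph{second-order} Schur term $(L^{(i)}\cD L^{(i)}\cD L^{(i)})_{oo}$, whose diagonal part carries the coefficient $\frac{1}{(d-1)^2}L^{(i)}_{ol_\al}L^{(i)}_{l_\al l_\al}L^{(i)}_{l_\al o}$ — this is the only way the factor $L^{(i)}_{l_\al l_\al}$ can appear, and it is why the prefactor is $\msc^{2\ell}(z_t)L^{(i)}_{l_\al l_\al}/(d-1)^{\ell+2}$ rather than the first-order $\msc^{2\ell+2}(z_t)/(d-1)^{\ell+1}$ your derivative computation produces. The factor $(G_{c_\al c_\al}^{(b_\al)}-Y_t)$ then appears only through the final replacement of one copy of $Q_t$ by $Y_t$ (cf.\ \eqref{e:newterm3}, \eqref{e:changeQtoY}), whose error of order $|Q_t-Y_t|$ per factor must itself be shown to be $\OO(N^{-\fb/2}\bE[\Psi_p])$ — it is exactly the origin of the first term in $\Psi_p$. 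Your proposal contains neither the cancellation mechanism for the first-order terms nor the $Q_t\to Y_t$ replacement step, so the stated prefactors of \eqref{e:case1_term} and \eqref{e:second_term0} cannot be recovered and the closure of the error at $\OO(N^{-\fb/4}\Psi_p)$ in \eqref{e:daerrorE} is not justified.

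The parts of your outline concerning $I_2$, \eqref{e:first_term0} and \eqref{e:third_term0} are closer to the paper's route: $I_2$ does arise from pairing the first-order diagonal term of $\wt G_{oo}^{(i)}-Y_t$ with the quadratic-in-$G^\circ$ part of $(\wt Q_t-\wt Y_t)-(Q_t-Y_t)$ from the Woodbury expansion (with matching index $\al$, which is precisely what defeats the edge-average cancellation), \eqref{e:first_term0} is indeed the $\wt G^{(\bT)}_{c_\al c_\beta}\mapsto G^{(b_\al b_\beta)}_{c_\al c_\beta}$ replacement error, and \eqref{e:third_term0} comes from pairing the first-order \emph{off-diagonal} term (a single $G^{(b_\al b_\beta)}_{c_\al c_\beta}$, not its square) with the $\al\neq\beta$ part of the Woodbury expansion, where the factor $L_{l_\al l_\beta}$ is supplied by $\wt Q_t-Q_t$, not by $\wt G_{oo}^{(i)}$. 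To repair the argument you need to (i) invoke the edge-averaging identities of \Cref{p:upbb} to dispose of the bare first-order terms, (ii) extract the second-order diagonal and off-diagonal coefficients as in \Cref{l:coefficient}, and (iii) perform and control the $Q_t\to Y_t$ substitution.
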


\begin{proposition}\label{p:track_error2}
Adopt the notation and assumptions in \Cref{p:general}. The error from expanding $I_1$ (from \eqref{e:case1_term}) as in \eqref{e:higher_case1}
is given by 
\begin{align}\begin{split}\label{e:track_error2}
      &\left(1-\left(\frac{\msc(z_t)}{\sqrt{d-1}}\right)^{2\ell+2}\right)   \frac{2\md(z_t)\msc^{6\ell}(z_t)}{(d-1)^{2\ell+3}}\times\\
      &\times \sum_{\al\neq\beta\in \sfA_i}\sum_{\bfi^+} \frac{1}{Z_{\cF^+}} \bE[\bm1(\cG\in \Omega)I(\cF^+, \cG)(G_{c_\al c_\beta}^{(b_\al b_\beta)})^2 (Q_t-Y_t)^{p-1}]
      +\OO(N^{-\fb/4} \bE[\Psi_p]).
\end{split}\end{align}
The error from expanding $I_2$ (from \eqref{e:case2_term}) as in \eqref{e:higher_case2}
is bounded by $\OO(N^{-\fb/4} \bE[\Psi_p])$. 
\end{proposition}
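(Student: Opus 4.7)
The plan is to mirror the proof of Proposition \ref{p:track_error1}, but now applied to the specific admissible functions appearing inside $I_1$ and $I_2$, carefully tracking which of the three leading error contributions \eqref{e:first_term0}--\eqref{e:third_term0} survive the extra multiplicative structure and which collapse into $\OO(N^{-\fb/4}\bE[\Psi_p])$.

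For the first claim, I fix $\al \in \sfA_i$ and view the factor $(G_{c_\al c_\al}^{(b_\al)} - Q_t)(Q_t - Y_t)^{p-1}$ appearing in the corresponding summand of $I_1$ as an admissible function on $\cF^+$ with respect to the unused core edge $(b_\al, c_\al)$. I then apply Proposition \ref{p:add_indicator_function} to this edge, and invoke the refined expansion of Proposition \ref{p:track_error1} at the second round of resampling. The error decomposition produces three analogues of \eqref{e:first_term0}, \eqref{e:second_term0}, and \eqref{e:third_term0}, each multiplied by the extra diagonal factor $(G_{c_\al c_\al}^{(b_\al)} - Q_t)$ inherited from the admissible structure of $I_1$. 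For the analogue of \eqref{e:first_term0}, Lemma \ref{l:diffG1} shows it is proportional to $\del_z m_t(z)/N$ times this extra diagonal factor, and since $\bE_\bfS[G_{c_\al c_\al}^{(b_\al)} - Q_t]$ is $\OO(N^{-1+\oo(1)})$ after the second resampling, the contribution is absorbed into $\OO(N^{-\fb/4}\bE[\Psi_p])$. For the analogue of \eqref{e:third_term0}, Lemma \ref{l:error_term} reduces it to another $\del_z m_t$-type term which, combined with the Ward bounds \eqref{e:naive-Ward} and the extra smallness of $(G_{c_\al c_\al}^{(b_\al)} - Q_t)$, gains a further $N^{-\fb/4}$ factor. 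The only surviving piece is the analogue of \eqref{e:second_term0}, where replacing the extra $(G_{c_\al c_\al}^{(b_\al)} - Q_t)$ by its deterministic leading-order tree value from the Schur analysis of Lemma \ref{lem:diaglem} produces exactly the coefficient displayed in \eqref{e:track_error2}.

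The explicit coefficient $\bigl(1-(\msc(z_t)/\sqrt{d-1})^{2\ell+2}\bigr)\,\cdot\,2\md(z_t)\msc^{6\ell}(z_t)/(d-1)^{2\ell+3}$ I would then derive by combining four ingredients: the $\msc^{2\ell}(z_t) L_{l_\al l_\al}^{(i)}/(d-1)^{\ell+2}$ prefactor from the definition of $I_1$; the corresponding $\msc^{2\ell}(z_t)/(d-1)^{\ell+1}$ prefactor appearing in the analogue of \eqref{e:second_term0} for the second round; the tree-Green's-function identity \eqref{e:Gtreemsc} which, after summing over the $l_\beta$ neighbors in $\cT^{(i)}$, contributes the factor $1-(\msc(z_t)/\sqrt{d-1})^{2\ell+2}$ together with an additional power of $\md(z_t)\msc^{2\ell}$ through the local boundary weights $L_{l_\beta l_\beta}$ and $L_{l_\al l_\beta}$; and a combinatorial factor from the summation $\al \neq \beta$ in $\sfA_i$.

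For the second claim, $I_2$ already carries two centered off-diagonal Green's function factors packaged inside $F_{uv}^{(\al)}$. Applying the same iteration machinery, each of the three leading error types from Proposition \ref{p:track_error1} produces a product with at least two off-diagonal Green's function factors from $F_{uv}^{(\al)}$ in addition to whatever the expansion itself generates. Using \eqref{e:naive-Ward} and \eqref{e:av_naive-Ward} together with the a priori bound $|G^\circ_{sw}| \lesssim N^{-\fb}$ from \eqref{e:Bsmall}, each resulting product is at least $N^{-\fb}\Phi$ smaller than the naive admissible error, which after combining with the $(d-1)^{-\ell/2}$ gain from the second iteration step yields $\OO(N^{-\fb/4}\bE[\Psi_p])$ in total. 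The main obstacle I expect is the exact computation of the constant in \eqref{e:track_error2}: the tree-factor $1-(\msc(z_t)/\sqrt{d-1})^{2\ell+2}$ must emerge precisely from summing \eqref{e:Gtreemsc} on the $(d-1)$-ary tree rooted at $o$ with vertex $i$ removed, then combining with the boundary-weight structure from $L^{(i)}$ and the fresh resampling data around $(b_\al, c_\al)$; matching signs and combinatorial normalizations across the two rounds of resampling is delicate. A secondary difficulty is the bookkeeping of admissible-function indices $(r, \bmr)$ under two successive local resamplings, which must be carried out in tandem with the Ward-type averaging estimates to rule out the possibility that two extra off-diagonal factors appear in denominator-producing positions that would prevent the overall $N^{-\fb/4}$ gain in the $I_2$ analysis.
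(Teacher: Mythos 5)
Your identification of the surviving contribution for $I_1$ is the wrong mechanism. The paper's route is to use that the expectation in \eqref{e:case1_term} is independent of $\al$, that $|\sfA_i|=(d-1)^{\ell+1}$, and that $L^{(i)}_{l_\al l_\al}=\md(z_t)\bigl(1-(-\msc(z_t)/\sqrt{d-1})^{2\ell+2}\bigr)$, so that $I_1$ is an explicit constant multiple of \eqref{e:higher_case1}, with the two diagonal factors $(\wt G_{oo}^{(i)}-Y_t)$ and $(\wt G_{oo}^{(i)}-\wt Q_t)$. The borderline error then comes from the case in which \emph{both} of these factors are replaced by their first-order off-diagonal parts $\msc^{2\ell}(z_t)(d-1)^{-(\ell+1)}\sum_{\al\neq\beta\in\sfA_i}G^{(b_\al b_\beta)}_{c_\al c_\beta}$ and the index pairs coincide, i.e.\ the term \eqref{e:main2} with coefficient $2\msc^{4\ell}(z_t)/(d-1)^{2\ell+2}$; multiplying by the prefactor $\md(z_t)\msc^{2\ell}(z_t)\bigl(1-(\msc(z_t)/\sqrt{d-1})^{2\ell+2}\bigr)/(d-1)$ extracted from $I_1$ gives exactly \eqref{e:track_error2}. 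The piece you single out, the analogue of \eqref{e:second_term0} (the second-order Schur term $(G^{(b_\al b_\beta)}_{c_\al c_\beta})^2$ from a \emph{single} factor), comes multiplied by the remaining small diagonal factor and is $\OO(N^{-\fb/2}\bE[\Psi_p])$ by \eqref{e:refined_bound}; consequently your coefficient bookkeeping cannot reproduce the stated constant — the factor $1-(\msc(z_t)/\sqrt{d-1})^{2\ell+2}$ comes solely from $L^{(i)}_{l_\al l_\al}$ already present in \eqref{e:case1_term}, and $\msc^{4\ell}(z_t)/(d-1)^{2\ell+2}$ is the square of the first-order prefactor, not an $L$-weight of a second-order term.

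The more serious gap is in the $I_2$ claim, where your argument is purely size-based and does not reach the required precision. The borderline terms produced when $\wt G_{oo}^{(i)}-Y_t$ and the two $\wt G^{\circ}$ factors inside $F^{(\al)}_{uv}$ from \eqref{e:Fuv} take their leading off-diagonal parts (the substitutions \eqref{e:sub1}) are of the form \eqref{e:sumtt} and are only $\OO(N^{\fo}\bE[\Psi_p])$ by \eqref{e:refined_bound2_special}: once you average the two $G^{\circ}$ factors over the special edge with the Ward-type bounds \eqref{e:naive-Ward}, you cannot simultaneously keep their pointwise $N^{-\fb}$ smallness, so there is no extra $N^{-\fb}$ gain, and after summing over $\al\neq\beta$ the naive bound is of order $(d-1)^{2\ell}\bE[\Psi_p]$, far worse than $N^{-\fb/4}\bE[\Psi_p]$. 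The actual proof hinges on an exact algebraic cancellation: after the substitutions \eqref{e:sub1}, the combinations $F_{vv}$, $F_{uv}$, $F_{uu}$ appearing in \eqref{e:dierxiang} vanish identically, because $L_{il_\al}=(-\msc(z_t)/\sqrt{d-1})L_{ol_\al}$ for $\al\in\sfA_i$ and $\frac{2(\md(z_t)-\msc(z_t))}{d-1}-\frac{2\md(z_t)\msc^2(z_t)}{(d-1)^2}=0$, which is the identity $\md(z_t)=\msc(z_t)/(1-\msc^2(z_t)/(d-1))$. Without exhibiting this cancellation, the claimed $\OO(N^{-\fb/4}\bE[\Psi_p])$ bound for the $I_2$ error does not follow.
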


For $z$ close to the spectral edge $\pm E_t$, the following proposition gives refined estimates for the error terms in \eqref{e:first_term0}, \eqref{e:second_term0}, \eqref{e:third_term0} and \eqref{e:track_error2}.
\begin{proposition}\label{l:first_term}
Adopt the notation and assumptions in \Cref{p:iteration}, and recall $\cA$ from \eqref{e:defA}. For $z\in \bf D$ and $|z-E_t|\leq N^{-\fg}$, we have the following estimates for the terms involved in the error \eqref{e:daerrorE}: 
\begin{align}\label{e:first_term}
   \eqref{e:first_term0}&=\left(\frac{d(d-1)^{\ell}}{d-2} -\frac{d}{d-2}\right)\frac{1}{\cA^2}\bE\left[\bm1(\cG\in \Omega)\frac{\del_z m_t(z)}{N}(Q_t-Y_t)^{p-1}\right]+\OO\left(\frac{\bE[\Psi_p]}{(d-1)^\ell} \right),\\
\label{e:refine_Gccerror}
    \eqref{e:second_term0}&=\left(\frac{d+2}{d-2}-\frac{d(d-1)^{\ell}}{d-2}-(\ell+1)\right)\frac{1}{\cA^2}\bE\left[\bm1(\cG\in \Omega)\frac{\del_z m_t(z)}{N}(Q_t-Y_t)^{p-1}\right]+\OO\left(\frac{\bE[\Psi_p]}{(d-1)^\ell} \right),\\
\label{e:GIGG2}
\eqref{e:third_term0}&=-  \frac{ (p-1)(\ell+1)}{\cA^2}\bE\left[\left(\frac{1-\del_1Y_\ell}{\cA}-t\del_2 Y_\ell\right)\bm1(\cG\in \Omega)\frac{\del^2_z m_t}{N^2} (Q_t-Y_t)^{p-2}\right]+\OO\left(\frac{\bE[\Psi_p]}{(d-1)^\ell} \right).
\end{align}
Moreover, the error \eqref{e:track_error2} satisfies
\begin{align}\label{e:GIGG3}
       \eqref{e:track_error2}= -\frac{2}{d-2}\frac{1}{\cA^2}\bE\left[\bm1(\cG\in \Omega)\frac{\del_z m_t(z)}{N}(Q_t-Y_t)^{p-1}\right]+\OO\left(\frac{\bE[\Psi_p]}{(d-1)^\ell} \right).
\end{align}
If $|z+E_t|\leq N^{-\fg}$, analogous statements hold after multiplying the right-hand sides by $-1$. 
\end{proposition}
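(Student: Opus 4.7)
My plan is to prove \Cref{l:first_term} by carrying out four separate edge-regime computations, one for each of the identities \eqref{e:first_term}, \eqref{e:refine_Gccerror}, \eqref{e:GIGG2}, and \eqref{e:GIGG3}. The common strategy is to (i) identify the core Green's function quantity appearing in each error term, (ii) invoke a structural identity expressing it in terms of $\del_z m_t(z)/N$ or $\del^2_z m_t(z)/N^2$, and (iii) simplify the resulting combinatorial coefficients using the edge-regime expansions from \eqref{e:medge_behavior}, combined with the explicit tree Green's function formulas \eqref{e:Gtreemkm}--\eqref{e:Gtreemsc}.

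The two driving identities are those foreshadowed in the introduction as \eqref{c1} and \eqref{c2}. For \eqref{e:first_term}, I will use the identity $\bE_{\bfS}[\wt G_{c_\al c_\beta}^{(\bT)} - G_{c_\al c_\beta}^{(b_\al b_\beta)}] \approx \frac{1}{\cA^2 N}\bigl(\frac{d(d-1)^\ell}{d-2} - \frac{d}{d-2}\bigr)\bE[\del_z m_t]$, derived by expanding the resampling-induced perturbation $\wt H - H$ via the Woodbury formula \eqref{e:WB}, truncating at the leading non-negligible order, and taking expectations over $\bfS$. For \eqref{e:refine_Gccerror} and \eqref{e:GIGG3}, I will use the Ward-identity-type estimate $\bE_{\bfS}[(G_{c_\al c_\beta}^{(b_\al b_\beta)})^2] \approx \frac{1}{\cA^2 N}\bE[\del_z m_t]$, obtained by relating $G^{(b_\al b_\beta)}$ to the full Green's function $G$ through the Schur complement and applying the Ward identity \eqref{eq:wardex}. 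For \eqref{e:GIGG2}, I will invoke an analogous Ward-identity-type estimate for the triple products $G_{c_\al c_\beta}^{(b_\al b_\beta)} G^\circ_{u c_\al} G^\circ_{u c_\beta}$ averaged over edges $(u,v)$, which produces $\del^2_z m_t/N^2$.

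Having invoked these driving identities, the remaining task is combinatorial. I will sum over $\al,\beta \in \sfA_i$ using $|\sfA_i|\asymp (d-1)^\ell$, substitute the tree Green's function values $L_{l_\al l_\beta}^{(i)}$ from \eqref{e:Gtreemsc}, and evaluate the geometric sums that arise. Near the edge, $\msc(z_t)\approx -1$ and $\md(z_t)\approx -(d-1)/(d-2)$, so the tree sums become explicit: the counting factor $(d-1)^\ell$ combines with $\md(z_t)\approx-(d-1)/(d-2)$ to produce $d(d-1)^\ell/(d-2)$ in \eqref{e:first_term}; analogous evaluations using $L^{(i)}_{l_\beta l_\beta},L^{(i)}_{l_\al l_\beta}$ produce the coefficients $(d+2)/(d-2) - d(d-1)^\ell/(d-2) - (\ell+1)$ in \eqref{e:refine_Gccerror}; the sum $\sum_{k=0}^\ell(\msc^2/(d-1))^k \to \ell+1$ at the edge produces the $(\ell+1)$ factor in \eqref{e:GIGG2}; and the geometric prefactor $1 - (\msc/\sqrt{d-1})^{2\ell+2}\to 1$ produces the coefficient $-2/(d-2)$ in \eqref{e:GIGG3}. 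As a consistency check, the four coefficients sum to $-(\ell+1)$, which combined with the $\cA^2/(\ell+1)$ prefactor in \eqref{e:Qrefined_bound} contributes exactly $-1$ to the $\del_z m_t/N$ correction, matching the microscopic loop equation \eqref{e:loop2}.

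The principal technical difficulty lies in controlling the edge-regime corrections $\msc^{2k}(z_t)-1 = O(k\sqrt{|z_t-2|})$, which are not uniformly negligible across $k=O(\ell)=O(\log N)$: for $t\leq N^{-1/3+\ft}$ and $|z-E_t|\leq N^{-\fg}$, one has $\sqrt{|z_t-2|}=O(t+\sqrt{|z-E_t|})$ from \eqref{e:relation_zt_z}, so these corrections multiply factors of order $(d-1)^\ell$ arising in the tree sums and are \emph{a priori} of comparable size to the leading coefficients. I will verify that they telescope to subleading terms absorbed into the $O(\bE[\Psi_p]/(d-1)^\ell)$ error using the recursion for $Y_\ell$ from \Cref{p:recurbound} and a careful accounting of all contributions of order $(d-1)^k\sqrt{|z_t-2|}$ for $k=0,1,\ldots,\ell$. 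A secondary difficulty is that the driving identities themselves require proof: each involves expanding $\wt G - G$ or $(G^{(b_\al b_\beta)}_{c_\al c_\beta})^2$ via \eqref{e:WB} or the Schur complement, extracting the leading $\del_z m_t/N$ contribution, and verifying the error is of the required order; these will be separate lemmas whose combination constitutes \Cref{l:first_term}.
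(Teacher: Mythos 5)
Your overall architecture is close to the paper's for three of the four identities: the Ward-type evaluation $\frac{1}{(Nd)^2}\sum_{b\sim c, b'\sim c'}(G^{(bb')}_{cc'})^2 I_{cc'}=\frac{\del_z m_t}{\cA^2 N}+\text{error}$ (the paper's \eqref{e:remove_indices}), the explicit tree sums of $L^{(i)}_{l_\al l_\beta}$, $L^{(i)}_{l_\beta l_\beta}$, $L_{l_\al l_\beta}$ with $\msc(z_t)\approx-1$, $\md(z_t)\approx-(d-1)/(d-2)$, and your consistency check that the coefficients of \eqref{e:first_term}, \eqref{e:refine_Gccerror}, \eqref{e:GIGG3} sum to $-(\ell+1)$ all mirror the paper's route for \eqref{e:refine_Gccerror}, \eqref{e:GIGG2}, \eqref{e:GIGG3}. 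The problem is \eqref{e:first_term}, where your bookkeeping is internally inconsistent and hides the actual work. In \eqref{e:first_term0} the sum over $\al,\beta\in\sfA_i$ carries the normalization $\msc^{2\ell}(z_t)/(d-1)^{\ell+1}$; if your per-pair driving identity $\bE_\bfS[\wt G^{(\bT)}_{c_\al c_\beta}-G^{(b_\al b_\beta)}_{c_\al c_\beta}]\approx\frac{1}{\cA^2N}\bigl(\frac{d(d-1)^\ell}{d-2}-\frac{d}{d-2}\bigr)\bE[\del_z m_t]$ held for every pair, summing over the $(d-1)^{2\ell+2}$ pairs would overshoot the stated coefficient by a factor $(d-1)^{\ell+1}$; conversely your later claim that the $(d-1)^\ell$ arises from "$|\sfA_i|\asymp(d-1)^\ell$ combining with $\md$" contradicts that normalization. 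In the correct computation the $(d-1)^\ell$ is the number $\mu=d(d-1)^\ell$ of resampled boundary edges appearing \emph{inside} the expansion of each diagonal ($\al=\beta$) difference, while $|\sfA_i|$ exactly cancels the $(d-1)^{-(\ell+1)}$ prefactor; the off-diagonal pairs contribute nothing to leading order.

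This points to the two genuine gaps. First, the off-diagonal pairs $\al\neq\beta$ are not negligible termwise at the precision you need, $\OO(\bE[\Psi_p]/(d-1)^\ell)$: after the expansion they are sums of products of two off-diagonal Green's entries over $\mu$ boundary edges, and naive size bounds leave you a factor $(d-1)^{2\ell}$ too large. Killing them requires averaging over the free vertex of an unused core edge via the trivial-eigenvector identity $\sum_x G_{xy}=(d/\sqrt{d-1}-z)^{-1}$ (as in \eqref{e:single_index_sum}, \eqref{e:Gccbb_youyige}, \eqref{e:GiGj}); "truncating at the leading non-negligible order and taking expectations over $\bfS$" does not supply this mechanism. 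Second, the diagonal contribution produces squares $(G^{(b_\al)}_{c_\al x})^2$ with $x\sim b_\al$, i.e.\ two vertices at distance two whose common neighbor has been removed; identifying their average with $\del_z m_t/(\cA^2 N)$ is not a direct Ward identity and in the paper requires a separate local-resampling argument (\Cref{lem:deletedalmostrandom}, estimate \eqref{e:GiGi}), which your plan omits. Relatedly, the Woodbury formula \eqref{e:WB} compares $\wt G$ and $G$ on the same index set, whereas \eqref{e:first_term0} compares $\wt G^{(\bT)}$ with $G^{(b_\al b_\beta)}$; one must interpolate through common removal sets (as in \eqref{e:removeW}--\eqref{e:schur_removeT}), which you do not set up. Finally, the difficulty you single out — the corrections $\msc^{2k}(z_t)-1$ for $k\le\ell$ — is harmless, since $\ell\sqrt{|z_t-2|}\lesssim(\log N)N^{-\fg/2}=o(1)$; the binding constraint is the $(d-1)^{-\ell}$ relative precision of the coefficients, which is exactly what forces the two mechanisms above.
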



\begin{proof}[Proof of \Cref{t:recursion}]
We will only prove the estimate for $\bE[{\bm1(\cG\in \Omega)}(Q_t-Y_t)^{p}]$ which is from \eqref{e:QY} by taking $z=z_1=z_2=\cdots=z_{p-1}$. The general case, and the estimate for $(m_t(z)-X_t(z))$ can be proven in the same way, so we omit its proof.  
Denote $\cF_0=(\bfi_0=(i,o), E=\{(i,o)\})$, and recall $I(\cF_0, \cG)$ from \Cref{def:indicator}. $I(\cF_0, \cG)=1$ if every vertex $v\in \cB_\ell(o,\cG)$ has a radius $\fR$ tree neighborhood in $\cG$.  Then we split
\begin{align}\begin{split} \label{e:tt1}
\bE[{\bm1(\cG\in \Omega)}(Q_t-Y_t)^{p}]
&=\frac{1}{Nd}\bE\left[\sum_{i,o}A_{oi}(G_{oo}^{(i)}-Y_t)(Q_t-Y_t)^{p-1}{\bm1(\cG\in \Omega)}\right]\\
&=\frac{1}{Nd}\sum_{\bfi_0}\bE\left[I(\cF_0,\cG)(G_{oo}^{(i)}-Y_t)(Q_t-Y_t)^{p-1}{\bm1(\cG\in \Omega)}\right]\\
&+\OO\left(\frac{1}{N^{1-3\fc/2}}\right)\bE\left[|Q_t-Y_t|^{p-1}{\bm1(\cG\in \Omega)}\right],
\end{split}\end{align}
where we used that for $\cG\in \Omega$, $A_{oi}|G_{oo}^{(i)}|,|Y_\ell (Q)|\lesssim 1$ from \eqref{eq:infbound}, and $I(\cF_0,\cG)=1$ except for $\OO(d(d-1)^{\ell+\fR} N^{\fc})=\OO(N^{3\fc/2})$ vertices from \Cref{def:omegabar}.

We denote 
\begin{align*}
    R_{\bfi_0}=(Q_t-Y_t)^{p-1},
\end{align*}
which satisfies the conditions in \Cref{def:pgen} with $r=0$ and $r_{j0}=1$ for $1\leq j\leq p-1$. With this notation, we can rewrite \eqref{e:tt1} as 
\begin{align}\label{e:IIR}
   \bE[(Q_t-Y_t)^{p}{\bm1(\cG\in \Omega)}]
   =\frac{1}{Z_{\cF_0}}\sum_{\bfi_0}\bE\left[I(\cF_0,\cG){\bm1(\cG\in \Omega)}(G_{oo}^{(i)}-Y_t)R_{\bfi_0}\right]+\OO(N^{-\fb}\bE[\Psi_p]),
\end{align}
where $Z_{\cF_0}=Nd$.
The above expression aligns with the form of \Cref{p:add_indicator_function}, allowing us to apply \Cref{p:add_indicator_function}, \Cref{p:iteration} and \Cref{p:general} to begin the iteration process. After expanding \eqref{e:IIR} using \Cref{p:iteration}, we obtain three types of terms, as described in \eqref{e:case1}, \eqref{e:case2}, \eqref{e:case3}. We can then further expand these terms using \Cref{p:general}. The result in \Cref{p:general} essentially states that, after further expansion, \eqref{e:case1} and \eqref{e:case2} either maintain the same form with an additional factor of $(d-1)^{-\ell/2}$, or they transform into \eqref{e:case3}.
Similarly, after expansion, \eqref{e:case3} remains in the same form, either with an additional  $(d-1)^{-\ell/2}$ factor, or an extra term in the form of \eqref{e:defcE1}, which is bounded by $N^{-\fb}\ll (d-1)^{-\ell/2}$.
Therefore, after finitely many steps, namely $\OO(4p\log_{d-1}(N)/\ell)$, all terms are bounded by $\OO(N^{-2p})=\OO(\bE[\Psi_p]/N)$. Meanwhile the errors from \Cref{p:add_indicator_function}, \Cref{p:iteration} and \Cref{p:general} are all bounded by $\OO((d-1)^{2\ell}\bE[\Psi_p])$. This gives \eqref{e:QY}.

We will prove \eqref{e:Qrefined_bound} only for $|z-E_t|\leq N^{-\fg}$, the other case $|z+E_t|\leq N^{-\fg}$, can be established in exactly the same way. 
To prove \eqref{e:Qrefined_bound}, we must track the errors from the iteration process more carefully.  These refined error estimates are presented in \Cref{p:track_error1}, \Cref{p:track_error2} and \Cref{p:track_error2}.
By adding \eqref{e:first_term},\eqref{e:refine_Gccerror} and \eqref{e:GIGG2},  the error $\cE$ from \Cref{p:iteration} is given by
\begin{align}\begin{split}\label{e:final_error1}
&-\left(\ell+1-\frac{2}{d-2}\right)\frac{1}{\cA^2}\bE\left[\bm1(\cG\in \Omega)\frac{\del_z m_t(z)}{N}(Q_t-Y_t)^{p-1}\right]\\
&- (p-1)\frac{(\ell+1)}{\cA^2}\bE\left[\left(\frac{1-\del_1Y_\ell}{\cA}-t\del_2 Y_\ell\right)\bm1(\cG\in \Omega)\frac{\del^2_z m_t(z)}{N^2} (Q_t-Y_t)^{p-2}\right]+\OO\left(\frac{\bE[\Psi_p]}{(d-1)^\ell} \right).
\end{split}\end{align}

For \Cref{p:track_error2}, the error from expanding \eqref{e:higher_case3} is small, i.e. bounded by $\OO(N^{-\fb/4}\bE[\Psi_p]$.
The errors from expanding \eqref{e:higher_case1} or \eqref{e:higher_case2} with $\fq\geq 1$ are bounded by $\OO((d-1)^{-\ell/2}N^\fo\bE[\Psi_p])$. For $\fq=0$, the error from expanding \eqref{e:higher_case1} is given in \eqref{e:track_error2} and \eqref{e:GIGG3}
\begin{align}\label{e:final_error2}
 -\frac{2}{d-2}\frac{1}{\cA^2}\bE\left[\bm1(\cG\in \Omega)\frac{\del_z m_t(z)}{N}(Q_t-Y_t)^{p-1}\right]+\OO\left(\frac{\bE[\Psi_p]}{(d-1)^\ell} \right).
\end{align}
Thanks to \Cref{p:track_error2}, for $\fq=0$, the error from expanding \eqref{e:higher_case2} is bounded by $\OO(N^{-\fb/4}\bE[\Psi_p])$. 
The correction terms in the microscopic loop equation \eqref{e:Qrefined_bound} is obtained by summing the refined errors from \eqref{e:final_error1} and \eqref{e:final_error2}, and noticing that $2\lim_{z_j\rightarrow z}(m_t(z)-m_t(z_j))/(z-z_j)=\del^2_z m_t(z)$.

\end{proof}

\begin{proof}[Proof of \Cref{t:correlation_evolution}]
We will only sketch the proof of \eqref{e:key_cancel} when $z=z_1=z_2=\cdots=z_{p-1}=E_t+w$ with $w\in \bf M$ (recall from \eqref{e:micro_window}). Then $\eta=\Im[z]\in[N^{-2/3-\ft}, N^{-2/3+\ft}]$ and $\kappa=|\Re[z]-E_t|\leq N^{-2/3+\ft}$.

We introduce the following error parameter by taking $F_t(z)=m_t(z)-\md(z;t)$ in \eqref{def:Xip}
\begin{align}\begin{split}\label{e:defXi2}
&\fX_p=\Xi_p(z;m_t(z)-\md(z;t))
= \bm1(\cG\in \Omega)\Bigg[\frac{|Q_t(z)-Y_t(z)|}{N^{\fb/8}}(|m_t(z)-\md(z;t)|+N^\fb\Phi(z))^{p-1}\\
&+\Phi(z)\left(|m_t(z)-\md(z;t)|+(1+N^\fb)\Phi(z)
+\frac{1 }{N\eta}\right)(|m_t(z)-\md(z;t)|+N^\fb\Phi(z))^{p-2}\Bigg].\\
\end{split}\end{align}
Then we can repeat the same argument as for \eqref{e:Qrefined_bound}, with $Q_t-Y_t$ replaced by $m_t-\md(z_t)$, and obtain the following estimate
\begin{align}\begin{split}\label{e:expbound}
    & \phantom{{}={}}\frac{\cA^2}{\ell+1}\bE\left[\bm1(\cG\in \Omega)(Q_t-Y_t)(m_t-\md(z_t))^{p-1}\right]
+\bE\left[\bm1(\cG\in \Omega)\frac{\del_z m_t(z)}{N}  (m_t-\md(z_t))^{p-1}\right]\\
&+  (p-1)\bE\left[\bm1(\cG\in \Omega)\frac{\del^2_z m_t(z)}{N^2}(m_t-\md(z_t))^{p-2}\right]=\OO\left((d-1)^{-\ell/2}N^\fo\bE[\fX_p]\right).
\end{split}\end{align}
Thanks to \eqref{e:mtmddiff} and \eqref{e:QQYY}, conditioned on the $\cG\in\Omega$, we have with overwhelmingly high probability 
\begin{align}\begin{split}\label{e:mtmddiff_refine}
    &|m_t-\md(z_t)|\lesssim \frac{N^{2\fo}}{N\eta}\lesssim \frac{N^{2\ft}}{N^{1/3}},\quad 
|Q_t(z)-Y_t(z)|\lesssim  \frac{N^{2\fo}(\kappa+\eta)^{1/2}}{N\eta}+\frac{N^{10\fo}}{(N\eta)^2}\lesssim \frac{N^{3\ft}}{N^{2/3}},\\
&\Phi(z)\leq \frac{\Im[\md(z_t)]+|m_t-\md(z_t)|}{N\eta}+\frac{1}{N^{1-2\fc}} \lesssim  \frac{N^{3\ft/2}}{N^{2/3}},
\end{split}\end{align}
where in the third statement, we used  \eqref{eq:square_root_behave} that $\Im[\md(z_t)]\lesssim \sqrt{\kappa+\eta}\lesssim N^{-1/3+\ft/2}$. By plugging \eqref{e:mtmddiff_refine} into \eqref{e:defXi2} we conclude the following holds with overwhelmingly high probability 
\begin{align}\label{e:Xibound}
    N^\fo\fX_p\lesssim \frac{N^{2(p+1)\ft}}{N^{(p+1)/3}}.
\end{align}
The claim \eqref{e:key_cancel} follows from plugging \eqref{e:Xibound} into \eqref{e:expbound}.
\end{proof}

\section{Expansions of Green's Function Differences}\label{sec:expansions}
In this section, we gather estimates on the difference in Green's functions before and after local resampling. For Green's functions related to the center of the local resampling, we employ Schur complement formulas, with results detailed in \Cref{sec:schurlemmas}. This methodology has been previously utilized in similar contexts \cite{huang2024spectrum,bauerschmidt2019local} to establish the local law of random $d$-regular graphs.

For Green's function terms away from the center of the local resampling, we develop a novel expansion using the Woodbury formula, as stated in \Cref{lem:woodbury}. This expansion represents a reorganization of the resolvent identity. In prior research \cite{huang2023edge}, resolvent identities played a pivotal role in analyzing the changes induced by simple switching in Green's functions, yielding an expansion where the terms exhibit exponential decay in $1/\sqrt d$. This decay rate proves adequate when $d$ scales with the size of the graph; however, in our scenario, where $d$ remains fixed, the decay is too slow. Notably, in the new expansion introduced in \Cref{lem:woodbury}, the terms decay exponentially at a rate of $1/N^{\fb}$. 

\subsection{Setting and notation}
\label{s:setting}
In this section, let $d\geq 3$, and $\cG$ is a $d$-regular graph on $N$ vertices. Let $\cF = (\bfi, E)$ be a forest as in \eqref{e:cFtocF+}, with switching edges $\cK$, core edges $\cC$ and unused core edges $\cC^\circ$. We view $\cF$ as a subgraph of $\cG$. We construct $\cF^+ = (\bfi^+, E^+)$ (as in \eqref{e:cF++}) by performing a local resampling around $(i, o) \in \cC^\circ$ with resampling data ${\bf S}=\{(l_\al, a_\al), (b_\al, c_\al)\}_{\al\in \qq{\mu}}$ where $\mu=d(d-1)^{\ell}$. We denote $\cT=\cB_\ell(o,\cG)$ with vertex set $\bT$. Let the switched graph be $\widetilde \cG = T_\bfS(\cG)$. We recall the set $\Omega$ of $d$-regular graphs from \Cref{thm:prevthm0},
and the indicator function $I(\cdot,\cdot)$ from \eqref{def:indicator}. 
In this section we assume
\begin{align}\label{e:tGI}
    \cG, \widetilde \cG\in \Omega, \quad I(\cF^+,\cG)=1.
\end{align}
The second statement in \eqref{e:tGI} implies the following: by considering $\cF^+$ as a subgraph of $\cG$, each connected component of $\cF^+$ has a radius $\fR$-tree neigbhorhood, is separated from the others by a distance of at least $3\fR$. Moreover, \eqref{e:tGI} also implies that the statements in \Cref{thm:prevthm}, \Cref{l:basicG}, \Cref{c:expectationbound} and \Cref{p:WtGbound} all hold with overwhelmingly high probability over $Z$.

We recall the spectral domain $\bf D$ from \eqref{e:D}, and parameters $\fo\ll \ft\ll\fb\ll\fc\ll\fg$ from \eqref{e:parameters}. Fix time $t\leq N^{-1/3+\ft}$. We recall $\varrho_d(x,t)$, $\md(z,t)$ and $E_t$ from \eqref{e:defrhodt} and \eqref{e:edgeeqn2}. For any parameter $z\in \bf D$ we denote $\eta=\Im[z]$, $\kappa=\min\{|\Re[z]-E_t|, |\Re[z]+E_t|\}$, and $z_t=z+t\md(z,t)=z+t\md(z_t)$ (recall from \eqref{e:defw}).

We recall the matrix $H(t)$, its Green's function $G(z,t)$, its Stieltjes transform $m_t(z)$, the quantities  $Q_t(z)$,  $Y_t(z)=Y_\ell(Q_t(z),z+tm_t(z))$, and   $X_t(z)=X_\ell(Q_t(z),z+tm_t(z))$ from \eqref{e:Ht} , \eqref{e:Gt}, \eqref{def_mtz}, \eqref{e:Qsum}, and  \eqref{e:defYt}. 
We recall the control parameters $\Phi(z), \Upsilon(z)$ and $\Psi_p(z)$ from \eqref{e:defPhi} and \eqref{eq:phidef}. We recall the local Green's function $L(z,t)=L(z,t,\cF^+,\cG)$ from \eqref{e:local_Green}, and $G^\circ(z,t)=G(z,t)-L(z,t)$ from \eqref{e:G-L}. Assumption \eqref{e:tGI} implies that restricted on $\cB_{\fR}(\cF^+,\cG)$, $L$ agrees with the Green's function of the infinite tree as defined in \eqref{e:Gtreemkm}. We denote the corresponding quantities for the switched graph $\tcG$ as $\wt H(t)$, $\wt G(z,t)$, $\wt m_t(z)$, $\wt Q_t(z)$,  $\wt Y_t(z)$, $\wt X_t(z)$,$\wt \Phi(z)$, $\wt \Upsilon(z)$,$\wt \Psi_p(z)$, $\wt L(z,t)$ and $\wt G^\circ(z,t)$. If the context is clear, we may omit the dependence on $z$ and $t$. 

\subsection{Switching using 
the Schur complement formula}\label{sec:schurlemmas}
In this section we will use the Schur complement formula to study the Green's function after local resampling. 
We recall the local resampling and related notation from \Cref{s:local_resampling}. We also introduce the following S-Product term.
\begin{definition}[S-Product term]\label{d:S-product}
    Fix $r\geq 0$, we define $R_r$ to be a \emph{S-product term} of order $r$ (where ``\emph{S}" indicates that these terms arise from expansions using the Schur complement formula) if it is a product of $r$ factors in the following forms: 
\begin{align*}
   (G_{c_\al c_\al}^{(b_\al)}-Q_t), \quad  G_{c_\al c_{\beta}}^{(b_\al b_\beta)},\quad  (Q_t-\msc(z_t)), \quad t(m_t-\md(z_t)),\quad \al\neq \beta\in \qq{\mu}.
\end{align*} 
\end{definition}

In the following proposition, we derive an expansion for factors that are Green's function entries with both indices in $\{i,o\}$.
\begin{proposition}\label{lem:diaglem}
Adopt the notation and assumptions that $\cG, \widetilde \cG\in \Omega,  I(\cF^+,\cG)=1$ in \Cref{s:setting}, and define the index set $\sfA_i := \{ \alpha \in \qq{\mu} : \dist_{\cT}(i, l_\al) = \ell+1 \}$. The following holds with overwhelmingly high probability over $Z$:
\begin{enumerate}
    \item  $\widetilde G_{oo}^{(i)}-Y_t$ can be rewritten as a weighted sum  
\begin{align}\begin{split}\label{e:G-Y}
   \widetilde G_{oo}^{(i)}-Y_t
   &= \frac{\msc(z_t)^{2\ell+2}} {(d-1)^{\ell+1}}\sum_{\al\in\sfA_i}(G^{(b_\alpha)}_{c_\alpha c_\alpha}-Q_t)+\frac{\msc(z_t)^{2\ell+2}} {(d-1)^{\ell+1}}\sum_{\al\neq \beta\in\sfA_i}G^{(b_\alpha b_\beta)}_{c_\alpha c_\beta}
   + \cU+\cZ+\cE.
\end{split}\end{align}
where 
$\cU$ is an $\OO(1)$-weighted sum of terms in the form $(d-1)^{3(r-1)\ell}R_r$, where $r\geq 2$, and $R_r$ is an $S$-product term (see \Cref{d:S-product}) which contains $G_{c_\al c_\al}^{(b_\al)}-Q_t$ or $G_{c_\al c_\beta}^{(b_\al b_\beta)}$; and the error $\cE$ is given by
\begin{align}\begin{split}\label{e:defCE}
 \cE&= \frac{\fc} {(d-1)^{\ell+1}}\sum_{\al,\beta\in\sfA_i}(\wt G^{(\bT)}_{c_\alpha c_\beta}-G^{(b_\alpha b_\beta)}_{c_\alpha c_\beta})+\OO\left(\frac{1}{N^{\fb/2} } \sum_{\al, \beta\in \qq{\mu}} |\wt G^{(\bT)}_{c_\al c_\beta}-G^{(b_\al b_\beta)}_{c_\al c_\beta}| +\frac{1}{N^2}\right)\\
 &=\OO\left(\sum_{\al, \beta\in \qq{\mu}} |\wt G^{(\bT)}_{c_\al c_\beta}-G^{(b_\al b_\beta)}_{c_\al c_\beta}| +\frac{1}{N^2}\right).
\end{split}\end{align}

\item For $s,s'\in\{i,o\}$, $\widetilde G^\circ_{ss'}=(\widetilde G-\widetilde L)_{ss'}$ can be rewritten as a weighted sum, 
\begin{align}\begin{split}\label{e:G-Ydouble}
    \widetilde G^\circ_{ss'}
    &=\frac{1} {(d-1)^\ell}\sum_{\al\in \qq{\mu}}\fc_1(\bm1(\al\in\sfA_i))(G^{(b_\alpha)}_{c_\alpha c_\alpha}-Q_t)
    \\
   &+\frac{1} {(d-1)^\ell}\sum_{\al\neq \beta\in \qq{\mu}} \fc_2(\bm1(\al\in\sfA_i),\bm1(\beta\in\sfA_i)) G^{(b_\alpha b_\beta)}_{c_\alpha c_\beta}
   +\cU+ \cZ+\cE,
\end{split}\end{align}
where $|\fc_1(\cdot)|, |\fc_2(\cdot, \cdot)|\lesssim 1$; 
$\cU$ is an $\OO(1)$-weighted sum of terms of the form $\{Q_t-\msc(z_t), t(m_t-\md(z_t))\}$ or $(d-1)^{3(r-1)\ell}R_r$, where $r\geq 2$, and $R_r$ is an $S$-product term (see \Cref{d:S-product}); and the error $\cE$ is bounded by
\begin{align*}
 |\cE|\lesssim\sum_{\al, \beta\in \qq{\mu}} |\wt G^{(\bT)}_{c_\al c_\beta}-G^{(b_\al b_\beta)}_{c_\al c_\beta}| +\frac{1}{N^2}.
\end{align*}
\end{enumerate}
In both cases, $\cZ$ in \eqref{e:G-Y} and \eqref{e:G-Ydouble} is an $\OO(1)$-weighted sum of terms of the form $(d-1)^{3(r+r')\ell}R_r R_{r'}'$, where $r\geq 0, r'\geq 1$, $R_r$ is an $S$-product term, and $R_{r'}'$ is a product of $r'$ factors of the following forms: 
\begin{align}\label{e:W-term}
 \sqrt t Z_{xy}, \quad
  \sqrt t( Z\wt G^{(\bT)})_{xc_\al}, \quad t(( Z\wt G^{(\bT)}Z)_{xy}-m_t \delta_{xy}), \quad \al\in \qq{\mu},\quad 
   x,y\in \bT.
\end{align}
\end{proposition}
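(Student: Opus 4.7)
The plan is to apply the Schur complement formula to isolate the vertex set $\bT$ (or $\bT \setminus \{i\}$) and then expand the resulting finite-dimensional resolvent on the truncated $(d-1)$-ary tree via a Neumann series around the matrix that defines $Y_t$. Because $I(\cF^+,\cG)=1$, the neighborhood $\cT = \cB_\ell(o,\cG)$ is a genuine tree, so removing $i$ makes $\cT^{(i)}$ isomorphic (on the component containing $o$) to $\cB_\ell(o,\cY)$. The Schur complement applied to $\wt H(t)-z$ on the partition $(\bT\setminus\{i\},\,\qq{N}\setminus\bT)$ yields
\begin{equation*}
\wt G^{(i)}_{\bT\setminus\{i\}}(z,t) = \bigl[-zI + A(\cT^{(i)})/\sqrt{d-1} + \sqrt{t}\,Z_{\bT\setminus\{i\}} - B\,\wt G^{(\bT)}(z,t)\,B^{\top}\bigr]^{-1},
\end{equation*}
where $B$ has nonzero entries $B_{l_\alpha c_\alpha} = 1/\sqrt{d-1}$ for $\alpha \in \qq{\mu}$ together with the $\sqrt{t}\,Z$-couplings from $\bT\setminus\{i\}$ to $\qq{N}\setminus\bT$. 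I will introduce the reference matrix $M_0 = -(z+tm_t)I + A(\cT^{(i)})/\sqrt{d-1} - (dI - D(\cT^{(i)}))Q_t/(d-1)$ whose inverse is precisely the Green's function of $\cB_\ell(o,\cY)$ at spectral parameter $z+tm_t$ with boundary weight $Q_t$, so that $(M_0^{-1})_{oo} = Y_t$.

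Writing $M = M_0 + \Delta$, the perturbation $\Delta$ splits into (a) the boundary correction whose $(l_\alpha,l_\beta)$-entry is $[Q_t \delta_{\alpha\beta} - \wt G^{(\bT)}_{c_\alpha c_\beta}]/(d-1)$; (b) the $\sqrt{t}\,Z$-cross-terms $\sqrt{t}\,Z_{\bT\setminus\{i\}}$ and the $\sqrt{t}$-part of $B\wt G^{(\bT)}B^{\top}$; and (c) a diagonal correction $tm_t I$ used to replace the spectral parameter $z$ in $M$ with $z+tm_t$ in $M_0$, which after combining with the centered quadratic $t\bigl((Z\wt G^{(\bT)}Z)_{xy} - m_t\delta_{xy}\bigr)$ (a contribution of the form \eqref{e:W-term}) leaves only centered $Z$-fluctuations. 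I will expand $M^{-1} = M_0^{-1} - M_0^{-1}\Delta M_0^{-1} + M_0^{-1}\Delta M_0^{-1}\Delta M_0^{-1} - \cdots$ and read the $(o,o)$ entry. The matrix $M_0^{-1}$ is a tree Green's function, so by \Cref{greentree} and \Cref{p:recurbound}, $(M_0^{-1})_{o,l_\alpha} = \msc(z_t)(-\msc(z_t)/\sqrt{d-1})^{\ell}$ up to $\OO((Q_t-\msc(z_t)))$ corrections, and vanishes for $\alpha \notin \sfA_i$ since those $l_\alpha$ sit in components of $\cT^{(i)}$ disconnected from $o$. Thus the first-order term of the Neumann series applied to the diagonal correction $(\wt G^{(\bT)}_{c_\alpha c_\alpha}-Q_t)/(d-1)$ with $\alpha \in \sfA_i$ produces precisely the coefficient $\msc(z_t)^{2\ell+2}/(d-1)^{\ell+1}$, and substituting $\wt G^{(\bT)}_{c_\alpha c_\alpha} - Q_t = (G^{(b_\alpha)}_{c_\alpha c_\alpha} - Q_t) + (\wt G^{(\bT)}_{c_\alpha c_\alpha} - G^{(b_\alpha)}_{c_\alpha c_\alpha})$ (and similarly for $\alpha\neq\beta$) yields the two leading sums in \eqref{e:G-Y} together with the first contribution to $\cE$ in \eqref{e:defCE}.

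The remaining terms are collected as follows. All higher-order terms in the Neumann series contribute at least two $S$-product factors drawn from $\{G^{(b_\alpha)}_{c_\alpha c_\alpha}-Q_t,\ G^{(b_\alpha b_\beta)}_{c_\alpha c_\beta},\ Q_t-\msc(z_t),\ t(m_t-\md(z_t))\}$; the $(Q_t-\msc(z_t))$ and $t(m_t-\md(z_t))$ pieces come from expanding $(M_0)_{oo}^{-1}$ and the entries $(M_0^{-1})_{o,l_\alpha}$ around $\msc(z_t)$ via \Cref{p:recurbound}, and the combinatorial coefficients, bounded by geometric sums of $(|\msc(z_t)|/\sqrt{d-1})^{\dist_\cT(\cdot,\cdot)}$ from \eqref{e:Pijbound}, fit inside the $(d-1)^{3(r-1)\ell}$ factor of \Cref{d:S-product}; these give $\cU$. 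Any contribution involving at least one $\sqrt{t}\,Z$ factor yields a term of $\cZ$: using Gaussian integration (\Cref{p:intbypart}) and the bounds in \Cref{p:WtGbound} to center the quadratic $Z\wt G^{(\bT)}Z$ at $m_t I$, every such term has at least one factor among \eqref{e:W-term}. Errors coming from higher iterations against $\wt G^{(\bT)}_{c_\alpha c_\beta}-G^{(b_\alpha b_\beta)}_{c_\alpha c_\beta}$ give the $N^{-\fb/2}$-suppressed sum in \eqref{e:defCE}. Part (2) of the proposition is proved by the same machinery with the partition taken on $\bT$ rather than $\bT\setminus\{i\}$ and the subtraction of $\wt L_{ss'}$, which is computed from the same Schur complement formula but with $\wt G^{(\bT)}_{c_\alpha c_\beta}$ replaced by $\msc(z_t)\delta_{\alpha\beta}$, $\sqrt{t}\,Z$ dropped, and spectral parameter $z_t$ in place of $z+tm_t$; the differences of parameters produce the extra $\{Q_t-\msc(z_t),\ t(m_t-\md(z_t))\}$ terms in $\cU$, and the coefficients $\fc_1,\fc_2$ arise from whether the paths from $s,s'$ to $l_\alpha$ pass through the removed neighbor $i$ (so that $(M_0^{-1})_{s,l_\alpha}$ picks up different $(d-1)^{-\ell/2}$-scale contributions depending on $\bm1(\alpha \in \sfA_i)$).

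The main obstacle I anticipate is the bookkeeping needed to classify every term of the Neumann series into one of the four buckets (leading, $\cU$, $\cZ$, $\cE$) with the correct scaling, and in particular to verify that the combinatorial coefficients never grow faster than $(d-1)^{3(r+r')\ell}$. This requires tracking the tree metric $\dist_\cT$ for every entry of $M_0^{-1}$ that appears between two insertion points of $\Delta$, together with a careful separation of the $tm_t I$ contribution from the centered quadratic in $Z$, so that the genuine $\sqrt{t}\,Z$-fluctuations end up exclusively inside $\cZ$ while the deterministic shift $z \mapsto z+tm_t$ is absorbed into $M_0$. Once these two points are handled, the tree bound \eqref{e:Pijbound} and the estimates in \Cref{l:basicG}--\Cref{p:WtGbound} close the argument.
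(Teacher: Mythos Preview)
Your approach is essentially the same as the paper's: Schur complement on $\bT\setminus\{i\}$ followed by a resolvent/Neumann expansion, with the leading coefficient read off from the tree Green's function formula $L^{(i)}_{ol_\alpha}=\msc(z_t)(-\msc(z_t)/\sqrt{d-1})^{\ell}$ for $\alpha\in\sfA_i$. The paper organizes the expansion slightly differently: rather than your random reference $M_0$ (with parameters $z+tm_t$ and $Q_t$) followed by a secondary expansion of each $M_0^{-1}$ entry around $L^{(i)}$, the paper expands both $\wt G_{oo}^{(i)}$ and $Y_t$ directly around the deterministic $L^{(i)}=P^{(i)}(\cT,z_t,\msc(z_t))$ and takes their difference (equation \eqref{e:GooY}). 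The full perturbation is $\cD=\cD'+\cD_1+\cD_2$ with $\cD'=\wt B^\top(Q_t-\msc(z_t))\mathbb I^\partial\wt B+t(m_t-\md(z_t))\mathbb I$; in the difference the pure-$\cD'$ terms cancel, so every surviving term of order $\geq 1$ carries at least one factor of $\cD_1$ or $\cD_2$. This single-reference scheme avoids the nested expansion and makes the $(d-1)^{3(r-1)\ell}$ weight-counting (via \eqref{e:sum_Pbound}) more transparent, but your two-step version reaches the same decomposition.

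One inaccuracy: you do not need Gaussian integration by parts to center $t(Z\wt G^{(\bT)}Z)_{xy}$ at $tm_t\delta_{xy}$. That is a purely algebraic add-and-subtract --- the $tm_t\mathbb I$ is absorbed into the spectral shift $z\to z+tm_t$, and the centered remainder $t((Z\wt G^{(\bT)}Z)_{xy}-m_t\delta_{xy})$ is simply declared to be one of the $\cZ$ building blocks in \eqref{e:W-term}. The bounds on these blocks come from Gaussian concentration (\Cref{p:WtGbound}), and integration by parts only enters later, in \Cref{p:WRbound}, when estimating expectations of $\cZ$ against admissible functions.
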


In the following proposition, we derive a similar expansion for factors that are Green's function entries with at most one index in $\{i,o\}$.
\begin{proposition}\label{lem:offdiagswitch}
Adopt the notation and assumptions that $\cG, \widetilde \cG\in \Omega,  I(\cF^+,\cG)=1$ in \Cref{s:setting}, and define the index set $\sfA_i := \{ \alpha \in \qq{\mu} : \dist_{\cT}(i, l_\al) = \ell+1 \}$. Then for any unused core edges $(b,c)\neq (b',c')\in \cC^\circ\setminus\{ (i,o)\}$, the following holds with overwhelmingly high probability over $Z$:
\begin{enumerate}
    \item  $\widetilde G^{(ib)}_{oc}$ and $\widetilde G^{(b)}_{ic}$ can be rewritten as a weighted sum 
\begin{align}\label{e:tGoc_exp}
  \frac{1} {(d-1)^{\ell/2}}\sum_{\al\in \qq{\mu}}\fc_1(\bm1(\al\in \sfA_i))G_{c_\al c}^{(b_\al b)}
   +\cU + \cZ +\cE,
\end{align}
where $|\fc_1(\cdot)|\lesssim 1$; $\cU$ is an $\OO(1)$-weighted sum of terms of the form $(d-1)^{3r\ell}R_r G_{c_\al c}^{(b_\al b)}$, for $R_r$ an S-product term (see \Cref{d:S-product}) with $r\geq 1$; $\cZ$ is an $\OO(1)$-weighted sum of terms in the forms $(d-1)^{3(r+r')\ell} R_r R_{r'}'G_{c_\al c}^{(b_\al b)}$ or $(d-1)^{3(r+r')\ell} R_r R_{r'}'$, where $r\geq 0, r'\geq 1$, $R_r$ is an S-product term, and $R_{r'}'$ is a product of $r'$ factors of the forms
\begin{align}\label{e:Werror3}
   \sqrt t Z_{xy},\quad \sqrt t( Z\wt G^{(\bT b)})_{xc}, \quad 
   \sqrt t( Z\wt G^{(\bT b)})_{xc_\al},\quad t(( Z\wt G^{(\bT b)}Z)_{xy}-m_t \delta_{xy}),\quad \al\in \qq{\mu},\quad  
   x,y\in \bT;
\end{align}
and the error $\cE$ is bounded by
\begin{align}\label{e:tGoc_cE}
    |\cE|\lesssim \sum_{\al\in \qq{\mu}} |\wt G_{c_\al c}^{(\bT b)}-G_{c_\al c}^{(b_\al b)}|+\sum_{\al, \beta\in \qq{\mu}}|\wt G_{c_\al c_\beta}^{(\bT)}-G^{(b_\al b_\beta)}_{c_\al c_\beta}|+N^{-2}.
\end{align}
  \item  $\widetilde G_{ib}, \widetilde G_{ob}$ and $\wt G_{ob}^{(i)}$ can be rewritten as a weighted sum 
\begin{align*}
  \frac{1} {(d-1)^{\ell/2}}\sum_{\al\in \qq{\mu}}\fc_1(\bm1(\al\in \sfA_i))G^{(b_\al)}_{c_\al b }
   +\cU + \cZ +\cE,
\end{align*}
where $|\fc_1(\cdot)|\lesssim 1$; $\cU$ is an $\OO(1)$-weighted sum of terms of the form $(d-1)^{3r\ell}R_r G_{c_\al b}^{( b_\al)}$, where $R_r$ is an S-product term (see \Cref{d:S-product}) with $r\geq 1$; $\cZ$ is an $\OO(1)$-weighted sum of terms of the form $(d-1)^{3(r+r')\ell}R_r R_{r'}'G_{c_\al b}^{( b_\al)}$ or $(d-1)^{3(r+r')\ell} R_r R_{r'}'$, where $r\geq 0, r'\geq 1$, $R_r$ is an S-product term, and $R_{r'}'$ is a product of $r'$ factors of the following forms
\begin{align*}
   \sqrt t Z_{xy}, \quad  \sqrt t( Z\wt G^{(\bT b)})_{xc_\al},\quad
   \sqrt t( Z\wt G^{(\bT)})_{xb},\quad t(( Z\wt G^{(\bT)}Z)_{xy}-m_t \delta_{xy}),\quad
   \al\in\qq{\mu}, \quad x,y\in \bT;
\end{align*}
and the error $\cE$ is bounded by
\begin{align*}
    |\cE|\lesssim \sum_{\al\in \qq{\mu}} |\wt G_{c_\al b}^{(\bT )}-G_{c_\al b}^{(b_\al )}|+\sum_{\al, \beta\in \qq{\mu}}|\wt G_{c_\al c_\beta}^{(\bT)}-G^{(b_\al b_\beta)}_{c_\al c_\beta}|+N^{-2}.
\end{align*}
\item  $\wt G_{cc}^{(b)}, \wt G_{cc'}^{(bb')},\wt G_{bc'}^{(b')}$ and $ \wt G_{bb'}$ can be rewritten as
\begin{align*}
&\wt G_{cc}^{(b)}=G_{cc}^{(b)}+\cE,\quad 
    |\cE|\lesssim |\wt G^{(\bT b)}_{cc}-G^{( b)}_{cc}|
    +(d-1)^\ell\sum_{\al\in \qq{\mu}} |\wt G_{c_\al c}^{(\bT b)}|^2+(d-1)^{2\ell}N^{2\fo}t\Phi,
\\
&\wt G_{cc'}^{(bb')}=G_{cc'}^{(bb')}+\cE,\quad
    |\cE|\lesssim |\wt G^{(\bT bb')}_{cc'}-G^{( bb')}_{cc'}|
    +(d-1)^\ell\sum_{\al\in \qq{\mu}} (|\wt G_{c_\al c}^{(\bT bb')}|^2+|\wt G_{c'c_\al}^{(\bT bb')}|^2)+(d-1)^{2\ell}N^{2\fo}t\Phi,
    \\
&\wt G_{bc'}^{(b')}=G_{bc'}^{(b')}+\cE,\quad
    |\cE|\lesssim |\wt G^{(\bT b')}_{bc'}-G^{( b')}_{bc'}|
    +(d-1)^\ell\sum_{\al\in \qq{\mu}} (|\wt G_{bc_\al}^{(\bT b')}|^2+|\wt G_{c'c_\al}^{(\bT b')}|^2)+(d-1)^{2\ell}N^{2\fo}t\Phi,
\\
&\wt G_{bb'}=G_{bb'}+\cE,\quad
    |\cE|\lesssim |\wt G^{(\bT)}_{bb'}-G_{bb'}|
    +(d-1)^\ell\sum_{\al\in \qq{\mu}} (|\wt G_{bc_\al}^{(\bT)}|^2+|\wt G_{b'c_\al}^{(\bT)}|^2)+(d-1)^{2\ell}N^{2\fo}t\Phi.
\end{align*}
\end{enumerate}
\end{proposition}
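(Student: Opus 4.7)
The plan is to follow the same Schur-complement strategy used to establish \Cref{lem:diaglem}. The key structural observation is that in $\tcG$ the interior of $\bT$ is unchanged by local resampling, and the only nonrandom edges between $\bT$ and $\bT^c$ are $\{(l_\al,c_\al)\}_{\al\in\qq{\mu}}$, each with weight $1/\sqrt{d-1}$, together with the random entries $\sqrt t\,Z_{xy}$. Consequently, every $\wt G$-entry can be decomposed by Schur complement into a $\wt G^{(\bT\cdots)}$-piece (which almost matches $G$ outside $\bT$) and boundary contributions running through the switched edges, plus a $Z$-noise term.

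For part (1), I would take $\wt G^{(ib)}_{oc}$ with $o\in\bT$ and $c\in\bT^c$ and apply Schur on $\qq{N}\setminus\{i,b\}$ split into $\bT\setminus\{i\}$ and $\bT^c\setminus\{b\}$:
\[
\wt G^{(ib)}_{oc}= -\sum_{\al\in\qq{\mu}}\frac{\wt G^{(ib)}_{o l_\al}}{\sqrt{d-1}}\,\wt G^{(\bT b)}_{c_\al c}-\sqrt t\sum_{x\in\bT\setminus\{i\},\,y\in\bT^c\setminus\{b\}}\wt G^{(ib)}_{ox}Z_{xy}\wt G^{(\bT b)}_{yc}.
\]
On the tree $\bT\setminus\{i\}$, $\wt G^{(ib)}_{o l_\al}$ is analyzed by the same recursion used in \Cref{lem:diaglem}: to leading order it equals $L^{(i)}_{ol_\al}$, which by \Cref{greentree} is a constant multiple of $(-\msc(z_t)/\sqrt{d-1})^{\ell}$, the constant depending on whether $\al\in\sfA_i$. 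Combined with the $1/\sqrt{d-1}$ factor this produces the $(d-1)^{-\ell/2}$ prefactor in the main term; replacing $\wt G^{(\bT b)}_{c_\al c}$ by $G^{(b_\al b)}_{c_\al c}$ gives the stated leading sum and contributes $\wt G^{(\bT b)}_{c_\al c}-G^{(b_\al b)}_{c_\al c}$ to $\cE$. All subleading terms from the tree recursion carry an extra $S$-product factor (namely $G^{(b_\al)}_{c_\al c_\al}-Q_t$, $G^{(b_\al b_\beta)}_{c_\al c_\beta}$, $Q_t-\msc(z_t)$, or $t(m_t-\md(z_t))$) and combine with $G^{(b_\al b)}_{c_\al c}$ to form $\cU$, while the $Z$-sum, after iterating Schur once more inside $\bT$ and regrouping, becomes an $\OO(1)$-weighted sum of products of $S$-product terms with factors of the form \eqref{e:Werror3} and thus contributes to $\cZ$. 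The derivations for $\wt G^{(b)}_{ic}$ and for all entries in part (2) follow from the same Schur expansion with $i$ and $b$ retained or removed differently.

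For part (3) both endpoints lie in $\bT^c$. For $\wt G^{(b)}_{cc}$, I would apply Schur between $\bT$ and $\bT^c\setminus\{b\}$ to get
\[
\wt G^{(b)}_{cc}=\wt G^{(\bT b)}_{cc}-\sum_{\al\in\qq{\mu}}\frac{\wt G^{(\bT b)}_{cc_\al}}{\sqrt{d-1}}\,\wt G^{(b)}_{l_\al c}-\sqrt t\sum_{x\in\bT^c\setminus\{b\},\,u\in\bT}\wt G^{(\bT b)}_{cx}Z_{xu}\wt G^{(b)}_{uc}.
\]
The first term differs from $G^{(b)}_{cc}$ by $|\wt G^{(\bT b)}_{cc}-G^{(b)}_{cc}|$; Cauchy--Schwarz together with $|\wt G^{(b)}_{l_\al c}|=\OO(1)$ (from \Cref{c:expectationbound}) bounds the middle sum by $\OO((d-1)^\ell\sum_\al|\wt G^{(\bT b)}_{cc_\al}|^2)$; and the Gaussian concentration estimates in \Cref{p:WtGbound} (applied to $(Z\wt G^{(\bT b)})_{xc}$ and $(Z\wt G^{(\bT b)}Z)_{uu}-m_t$) bound the $Z$-term by $\OO((d-1)^{2\ell}N^{2\fo}t\Phi)$. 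The remaining $\wt G^{(bb')}_{cc'},\wt G^{(b')}_{bc'},\wt G_{bb'}$ are handled identically, applying Schur after removing $\{b,b'\}$, $\{b'\}$, or $\emptyset$ respectively, yielding the same three contributions.

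The hard part will be the bookkeeping: each iteration of the tree recursion inside $\bT$ produces many new terms, and each must be correctly classified into the leading piece, $\cU$, $\cZ$, or $\cE$. The combinatorial enumeration factors $(d-1)^{3(r+r')\ell}$ in the definitions of $\cU$ and $\cZ$ must be shown to absorb the proliferation from repeatedly summing over the $\mu=d(d-1)^\ell$ boundary vertices; this becomes tractable only because the indicator $I(\cF^+,\cG)=1$ provides enough tree structure and vertex isolation for the tree Green's function estimates of \Cref{greentree} and the bounds in \Cref{l:basicG} to apply uniformly throughout the expansion.
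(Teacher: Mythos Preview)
Your approach for parts (1) and (2) is essentially identical to the paper's: apply the off-diagonal Schur complement formula \eqref{e:Schur1} to split off $\bT$, then expand $\wt G^{(ib)}_{ox}$ for $x\in\bT$ via the same resolvent expansion around $L^{(i)}$ used in the proof of \Cref{lem:diaglem}. The paper organizes the resulting terms exactly as you describe.

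For part (3) there is a small but genuine gap. Your one-sided Schur expansion
\[
\wt G^{(b)}_{cc}=\wt G^{(\bT b)}_{cc}-\sum_{\al}\frac{\wt G^{(\bT b)}_{cc_\al}}{\sqrt{d-1}}\,\wt G^{(b)}_{l_\al c}-\sqrt t\sum_{u\in\bT}(\wt G^{(\bT b)}Z)_{cu}\wt G^{(b)}_{uc}
\]
is correct, but the claim that Cauchy--Schwarz with $|\wt G^{(b)}_{l_\al c}|=\OO(1)$ yields the bound $(d-1)^\ell\sum_\al|\wt G^{(\bT b)}_{cc_\al}|^2$ is not right: it only gives the square root $\bigl((d-1)^\ell\sum_\al|\wt G^{(\bT b)}_{cc_\al}|^2\bigr)^{1/2}$, and similarly the $Z$-term only comes out as $\sqrt{(d-1)^{2\ell}N^{2\fo}t\Phi}$. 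To obtain the \emph{squared} forms in the statement you need one more step: either expand $\wt G^{(b)}_{l_\al c}$ itself by Schur (it has one index in $\bT$ and one outside), or --- equivalently and more directly --- use the symmetric Schur formula
\[
\wt G^{(b)}_{cc}=\wt G^{(\bT b)}_{cc}+\bigl(\wt G^{(\bT b)}(\wt B+\sqrt t Z)\,\wt G^{(b)}|_{\bT}\,(\wt B^\top+\sqrt t Z)\wt G^{(\bT b)}\bigr)_{cc},
\]
which is what the paper does. This already has two factors of $\wt G^{(\bT b)}$ on the outside, so a single Cauchy--Schwarz on $\bigl(\sum_{x\in\bT}|((\wt B^\top+\sqrt t Z)\wt G^{(\bT b)})_{xc}|\bigr)^2\le|\bT|\sum_x|\cdot|^2$ produces the quadratic bound immediately.
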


We recall from \eqref{e:G-Y}, $\cU$ is an $\OO(1)$-weighted sum of terms in the form $(d-1)^{3(r-1)\ell}R_r$ and $R_r$ is an $S$-product term (see \Cref{d:S-product}), which contains $G_{c_\al c_\al}^{(b_\al)}-Q_t$ or $G_{c_\al c_\beta}^{(b_\al b_\beta)}$. 
We now gather the following computations related to \eqref{e:G-Y}, which will be used later. 
\begin{lemma}\label{l:coefficient}
    Adopt the same notation and assumptions as in \Cref{lem:diaglem}. In \eqref{e:G-Y}, the first few terms of $\wt G_{oo}^{(i)}-Y_t$ are explicitly given by
    \begin{align}\begin{split}\label{e:Uterm}
        &\wt G_{oo}^{(i)}-Y_t
        =
        \frac{\msc^{2\ell}(z_t)}{(d-1)^{\ell+1}}\sum_{\al \in \sfA_i}(G_{c_\al c_\al}^{(b_\al)}-Q_t)
        +\frac{\msc^{2\ell}(z_t)}{(d-1)^{\ell+1}}\sum_{\al\neq\beta \in \sfA_i}G_{c_\al c_\beta}^{(b_\al b_\beta)}+\cU+\cZ+\cE\\
        &\cU=\frac{\msc^{2\ell}(z_t)}{(d-1)^{\ell+2}}\sum_{\al \in \sfA_i}L^{(i)}_{l_\al l_\al}(G_{c_\al c_\al}^{(b_\al)}-Q_t)^2
        + \frac{\msc^{2\ell}(z_t)}{(d-1)^{\ell+2}}\sum_{ \al\in \sfA_i,\beta \in \qq{\mu}\atop \al\neq \beta}(L^{(i)}_{l_\beta l_\beta}+L^{(i)}_{l_\al l_\beta})(G_{c_\al c_\beta}^{(b_\al b_\beta)})^2\\
        &+
     \sum_{\al\neq\beta\in\qq{\mu}}  \fc_1(\al,\beta)  (G_{c_\al c_\al}^{(b_\al)}-Q_t)(G_{c_{\beta} c_{\beta}}^{(b_{\beta})}-Q_t)
    +
   \sum_{\al\in\qq{\mu}\atop \al'\neq\beta'\in\qq{\mu}}  \fc_2(\al,\al', \beta')  (G_{c_\al c_\al}^{(b_\al)}-Q_t)G_{c_{\al'} c_{\beta'}}^{(b_{\al'} b_{\beta'})}\\
    &+\sum_{\al\neq \beta\in \qq{\mu},\al'\neq \beta'\in \qq{\mu}\atop \{\al,\beta\}\neq \{\al',\beta'\}}\fc_3(\al,\beta,\al',\beta')  G_{c_\al c_\beta}^{(b_\al b_\beta)}G_{c_{\al'} c_{\beta'}}^{(b_{\al'} b_{\beta'})}+ \sum_{\al\in\qq{\mu}}\fc_4(\al)  (G_{c_\al c_\al}^{(b_\al)}-Q_t)(Q_t-\msc(z_t))\\
    &+ \sum_{\al\in\qq{\mu}}\fc_5(\al)  (G_{c_\al c_\al}^{(b_\al)}-Q_t)t(m_t-\md(z_t))+ \sum_{\al\neq \beta\in\qq{\mu}}\fc_6(\al,\beta)  G_{c_\al c_\beta}^{(b_\al b_\beta)}(Q_t-\msc(z_t))\\
     &+ \sum_{\al\neq \beta\in\qq{\mu}}\fc_7(\al,\beta)  G_{c_\al c_\beta}^{(b_\al b_\beta)}t(m_t-\md(z_t))
     +\textnormal{terms of the form }\{(d-1)^{3(r-1)\ell} R_r\}_{r\geq 3},
    \end{split}\end{align}
    where the total sum of the coefficients $\fc_1, \fc_2, \ldots, \fc_7$ is bounded by $\OO((d-1)^3)$, and 
    \begin{align}\label{e:2defPi}
        L^{(i)}=P^{(i)}(\cT, z_t, \msc(z_t))=\frac{1}{H_{\bT}^{(i)}-z_t- \msc(z_t)\mathbb I^{\del}},\quad \mathbb I^{\del}_{xy}=\bm1(\dist_\cT(x,o)=\ell)\delta_{xy}.
    \end{align}
\end{lemma}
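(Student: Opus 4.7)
The plan is to unpack the proof of Proposition~\ref{lem:diaglem} and record its Neumann/Taylor expansion explicitly up to second order. Recall that Proposition~\ref{lem:diaglem} is obtained by applying the Schur complement formula on the tree $\cT^{(i)}$: this writes $\wt G^{(i)}_{oo}$ as the $(o,o)$ entry of the inverse of $H_{\bT}^{(i)} - z_t\mathbb{I}$ augmented by a boundary weight matrix $\Sigma$ supported on the leaves $\{l_\al\}_{\al\in\sfA_i}$, with $\Sigma_{l_\al l_\beta} = -\wt G^{(\bT)}_{c_\al c_\beta}/(d-1)$ (the $1/(d-1)$ coming from two adjacency edge weights $1/\sqrt{d-1}$, plus the $Z$-perturbation corrections which produce the term $\cZ$ exactly as in Proposition~\ref{lem:diaglem}). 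After replacing $\wt G^{(\bT)}_{c_\al c_\beta}$ by $G^{(b_\al b_\beta)}_{c_\al c_\beta}$ up to the error $\cE$, the boundary weight splits as $\Sigma = \Sigma_0 + \Xi$, where $\Sigma_0|_{l_\al l_\beta} = -Q_t\delta_{\al\beta}/(d-1)$ is the baseline and
\begin{equation*}
\Xi_{l_\al l_\beta} = -\frac{1}{d-1}\bigl((G^{(b_\al)}_{c_\al c_\al}-Q_t)\delta_{\al\beta}+G^{(b_\al b_\beta)}_{c_\al c_\beta}(1-\delta_{\al\beta})\bigr)
\end{equation*}
is the perturbation.

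The baseline inverse equals $Y_\ell(Q_t,z_t)$ by the definition~\eqref{def:Y}; this differs from $Y_t=Y_\ell(Q_t,z+tm_t(z))$ through the replacement $z_t\leftrightarrow z+tm_t(z) = z_t - t(m_t-\md(z_t))$. Expanding $Y_\ell$ to first order in this shift via Proposition~\ref{p:recurbound} produces a term proportional to $t(m_t-\md(z_t))$; combined with first-order $\Xi$ perturbations this yields the coefficients $\fc_4,\fc_5$ on $(G^{(b_\al)}_{c_\al c_\al}-Q_t)(Q_t-\msc(z_t))$, $(G^{(b_\al)}_{c_\al c_\al}-Q_t)t(m_t-\md(z_t))$ and analogously $\fc_6,\fc_7$ for the off-diagonal perturbations. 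The first-order Neumann term $[B\Xi B]_{oo}$ with $B = L^{(i)} = P^{(i)}(\cT,z_t,\msc(z_t))$ reproduces the explicit linear contributions already displayed in the first line of \eqref{e:G-Y}; here the prefactor $\msc^{2\ell+2}(z_t)/(d-1)^{\ell+1}$ combines $B_{ol_\al}^2$, given by $\msc^2(z_t)(\msc(z_t)/\sqrt{d-1})^{2\ell+2}$ via the tree formula~\eqref{e:Gtreemsc2}, with the $1/(d-1)^2$ normalization and one cancellation.

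The main step is to compute the second-order Neumann term $[B\Xi B\Xi B]_{oo}$. When both $\Xi$-insertions carry the same diagonal index $\al\in\sfA_i$, this produces $B_{ol_\al}\cdot L^{(i)}_{l_\al l_\al}\cdot B_{l_\al o}\cdot (G^{(b_\al)}_{c_\al c_\al}-Q_t)^2/(d-1)^2$, giving the prefactor $\msc^{2\ell}(z_t) L^{(i)}_{l_\al l_\al}/(d-1)^{\ell+2}$. For the off-diagonal perturbations $G^{(b_\al b_\beta)}_{c_\al c_\beta}$ with $\al\neq\beta$, the quadratic arises when both insertions use the same pair $\{\al,\beta\}$. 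The middle factor contributes either $L^{(i)}_{l_\beta l_\beta}$ (when both insertions carry $l_\al$ externally) or $L^{(i)}_{l_\al l_\beta}$ (when the two insertions cross), which, after the combinatorial factor $2$ from the two orderings, gives the asymmetric sum $L^{(i)}_{l_\beta l_\beta}+L^{(i)}_{l_\al l_\beta}$ summed over $\al\in\sfA_i$, $\beta\in\qq{\mu}\setminus\{\al\}$. All remaining quadratic combinations (different pairs, or products with $Q_t-\msc(z_t)$, etc.) produce the bounded coefficients $\fc_1,\fc_2,\fc_3$; they are controlled by the tree decay~\eqref{e:Pijbound}, which bounds the middle Green's function factors $L^{(i)}_{l_{\al'}l_{\al''}}$ uniformly by $\OO((d-1)^3)$ when summed over admissible index configurations at quadratic order.

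The main technical obstacle will be the careful combinatorial bookkeeping at the second-order level: verifying that the quadratic terms with \emph{matched} index structure produce precisely the sums displayed in \eqref{e:Uterm} (including the asymmetric pair $L^{(i)}_{l_\beta l_\beta}+L^{(i)}_{l_\al l_\beta}$), and that all other second-order terms have coefficient weight $\OO((d-1)^3)$. Higher-order Neumann contributions with $r\geq 3$ are automatically absorbed into $\{(d-1)^{3(r-1)\ell}R_r\}_{r\geq 3}$: each additional $\Xi$ factor introduces at most $\mu = d(d-1)^\ell$ boundary index summations and a multiplicative S-product factor, while the tree Green's function decay provides the compensating $1/(d-1)^{\ell}$ per propagator, leaving $(d-1)^{3\ell}$ of budget per extra factor, which is exactly what the form $(d-1)^{3(r-1)\ell}R_r$ allows.
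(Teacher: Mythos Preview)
Your approach is essentially the same as the paper's: both extract the explicit $k=2$ contribution from the Neumann expansion \eqref{e:GooY} that already underlies the proof of Proposition~\ref{lem:diaglem}. The paper's proof is in fact a one-paragraph remark that reads off the coefficient of $(G_{c_\al c_\al}^{(b_\al)}-Q_t)^2$ as $(d-1)^{-2}L^{(i)}_{ol_\al}L^{(i)}_{l_\al l_\al}L^{(i)}_{l_\al o}$ and the coefficient of $(G_{c_\al c_\beta}^{(b_\al b_\beta)})^2$ as $(d-1)^{-2}(L^{(i)}_{ol_\al}L^{(i)}_{l_\beta l_\beta}L^{(i)}_{l_\al o}+L^{(i)}_{ol_\al}L^{(i)}_{l_\beta l_\al}L^{(i)}_{l_\beta o})$, directly from the $k=2$ term $(L^{(i)}\cD L^{(i)}\cD L^{(i)})_{oo}$.

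One internal inconsistency in your write-up: you declare the baseline boundary weight to be $\Sigma_0|_{l_\al l_\beta}=-Q_t\delta_{\al\beta}/(d-1)$, but then set $B=L^{(i)}=P^{(i)}(\cT,z_t,\msc(z_t))$, which is the inverse at the $\msc(z_t)$ baseline, not $Q_t$. The paper expands around $\msc(z_t)$ throughout (see \eqref{e:defcE}), so that $(Q_t-\msc(z_t))$ enters the perturbation $\cD$ alongside $t(m_t-\md(z_t))$ and $\cD_1$; this is precisely what generates the $\fc_4,\fc_6$ cross-terms at second order. With your choice of $Q_t$-baseline those factors would not appear in $\Xi$, so you would have to recover them by re-expanding $B$ itself around $L^{(i)}$. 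Either route works, but your text mixes the two; aligning with the paper's $\msc(z_t)$-baseline makes the bookkeeping cleanest. Also, the two contributions $L^{(i)}_{l_\beta l_\beta}$ and $L^{(i)}_{l_\al l_\beta}$ are two distinct index paths $(l_\al,l_\beta,l_\beta,l_\al)$ and $(l_\al,l_\beta,l_\al,l_\beta)$ in the expansion, not a single term doubled by an ordering symmetry.
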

Here we slightly abuse notation: $L^{(i)}$ in \eqref{e:2defPi} is consistent with the local Green's function with vertex $i$ removed (as defined in \eqref{e:defLi}), and we use the same symbols to represent them.

We start with the Schur complement formulas which will be used to prove \Cref{lem:diaglem} and \Cref{lem:offdiagswitch}. We recall the notation from \Cref{s:setting}. 
Let $\wt B$ be the normalized adjacency matrix of the directed edges $\{(c_\al, l_\al)\}_{\al \in \qq{\mu}}$. Then the adjacency matrices $\widetilde H^{(i)}, Z^{(i)}$ are in the block form
\begin{align*}
    \widetilde H^{(i)}=
    \left[
    \begin{array}{cc}
        H^{(i)}_{\bT} & \wt B^\top\\
        \wt B & \widetilde H_{\bT^\complement}
    \end{array}
    \right],\quad
   Z^{(i)}=
    \left[
    \begin{array}{cc}
        Z^{(i)}_{\bT} & Z^{(i)}_{\bT\bT^\complement}\\
        Z^{(i)}_{\bT^\complement\bT} & Z^{(i)}_{\bT^\complement}
    \end{array}
    \right].
\end{align*}
We also denote the Green's function of $\cG^{(\bT)}$ and $\wt\cG^{(\bT)}$ as $ G^{(\bT)}$ and $\wt G^{(\bT)}$ respectively.  

We collect some estimates below, which will be used later. Recall from \Cref{greentree}, for $x,y\in \bT\setminus\{i\}$, $|L_{xy}^{(i)}|\lesssim (d-1)^{-\dist_\cT(x,y)/2}$. It follows that
\begin{align}\begin{split}\label{e:sum_Pbound}
    &\sum_{x\in \bT\setminus\{i\}} |L^{(i)}_{ox}|\lesssim \sum_{r=0}^{\ell}(d-1)^{r/2}\lesssim (d-1)^{\ell/2},\\
    &\sum_{x,y\in \bT\setminus\{i\}} |L^{(i)}_{xy}|\lesssim
    \sum_{r=0}^{\ell}(d-1)^{\ell-r}\left(\sum_{r'=0}^r (d-1)^{r'/2}+\sum_{r'=r+1}^{2\ell-r}(d-1)^{r/2}\right)\lesssim \ell(d-1)^{\ell},
\end{split}\end{align}
where for the first sum, we used that $|\{x\in \bT: \dist_\cT(o,x)=r\}|=\OO((d-1)^{r})$ for $0\leq r\leq \ell$. For the second sum, we consider $\{x\in \bT: \dist_\cT(o,x)=\ell-r\}$. Note that there are $\OO((d-1)^{\ell-r})$ such vertices. Given such $x$, for any $y\in \bT$, $0\leq \dist_\cT(x,y)\leq 2\ell-r$.
There are two cases depending on $\dist_\cT(x,y)=r'$. If $0\leq r'\leq r$, we have $|\{y\in \bT: \dist_\cT(x,y)=r'\}|=\OO((d-1)^{r'})$, and if $r+1\leq r'\leq 2\ell-r$ we have $|\{y\in \bT: \dist_\cT(x,y)=r'\}|=\OO((d-1)^{(r+r')/2})$. The second statement in \eqref{e:sum_Pbound} follows from summing over $r,r'$ and using $|L_{xy}^{(i)}|\lesssim (d-1)^{-r'/2}$.

\begin{proof}[Proof of \Cref{lem:diaglem}]

Thanks to the Schur complement formula \eqref{e:Schur1}, we have 
\begin{align*}
\widetilde G_{oo}^{(i)}=\left(\frac{1}{H_{\bT}^{(i)}+\sqrt t Z_\bT^{(i)}-z-(\widetilde  B+\sqrt t Z^{(i)}_{\bT^\complement \bT})^\top\widetilde G^{(\bT)}(\widetilde  B+\sqrt t Z^{(i)}_{\bT^\complement \bT})}\right)_{oo}.
\end{align*}
Since $o$ has a radius $\fR$ tree neighborhood, $H^{(i)}_\bT$ is the normalized adjacency matrix of a truncated $(d-1)$-ary tree, and $L^{(i)}$ from \eqref{e:2defPi} agrees with the Green's function of $(d-1)$-ary tree (see \eqref{e:Gtreemsc2}),
\begin{align*}
    &\msc(z_t)=L^{(i)}_{oo}=P_{oo}^{(i)}(\cT, z_t, \msc(z_t))=\left(\frac{1}{H_{\bT}^{(i)}-z_t- \msc(z_t)\mathbb I^{\partial}}\right)_{oo}=\left(\frac{1}{H_{\bT}^{(i)}-z_t-\widetilde B^\top \msc(z_t)\widetilde B}\right)_{oo}, 
\end{align*}
By taking the difference of the two above expressions, we have
\begin{equation*}
\widetilde G_{oo}^{(i)}-\msc(z_t)=\left(\frac{1}{H_{\bT}^{(i)}-z_t-\widetilde B^\top \msc(z_t)\widetilde B-\cD}-\frac{1}{H_{\bT}^{(i)}-z_t-\widetilde B^\top \msc(z_t)\widetilde B}\right)_{oo},
\end{equation*}
where we recall $z_t=z+t\md(z,t)=z+tm_d(z_t)$, and
\begin{align}\begin{split}\label{e:defcE}
\cD
&=\widetilde  B^\top(Q_t-\msc(z_t))  \widetilde  B+t(m_t-\md(z_t))\mathbb I+\cD_1+\cD_2, \\
\cD_1&= B^\top(\widetilde G^{(\bT)}-Q_t  )\widetilde  B,\\
\cD_2&
=-\sqrt t Z_\bT^{(i)}+ \sqrt t  Z_{\bT \bT^\complement}^{(i)}\widetilde G^{(\bT)}\widetilde  B+ \sqrt t \widetilde  B^\top\widetilde G^{(\bT)}Z^{(i)}_{\bT^\complement \bT }
+t (Z^{(i)}_{\bT \bT^\complement}\widetilde G^{(\bT)}Z^{(i)}_{\bT^\complement \bT }- m_t\mathbb I),
\end{split}\end{align}
are matrices indexed by $(\bT\setminus\{i\})\times(\bT\setminus\{i\})$. We remark that each entry of $\cD_2$ is in the form \eqref{e:W-term}.

Thanks to \eqref{eq:infbound} and \eqref{e:Werror2}, with overwhelmingly high probability over $Z$, we have 
\begin{align*}
     |(\cD_1)_{xy}|, |(\cD_2)_{xy}|, |\cD_{xy}|\leq N^{-\fb}, \text{ for }x,y\in \bT\setminus\{i\}.
\end{align*} 
Thus, for some sufficiently large constant $\fp$, we have
\begin{align}\begin{split}\label{eq:resolventexp}
    \widetilde G_{oo}^{(i)}-\msc(z_t)&=\left(\frac{1}{H_{\bT}^{(i)}-z_t-\widetilde B^\top \msc(z_t)  \widetilde B-\cD}-\frac{1}{H_{\bT}^{(i)}-z_t-\widetilde B^\top \msc(z_t)  \widetilde B}\right)_{oo}\\
    &=\left(L^{(i)}\sum_{k=1}^\fp \left(\cD L^{(i)}\right)^k\right)_{oo}+\OO(N^{-2}).
\end{split}
\end{align}
Recall $Y_t=Y_\ell(Q_t, z+tm_t)$ from \eqref{def:Y} and \eqref{e:defYt}:
\begin{align*}
Y_t=\left(\frac{1}{H_{\bT}^{(i)}-z-tm_t-\widetilde B^\top Q_t  \widetilde B}\right).
\end{align*}
By the same argument as in \eqref{eq:resolventexp} we also have that
\begin{align}\label{e:YQ-m}
    Y_t-\msc(z_t)=\left(L^{(i)}\sum_{k=1}^\fp \left((\widetilde B^\top (Q_t-\msc(z_t))\widetilde B +t(m_t-m_d(z_t))\mathbb I)L^{(i)}\right)^k\right)_{oo}+\OO(N^{-2}).
\end{align}
By taking the difference of \eqref{eq:resolventexp} and \eqref{e:YQ-m}, up to error $\OO(N^{-2})$, we get that the difference $\widetilde G_{oo}^{(i)}-Y_t$ is given as
\begin{align}\begin{split}\label{e:GooY}
(L^{(i)} \left(\cD_1+\cD_2\right)L^{(i)})_{oo}
&+ \left(L^{(i)}\sum_{k=2}^\fp \left(\cD L^{(i)}\right)^k\right)_{oo}\\
&-\left(L^{(i)}\sum_{k=2}^\fp \left((\widetilde B^\top (Q_t-\msc(z_t)) \widetilde B +t(m_t-m_d(z_t))\mathbb I )L^{(i)}\right)^k\right)_{oo}.
\end{split}\end{align}

If $\al \in \sfA_i$, then $\dist_\cT(i, l_\al)=\ell+1$, and \Cref{greentree} gives
\begin{align}\label{e:Piol}
  L^{(i)}_{ol_\al}=\msc(z_t)\left(-\frac{{\msc(z_t)}}{\sqrt{d-1}}\right)^{\dist_\cT(o,l_\al)},\quad  |L^{(i)}_{ol_\al}|\lesssim (d-1)^{-\dist_{\cT}(o,l_\al)/2}=(d-1)^{-\ell/2}.
\end{align}
Otherwise if $\al \in\qq{\mu}\setminus \sfA_i$, then $o,l_\al$ are in different connected components of $\cT^{(i)}$, and $|L^{(i)}_{ol_\al}|=0$.
Thus the first term in \eqref{e:GooY} can be computed as, 
\begin{align}\label{e:G-Y2}
    (L^{(i)}\cD_1 L^{(i)})_{oo}=\frac{\msc(z_t)^{2\ell+2}} {(d-1)^{\ell+1}}\sum_{\al\in\sfA_i}(\wt G^{(\bT)}_{c_\alpha c_\alpha}-Q_t)
   +\frac{\msc(z_t)^{2\ell+2}} {(d-1)^{\ell+1}}\sum_{\al\neq \beta\in\sfA_i} \wt G^{(\bT)}_{c_\alpha c_\beta},
\end{align}
and $(d-1)^{-3\ell}L^{(i)}\cD_2 L^{(i)}$ is a weighted sum of terms of the form \eqref{e:W-term}, with total weights bounded by 
\begin{align*}
    \frac{1}{(d-1)^{3\ell}}\sum_{x,y\in \bT\setminus\{i\}}|L^{(i)}_{ox}||L^{(i)}_{yo}|\lesssim 1,
\end{align*}
where we used \eqref{e:sum_Pbound}.

We obtain the first two terms in \eqref{e:G-Y}, after replacing $\widetilde G_{c_\al c_\al}^{(\bT)}-Q_t,\widetilde G_{c_\al c_\beta}^{(\bT)}$ in \eqref{e:G-Y2} by $G_{c_\al c_\al}^{(b_\al)}, G_{c_\al c_\beta}^{(b_\al b_\beta)}$. We collect the difference in the error term $\cE$ (as in \eqref{e:defCE}).

For the terms $k\geq 2$, in general for any matrix $V$ defined on $(\bT\setminus\{i\})\times (\bT\setminus\{i\})$, 
$L^{(i)}(V L^{(i)})^k$ is given as a sum of terms in the following form
\begin{align}\label{e:PUP}
   \sum_{x_1, x_2, \cdots, x_{2k}\in \bT\setminus\{i\}}L^{(i)}_{o x_1} V_{x_1 x_2} L^{(i)}_{x_2 x_3} V_{x_3 x_4} 
   L^{(i)}_{x_4 x_5}\cdots V_{x_{2k-1} x_{2k}}L^{(i)}_{x_{2k} o}.
\end{align}
We can reorganize \eqref{e:PUP} in the following way
\begin{align}\begin{split}\label{e:totalsum0}
    &\phantom{{}={}}\sum_{x_1, x_2, \cdots, x_{2k}\in \bT\setminus\{i\}}L^{(i)}_{o x_1} L^{(i)}_{x_2 x_3}\cdots L^{(i)}_{x_{2k} o}  V_{x_1 x_2}  V_{x_3 x_4} 
   \cdots V_{x_{2k-1} x_{2k}}.\\
   &=(d-1)^{3(k-1)\ell}\sum_{x_1, x_2, \cdots, x_{2k}\in \bT\setminus\{i\}}(d-1)^{-3(k-1)\ell}L^{(i)}_{o x_1} L^{(i)}_{x_2 x_3}\cdots L^{(i)}_{x_{2k} o} V_{x_1x_2} V_{x_3 x_4} 
   \cdots  V_{x_{2k-1}x_{2k}}\\
   &=:(d-1)^{3(k-1)\ell}\sum_{x_1, x_2, \cdots, x_{2k}\in \bT\setminus\{i\}}\fc_{\bmx}V_{x_1x_2}V_{x_3 x_4}
   \cdots  V_{x_{2k-1}x_{2k}},
\end{split}\end{align}
where the weights $\fc_{\bmx}=(d-1)^{-3(k-1)\ell}L^{(i)}_{o x_1} \cdots L^{(i)}_{x_{2k} o} $, and the total weights are bounded as
\begin{align*}
\sum_{x_1, x_2, \cdots, x_{2k}\in \bT\setminus\{i\}} |\fc_{\bmx}|
&=(d-1)^{-3(k-1)\ell}\sum_{x_1\in \bT\setminus\{i\}}|L^{(i)}_{o x_1}| \sum_{x_2, x_3\in \bT\setminus\{i\}}|L^{(i)}_{x_2 x_3}|\cdots \sum_{x_{2k}\in \bT\setminus\{i\}}|L^{(i)}_{x_{2k} o}|\\
&\lesssim (d-1)^{-3(k-1)\ell} \ell^{k-1} (d-1)^{k\ell}= (d-1)^{-(2k-3)\ell} \ell^{k-1}\lesssim 1,
\end{align*}
where to get the second line we used \eqref{e:sum_Pbound}; in the last inequality, we used that $k\geq 2$.

To compute the difference for $k\geq 2$ in \eqref{e:GooY}, we consider two possible forms for $V$:
$V=\widetilde B^\top (Q_t-\msc(z_t))\widetilde B +t(m_t-m_d(z_t)\mathbb I $ or $V=\widetilde B^\top (Q_t-\msc(z_t))\widetilde B +t(m_t-m_d(z_t))\mathbb I+\cD_1+\cD_2 $. As discussed above (see \eqref{e:totalsum0}), terms in \eqref{e:GooY} with $k\geq 2$ break down to an $\OO(1)$-weighted sum of terms in the form $(d-1)^{3(k-1)\ell}\wt R_k$. Here $\wt R_k$ is a product of $k$ factors,  each taking the form of \eqref{e:W-term} or one of the following:
\begin{align*}
    (\wt G_{c_\al c_\al}^{(\bT)}-Q_t), \quad  \wt G_{c_\al c_{\beta}}^{(\bT)},\quad  (Q_t-\msc(z_t)), \quad  t(m_t-\md(z_t)), \quad \al\neq \beta\in \qq{\mu}.
\end{align*} 
Moreover, $\wt R_k$ contains at least one factor in \eqref{e:W-term} (arising from $\cD_2$) or at least one factor of the form $\{\wt G_{c_\al c_\al}^{(\bT)}-Q_t, \wt G_{c_\al c_\beta}^{(\bT)}\}_{\al\neq \beta\in \qq{\mu}}$ (arising from  $\cD_1$). Otherwise, the terms from the difference in \eqref{e:GooY} cancel out.

For each $\wt R_k$ terms, we get $R_k$ by replacing $\widetilde G_{c_\al c_\al}^{(\bT)}-Q_t,\widetilde G_{c_\al c_\beta}^{(\bT)}$ with $G_{c_\al c_\al}^{(b_\al)}-Q_t, G_{c_\al c_\beta}^{(b_\al b_\beta)}$, respectively. As $k\geq 2$  and each factor of $R_k, \wt R_k$ is bounded by $N^{-\fb} $ with overwhelmingly high probability over $Z$, the replacement error is bounded by
\begin{align*}
    |\wt R_k-R_k|\lesssim \frac{1}{N^{\fb/2} } \sum_{\al, \beta\in \qq{\mu}} |\wt G^{(\bT)}_{c_\al c_\beta}-G^{(b_\al b_\beta)}_{c_\al c_\beta}|.
\end{align*}
We collect the above error in $\cE$ (as in \eqref{e:defCE}).

For the $\OO(1)$-weighted sum of terms in the form $(d-1)^{3(k-1)\ell} R_k$, we can regroup it as $\cU+\cZ$, depending if $R_k$ contains a factor in \eqref{e:W-term} or not. This finishes  the first statement in \Cref{lem:diaglem}.

Now, we generalize this argument to a term of the form $\wt G_{ss'}^\circ=(\wt G-\wt L)_{ss'}$ with $s,s'\in\{i,o\}$. We notice that $\widetilde L_{ss'}=L_{ss'}$. By the exact same expansion as in  \eqref{eq:resolventexp}, we have
\begin{align*}
&\phantom{{}={}}(\wt G-\wt L)_{ss'}=\left(L\sum_{k=1}^\fp \left(\cD L\right)^k\right)_{ss'}+\OO(N^{-2}),
\end{align*}
where $\cD=\widetilde  B^\top(Q_t-\msc(z_t))\widetilde  B+t(m_t-\md(z_t))\mathbb I+\cD_1+\cD_2 $ is defined similarly as in \eqref{e:defcE} without removing the vertex $i$.
Using the above expansion, the statement follows from the same argument as the first statement.  
\end{proof}

\begin{proof}[Proof of \Cref{lem:offdiagswitch}]
The proof closely follows the argument used in \Cref{lem:diaglem}, with a few minor modifications. For the first statement, we demonstrate the result for $G_{oc}^{(ib)}$; the proof for $G_{ic}^{(b)}$ is analogous and will be omitted. We expand $\wt G_{oc}^{(ib)}$ using the Schur complement formula \eqref{e:Schur1} as
\begin{align}\begin{split}\label{eq:offdiagonalexp}
\wt G^{(ib)}_{oc}
&=-\sum_{x\in \bT \setminus \{i\}} \wt G^{(ib)}_{o x}((\wt B^\top+\sqrt t Z)\wt G^{(\bT b)})_{x c}.
\end{split}\end{align}
Here, we slightly abuse notation, letting $(Z\wt G^{(\bT b)})_{x c}=\sum_{y\not\in \bT b}Z_{xy}G^{(\bT b)}_{y c}$.

For $\wt G^{(ib)}_{o x}$ in \eqref{eq:offdiagonalexp}, we can do the same expansion as in \eqref{e:defcE} and \eqref{eq:resolventexp}, writing
\begin{align}\begin{split}\label{eq:offdiagonalexp2}
   &\phantom{{}={}}\sum_{x\in \bT \setminus \{i\}} \wt G^{(ib)}_{o x}((\wt B^\top+\sqrt t Z)\wt G^{(\bT b)})_{x c}
   =\sum_{x\in \bT \setminus \{i\}}L_{o x}^{(i)}((\wt B^\top+\sqrt t Z)\wt G^{(\bT b)})_{x c}\\
   &+ \sum_{x\in \bT \setminus \{i\}}  \left(L^{(i)}\sum_{k=1}^\fp \left(\cD L^{(i)}\right)^k\right)_{o x}((\wt B^\top+\sqrt t Z)\wt G^{(\bT b)})_{x c}+\OO(N^{-2}),
\end{split}\end{align}
where 
\begin{align}\begin{split}\label{e:def_cD2}
&\cD:=\widetilde  B^\top(Q_t-\msc(z_t))\widetilde  B+t(m_t-\md(z_t))\mathbb I+\cD_1+\cD_2,\\
&\cD_1:= B^\top(\widetilde G^{(\bT b)}-Q_t)\widetilde  B
\\
&\cD_2
=-\sqrt t Z_\bT^{(i)}+ \sqrt t  Z_{\bT \bT^\complement}^{(i)}\widetilde G^{(\bT b)}\widetilde  B+ \sqrt t \widetilde  B^\top\widetilde G^{(\bT b)}Z^{(i)}_{\bT^\complement \bT}
+t (Z_{\bT \bT^\complement}^{(i)}\widetilde G^{(\bT b)}Z_{\bT^\complement \bT}^{(i)}- m_t\mathbb I).
\end{split}\end{align}
are matrices index by $(\bT\setminus \{i\})\times (\bT \setminus \{i\})$. We remark that each entry of $\cD_2$ is in the form \eqref{e:Werror3}.

For the first term on the right-hand side of \eqref{eq:offdiagonalexp2}, using \eqref{e:Piol}, we can rewrite it as
\begin{align}\label{e:tGoc_exp2}
  \frac{\fc_1}{(d-1)^{\ell/2}}\sum_{\al\in\sfA_i}\wt G^{(\bT b)}_{c_\alpha c}+\sum_{x\in \bT \setminus \{i\}} L_{o x}^{(i)}\sqrt t (Z\wt G^{(\bT b)})_{x c},  \quad \fc_1=\OO(1).
\end{align}
We obtain the first term in \eqref{e:tGoc_exp}, after replacing $\widetilde G_{c_\al c}^{(\bT b)}$ in \eqref{e:tGoc_exp2} by $G_{c_\al c}^{(b_\al b)}$. We collect the difference in the error term $\cE$ (as in \eqref{e:tGoc_cE}). The second term in \eqref{e:tGoc_exp2} is a weighted sum of terms of the form $(d-1)^{3\ell}\sqrt t (Z\wt G^{(\bT b)})_{x c}$, with total weights bounded by 
$(d-1)^{-3\ell}\sum_{x\in \bT \setminus \{i\}}|L^{(i)}_{ox}|\lesssim 1$, which follows from \eqref{e:sum_Pbound}.

For the terms with $k\geq 1$ on the right-hand side of \eqref{eq:offdiagonalexp2}, by the same argument as in \eqref{e:totalsum0}, 
we can reorganize them in the following way:
\begin{align}\begin{split}\label{e:total_Goc}
   (d-1)^{3k\ell}\sum_{x_1, x_2, \cdots, x_{2k+1}\in \bT \setminus \{i\}} \fc_{\bmx}\cD_{x_1x_2}\cD_{x_3 x_4}
   \cdots  \cD_{x_{2k-1}x_{2k}} ((\wt B^\top+\sqrt t Z)\wt G^{(\bT b)})_{x_{2k+1} c},
\end{split}\end{align}
where the weights $\fc_{\bmx}=(d-1)^{-3k\ell}L^{(i)}_{o x_1} \cdots L^{(i)}_{x_{2k} x_{2k+1}} $, and using \eqref{e:sum_Pbound}, the total weights is bounded  
\begin{align*}
\sum_{x_1, x_2, \cdots, x_{2k+1}\in \bT \setminus \{i\}}  |\fc_{\bmx}|
&=(d-1)^{-3k\ell}\sum_{x_1\in \bT \setminus \{i\}}|L^{(i)}_{o x_1}| \sum_{x_2, x_3\in \bT \setminus \{i\}}|L^{(i)}_{x_2 x_3}|\cdots \sum_{x_{2k}, x_{2k+1}\in \bT \setminus \{i\}} |L^{(i)}_{x_{2k} x_{2k+1}}|\\
&\lesssim (d-1)^{-3k\ell} \ell^{k} (d-1)^{(k+1/2)\ell}= (d-1)^{-(2k-1/2)\ell} \ell^{k}\lesssim 1. 
\end{align*}

Recalling $\cD$ from \eqref{e:def_cD2}, \eqref{e:total_Goc} further breaks down to an $\OO(1)$-weighted sum of terms of the form $(d-1)^{3k\ell}\wt R_k \wt G_{c_\al c}^{(\bT b)}$ or $ (d-1)^{3k\ell}\wt R_{k} \sqrt t(Z \wt G^{(\bT b)})_{x c}$. Here $\wt R_k$ is a product of $k$ factors, each taking the form of \eqref{e:Werror3} or one of the following:
\begin{align*}
    \wt G_{c_\al c_\al}^{(\bT)}-Q_t, \quad  \wt G_{c_\al c_{\beta}}^{(\bT)},\quad  (Q_t-\msc(z_t)), \quad  t(m_t-\md(z_t)), \quad \al\neq \beta\in \qq{\mu}.
\end{align*} 
By the same argument as in the proof of \Cref{lem:diaglem}, we can replace $\widetilde G_{c_\al c_\al}^{(\bT)}-Q_t,\widetilde G_{c_\al c_\beta}^{(\bT)}, \widetilde G_{c_\al c}^{(\bT b)}$ by $G_{c_\al c_\al}^{(b_\al)}-Q_t, G_{c_\al c_\beta}^{(b_\al b_\beta)}, G_{c_\al c}^{(b_\al b)}$, and the errors are bounded by 
\begin{align*}
\sum_{\al\in \qq{\mu}} |\wt G_{c_\al c}^{(\bT b)}-G_{c_\al c}^{(b_\al b)}|+\sum_{\al, \beta\in \qq{\mu}}|\wt G_{c_\al c_\beta}^{(\bT)}-G^{(b_\al b_\beta)}_{c_\al c_\beta}|.
\end{align*}
Then we can regroup them as $\cU+\cZ$,  depending if these terms contain a factor in \eqref{e:Werror3} or not. This finishes  the first statement in \Cref{lem:offdiagswitch}.

The second statement in \Cref{lem:offdiagswitch} can be proven using the same approach as the first statement; therefore, the proof is omitted.

For the third statement, we provide a proof for $\wt G_{cc}^{(b)}$. The remaining cases can be proven similarly and are therefore omitted. Using the Schur complement formula \eqref{e:Schur1}, we have
\begin{align}\label{e:tGccb}
\wt G^{(b)}_{cc}
=\wt G^{(\bT b)}_{cc}
+\left(\wt G^{(\bT b)}(\wt B+\sqrt tZ_{\bT^\complement \bT}) \widetilde G^{( b)}(\wt B^\top+\sqrt tZ_{\bT^\complement \bT})\wt G^{(\bT b)}\right)_{cc}.
\end{align}
By the Cauchy-Schwarz inequality, we can bound the second term in \eqref{e:tGccb} as
\begin{align}\begin{split}\label{e:tGccb2}
    &\phantom{{}={}}\sum_{x,y\in \bT}|((\wt B^\top+\sqrt tZ)\wt G^{(\bT b)})_{xc}| \widetilde G^{( b)}_{xy}||((\wt B^\top+\sqrt tZ)\wt G^{(\bT b)})_{yc}|
    \lesssim  \left(\sum_{x\in \bT}|((\wt B^\top+\sqrt tZ)\wt G^{(\bT b)})_{xc}|\right)^2\\
    &\lesssim (d-1)^{\ell}\left(\sum_{\al\in \qq{\mu}} |\wt G_{c_\al c}^{(\bT b)}|^2+\sum_{x\in \bT} t|(Z\wt G^{(\bT b)})_{xc}|^2\right)
    \lesssim (d-1)^{\ell}\sum_{\al\in \qq{\mu}} |\wt G_{c_\al c}^{(\bT b)}|^2 +(d-1)^{2\ell}N^{2\fo}t\Phi,
\end{split}\end{align}
where in the first statement, we used $|\widetilde G_{xy}^{(b)}|\lesssim 1$ from \eqref{eq:infbound}; in the last statement, we used \eqref{e:Werror2}.
It follows from combining \eqref{e:tGccb} and \eqref{e:tGccb2} that $\wt G_{cc}^{(b)}= G^{(b)}_{cc}+\cE$, where
\begin{align*}
    |\cE|\lesssim |\wt G^{(\bT b)}_{cc}-G^{( b)}_{cc}|
    +(d-1)^{\ell}\sum_{\al\in \qq{\mu}} |\wt G_{c_\al c}^{(\bT b)}|^2+(d-1)^{2\ell}N^{2\fo}t\Phi.
\end{align*}
\end{proof}

\begin{proof}[Proof of \Cref{l:coefficient}]
The decomposition of $\wt G_{oo}^{(i)}-Y_t$ in \eqref{e:Uterm} is from \eqref{e:G-Y}, where 
$\cU$ is an $\OO(1)$-weighted sum of terms in the form $(d-1)^{3(r-1)\ell}R_r$, where $r\geq 2$, and $R_r$ is an $S$-product term (see \Cref{d:S-product}) which contains $G_{c_\al c_\al}^{(b_\al)}-Q_t$ or $G_{c_\al c_\beta}^{(b_\al b_\beta)}$. For the expression of $\cU$ in \eqref{e:G-Y}, we list all possibility of $R_2$ which contains $G_{c_\al c_\al}^{(b_\al)}-Q_t$ or $G_{c_\al c_\beta}^{(b_\al b_\beta)}$, and the total sum of coefficients is bounded by $\OO((d-1)^3)$. The precise coefficients in front of  $(G_{c_\al c_\al}^{(b_\al)}-Q_t)^2$ and $(G_{c_\al c_\beta}^{(b_\al b_\beta)})^2$ are from the expansion \eqref{e:GooY}.  More precisely, in $\cU$ the coefficient of $(G_{c_\al c_\al}^{(b_\al)}-Q_t)^2$ is given by 
\begin{align*}
\frac{1}{(d-1)^2}L_{o l_\al}^{(i)}L_{ l_\al  l_\al}^{(i)}L_{ l_\al o}^{(i)}
= \frac{\msc^{2\ell}(z_t)}{(d-1)^{\ell+2}}L_{ l_\al  l_\al}^{(i)}.
 \end{align*}
Similarly, in $\cU$ the coefficient of $(G_{c_\al c_\beta}^{(b_\al b_\beta)})^2$ is given by 
\begin{align*}
    \frac{1}{(d-1)^2}L_{ol_\al}^{(i)}L_{l_\beta l_\beta}^{(i)} L_{l_\al o}^{(i)} 
    + \frac{1}{(d-1)^2}L_{ol_\al}^{(i)}L_{l_\beta l_\al}^{(i)} L_{l_\beta o}^{(i)} 
    =\frac{\msc^{2\ell}(z_t)}{(d-1)^{\ell+2}}(L_{ l_\al  l_\beta}^{(i)}+L_{ l_\beta  l_\beta}^{(i)})\bm1(\al\in \sfA_i).
\end{align*}
\end{proof}

\subsection{An Expansion using the Woodbury formula}\label{sec:fanalysis}
 We recall the notation and assumptions from \Cref{s:setting}. In this section, we introduce a novel expansion based on the Woodbury formula \eqref{e:woodbury}.
We compare the normalized adjacency matrix of the switched graph to that of the original graph, $\wt H-H$.
We denote the adjacency matrices of our switching as
\begin{align*}
   \wt H-H=-\sum_{\al\in\qq{\mu} }\xi_\al,\quad  \xi_\al:=\frac{1}{\sqrt{d-1}}\left(\Delta_{l_\al a_\al}
    +\Delta_{b_\al c_\al}
    -\Delta_{l_\al c_\al}-\Delta_{a_\al b_\al}\right).
\end{align*}
We denote the rank of this difference as $r=\OO((d-1)^\ell)$, and rewrite 
\begin{align*}
    \wt H-H=UV^\top,
\end{align*}
where $U, V$ are $N\times r$ matrices, and their nonzero rows correspond to the vertices $\{l_\al, a_\al, b_\al, c_\al\}_{\al\in \qq{\mu}}$.
Then, the Woodbury formula \eqref{e:woodbury} gives us 
\begin{align}\label{e:tGG}
\tG-G=(H-z+UV^\top)^{-1}-(H-z)^{-1}=-GU(\mathbb I+V^\top G U)^{-1}V^\top G.
\end{align}

We recall $\cF^+$ as in \eqref{eq:forestdef2}, and denote by $\wt \cF^+$ the switched version of it
\begin{align*}
  \cF^+:=\cF\cup \cB_{\ell}(o,\cG)\cup\bigcup_{\al=1}^\mu \{(l_\alpha,a_\alpha), (b_\alpha, c_\alpha)\},\quad   \wt \cF^+:=\cF\cup \cB_{\ell}(o,\cG)\cup\bigcup_{\al=1}^\mu \{(l_\alpha,c_\alpha), (a_\alpha, b_\alpha)\}.
\end{align*} 
We view $\cF^+, \wt\cF^+$ as subgraphs of $\cG, \wt\cG$ respectively. Their radius $\fR$ neighborhoods, $\cB_\fR(\cF^+, \cG), \cB_\fR(\wt \cF^+,\wt \cG)$ consist of the same vertices. We will analyze \eqref{e:tGG} using the matrices \begin{align}\label{e:defPtP}
   L:=P(\cB_\fR(\cF^+, \cG),z_t,\msc(z_t)), \quad
   \widetilde L:=P(\cB_\fR(\wt\cF^+, \wt\cG),z_t,\msc(z_t))=P(\cB_\fR(\cF^+, \wt\cG),z_t,\msc(z_t)),
\end{align} 
as was defined in \Cref{def:pdef}. Here we slightly abuse notation: $L,\widetilde L$ in \eqref{e:defPtP} are consistent with the local Green's functions  (as defined in \eqref{e:local_Green} and \eqref{e:deftL}) on $\cB_{\fR}(\cF^+,\cG)$, and we use the same symbols to represent them. 

Notice that when restricted to the vertex set $\cB_{\fR}(\cF^+,\cG)$, 
\begin{align}\label{e:inverseL}
    \widetilde L^{-1}-L^{-1}=\widetilde H-H=-\sum_{\al\in\qq{\mu} }\xi_\al=UV^\top.
\end{align} 
We can use the Woodbury formula on $ L, \widetilde L$ as well, giving
\begin{align}\label{e:tP-P}
    \widetilde L-L=-LU(\mathbb I+V^\top L U)^{-1}V^\top L.
\end{align}

A crucial observation is that the quantity $-U(\mathbb I+V^\top L U)^{-1}V^\top$ in \eqref{e:tGG} and \eqref{e:tP-P} take very simple form.
\begin{lemma}\label{l:defF2}
We introduce the following matrix $F$, which is nonzero on the vertex set $\{l_\al, a_\al, b_\al, c_\al\}_{\al\in \qq{\mu}}$,
\begin{align}\label{e:defF}
F:=\sum_{\al \in \qq{\mu}} \xi_{\al}+\sum_{\al, \beta\in \qq{\mu}} \xi_{\al}\tL \xi_{\beta}.
\end{align}
Then 
\begin{align}\label{e:defF2g}
    F=-U(\mathbb I+V^\top L U)^{-1}V^\top.
\end{align}
\end{lemma}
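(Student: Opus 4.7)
The plan is to reduce the claimed identity to a purely algebraic manipulation of the low-rank perturbation, using \eqref{e:inverseL} as the only substantive input. First I would rewrite $F$ in terms of $U,V$: since $\sum_\al \xi_\al = -UV^\top$ (this is just the definition $\wt H-H=UV^\top$ combined with $\wt H-H=-\sum_\al \xi_\al$), the double sum collapses as $\sum_{\al,\beta}\xi_\al \wt L \xi_\beta = (UV^\top)\wt L(UV^\top)$. Therefore
\begin{equation*}
F \;=\; -UV^\top + UV^\top \wt L\, UV^\top \;=\; -U\bigl(\mathbb I - V^\top \wt L\, U\bigr)V^\top.
\end{equation*}
Comparing with \eqref{e:defF2g}, it is enough to establish the operator identity
\begin{equation*}
\bigl(\mathbb I - V^\top \wt L\, U\bigr)\bigl(\mathbb I + V^\top L\, U\bigr) \;=\; \mathbb I
\end{equation*}
on the (finite-dimensional) index space of columns of $U$.

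Next I would expand the product directly:
\begin{equation*}
(\mathbb I - V^\top \wt L U)(\mathbb I + V^\top L U)
= \mathbb I + V^\top(L-\wt L)U - V^\top \wt L U V^\top L U
= \mathbb I + V^\top\bigl(L - \wt L - \wt L\, UV^\top L\bigr)U.
\end{equation*}
The problem is thus reduced to showing $L - \wt L = \wt L\, UV^\top L$, equivalently $L = \wt L(\mathbb I + UV^\top L)$. This is immediate from \eqref{e:inverseL}: multiplying $\wt L^{-1} = L^{-1} + UV^\top$ on the right by $L$ gives $\wt L^{-1}L = \mathbb I + UV^\top L$, and then left-multiplying by $\wt L$ yields the desired identity. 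The invertibility of $\mathbb I + V^\top LU$ (which is needed for \eqref{e:tP-P} to make sense and is implicitly assumed whenever the Woodbury formula is invoked) together with the identity above simultaneously prove that $\mathbb I - V^\top \wt L U$ is its two-sided inverse.

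I do not anticipate any real obstacle here: the lemma is a formal algebraic consequence of \eqref{e:inverseL}, and the only care needed is to keep track of which side inverses act on and to note that the equality $\mathbb I - V^\top \wt L U = (\mathbb I + V^\top LU)^{-1}$ implies \eqref{e:defF2g} after sandwiching by $-U$ on the left and $V^\top$ on the right.
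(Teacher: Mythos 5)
Your argument is correct. The only inputs you use are $\sum_{\al}\xi_\al=-UV^\top$, the factorization of the double sum, and \eqref{e:inverseL}, and the key step $L-\tL=\tL\, UV^\top L$ does follow by sandwiching $\tL^{-1}-L^{-1}=UV^\top$ between $\tL$ and $L$ on the restricted index set where \eqref{e:inverseL} holds; since everything in $V^\top L U$, $V^\top \tL U$ lives on the switching vertices inside $\cB_{\fR}(\cF^+,\cG)$, the restriction causes no trouble, and in finite dimensions the one-sided identity $(\mathbb I - V^\top \tL U)(\mathbb I + V^\top L U)=\mathbb I$ already forces $\mathbb I - V^\top \tL U=(\mathbb I + V^\top L U)^{-1}$.

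Your route is organized differently from the paper's, though both ultimately rest on \eqref{e:inverseL}. The paper starts from the Woodbury identity \eqref{e:tP-P} for the pair $(L,\tL)$, rearranges it into $L^{-1}\tL L^{-1}-L^{-1}=-U(\mathbb I+V^\top LU)^{-1}V^\top$ (their \eqref{e:Fz}), and then telescopes $L^{-1}\tL L^{-1}-L^{-1}$ into $(L^{-1}-\tL^{-1})+(L^{-1}-\tL^{-1})\tL(L^{-1}-\tL^{-1})=F$ using $L^{-1}-\tL^{-1}=\sum_\al\xi_\al$. You instead never invoke \eqref{e:tP-P}: you factor $F=-U(\mathbb I-V^\top\tL U)V^\top$ and identify $\mathbb I-V^\top\tL U$ explicitly as the inverse of the capacitance matrix $\mathbb I+V^\top LU$. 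What your version buys is self-containedness (only invertibility of $\mathbb I+V^\top LU$ and \eqref{e:inverseL} are needed, and the inverse of the capacitance matrix is exhibited in closed form, which is occasionally useful elsewhere); what the paper's version buys is that it recycles the already-displayed identity \eqref{e:tP-P} and avoids introducing the auxiliary matrix $\mathbb I-V^\top\tL U$. The two computations are of comparable length and substance.
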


\begin{proof}[Proof of \Cref{l:defF2}]
The nonzero rows of $U, V$ are parametrized by $\{l_\alpha, a_\alpha, b_\alpha, c_\alpha\}_{\al\in \qq{\mu}}$.
By rearranging the above expression \eqref{e:tP-P} (we view all the matrices as restricted on the vertex set $\cB_{\fR}(\cF^+,\cG)$), we get
\begin{align}\label{e:Fz}
L^{-1}\tL L^{-1}-L^{-1}=-U(\mathbb I+V^\top L U)^{-1}V^\top.
\end{align}
We can reorganize \eqref{e:Fz} as
\begin{align}\begin{split}\label{e:defF2}
    &\phantom{{}={}}-U(\mathbb I+V^\top L U)^{-1}V^\top =L^{-1}\tL L^{-1}-L^{-1}=L^{-1}\tL \tL^{-1}+L^{-1}\tL (L^{-1}-\tL^{-1})-L^{-1}\\
    &=L^{-1}\tL (L^{-1}-\tL^{-1})
    =(L^{-1}-\tL^{-1})\tL (L^{-1}-\tL^{-1})+\tL^{-1}\tL (L^{-1}-\tL^{-1})\\
     &=(L^{-1}-\tL^{-1})+(L^{-1}-\tL^{-1})\tL (L^{-1}-\tL^{-1})=\sum_{\al \in \qq{\mu}} \xi_{\al}+\sum_{\al, \beta\in \qq{\mu}} \xi_{\al}\tL \xi_{\beta}=F,
\end{split}\end{align}
where in the last statement we used  \eqref{e:inverseL}.
\end{proof}

Our next lemma attempts to expand $\tG-G$ in terms of $\tL-L$.

\begin{lemma}\label{lem:woodbury} 
Adopt the notation and assumptions that $\cG, \widetilde \cG\in \Omega,  I(\cF^+,\cG)=1$ in \Cref{s:setting}, and recall $G^\circ|_{\cB_\fR(\cF^+, \cG)}=(G-L)|_{\cB_\fR(\cF^+, \cG)}$. Then with overwhelmingly high probability over $Z$, we have:  
\begin{align}\label{e:tG-Gdiff}
&\tG-G=\sum_{k\geq 0} GF(G^\circ F)^{k}G,
\end{align}
and if we restrict to the vertex set of ${\cB_\fR(\cF^+, \cG)}$ the following holds
\begin{align}\begin{split}\label{e:tG-Gdiff2}
&\phantom{{}={}}\left.(\tG^\circ -G^\circ)\right|_{\cB_\fR(\cF^+, \cG)}=\sum_{k\geq 1} L(FG^\circ)^{k}+(G^\circ F)^{k}L+(G^\circ F)^{k}G^\circ+LF(G^\circ F)^{k}L.
\end{split}\end{align} 
\end{lemma}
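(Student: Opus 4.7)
The starting point is the Woodbury identity \eqref{e:tGG}: $\tG - G = -GU(\mathbb{I} + V^\top G U)^{-1} V^\top G$. Since the nonzero rows of $U$ and $V$ are indexed by vertices in $\cB_\fR(\cF^+, \cG)$, all products of the form $V^\top X U$ depend only on the restriction of $X$ to that ball. On this support I would decompose $G = L + G^\circ$, writing $V^\top G U = A + B$ with $A := V^\top L U$ and $B := V^\top G^\circ U$. Factoring out $(\mathbb{I}+A)$ and expanding the remaining factor as a Neumann series gives $(\mathbb{I}+A+B)^{-1} = \sum_{k \geq 0} (-(\mathbb{I}+A)^{-1} B)^k (\mathbb{I}+A)^{-1}$. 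Substituting back and invoking the key identity $-U(\mathbb{I}+A)^{-1} V^\top = F$ from \Cref{l:defF2}, each term telescopes to a pure expression in $F$ and $G^\circ$: indeed, $U(\mathbb{I}+A)^{-1}(V^\top G^\circ U(\mathbb{I}+A)^{-1})^k V^\top$ collapses by repeatedly inserting the block $-F$ to $(-1)^{k+1} F(G^\circ F)^k$, and the overall sign from the Woodbury formula absorbs the remaining $(-1)$. This yields $\tG - G = \sum_{k \geq 0} GF(G^\circ F)^k G$, i.e.\ \eqref{e:tG-Gdiff}.

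For \eqref{e:tG-Gdiff2}, I would apply the Woodbury formula \eqref{e:tP-P} together with \Cref{l:defF2} to obtain the much simpler identity $\tL - L = LFL$ on $\cB_\fR(\cF^+, \cG)$. Since $\tG^\circ - G^\circ = (\tG - \tL) - (G - L) = (\tG - G) - (\tL - L)$ on this ball, I would substitute the expansion of $\tG - G$ just derived and expand each factor $G = L + G^\circ$ inside $GF(G^\circ F)^k G$. The $k = 0$ contribution $GFG - LFL = LFG^\circ + G^\circ FL + G^\circ FG^\circ$ matches the $k' = 1$ terms in \eqref{e:tG-Gdiff2}; for $k \geq 1$, using the telescoping identities $LF(G^\circ F)^k G^\circ = L(FG^\circ)^{k+1}$, $G^\circ F(G^\circ F)^k L = (G^\circ F)^{k+1} L$, and $G^\circ F(G^\circ F)^k G^\circ = (G^\circ F)^{k+1} G^\circ$, the four resulting pieces land at indices $k' = k$ (for $LF(G^\circ F)^{k'} L$) and $k' = k+1$ (for the other three families). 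Reindexing gives precisely \eqref{e:tG-Gdiff2}.

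The only analytic input needed is convergence of the Neumann series in $B$, which I expect to be the main (though modest) obstacle. The matrix $F$ has rank and support of size $O((d-1)^\ell)$, and by \eqref{e:defF}, \Cref{greentree}, and \eqref{e:Pijbound} its entries are $O(1)$; meanwhile \eqref{eq:infbound} gives $|G^\circ_{xy}| \lesssim N^{-2\fb}$ uniformly on $\cB_\fR(\cF^+, \cG)$. The choice of parameters in \eqref{e:parameters}, with $\ell / \log_{d-1} N \ll \fb$, therefore ensures $\|(\mathbb{I}+A)^{-1} V^\top G^\circ U\| \lesssim (d-1)^\ell N^{-2\fb} \ll N^{-\fb}$, so the series converges geometrically with overwhelmingly high probability over $Z$; the tail beyond any fixed truncation $k \leq \fp$ is bounded by $N^{-\fC}$ and absorbed into the identity. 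Invertibility of $\mathbb{I} + A$ on the relevant finite-dimensional space is automatic from \Cref{l:defF2}, since $F$ is explicitly well-defined. Once these bounds are in place, the two identities \eqref{e:tG-Gdiff} and \eqref{e:tG-Gdiff2} follow from the algebraic manipulations above.
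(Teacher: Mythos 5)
Your proposal is correct and follows essentially the same route as the paper: expand the Woodbury formula \eqref{e:tGG} as a Neumann series in $V^\top G^\circ U$ around $\mathbb{I}+V^\top L U$, collapse each term using $F=-U(\mathbb{I}+V^\top L U)^{-1}V^\top$ from \Cref{l:defF2} to get \eqref{e:tG-Gdiff}, and obtain \eqref{e:tG-Gdiff2} by subtracting $\tL-L=LFL$ and splitting $G=L+G^\circ$; convergence is justified exactly as in the paper by $|G^\circ_{xy}|\lesssim N^{-\fb}$ on the support of $F$. Your telescoping/reindexing verification of \eqref{e:tG-Gdiff2} is just a more explicit writing of the paper's one-line conclusion, and is fine.
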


\begin{proof}

Thanks to \eqref{eq:infbound}, with overwhelmingly high probability over $Z$, $|G^\circ_{xy}|\lesssim N^{-\fb}$,   uniformly for $x,y\in \{l_\al, a_\al, b_\al, c_\al\}_{\al \in \qq{\mu}}$.
We can then expand \eqref{e:tGG} using the resolvent identity \eqref{e:resolv} and  \eqref{e:defF2g} to conclude that
\begin{align*}
\tG-G
&=-GU(\mathbb I+V^\top G U)^{-1}V^\top G
=-GU(\mathbb I+V^\top L U+V^\top G^\circ U)^{-1}V^\top G\\
&
=-GU\left((\mathbb I+V^\top L U)^{-1}\sum_{k\geq 0}(-1)^k(V^\top G^\circ U (\mathbb I+V^\top L U)^{-1})^k \right)V^\top G\\
&
=\sum_{k\geq 0}(-1)^{k+1} GU(\mathbb I+V^\top L U)^{-1}(V^\top G^\circ U (\mathbb I+V^\top L U)^{-1})^k V^\top G\\
&=\sum_{k\geq 0} GF(G^\circ F)^{k}G
.
\end{align*}
This gives \eqref{e:tG-Gdiff}. The claim \eqref{e:tG-Gdiff2} follows by taking the difference between \eqref{e:tG-Gdiff} and \eqref{e:tP-P}, then substituting according to \eqref{e:defF2g}.
\end{proof}

Finally we gather the following estimates, which will be used later. 
\begin{lemma}
    Recall $\wt L, L$ from \eqref{e:defPtP}. The matrix $F$ from \eqref{e:defF} has nonzero entries only on the vertices $\{l_\al, a_\al, b_\al, c_\al\}_{\al\in \qq{\mu}}$, and 
\begin{align}\begin{split}\label{e:Fbound}
    \sum_{s,s'\in \{l_\al, a_\al, b_\al, c_\al\}_{\al\in \qq{\mu}}}|F_{ss'}|
    \lesssim \ell (d-1)^\ell.
\end{split}\end{align}
Moreover, for $s\in \{i,o\}$, and $\sfJ\in \{l, a,b,c\}$
\begin{align}\label{e:PF_bound}
    (LF)_{s \sfJ_\al}=\frac{1}{\sqrt{d-1}}\sum_{x\sim \sfJ_\al} \wt L_{s x} -z_t\wt L_{s\sfJ_\al}=\OO\left(\frac{1}{(d-1)^{\ell/2}}\right).
\end{align}
\end{lemma}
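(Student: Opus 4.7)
The first assertion, that $F$ has nonzero entries only on $\{l_\al, a_\al, b_\al, c_\al\}_{\al\in \qq{\mu}}$, will be immediate from the definition \eqref{e:defF}, since each $\xi_\al$ is itself supported on $\{l_\al, a_\al, b_\al, c_\al\} \times \{l_\al, a_\al, b_\al, c_\al\}$. For the entrywise bound \eqref{e:Fbound}, the plan is to split $F = X + X\tL X$ with $X := \sum_\al \xi_\al = L^{-1} - \tL^{-1}$ (on the restricted vertex set, via \eqref{e:inverseL}). The linear part gives at most $8\mu$ nonzero entries of size $1/\sqrt{d-1}$, so $\sum_{s,s'}|X_{ss'}| = \OO((d-1)^{\ell-1/2})$. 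For the bilinear part, since each of $\xi_\al, \xi_\beta$ has at most four nonzero rows and columns, I will bound $\sum_{s,s'}|(\xi_\al \tL \xi_\beta)_{ss'}| \lesssim (d-1)^{-1} \sum_{x\in\supp(\xi_\al),\, y\in\supp(\xi_\beta)}|\tL_{xy}|$ by the triangle inequality, and then sum over the pairs $(\al,\beta)$.

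The crucial step will be the counting estimate $\sum_{x,y \in \bigcup_\al \{l_\al, a_\al, b_\al, c_\al\}}|\tL_{xy}| \lesssim \ell(d-1)^{\ell+1}$. Here I will use the assumption $I(\cF^+,\cG)=1$, which guarantees that connected components of $\cF^+$ have pairwise distance at least $3\fR$, so all contributions from $b_\al$ or $c_\al$ paired against a different component are exponentially small. The dominant term comes from pairs of boundary vertices of $\cT=\cB_\ell(o,\cG)$, for which I will apply the tree-Green's-function decay $|\tL_{xy}| \lesssim (d-1)^{-\dist(x,y)/2}$ from \Cref{greentree} and \eqref{e:Pijbound}, combined with the combinatorial fact that in a $d$-regular tree truncated at depth $\ell$, any fixed leaf has $\OO((d-1)^k)$ leaves at distance $2k$. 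This yields $\sum_y |\tL_{xy}| \lesssim \sum_{k=0}^\ell (d-1)^k (d-1)^{-k} = \OO(\ell)$ per vertex $x$; summing over the $\OO((d-1)^\ell)$ relevant $x$ and dividing by $(d-1)$ yields the desired $\OO(\ell(d-1)^\ell)$.

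For \eqref{e:PF_bound}, the key identity is $LF = \tL L^{-1} - \mathbb I$, which is a direct rearrangement of \eqref{e:Fz}. This gives
\begin{align*}
(LF)_{s\sfJ_\al} = \sum_w \tL_{sw}(L^{-1})_{w \sfJ_\al} - \delta_{s\sfJ_\al}.
\end{align*}
Under $I(\cF^+,\cG)=1$, every vertex $\sfJ_\al$ has its full $d$-neighborhood contained inside $\cB_\fR(\cF^+,\cG)$, so $\sfJ_\al$ is an interior vertex and $D(\cB_\fR(\cF^+,\cG))_{\sfJ_\al\sfJ_\al} = d$. The definition \eqref{e:defP} then collapses to $(L^{-1})_{w\sfJ_\al} = -z_t\delta_{w\sfJ_\al} + A^{\cG}_{w\sfJ_\al}/\sqrt{d-1}$ with no $\msc(z_t)$ boundary correction. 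Substituting this and noting $\delta_{s\sfJ_\al}=0$ since $s\in\{i,o\}$ is far from $\sfJ_\al$, one obtains the claimed formula. The size bound $\OO((d-1)^{-\ell/2})$ will then follow from $|\tL_{sw}|\lesssim (d-1)^{-\dist(s,w)/2}$ and the distance estimates $\dist_\cG(s,\sfJ_\al)\in\{\ell-1,\ell,\ell+1\}$ when $\sfJ_\al \in \{l_\al, a_\al\}$, and $\dist_\cG(s,\sfJ_\al)\geq 3\fR$ when $\sfJ_\al\in\{b_\al, c_\al\}$; in all cases the sum of $d$ nearest-neighbor Green's function entries, scaled by $1/\sqrt{d-1}$, is of size $\OO((d-1)^{-\ell/2})$.

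The main technical obstacle is the leaf-counting argument for $X\tL X$: one must balance the exponential growth of the number of vertices at distance $r$ in the tree against the exponential decay $(d-1)^{-r/2}$ of $\tL$, which produces the (optimal) logarithmic-in-$N$ factor $\ell$ seen in \eqref{e:Fbound}. The remainder of the proof consists of clean algebraic manipulations based on the Woodbury-type identity $LFL = \tL - L$ and the local tree structure imposed by $I(\cF^+,\cG)=1$.
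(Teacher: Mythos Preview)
Your approach is correct and essentially matches the paper's proof. Both arguments use the same key ingredients: the tree Green's function decay $|\tL_{\sfJ_\al \sfJ'_{\al'}}|\lesssim (d-1)^{-\dist_\cT(l_\al,l_{\al'})/2}$ combined with the leaf-counting estimate for \eqref{e:Fbound}, and the identity $LF=\tL L^{-1}-\mathbb I$ for \eqref{e:PF_bound}. The paper is slightly more direct in bounding $|F_{\sfJ_\al \sfJ'_{\al'}}|$ entrywise from \eqref{e:defF} rather than splitting $F=X+X\tL X$, but this is purely organizational.

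One small imprecision: for the size bound in \eqref{e:PF_bound} you invoke $\dist_\cG(s,\sfJ_\al)$, but $\tL$ is the local Green's function on the \emph{switched} graph $\tcG$, so the relevant distances are those in $\cB_\fR(\cF^+,\tcG)$. In particular, in $\tcG$ one has $\tL_{sa_\al}=\tL_{sb_\al}=0$ (after switching, $a_\al$ and $b_\al$ lie in a component disjoint from $s$), while $\tL_{sl_\al},\tL_{sc_\al}=\OO((d-1)^{-\ell/2})$ since $c_\al$ is now adjacent to $l_\al$. This does not affect your conclusion---if anything it makes several terms vanish outright---but the distinction between $\cG$- and $\tcG$-distances is worth tracking carefully.
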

    
\begin{proof}
    It is easy to see from the expression \eqref{e:defF} that $F$
 has nonzero entries only on the vertices $\{l_\al, a_\al, b_\al, c_\al\}_{\al\in \qq{\mu}}$. The estimate \eqref{e:Pijbound} gives
\begin{align}\label{e:Piolha}
  |\tL_{{\sfJ}_\al \sfJ'_{\al'}}|\lesssim (d-1)^{-\dist_{\cT}(l_\al,l_{\al'})/2},\text{ for } \sfJ, \sfJ'\in \{l,a,b,c\},\quad \al, \al'\in \qq{\mu}.
\end{align}
 By plugging \eqref{e:Piolha} into \eqref{e:defF}, we get
\begin{align*}\begin{split}
    &|F_{{\sfJ}_\al \sfJ'_{\al'}}|\lesssim (d-1)^{-\dist_{\cT}(l_\al, l_{\al'})/2},
   \text{ for }  \sfJ, \sfJ'\in \{l,a,b,c\},\quad \al, \al'\in \qq{\mu},\\
    &\sum_{s,s'\in \{l_\al, a_\al, b_\al, c_\al\}_{\al\in \qq{\mu}}}|F_{ss'}|
    \lesssim \sum_{\al, \al'\in \qq{\mu}}(d-1)^{-\dist_{\cT}(l_\al, l_{\al'})/2}\lesssim \ell (d-1)^\ell,
\end{split}\end{align*}
where in the last inequality we used that $\{\al'\in \qq{\mu}: \dist_{\cT}(l_\al, l_{\al'})=2r\}=\OO((d-1)^{r})$ for $0\leq r\leq \ell$.

Next, we show \eqref{e:PF_bound}. We recall from \eqref{e:defF2} that $F=L^{-1} \wt L L^{-1} -L^{-1}$, and from \eqref{e:defP} and \eqref{e:defPtP}, we have $L^{-1}_{x\sfJ_\al}=H_{x\sfJ_\al}-z_t$ for any vertex $x$ in $\cB_\fR(\cF^+,\cG)$ (since $\sfJ_\al$ is not on the boundary of $\cB_\fR(\cF^+,\cG)$).
It follows
\begin{align*}
    (LF)_{s\sfJ_\al}= (L(L^{-1} \wt L L^{-1} -L^{-1}))_{s\sfJ_\al}=(\wt L L^{-1})_{s\sfJ_\al}
    =(\wt L H)_{s \sfJ_\al }-z_t \wt L_{s \sfJ_\al}.
\end{align*}
The claim \eqref{e:PF_bound} follows by noticing that $|\wt L_{sx}|, |\wt L_{s \sfJ_\al}|\lesssim (d-1)^{-\ell/2}$.
\end{proof}

\subsection{Switching using 
the Woodbury formula}\label{sec:woodbury_lemmas}

In this section, we will use the expansion formula \Cref{lem:woodbury} to study the Green's function after local resampling. We recall the local Green's function $L$ from \eqref{e:local_Green}, and extend it to be associated with $\cF^+$. We also introduce the following W-Product term.

\begin{definition}[W-Product term]\label{d:W-product}
    Fixing $r\geq 0$, we define $R_r$ as a \emph{W-product term} of order $r$ (where ``\emph{W}" indicates that these terms arise from expansions using the Woodbury formula) if it is a product of $r$ factors in the following forms: $G^{\circ}_{xy}=(G-L)_{xy}$ for ${x,y\in \cK^+}$. 
\end{definition}

In the following proposition, we gather estimates on the difference in Green's functions before and after local resampling, using \Cref{lem:woodbury}.
\begin{proposition}
\label{lem:generalQlemma}
Adopt the notation and assumptions that $\cG, \widetilde \cG\in \Omega,  I(\cF^+,\cG)=1$ in \Cref{s:setting}, and define the index set $\sfA_i := \{ \alpha \in \qq{\mu} : \dist_{\cT}(i, l_\al) = \ell+1 \}$. The following holds with overwhelmingly high probability over $Z$, where in all cases, the error $\cE$ satisfies
\begin{align*}
  |\cE|\leq N^{-2}.
\end{align*}
\begin{enumerate}

\item For $w,w'\in \qq{N}$ with $\dist_{\cG}(w,w')\leq 1$, we have
\begin{align}\begin{split}\label{e:Gsw_exp}
    (\tG-G)_{ww'}+\md(z_t) \msc(z_t)(\wt H_{ww'}- H_{ww'}) =( \wt G^{\circ}- G^{\circ})_{ww'}=\cU+\cE,
\end{split}\end{align}
where
$\cU$ is an $\OO(1)$-weighted sum of terms of the forms 
$(d-1)^{3\ell}\times \{ G^{\circ}_{wx}  G^{\circ}_{yw'}, L_{wx} G^{\circ}_{yw'},  G^{\circ}_{wx}L_{yw'}\}$ and
$\{ G^{\circ}_{wx}, L_{xw}\}\times \{ G^{\circ}_{yw'}, L_{yw'}\}\times (d-1)^{3r\ell} R_r$, where $x,y\in \{l_\al, a_\al, b_\al, c_\al\}_{\al\in \qq{\mu}}$ and $R_r$ is a W-product term (see \Cref{d:W-product}) with $r\geq 1$. 

\item    For any $w\in \qq{N}$, we have
\begin{align*}
   (1/\widetilde G_{ww})-(1/G_{ww})=\cU+\cE,
\end{align*}
where $\cU$ is an $\OO(1)$-weighted sum of terms of the form $(d-1)^{3(r+r_1)\ell}R'_{r_1,r_2}R_{r}$. Here $R'_{r_1,r_2}$ contains $r_1$ factors of the form $ G^{\circ}_{xw}$, $r_2$ factors of the form $L_{xw}$ with $x\in\{l_\al, a_\al,b_\al, c_\al\}_{\al\in \qq{\mu}}$ and an arbitrary number of factors $1/G_{ww}$; $R_r$ is a W-product term (see \Cref{d:W-product}).  Moreover, $r_1+r_2\geq 2$ is even, and $r+r_1\geq 1$.

\item 
For $s\in \cK$ and $w\in \qq{N}$,  we have 
\begin{align}\begin{split}\label{e:tG-Gexp1}
 \wt G^{\circ}_{sw}&=G^{\circ}_{sw}+  \cU+\cE, \quad s\in \cK\setminus\{i,o\},\\
 \wt G^{\circ}_{sw}&=  G^{\circ}_{sw}+ \frac{\sum_{\sfJ\in\{b,c\}}\sum_{\al\in \qq{\mu}} \fc(\sfJ, \bm1(\al\in \sfA_i))  G^{\circ}_{\sfJ_\al w}}{(d-1)^{\ell/2}}+(\Av G^{\circ})_{ow}+\cU+\cE,\quad  s\in \{i,o\}.
\end{split}\end{align}
where $|\fc(\cdot, \cdot)|\lesssim 1$; $(\Av G^{\circ})_{ow}$ represents an $\OO(1)$-weighted sum of terms of the form \eqref{e:defAvGL}; and $\cU$ is an $\OO(1)$-weighted sum of terms of the form $\{ G^{\circ}_{xw}, L_{xw}\}\times (d-1)^{3r\ell}R_{r}$, with $x\in \{l_\al, a_\al, b_\al, c_\al\}_{\al\in \qq{\mu}}$ and $R_r$ is a W-product term (see \Cref{d:W-product}) with $r\geq 1$.

Similarly, for $s\in \cK$ and $w\in \qq{N}$, we have
\begin{align}\begin{split}\label{e:t}
    &\wt L_{sw}=L_{sw}, \quad s\in \cK\setminus\{i,o\},\\
    &\wt L_{sw}=L_{sw}+\frac{ \sum_{\sfJ\in \{b,c\}}\sum_{\al\in \qq{\mu}} \fc(\sfJ, \bm1(\al\in \sfA_i)) L_{\sfJ_\al w}}{(d-1)^{\ell/2}}+(\Av L)_{ow}, \quad s\in \{i,o\},
    \end{split}\end{align}
where $|\fc(\cdot,\cdot)|\lesssim 1$; and $(\Av L)_{ow}$ represents an $\OO(1)$-weighted sum of terms of the form \eqref{e:defAvL}.

\item 
For  $(i',o')\in \cC\setminus \cC^\circ$ and $w\in \qq{N}$,  we have 
\begin{align}\begin{split}\label{e:tAvG-AvGexp1}
&
(\Av \wt G^{\circ})_{o'w}=(\Av G^{\circ})_{o'w}+ \cU+\cE,
\end{split}\end{align}
where $\cU$ is an $\OO(1)$-weighted sum of terms of the forms $ \{ G^{\circ}_{xw}, L_{xw}\}\times (d-1)^{3r\ell} R_r$ where $x\in \{l_\al, a_\al, b_\al, c_\al\}_{\al\in \qq{\mu}}$ and $R_r$ is a W-product term (see \Cref{d:W-product}) with $r\geq 1$.

Similarly, for  $(i',o')\in \cC\setminus \cC^\circ$ and $w\in \qq{N}$, $(\Av \wt L)_{o'w}=(\Av L)_{o'w}$.

\end{enumerate}

\end{proposition}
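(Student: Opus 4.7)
The plan is to apply the Woodbury expansions \eqref{e:tG-Gdiff} and \eqref{e:tG-Gdiff2} from \Cref{lem:woodbury} systematically, truncating each infinite series at a large but finite order $\fp$. Three facts drive the analysis. First, by \eqref{e:Fbound} the matrix $F$ is supported on $\{l_\al, a_\al, b_\al, c_\al\}_{\al\in\qq{\mu}}$ with $\sum_{s,s'}|F_{ss'}|\lesssim \ell(d-1)^\ell$. Second, the assumption $I(\cF^+,\cG)=1$ together with \Cref{thm:prevthm} yields $|G^\circ_{xy}|\lesssim N^{-\fb}$ uniformly for $x,y$ in the support of $F$. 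Third, by \eqref{e:PF_bound}, $(LF)_{s\sfJ_\al}=\OO((d-1)^{-\ell/2})$ for $s\in\{i,o\}$. Consequently each factor of $G^\circ F$ contributes a multiplicative $\OO(N^{-\fb/2})$ (after absorbing the $(d-1)^{3\ell}$-style combinatorial weights as in \Cref{def:O1sum}), so truncating at constant $\fp$ produces a tail of order $N^{-2}$, which is absorbed into the $\cE$ term throughout.

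For part (1), I split $G=L+G^\circ$ in the $k=0$ term $GFG$ of \eqref{e:tG-Gdiff}, obtaining $LFL+LFG^\circ+G^\circ FL+G^\circ FG^\circ$. From \eqref{e:tP-P} and \eqref{e:defF2g} one has $\tL-L=LFL$; using the explicit tree formula \eqref{e:Gtreemkm} for $L_{ww'}$ at distances $\leq 1$ (where neither $w$ nor $w'$ lies on the boundary of $\cB_\fR(\cF^+,\cG)$), the entry $(LFL)_{ww'}=(\tL-L)_{ww'}$ matches $-\md(z_t)\msc(z_t)(\tH_{ww'}-H_{ww'})$, which accounts for the left-hand side of \eqref{e:Gsw_exp}. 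The remaining mixed terms $LFG^\circ$, $G^\circ FL$, $G^\circ FG^\circ$ and higher-order terms $k\geq 1$ expand into $\OO(1)$-weighted sums of the forms declared for $\cU$, after summing over the nonzero rows/columns of $F$ and absorbing the weight $\ell(d-1)^\ell$ into the $(d-1)^{3\ell}$ factor.

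For part (2), write $1/\tG_{ww}-1/G_{ww}=(G_{ww}-\tG_{ww})/(G_{ww}\tG_{ww})$ and expand $1/\tG_{ww}=(1/G_{ww})\sum_{j\geq 0}((G_{ww}-\tG_{ww})/G_{ww})^j$. Substituting part (1) at $w=w'$ (where the $\tH-H$ correction vanishes), each factor $G_{ww}-\tG_{ww}$ is itself a sum of terms each containing at least two factors of the form $\{G^\circ_{xw}, L_{xw}\}_{x\in\{l_\al,a_\al,b_\al,c_\al\}_\al}$ together with a W-product term; multiplying these parities produces $r_1+r_2$ even and $r+r_1\geq 1$ as required. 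For parts (3) and (4) I work inside $\cB_\fR(\cF^+,\cG)$ and apply the restricted expansion \eqref{e:tG-Gdiff2}. For $s\in\cK\setminus\{i,o\}$, the assumption $I(\cF^+,\cG)=1$ guarantees $\dist_{\cG}(s,\{l_\al,a_\al,b_\al,c_\al\})>3\fR$, so $L_{s\sfJ_\al}=0$ and the contribution arises solely through $\cU$ (and similarly $\tL_{sw}=L_{sw}$). For $s\in\{i,o\}$, the $k=1$ term $L(FG^\circ)|_{sw}=\sum_{\al,\sfJ}(LF)_{s\sfJ_\al}G^\circ_{\sfJ_\al w}$ together with \eqref{e:PF_bound} produces contributions of order $(d-1)^{-\ell/2}$; I then split the resulting sum over $\al$ into two groups, namely $\al$ with $\dist_\cT(s,l_\al)=\ell+1$ (giving the explicit $\fc(\sfJ,\bm1(\al\in\sfA_i))$ terms in \eqref{e:tG-Gexp1}) and the remaining $\al$ (which reorganize into one of the four averaged forms in \eqref{e:defAvGL}, hence into $(\Av G^\circ)_{ow}$). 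The corresponding identity \eqref{e:t} for $\wt L$ follows from applying the same expansion to $\wt L-L$ via $LFL$, with no $G^\circ$-dependence and hence no $\cU,\cE$ terms. Part (4) is obtained by averaging part (3) over the boundary vertices attached to $(i',o')$.

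The main technical obstacle is part (3) with $s\in\{i,o\}$: one must carefully separate the leading tree contribution $(LF)_{s\sfJ_\al}\sim \msc(z_t)^{\dist_\cT(s,l_\al)}/(d-1)^{\dist_\cT(s,l_\al)/2}$ from the remainder, and verify that the reorganization over $\al\in\qq{\mu}$ of terms with $\dist_\cT(s,l_\al)\neq \ell+1$ matches exactly one of the four prescribed averages in \eqref{e:defAvGL}. This requires tracking the geometry of $\cT$ around $s$: the indices $\al\in\sfA_i$ correspond to boundary vertices whose unique path to $s$ in $\cT^{(i)}$ has length $\ell+1$, while complementary $\al$ produce averages with different weights that must be recognized as the admissible averaged Green's functions. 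The parallel identity for $\wt L$ in \eqref{e:t} serves as a sanity check, since the deterministic decomposition there exposes the same combinatorial structure.
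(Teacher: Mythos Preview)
Your overall strategy matches the paper's: truncate the Woodbury expansions \eqref{e:tG-Gdiff}, \eqref{e:tG-Gdiff2} at finite order using $|G^\circ_{xy}|\lesssim N^{-\fb}$, and organize the resulting terms using the total-weight bound \eqref{e:Fbound} and the pointwise bound \eqref{e:PF_bound}. Parts (1), (2), and (4) are handled essentially as in the paper.

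There is, however, a concrete gap in your treatment of part (3) for $s\in\{i,o\}$. You propose to split the sum $\sum_{\al,\sfJ}(LF)_{s\sfJ_\al}G^\circ_{\sfJ_\al w}$ according to whether $\dist_\cT(s,l_\al)=\ell+1$ (i.e., by $\al\in\sfA_i$ versus $\al\notin\sfA_i$). This does not reproduce the decomposition in \eqref{e:tG-Gexp1}. Look at the target: the explicit sum there runs over \emph{all} $\al\in\qq{\mu}$ but only over $\sfJ\in\{b,c\}$, while the averaged term $(\Av G^\circ)_{ow}$ from \eqref{e:defAvGL} is, by definition, a combination of $G^\circ_{l_\al w}$ and $G^\circ_{a_\al w}$ only. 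So the correct split is by the letter $\sfJ$ --- the contributions $\sfJ\in\{b,c\}$ yield the explicit sum, and the contributions $\sfJ\in\{l,a\}$ get absorbed into $(\Av G^\circ)_{ow}$. Your proposed split by $\al$ cannot work: terms like $G^\circ_{b_\al w}$ with $\al\notin\sfA_i$ are not of any form appearing in \eqref{e:defAvGL} and are of the same order as the leading term, so they cannot be dropped either. (Note also that for $s=o$ one has $\dist_\cT(o,l_\al)=\ell$ for every $\al$, so your stated condition $\dist_\cT(s,l_\al)=\ell+1$ is vacuous in that case.)

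The missing observation is a symmetry: from \eqref{e:PF_bound}, $(LF)_{s\sfJ_\al}$ is expressed through the switched tree Green's function $\wt L$, and once you write it out (as in \Cref{l:coefficientW}) you see that for fixed $\sfJ$ it depends on $\al$ only through $\bm1(\al\in\sfA_i)$. This is why the explicit sum in \eqref{e:tG-Gexp1} carries the coefficient $\fc(\sfJ,\bm1(\al\in\sfA_i))$. Once you reorganize by $\sfJ$ rather than by $\al$, the $\sfJ\in\{l,a\}$ piece falls directly into one of the four forms in \eqref{e:defAvGL} (either the full sum over $\qq{\mu}$ or its restriction to $\sfA_i$), and the claimed decomposition follows.
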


In the following proposition, we gather estimates on the difference of $Q_t, m_t$ before and after local resampling, using \Cref{lem:woodbury}.

\begin{proposition}
\label{c:Qmchange}
Adopt the same notation and assumptions as in \Cref{lem:generalQlemma}. The following holds with overwhelmingly high probability over $Z$: 
\begin{enumerate}
   \item  
$\wt Q_t-Q_t$ can be rewritten as a weighted sum
\begin{align}\begin{split}\label{e:tQ-Qexp}
  \wt Q_t-Q_t&=\frac{1}{Nd}\sum_{u\sim v}\cU^{(u,v)}+  \frac{(d-1)^\ell}{N}\cU+\cE,
\end{split}\end{align}
where $\cU^{(u,v)}$ is an $\OO(1)$-weighted sum of terms $(d-1)^{3(r+r_1)\ell}R'_{r_1,r_2}R_r$. Here $R'_{r_1, r_2}$ contains $r_1$ factors of the form $ G^{\circ}_{xu},  G^{\circ}_{xv}$, $r_2$ factors of the form $L_{xu}, L_{xv}$, with $x\in\{l_\al, a_\al,b_\al, c_\al\}_{\al\in \qq{\mu}}$ and an arbitrary number of factors $G_{uv}, 1/G_{uu}$; $R_r$ is a W-product term (see \Cref{d:W-product}). Moreover, $ r_1+r_2\geq 2$ is even and $r+r_1\geq 1$;
$\cU$ is an $\OO(1)$-weighted sum of terms $(d-1)^{3r\ell}R_{r}$, where $R_r$ is a W-product term (see \Cref{d:W-product}) with $r\geq 0$.

\item $\wt m_t-m_t$ can be rewritten as a weighted sum
\begin{align}\begin{split}\label{e:tm-mexp}
  \wt m_t-m_t=\frac{1}{dN}\sum_{u\sim v}\cU^{(u)}+\cE,
\end{split}\end{align}
where $\cU^{(u)}$ is an $\OO(1)$-weighted sum of terms of the form
$(d-1)^{3\ell} \{ G^{\circ}_{ux}L_{y u},L_{ux} G^{\circ}_{y u},  G^{\circ}_{ux} G^{\circ}_{y u}\}$, and 
$\{  G^{\circ}_{ux},L_{ux}\}\times\{ G^{\circ}_{y u}, L_{y u}\} \times (d-1)^{3r\ell}R_r$
where $x,y\in \{l_\al, a_\al, b_\al, c_\al\}_{\al\in \qq{\mu}}$, and $R_r$ is a W-product term (see \Cref{d:W-product}) with $r\geq 1$.
\end{enumerate}
In both cases the error $\cE$ satisfies $|\cE|\leq N^{-2}$. Moreover, we have the following bounds
\begin{align}\label{e:Qtmtbound}
|\wt Q_t-Q_t|, |\wt m_t-m_t|\lesssim (d-1)^{6\ell}N^\fo \Phi.
\end{align}
\end{proposition}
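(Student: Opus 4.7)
The plan is to reduce both statements in Proposition~\ref{c:Qmchange} to the entrywise expansions established in Proposition~\ref{lem:generalQlemma} by first rewriting $Q_t$, $\wt Q_t$, $m_t$, $\wt m_t$ as sums of products of entries of the \emph{full} Green's functions over edges of the respective graphs, and then applying the Woodbury-based expansion edge by edge.

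For $\wt m_t - m_t$ the starting identity is
\begin{equation*}
\wt m_t - m_t = \frac{1}{N}\sum_{u}(\wt G_{uu} - G_{uu}) = \frac{1}{Nd}\sum_{u \sim v}(\wt G_{uu} - G_{uu}),
\end{equation*}
where the second equality uses $d$-regularity of $\cG$ ($\sum_v A_{uv}=d$). I then apply \eqref{e:Gsw_exp} with $w=w'=u$; since $\wt H_{uu}=H_{uu}=0$ the $\md(z_t)\msc(z_t)(\wt H-H)_{uu}$ term vanishes, and $\wt G_{uu}-G_{uu}$ is precisely of the form $\cU^{(u)}$ advertised in \eqref{e:tm-mexp} (the deterministic $\wt L-L$ piece at $uu$ is already accounted for there).

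For $\wt Q_t - Q_t$, I would first invoke the Schur complement identity $G^{(u)}_{vv} = G_{vv} - G_{uv}^2/G_{uu}$ (and its tilde version) to rewrite $Q_t$ and $\wt Q_t$ as sums over adjacent pairs of products of entries of the full Green's functions. Because the local resampling alters which pairs are adjacent, I split
\begin{equation*}
\wt Q_t - Q_t = \frac{1}{Nd}\sum_{u \sim v}\left[\!\left(\wt G_{vv} - \frac{\wt G_{uv}^{\,2}}{\wt G_{uu}}\right) - \left(G_{vv} - \frac{G_{uv}^{2}}{G_{uu}}\right)\!\right] + \frac{1}{Nd}\left[\sum_{u \text{ adj.\ } v \text{ in } \wt\cG}-\sum_{u\sim v}\right]\!\left(\wt G_{vv} - \frac{\wt G_{uv}^{\,2}}{\wt G_{uu}}\right).
\end{equation*}
The second bracket is a sum over the symmetric difference of the edge sets of $\cG$ and $\wt\cG$, which by the construction of local resampling consists of $\OO((d-1)^\ell)$ edges, all contained in $\cK^+$; each summand is $\OO(1)$ by \eqref{eq:local_law} and \eqref{e:tmmdiff}, so this piece is the $\frac{(d-1)^\ell}{N}\cU$ term (possibly after re-expressing $\wt G$ entries in terms of $G$ via \eqref{e:tG-Gexp1}, which only introduces additional W-product factors). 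For the first bracket, I expand $\wt G_{vv}-G_{vv}$ and $\wt G_{uv}^{\,2}-G_{uv}^{2}=(\wt G_{uv}-G_{uv})(\wt G_{uv}+G_{uv})$ using \eqref{e:Gsw_exp} (valid since $u\sim v$ satisfies $\dist_\cG(u,v)\le 1$), and $1/\wt G_{uu}-1/G_{uu}$ using part~(2) of Proposition~\ref{lem:generalQlemma}. Multiplying out produces terms of exactly the shape $(d-1)^{3(r+r_1)\ell}R'_{r_1,r_2}R_r$ as in \eqref{e:tQ-Qexp}: the $G^\circ_{xu}, G^\circ_{xv}, L_{xu}, L_{xv}$ factors and the W-product $R_r$ come from Proposition~\ref{lem:generalQlemma}, while the $G_{uv}$ and $1/G_{uu}$ factors are inherited from the Schur identity. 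The parity condition $r_1+r_2\geq 2$ even and the non-triviality $r+r_1\geq 1$ are preserved because they hold in each factor appearing in Proposition~\ref{lem:generalQlemma} and an even-parity expansion times an even-parity expansion is still even.

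For the uniform bound $|\wt Q_t-Q_t|,|\wt m_t-m_t|\lesssim (d-1)^{6\ell}N^\fo\Phi$, I take absolute values in the expansions and, for each term, pair up the $r_1+r_2\geq 2$ factors of the form $G^\circ_{xu},G^\circ_{xv},L_{xu},L_{xv}$ and apply Cauchy--Schwarz in the sum over $u\sim v$ together with the Ward-identity type bounds \eqref{e:naive-Ward}: $\frac{1}{N}\sum_w |G^\circ_{sw}|^2,\ \frac{1}{N}\sum_w|L_{sw}|^2\lesssim N^\fo\Phi$ for $s\in\cK^+$. Each such pair produces a factor $N^\fo\Phi$, while the remaining bounded factors ($G_{uv}$, $1/G_{uu}$, and W-product factors each of size $\OO(N^{-\fb})$) contribute at most $N^\fo$ in total. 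The combinatorial weight $(d-1)^{6\ell}$ absorbs the outer sums over $\{l_\al,a_\al,b_\al,c_\al\}_{\al\in\qq{\mu}}$ (each of cardinality $\OO((d-1)^\ell)$) and the explicit $(d-1)^{3(r+r_1)\ell}$ weights; since the Woodbury expansion in Lemma~\ref{lem:woodbury} may be truncated at a bounded order $\fp$ (the tail being negligible by a geometric-series argument based on $|G^\circ|\lesssim N^{-\fb}$), the total exponent of $d-1$ is bounded by a universal multiple of $\ell$. The main technical nuisance will be the careful bookkeeping of the edge-set discrepancy between $\cG$ and $\wt\cG$ in the definition of $\wt Q_t$: one must verify that the ``boundary'' correction is genuinely localized to $\cK^+$ (guaranteed by the construction of the local resampling) and that every W-product factor introduced when converting $\wt G$-entries to $G$-entries is legitimately in $\cK^+$ and therefore bounded by $N^{-\fb}$.
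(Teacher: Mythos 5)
Your overall route is the same as the paper's: write $Q_t,\wt Q_t$ via the Schur identity $G^{(u)}_{vv}=G_{vv}-G_{uv}^2/G_{uu}$, split off the edge-set discrepancy between $\cG$ and $\wt\cG$, feed the differences of full Green's function entries into the Woodbury-based expansions of \Cref{lem:generalQlemma}, and get \eqref{e:Qtmtbound} by Cauchy--Schwarz plus the Ward-type bounds. The treatment of $\wt m_t-m_t$ and of the final bound is fine (the paper in fact gets the $\wt m_t-m_t$ bound directly from \eqref{e:tmmdiff}, but your route works too).

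There is, however, a concrete slip in your first bracket for $\wt Q_t-Q_t$. You sum the difference $(\wt G_{vv}-\wt G_{uv}^{2}/\wt G_{uu})-(G_{vv}-G_{uv}^{2}/G_{uu})$ over \emph{all} edges $u\sim v$ of $\cG$ and claim that expanding via \eqref{e:Gsw_exp} yields only terms of the $\cU^{(u,v)}$ shape ($r_1+r_2\ge 2$ factors of $G^\circ$ or $L$ attached to $u,v$). That is false for the $\OO((d-1)^\ell)$ edges $\{l_\al,a_\al\},\{b_\al,c_\al\}$ that the resampling removes: there $\wt H_{uv}-H_{uv}=-1/\sqrt{d-1}$, so \eqref{e:Gsw_exp} gives $\wt G_{uv}-G_{uv}=\md(z_t)\msc(z_t)/\sqrt{d-1}+\cU+\cE$, and the cross term $2G_{uv}(\wt G_{uv}-G_{uv})/G_{uu}$ then contains an $\OO(1)$ contribution with a single (uncentered) Green's function factor, which does not fit $\cU^{(u,v)}$. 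These contributions are harmless only because they live on $\OO((d-1)^\ell)$ edges whose endpoints lie in $\cK^+$, so after writing $G_{uv}=L_{uv}+G^\circ_{uv}$ they are $\OO(1)$-weighted W-product terms and must be routed into the $\frac{(d-1)^\ell}{N}\cU$ bucket, not into $\frac{1}{Nd}\sum_{u\sim v}\cU^{(u,v)}$; the paper avoids the issue by restricting the first sum to edges outside $\{\{l_\al,a_\al\},\{b_\al,c_\al\}\}_{\al\in\qq{\mu}}$ and carrying the switched edges separately. Relatedly, for the symmetric-difference piece, converting $\wt G$ entries at the vertices $\{l_\al,a_\al,b_\al,c_\al\}$ into $G$ plus W-product corrections should not be cited as \eqref{e:tG-Gexp1} (stated for $s\in\cK$); the correct tool is the general Woodbury expansion \eqref{e:tG-Gdiff35} (equivalently \eqref{e:Gsw_exp} for pairs inside $\cK^+$), where the nonvanishing $L$-entries at these vertices are absorbed into the bounded weights. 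Both points are repairable within your scheme, but as written the first bracket's claimed form is incorrect.
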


In the following lemma, we gather estimates on the difference of $Q_t-Y_t$, $\del_1 Y_\ell(Q_t, z+t m_t)$ and $\del_2 Y_\ell(Q_t, z+t m_t)$ before and after local resampling, using \Cref{lem:woodbury}.
\begin{proposition}
    \label{c:Q-Ylemma}
Adopt the same notation and assumptions as in \Cref{lem:generalQlemma}, The following holds with overwhelmingly high probability over $Z$: 
\begin{enumerate}
    \item 
$(\wt Q_t-\wt Y_t)$ can be rewritten as
\begin{align}\begin{split}\label{e:tQY-QYexp}
  &\phantom{{}={}}(Q_t-Y_t)+\frac{1}{Nd}\sum_{u\sim v} (1-\del_1 Y_\ell(Q_t, z+tm_t))\cU^{(u,v)}+(-t\del_2 Y_\ell)(Q_t, z+tm_t))\cU^{(u)} \\
  &+\frac{(d-1)^\ell (1-\del_1 Y_\ell(Q_t, z+tm_t))}{N}\cU
  +\cE,
\end{split}\end{align}
where $\cU^{(u,v)}, \cU$ are as in \eqref{e:tQ-Qexp}; $\cU^{(u)}$ is as in \eqref{e:tm-mexp}; and $|\cE|\lesssim \ell^3((d-1)^{6\ell}N^{\fo}\Phi)^2$. 
\item $\del_1 Y_\ell(\wt Q_t, z+t\wt m_t)$ and $\del_2 Y_\ell(\wt Q_t, z+t\wt m_t)$ can be written as
\begin{align}\begin{split}\label{e:dtY-dYexp}
    &\del_1 Y_\ell(\wt Q_t, z+t\wt m_t)=\del_1 Y_\ell( Q_t, z+t m_t)+\cE,\quad |\cE|\lesssim \ell^3(d-1)^{6\ell}N^{\fo}\Phi,\\
    &\del_2 Y_\ell(\wt Q_t, z+t\wt m_t)=\del_2 Y_\ell(Q_t, z+t m_t)+\cE,\quad |\cE|\lesssim \ell^3(d-1)^{6\ell}N^{\fo}\Phi.
\end{split}\end{align}
\end{enumerate}
Moreover, we have the following estimates \begin{align}\label{e:tilde_bound}
    \wt \Phi\lesssim \Phi,\quad  \wt \Upsilon\lesssim \Upsilon, \quad |\wt Q_t-\wt Y_t|\lesssim | Q_t- Y_t|+ (d-1)^{8\ell}\Upsilon\Phi, 
\end{align}
\end{proposition}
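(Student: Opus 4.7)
\textbf{Proof proposal for Proposition \ref{c:Q-Ylemma}.}
The plan is to obtain all three statements by Taylor-expanding $Y_\ell$ and $\del_i Y_\ell$ around the base point $(Q_t,z+tm_t)$, and then substituting the already-established expansions for $\wt Q_t-Q_t$ and $\wt m_t-m_t$ from Proposition \ref{c:Qmchange}. The derivative bounds \eqref{e:Yl_derivative} together with the a priori estimate $|\wt Q_t-Q_t|,|\wt m_t-m_t|\lesssim (d-1)^{6\ell}N^\fo\Phi$ from \eqref{e:Qtmtbound} will control all remainders. The quadratic bound on $\cE$ in \eqref{e:tQY-QYexp} is what forces the second-order Taylor expansion rather than a first-order one.

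For the first claim, I would start from the identity
\begin{align*}
\wt Q_t-\wt Y_t=(Q_t-Y_t)+(\wt Q_t-Q_t)-\bigl(Y_\ell(\wt Q_t,z+t\wt m_t)-Y_\ell(Q_t,z+tm_t)\bigr),
\end{align*}
and Taylor expand $Y_\ell$ to first order at $(Q_t,z+tm_t)$, using \eqref{e:Yl_derivative} to bound the second-order remainder by $\ell^3(|\wt Q_t-Q_t|+t|\wt m_t-m_t|)^2\lesssim \ell^3((d-1)^{6\ell}N^\fo\Phi)^2$. This produces
\begin{align*}
\wt Q_t-\wt Y_t=(Q_t-Y_t)+(1-\del_1 Y_\ell)(\wt Q_t-Q_t)+(-t\del_2 Y_\ell)(\wt m_t-m_t)+\cE,
\end{align*}
with derivatives evaluated at $(Q_t,z+tm_t)$. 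Plugging in the expansions \eqref{e:tQ-Qexp} and \eqref{e:tm-mexp} (and noting that the term $(1-\del_1 Y_\ell)\cU^{(u)}$ and $(-t\del_2 Y_\ell)\cU^{(u,v)}$ are absorbed into the $\cU^{(u,v)}$ and $\cU^{(u)}$ families since they are $\OO(1)$-weighted sums of the same admissible shape) gives precisely the right-hand side of \eqref{e:tQY-QYexp}. The only additional error is $(1-\del_1 Y_\ell)\cE_{\eqref{e:tQ-Qexp}}+(-t\del_2 Y_\ell)\cE_{\eqref{e:tm-mexp}}$ which is $\OO(\ell N^{-2})$, clearly absorbed into $\cE$.

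For the second claim, I would similarly Taylor expand $\del_i Y_\ell$ to zeroth order at $(Q_t,z+tm_t)$:
\begin{align*}
\del_i Y_\ell(\wt Q_t,z+t\wt m_t)-\del_i Y_\ell(Q_t,z+tm_t)
=\del_1\del_i Y_\ell\,(\wt Q_t-Q_t)+t\del_2\del_i Y_\ell\,(\wt m_t-m_t)+\OO(\ell^5(\cdots)),
\end{align*}
and bound the first-order term by $\ell^3\cdot(d-1)^{6\ell}N^\fo\Phi$ using the second-derivative bound in \eqref{e:Yl_derivative} and \eqref{e:Qtmtbound}; the higher-order remainder is strictly smaller, hence absorbed. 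For the final trio of bounds, the inequality $\wt\Phi\lesssim \Phi$ follows from $\Im[\wt m_t]\leq \Im[m_t]+|\wt m_t-m_t|\lesssim\Im[m_t]+(d-1)^\ell N^\fo\Phi/(N\eta)^{-1}$ combined with $\Im[m_t]\gtrsim \eta$ inside $\bf D$. The bound $\wt \Upsilon\lesssim \Upsilon$ is immediate from part (2) and $\wt\Phi\lesssim \Phi$. Finally, the bound on $|\wt Q_t-\wt Y_t|$ follows by taking absolute values in the just-proved expansion and estimating $|\cU^{(u,v)}|,|\cU^{(u)}|,|\cU|$ using \eqref{e:naive-Ward}--\eqref{e:av_naive-Ward} and the fact that each product of two $G^\circ$ or $L$ factors summed against $(Nd)^{-1}\sum_{u\sim v}$ is bounded by $(d-1)^{6\ell}N^\fo\Phi$, giving the $(d-1)^{8\ell}\Upsilon\Phi$ term after multiplying by $|1-\del_1 Y_\ell|+t|\del_2 Y_\ell|\leq \Upsilon$.

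The main technical point — and the only step requiring real care — is verifying that in \eqref{e:tQY-QYexp} the symbols $\cU^{(u,v)}$, $\cU^{(u)}$, $\cU$ truly have the same structural form as in Proposition \ref{c:Qmchange}, i.e.\ that multiplying by $(1-\del_1 Y_\ell)$ or $(-t\del_2 Y_\ell)$ does not alter the admissible-function classification. Since these coefficients are deterministic and of order $\Upsilon$, they can be absorbed into the $\OO(1)$ weights without changing the W-product structure. Aside from this bookkeeping, the proof is essentially a two-term Taylor expansion followed by substitution of \Cref{c:Qmchange}.
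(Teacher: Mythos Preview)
Your approach is correct and essentially identical to the paper's: Taylor-expand $Y_\ell$ to first order at $(Q_t,z+tm_t)$ using \eqref{e:Yl_derivative}, substitute the expansions from Proposition~\ref{c:Qmchange}, and control remainders with \eqref{e:Qtmtbound}. Two small points where you make extra work for yourself: first, the coefficients $(1-\del_1 Y_\ell)$ and $(-t\del_2 Y_\ell)$ are written \emph{explicitly} in \eqref{e:tQY-QYexp} in front of $\cU^{(u,v)},\cU^{(u)},\cU$, so these symbols are literally the same objects as in Proposition~\ref{c:Qmchange} and no ``absorption'' or structural check is needed; second, for the final bound on $|\wt Q_t-\wt Y_t|$ the paper simply feeds $|\wt Q_t-Q_t|,|\wt m_t-m_t|\lesssim (d-1)^{6\ell}N^\fo\Phi$ from \eqref{e:Qtmtbound} directly into the Taylor expansion rather than re-estimating the individual $\cU$ pieces.
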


We gather the following computations related to \eqref{e:Gsw_exp}, \eqref{e:tG-Gexp1} and \eqref{e:tQY-QYexp}, which will be used later. 
\begin{lemma}\label{l:coefficientW}
Adopt the same notation and assumptions as in \Cref{lem:generalQlemma}.  Recall $G^\circ=G-L$, and $F_{uv}^{(\al)}$ from \eqref{e:Fuv}.
The first few terms of $\cU$ in \eqref{e:Gsw_exp} (by taking $(w,w')=(u,v)$) are given by
    \begin{align}\begin{split}\label{e:coeff2}
    \cU= \sum_{\al\in \qq{\mu}} F_{uv}^{(\al)}+\sum_{\al\neq \beta\in\qq{\mu}} \frac{L_{l_\al l_\beta}}{d-1}&\left( G^{\circ}_{u c_\al}+\frac{\msc(z_t)  G^{\circ}_{u b_\al}}{\sqrt{d-1}}\right)\left( G^{\circ}_{v c_\beta}+\frac{\msc(z_t)  G^{\circ}_{v b_\beta}}{\sqrt{d-1}}\right)+\cdots.
\end{split}\end{align}
The leading order terms in \eqref{e:tG-Gexp1} are given by
    \begin{align}\label{e:fengkai}
        \frac{\sum_{\sfJ\in\{b,c\}}\sum_{\al\in \qq{\mu}} \fc(\sfJ, \bm1(\al\in \sfA_i))  G^{\circ}_{\sfJ_\al w}}{(d-1)^{\ell/2}}=\sum_{\al\in \qq{\mu}} -\frac{L_{s l_\al}}{\sqrt{d-1}}\left( G^{\circ}_{c_\al w}+\frac{ G^{\circ}_{b_\al w}}{\sqrt{d-1}}\right).
    \end{align}
  The first few terms of $(\wt Q_t-\wt Y_t)-(Q_t-Y_t)$ in \eqref{e:tQY-QYexp} are given by
    \begin{align}\begin{split}\label{e:refine_QYdiff}
   &\phantom{{}={}}\frac{1}{Nd} \sum_{u\sim v}\sum_{\al\in \qq{\mu}} (1-\del_1 Y_\ell)\left(F_{vv}^{(\al)}-\frac{2G_{uv}}{G_{uu}}F_{uv}^{(\al)}+\left(\frac{G_{uv}}{G_{uu}}\right)^2F_{uu}^{(\al)}\right)+\del_2 Y_\ell F_{vv}^{(\al)}\\
&+\frac{1}{Nd} \sum_{u\sim v}\sum_{\al\neq \beta\in\qq{\mu}}(1-\del_1 Y_\ell)
    \frac{L_{l_\al l_\beta}}{d-1}\left( G^{\circ}_{v c_\al}+\frac{\msc(z_t)  G^{\circ}_{v b_\al}}{\sqrt{d-1}}-\frac{G_{uv}}{G_{uu}}\left( G^{\circ}_{u c_\al}+\frac{\msc(z_t)  G^{\circ}_{u b_\al}}{\sqrt{d-1}}\right)\right)\\
    &\times\left( G^{\circ}_{v c_\beta}+\frac{\msc(z_t)  G^{\circ}_{v b_\beta}}{\sqrt{d-1}}-\frac{G_{uv}}{G_{uu}}\left( G^{\circ}_{u c_\beta}+\frac{\msc(z_t)  G^{\circ}_{u b_\beta}}{\sqrt{d-1}}\right)\right)\\
& +\frac{1}{Nd}\sum_{u\sim v}\sum_{\al\neq \beta\in\qq{\mu}}(-t\del_2 Y_\ell)\frac{L_{l_\al l_\beta}}{d-1}\left( G^{\circ}_{u c_\al}+\frac{\msc(z_t)  G^{\circ}_{u b_\al}}{\sqrt{d-1}}\right)\left( G^{\circ}_{u c_\beta}+\frac{\msc(z_t)  G^{\circ}_{u b_\beta}}{\sqrt{d-1}}\right)+\cdots.
\end{split}\end{align}
\end{lemma}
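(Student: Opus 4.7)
All three identities follow from extracting the explicit leading-order terms in the Woodbury expansion of \Cref{lem:woodbury}, combined with the closed form $F = \sum_\al \xi_\al + \sum_{\al,\beta}\xi_\al \wt L\xi_\beta$ from \Cref{l:defF2} and the tree Green's function identities of \Cref{greentree}. The unifying tool is to unpack each $\xi_\al = (\Delta_{l_\al a_\al} + \Delta_{b_\al c_\al} - \Delta_{l_\al c_\al} - \Delta_{a_\al b_\al})/\sqrt{d-1}$ entry-by-entry, and to use $L_{sa_\al} = -\msc(z_t)L_{sl_\al}/\sqrt{d-1}$ (valid when $s$ and $l_\al$ lie in the same tree component of $\cF^+$ with $a_\al$ one step further from $s$) to collapse pairs of terms into a single coefficient.

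For \eqref{e:coeff2}, I would evaluate the $k=1$ part of \eqref{e:tG-Gdiff2} at $(w,w')=(u,v)$ with $u\sim v$ generic. Since typically neither $u$ nor $v$ lies in $\cB_\fR(\cF^+,\cG)$, the pieces involving $L$ at the $w$- or $w'$-index vanish, and the leading diagonal ($\al=\beta$) contribution comes from $(G^\circ F G^\circ)_{uv}$. Writing out $(G^\circ \xi_\al G^\circ)_{uv}$ produces the eight cross products $G^\circ_{ul_\al}G^\circ_{a_\al v}$, $G^\circ_{ub_\al}G^\circ_{c_\al v}$, $G^\circ_{ul_\al}G^\circ_{c_\al v}$, $G^\circ_{ua_\al}G^\circ_{b_\al v}$ (plus their transposes). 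Collapsing the $a_\al$-indices into $l_\al$-indices by the tree identity, and adding the $\xi_\al\wt L\xi_\al$ contribution from the quadratic part of $F$, yields exactly $F_{uv}^{(\al)}$ from \eqref{e:Fuv}. The off-diagonal $\al\neq\beta$ contributions from $\sum_{\al\neq\beta}\xi_\al\wt L\xi_\beta$ sandwiched in $G^\circ F G^\circ$ produce the second sum in \eqref{e:coeff2}, after replacing $\wt L_{l_\al l_\beta}$ with $L_{l_\al l_\beta}$ at cost $\OO(N^{-\fb})$; the $k\geq 2$ terms of \eqref{e:tG-Gdiff2} and the remaining off-diagonal combinations are absorbed into the tail of $\cU$.

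For \eqref{e:fengkai}, I would again start from $k=1$ of \eqref{e:tG-Gdiff2}, now at $(s,w)$ with $s\in\{i,o\}$. Since $L_{sb_\al} = L_{sc_\al} = 0$ (the vertices $\{b_\al, c_\al\}$ are in a different connected component of $\cF^+$ from $\{i,o\}$), the expansion reduces to
\begin{align*}
(L\xi_\al G^\circ)_{sw} = \frac{L_{sl_\al}}{\sqrt{d-1}}\bigl(G^\circ_{a_\al w} - G^\circ_{c_\al w}\bigr) + \frac{L_{sa_\al}}{\sqrt{d-1}}\bigl(G^\circ_{l_\al w} - G^\circ_{b_\al w}\bigr),
\end{align*}
plus the $(L\xi_\al\wt L\xi_\al G^\circ)_{sw}$ correction from the quadratic part of $F$. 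The $l_\al$ and $a_\al$ contributions, which do not appear explicitly in \eqref{e:fengkai}, are by design absorbed into the $(\Av G^\circ)_{ow}$ term allowed by \Cref{def:av_Green}, since $L_{sl_\al}/\sqrt{d-1}$ provides the requisite $1/(d-1)^{\ell/2}$ weighting. After applying the tree identity and combining with the $\xi_\al\wt L\xi_\al$ correction, whose role is precisely to cancel the stray $\msc(z_t)$ prefactor on $G^\circ_{b_\al w}$ produced by $L_{sa_\al}$, the coefficients on $G^\circ_{c_\al w}$ and $G^\circ_{b_\al w}$ simplify to $-L_{sl_\al}/\sqrt{d-1}$ and $-L_{sl_\al}/(d-1)$, respectively.

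For \eqref{e:refine_QYdiff}, I would Taylor-expand $\wt Y_t = Y_\ell(\wt Q_t, z+t\wt m_t)$ about $(Q_t, z+tm_t)$ to first order, giving
\begin{align*}
(\wt Q_t - \wt Y_t) - (Q_t - Y_t) = (1-\del_1 Y_\ell)(\wt Q_t - Q_t) + (-t\del_2 Y_\ell)(\wt m_t - m_t) + \text{h.o.t.},
\end{align*}
where the second-order Taylor remainder is controlled via \eqref{e:Yl_derivative} together with $|\wt Q_t - Q_t| + |\wt m_t - m_t| \lesssim (d-1)^{6\ell}N^{\fo}\Phi$ from \eqref{e:Qtmtbound}. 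Then I would substitute the leading pieces of $\wt Q_t - Q_t$ and $\wt m_t - m_t$ from \Cref{c:Qmchange}. Using the Schur identity $G_{vv}^{(u)} = G_{vv} - G_{uv}^2/G_{uu}$ and its analogue for $\wt G$,
\begin{align*}
\wt G_{vv}^{(u)} - G_{vv}^{(u)} = (\wt G_{vv} - G_{vv}) - \frac{2G_{uv}}{G_{uu}}(\wt G_{uv} - G_{uv}) + \frac{G_{uv}^2}{G_{uu}^2}(\wt G_{uu} - G_{uu}) + \text{h.o.t.},
\end{align*}
and feeding in \eqref{e:coeff2} at $(u,u), (u,v), (v,v)$ produces the first line of \eqref{e:refine_QYdiff} from the diagonal $\sum_\al F^{(\al)}_{\cdot\cdot}$ contributions and the second line from the $\al\neq\beta$ cross terms. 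For $\wt m_t - m_t = (1/(dN))\sum_{u\sim v}(\wt G_{vv} - G_{vv})$, applying \eqref{e:coeff2} at $(v,v)$ directly gives the third line. The main obstacle will be the sign and $\msc(z_t)$-factor bookkeeping when combining contributions from $\sum_\al \xi_\al$ and $\sum_{\al,\beta}\xi_\al\wt L\xi_\beta$: naive groupings typically yield answers differing from \eqref{e:coeff2}, \eqref{e:fengkai}, \eqref{e:refine_QYdiff} by stray $\msc(z_t)$ factors, so the $\xi_\al\wt L\xi_\al$ correction must be kept in lockstep with the $\xi_\al$ term throughout, rather than treated as a higher-order remainder.
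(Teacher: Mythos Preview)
Your approach for \eqref{e:refine_QYdiff} matches the paper's, and for \eqref{e:coeff2} it is nearly right but contains a small misconception: the tree identity $L_{sa_\al}=-\msc(z_t)L_{sl_\al}/\sqrt{d-1}$ applies only to $L$-entries, not to $G^\circ$-entries at a generic vertex $u\notin\cB_\fR(\cF^+,\cG)$, so there is no way to ``collapse $a_\al$-indices into $l_\al$-indices'' in $G^\circ_{ua_\al}$. The resolution is simpler than you expect: the formula \eqref{e:coeff2} records only the $\{b,c\}\times\{b,c\}$ block of $(G^\circ F G^\circ)_{uv}$, and the $l_\al,a_\al$-terms go straight into the ``$\cdots$'' tail. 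The paper therefore restricts to $\sfJ,\sfJ'\in\{b,c\}$ from the outset and evaluates $F_{\sfJ_\al\sfJ'_\beta}$ directly from $F=\sum_\al\xi_\al+\sum_{\al,\beta}\xi_\al\wt L\xi_\beta$ using the explicit $\wt L$-entries in \eqref{e:tP_exp}.

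For \eqref{e:fengkai} your direct expansion of $L\xi_\al+L\sum_\gamma\xi_\gamma\wt L\xi_\al$ is correct in principle but laborious, and the cancellation you flag is genuinely delicate. The paper takes a much cleaner route: from the identity $F=L^{-1}\wt L L^{-1}-L^{-1}$ (see \Cref{l:defF2}) one gets $LF=\wt L L^{-1}-I$, hence for $s\in\{i,o\}$ and $\sfJ\in\{b,c\}$,
\[
(LF)_{s\sfJ_\al}=(\wt L L^{-1})_{s\sfJ_\al}=\frac{1}{\sqrt{d-1}}\sum_{x\sim\sfJ_\al\text{ in }\cG}\wt L_{sx}-z_t\wt L_{s\sfJ_\al},
\]
which is exactly \eqref{e:PF_bound}. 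Since $\wt L_{sb_\al}=\wt L_{sa_\al}=0$ and $\wt L_{sc_\al}=-\msc(z_t)L_{sl_\al}/\sqrt{d-1}$, this gives $(LF)_{sc_\al}$ and $(LF)_{sb_\al}$ in one line each, with no need to track the linear and quadratic pieces of $F$ separately. This is the main simplification the paper's argument buys over yours.
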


Before proving \Cref{lem:generalQlemma}, we collect the following lemma on the difference of Green's functions before and after local resampling, which is an easy consequence of \eqref{e:tP-P} and \eqref{e:tG-Gdiff}.
\begin{lemma}\label{l:Gdiff}
    Adopt the assumptions and notation of \Cref{lem:woodbury}. 
    For $w,w'\in \qq{N}$,  
    \begin{align}\begin{split}\label{e:tG-Gdiff35}
        ( \wt G^{\circ}- G^{\circ})_{ww'}=\sum_{k\geq 1} \left(( G^{\circ}F)^{k}L+( G^{\circ}F)^{k} G^{\circ}+L(F G^{\circ})^{k}+LF( G^{\circ}F)^{k}L\right)_{ww'}.
    \end{split}\end{align}
    and 
    \begin{align}\begin{split}\label{e:tL-Ldiff3}
        (\wt L-L)_{ww'}=(LFL)_{ww'}.
    \end{split}\end{align} 
    If we further assume $\dist_\cG(w,w')\leq 1$, then
    \begin{align}\label{e:tG-Gdiff3}
        ( \wt G^{\circ}- G^{\circ})_{ww'}=(\tG-G)_{ww'}+\md(z_t) \msc(z_t)(\wt H_{ww'}- H_{ww'})=\eqref{e:tG-Gdiff35}.
    \end{align}
\end{lemma}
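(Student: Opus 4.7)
The plan is to combine the two Woodbury-type identities already in hand: \eqref{e:tG-Gdiff} for $\wt G - G$, and \eqref{e:tP-P} together with \Cref{l:defF2} for $\wt L - L = LFL$. Substituting the decomposition $G = L + G^\circ$ into the first and cancelling $LFL$ against $\wt L - L$ will yield the two identities \eqref{e:tL-Ldiff3} and \eqref{e:tG-Gdiff35}, while the short-distance statement \eqref{e:tG-Gdiff3} is then a local verification using the tree Green's function.

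For the algebraic part, I would substitute $G = L + G^\circ$ into the two outer factors of each term on the right side of \eqref{e:tG-Gdiff} to obtain
\[
GF(G^\circ F)^k G = LF(G^\circ F)^k L + L(FG^\circ)^{k+1} + (G^\circ F)^{k+1} L + (G^\circ F)^{k+1} G^\circ.
\]
Summing over $k\geq 0$, the $k=0$ contribution to the first piece is exactly $LFL$, which by \eqref{e:tP-P} and \Cref{l:defF2} equals $\wt L - L$ on $\cB_\fR(\cF^+,\cG)$ and trivially $0 = \wt L - L$ off of that support (since $L, \wt L$ and the rows of $LFL$ all vanish there), yielding \eqref{e:tL-Ldiff3} for all $w,w'\in\qq{N}$. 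Writing $\wt G^\circ - G^\circ = (\wt G - G) - (\wt L - L)$ and cancelling the $LFL$ terms, the remaining four sums re-index to give exactly \eqref{e:tG-Gdiff35}.

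For the short-distance statement \eqref{e:tG-Gdiff3}, the task reduces to showing $(\wt L - L)_{ww'} = -\md(z_t)\msc(z_t)(\wt H - H)_{ww'}$ whenever $\dist_\cG(w,w')\leq 1$. If $(w,w')$ is not touched by the switching then $\wt H_{ww'} = H_{ww'}$, and the explicit tree Green's function formula \eqref{e:Gtreemkm} gives $L_{ww'} = \wt L_{ww'}$ (equal to $\md(z_t)$ for $w=w'$, and to $-\md(z_t)\msc(z_t)/\sqrt{d-1}$ for adjacent pairs in the local tree), so both sides vanish. If $(w,w')$ is a switched edge --- the removed $(l_\al, a_\al)$ or the added $(l_\al, c_\al)$ or $(a_\al, b_\al)$ --- then exactly one of $L_{ww'}, \wt L_{ww'}$ equals $-\md(z_t)\msc(z_t)/\sqrt{d-1}$ via \eqref{e:Gtreemkm} while the other equals $0$; combined with $(\wt H - H)_{ww'} = \mp 1/\sqrt{d-1}$, the identity follows.

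The main step --- really the only nontrivial ingredient --- is verifying that one of $L_{ww'}, \wt L_{ww'}$ genuinely vanishes in each switched-edge case. This is a combinatorial fact about the radius-$\fR$ neighborhoods: the assumption $I(\cF^+,\cG)=1$ together with $\cG,\wt\cG\in\Omega$ from \eqref{e:tGI} forces the connected components of $\cB_\fR(\cF^+,\cdot)$ associated to distinct core edges of $\cF^+$ to be disjoint tree neighborhoods, and the local switching redistributes boundary edges so as to separate $l_\al$ from $a_\al$ in $\wt\cG$ (joining $l_\al$ to the tree at $c_\al$ while $a_\al$ becomes attached only via $b_\al$), and symmetrically for the added-edge cases. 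Once this decoupling is recorded, the remaining work is finite algebraic matching.
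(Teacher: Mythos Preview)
Your proposal is correct and follows essentially the same route as the paper. For \eqref{e:tG-Gdiff35} and \eqref{e:tL-Ldiff3}, your global algebraic argument---substitute $G = L + G^\circ$ into the two outer factors of \eqref{e:tG-Gdiff}, then subtract $LFL = \wt L - L$---is in fact slightly more streamlined than the paper's, which first invokes the restricted identity \eqref{e:tG-Gdiff2} on $\cB_\fR(\cF^+,\cG)$ and then separately treats the case where $w$ or $w'$ lies outside that neighborhood by collapsing the $L$-terms to zero and reducing to \eqref{e:tG-Gdiff}. Your observation that $L$, $\wt L$, and the rows of $LFL$ all vanish off the support makes the case split unnecessary.

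One small slip in your case analysis for \eqref{e:tG-Gdiff3}: the condition is $\dist_\cG(w,w')\leq 1$, so the only switched pairs that arise are the \emph{removed} edges $(l_\al,a_\al)$ and $(b_\al,c_\al)$; the added edges $(l_\al,c_\al)$ and $(a_\al,b_\al)$ are at distance $\gtrsim \fR$ in $\cG$ under $I(\cF^+,\cG)=1$ and hence never enter. You listed the added edges and omitted $(b_\al,c_\al)$, but the verification for $(b_\al,c_\al)$ is identical in form to that for $(l_\al,a_\al)$, so this does not affect the argument.
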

\begin{proof}[Proof of \Cref{l:Gdiff}]
We recall that $L,\wt L$ in \eqref{e:defPtP} are the restrictions of the local Green's functions on $\cB_\fR(\cF^+,\cG)$, Thus for $w,w'$ in $\cB_\fR(\cF^+,\cG)$, the claim \eqref{e:tG-Gdiff35} follows from \eqref{e:tG-Gdiff2}. 
     If $\dist_\cG(\cF^+, w')>\fR$, then $L_{x w'}=0$ for any $x\in \qq{N}$. Thus, $( \wt G^{\circ}- G^{\circ})_{ww'}=( \wt G- G)_{ww'}$, and only the second and the third terms in \eqref{e:tG-Gdiff35} are nonzero, and the claim \eqref{e:tG-Gdiff35} reduces to \eqref{e:tG-Gdiff}
    \begin{align*}
       \sum_{k\geq 1}\left(( G^{\circ}F)^{k} G^{\circ}+L(F G^{\circ})^{k}\right)_{ww'} 
       &= \sum_{k\geq 1}\left(GF( G^{\circ}F)^{k-1} G^{\circ}\right)_{ww'}\\
       &=\sum_{k\geq 1}\left(GF( G^\circ F)^{k-1}G\right)_{ww'},
    \end{align*}
    where in the last line we used $L_{xw'}=0$. By the same argument, if $\dist_\cG(\cF^+, w)>\fR$, \eqref{e:tG-Gdiff35} follows from \eqref{e:tG-Gdiff}. 

    By the same argument \eqref{e:tL-Ldiff3} follows from \eqref{e:tP-P} and \eqref{e:defF2g}. For \eqref{e:tG-Gdiff3}, if $\dist_\cG(\cF^+, w)>\fR$ or $\dist_\cG(\cF^+, w')>\fR$, then $\wt L_{w w'}=L_{ww'}=0$, and $\wt H_{ww'}-H_{ww'}=0$, so \eqref{e:tG-Gdiff3} holds.
    If $\dist_\cG(\cF^+, w)\leq \fR, \dist_\cG(\cF^+, w')\leq \fR$, then $\wt L, L$ agree with the Green's function of infinite $d$-regular tree \eqref{e:Gtreemkm} and $-\wt L_{ww'}+L_{ww'}=(\wt H_{ww'}- H_{ww'})\md(z_t) \msc(z_t)$, and \eqref{e:tG-Gdiff3} also holds.
\end{proof}

\begin{proof}[Proof of \Cref{lem:generalQlemma}]
The first statement in \eqref{e:Gsw_exp} follows from \eqref{e:tG-Gdiff3}. We start with the second statement of \eqref{e:Gsw_exp}. Thanks to \eqref{eq:infbound},
$| G^{\circ}_{xy}|\leq N^{-\fb} $ uniformly for $x,y\in \{l_\al, a_\al, b_\al, c_\al\}_{\al \in \qq{\mu}}$. We can truncate the summation \eqref{e:tG-Gdiff35} as
     \begin{align}\begin{split}\label{e:tG-Gdiff4}
        \sum_{k=1}^\fp \left(( G^{\circ}F)^{k}L+( G^{\circ}F)^{k} G^{\circ}+L(F G^{\circ})^{k}+LF( G^{\circ}F)^{k}L\right)_{ww'}+\OO(N^{-2}).
    \end{split}\end{align}


The first three terms in \eqref{e:tG-Gdiff4} are in the following form 
\begin{align}\begin{split}\label{eq:fexpansion3}
\sum_{\bms}
V_{ws_1}
F_{s_1s_2}
V_{s_2s_3}
F_{s_3s_4}
\cdots
V_{s_{2k-2}s_{2k-1}}
F_{s_{2k-1}s_{2k}}V_{s_{2k}w'},
\end{split}\end{align}
where the summation is over $\bms=(s_1,s_2,\cdots,s_{2k})\in (\{l_\al, a_\al, b_\al, c_\al\}_{\al\in \qq{\mu}})^{2k}$, $V_{ws_1}\in \{ G^{\circ}_{ws_1}, L_{ws_1}\}$, $V_{s_{2j}s_{2j+1}}=G^\circ_{s_{2j} s_{2j+1}}$ for $1\leq j\leq k-1$,  $V_{s_{2k}w'}\in \{ G^{\circ}_{s_{2k}w'}, L_{s_{2k}w'}\}$

For $k=1$ in \eqref{eq:fexpansion3}, we have
\begin{align}\label{e:biaodaling}
    \sum_{s_1 s_2}V_{ws_1}
F_{s_1s_2}V_{s_2 w'}
=\sum_{\sfJ, \sfJ'\in\{l, a,b,c\}}\sum_{\al\in \qq{\mu}} F_{\sfJ_\al \sfJ'_\al} V_{w \sfJ_\al}V_{\sfJ'_\al w'} + \sum_{\sfJ, \sfJ'\in\{l, a,b,c\}}\sum_{\al\neq \beta\in\qq{\mu}}F_{\sfJ_\al \sfJ'_\beta}  V_{w \sfJ_\al}V_{\sfJ'_\beta w'}.
\end{align}
Thanks to \eqref{e:Fbound},  the above expression is an $\OO(1)$-weighted sum of terms of the form $(d-1)^{3\ell}\times \{ G^{\circ}_{wx}L_{yw'}, L_{wx} G^{\circ}_{yw'},  G^{\circ}_{wx} G^{\circ}_{yw'}\}$, with $x,y\in \{l_\al, a_\al, b_\al, c_\al\}_{\al\in \qq{\mu}}$.

In general, for $k\geq 2$ in \eqref{eq:fexpansion3}, we can reorganize terms in the following way
\begin{align}\label{e:biaodayi}
    (d-1)^{3(k-1)\ell}\sum_{\bms}\fc_\bms V_{ws_1}V_{s_2s_3}\cdots V_{s_{2k-2}s_{2k-1}} V_{s_{2k}w'},\quad \fc_\bms=(d-1)^{-3(k-1)\ell}F_{s_1s_2}F_{s_3 s_4}\cdots F_{s_{2k-1}s_{2k}}.
\end{align}
Thanks to \eqref{e:Fbound}, the summation over the weights can be bounded as
\begin{align}\begin{split}\label{e:csxiangjia}
   \sum_{\bms}|\fc_{\bms}|
  &\lesssim (d-1)^{-3(k-1)\ell}\sum_{s_1, s_2}|F_{s_1s_2}|\sum_{s_3, s_4}|F_{s_3s_4}|\cdots 
  \sum_{s_{2k-1}, s_{2k}}|F_{s_{2k-1}s_{2k}}|\\
   &\leq (d-1)^{-3(k-1)\ell}\ell^k (d-1)^{k\ell}=(d-1)^{-(2k-2)\ell}\ell^k \lesssim (d-1)^{-\ell}. 
\end{split}\end{align}

For the last term on the right-hand side of \eqref{e:tG-Gdiff4}, we reorganize terms as
\begin{align}\label{e:biaodaer}
    (d-1)^{3k\ell}\sum_{\bms}\fc_\bms V_{w s_1}V_{s_2s_3} \cdots V_{s_{2k}s_{2k+1}} V_{s_{2k+2}w'},\quad \fc_\bms=(d-1)^{-3k\ell}F_{s_1s_2}\cdots F_{s_{2k+1}s_{2k+2}},
\end{align}
where the summation is over $\bms=(s_1,s_2,\cdots,s_{2k+2})\in (\{l_\al, a_\al, b_\al, c_\al\}_{\al\in \qq{\mu}})^{2k+2}$, $V_{ws_1}= L_{ws_1}$, $V_{s_{2j}s_{2j+1}}=G^\circ_{s_{2j} s_{2j+1}}$ for $1\leq j\leq k$,  $V_{s_{2k+2}w'}=L_{s_{2k+2}w'}$. By the same argument as in \eqref{e:csxiangjia}, the total weight is $\sum_\bms |\fc_\bms|\lesssim (d-1)^{-\ell}$. 

These terms in \eqref{e:biaodayi} and \eqref{e:biaodaer} are $\OO(1)$-weighted sums of terms of the form $\{ G^{\circ}_{sx}, L_{sx}\}\times \{ G^{\circ}_{yw}, L_{yw}\}\times (d-1)^{3 r\ell}R_r$, where $x,y\in \{l_\al, a_\al, b_\al, c_\al\}_{\al\in \qq{\mu}}$ and $R_r$ is a W-product term (see \Cref{d:W-product}) with $r\geq 1$. This finishes the proof of the first statement of \Cref{lem:generalQlemma}.

Next we prove the second statement in \Cref{lem:generalQlemma}. 
From the first statement in \Cref{lem:generalQlemma}, $\wt G_{ww}-G_{ww}$ is an $\OO(1)$-weighted sum of terms of the forms 
$(d-1)^{3\ell}\times \{ G^{\circ}_{wx}  G^{\circ}_{yw}, L_{wx} G^{\circ}_{yw},  G^{\circ}_{wx}L_{yw}\}$ and
$\{ G^{\circ}_{wx}, L_{xw}\}\times \{ G^{\circ}_{yw}, L_{yw}\}\times (d-1)^{3r\ell} R_r$, where $x,y\in \{l_\al, a_\al, b_\al, c_\al\}_{\al\in \qq{\mu}}$ and $R_r$ is a W-product term (see \Cref{d:W-product}) with $r\geq 1$. 
Notice that thanks to \eqref{eq:infbound}, $| G^{\circ}_{xw}|, | G^{\circ}_{xy}|\lesssim N^{-\fb} $ for $x,y\in \{l_\al, a_\al, b_\al, c_\al\}_{\al\in \qq{\mu}}$, thus $|\wt G_{ww}- G_{ww}|\lesssim (d-1)^{3\ell}N^{-\fb} \lesssim N^{-\fb/2}$.
By Taylor expansion we have
\begin{align}\label{e:texp}
    \frac{1}{\widetilde G_{ww}}-  \frac{1}{ G_{ww}}
    =\sum_{1\leq j\leq \fp} \frac{(G_{ww}-\wt G_{ww})^j}{G_{ww}^{j+1}}+\OO(N^{-2}).
\end{align}

From the first statement in \Cref{lem:generalQlemma}, the right-hand side of \eqref{e:texp}  is an $\OO(1)$-weighted sum of terms $(d-1)^{3(r+r_1)\ell}R'_{r_1, r_2}R_{r}$, where $R'_{r_1,r_2}$ contains $r_1$ factors of the form $ G^{\circ}_{xw}$, $r_2$ factors of the form $L_{xw}$, with $x\in\{l_\al, a_\al,b_\al, c_\al\}_{\al\in \qq{\mu}}$ and an arbitrary number of factors $1/G_{ww}$; $R_r$ is a W-product term (as in \Cref{d:W-product}). Moreover, 
the triple $(r, r_1,r_2)=(0,1,1), (0,2,0)$ or $r_1+r_2\geq 2$ is even and $r\geq 1$. They satisfy
$r+r_1\geq 1$. Then the second statement in \Cref{lem:generalQlemma} follows.

Next we prove the third statement in \Cref{lem:generalQlemma}. Similar to \eqref{e:tG-Gdiff4}, we can truncate the summation \eqref{e:tG-Gdiff35} as
     \begin{align}\begin{split}\label{e:tG-Gdiff45}
        \sum_{k=1}^\fp \left(( G^{\circ}F)^{k}L+( G^{\circ}F)^{k} G^{\circ}+L(F G^{\circ})^{k}+LF( G^{\circ}F)^{k}L\right)_{sw}+\OO(N^{-2}).
    \end{split}\end{align}
We first prove the third statement for $s\notin\{i,o\}$. In this case,
since $I(\cF^+,\cG)=1$, we have $L_{sx}=0$ for any $x\in\{l_\al, a_\al, b_\al, c_\al\}_{\al\in \qq{\mu}}$. Thus the last two terms in \eqref{e:tG-Gdiff45} vanish. Similar to \eqref{e:biaodaling} and \eqref{e:biaodayi}, the first two terms in \eqref{e:tG-Gdiff4}
are an $\OO((d-1)^{-\ell})$-weighted sum of terms of the forms 
\begin{align}\label{e:zhongjian}
     G^{\circ}_{sx}\times\{ G^{\circ}_{yw},L_{yw}\}\times  (d-1)^{3(r+1)\ell}R_r,\quad x,y\in \{l_\al, a_\al, b_\al, c_\al\}_{\al\in \qq{\mu}},
\end{align} 
where $x,y\in \{l_\al, a_\al, b_\al, c_\al\}_{\al\in \qq{\mu}}$ and $R_r$ is a W-product term (see \Cref{d:W-product}) with $r\geq 0$. This finishes the proof of the first statement in \eqref{e:tG-Gexp1}.

For $s\in\{i,o\}$, we need to understand the last two terms in \eqref{e:tG-Gdiff45}. For the third term on the right-hand side of \eqref{e:tG-Gdiff4}, with $k=1$, namely, the term 
\begin{align}\label{e:sums1s2}
    (LF G^{\circ})_{sw}=\sum_{s_1,s_2}L_{ss_1}F_{s_1 s_2} G^{\circ}_{s_2 w}=\sum_{s_2}(LF)_{s s_2} G^{\circ}_{s_2 w},
\end{align}
for $s_1,s_2\in\{l_\al, a_\al, b_\al, c_\al\}_{\al\in \qq{\mu}}$.
 Fix $s_2=\sfJ_{\al_2}$ for some $\sfJ\in \{l,a,b,c\}$ and $1\leq \alpha_2\leq \mu$. By \eqref{e:PF_bound} 
\begin{align}\label{e:sum_s1}
   (LF)_{s s_2}=(LF)_{s \sfJ_{\al_2}}=\fc(\sfJ, \bm1(\al_2\in\sfA_i))/(d-1)^{\ell/2},
\end{align}
where $\sfA_i=\{\al\in \qq{\mu}: \dist_\cT(i,l_\al)=\ell+1\}$ and the coefficients $|\fc(\cdot, \cdot)|\lesssim 1$ depend on $s\in\{i,o\}$.
Then we can further sum over $s_2$ in \eqref{e:sums1s2},
\begin{align*}\begin{split}
    &\phantom{{}={}}\sum_{s_1,s_2}L_{ss_1}F_{s_1 s_2} G^{\circ}_{s_2 w}
    =\sum_{s_2=\sfJ_{\al_2}}\fc(\sfJ, \bm1(\al_2\in\sfA_i)) \frac{G^{\circ}_{s_2 w}}{(d-1)^{\ell/2}}\\
     &=\frac{ \sum_{\sfJ}\sum_{\al\in \qq{\mu}} \fc(\sfJ,\bm1(\al\in \sfA_i))  G^{\circ}_{\sfJ_\al w}}{(d-1)^{\ell/2}}
     =\frac{ \sum_{\sfJ\in\{b,c\}}\sum_{\al\in \qq{\mu}} \fc(\sfJ,\bm1(\al\in \sfA_i))  G^{\circ}_{\sfJ_\al w}}{(d-1)^{\ell/2}}+(\Av G^{\circ})_{ow}.
\end{split}\end{align*}
where in the last equation, we collect the summation over $\sfJ\in\{l,a\}$ in $(\Av G^{\circ})_{ow}$. The above expression gives the leading terms in \eqref{e:tG-Gexp1}.

For the third term on the right-hand side of \eqref{e:tG-Gdiff45}, with $k\geq 2$, we reorganize them as
\begin{align}\label{e:biaodasan}
    (d-1)^{3(k-1)\ell}\sum_{\bms}\fc_\bms V_{s_2s_3} \cdots V_{s_{2k-2}s_{2k-1}}  G^{\circ}_{s_{2k}w},\quad \fc_\bms=(d-1)^{-3(k-1)\ell}L_{ss_1}F_{s_1s_2}\cdots F_{s_{2k-1}s_{2k}},
\end{align}
where the summation is over $\bms=(s_1,s_2,\cdots,s_{2k})\in (\{l_\al, a_\al, b_\al, c_\al\}_{\al\in \qq{\mu}})^{2k}$, and  $V_{s_{2j}s_{2j+1}}=  G^{\circ}_{s_{2j} s_{2j+1}}$ for $1\leq j\leq k-1$.
Thanks to \eqref{e:Fbound} and \eqref{e:sum_s1}, the summation over the weights can be bounded as
\begin{align}\begin{split}\label{e:youyige}
   \sum_{\bms}|\fc_{\bms}|
  &\lesssim (d-1)^{-3(k-1)\ell}\sum_{ s_2}|(LF)_{ss_2}|\sum_{s_3, s_4}|F_{s_3s_4}|\cdots 
  \sum_{s_{2k-1}, s_{2k}}|F_{s_{2k-1}s_{2k}}|\\
   &\leq (d-1)^{-3(k-1)\ell}\ell^k (d-1)^{(k-1/2)\ell}\lesssim \ell^k (d-1)^{-(2k-5/2)}\lesssim 1. 
\end{split}\end{align}

For the last term on the right-hand side of \eqref{e:tG-Gdiff4}, we reorganize them as
\begin{align}\label{e:biaodasi}
    (d-1)^{3k\ell}\sum_{\bms}\fc_\bms  V_{s_2s_3} \cdots V_{s_{2k}s_{2k+1}} L_{s_{2k+2}w},\quad \fc_\bms=(d-1)^{-3k\ell}L_{ss_1} F_{s_1s_2}\cdots F_{s_{2k+1}s_{2k+2}},
\end{align}
where $V_{s_{2j}s_{2j+1}}=  G^{\circ}_{s_{2j} s_{2j+1}}$ for $1\leq j\leq k$.
The same as in \eqref{e:youyige}, the summation over the weights can be bounded as $\sum_{\bms}|\fc_{\bms}|\lesssim 1$.

These terms in \eqref{e:biaodasan} with $k\geq 2$ and \eqref{e:biaodasi} are $\OO(1)$-weighted sum of terms of the form $ \{ G^{\circ}_{yw}, L_{yw}\}\times (d-1)^{3r\ell}R_r$, where $y\in \{l_\al, a_\al, b_\al, c_\al\}_{\al\in \qq{\mu}}$ and $R_r$ is a W-product term (see \Cref{d:W-product}) with $r\geq 1$. This finishes the proof of the second statement in \eqref{e:tG-Gexp1}.

Using \eqref{e:tL-Ldiff3} as input, the result for $\wt L_{sw}-L_{sw}$ can be proven in a similar manner, and is therefore omitted.

Next we prove the last statement in \Cref{lem:generalQlemma}. From the definition \eqref{e:defAvGL}, $(\Av G^{\circ})_{o'w}$ is an $\OO((d-1)^{\ell/2})$-weighted sum of terms of the form $ G^{\circ}_{sw}$ for $s\in \cK\setminus\{i,o\}$. By the same argument as for the third statement (see \eqref{e:tG-Gdiff45} and \eqref{e:zhongjian}), we conclude that $(\Av \wt G^{\circ})_{o'w}-(\Av G^{\circ})_{o'w}$ is an $\OO(1)$-weighted sum of terms of the forms 
$ G^{\circ}_{sx}\times\{ G^{\circ}_{yw},L_{yw}\}\times  (d-1)^{3(r+1)\ell}R_r$,
 where $x,y\in \{l_\al, a_\al, b_\al, c_\al\}_{\al\in \qq{\mu}}$ and $R_r$ is a W-product term (see \Cref{d:W-product}) with $r\geq 0$. This finishes the proof of \eqref{e:tAvG-AvGexp1}.  The statement for $(\Av\wt L)_{o'w}-(\Av L)_{o'w}$ can be proven in a similar manner using \eqref{e:tL-Ldiff3} as input.
\end{proof}

\begin{proof}[Proof of \Cref{c:Qmchange}]
The difference $\wt Q_t-Q_t$ can be rewritten as
\begin{align}\begin{split}\label{e:tQ-Qdiff}
  &\phantom{{}={}}\frac{1}{Nd}\sum_{ \{u,v\}\notin \{\{l_\al, a_\al\},\{b_\al, c_\al\}\}_{\al\in\qq{\mu}} }A_{uv}(\widetilde G_{vv}^{(u)}-G_{vv}^{(u)})+\frac{1}{Nd}\sum_{\{u,v\}\in \{\{l_\al, c_\al\},\{a_\al, b_\al\}\}_{\al\in\qq{\mu}}}(\wt G^{(u)}_{vv}-G_{vv}^{(u)})\\
    &+\frac{1}{Nd}\sum_{\{u,v\}\in \{\{l_\al, c_\al\},\{a_\al, b_\al\}\}_{\al\in\qq{\mu}}}G_{vv}^{(u)}
    -\frac{1}{Nd}\sum_{\{u,v\}\in \{\{l_\al, a_\al\},\{b_\al, c_\al\}\}_{\al\in\qq{\mu}}}G^{(u)}_{vv}. 
\end{split}\end{align}
For $G_{vv}^{(u)}, \wt G_{vv}^{(u)}$ in \eqref{e:tQ-Qdiff}, we can rewrite them using the Schur complement formula \eqref{e:Schurixj},
\begin{align}\label{e:schurexp}
    G_{vv}^{(u)}=G_{vv}-\frac{G_{uv}^2}{G_{uu}},\quad
    \wt G_{vv}^{(u)}=\wt G_{vv}-\frac{\wt G_{uv}^2}{\wt G_{uu}}.
\end{align}

By \eqref{e:Gsw_exp}, for $w,w'\in \{u,v\}$,
$\widetilde G_{ww'}-G_{ww'}$ can be rewritten as a weighted sum 
\begin{align}\label{e:diftG-G}
    \widetilde G_{ww'}-G_{ww'}=\md(z_t) \msc(z_t) (H_{ww'}-\wt H_{ww'})+\cV^{(w,w')}+\cE,
\end{align}
where $\cV^{(w,w')}$ is an $\OO(1)$-weighted sum of terms of the forms 
$(d-1)^{3\ell}\times \{ G^{\circ}_{wx}  G^{\circ}_{yw'}, L_{wx} G^{\circ}_{yw'},  G^{\circ}_{wx}L_{yw'}\}$ and
$\{ G^{\circ}_{wx}, L_{xw}\}\times \{ G^{\circ}_{yw'}, L_{yw'}\}\times (d-1)^{3r\ell} R_r$, where $x,y\in \{l_\al, a_\al, b_\al, c_\al\}_{\al\in \qq{\mu}}$ and $R_r$ is a W-product term (see \Cref{d:W-product}) with $r\geq 1$;  here $|\cE|\lesssim N^{-2}$.

For the difference $\widetilde G_{vv}^{(u)}- G_{vv}^{(u)}$, using \eqref{e:schurexp} and \eqref{e:texp}, we can rewrite it as
\begin{align}\begin{split}\label{e:Guuvdiff}
  \widetilde G_{vv}^{(u)}- G_{vv}^{(u)}
  &=  (\wt G_{vv}-G_{vv})
  - \frac{G^2_{uv}}{G_{uu}}\sum_{1\leq j\leq \fp }\left(\frac{ G_{uu}-\wt G_{uu}}{G_{uu}}\right)^j\\
  &-\frac{2G_{uv}(\wt G_{uv} -G_{uv})+(\wt G_{uv} -G_{uv})^2}{G_{uu}}\sum_{0\leq j\leq \fp }\left(\frac{ G_{uu}-\wt G_{uu}}{G_{uu}}\right)^j+\OO(N^{-2}).
\end{split}\end{align}
Then we can replace the differences $\wt G_{vv}-G_{vv}$, $\wt G_{uu}-G_{uu}$ and $\wt G_{uv}-G_{uv}$ by $\cV^{(v,v)}$, $\cV^{(u,u)}$, and $\cV^{(u,v)}$ from \eqref{e:diftG-G}, giving
\begin{align}\begin{split}\label{e:tG-Gexpa}
   &\widetilde G_{vv}^{(u)}- G_{vv}^{(u)}= \cU^{(u,v)}+\cE,\quad \{u,v\}\notin \{\{l_\al, a_\al\}, \{b_\al, c_\al\},\{l_\al, c_\al\},\{a_\al, b_\al\}\}_{\al\in \qq{\mu}},
\end{split}\end{align}
where $\cU^{(u,v)}$ is an $\OO(1)$-weighted sum of terms $(d-1)^{3(r+r_1)\ell}R'_{r_1,r_2}R_r$, where $R'_{r_1,r_2}$ contains $r_1$ factors of the form $ G^{\circ}_{xu},  G^{\circ}_{xv}$, $r_2$ factors of the form $L_{xu}, L_{xv}$ for $x\in\{l_\al, a_\al,b_\al, c_\al\}_{\al\in \qq{\mu}}$, and an arbitrary number of factors $G_{uv}, 1/G_{uu}$; $R_r$ is a W-product term (see \Cref{d:W-product}). Moreover, $r_1+r_2\geq 2$ is even and $r_1+r\geq 1$. 

For $\{u,v\}\in \{\{l_\al, c_\al\},\{a_\al, b_\al\}\}_{\al\in \qq{\mu}}$, we have $\md(z_t) \msc(z_t) (H_{uv}-\wt H_{uv})=-\md(z_t)\msc(z_t)/\sqrt{d-1}$, and $\cV^{(u,u)}$, $\cV^{(v,v)}$ and $\cV^{(u,v)}$ are $\OO(1)$-weighted sum of terms of the form  $(d-1)^{3r\ell}R_{r}$, where $R_r$ is a W-product term (see \Cref{d:W-product}), and $r\geq 0$. Plugging \eqref{e:diftG-G} into \eqref{e:tG-Gexpa}, we conclude 
\begin{align}\label{e:tG-Gexpa2}
   &\widetilde G_{vv}^{(u)}- G_{vv}^{(u)}= \overline\cU^{(u,v)}+\cE,\quad \{u,v\}\in \{\{l_\al, c_\al\},\{a_\al, b_\al\}\}_{\al\in \qq{\mu}},
\end{align}
where $\overline \cU^{(u,v)}$ is an  $\OO(1)$-weighted sum of terms $(d-1)^{3r\ell}R_{r}$, where $R_r$ is a W-product term (see \Cref{d:W-product}), and $r\geq 0$. 

Thanks to \eqref{e:tG-Gexpa} and \eqref{e:tG-Gexpa2}, up to error $\OO(N^{-2})$, we can rewrite \eqref{e:tQ-Qdiff} as
\begin{align}\begin{split}\label{e:tQ-Qdiff2}
  &\phantom{{}={}}\frac{1}{Nd}\sum_{ u\sim v} \cU^{(u,v)}-\frac{1}{Nd}\sum_{ \{u,v\}\notin \{\{l_\al, a_\al\},\{b_\al, c_\al\}\}_{\al\in\qq{\mu}} }\cU^{(u,v)}+\frac{1}{Nd}\sum_{\{u,v\}\in \{\{l_\al, c_\al\},\{a_\al, b_\al\}\}_{\al\in\qq{\mu}}}\overline \cU^{(u,v)}\\
    &+\frac{1}{Nd}\sum_{\{u,v\}\in \{\{l_\al, c_\al\},\{a_\al, b_\al\}\}_{\al\in\qq{\mu}}}G_{vv}^{(u)}
    -\frac{1}{Nd}\sum_{\{u,v\}\in \{\{l_\al, a_\al\},\{b_\al, c_\al\}\}_{\al\in\qq{\mu}}}G^{(u)}_{vv}.
\end{split}\end{align}

For the last line of \eqref{e:tQ-Qdiff2}, we notice that for $\{u,v\}\in \{\{l_\al, a_\al\}, \{b_\al, c_\al\},\{l_\al, c_\al\},\{a_\al, b_\al\}\}_{\al\in \qq{\mu}}$.
\begin{align*}
   G_{vv}^{(u)}&=L_{vv}-\frac{L_{uv}^2}{L_{uu}}+ G^{\circ}_{vv}
  +\frac{2L_{uv} G^{\circ}_{uv} }{L_{uu}}\sum_{0\leq j\leq \fp }\frac{ (-G^{\circ}_{uu})^j}{L^j_{uu}}+\frac{ (G^{\circ})^2_{uv}}{L_{uu}}\sum_{0\leq j\leq \fp }\frac{ (-G^{\circ}_{uu})^j}{L^j_{uu}}+\OO(N^{-2}),
\end{align*}
which is an $\OO(1)$-weighted sum of terms $(d-1)^{3r\ell}R_{r}$, where $R_r$ is a W-product term (see \Cref{d:W-product}), and $r\geq 0$. The last four terms in \eqref{e:tQ-Qdiff2} give $\cU$ in \eqref{e:tQ-Qexp}.

This concludes the proof of the first statement in \Cref{c:Qmchange}. The second statement follows from the same reasoning, so we omit its proof.

Next we prove \eqref{e:Qtmtbound} for $|\wt Q_t-Q_t|$, the statement for $|\wt m_t-m_t|$ follows from \eqref{e:tmmdiff}. Thanks to \eqref{e:Gsw_exp}
\begin{align}\label{e:tQ-Qdiffcopy}
|\wt Q_t-Q_t|\leq \frac{1}{Nd}\sum_{u\sim v}|\cU^{(u,v)}|+  \frac{(d-1)^\ell}{N}|\cU|+\OO(N^{-2}).
\end{align}

Thanks to \eqref{eq:infbound}, with overwhelmingly high probability,  factors involved in W-product term (see \Cref{d:W-product}) are all bounded by $N^{-\fb} $. In \eqref{e:tQ-Qdiffcopy}, $\cU$ is an $\OO(1)$-weighted sum of terms $(d-1)^{3r\ell}R_{r}$, where $R_r$ is a W-product term with $r\geq 0$. It is bounded as $|(d-1)^{3r\ell}R_{r}|\lesssim (d-1)^{3r\ell} N^{-r\fb} \lesssim 1$.

In \eqref{e:tQ-Qdiffcopy}, $\cU^{(u,v)}$ is an $\OO(1)$-weighted sum of terms $(d-1)^{3(r+r_1)\ell}R'_{r_1,r_2}R_r$, where $R'_{r_1, r_2}$ contains $r_1$ factors of the form $ G^{\circ}_{xu},  G^{\circ}_{xv}$, $r_2$ factors of the form $L_{xu}, L_{xv}$, with $x\in\{l_\al, a_\al,b_\al, c_\al\}_{\al\in \qq{\mu}}$ and an arbitrary number of factors $G_{uv}, 1/G_{uu}$; $R_r$ is a W-product term (see \Cref{d:W-product}). Moreover, $ r_1+r_2\geq 2$ is even and $r+r_1\geq 1$. We can bound it as
\begin{align*}
    |(d-1)^{3(r+r_1)\ell}R'_{r_1,r_2}R_r|
\lesssim (d-1)^{3(r+r_1)\ell}N^{-r\fb} |R'_{r_1,r_2}|\lesssim (d-1)^{3r_1\ell} |R'_{r_1,r_2}|.
\end{align*}
Then we can sum over $u\sim v$, by the same argument as for \eqref{e:1_Sbound}, using a Cauchy-Schwarz inequality and \eqref{e:naive-Ward},
\begin{align*}
    \frac{1}{Nd}\sum_{u\sim v}(d-1)^{3r_1\ell} |R'_{r_1,r_2}|
    \lesssim (d-1)^{3r_1\ell}N^{\fo} 
    \left\{
    \begin{array}{cc}
      N^{-(r_{1}-2)\fb} \Phi  & r_{2}=0 \\
       N^{-\max\{r_{1}-1,0\}\fb} \sqrt{\Phi/N}  & r_{2}\geq 1, 
    \end{array}
    \right.
\end{align*}
which is bounded by $(d-1)^{6\ell} N^\fo \Phi$, by noticing $\Phi\gtrsim 1/N$. The claim \eqref{e:Qtmtbound}  follows from plugging these above estimates into \eqref{e:tQ-Qdiffcopy}.
\end{proof}

\begin{proof}[Proof of \Cref{c:Q-Ylemma}]
Thanks to \eqref{e:Yl_derivative}, by Taylor expansion, we have 
\begin{align}\begin{split}\label{e:tQY-QYleading_term}
    (\wt Q_t-\wt Y_t)-(Q_t-Y_t)
    &= (1-\del_1 Y_\ell(Q_t, z+tm_t))(\wt Q_t-Q_t)\\
    &+(-t\del_2 Y_\ell)(Q_t, z+tm_t))(\wt m_t-m_t)
  +\OO(\ell^3(|\wt Q_t-Q_t|^2+t^2|\wt m_t-m_t|^2)).
\end{split}\end{align}
The claim \eqref{e:tQY-QYexp} follows from bounding $\wt Q_t-Q_t$ and $\wt m_t-m_t$ using \eqref{e:Qtmtbound}, which gives $|\cE|\lesssim \ell^3((d-1)^{6\ell}N^\fo \Phi)^2$.

The statement \eqref{e:dtY-dYexp} also follows from performing a Taylor expansion using \eqref{e:Yl_derivative} and \eqref{e:Qtmtbound}
\begin{align*}\begin{split}
    &\del_1 Y_\ell(\wt Q_t, z+t\wt m_t)=\del_1 Y_\ell( Q_t, z+t m_t)+\OO(\ell^3(|\wt Q_t-Q_t|+t|\wt m_t-m_t|)),\\
    &\del_2 Y_\ell(\wt Q_t, z+t\wt m_t)=\del_2 Y_\ell(Q_t, z+t m_t)+\OO(\ell^3(|\wt Q_t-Q_t|+t|\wt m_t-m_t|)).
\end{split}\end{align*}

Next we prove \eqref{e:tilde_bound}. We recall from \eqref{e:tmmdiff}, for $z\in \bf D$ (recall  from \eqref{e:D})
\begin{align}\label{e:tmmdiffcopy}
    |\wt m_t(z)-m_t(z)|\lesssim \frac{(d-1)^\ell N^\fo \Im[m_t(z)]}{N\Im[z]}\ll \Im[m_t(z)],
\end{align}
where we used that for $z\in \bf D$, $\Im[z]\geq N^{-1+\fg}$ and $(d-1)^{6\ell}N^{\fo}/N\Im[z]\ll1 $. We conclude from \eqref{e:tmmdiffcopy}, and the definition of $\Phi$ from \eqref{e:defPhi}
\begin{align}\label{e:Immt_change}
   \wt \Phi=\frac{\Im[\wt m_t]}{N\Im[z]}+\frac{1}{N^{1-2\fc}}\lesssim \frac{\Im[ m_t]}{N\Im[z]}+\frac{1}{N^{1-2\fc}}= \Phi,
\end{align}
 and the expansion \eqref{e:tQY-QYleading_term} gives
\begin{align*}\begin{split}
    &\phantom{{}={}}|(\wt Q_t-\wt Y_t)-(Q_t- Y_t)| \\
    &\lesssim |1-\del_1 Y_\ell||\wt Q_t-Q_t|+t|\del_2 Y_\ell||\wt m_t-m_t|+\OO(\ell^3(|\wt Q_t-Q_t|+t|\wt m_t-m_t|)^2)\\
    &\lesssim (d-1)^{6\ell}N^{\fo}(|1-\del_1 Y_\ell|+t|\del_2 Y_\ell|) \Phi+\ell^3((d-1)^{6\ell}N^{\fo}\Phi)^2\lesssim (d-1)^{8\ell}\Upsilon\Phi,
\end{split}\end{align*}
where in the last line we used \eqref{e:Qtmtbound}, and the definition of $\Upsilon$ from \eqref{e:defPhi}. The last statement of \eqref{e:tilde_bound} follows.
Moreover, thanks to \eqref{e:dtY-dYexp}, we have 
\begin{align*}
    \wt \Upsilon=|1-\del_1 \wt Y_\ell|+t|\del_2 \wt Y_\ell|+(d-1)^{8\ell}\wt \Phi
    \leq (|1-\del_1 Y_\ell|+t|\del_2 Y_\ell|)  +\OO((d-1)^{8\ell}\Phi)\lesssim\Upsilon.
\end{align*}
\end{proof}

 \begin{proof}[Proof of \Cref{l:coefficientW}]
We recall $L,\wt L$ from \eqref{e:defPtP}, and the matrix $F$ from \eqref{e:defF}. Notice that $L,\wt L$ agree with the Green's function of infinite $d$-regular tree \eqref{e:Gtreemkm}, we have
\begin{align}\begin{split}\label{e:tP_exp}
    &\wt L_{s c_\al}=-\frac{\msc(z_t) }{\sqrt{d-1}}\wt L_{s l_\al},\quad \wt L_{s b_\al}=\wt L_{s a_\al}=0,\text{ for } s\in\{i,o\}, \quad \al\in\qq{\mu},\\
   &\wt L_{l_\al c_\beta}=
   \wt L_{c_\al l_\beta}=-\frac{\msc(z_t)}{\sqrt{d-1}}L_{l_\al l_\beta},\quad 
   \wt L_{c_\al c_\beta}=\frac{\msc^2(z_t)}{d-1}L_{l_\al l_\beta},\text{ for }  \al\neq \beta\in \qq{\mu}.
\end{split}\end{align}

  The two terms in \eqref{e:coeff2} are from the expansion of $(G^\circ F G^\circ )_{uv}$ as in \eqref{e:tG-Gdiff4}, which is given by
  \begin{align}\begin{split}\label{e:all_index}
\sum_{\al,\beta\in\qq{\mu}}\frac{ G^{\circ}_{uc_\al}F_{c_\al c_\beta} G^{\circ}_{vc_\beta}}{d-1}
  +\frac{ G^{\circ}_{uc_\al}F_{c_\al b_\beta} G^{\circ}_{v b_\beta}}{d-1}
  +\frac{ G^{\circ}_{u b_\al}F_{b_\al c_\beta} G^{\circ}_{vc_\beta}}{d-1}
  +\frac{ G^{\circ}_{ub_\al}F_{b_\al b_\beta} G^{\circ}_{vb_\beta}}{d-1}.
\end{split}\end{align}
For $\al\neq \beta$ and $\sfJ,\sfJ'\in\{b,c\}$, using \eqref{e:defF} we have
$F_{\sfJ_\al \sfJ'_\beta}=(\xi_\al \wt L \xi_\beta)_{\sfJ_\al \sfJ'_\beta}$,
and we can rewrite \eqref{e:all_index} as
\begin{align}\begin{split}\label{e:different_index}
&\phantom{{}={}}\sum_{\al\neq \beta\in\qq{\mu}}\frac{ G^{\circ}_{uc_\al}\widetilde L_{l_\al l_\beta} G^{\circ}_{vc_\beta}}{d-1}
  -\frac{ G^{\circ}_{uc_\al}\widetilde L_{l_\al c_\beta} G^{\circ}_{v b_\beta}}{d-1}
  -\frac{ G^{\circ}_{u b_\al}\widetilde L_{c_\al l_\beta} G^{\circ}_{vc_\beta}}{d-1}
  +\frac{ G^{\circ}_{ub_\al}\widetilde L_{c_\al c_\beta} G^{\circ}_{vb_\beta}}{d-1}\\
&= \sum_{\al\neq \beta\in\qq{\mu}} \frac{L_{l_\al l_\beta}}{d-1}\left( G^{\circ}_{uc_\al} G^{\circ}_{vc_\beta}
  +\frac{\msc(z_t)  G^{\circ}_{uc_\al} G^{\circ}_{v b_\beta}}{\sqrt{d-1}}
  +\frac{\msc(z_t)  G^{\circ}_{u b_\al} G^{\circ}_{vc_\beta}}{\sqrt{d-1}}
  +\frac{\msc^2(z_t)  G^{\circ}_{ub_\al} G^{\circ}_{vb_\beta}}{d-1}\right)\\
  &=\sum_{\al\neq \beta\in\qq{\mu}} \frac{L_{l_\al l_\beta}}{d-1}\left( G^{\circ}_{u c_\al}+\frac{\msc(z_t)  G^{\circ}_{u b_\al}}{\sqrt{d-1}}\right)\left( G^{\circ}_{v c_\beta}+\frac{\msc(z_t)  G^{\circ}_{v b_\beta}}{\sqrt{d-1}}\right),
\end{split}\end{align}
where we used \eqref{e:tP_exp} in the second line.

For $\al=\beta$ and $\sfJ,\sfJ'\in\{b,c\}$, using \eqref{e:defF} we have
$F_{\sfJ_\al \sfJ'_\al}=(\xi_\al+\xi_\al \wt L \xi_\al)_{\sfJ_\al \sfJ'_\al}$, and we can rewrite \eqref{e:all_index} as
\begin{align}\begin{split}\label{e:same_index}
&\phantom{{}={}}\sum_{\al\in \qq{\mu}} ( G^{\circ}_{uc_\al} G^{\circ}_{vb_\alpha}+ G^{\circ}_{ub_\al} G^{\circ}_{vc_\alpha})\frac{(\xi_\al+\xi_\al\wt L\xi_\al)_{c_\al b_\al}}{d-1}+ G^{\circ}_{ub_\al} G^{\circ}_{vb_\alpha}\frac{(\xi_\al\wt L\xi_\al)_{b_\al b_\al}}{d-1}+ G^{\circ}_{uc_\al} G^{\circ}_{vc_\alpha}\frac{(\xi_\al\wt L\xi_\al)_{c_\al c_\al}}{d-1}\\
&=\sum_{\al\in \qq{\mu}} ( G^{\circ}_{uc_\al} G^{\circ}_{vb_\alpha}+ G^{\circ}_{ub_\al} G^{\circ}_{vc_\alpha})\left(\frac{1}{\sqrt{d-1}}+\frac{2\md(z_t) \msc(z_t)}{(d-1)\sqrt{d-1}}\right)+\frac{2\md(z_t)}{d-1}\left( G^{\circ}_{ub_\al} G^{\circ}_{vb_\alpha}+ G^{\circ}_{uc_\al} G^{\circ}_{vc_\alpha}\right).
\end{split}\end{align}
The claim \eqref{e:coeff2} follows from combining \eqref{e:different_index} and \eqref{e:same_index}.

The left-hand side of \eqref{e:fengkai} is from \eqref{e:sum_s1}, 
 \begin{align}\label{e:leadfengkai}
        \frac{\sum_{\sfJ\in\{b,c\}}\fc(\sfJ, \bm1(\al\in \sfA_i))  G^{\circ}_{\sfJ_\al w}}{(d-1)^{\ell/2}}=
        (LF)_{s c_\al} G^{\circ}_{c_\al w} + (LF)_{sb_\al} G^{\circ}_{b_\al w}.
    \end{align}
Using \eqref{e:PF_bound} and \eqref{e:tP_exp}, explicit computation leads to    
\begin{align*}\begin{split}
    (LF)_{s c_\al}&=\frac{1}{\sqrt{d-1}}\sum_{x\sim c_\al} \wt L_{s x} -z_t\wt L_{s c_\al}
    =-\msc(z_t) \wt L_{s c_\al} -z_t\wt L_{s c_\al}\\
    &=\frac{\msc(z_t) (\msc(z_t)+z_t) }{\sqrt{d-1}}L_{s l_\al}
    =-\frac{L_{s l_\al} }{\sqrt{d-1}}.
    \\
    (LF)_{s b_\al}&=\frac{1}{\sqrt{d-1}}\sum_{x\sim b_\al} \wt L_{s x} -z_t\wt L_{s b_\al}
    =\frac{1}{\sqrt{d-1}}\wt L_{s c_\al}
    =-\frac{\msc(z_t)}{d-1} L_{s l_\al}.
\end{split}\end{align*}
 The claim \eqref{e:fengkai} follows from plugging the above formulas into \eqref{e:leadfengkai}.

  The leading terms in $\wt Q_t- Q_t$ are from the leading terms in \eqref{e:Guuvdiff}, which are given by 
    \begin{align}\label{e:Guuvdiff_leading}
       (\wt G_{vv}-G_{vv})-\frac{2G_{uv}(\wt G_{uv} -G_{uv})}{G_{uu}}
  + \frac{G^2_{uv}(\wt G_{uu}-G_{uu})}{G^2_{uu}}.
    \end{align}
    By plugging \eqref{e:coeff2} into \eqref{e:Guuvdiff_leading}, we get
    that the first few terms in $\wt Q_t-Q_t$ are given by
    \begin{align}\begin{split}\label{e:tQ-Q_leading_term}
        &\phantom{{}={}}\frac{1}{Nd} \sum_{\al\in \qq{\mu}}\sum_{u\sim v}\left(F_{vv}^{(\al)}-\frac{2G_{uv}}{G_{uu}}F_{uv}^{(\al)}+\left(\frac{G_{uv}}{G_{uu}}\right)^2F_{uu}^{(\al)}\right)
        \\
        &+\frac{1}{Nd} \sum_{\al\neq \beta\in\qq{\mu}} \sum_{u\sim v}\frac{L_{l_\al l_\beta}}{d-1}\left(\left( G^{\circ}_{v c_\al}+\frac{\msc(z_t)  G^{\circ}_{v b_\al}}{\sqrt{d-1}}\right)\left( G^{\circ}_{v c_\beta}+\frac{\msc(z_t)  G^{\circ}_{v b_\beta}}{\sqrt{d-1}}\right)\right.\\
        &-\frac{2G_{uv}}{G_{uu}}\left( G^{\circ}_{u c_\al}+\frac{\msc(z_t)  G^{\circ}_{u b_\al}}{\sqrt{d-1}}\right)\left( G^{\circ}_{v c_\beta}+\frac{\msc(z_t)  G^{\circ}_{v b_\beta}}{\sqrt{d-1}}\right)\\
        &\left.+\left(\frac{G_{uv}}{G_{uu}}\right)^2\left( G^{\circ}_{u c_\al}+\frac{\msc(z_t)  G^{\circ}_{u b_\al}}{\sqrt{d-1}}\right)\left( G^{\circ}_{u c_\beta}+\frac{\msc(z_t)  G^{\circ}_{u b_\beta}}{\sqrt{d-1}}\right)\right).
    \end{split}\end{align}
The second term in \eqref{e:tQ-Q_leading_term} factorizes as
\begin{align}\begin{split}\label{e:tQ-Q_leading_term2} 
    &\left( G^{\circ}_{v c_\al}+\frac{\msc(z_t)  G^{\circ}_{v b_\al}}{\sqrt{d-1}}-\frac{G_{uv}}{G_{uu}}\left( G^{\circ}_{u c_\al}+\frac{\msc(z_t)  G^{\circ}_{u b_\al}}{\sqrt{d-1}}\right)\right)\times\\
    &\times\left( G^{\circ}_{v c_\beta}+\frac{\msc(z_t)  G^{\circ}_{v b_\beta}}{\sqrt{d-1}}-\frac{G_{uv}}{G_{uu}}\left( G^{\circ}_{u c_\beta}+\frac{\msc(z_t)  G^{\circ}_{u b_\beta}}{\sqrt{d-1}}\right)\right).
\end{split}\end{align}
By the same argument, the first few terms in $\wt m_t-m_t$ are given by
\begin{align}\label{e:tm-m_leading_term} 
\frac{1}{Nd} \sum_{\al\in \qq{\mu}}\sum_{u\sim v}F_{vv}^{(\al)}+\frac{1}{Nd}\sum_{u\sim v}\sum_{\al\neq \beta\in\qq{\mu}}\frac{L_{l_\al l_\beta}}{d-1}\left( G^{\circ}_{u c_\al}+\frac{\msc(z_t)  G^{\circ}_{u b_\al}}{\sqrt{d-1}}\right)\left( G^{\circ}_{u c_\beta}+\frac{\msc(z_t)  G^{\circ}_{u b_\beta}}{\sqrt{d-1}}\right).
\end{align}
The claim \eqref{e:refine_QYdiff} follows from plugging \eqref{e:tQ-Q_leading_term}, \eqref{e:tQ-Q_leading_term2} and \eqref{e:tm-m_leading_term} into \eqref{e:tQY-QYleading_term}.

\end{proof}

\section{Bounds on Error Terms}\label{e:error_term}
In \Cref{sec:expansions}, we introduced various expansions for the differences in Green's functions resulting from local resampling. In this section, we demonstrate that many of these terms are negligibly small, a critical aspect of the iteration scheme described in \Cref{s:proofoutline}.

In \Cref{s:Green_est}, we provide a collection of estimates for the Green's function, leveraging the fact that $\bm1$ is a trivial eigenvector and utilizing the tree structure.
In \Cref{s:change_est}, we present bounds on the error terms $\cE$ derived in \Cref{lem:diaglem} and \Cref{lem:offdiagswitch}.
In \Cref{s:adm_bound}, we gather estimates for the admissible functions as defined in \Cref{def:pgen}.
Finally, in \Cref{s:Z_term}, we focus on estimates involving the constrained GOE matrix  $Z$.

\subsection{Setting and notation}
\label{s:setting5}
In this section, let $d\geq 3$ and $\cG$ be a $d$-regular graph on $N$ vertices. Let $\cF = (\bfi, E)$ be a forest as in \eqref{e:cFtocF+}, with switching edges $\cK$, core edges $\cC$ and unused core edges $\cC^\circ$. We view $\cF$ as a subgraph of $\cG$. We construct $\cF^+ = (\bfi^+, E^+)$ (as in \eqref{e:cF++}) by performing a local resampling around $(i, o) \in \cC^\circ$ with resampling data ${\bf S}=\{(l_\al, a_\al), (b_\al, c_\al)\}_{\al\in \qq{\mu}}$ where $\mu=d(d-1)^{\ell}$. We denote $\cT=\cB_\ell(o,\cG)$ with vertex set $\bT$. Let the switched graph be $\widetilde \cG = T_\bfS(\cG)$.
We recall the sets $\Omega$ and $\oOmega$ of $d$-regular graphs from \Cref{def:omegabar} and \Cref{thm:prevthm0}, and 
and the indicator function $I(\cF,\cG)$ from \eqref{def:indicator}.

Fix $d\geq 3$. We recall the spectral domain $\bf D$ from \eqref{e:D}, and parameters $\fo\ll \ft\ll\fb\ll\fc\ll\fg$ from \eqref{e:parameters}. Fix time $t\leq N^{-1/3+\ft}$. We recall $\varrho_d(x,t)$, $\md(z,t)$ and $E_t$ from \eqref{e:defrhodt} and \eqref{e:edgeeqn2}. For any parameter $z\in \bf D$ we denote $\eta=\Im[z]$, $\kappa=\min\{|\Re[z]-E_t|, |\Re[z]+E_t|\}$, and $z_t=z+t\md(z,t)=z+t\md(z_t)$ (recall from \eqref{e:defw}). 

We recall the matrix $H(t)$, its Green's function $G(z,t)$, its Stieltjes transform $m_t(z)$, the quantities  $Q_t(z)$,  $Y_t(z)=Y_\ell(Q_t(z),z+tm_t(z))$, and   $X_t(z)=X_\ell(Q_t(z),z+tm_t(z))$ from \eqref{e:Ht} , \eqref{e:Gt}, \eqref{def_mtz}, \eqref{e:Qsum}, and  \eqref{e:defYt}. 
We recall the control parameters $\Phi(z), \Upsilon(z)$ and $\Psi_p(z)$ from \eqref{e:defPhi} and  \eqref{eq:phidef}. We recall the local Green's function $L(z,t)=L(z,t,\cF^+,\cG)$ from \eqref{e:local_Green}, and $G^\circ(z,t)=G(z,t)-L(z,t)$ from \eqref{e:G-L}. We denote the corresponding quantities for the switched graph $\tcG$ as $\wt H(t)$, $\wt G(z,t)$, $\wt m_t(z)$, $\wt Q_t(z)$,  $\wt Y_t(z)$, $\wt X_t(z)$,$\wt \Phi(z)$, $\wt \Upsilon(z)$,$\wt \Psi_p(z)$, $\wt L(z,t)$ and $\wt G^\circ(z,t)$. If the context is clear, we may omit the dependence on $z$ and $t$. 

We recall the array $\bmr=[r_{jk}]_{1\leq j\leq p-1, 0\leq k\leq 2}$ used in the definition of the admissible function \Cref{def:pgen}: for any $1\leq j\leq p-1$,
\begin{align}\label{e:defr_copy}
    (r_{j0},r_{j1},r_{j2})\in\{(1,0,0), (3,0,0)\}\cup \{(2,r_1,r_2): r_1+r_2\geq 2 \text{ is even}\}.
\end{align}

\subsection{Green's Function Estimates}
\label{s:Green_est}
In \Cref{p:upbb}, we provide a collection of estimates for the Green's function, leveraging the fact that $\bm1$ is a trivial eigenvector. It states that the expectation of $G_{c_\al c_\al}^{(b_\al)}-Q_t$ and $G_{c_\al c_\beta}^{(b_\al b_\beta)}$, with respect to the randomness of the local resampling, are small. 
Before stating \Cref{p:upbb}, let us first introduce some indicator functions. 
\begin{definition}\label{def:Ic}
    Given a $d$-regular graph $\cG$ on $N$ vertices, we introduce indicator functions 
   $I_c, I_{cc'}, I_{cc'c''}$ for $c,c',c''\in \qq{N}$ as follows: 
\begin{enumerate}
    \item $I_c=1$ if and only if the vertex $c$ has a radius $\fR/2$ tree neighbhorhood. Note that if $\GG\in \oOmega$, $\sum_{c\in \qq{N}} |1-I_c|\leq N^{3\fc/2}$.
    \item $I_{cc'}=I_c I_{c'} \widehat{I}_{cc'}$ where $I_c, I_{c'}$ are as above. $\widehat I_{cc'}=1$ if and only if 
    $\dist_{\cG}(c,c')> \fR/2$. Note that if $\GG\in \oOmega$, $\sum_{c\in \qq{N}} |1-\widehat I_{cc'}|, \sum_{c'\in \qq{N}} |1-\widehat I_{cc'}|\leq N^{3\fc/2}$.
    \item $I_{cc'c''}=I_c I_{c'} I_{c''}\widehat{I}_{cc'}\widehat{I}_{cc''}\widehat{I}_{c'c''}$ where $I_c, I_{c'},I_{c''},\widehat{I}_{cc'},\widehat{I}_{cc''},\widehat{I}_{c'c''}$ are as above. 
\end{enumerate}
\end{definition}

\begin{remark}
   Later we will use indicator functions $I_c, I_{cc'}$ in \Cref{def:Ic} to decompose the indicator function $I(\cF,\cG)$ from \Cref{def:indicator}. More precisely, assume $\cG\in \Omega$, and let $(b,c)\neq  (b',c')$ be two unused core edges of $\cF=(\bfi, E)$. 
   
   If we define the forest $\widehat \cF$ from $\cF$ by removing $\{(b,c)\}$,
$
    \widehat \cF=(\widehat \bfi, \widehat E)=\cF\setminus  \{(b,c)\},\quad \widehat \bfi=\bfi\setminus\{b,c\}
$, then 
$I(\cF,\cG)=A_{bc}I_{c}I(\widehat \cF, \cG)$,
 where $I_{c}=\bm1(c\not\in \bX)$ and 
$\bX$ is the collection of vertices $v$ such that either there exists some $u\in \cB_\ell(v,\cG)$ such that $\cB_\fR(u,\cG)$ is not a tree; or $\dist(v,c'')<3\fR$ for some $(b'',c'')\in \cC\setminus\{(b, c)\}$. $I_c$ satisfies the requirements in \Cref{def:Ic}.

Alternatively, if we define the forest $\widehat \cF$ from $\cF$ by removing $\{(b,c), (b',c')\}$,
$
    \widehat \cF=(\widehat \bfi, \widehat E)=\cF\setminus  \{(b,c), (b',c')\},\quad \widehat \bfi=\bfi^+\setminus\{b,c,b',c'\}
$, then $
I(\cF,\cG)=A_{bc }A_{b'c'} I_{cc'}I(\widehat \cF, \cG)$, 
 where $I_{cc'}=\bm1(c \not\in \bX)\bm1(c' \not\in \bX)\bm1(\dist_\cG(c,c')\geq 3\fR)$ and 
$\bX$ is the collection of vertices $v$ such that either there exists some $u\in \cB_\ell(v,\cG)$ such that $\cB_\fR(u,\cG)$ is not a tree; or $\dist(v,c'')<3\fR$ for some $(b'',c'')\in \cC\setminus\{(b, c), (b', c')\}$. $I_{cc'}$ satisfies the requirements in \Cref{def:Ic}.

\end{remark}

\begin{proposition}
\label{p:upbb}
Adopt the notation of \Cref{s:setting5}, recall the indicator functions $I_c, I_{cc'}$ from \Cref{def:Ic}, and assume $z\in {\bf D}, \cG\in\Omega$. 
Then the following holds with overwhelming probability over $Z$:
 \begin{align}\label{e:single_index_sum}
        \frac{1}{Nd}\sum_{b \sim c }(G_{c  c }^{(b )}-Q_t)I_c, 
        \quad 
       \frac{1}{Nd}\sum_{b\sim  c }G_{b  x}I_c 
        \quad 
       \frac{1}{Nd}\sum_{b \sim c } G_{c  x}I_c 
       =\OO\left(\frac{1}{N^{1-3\fc/2} }\right),
    \end{align}
    and 
    \begin{align}\begin{split}\label{e:Gccbb}
      &\phantom{{}={}}\frac{A_{b'c'}}{Nd}\sum_{b\sim c } \left(G_{c  c'}^{(b  b')}-\frac{G_{b  c'}^{(b')}(G_{c  c }^{(b)}-Q_t)}{\sqrt{d-1}}\right)I_{cc'} 
    =\frac{A_{b'c'}}{Nd}\sum_{b\sim c}\OO\left(|G_{b c'}^{(b')}|\sum_{x\sim b, x\neq c}|G_{c  x}^{(b )}| +\frac{ \Phi}{N^{\fb/2}}\right)I_{cc'}, \\
    &\phantom{{}={}}\frac{1}{(Nd)^2}\sum_{b \sim c \atop  b'\sim c'}\left(G_{c  c'}^{(b   b')}-\frac{G_{b   b'}(G_{c  c }^{(b )}-Q_t) (G_{c' c'}^{( b')}-Q_t)}{d-1}\right)I_{cc'}\\
&=\frac{N^{-\fb/2} }{(Nd)^2}\sum_{b \sim c \atop  b'\sim c'} \OO\left(|G_{b   b'}|\left(\sum_{x\sim b , x\neq c }|G_{c  x}^{(b )}|+\sum_{x\sim  b', x\neq c'}|G_{c' x}^{( b')}|\right)+ \Phi\right)I_{cc'}.
    \end{split}\end{align}
Moreover, the following holds 
\begin{align}\begin{split}\label{e:Gccbb_youyige}
&\left|\frac{1}{Nd}\sum_{b \sim c } G_{c  x}^{(b )}I_{c} \right|    \lesssim \frac{N^{-\fb/2} }{Nd}\sum_{b \sim c } (|G_{b  x}| +\Phi)I_{c},
\\
&\left|\frac{A_{b'c'}}{Nd}\sum_{b \sim c } G_{c  c'}^{(b   b')}I_{cc'}   \right|  \lesssim \frac{N^{-\fb/2} A_{b'c'}}{Nd}\sum_{b \sim c } (|G^{( b')}_{b  c'}| +\Phi)I_{cc'}. 
\end{split}\end{align}
\end{proposition}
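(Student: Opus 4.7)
The proof is essentially deterministic once we condition on the high-probability estimates from \Cref{l:basicG} and \Cref{thm:prevthm0}. Two structural inputs drive everything: the trivial eigenvector identity $\sum_{x} G_{xy}(z,t) = 1/(d/\sqrt{d-1}-z)$ from \eqref{e:Ghao0}, together with the local-law replacement $G_{ij}(z,t) = P_{ij}(\cB_{\fR/100}(\{i,j\},\cG),z_t,\msc(z_t)) + \OO(N^{-\fb})$ from \eqref{eq:local_law}; and the Schur complement formula, applied systematically to peel off the removed vertices $b$ and $b'$.

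For \eqref{e:single_index_sum}, I will use the defining identity $Q_t = (Nd)^{-1}\sum_{i\sim j}G^{(i)}_{jj}$, which makes $\sum_{b\sim c}(G^{(b)}_{cc}-Q_t) = 0$ exactly. Restriction to $I_c=1$ introduces only the error $\sum_{b\sim c}(G^{(b)}_{cc}-Q_t)(1-I_c)$, which is $\OO(1)\cdot\OO(d\cdot N^{3\fc/2})$ since at most $N^{3\fc/2}$ vertices $c$ violate $I_c$ on $\oOmega$, and diagonal Green's function entries are $\OO(1)$ by \eqref{eq:local_law}. For the sums $\sum_{b\sim c}G_{bx}I_c$ and $\sum_{b\sim c}G_{cx}I_c$, the unrestricted sums equal $d\sum_b G_{bx} = \OO(1)$ and $d\sum_c G_{cx} = \OO(1)$ respectively by \eqref{e:Ghao0}, while the indicator correction is again bounded by $N^{3\fc/2}$ times a uniform $\OO(1)$ entrywise bound.

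For the two identities in \eqref{e:Gccbb}, I will apply the Schur complement formula to write $G^{(bb')}_{cc'} = G^{(b')}_{cc'} - G^{(b')}_{cb}G^{(b')}_{bc'}/G^{(b')}_{bb}$, and then exploit the row equation $\sum_{x\sim c}G^{(b')}_{xc'} = \sqrt{d-1}\,z\,G^{(b')}_{cc'}$, which holds once $I_{cc'}=1$ forces $A_{b'c}=0=A_{bc'}$. Substituting the local-law identity $G^{(b')}_{cb}/G^{(b')}_{bb} = -1/\sqrt{d-1} + \OO(N^{-\fb})$ (valid when $b\sim c$ is a tree edge disjoint from $b'$), the leading contributions along neighbors of $c$ telescope via the row equation, and what remains is precisely the diagonal factor $G^{(b)}_{cc} - Q_t$ paired with $G^{(b')}_{bc'}/\sqrt{d-1}$, up to residuals carried by the excluded vertex $b$ itself (these give the $|G^{(b')}_{bc'}|\sum_{x\sim b,x\ne c}|G^{(b)}_{cx}|$ term) and the $\OO(N^{-\fb})$ local-law error (which produces the $\Phi/N^{\fb/2}$ tail). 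The second identity of \eqref{e:Gccbb} is proved by iterating this argument symmetrically: after extracting the diagonal factor $(G^{(b)}_{cc}-Q_t)$ along $(b,c)$, repeat the same manipulation along $(b',c')$ to extract $(G^{(b')}_{c'c'}-Q_t)$; the remaining two-point part factorizes into the coefficient $G_{bb'}/(d-1)$. For \eqref{e:Gccbb_youyige}, the Schur identity $G^{(b)}_{cx} = G_{cx} - G_{cb}G_{bx}/G_{bb}$ together with $G_{cb}/G_{bb} = -1/\sqrt{d-1}+\OO(N^{-\fb})$ reduces the sum to $(\sqrt{d-1})^{-1}\sum_{b\sim c}G_{bx}I_c$ (which is $\OO(1/N)$ by the trivial eigenvector property) plus the $N^{-\fb/2}(|G_{bx}|+\Phi)$ remainder; an analogous argument handles the off-diagonal case after Schur-removal of $b'$.

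The principal difficulty will be bookkeeping the cancellations for the second identity in \eqref{e:Gccbb}: two quantities of comparable size must subtract to leave exactly the double-diagonal-factor combination, with the leading error expressible as $|G_{bb'}|$ times sums of off-diagonal Green's function entries around the removed vertices. This requires tracking precisely which neighbors of $c$ and $c'$ are excluded by the row equation, and verifying that the accumulated $\OO(N^{-\fb})$ residuals combine into the $\Phi/N^{\fb/2}$ bound rather than something worse; the Ward-identity-type bounds \eqref{e:naive-Ward} and \eqref{e:use_Ward} from \Cref{l:basicG} will be used to control these residual sums on average.
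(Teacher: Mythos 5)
Your handling of \eqref{e:single_index_sum} coincides with the paper's (exact cancellation from the definition \eqref{e:Qsum} of $Q_t$, the trivial-eigenvector identity \eqref{e:Ghao0}, and the $N^{3\fc/2}$ count of exceptional vertices), and your direct route to \eqref{e:Gccbb_youyige} is workable, since there an error of size $N^{-\fb}|G_{bx}|$ per edge is admissible. The genuine gap is in \eqref{e:Gccbb}, which is where the content of the proposition lies. The manipulations you describe cannot produce the claimed structure: after writing $G^{(bb')}_{cc'}=G^{(b')}_{cc'}-G^{(b')}_{cb}G^{(b')}_{bc'}/G^{(b')}_{bb}$, your row equation at $c$ generates entries $G^{(b')}_{xc'}$ with $x\sim c$; none of these is a diagonal entry at $c$, so no telescoping yields the factor $G^{(b)}_{cc}-Q_t$, nor the error sum $\sum_{x\sim b,\,x\neq c}|G^{(b)}_{cx}|$, which runs over neighbors of $b$, not of $c$, and you never explain where the subtraction of $Q_t$ comes from. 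The device that works is to expand at the removed vertex $b$: by the Schur complement, $G^{(bb')}_{cc'}=G^{(b')}_{cc'}+G^{(b')}_{bc'}\bigl(\tfrac{1}{\sqrt{d-1}}\sum_{x\sim b}G^{(bb')}_{cx}+\sqrt{t}\,(ZG^{(bb')})_{bc}\bigr)$; the term $x=c$ gives the diagonal factor, the terms $x\neq c$ give exactly the first allowed error, $Q_t$ may be inserted because $\tfrac{1}{Nd}\sum_{b\sim c}G^{(b')}_{bc'}I_{cc'}$ and $\tfrac{1}{Nd}\sum_{b\sim c}G^{(b')}_{cc'}I_{cc'}$ are $\OO(N^{-1+3\fc/2})$ by the same averaging as in \eqref{e:single_index_sum}, and the replacement $G^{(bb')}_{cx}\to G^{(b)}_{cx}$ costs a product of two off-diagonal entries which is controlled on average by the Ward identity.

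Even setting aside the structural issue, the error your route incurs is too large. Replacing $G^{(b')}_{cb}/G^{(b')}_{bb}$ by a constant (which, incidentally, is $-\msc(z_t)/\sqrt{d-1}+\OO(N^{-\fb})$, not $-1/\sqrt{d-1}$; your row identity also drops the $\sqrt{t}\,(ZG^{(b')})_{cc'}$ contribution from $H(t)=H+\sqrt{t}Z$) produces an error $\OO(N^{-\fb})\,|G^{(b')}_{bc'}|$ per edge. After averaging over $b\sim c$ this is of order $N^{-\fb}\sqrt{N^{\fo}\Phi}$ by Cauchy--Schwarz and \eqref{e:naive-Ward}, which near the edge (where $\Phi\ll N^{-\fb}$) is much larger than the stated $N^{-\fb/2}\Phi$; nor can it be hidden in $|G^{(b')}_{bc'}|\sum_{x\sim b,x\neq c}|G^{(b)}_{cx}|$, since that product of two fluctuating off-diagonal factors is typically of size $|G^{(b')}_{bc'}|\sqrt{\Phi}\ll N^{-\fb}|G^{(b')}_{bc'}|$. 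The whole point of \eqref{e:Gccbb} is that the residuals are either products of two off-diagonal Green's function entries (later averaged via \Cref{lem:deletedalmostrandom}) or genuinely $\OO(N^{-\fb/2}\Phi)$; a one-factor error of size $N^{-\fb}\sqrt{\Phi}$ breaks the downstream identification of the leading diagonal-factor terms. The same objection applies to your iteration for the second identity in \eqref{e:Gccbb}.
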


\begin{proof}[Proof of \Cref{p:upbb}]
For the first statement in \eqref{e:single_index_sum}, we have
    \begin{align*}
         \frac{1}{Nd}\sum_{b \sim c }(G_{c  c }^{(b )}-Q_t)I_c  
        =\frac{1}{Nd}\sum_{b \sim c }(G_{c  c }^{(b )}-Q_t) + \OO\left(\frac{\sum_{c\in\qq{N}}|1-I_c|}{Nd}\right)
        =\OO\left(\frac{1}{N^{1-3\fc/2}}\right),
    \end{align*}
    where for the first equality we used $|G_{cc}^{(b)}|, |Q_t|\lesssim 1$ thanks to \eqref{eq:local_law}; for the second equality, we used the definition \eqref{e:Qsum} of $Q_t$, and  $\sum_{c\in\qq{N}}|1-I_c|\lesssim N^{3\fc/2}$ from \Cref{def:Ic}.

    To prove the last two relations of \eqref{e:single_index_sum}, we recall from \eqref{e:Ghao0} that the Green's function $G$ satisfies 
    \begin{align}\label{e:Ghao}
        \sum_{x\in\qq{N}} G_{xy}=\frac{1}{d/\sqrt{d-1} -z}\lesssim 1,
    \end{align}
    for $z\in \bf D$ from \eqref{e:D}. It follows that
    \begin{align}\label{e:Gbs}
       \frac{1}{Nd}\sum_{b \sim c}G_{b x}I_c
        =\frac{1}{Nd}\sum_{b\sim c}G_{b x}+ \OO\left(\frac{\sum_{c\in\qq{N}}|1-I_c|}{Nd}\right)
          =\OO\left(\frac{1}{N^{1-3\fc/2}}\right).
    \end{align}
where we used \eqref{e:Ghao} and $|G_{bx}|\lesssim 1$ thanks to \eqref{eq:infbound}. The same estimate holds for $G_{cx}$.

To prove \eqref{e:Gccbb}, thanks to the Schur complement formula \eqref{e:Schurixj}, we have
\begin{align}\begin{split}\label{e:Gccbb1}
    G_{c c'}^{(b b')}
    &=G_{c c'}^{( b')}+(G^{(b b')}H(t))_{cb}G^{(b')}_{bc'}\\
    &=G_{c c'}^{(b')}+G_{b c'}^{(b')}
   \left(
   \frac{1}{\sqrt{d-1}}\sum_{x\sim b}G_{c x}^{(b b')}
   +\sqrt t (ZG^{(b b')})_{b c}
   \right)\\
   &=G_{c c'}^{( b')}+G_{b c'}^{(b')}
   \left(
   \frac{1}{\sqrt{d-1}}\sum_{x\sim b}\left(G_{c  x}^{(b )}-\frac{G_{c  b'}^{(b)}G_{b' x}^{(b )}}{G_{b'b'}^{(b )}}\right)
   +\sqrt t (ZG^{(b  b')})_{b  c }
   \right).
\end{split}\end{align}
For $G_{c c'}^{(b')}$ in \eqref{e:Gccbb1}, by the Schur complement formula \eqref{e:Schurixj}, we have
\begin{align}\begin{split}\label{e:Gbs2}
   &\phantom{{}={}}\frac{1}{Nd}\sum_{b \sim c } G_{c c'}^{(b')}I_{cc'}  =\frac{1}{Nd}\sum_{c \sim b }\left(G_{c c'}-\frac{G_{c  b'}G_{b'c'}}{G_{b'b'}}\right)I_c\widehat I_{cc'}I_{c'}\\
   &=\frac{1}{Nd}\sum_{c \sim b }\left(G_{cc'}-\frac{G_{c  b'}G_{b'c'}}{G_{b'b'}}\right)I_{c'}+\OO\left(\frac{\sum_{c\in\qq{N}}|1-I_{c}\widehat I_{cc'}|I_c'}{N}\right)=\OO\left(\frac{I_{c'}}{N^{1-3\fc/2}}\right)=\OO\left(\frac{\Phi I_{c'}}{N^{\fb}}\right), 
\end{split}\end{align}
where we used that $\sum_{c\in\qq{N}}|1-I_{c}\widehat I_{cc'}|\leq \sum_{c\in\qq{N}}(|1-I_{c}|+|1-\widehat I_{cc'}|)\lesssim N^{3\fc/2}$, \eqref{e:Gbs},  and $\Phi\geq 1/N^{1-2\fc}$ (recall from \eqref{e:defPhi}).
Moreover, the same statement holds for $G_{b  c'}^{(b')}$. For the last term in \eqref{e:Gccbb1}, thanks to \eqref{e:Werror},  with overwhelmingly high probability over $Z$ it holds that
\begin{align}\label{e:Wxybound0}
    (ZG^{(b  b')})_{b  c }\lesssim N^\fo \sqrt{\Phi}.
\end{align}
Moreover, for $I_{cc'}=1$, \eqref{eq:local_law} implies that $|G_{b'b'}^{(b)}|\gtrsim 1$ and $|G_{b'x}^{(b)}|\lesssim N^{-\fb} $ for $b\sim x$. Then  \eqref{e:Gest} and \eqref{e:use_Ward} imply
\begin{align}\label{e:warduse1}
    \frac{A_{b'c'}}{Nd}\sum_{b \sim c } \sum_{x\sim b }\left|\frac{G_{b  c'}^{(b')}G_{c  b'}^{(b )}G_{b' x}^{(b )}I_{cc'}  }{G_{b'b'}^{(b )}}\right|\lesssim  \frac{A_{b'c'} N^{-\fb} }{N}\sum_{b \sim c }|G_{b  c'}^{(b')}G_{c  b'}^{(b )}|I_{c'}\lesssim  N^{-\fb/2} \Phi A_{b'c'} I_{c'}.
\end{align}
By plugging \eqref{e:Gbs2}, \eqref{e:Wxybound0} and \eqref{e:warduse1} into \eqref{e:Gccbb1}, and averaging over the edges $(b , c )$ we conclude that
\begin{align}\begin{split}\label{e:Gbbcc0}
    \frac{A_{b'c'}}{Nd}\sum_{b \sim c } 
 G_{c c'}^{(b  b')}I_{cc'} 
 &=\frac{A_{b'c'}}{Nd}\sum_{b \sim c } \frac{G_{b  c'}^{(b')}}{\sqrt{d-1}}\left((G_{c  c }^{(b )}-Q_t) +\sum_{x\sim b , x\neq c }G_{c  x}^{(b )}\right)I_{cc'} +\OO\left(\frac{A_{b'c'} I_{c'}\Phi}{N^{\fb/2}}\right)\\
    &= \frac{A_{b'c'}}{Nd}\sum_{b \sim c } \left(\frac{G_{b  c'}^{(b')}}{\sqrt{d-1}}\left((G_{c  c }^{(b )}-Q_t) +\sum_{x\sim b , x\neq c }G_{c  x}^{(b )}\right)+\OO(N^{-\fb/2} \Phi)\right)I_{cc'}.
\end{split}\end{align}
Moreover, the first claim in \eqref{e:Gccbb} follows from rearranging the above expression.



To prove the second claim in \eqref{e:Gccbb}, we can then average over edges $( b', c')$  in \eqref{e:Gbbcc0}. Similar to \eqref{e:Gccbb1}, we can first replace $G_{b  c'}^{( b')}$ as 
\begin{align}\begin{split}\label{e:Gccbb2}
    G_{b  c'}^{( b')}
    &=G_{b  c'}+G_{ b b' }(H(t)G^{( b')})_{b'c'}\\
   &=G_{b  c'}+G_{b   b'}
   \left(
   \frac{1}{\sqrt{d-1}}\sum_{x\sim  b'}G^{( b')}_{c' x}
   +\sqrt t (ZG^{( b')})_{ b' c'}
   \right),
\end{split}\end{align}
and conclude
\begin{align*}\begin{split}
\frac{1}{(Nd)^2}\sum_{b \sim c \atop  b'\sim c'}G_{c  c'}^{(b   b')}I_{cc'}
    &=\frac{1}{(Nd)^2}\sum_{b \sim c \atop  b'\sim c'}\frac{G_{b   b'}}{d-1}\left(\sum_{x\sim b }G_{c  x}^{(b )} -Q_t\right)\left( \sum_{x\sim  b'}G_{c' x}^{( b')}-Q_t\right)I_{cc'} +\OO(N^{-\fb/2} \Phi)\\
&=\frac{1}{(Nd)^2}\sum_{b \sim c \atop  b'\sim c'}\frac{G_{b   b'}}{d-1}(G_{c  c }^{(b )}-Q_t) (G_{c' c'}^{( b')}-Q_t)I_{cc'} \\
&+\frac{N^{-\fb/2} }{(Nd)^2}\sum_{b \sim c \atop  b'\sim c'} \OO\left(|G_{b   b'}|\left(\sum_{x\sim b , x\neq c }|G_{c  x}^{(b )}|+\sum_{x\sim  b', x\neq c'}|G_{c' x}^{( b')}|\right)+ \Phi\right)I_{cc'},
\end{split}\end{align*}
where for the last equality we bound $|G_{c x}^{( b)}|\lesssim N^{-\fb} $ for $x\sim b,x\neq c$, and $|G_{c' x}^{( b')}|\lesssim N^{-\fb} $ for $x\sim b',x\neq c'$ using \eqref{eq:local_law}.

Finally the estimates in \eqref{e:Gccbb_youyige} follow from the first statement in \eqref{e:Gccbb}, and noticing 
$|G_{cc}^{(b)}-Q_t|\lesssim N^{-\fb}$, and 
$|G_{c x}^{( b)}|\lesssim N^{-\fb} $ for $x\sim b,x\neq c$ using \eqref{eq:local_law}.
\end{proof}

In the following proposition, we provide an estimate related to the Green's function, leveraging the fact that the spectral norm of $G$ is bounded by $1/\eta$.

\begin{proposition}
Adopt the notation of \Cref{s:setting5}, and assume $z\in {\bf D}$ and $\cG\in\Omega$. Take a forest $\cF=\{\{b,c\}, \{b',c'\}\}$ consisting of two unused core edges, and denote $I(\cF,\cG)=I_{cc'}$ (which satisfies the requirements in \Cref{def:Ic}).
For  $w\in \{u,v\}$, $U^{(u,v)}$ a product of $G_{uv}, 1/G_{uu}$, $s\in \{b,c\}$,
   and any vector $(V^{(b,c)})_{b\sim c\in\qq{N}}$,
 the following holds with overwhelmingly high probability over $Z$:
    \begin{align}\label{e:single_index_term2}
       \frac{1}{(Nd)^2} \sum_{b\sim c,b'\sim c'}\sum_{u\sim v} I_{cc'} V^{(b,c)}  G^{\circ}_{s  w} G^{\circ}_{c'u}U^{(u,v)} =\OO\left( \frac{N^{3\fc/2+\fo}\sqrt{\sum_{b\sim c} |V^{(b,c)}|^2} }{N^{3/2}\eta}\right).
    \end{align}
\end{proposition}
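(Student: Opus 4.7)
The strategy is to bring the weight $V^{(b,c)}$ outside via Cauchy--Schwarz and then exploit the trivial eigenvector identity $G\bm{1}=\bm{1}/(d/\sqrt{d-1}-z)=:c_0\bm{1}$ inside the $(b',c')$-sum.  Since $|G_{uv}|,|1/G_{uu}|\lesssim 1$ by \eqref{eq:local_law}, we have $|U^{(u,v)}|\lesssim 1$, and the partially summed weight $\tilde U(u):=\sum_{v\sim u}U^{(u,v)}$ satisfies $\|\tilde U\|_\infty\lesssim d$, $\|\tilde U\|_2^2\lesssim Nd^2$ (the case $w=v$ is handled symmetrically by first summing $u\sim v$).

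The key decomposition is
\[
\sum_{b'\sim c'} I_{cc'}\,G^\circ_{c'u}\;=\;d\,I_c\bigl(c_0-\ell(u)\bigr)\;-\;d\,I_c\,B_2^c(u),
\]
where $\ell(u):=\sum_{c'}L_{c'u}$ is supported on $\cB_\fR(\cF^+,\cG)$ and satisfies $|\ell(u)|\lesssim (d-1)^{\fR/2}\lesssim N^{\fc/8}$ by the decay $|L_{c'u}|\lesssim(d-1)^{-\dist/2}$, while $B_2^c(u):=\sum_{c'\in\bX_c}G^\circ_{c'u}$ with $|\bX_c|\lesssim N^{3\fc/2}$ from \Cref{def:Ic}.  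This splits the quantity of interest into three pieces $Y_0+Y_\ell+Y_{B_2}$.

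The crucial piece $Y_0$ (the $c_0$-part) depends on $c$ only through the indicator $I_c$.  Summing $c$ out first converts the expression into a one-dimensional inner product
\[
\sum_{b\sim c} V^{(b,c)}\,Y_0^{(b,c)}\;=\;\frac{c_0}{N^2 d}\,\bigl\langle \tilde V,\; G^\circ \tilde U\bigr\rangle,\qquad \tilde V^{(b)}:=\sum_{c\sim b}V^{(b,c)}I_c
\]
(for $s=b$; the $s=c$ case uses $\tilde V^{(c)}=\sum_{b\sim c}V^{(b,c)}$ analogously).  Cauchy--Schwarz together with $\|\tilde V\|_2^2\leq d\sum_{b\sim c}|V^{(b,c)}|^2$, $\|\tilde U\|_2\lesssim d\sqrt N$, and $\|G^\circ\|_{\rm op}\lesssim 1/\eta+N^{\fc/8}$ then yields $|\sum V\,Y_0|\lesssim (N^{3/2}\eta)^{-1}\sqrt{\sum|V^{(b,c)}|^2}$, safely within the target.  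The piece $Y_\ell$ is handled identically with $\tilde U$ replaced by $\tilde U\cdot\ell$; the tiny $\ell^2$-norm $\|\tilde U\,\ell\|_2^2\lesssim d^2 N^{\fc/2}$ coming from the small support of $\ell$ makes this contribution negligible.

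For $Y_{B_2}$, the function $B_2^c(u)$ genuinely depends on $c$, so we apply Cauchy--Schwarz in $(b,c)$ directly against $V^{(b,c)}$ and estimate the inner second moment using the Ward identity \eqref{e:naive-Ward} twice: $\sum_u|G^\circ_{bu}|^2\lesssim N^{1+\fo}\Phi$, and the Cauchy--Schwarz consequence $\sum_u|B_2^c(u)|^2\leq |\bX_c|\sum_{c'\in\bX_c}\sum_u|G^\circ_{c'u}|^2\lesssim N^{1+3\fc+\fo}\Phi$.  Combined with $\Phi\lesssim 1/(N\eta)$ from \eqref{e:defPhi} and \eqref{eq:square_root_behave}, this produces precisely the claimed leading bound $\OO\bigl(N^{3\fc/2+\fo}/(N^{3/2}\eta)\bigr)\sqrt{\sum|V|^2}$.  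The main obstacle lies in the correct ordering of Cauchy--Schwarz for $Y_0$: if one first applies it against $V^{(b,c)}$ and then controls $\sum_{b\sim c}|Y_0^{(b,c)}|^2$ via pointwise bounds, a factor of roughly $\sqrt{N\eta}$ is lost; summing $c$ out before Cauchy--Schwarz reduces the problem to an inner product on $\bR^N$ controlled efficiently by the operator norm $\|G^\circ\|_{\rm op}$.
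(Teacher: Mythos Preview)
Your approach is essentially the same as the paper's, relying on the same three ingredients: (i) summing out $c'$ via the trivial-eigenvector identity $\sum_{c'}G_{c'u}=\OO(1)$, (ii) a single operator-norm Cauchy--Schwarz to pass through the remaining $G$ (or $G^\circ$) factor, and (iii) handling the bad indices where $I_{cc'}=0$ by a direct Ward-type bound. The paper organizes things slightly differently: it first expands $G^\circ_{sw}G^\circ_{c'u}=G_{sw}G_{c'u}-G_{sw}L_{c'u}-L_{sw}G_{c'u}+L_{sw}L_{c'u}$ and treats each piece separately (the $LL$ term vanishes for $u\sim v$, the cross terms are handled before removing the indicator, and the main $GG$ term then proceeds exactly as you describe), rather than decomposing only the $c'$-sum of $G^\circ_{c'u}$.

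There is one error in your plan: the function $\ell(u)=\sum_{c'}L_{c'u}$ is \emph{not} supported on a small set. The local Green's function $L$ here is defined with respect to the two-edge forest $\cF=\{\{b,c\},\{b',c'\}\}$ and therefore changes as $(b',c')$ varies; for each $u$, every $c'$ within distance $\fR$ of $u$ contributes, so $\ell(u)$ is nonzero essentially everywhere. Your claimed bound $\|\tilde U\ell\|_2^2\lesssim d^2N^{\fc/2}$ is therefore wrong. However, the pointwise bound $|\ell(u)|\lesssim N^{\fc/8}$ you state is correct, and that alone gives $\|\tilde U\ell\|_2\lesssim dN^{\fc/8}\sqrt N$, which still yields an acceptable contribution $N^{\fc/8}/(N^{3/2}\eta)\cdot\sqrt{\sum|V|^2}$ for the $Y_\ell$ piece --- so the argument survives. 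A related subtlety is that $G^\circ$ is not literally a single fixed matrix (since $L$ depends on the forest through $(b,c,b',c')$); your inner-product formulation $\langle\tilde V,G^\circ\tilde U\rangle$ tacitly replaces the forest-dependent $L$ by a global tree-extension Green's function, which is legitimate once the indicator $I_c$ forces tree neighborhoods. The paper's expansion of $G^\circ G^\circ$ sidesteps both of these issues by keeping the $L$-pieces explicit and localized from the outset.
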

\begin{proof}
We assume $w=v$. The other case that $w=u$ can be proven in the same way, so we omit its proof.

We notice that $ G^{\circ}_{sv} G^{\circ}_{c'u}=G_{sv}G_{c'u}-G_{sv}L_{c'u}-L_{sv}G_{c'u}+L_{sv}L_{c'u}$. First we show that up to negligible error, we can replace $ G^{\circ}_{sv} G^{\circ}_{c'u}$ in \eqref{e:single_index_term2} by $G_{sv}G_{c'u}$.  
Since $s\in \{b,c\}$ and $c'$ belong to two different core edges, for $u\sim v$, $L_{sv} L_{c'u}=0$ .
For $G_{sv}L_{c'u}$, using $\|G\|_{\rm spec}\leq 1/\eta$ and  \eqref{e:naive-Ward}
\begin{align}\begin{split}\label{e:trianglesvcu0}
    &\phantom{{}={}}\frac{1}{(Nd)^2}\sum_{b'\sim c'}\left|\sum_{ b\sim c}\sum_{u\sim v}
I_{cc'} V^{(b,c)}  G_{s  w} L_{c'u}U^{(u,v)}\right|\\
&\lesssim \frac{1}{(Nd)^2\eta}\sum_{b' \sim c'} 
\sqrt{\sum_{b\sim c}|V^{(b,c)}|^2I_{c c'}}\sqrt{\sum_{u\sim v} |L_{c'u}|^2}\lesssim \frac{\sqrt{\sum_{b\sim c}|V^{(b,c)}|^2}\sqrt{\fR/N}}{N\eta}
\lesssim \frac{N^{\fc}\sqrt{\sum_{b\sim c} |V^{(b,c)}|^2} }{N^{3/2}\eta}.
\end{split}\end{align}
For $L_{sv}G_{c'u}$, we have exactly the same bound as in \eqref{e:trianglesvcu0}, by first summing over $b'\sim c', u\sim v$. 

Since $(Nd)^{-2}\sum_{b'\sim c'}|1-I_{cc'}|\leq N^{3\fc/2-2}$, we can remove the indicator function $I_{cc'}$ in \eqref{e:single_index_term2}, and the error is bounded by
 \begin{align}\begin{split}\label{e:removeIcc}
     &\phantom{{}={}}\frac{1}{(Nd)^2}\sum_{b\sim c \atop b'\sim c'}
|1-I_{c c'}||V^{(b,c)} |\sum_{u\sim v} | G_{s  w} ||G_{c'u}|\lesssim \frac{N^\fo \Phi}{N^{1-3\fc/2}}\sum_{b\sim c}|V^{(b,c)}|\\
&\lesssim 
\frac{N^{3\fc/2+\fo}\Phi\sqrt{\sum_{b\sim c} |V^{(b,c)}|^2} }{N^{1/2}}
\lesssim \frac{N^{3\fc/2+\fo}\sqrt{\sum_{b\sim c} |V^{(b,c)}|^2} }{N^{3/2}\eta},
\end{split} \end{align}
where in the first statement we used \eqref{e:Gest}; and in the second statement we used Cauchy inequality; in the third statement we used $\Phi\lesssim 1/N\eta$.

The estimates \eqref{e:trianglesvcu0} and \eqref{e:removeIcc} together reduces  \eqref{e:single_index_term2} to the following 
\begin{align*}
    &\phantom{{}={}}\frac{1}{(Nd)^2}\sum_{v\in\qq{N}}\sum_{b\sim c}V^{(b,c)} G_{sv}\left(\sum_{u: u\sim v}\sum_{b' \sim c' }  G_{c'u}U^{(u,v)}\right)\\
    &\lesssim \frac{1}{N^2\eta} \sqrt{\sum_{b\sim c} |V^{(b,c)}|^2}\sqrt{N}
    \lesssim \frac{\sqrt{\sum_{b\sim c} |V^{(b,c)}|^2} }{N^{3/2}\eta},
\end{align*}
where we used that $\|G\|_{\spec}\leq 1/\Im[z]$ and  \eqref{e:Ghao} which gives
$\sum_{c'\in\qq{N}}   G_{c'u}\lesssim 1$.
This gives \eqref{e:single_index_term2}.
\end{proof}

If vertex $o$ has a tree neighborhood, the following lemma gives a simple formula for the average of the Green's function over the radius-$\ell$ ball, which will be used to bound $(\Av G^{\circ})_{o'v}$ from \eqref{e:defAvGL}.

\begin{lemma}\label{lem:boundaryreduction}
Adopt the notation of \Cref{s:setting5} and assume $z\in {\bf D}$, $\cG\in\Omega$ and $I(\cF,\cG)=1$.
For any used core edge $(i',o')\in \cC\setminus \cC^\circ$ with switching data $\{(l'_\al, a'_\al)\}_{\al\in \qq{\mu}}$,  $\sfA_{i'}=\{\al: \dist_\cG(l'_\al, i')=\ell+1\}$, $\sfJ\in\{l',a'\}$ and $w\in \qq{N}$, the following holds with overwhelmingly high probability over $Z$:
\begin{align}
\label{e:aveall}
&\frac{\sum_{\alpha\in \qq{\mu}} G_{ \sfJ_\alpha w}}{(d-1)^{\ell/2}}=\fc_1(\sfJ) G_{ow} +\cE_1^{(w)},\quad \frac{\sum_{\alpha \in\sfA_{i'}} G_{\sfJ_\alpha w }}{(d-1)^{\ell/2}}=\fc_2(\sfJ)G_{iw}+\fc_3(\sfJ) G_{ow}+\cE_2^{(w)},
\end{align}
where the constants $|\fc_1(\cdot)|, |\fc_2(\cdot)|, |\fc_3(\cdot)|\lesssim 1$, and the error terms satisfy
\begin{align*}
    \frac{1}{N}\sum_{w\in\qq{N}}|\cE_1^{(w)}|^2,\frac{1}{N}\sum_{w\in\qq{N}}|\cE_2^{(w)}|^2 \lesssim (d-1)^{2\ell}t\Phi+\frac{\ell}{N}.
\end{align*}
Similar statements hold for the local Green's function $L=L(z,t,\cF,\cG)$:
\begin{align}
\label{e:aveall2}
&\frac{\sum_{\alpha\in \qq{\mu}} L_{ \sfJ_\alpha w}}{(d-1)^{\ell/2}}=\fc_1(\sfJ) L_{ow} +\widetilde\cE_1^{(w)},\quad \frac{\sum_{\alpha \in\sfA_{i'}} L_{\sfJ_\alpha w }}{(d-1)^{\ell/2}}=\fc_2(\sfJ)L_{iw}+\fc_3(\sfJ) L_{ow}+\widetilde \cE_2^{(w)},
\end{align}
and the error terms satisfy $ \sum_{w\in\qq{N}}|\wt \cE_1^{(w)}|^2, \sum_{w\in\qq{N}}|\wt \cE_2^{(w)}|^2 \lesssim \ell$.
\end{lemma}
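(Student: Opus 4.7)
The plan is to exploit the fact that $I(\cF,\cG)=1$ makes $\cB_{\ell+1}(o',\cG)$ a perfectly tree-like neighborhood rooted at $o'$ of depth $\ell+1$, on which the resolvent identity $(H(t)-z)G(z,t)=I$ yields a one-dimensional Chebyshev-type recursion after summing over level sets. Define the level sums
\begin{align*}
g_k(w):=\sum_{v\in\cB_\ell(o',\cG):\,\dist_\cG(v,o')=k}G_{vw},\qquad 0\leq k\leq \ell+1.
\end{align*}
Each level-$\ell$ vertex of $\cT$ is an endpoint $l'_\alpha$ of exactly $d-1$ boundary edges (its $d-1$ children outside $\cT$), whereas each level-$(\ell+1)$ vertex is uniquely one $a'_\alpha$, so
\begin{align*}
\sum_{\alpha\in\qq{\mu}}G_{l'_\alpha w}=(d-1)g_\ell(w),\qquad \sum_{\alpha\in\qq{\mu}}G_{a'_\alpha w}=g_{\ell+1}(w).
\end{align*}
The task therefore reduces to expressing $g_\ell(w)$ and $g_{\ell+1}(w)$ as $O(1)$-multiples of $G_{o'w}$, modulo a controllable error.

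Summing the pointwise identity $-zG_{vw}+\frac{1}{\sqrt{d-1}}\sum_{x\sim v}G_{xw}+\sqrt{t}(ZG)_{vw}=\delta_{vw}$ over vertices $v$ at a fixed level $k$, and using that every level-$(k-1)$ vertex parents exactly $d-1$ children (except the root $o'$, which parents $d$), yields the recursion
\begin{align*}
g_{k+1}(w)=\sqrt{d-1}\,z\,g_k(w)-(d-1)g_{k-1}(w)+\eta_k(w),\qquad 1\leq k\leq\ell,
\end{align*}
with initial condition $g_1(w)=\sqrt{d-1}\,z\,g_0(w)+\eta_0(w)$, where $\eta_k(w):=\sqrt{d-1}\sum_{v\text{ at level }k}(\delta_{vw}-\sqrt{t}(ZG)_{vw})$ (the minor discrepancy at $k=1$, due to the root having $d$ rather than $d-1$ children, is absorbed into $\eta_1$). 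The substitution $g_k=(d-1)^{k/2}h_k$ converts this to the standard Chebyshev recursion $h_{k+1}=zh_k-h_{k-1}$ with solution $h_k=U_k(z/2)h_0+\sum_{j<k}Q_{k,j}(z)(d-1)^{-k/2}\eta_j(w)$. By \Cref{l:relation_zt_z}, $|\msc(z_t)|\asymp 1$ uniformly on $\bf D$, so the Chebyshev coefficients satisfy $|U_k(z/2)|,|Q_{k,j}(z)|\lesssim 1$, and dividing by $(d-1)^{\ell/2}$ gives the first identity of \eqref{e:aveall} with bounded $\fc_1(\sfJ)$.

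For the error, the $\delta$-component of $\cE_1^{(w)}$ is supported on $w\in\cB_\ell(o',\cG)$ and gives $\sum_w|\cdot|^2\lesssim\sum_{k=0}^\ell(d-1)^k\cdot(d-1)^{-k}\lesssim\ell$. For the $Z$-component, Cauchy--Schwarz together with \eqref{e:Werror} yields
\begin{align*}
\sum_w\Big|\sum_{v\text{ level }j}(ZG)_{vw}\Big|^2\lesssim(d-1)^j\sum_{v,w}|(ZG)_{vw}|^2\lesssim(d-1)^{2j}N^{2\fo+1}\Phi,
\end{align*}
so after multiplying by $t(d-1)^{j-\ell}$ and summing in $j$, $\sum_w|\cE_1^{(w)}|^2\lesssim tN^{2\fo+1}\Phi(d-1)^\ell+\ell$, yielding $N^{-1}\sum_w|\cE_1^{(w)}|^2\lesssim(d-1)^{2\ell}t\Phi+\ell/N$ with room to spare. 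For the restricted $\sfA_{i'}$-sum, I would repeat the recursion on the removed tree $\cT^{(i')}$; here $o'$ has $d-1$ children in $\cT^{(i')}$ but retains degree $d$ in $\cG$, so the equation at $o'$ generates an additional inhomogeneous $-G_{i'w}$ term, which propagates linearly through the Chebyshev recursion and furnishes the $\fc_2 G_{iw}+\fc_3 G_{ow}$ form (with $i,o$ identified with $i',o'$). Finally, \eqref{e:aveall2} follows verbatim from the same derivation applied to $L$: replace $z$ by $z_t$, drop the $Z$-term (since $L$ satisfies the exact tree Green's function equation on $\cB_\fR(\cF,\cG)\supset\cB_{\ell+1}(o',\cG)$, using $\ell\ll\fR$), so only the $\delta$-contribution remains, giving $\sum_w|\wt\cE^{(w)}|^2\lesssim\ell$ without the $1/N$ factor. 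The only delicate point I anticipate is ensuring the Chebyshev propagation coefficients $U_k,Q_{k,j}$ remain uniformly $O(1)$ for $z$ near the spectral edge $\pm E_t$, which ultimately reduces to the uniform bound $|\msc(z_t)|\asymp 1$ on $\bf D$ from \Cref{l:relation_zt_z}.
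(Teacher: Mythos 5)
Your proposal is essentially the paper's own argument: sum the resolvent identity $(H(t)-z)G=I$ over the spheres of the tree neighborhood of $o'$, solve the resulting two-term transfer-matrix (Chebyshev) recursion to express the normalized level-$\ell$ and level-$(\ell+1)$ sums as bounded combinations of $G_{o'w}$ (and $G_{i'w}$ for the $\sfA_{i'}$-restricted sum), and bound the $\delta$- and $\sqrt{t}\,Z$-inhomogeneities via \eqref{e:Gest} and \eqref{e:Werror}, with the $L$-case identical but deterministic; the paper merely centers the recursion at $z_t$ rather than $z$, which only shifts the harmless $t\,\md$ correction from the error into the coefficients. One small bookkeeping point: the extra root-degree term at $k=1$ is a bounded multiple of $G_{o'w}$ and must be folded into the $\fc_1(\sfJ)G_{ow}$ main term (as the paper's initial-condition treatment does), not into $\eta_1$, since otherwise it would violate the stated error bound.
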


\begin{proof}
We will only prove the second statement in \eqref{e:aveall}, and the proof for the first statement follows from essentially the same argument. 

By our assumption that $I(\cF,\cG)=1$, vertex $o'$ has radius $\fR$ tree neighborhood. We denote $\cS_0=\{o'\}$, and $\cS_k=\{y\in \qq{N}: \dist_\cG(y,o')=k, \dist_\cG(y,i')=k+1\}$ for $0\leq k\leq \ell$. $\cS_k$ denotes the collection of vertices at a distance $k$ from the root vertex $o'$, excluding the descendants of $i'$.
By using the equation $((H(t)-z)G)_{o'w}=\delta_{o'w}$, we have that
\begin{align}\label{e:Gvo_relation}
\frac1{\sqrt{d-1}}\sum_{y\in \cS_1} G_{yw}=zG_{o'w}-\sqrt t(ZG)_{o'w}+\delta_{o'w}-\frac1{\sqrt{d-1}}G_{i'w}.
\end{align}
Recall $z_t=z+t\md(z,t)$. We can rewrite \eqref{e:Gvo_relation} as
\begin{align*}
\frac1{\sqrt{d-1}}\sum_{y\in \cS_1} G_{yw}=z_tG_{o'w}-\frac{G_{i'w}}{\sqrt{d-1}}+\cE_{o'},\quad \cE_{o'}:=-t\md(z,t)G_{o'w}-\sqrt{t}(ZG)_{o'w}+\delta_{o'w},
\end{align*}

More generally for $k\geq 1$, by summing over $((H(t)-z)G)_{yw}=\delta_{yw}$ for $y\in \cS_k$ we get
\begin{align}\begin{split}\label{e:Gsum_recursion}
    0&=\sum_{y\in \cS_k}((H(t)-z)G)_{yw}-\delta_{yw}=-z\sum_{y\in \cS_k}G_{yw}+\sum_{y\in \cS_k}\sqrt t(ZG)_{yw}-\delta_{yw}+\sum_{y\in \cS_k}(HG)_{yw}\\
    &=-z\sum_{y\in \cS_k}G_{yw}+\sum_{y\in \cS_k}\sqrt t(ZG)_{yw}-\delta_{yw}+\sqrt{d-1}\sum_{y\in \cS_{k-1}}G_{yw}+\frac{1}{\sqrt{d-1}}\sum_{y\in \cS_{k+1}}G_{yw}\\
    &=-z_t\sum_{y\in \cS_k}G_{yw}+\sqrt{d-1}\sum_{y\in \cS_{k-1}}G_{yw}+\frac{1}{\sqrt{d-1}}\sum_{y\in \cS_{k+1}}G_{yw}-\sum_{y\in \cS_k}\cE_y,
\end{split}\end{align}
where $\cE_y:=-t\md(z,t)G_{yw}-\sqrt{t}(ZG)_{yw}+\delta_{yw}$.
We can convert this into an equation of the sum $\sum_{y\in \cS_k}G_{yw}$, for $k\geq 1$,
\begin{align*}
&\phantom{{}={}}\sum_{y\in \cS_{k+1}}\left(\frac{1}{\sqrt{d-1}}\right)^{k+1}G_{yw}=z_t\sum_{y\in \cS_{k}}\left(\frac{1}{\sqrt{d-1}}\right)^{k}G_{yw}-\sum_{y\in \cS_{k-1}}\left(\frac{1}{\sqrt{d-1}}\right)^{k-1}G_{yw}+\frac{\sum_{y\in \cS_k}\cE_y}{(d-1)^{k/2}}.
\end{align*}

 The recursive equation gives
\begin{align}\begin{split}\label{e:recursion}
&\phantom{{}={}}\left(
\begin{array}{c}
(d-1)^{-\ell/2}\sum_{y\in \cS_\ell}G_{yw}\\
(d-1)^{-(\ell+1)/2}\sum_{y\in \cS_{\ell+1}} G_{yw}
\end{array}\right)
\\
&=
\left(
\begin{array}{cc}
0&1\\
-1&z_t
\end{array}
\right) 
\left(
\begin{array}{c}
(d-1)^{-(\ell-1)/2}\sum_{y\in \cS_{\ell-1}}G_{yw}\\
(d-1)^{-\ell/2}\sum_{y\in \cS_{\ell}} G_{yw}
\end{array}\right)
+
\left(
\begin{array}{c}
0\\
\sum_{y\in \cS_\ell}\cE_y/(d-1)^{\ell/2}
\end{array}\right)
\\
&=
\left(\begin{array}{cc}
0&1\\
-1&z_t
\end{array}
\right)^{\ell+1} \left(
\begin{array}{c}
G_{i'w}/\sqrt{d-1}\\
 G_{o'w}
\end{array}
\right) +\sum_{k=1}^\ell\left(\begin{array}{cc}
0&1\\
-1&z_t
\end{array}
\right)^{\ell-k}
\left(
\begin{array}{c}
0\\
\sum_{y\in \cS_k}\cE_y/(d-1)^{k/2}
\end{array}\right).
\end{split}\end{align}

The eigenvalues of the transfer matrix are given by $\msc(z_t)$ and $1/\msc(z_t)$. Therefore $(d-1)^{-\ell/2}\sum_{y\in \cS_\ell}G_{yw}$, 
$(d-1)^{-(\ell+1)/2}\sum_{y\in \cS_{\ell+1}} G_{yw}$ can be written as linear combination of $G_{i'w}, G_{o'w}$, with bounded coefficients, using that for $z\in \mathbf D$, $|\msc(z_t)|^\ell\asymp 1$.
The second claim in \eqref{e:aveall} follows by noticing that 
\begin{align*}
    &\sum_{\alpha \in\sfA_{i'}}(d-1)^{-\ell/2} G_{ l'_\alpha w}
    =(d-1)^{-(\ell-2)/2}\sum_{y\in \cS_{\ell}}G_{yw},\\
    &\sum_{\alpha \in\sfA_{i'}}(d-1)^{-\ell/2} G_{ a'_\alpha w}
    =(d-1)^{-\ell/2}\sum_{y\in \cS_{\ell+1}}G_{yw},
\end{align*}
where we recall that $\cS_k$ denotes the collection of vertices at a distance $k$ from the root vertex $o'$, excluding the descendants of $i'$. Consequently, the multiset $\{l'_\alpha\}_{\alpha \in \sfA_{i'}}$ contains each vertex in $\cS_\ell$ exactly $(d-1)$ times, and $\{a'_\al\}_{\al \in \sfA_{i'}}=\cS_{\ell+1}$.

The error term $\cE_2^{(w)}$ in \eqref{e:aveall} is given by the last term in \eqref{e:recursion}, and recall $\cE_y=-t\md(z,t)G_{yw}-\sqrt{t}(ZG)_{yw}+\delta_{yw}$. We have
\begin{align*}
    |\cE_2^{(w)}|\lesssim \sum_{k=0}^\ell \sum_{y\in \cS_k} \frac{|\cE_y|}{(d-1)^{k/2}}\lesssim \sum_{k=0}^\ell \left(\sum_{y\in \cS_k}\frac{t|G_{yw}|+\sqrt t|(ZG)_{yw}|}{(d-1)^{k/2}}+\frac{\delta_{yw}}{(d-1)^{k/2}}\right).
\end{align*}
Let $\bT'=\{y\in\qq{N}: \dist_\cG(y,o')\leq \ell\}$, then we can square both sides of the above estimate, and average over $w$, giving
\begin{align*}
    \frac{1}{N}\sum_{w\in \qq{N}}|\cE_2^{(w)}|^2
    &\lesssim \frac{1}{N}\sum_{w\in \qq{N}} \left(\sum_{y\in \bT'}\ell\left(t^2|G_{yw}|^2+t|(ZG)_{yw}|^2\right)+\frac{\bm1(w\in \bT')}{(d-1)^{\dist_\cT(w,o)}}\right)\\
    &\lesssim \ell (d-1)^\ell tN^\fo \Phi+\frac{\ell}{N}\lesssim (d-1)^{2\ell}t\Phi+\frac{\ell}{N}.
\end{align*}
where in the second inequality, we used $|\bT'|\lesssim (d-1)^\ell$, \eqref{e:Gest} and \eqref{e:Werror}.

By our assumption that $I(\cF,\cG)=1$, vertex $o'$ has radius $\fR$ tree neighborhood.  The local Green's function $L$ satisfies $(H-z_t)L)_{yw}=\delta_{yw}$  for $y\in \bT'$. The same as in \eqref{e:Gsum_recursion}, we have
\begin{align}\label{e:Lsum_recursion}
    0=-z_t\sum_{y\in \cS_k}L_{yw}+\sqrt{d-1}\sum_{y\in \cS_{k-1}}L_{yw}+\frac{1}{\sqrt{d-1}}\sum_{y\in \cS_{k+1}}L_{yw}-\sum_{y\in \cS_k}\wt\cE_y,\quad \wt \cE_y=\delta_{yw}.
\end{align}
Using \eqref{e:Lsum_recursion} as input, the claims in \eqref{e:aveall2} follows from the same argument as for \eqref{e:aveall}.
\end{proof}

\subsection{Error from Local Resampling}
\label{s:change_est}

Later, we will encounter Green's function entries such as 
$G_{ij}^{(o)}$,
where $i,j$ are two adjacent vertices of a vertex $o$, i.e. $o\sim i,  o \sim j$. In the original graph $\cG$, $i$ and $j$ have distance two, so $G_{ij}$ is not small. However, in the modified graph $\cG^{(o)}$ obtained by removing the vertex $o$, the vertices $i,j$ are far apart. 
In the following proposition, we establish a Ward identity type result for the expectation of $|G_{ij}^{(o)}|^2$, analogous to \eqref{eq:wardex}.
This result shows that the expected value of $|G_{ij}^{(o)}|^2$ is as small as it would be if $i,j$ were chosen randomly. The proof of \Cref{lem:deletedalmostrandom} is based on local resampling around the vertex
$o$. We defer this proof to \Cref{s:removeonevertex}.

To describe this result, we first introduce the control parameter $\Pi(z)$, defined as follows. In most applications, we set $\Pi(z)$ to $\Pi(z)=\bm1(\cG\in \Omega)(|Q_t(z)-Y_t(z)|+(d-1)^{8\ell}\Upsilon(z)\Phi(z))^{p-1}$, which is an upper bound for admissible functions from \Cref{def:pgen}. We denote $\widetilde \Pi(z)$ as the corresponding quantity associated with the switched graph $\widetilde \cG$.
\begin{definition}\label{def:Pi}
 Adopt the notation from \Cref{s:setting5}.
Let $\Pi(z)=\bm1(\cG\in \Omega)\widehat \Pi(z)$ be a control parameter, where $\widehat \Pi(z)$ depends on $Q_t(z), Y_t(z), \Upsilon(z), \Phi(z)$. We assume there exists a large constant $\fC$ such that $N^{-\fC}\leq \widehat\Pi(z)\leq N^{\fC}$. Furthermore,
with overwhelmingly high probability over $Z$, the following holds
\begin{align}\label{e:sametilde}
  \bm1(\cG,\wt \cG\in \Omega) \wt \Pi(z)
   \lesssim  \bm1(\cG,\wt \cG\in \Omega)\Pi(z).
\end{align}
\end{definition}

\begin{proposition}
    \label{lem:deletedalmostrandom}
    Adopt the notation of \Cref{s:setting5}, and recall $\Pi$ from \Cref{def:Pi}. We take $z\in {\bf D}$, and recall the indicator functions $I_c, I_{cc'}, I_{cc'c''}$ from \Cref{def:Ic}. 
    Then the following holds for $i,j\sim o$ and $i\neq j$, 
    \begin{align}\label{e:sameasdisconnect}
        \frac{1}{N}\sum_{o\in \qq{N}}\bE[I_o |G^{(o)}_{ij}|^2 \Pi]\lesssim \bE[N^\fo \Phi \Pi],
    \end{align}
and let $\al\neq \beta\in \qq{\mu}$,
\begin{align}\begin{split}\label{e:GiGi}
&\phantom{{}={}}\frac{1}{N}\sum_{o\in \qq{N}}\bE[I_o\bm1(\cG\in \Omega)(G_{i j}^{(o)})^2(Q_t-Y_t)^{p-1}]\\
&=\frac{\msc^{4\ell}(z_t)}{Z_{\cF^+}}\sum_{\bfi^+}\bE[\bm1(\cG\in \Omega)I(\cF^+,\cG) (G^{(b_\al b_\beta)}_{c_\al c_\beta})^2(Q_t-Y_t)^{p-1}]+\OO\left(N^{-\fb/2}\bE[\Psi_p]\right).
\end{split}\end{align}

    For $i,j,k\sim o$ and $i\neq j, i\neq k$, it holds
\begin{align}\label{e:GiGjGk}
&\frac{1}{N}\sum_{o\in \qq{N}}\bE[I_o\bm1(\cG\in \Omega)(G_{i j}^{(o)}G_{ik}^{(o)})(Q_t-Y_t)^{p-1}]=\OO(N^{-\fb/2} \bE[\Psi_p]).
\end{align}
Moreover, for $i,j\sim o$ with $i\neq j$, $b\sim c$ and $b'\sim c'$, it holds
\begin{align}\begin{split}\label{e:GiGj}
&\frac{1}{N^2}\sum_{o,c\in \qq{N}}\bE[I_{oc}\bm1(\cG\in \Omega)(G_{i c}^{(ob)}G_{jc}^{(ob)})(Q_t-Y_t)^{p-1}]=\OO(N^{-\fb/2} \bE[\Psi_p]),\\
&\frac{1}{N^2}\sum_{o,c,c'\in \qq{N}}\bE[I_{occ'}\bm1(\cG\in \Omega)(G_{i c}^{(ob)}G_{jc'}^{(ob')})(Q_t-Y_t)^{p-1}]=\OO(N^{-\fb/2} \bE[\Psi_p]).
\end{split}\end{align}
\end{proposition}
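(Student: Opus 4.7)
The plan is to prove \Cref{lem:deletedalmostrandom} by performing a local resampling around the vertex $o$, in direct analogy with the strategy used for the self-consistent equation in \Cref{p:add_indicator_function}. The key structural observation is that once the boundary edges of $\cT=\cB_\ell(o,\cG)$ are resampled, the only paths in $\widetilde\cG^{(o)}$ connecting two neighbors $i,j$ of $o$ must leave the ball $\bT$ through the randomly chosen pairs $\{(b_\al,c_\al)\}_{\al\in\qq{\mu}}$. This forces $\widetilde G^{(o)}_{ij}$ to be expressible as a weighted sum of Green's function entries between uniformly random vertices, which is exactly the setting where the Ward identity \eqref{e:Gest} produces $\Phi$-sized bounds.

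The first step is to use the exchangeability \Cref{lem:exchangeablepair} together with the compatibility assumption \eqref{e:sametilde} on $\Pi$ to replace $G^{(o)}$ by $\widetilde G^{(o)}$ in all four identities. I would then expand $\widetilde G^{(o)}_{ij}$ by the Schur complement formula, writing it as the Green's function of the forest $\cT\setminus\{o\}$ (a disjoint union of $d$ truncated $(d-1)$-ary trees of depth $\ell-1$) with boundary weights $\widetilde G^{(\bT)}_{c_\al c_\beta}$. Mimicking the expansion of \Cref{lem:diaglem} (in particular \eqref{e:G-Y}--\eqref{e:G-Y2}) and using the explicit tree propagator from \Cref{greentree}, the leading order is
\begin{align*}
\widetilde G^{(o)}_{ij}=\frac{\msc^{2\ell}(z_t)}{(d-1)^{\ell+1}}\sum_{\al\in\sfA_i,\beta\in\sfA_j}\widetilde G^{(\bT)}_{c_\al c_\beta}+\cU+\cZ+\cE,
\end{align*}
where $\sfA_i,\sfA_j$ index the subtrees rooted at $i,j$ respectively, and $\cU,\cZ,\cE$ play the same roles as in \Cref{lem:diaglem}. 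I would then replace $\widetilde G^{(\bT)}_{c_\al c_\beta}$ by $G^{(b_\al b_\beta)}_{c_\al c_\beta}$, absorbing the discrepancy into the error using \Cref{c:expectationbound}.

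For \eqref{e:sameasdisconnect}, taking the absolute value squared and averaging over $o$, the main contribution becomes
\begin{align*}
\frac{1}{N(d-1)^{2\ell+2}}\sum_o\sum_{\al,\al'\in\sfA_i}\sum_{\beta,\beta'\in\sfA_j}\bE\bigl[I_o\,G^{(b_\al b_\beta)}_{c_\al c_\beta}\overline{G^{(b_{\al'}b_{\beta'})}_{c_{\al'}c_{\beta'}}}\,\Pi\bigr].
\end{align*}
Since $(b_\al,c_\al)$ are uniform random edges and $|\sfA_i|=|\sfA_j|=\OO((d-1)^\ell)$, the $(\alpha,\beta,\alpha',\beta')$-summation acts as an average over four independent random edges; applying Cauchy--Schwarz to reduce to $|G^{(b_\al b_\beta)}_{c_\al c_\beta}|^2$ and invoking the Ward identity \eqref{e:Gest} yields the target bound $\OO(N^\fo\Phi)$. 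The identity \eqref{e:GiGi} follows from the same expansion but \emph{without} taking absolute values: the product $\widetilde G^{(\bT)}_{c_\al c_\beta}\widetilde G^{(\bT)}_{c_{\al'} c_{\beta'}}$ survives only when $\{c_\al,c_\beta\}=\{c_{\al'},c_{\beta'}\}$ up to the negligible error (terms with disjoint indices produce two independent averages of Green's entries that are both $\OO(\sqrt{\Phi/N})$ and hence contribute to the error); collecting the diagonal pieces gives exactly the $\msc^{4\ell}(z_t)$ coefficient appearing on the right-hand side of \eqref{e:GiGi}. For \eqref{e:GiGjGk} the sums $\sfA_j$ and $\sfA_k$ are disjoint (since $j,k$ are distinct neighbors of $o$), so the diagonal-matching mechanism used for \eqref{e:GiGi} is unavailable and one gains an extra factor of $(d-1)^{-\ell/2}$ beyond the Ward estimate, producing the $N^{-\fb/2}\bE[\Psi_p]$ bound. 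The two displays in \eqref{e:GiGj} are handled by iterating the same argument: one performs an additional local resampling around the second free vertex ($c$ or $c,c'$) to reduce to an average of Green's function entries between three/four random vertices.

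The main obstacle will be the bookkeeping in expressions like $\cU,\cZ,\cE$ from the Schur expansion: each of the $\mu^2\asymp (d-1)^{2\ell}$ boundary pairs contributes an error from replacing $\widetilde G^{(\bT)}_{c_\al c_\beta}$ by $G^{(b_\al b_\beta)}_{c_\al c_\beta}$, and these errors must be controlled without losing the averaging gain. The control will come from the bounds in \Cref{l:basicG}, \Cref{c:expectationbound} and \Cref{p:WtGbound}, combined with the fact that $\Pi$ is compatible with the switching by \eqref{e:sametilde}, so that all the $N^\fo$-type slack needed to pass from deterministic bounds to integrated bounds is available. A secondary technical point is separating the contribution when $\al=\al'$ or $\beta=\beta'$ (giving the leading $\msc^{4\ell}(z_t)$ in \eqref{e:GiGi}) from off-diagonal contributions, which needs the same sign-tracking as in \Cref{l:coefficient}.
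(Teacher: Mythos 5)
Your skeleton matches the paper's: replace $I_o$ by a forest indicator, use exchangeability of $(\cG,T_\bfS\cG)$ and \eqref{e:sametilde} to pass to $\wt G^{(o)}_{ij}$, expand by the Schur complement noting $L^{(o)}_{ij}=0$, extract the leading term $\frac{\msc^{2\ell}(z_t)}{(d-1)^{\ell}}\sum_{\al\in\sfA_i,\beta\in\sfA_j}\wt G^{(\bT)}_{c_\al c_\beta}$, and control the genuinely off-diagonal main term $(G^{(b_\al b_\beta)}_{c_\al c_\beta})^2$ with $\al\neq\beta$ by the Ward identity, since $c_\al,c_\beta$ are then essentially independent random vertices. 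Up to that point your plan is sound.

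The genuine gap is in the replacement step $\wt G^{(\bT)}_{c_\al c_\beta}\to G^{(b_\al b_\beta)}_{c_\al c_\beta}$, which you propose to "absorb using \Cref{l:basicG}, \Cref{c:expectationbound}, \Cref{p:WtGbound}". None of these suffices. The paper controls this difference by the multi-step Schur analysis of \Cref{l:diffG1} (remove $\bW$, reinsert $\bW\setminus\{b_\al,b_\beta\}$, reinsert $\bT$), and the resulting error contains terms of the form $\sum_{x\sim b_\al,\,x\neq c_\al}|G^{(b_\al)}_{c_\al x}|^2$ — that is, squared Green's function entries between two vertices that are \emph{both} neighbors of the same removed vertex, exactly the structure of the quantity $|G^{(o)}_{ij}|^2$ you are trying to bound. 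The Ward-type estimates you cite apply only when one index is averaged freely over $\qq{N}$; here both indices are pinned to the star of $b_\al$, and the deterministic bound $|G^{(b_\al)}_{c_\al x}|^2\lesssim N^{-2\fb}$ is far larger than the target $N^\fo\Phi$ (which is $N^{-2/3+\oo(1)}$ near the edge). The paper's resolution — and the idea your proposal is missing — is that after averaging over the resampling data this error term is the original quantity itself: one obtains an inequality of the shape $X\lesssim N^{-3\fb/4}X+\bE[N^\fo\Phi\,\Pi]$ with $X=\frac1N\sum_o\bE[I(\{i,o\},\cG)|G^{(o)}_{ij}|^2\Pi]$ (see the chain \eqref{e:cEbound}, \eqref{e:yizhong3}--\eqref{e:yizhong5}, \eqref{e:IIGU3}), and the claim follows by rearranging this self-consistent bound. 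Without this closing step your argument does not terminate. Two secondary points: for the cross terms in \eqref{e:GiGi} and for \eqref{e:GiGjGk}--\eqref{e:GiGj}, the smallness in the paper comes from the trivial-eigenvector identity via \Cref{p:upbb} (summing a $G^{(b_\al\cdot)}_{c_\al\cdot}$ entry over the random edge $b_\al\sim c_\al$ yields an $N^{-\fb/2}$ factor, as in \eqref{e:Gccbb_youyige}), not from treating the two factors as independent $\OO(\sqrt{\Phi/N})$ averages; and the $(d-1)^{-\ell/2}$ gain you invoke for \eqref{e:GiGjGk} is quantitatively insufficient, since $(d-1)^{\ell}\ll N^{\fb}$ so $(d-1)^{-\ell/2}\gg N^{-\fb/2}$.
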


As a consequence of \Cref{lem:deletedalmostrandom}, the following proposition states that during the local resampling, the errors $\cE$ from \Cref{lem:diaglem} and \Cref{lem:offdiagswitch} (after averaging) are negligible. 
\Cref{lem:task2} follows from \Cref{lem:deletedalmostrandom} and Schur complement formula \eqref{e:Schur1}. We defer this proof to \Cref{s:removeonevertex}.

\begin{proposition}
\label{lem:task2}
Adopt the notation from \Cref{s:setting5}, and recall $\Pi$ from \Cref{def:Pi}. We take $z\in {\bf D}$, unused core edges $(b,c), (b',c')\in \cC^\circ\setminus\{ (i,o)\}$, and indices $\al,\beta\in \qq{\mu}$, the following holds
\begin{align}\begin{split}\label{eq:task2}
    &\frac{1}{Z_{\cF^+}}\sum_{\bfi^+}\bE[I(\cF^+,\cG)\bm1(\cG\in \Omega)|\widetilde G^{(\bT)}_{c_\alpha c_\beta}-G_{c_\alpha c_\beta}^{(b_\alpha b_\beta)}|\Pi]\lesssim(d-1)^{\ell}\bE[N^\fo\Phi \Pi],\\
    &\frac{1}{Z_{\cF^+}}\sum_{\bfi^+}\bE[I(\cF^+,\cG)\bm1(\cG\in \Omega)|\widetilde G^{(\bT\cup \mathbb X)}_{c_\alpha c}-G_{c_\alpha c}^{(b_\alpha\cup \mathbb X)}|\Pi]\lesssim (d-1)^{\ell}\bE[N^\fo\Phi \Pi], \quad \bX\in\{\emptyset, b\},\\
   &\frac{1}{Z_{\cF^+}}\sum_{\bfi^+}\bE[I(\cF^+,\cG)\bm1(\cG\in \Omega)|\widetilde G^{(\bT \cup \bX)}_{c c'}-G^{(\bX)}_{c c'}|\Pi]\lesssim (d-1)^{\ell}\bE[N^\fo\Phi \Pi],\quad \mathbb X\in  \{\emptyset, \{b\}, \{b'\}, \{bb'\}\}.
   \end{split}
    \end{align} 
 
\end{proposition}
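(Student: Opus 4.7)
The plan is to deduce the three estimates in \eqref{eq:task2} from the Ward-identity-type bound in \Cref{lem:deletedalmostrandom} by reducing, via the Schur complement formula \eqref{e:Schur1}, the Green's function differences on the two graphs $\widetilde\cG$ and $\cG$ to sums of squared entries of the form $G^{(o)}_{ij}$ with $i,j\sim o$. The guiding observation is that after additionally removing the switching endpoints $b_\alpha,b_\beta$ as well as the ball $\bT$, the two graphs $\widetilde\cG^{(\bT b_\alpha b_\beta)}$ and $\cG^{(\bT b_\alpha b_\beta)}$ agree in a neighborhood of $c_\alpha,c_\beta$ (the remaining switchings at indices $\gamma\neq \alpha,\beta$ being at distance $\geq 3\fR$ by the isolation condition in $I(\cF^+,\cG)=1$), so the leading contributions to the two Green's functions cancel up to a negligible exponentially-decaying error.

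First I would apply the Schur complement formula to $\widetilde G^{(\bT)}_{c_\alpha c_\beta}$, removing $b_\alpha$ and then $b_\beta$, to produce
\begin{equation*}
\widetilde G^{(\bT)}_{c_\alpha c_\beta} = \widetilde G^{(\bT b_\alpha b_\beta)}_{c_\alpha c_\beta} + \frac{\widetilde G^{(\bT)}_{c_\alpha b_\alpha}\widetilde G^{(\bT)}_{b_\alpha c_\beta}}{\widetilde G^{(\bT)}_{b_\alpha b_\alpha}} + \frac{\widetilde G^{(\bT b_\alpha)}_{c_\alpha b_\beta}\widetilde G^{(\bT b_\alpha)}_{b_\beta c_\beta}}{\widetilde G^{(\bT b_\alpha)}_{b_\beta b_\beta}},
\end{equation*}
and an analogous expansion for $G^{(b_\alpha b_\beta)}_{c_\alpha c_\beta}$ obtained by inserting the vertices of $\bT$, of the form $G^{(b_\alpha b_\beta)}_{c_\alpha c_\beta} = G^{(\bT b_\alpha b_\beta)}_{c_\alpha c_\beta} + \sum_{x,y\in\bT} G^{(b_\alpha b_\beta)}_{c_\alpha x}\,(G^{(b_\alpha b_\beta)}_{\bT\bT})^{-1}_{xy}\,G^{(b_\alpha b_\beta)}_{y c_\beta}$. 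Subtracting, the leading terms cancel up to the tree-decay error noted above, and the difference reduces to a sum of correction products. Each such product, after one further Schur reduction that expresses off-diagonal entries such as $\widetilde G^{(\bT)}_{c_\alpha b_\alpha}$ as a weighted sum over neighbors of $b_\alpha$ in $\widetilde \cG^{(\bT)}$, takes the generic form $G^{(\bX o)}_{ij}$ (or its tilde analogue) with $i,j$ two distinct neighbors of a vertex $o\in\{b_\alpha,b_\beta\}\cup \bT$; the tree structure of $\bT$ and the exponential decay of $(G^{(b_\alpha b_\beta)}_{\bT\bT})^{-1}_{xy}$ off the diagonal, established in \Cref{greentree} and \eqref{e:sum_Pbound}, convert the sum over $x\in\bT$ into a bounded sum times the factor $(d-1)^\ell$ appearing in \eqref{eq:task2}.

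After averaging by $Z_{\cF^+}^{-1}\sum_{\bfi^+}$, Cauchy--Schwarz separates each product into the squared modulus of the relevant Green's function entry, summed over the vertex $o$ that plays the role of the removed center. This is precisely the setting of \Cref{lem:deletedalmostrandom} (with $\Pi$ absorbing the remaining factors, and \eqref{e:sametilde} used to switch between tilde and non-tilde variants), yielding an upper bound of order $\bE[N^\fo \Phi \Pi]$ and thus the right-hand side of \eqref{eq:task2}. The second and third bounds follow by the same scheme with $c_\beta$ replaced by an endpoint $c$ of another unused core edge (still requiring one Schur removal at $b_\alpha$), or with both endpoints replaced by $c,c'$ (so that only the Schur removal of $\bT$ is needed). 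The main obstacle in executing the plan is bookkeeping: identifying, for each of the many correction terms produced by the double Schur expansion, the correct vertex $o$ and verifying that the indicator $I_o$ required by \Cref{lem:deletedalmostrandom} can be inserted at negligible cost using \Cref{def:omegabar} together with the tree and isolation conditions encoded in $I(\cF^+,\cG)=1$.
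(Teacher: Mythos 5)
There is a genuine gap, and it sits at the very first step of your reduction. You compare $\wt G^{(\bT b_\al b_\beta)}_{c_\al c_\beta}$ with $G^{(\bT b_\al b_\beta)}_{c_\al c_\beta}$ and dismiss their difference as ``negligible exponentially-decaying'' because the two graphs agree near $c_\al,c_\beta$, the remaining switchings at indices $\gamma\neq\al,\beta$ being far away. But locality of the graph difference does not make the Green's function difference exponentially small at this scale: removing only $b_\al,b_\beta$ leaves the two graphs differing by the $\asymp(d-1)^{\ell}$ switched edges $\{l_\gamma,a_\gamma\},\{b_\gamma,c_\gamma\}\to\{l_\gamma,c_\gamma\},\{a_\gamma,b_\gamma\}$, and by the resolvent identity each such changed edge contributes a term of the form $G^{(\cdot)}_{c_\al x}\wt G^{(\cdot)}_{y c_\beta}$ with $x,y$ far from $c_\al,c_\beta$. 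Such off-diagonal entries are only $\OO(N^{-\fb})$ pointwise and of size $\sqrt{N^{\fo}\Phi}$ in quadratic mean, so after summing over the $\OO((d-1)^{\ell})$ changed edges this ``error'' is of order $(d-1)^{\ell}N^{\fo}\Phi$ — exactly the size of the bound you are trying to prove, i.e.\ it is the dominant contribution, not a remainder. (If Green's functions depended on the graph only locally up to exponentially small errors, the target estimate would be trivial.) As written, your argument discards the main term on the basis of a false locality principle, so the proof is incomplete even though the final bound happens to be of the right order.

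The fix is the route the paper takes in \Cref{l:diffG1}: remove the \emph{entire} set $\bW=\{b_\gamma\}_{\gamma\in\qq{\mu}}$ together with $\bT$, since every switched edge has an endpoint in $\bT\cup\bW$, so that $\tcG^{(\bT\bW)}=\cG^{(\bT\bW)}$ holds \emph{exactly}; then account for the three replacement steps $\wt G^{(\bT)}\to G^{(\bT\bW)}\to G^{(\bT b_\al b_\beta)}\to G^{(b_\al b_\beta)}$ via Schur complement expansions as in \eqref{e:diffG1}--\eqref{e:diffG3}, which produce sums of (squares of) Green's function entries rather than a cancellation. The resulting sums over $\gamma\in\qq{\mu}$ and over $\partial\bT$ are then controlled, after summing over $b_\al\sim c_\al$, by the Ward-type bound \eqref{e:use_Ward}, while the only terms not amenable to it — the entries $G^{(b_\al)}_{c_\al x}$ with $x\sim b_\al$, $x\neq c_\al$ — are handled by \Cref{lem:deletedalmostrandom} (estimate \eqref{e:sameasdisconnect}), which is the part of your plan that is aligned with the paper. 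Your attribution of the factor $(d-1)^{\ell}$ to the decay of $(G^{(b_\al b_\beta)}|_{\bT})^{-1}$ is also off: it comes from the number of switched edges and from the boundary of $\bT$, each contributing $\OO(N^{\fo}\Phi)$ after Ward, not from a bounded geometric sum.
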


\subsection{Estimates of Admissible Functions}
\label{s:adm_bound}

In this section, we gather estimates for the admissible functions as defined in \Cref{def:pgen}. We start with the proof of \Cref{l:adm_term_bound}.

\begin{proof}[Proof of \Cref{l:adm_term_bound}]
For $\cG\in \Omega$ and $I(\cF,\cG)=1$, thanks to \eqref{eq:infbound} and \eqref{eq:local_law}, each factor in \eqref{e:defcE1} and \eqref{e:defcE0} is bounded by $N^{-\fb} $ with overwhelmingly high probability over $Z$. 

For the second statement in \eqref{e:naive-Ward}, by \eqref{e:local_Green} and \eqref{e:Pijbound}, we have
\begin{align}\label{e:qiuhe1}
    \sum_{w\in \qq{N}}|L^2_{sw}|\lesssim \sum_{\dist_\cG(s,w)\leq 2\fR}\frac{1}{(d-1)^{\dist_\cG(s,w)}}\lesssim \fR. 
\end{align}
The first statement in \eqref{e:naive-Ward} follows from \eqref{e:Gest} and \eqref{e:qiuhe1}
\begin{align}\label{e:qiuhe2}
    \frac{1}{N}\sum_{w\in \qq{N}} | G^{\circ}_{sw}|^2\lesssim \frac{1}{N}\sum_{w\in \qq{N}} |G_{sw}|^2+|L_{sw}|^2\lesssim N^\fo \Phi+ \frac{\fR}{N}\lesssim N^\fo \Phi.
\end{align}

For the two statements in \eqref{e:av_naive-Ward}, we recall the definition of $\Av$ from \Cref{def:av_Green}. Thanks to \eqref{e:aveall} and \eqref{e:aveall2}, we have
\begin{align}\label{e:avG_avL}
    |(\Av G)_{o'w}|\lesssim |G_{i'w}|+|G_{o'w}|+|\cE^{(w)}|,
    \quad 
    |(\Av L)_{o'w}|\lesssim |L_{i'w}|+|L_{o'w}|+|\wt \cE^{(w)}|,
\end{align}
where the errors satisfy
\begin{align*}
\frac{1}{N}\sum_{w\in \qq{N}} |\cE^{(w)}|^2\lesssim (d-1)^{2\ell}t\Phi+\frac{\ell}{N}, \quad  \frac{1}{N}\sum_{w\in \qq{N}} |\wt \cE^{(w)}|^2\lesssim \frac{\ell}{N}.
\end{align*}
Using \eqref{e:avG_avL} as input, the claim \eqref{e:av_naive-Ward} follows by the same argument as in  \eqref{e:qiuhe2} and \eqref{e:qiuhe1}.

For the following discussion, we recall $\fc\gg \fb\gg \ell/\log_{d-1} N\gg \fo$ from \eqref{e:parameters}, and $\Phi\geq N^{2\fc}/N\geq N^{4\fb}/N$ from \eqref{e:defPhi}.
By the definition of admissible functions (as in \Cref{def:pgen}), for each $\cW_j$ if $r_{j0}=1$,  $|\cW_j|=|Q_t-Y_t|$, and if $r_{j0}=3$, we have  $|\cW_j|\lesssim (d-1)^\ell\Upsilon/N\lesssim \Upsilon N^{-\fb}\Phi$. In the following we discuss the case that $r_{j0}=2$. In this case, we have
\begin{align*}
    |\cW_j|&\lesssim \frac{\Upsilon}{Nd}\sum_{u_j\sim v_j\in \qq{N}}\prod_{k=1}^{r_{j1}}|R_{k}|\prod_{k'=1}^{r_{j2}}|R'_{k'}|,
\end{align*}
where $R_k$ are factors of the form \eqref{e:defcE0}, and $R_k'$ are factors of the form \eqref{e:defLerror}.

If $r_{j2}=0$, then $r_{j1}\geq 2$. Using that $|R_{k}|\lesssim N^{-\fb} $, $|R'_{k'}|\lesssim 1$, \eqref{e:naive-Ward} and \eqref{e:av_naive-Ward}, we have
\begin{align}\label{e:k1}
    |\cW_j|&\lesssim \frac{\Upsilon}{Nd}N^{-(r_{j1}-2)\fb} \sqrt{\sum_{u_j\sim v_j\in \qq{N}}|R_{1}|^2\sum_{u_j\sim v_j\in \qq{N}}|R_{2}|^2}\lesssim N^\fo N^{-(r_{j1}-2)\fb}  \Upsilon \Phi.
\end{align}
If $r_{j2}=1$, then $r_{j1}\geq 1$. Using that $|R_{k}|\lesssim N^{-\fb} , |R'_{k'}|\lesssim 1$, \eqref{e:naive-Ward} and \eqref{e:av_naive-Ward}, we have
\begin{align}\label{e:k2}
    |\cW_j|&\lesssim \frac{\Upsilon}{Nd}\sum_{u_j\sim v_j\in \qq{N}}N^{-(r_{j1}-1)\fb}  \sqrt{\sum_{u_j\sim v_j\in \qq{N}}|R_{1}|^2\sum_{u_j\sim v_j\in \qq{N}}|R'_{1}|^2}\lesssim N^\fo N^{-(r_{j1}-1)\fb} \Upsilon \sqrt{\frac{\Phi}{N}}.
\end{align}
Finally if $r_{j2}\geq 2$, then using that $|R_{k}|\lesssim N^{-\fb} , |R'_{k'}|\lesssim 1$, \eqref{e:naive-Ward} and \eqref{e:av_naive-Ward}, we have
\begin{align}\label{e:k3}
    |\cW_j|&\lesssim \frac{\Upsilon}{Nd}\sum_{u_j\sim v_j\in \qq{N}}N^{-r_{j1}\fb}  \sqrt{\sum_{u_j\sim v_j\in \qq{N}}|R'_{1}|^2\sum_{u_j\sim v_j\in \qq{N}}|R'_{2}|^2}\lesssim \frac{N^\fo \Upsilon}{N}\lesssim  N^\fo \Upsilon \sqrt{\frac{\Phi}{N}}.
\end{align}
The estimates \eqref{e:k1}, \eqref{e:k2} and \eqref{e:k3} together lead to \eqref{e:Sjbound}.

If $(r_{j0}, r_{j1}, r_{j2})=(2,2,0)$, then \eqref{e:k1} gives $(d-1)^{3r_{j1}\ell}|\cW_j|\lesssim (d-1)^{6\ell}N^\fo \Phi\lesssim (d-1)^{8\ell}\Upsilon \Phi$. If $r_{j0}=2, r_{j2}=0$, and $r_{j1}\geq 4$, then \eqref{e:k1} implies $(d-1)^{3r_{j1}\ell}|\cW_j|\lesssim (d-1)^{3r_{j1}\ell}N^\fo N^{-(r_{j1}-2)\fb}\Phi\lesssim N^{-\fb}\Upsilon \Phi$. 
If $r_{j0}=2, r_{j2}\geq 1$, then \eqref{e:k2} and \eqref{e:k3} implies $(d-1)^{3r_{j1}\ell}|\cW_j|\lesssim (d-1)^{3r_{j1}\ell}N^\fo N^{-\max\{r_{j1}-1,0\}\fb}\sqrt{\Phi/N}\lesssim (d-1)^{3\ell}N^{\fo}\sqrt{\Phi/N}\lesssim N^{-\fb}\Upsilon \Phi$.  This finishes the proof of \eqref{e:1_Sbound} and \Cref{l:adm_term_bound}.
\end{proof}

We recall the three cases of $\bmr$ from \Cref{r:r_fenlei}. Next, we introduce the following error parameter, which depends on whether $\bmr$ satisfies  \eqref{e:1_expand} or \eqref{e:feasible}.
\begin{align}\begin{split}\label{e:defUG}
 \Pi(z,\bmr):&=\bm1(\cG\in \Omega)\left(|Q_t(z)-Y_t(z)|+(d-1)^{8\ell} \Upsilon(z)\Phi(z)\right)^{p-1}, \;\text{if }\eqref{e:1_expand},\eqref{e:feasible}, \text{ do not hold;}\\
 \Pi(z,\bmr):&=\bm1(\cG\in \Omega)(d-1)^{8\ell} \Upsilon(z)\Phi(z)\left(|Q_t(z)-Y_t(z)|+(d-1)^{8\ell} \Upsilon(z)\Phi(z)\right)^{p-2}, \;  \text{if }\eqref{e:1_expand} \text{ holds};\\ \Pi(z,\bmr):&=\bm1(\cG\in \Omega)\bigg(N^{-\fb} \Upsilon(z)\Phi(z)\left(|Q_t(z)-Y_t(z)|+(d-1)^{8\ell} \Upsilon(z)\Phi(z)\right)^{p-2}\\
    &+((d-1)^{8\ell} \Upsilon(z)\Phi(z))^2\left(|Q_t(z)-Y_t(z)|+(d-1)^{8\ell} \Upsilon(z)\Phi(z)\right)^{p-3}\bigg), 
    \;\text{if }\eqref{e:feasible} \text{ holds}.
\end{split}\end{align}
The above error parameters will be used to upper bound admissible functions in \Cref{l:Sbound}. 

We remark that the sizes of the three $\Pi(z,\bmr)$ terms in \eqref{e:defUG}  decrease, and in particular in all three cases $\Pi(z,\bmr)\lesssim \bm1(\cG\in \Omega)\left(|Q_t(z)-Y_t(z)|+(d-1)^{8\ell} \Upsilon(z)\Phi(z)\right)^{p-1}$,
and 
\begin{align}\label{e:bPi1}
    \Phi\Pi(z,\bmr)\lesssim \Psi_p.
\end{align}
Moreover, if $\bmr$ satisfies \eqref{e:feasible}, then we have
\begin{align}\begin{split}\label{e:bPi2}
&\left(|Q_t-Y_t|+\frac{N^{-7\fb/8}}{N\eta}+\Phi\right)\Pi(z,\bmr)\lesssim N^{-3\fb/4}\Psi_p, \text { if $\bmr$ satisfies \eqref{e:1_expand} },\\
&\left(|Q_t-Y_t|+\frac{1}{N\eta}+\Phi\right)\Pi(z,\bmr)\lesssim N^{-3\fb/4}\Psi_p, \text { if $\bmr$ satisfies \eqref{e:feasible}}.
\end{split}\end{align}
In the following propositions, we gather estimates on admissible functions $R_\bfi$ before and after local resampling, in terms of the error parameter $\Pi(z,\bmr)$ from \eqref{e:defUG}.

\begin{lemma}\label{l:Sbound}
  Adopt the notation of \Cref{s:setting5}. We take $z\in {\bf D}$ and an array $\bmr=[r_{jk}]_{1\leq j\leq p-1, 0\leq k\leq 2}$ satisfying \eqref{e:defr_copy}. 
  Assume $\cG,\tcG\in \Omega$ and $I(\cF^+,\cG)=1$. Then
   for any admissible function $R_\bfi=R\prod_{j=1}^{p-1} \cW_j\in \Adm(r,\bmr,\cF,\cG)$ as in \Cref{def:pgen}, 
   the following holds with overwhelmingly high probability over $Z$:
      \begin{align}\label{e:Sbound}
    (d-1)^{3\ell\sum_{j=1}^{p-1} r_{j1}}\left(\prod_{j=1}^{p-1}  |\cW_j|+\prod_{j=1}^{p-1} |\wt \cW_j|\right)\lesssim \Pi(z,\bmr).
\end{align}
\end{lemma}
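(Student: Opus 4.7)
The plan is to reduce the product bound \eqref{e:Sbound} to the pointwise bound \eqref{e:1_Sbound} from \Cref{l:adm_term_bound}, and then perform a case analysis on $\bmr$ matching the three expressions defining $\Pi(z,\bmr)$ in \eqref{e:defUG}. Recall that \eqref{e:1_Sbound} asserts
\begin{align*}
(d-1)^{3\ell r_{j1}}|\cW_j| \lesssim
\begin{cases}
|Q_t-Y_t| & \text{if } r_{j0}=1,\\
(d-1)^{8\ell}\Upsilon\Phi & \text{if } (r_{j0},r_{j1},r_{j2})=(2,2,0),\\
N^{-\fb}\Upsilon\Phi & \text{otherwise.}
\end{cases}
\end{align*}
Multiplying these over $j$ distributes the total prefactor $(d-1)^{3\ell\sum_j r_{j1}}$ and gives the desired control, provided the resulting product of bounds matches the appropriate case in \eqref{e:defUG}.

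Following \Cref{r:r_fenlei}, I would split into three cases. If every $r_{j0}=1$, then every factor is in regime (i) and the product is bounded by $|Q_t-Y_t|^{p-1}$, which is dominated by the first form of $\Pi(z,\bmr)$. If $\bmr$ satisfies \eqref{e:1_expand}, then exactly one index $j'$ contributes regime (ii), giving $(d-1)^{8\ell}\Upsilon\Phi$, while the other $p-2$ indices contribute $|Q_t-Y_t|$, matching the second form of $\Pi(z,\bmr)$. If $\bmr$ satisfies \eqref{e:feasible}, I would treat its three sub-cases: the presence of an index with $r_{j0}=3$, or an index with $r_{j0}=2$ and $(r_{j1},r_{j2})\neq(2,0)$, produces at least one factor bounded by $N^{-\fb}\Upsilon\Phi$, yielding the first summand of the third form of $\Pi(z,\bmr)$; the presence of at least two indices with $r_{j0}=2$ produces two factors each bounded by $(d-1)^{8\ell}\Upsilon\Phi$, yielding the second summand. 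In each sub-case the remaining $r_{j0}=1$ factors contribute $|Q_t-Y_t|$, which is absorbed into $(|Q_t-Y_t|+(d-1)^{8\ell}\Upsilon\Phi)^{p-2}$ or $(|Q_t-Y_t|+(d-1)^{8\ell}\Upsilon\Phi)^{p-3}$ as needed.

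For the switched product $\prod_j |\wt\cW_j|$, I would note that \Cref{lem:configuration} ensures $I(\cF,\wt\cG)=1$ with overwhelming probability, so \eqref{e:1_Sbound} applies verbatim to the tilde quantities, producing bounds in terms of $|\wt Q_t-\wt Y_t|$, $\wt\Upsilon$, and $\wt\Phi$. The three estimates in \eqref{e:tilde_bound} then convert these to the untilded versions: $\wt\Phi\lesssim\Phi$, $\wt\Upsilon\lesssim\Upsilon$, and $|\wt Q_t-\wt Y_t|\lesssim|Q_t-Y_t|+(d-1)^{8\ell}\Upsilon\Phi$. The last estimate is why $\Pi(z,\bmr)$ in \eqref{e:defUG} is stated using the sum $|Q_t-Y_t|+(d-1)^{8\ell}\Upsilon\Phi$ rather than $|Q_t-Y_t|$ alone; with this choice, the same case analysis as above yields the claimed bound on the tilde product as well.

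The main obstacle is really only bookkeeping in the \eqref{e:feasible} case, where one must track which sub-case is active and match it to the correct summand of $\Pi(z,\bmr)$; there is no hidden analytic difficulty since \eqref{e:1_Sbound} already does the heavy lifting, and the absorption of $|Q_t-Y_t|$ factors into the combined expression $|Q_t-Y_t|+(d-1)^{8\ell}\Upsilon\Phi$ is automatic.
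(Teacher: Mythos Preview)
Your proposal is correct and follows essentially the same approach as the paper: multiply the factor-wise bound \eqref{e:1_Sbound} over $j$, match the resulting product against the three-case definition of $\Pi(z,\bmr)$ in \eqref{e:defUG}, and then handle the tilde product by applying the same bound and invoking \eqref{e:tilde_bound} to pass from $\wt\Phi,\wt\Upsilon,|\wt Q_t-\wt Y_t|$ back to the untilded quantities. The paper's version is more compressed (it writes out the product bound once and then simply notes that $\Pi(z,\bmr)$ is a monotone function of $\Phi$, $\Upsilon$, and $|Q_t-Y_t|+(d-1)^{8\ell}\Upsilon\Phi$, so $\wt\Pi(z,\bmr)\lesssim\Pi(z,\bmr)$ is immediate from \eqref{e:tilde_bound}), but your explicit case split per \Cref{r:r_fenlei} is exactly what underlies that sentence.
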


\begin{proposition}\label{p:small_Ri}
     Adopt the notation of \Cref{s:setting5}. We take $z\in {\bf D}$, an array $\bmr=[r_{jk}]_{1\leq j\leq p-1, 0\leq k\leq 2}$ satisfying \eqref{e:defr_copy}, and $R_\bfi\in \Adm(r,\bmr,\cF,\cG)$ as in \Cref{def:pgen}. Then the following holds with overwhelmingly high probability over $Z$:
    \begin{enumerate}
        \item \label{i1:small} If $R_\bfi$ contains two terms of the form $\{G_{cc'}^{(bb')},G_{bc'}^{(b')}, G_{bb'}, G_{cb'}\}$ with $(c,b)\neq (c',b')\in \cC^\circ$ or $ G^{\circ}_{ss'}$ with $s,s'\in \cK$ in different connected components of $\cF$, then
    \begin{align}\label{e:refined_bound}
        \frac{(d-1)^{6\ell r+  3\ell\sum_{j=1}^{p-1} r_{j1}}}{Z_\cF}\sum_{\bfi}\bm1(\cG\in \Omega)I(\cF, \cG)R_\bfi =\OO\left(\frac{(d-1)^{6\ell r}N^\fo}{N^{(r-2)\fb} } \Phi\Pi(z,\bmr)\right).
    \end{align}
     If we further assume $r\geq 3$, or $\bmr$ satisfies \eqref{e:feasible}, then by \eqref{e:bPi1} and \eqref{e:bPi2}, the expectation of \eqref{e:refined_bound} is bounded by  $\OO(N^{-\fb/2}\bE[\Psi_p])$;
        \item \label{i2:small} If $R_\bfi$ contains exact one term of the following form: $\{G_{cc'}^{(bb')},G_{bc'}^{(b')}, G_{bb'}, G_{cb'}\}$ with $(b,c)\neq (b',c')\in \cC^\circ$ or $ G^{\circ}_{ss'}$ with $s,s'\in \cK$ in different connected components of $\cF$, and $|\{j\in\qq{p-1}: r_{j0}=2,3\}|\geq 1$ then
    \begin{align}\label{e:refined_bound2}
        \frac{(d-1)^{6\ell r+ 3\ell\sum_{j=1}^{p-1} r_{j1}}}{Z_\cF}\sum_{\bfi}\bm1(\cG\in \Omega)I(\cF, \cG)R_\bfi =\OO\left(\left( \frac{1}{N\eta}+\Phi\right)\frac{(d-1)^{6\ell r} \Pi(z,\bmr)}{ N^{(r-1)\fb} }\right).
    \end{align}
     If we further assume $r\geq 2$, or $\bmr$ satisfies \eqref{e:feasible}, then by \eqref{e:bPi1} and \eqref{e:bPi2}, the expectation of \eqref{e:refined_bound2} is bounded by   $\OO(N^{-\fb/2}\bE[\Psi_p])$.

     We also record the following estimate, which is a special case of \eqref{e:refined_bound2}. For $(b,c)\neq (b',c')\in \cC^\circ$, $x\in \{b,c\}$, $x'\in \{b',c'\}$, and $w,w'\in \{u,v\}$, then
\begin{align}\begin{split}\label{e:refined_bound2_special}
          &\phantom{{}={}}\frac{1}{Z_\cF}\sum_{\bfi}\bm1(\cG\in \Omega)I(\cF, \cG)G_{cc'}^{(bb')}\times \frac{\{1-\del_1 Y_\ell, -t\del_2 Y_\ell\}}{Nd}\sum_{u\sim v}G^{\circ}_{xw}G^{\circ}_{x'w'} (Q_t-Y_t)^{p-2}\\
          &\lesssim \bm1(\cG\in \Omega)\frac{N^\fo \Upsilon \Phi}{N\eta}|Q_t-Y_t|^{p-2}
          \lesssim N^\fo\Psi_p.
     \end{split}\end{align}
       \end{enumerate}
\end{proposition}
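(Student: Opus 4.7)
The plan is to estimate the averaged sums in \eqref{e:refined_bound} and \eqref{e:refined_bound2} by Cauchy--Schwarz on the off-diagonal Green's function factors combined with the Ward-identity bounds from \Cref{l:basicG}. The pointwise bounds $|B|\lesssim N^{-\fb}$ from \eqref{e:Bsmall} on each remaining S-product entry and the bound on $\prod_j|\cW_j|$ supplied by \Cref{l:Sbound} will absorb the combinatorial prefactor $(d-1)^{6\ell r+3\ell\sum_j r_{j1}}$ and produce the requested control parameter $\Pi(z,\bmr)$.

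For item \eqref{i1:small}, write the admissible function as $R_\bfi=X_1 X_2\cdot R'\cdot \prod_j \cW_j$, where $X_1,X_2\in\{G_{cc'}^{(bb')},G_{bc'}^{(b')},G_{bb'},G_{cb'},G^\circ_{ss'}\}$ are the two off-diagonal factors coming from the hypothesis, and $R'$ gathers the remaining $r-2$ S-product entries. Applying Cauchy--Schwarz in the vertices of $\bfi^+$ that parametrize $X_1$ and $X_2$ gives
\begin{equation*}
\frac{1}{Z_\cF}\sum_\bfi|X_1X_2||R'|\prod_j|\cW_j|\leq\sqrt{\tfrac{1}{Z_\cF}\sum_\bfi|X_1|^2|R'|\prod_j|\cW_j|}\sqrt{\tfrac{1}{Z_\cF}\sum_\bfi|X_2|^2|R'|\prod_j|\cW_j|}.
\end{equation*}
In each square root I would sum the free vertices attached to $X_i$ using the Ward-type bound \eqref{e:use_Ward} when $X_i$ is a Green's function entry with a removed row, or \eqref{e:Gest}/\eqref{e:naive-Ward} when $X_i=G^\circ_{ss'}$, producing a factor $N^{\fo}\Phi$ per square root. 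The residual S-product factors in $R'$ contribute $N^{-(r-2)\fb}$ via \eqref{e:Bsmall}, and $\prod_j|\cW_j|$ contributes $(d-1)^{-3\ell\sum_jr_{j1}}\Pi(z,\bmr)$ via \eqref{e:Sbound}. Multiplying everything yields \eqref{e:refined_bound}, and the sharper $N^{-\fb/2}\bE[\Psi_p]$ bound when $r\geq 3$ or $\bmr$ satisfies \eqref{e:feasible} follows directly from \eqref{e:bPi1}--\eqref{e:bPi2}.

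For item \eqref{i2:small}, a single off-diagonal factor $X_1$ produces only $\sqrt{N^\fo\Phi}$ from the Ward identity, which is insufficient; the missing smallness must be extracted from the hypothesis that some index $j'$ has $r_{j'0}\in\{2,3\}$. If $r_{j'0}=3$, the explicit factor $(d-1)^\ell/N$ in $\cW_{j'}$, combined with the pointwise bound $|X_1|\lesssim N^{-\fb}$ (via \eqref{e:Bsmall}), already supplies the $1/(N\eta)$ factor (since $1/N\lesssim 1/(N\eta)$ in $\bf D$). If $r_{j'0}=2$, I would Cauchy--Schwarz jointly in the $\bfi$-indices of $X_1$ and the special-edge sum $\frac{1}{Nd}\sum_{u_{j'}\sim v_{j'}}F_{u_{j'}v_{j'}}$ inside $\cW_{j'}$: one side produces $\Phi$ via \eqref{e:use_Ward} applied to $|X_1|^2$, while the other side produces a factor bounded by \eqref{e:Sjbound} applied to $|F_{u_{j'}v_{j'}}|^2$ (using the $L^2$ bounds \eqref{e:naive-Ward} and \eqref{e:av_naive-Ward}). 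The remaining $r-1$ S-product entries are bounded pointwise by $N^{-(r-1)\fb}$, and the $\cW_j$ with $j\neq j'$ are controlled by \Cref{l:Sbound}, giving \eqref{e:refined_bound2}. The special case \eqref{e:refined_bound2_special} is exactly this scheme with $X_1=G_{cc'}^{(bb')}$ and $\cW_{j'}=(1-\del_1 Y_\ell)(Nd)^{-1}\sum_{u\sim v}G^\circ_{xw}G^\circ_{x'w'}$, where Cauchy--Schwarz on the $u\sim v$ sum together with \eqref{e:use_Ward} applied to both $|G_{cc'}^{(bb')}|^2$ and $|G^\circ_{\cdot u},G^\circ_{\cdot v}|^2$ yields the claimed $\Upsilon\Phi/(N\eta)$ size.

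The principal obstacle will be the correct distribution of Cauchy--Schwarz between the $\bfi$-sum and the inner $u_{j'}\sim v_{j'}$-sum in $\cW_{j'}$ for item \eqref{i2:small}: we must choose which free vertices pair with $|X_1|^2$ versus $|F_{u_{j'}v_{j'}}|^2$ so that \emph{both} resulting sums fall within the scope of \eqref{e:use_Ward}, \eqref{e:naive-Ward} or \eqref{e:av_naive-Ward}. A careful bookkeeping of removed rows is required: depending on which core edges are attached to $X_1$ and which to $F_{u_{j'}v_{j'}}$, the Green's function in the square root may need its set $\bX$ of removed vertices enlarged, after which one still needs the diagonal part of $G^{(\bX)}G^{(\bX)*}$ to be bounded by $\Im[m_t]/\eta$. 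Once this bookkeeping is in place, all remaining steps reduce to direct applications of \Cref{l:Sbound}, \eqref{e:Bsmall}, and the comparisons \eqref{e:bPi1}--\eqref{e:bPi2}.
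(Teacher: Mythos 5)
Your treatment of item \eqref{i1:small} is essentially the paper's argument: decompose $I(\cF,\cG)$ so that the two edges carrying the off-diagonal factors can be averaged freely, bound the remaining $r-2$ S-product entries pointwise by $N^{-\fb}$ via \eqref{e:Bsmall}, control $\prod_j\cW_j$ by \Cref{l:Sbound}, and use the Ward-type bounds \eqref{e:use_Ward}/\eqref{e:Gest} on the squared off-diagonal entries (after reducing $G_{cc'}^{(bb')}$ to plain entries via \eqref{e:Gbbcc}); this gives the factor $N^\fo\Phi$ and hence \eqref{e:refined_bound}. Aside from the slip that each square root contributes $\sqrt{N^\fo\Phi}$ (not $N^\fo\Phi$), this part is sound and matches the paper.

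Item \eqref{i2:small}, however, has a genuine gap: the mechanism you propose cannot produce the factor $\tfrac{1}{N\eta}+\Phi$. In the paper this factor does \emph{not} come from the index $j'$ with $r_{j'0}\in\{2,3\}$ at all; it comes from the operator-norm bound $\|G\|_{\spec}\leq\eta^{-1}$ applied to the single connecting entry viewed as a bilinear form over the two independent edge averages: after replacing $G_{cc'}^{(bb')}$ by $G_{cc'}$ (plus similar main terms) via \eqref{e:Gbbcc} and sorting all factors into $U_{bc}$, $V_{b'c'}$, $R'_{\widehat\bfi}$ according to which core edge they touch, one bounds $\bigl|\sum_{b\sim c,\,b'\sim c'}U_{bc}G_{cc'}V_{b'c'}\bigr|\leq\eta^{-1}\bigl(\sum|U_{bc}|^2\bigr)^{1/2}\bigl(\sum|V_{b'c'}|^2\bigr)^{1/2}$, and the normalization $(Nd)^{-2}$ then yields $1/(N\eta)$, with the vector norms handled by Cauchy--Schwarz over the special edges and \eqref{e:k1}--\eqref{e:k3}. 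Your substitutes fail quantitatively. Using Ward on the single connecting factor gives only $\sqrt{N^\fo\Phi}$, which in the bulk exceeds $1/(N\eta)+\Phi$ by roughly $\sqrt{N\eta}$, and the ``missing smallness'' cannot be recovered from $\cW_{j'}$: the $1/N$ in the $r_{j'0}=3$ weight and the $L^2$ bound on $F_{u_{j'}v_{j'}}$ in the $r_{j'0}=2$ case are exactly the quantities already recorded in $\Pi(z,\bmr)$ through \Cref{l:Sbound} and \eqref{e:defUG}, so spending them on the prefactor double-counts; a direct size check (e.g.\ at $\eta\asymp N^{-\fo}$, where $(d-1)^{\ell}/N\gg\Phi(1/(N\eta)+\Phi)$) shows the resulting bound misses \eqref{e:refined_bound2} by a large power of $N$. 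The same applies to \eqref{e:refined_bound2_special}: Ward identities alone give at best $\Upsilon\, N^\fo\Phi\cdot\sqrt{N^\fo\Phi}$, not $N^\fo\Upsilon\Phi/(N\eta)$; there too the $\eta^{-1}$ spectral bound on the $(b\sim c)\times(b'\sim c')$ bilinear form is indispensable. Note also that the hypothesis that the two endpoints lie in different connected components of $\cF$ is what makes the two edge sums independent (after the indicator reduction $I(\cF,\cG)\approx A_{bc}A_{b'c'}I(\widehat\cF,\cG)$), and this reduction must be carried out before the bilinear step; your ``principal obstacle'' paragraph worries about bookkeeping of removed rows in Ward identities, but the real obstruction is that Ward identities are simply not strong enough here.
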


Before proving \Cref{l:Sbound} and \Cref{p:small_Ri}, we present the following lemma, which will be used to estimate $G_{cc'}^{(bb')}$.
\begin{lemma}Adopt the notation of \Cref{p:small_change}.  Assume $\cG\in \Omega$ and $I(\cF,\cG)=1$. Then for any  $(c,b)\neq (c',b')\in \cC^\circ$, the following holds with overwhelmingly high probability over $Z$:
\begin{align}\label{e:rough_bb1}
    |G_{bb}|,|G_{b'b'}|,|G_{cc}|,|G_{c'c'}|, |G_{bc}|, |G_{b'c'}|\asymp 1, \quad 
    |G_{bb'}|,|G_{cc'}|, |G_{bc'}|,|G_{cb'}|\lesssim N^{-\fb} ,
\end{align}
and 
\begin{align}\begin{split}\label{e:Gbbcc}
 G_{cc'}^{(bb')}&=   G_{cc'}-\frac{G_{cb}G_{bc'}}{G_{bb}}-\frac{G_{cb'}G_{b'c'}}{G_{b'b'}} +\frac{G_{cb}G_{bb'}G_{b'c'}}{G_{bb}G_{b'b'}}  \\ 
&+\OO\left(|G_{bb'}|^2+|G_{cc'}|^2+|G_{bc'}|^2+|G_{cb'}|^2+\frac{1}{N^2}\right).
\end{split}\end{align}
\end{lemma}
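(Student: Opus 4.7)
The plan is to establish the lemma in two essentially independent steps: first the crude size estimates \eqref{e:rough_bb1}, and then the expansion \eqref{e:Gbbcc}.

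For the crude estimates, I would apply the local law \eqref{eq:local_law} of \Cref{l:basicG}, which compares $G$ entry-wise to the tree-extension Green's function $P_{xy}(\cB_{\fR/100}(\{x,y\},\cG),z_t,\msc(z_t))$ from \Cref{def:pdef} up to error $N^{-\fb}$. Since $\cG\in\Omega$ and $I(\cF,\cG)=1$, each of $b,b',c,c'$ has a radius $\fR$ tree neighborhood, and the explicit tree formulas in \Cref{greentree} give $P_{xx}=\md(z_t)$ and $P_{xy}=-\md(z_t)\msc(z_t)/\sqrt{d-1}$ for adjacent $x,y$. For $z\in\mathbf D$ we have $|\msc(z_t)|,|\md(z_t)|\asymp 1$ (see the discussion following \eqref{e:D}), which yields $|G_{bb}|,|G_{b'b'}|,|G_{cc}|,|G_{c'c'}|,|G_{bc}|,|G_{b'c'}|\asymp 1$. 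For the cross entries $G_{bb'},G_{cc'},G_{bc'},G_{cb'}$, the indicator $I(\cF,\cG)=1$ enforces $\dist_\cG(c,c')\geq 3\fR$, so $\{b,c\}$ and $\{b',c'\}$ lie in different connected components of every ball $\cB_{\fR/100}$, causing the corresponding tree-extension values to vanish identically; the local law alone therefore produces the claimed $N^{-\fb}$ bounds.

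For the expansion, I would iterate the single-vertex Schur complement identity (used already in \eqref{e:Gccbb1}) twice. A first application to $G$ gives $G^{(b)}_{xy}=G_{xy}-G_{xb}G_{by}/G_{bb}$ for $x,y\in\{c,c',b'\}$, and a second application to $G^{(b)}$ yields $G^{(bb')}_{cc'}=G^{(b)}_{cc'}-G^{(b)}_{cb'}G^{(b)}_{b'c'}/G^{(b)}_{b'b'}$. Substituting produces a rational expression in the original entries. Using the rough bounds together with $|G_{bb}|,|G_{b'b'}|\asymp 1$, I would expand
\begin{align*}
\frac{1}{G^{(b)}_{b'b'}}=\frac{1}{G_{b'b'}-G_{b'b}G_{bb'}/G_{bb}}=\frac{1}{G_{b'b'}}\sum_{k\geq 0}\left(\frac{G_{b'b}G_{bb'}}{G_{b'b'}G_{bb}}\right)^k,
\end{align*}
and truncate at a large finite order, with tail $\OO(N^{-2})$. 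Multiplying through and collecting powers of the small quantities $G_{bb'},G_{cc'},G_{bc'},G_{cb'}$ reveals exactly one correction at order $N^{-\fb}$, namely $G_{cb}G_{bb'}G_{b'c'}/(G_{bb}G_{b'b'})$, in which only $G_{bb'}$ is small while $G_{cb}$ and $G_{b'c'}$ are of size $\asymp 1$.

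The principal bookkeeping task is to verify that all other cross terms are absorbed into the stated error $\OO(|G_{bb'}|^2+|G_{cc'}|^2+|G_{bc'}|^2+|G_{cb'}|^2+N^{-2})$. The symmetric-looking competitor $G_{cb'}G_{b'b}G_{bc'}/(G_{bb}G_{b'b'})$ is a triple product in which every factor already lies among the small quantities, and the elementary inequality $|abc|\leq(|a|^3+|b|^3+|c|^3)/3\leq|a|^2+|b|^2+|c|^2$ (valid because each factor is bounded by $1$) places it in the error. The same reasoning disposes of every term obtained from $k\geq 1$ in the geometric series and of the higher-order products from the numerator expansion, since each such term carries at least two additional small factors. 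I do not anticipate any genuine obstacle: once the rough bounds are in hand, the remainder of the argument is a mechanical Taylor expansion combined with triangle-inequality bookkeeping.
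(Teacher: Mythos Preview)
Your proposal is correct and follows essentially the same route as the paper. The only organizational difference is in the expansion step: the paper removes $b$ and $b'$ simultaneously via the $2\times 2$ block Schur complement $G_{cc'}^{(bb')}=G_{cc'}-(G(G|_{\{b,b'\}})^{-1}G)_{cc'}$ and then expands the $2\times 2$ inverse as a geometric series in $G_{bb'}^2/(G_{bb}G_{b'b'})$, whereas you remove $b$ and $b'$ one at a time and expand $1/G^{(b)}_{b'b'}$; both computations produce the identical leading terms and the same quadratic error, so there is no substantive difference.
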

\begin{proof}
 For $\cG\in \Omega, I(\cF,\cG)=1$, the claim \eqref{e:rough_bb1} follows from \Cref{thm:prevthm}.
To prove \eqref{e:Gbbcc}, we start with the Schur complement formula \eqref{e:Schur1}
\begin{align}\label{e:scf}
 G_{cc'}^{(bb')}=G_{cc'}-(G (G|_{\{bb'\}})^{-1} G)_{cc'},
\end{align}
where
$(G|_{\{bb'\}})^{-1}$ is given by
\begin{align}\begin{split}\label{e:rough_bb2}
    (G|_{\{bb'\}})^{-1}
    &=\frac{1}{G_{bb}G_{b'b'} -G_{bb'}^2}
    \left[
    \begin{array}{cc}
    G_{b'b'} & -G_{bb'}\\
    -G_{bb'} & G_{bb}
    \end{array}
    \right]\\
    &=\left(\sum_{j=0}^\fp \frac{G_{bb'}^{2j}}{(G_{bb}G_{b'b'})^{j+1} } +\OO\left(\frac{1}{N^2}\right)\right)
    \left[
    \begin{array}{cc}
    G_{b'b'} & -G_{bb'}\\
    -G_{bb'} & G_{bb}
    \end{array}
    \right].
\end{split}\end{align}

 By substituting \eqref{e:rough_bb2} into \eqref{e:scf}, we obtain an expansion of $G_{cc'}^{(bb')}$. Terms in this expansion containing at least two factors of the form $\{G_{bb'},G_{cc'}, G_{bc'},G_{cb'}\}$ can be bounded by $\OO(|G_{bb'}|^2+|G_{cc'}|^2+|G_{bc'}|^2+|G_{cb'}|^2)$. The remaining terms in the expansion are given by
\begin{align*}
    G_{cc'}-\frac{G_{cb}G_{b'b'}G_{bc'}}{G_{bb}G_{b'b'}} +\frac{G_{cb}G_{bb'}G_{b'c'}}{G_{bb}G_{b'b'}} -\frac{G_{cb'}G_{bb}G_{b'c'}}{G_{bb}G_{b'b'}} .
\end{align*} 
This gives \eqref{e:Gbbcc}.
\end{proof}

\begin{proof}[Proof of \Cref{l:Sbound}]

The estimate \eqref{e:1_Sbound} implies the following bound for $(d-1)^{3\ell\sum_{j=1}^{p-1} r_{j1}}\prod_{j=1}^{p-1} \cW_j$,
\begin{align}\begin{split}\label{e:Eexp}
    &\phantom{{}={}}(d-1)^{3\ell\sum_{j=1}^{p-1} r_{j1}}\prod_{j=1}^{p-1} |\cW_j|\\
    &\lesssim  \prod_{j:r_{j0}=1}|Q_t-Y_t|\prod_{j:r_{j0}=2, r_{j2}=0}(d-1)^{8\ell} \Upsilon \Phi
    \times\prod_{j:r_{j0}=2, r_{j2}\geq1}N^{-\fb}\Upsilon \Phi
    \prod_{j:r_{j0}=3}N^{-\fb}\Upsilon \Phi\\
    &\lesssim \Pi(z,\bmr),
\end{split}\end{align}
where the last statement follows from the definition of $\Pi(z,\bmr)$ as in \eqref{e:defUG}.
Thanks to \eqref{e:Eexp}, we have
\begin{align*}
     (d-1)^{3\ell\sum_{j=1}^{p-1} r_{j1}}\prod_{j=1}^{p-1} |\wt\cW_j|\lesssim \widetilde \Pi(z,\bmr)\lesssim \Pi(z,\bmr),
\end{align*}
where the last statement follows from \eqref{e:tilde_bound}, and noticing $\Pi(z,\bmr)$ is a function of $\Phi, \Upsilon$ and $|Q_t-Y_t|+(d-1)^{8\ell}\Upsilon \Phi$. 
\end{proof}

\begin{proof}[Proof of \Cref{p:small_Ri}]
We prove \eqref{e:refined_bound} when $R_\bfi$ contains the factor $(G_{cc'}^{(bb')})^2$. The other cases can be proven in the same way, so we omit their proofs.
Define the forest $\widehat \cF$ from $\cF$ by removing $\{(b,c), (b',c')\}$,
\begin{align}\label{e:hatcF}
    \widehat \cF=(\widehat \bfi, \widehat E)=\cF\setminus \left( \{(b,c), (b',c')\}\right),\quad \widehat \bfi=\bfi\setminus\{b, c, b', c'\},\quad 
  I(\cF,\cG)=A_{bc }A_{b'c'} I_{cc'}I(\widehat \cF, \cG),  
\end{align}
 where $I_{cc'}=\bm1(c \not\in \bX)\bm1(c' \not\in \bX)\bm1(\dist_\cG(c,c')\geq 3\fR)$ and 
$\bX$ is the collection of vertices $v$ such that either there exists some $u\in \cB_\ell(v,\cG)$ such that $\cB_\fR(u,\cG)$ is not a tree; or $\dist(v,c'')<3\fR$ for some $(b'',c'')\in \cC^+\setminus\{(b, c), (b', c')\}$. 
For $\cG\in \Omega \subset \oOmega$, $|\bX|=\OO((d-1)^{\fR+\ell}N^\fc)=\OO(N^{3\fc/2})$, and
$I_{cc'}$ satisfies 
\begin{align}\label{e:indicator}
     \frac{\bm1(\cG\in \Omega)}{(Nd)^2}\sum_{b\sim c \atop b'\sim c'}|1-I_{cc'}|
     \lesssim \frac{N(|\bX|+(d-1)^{3\fR})}{N^{2}}\lesssim \frac{1}{N^{1-2\fc}}.
\end{align}

The statements \eqref{e:rough_bb1} and \eqref{e:Gbbcc} together imply
\begin{align}\label{e:Gcc'bb'}
|G_{cc'}^{(bb')}|\lesssim |G_{bb'}|+|G_{bc'}|+|G_{b'c}|+|G_{cc'}|,
\end{align}
and \eqref{e:Gest} leads to 
\begin{align}\begin{split}\label{e:ccbb}
   &\phantom{{}={}}\frac{\bm1(\cG\in \Omega)}{(Nd)^2} \sum_{c\sim b\atop c'\sim b'}I(\cF,\cG) |G_{cc'}^{(bb')}|^2\\
   &\lesssim 
  \frac{\bm1(\cG\in \Omega)}{(Nd)^2}I(\widehat \cF,\cG)\sum_{c\sim b\atop c'\sim b'}I_{cc'}(|G_{bb'}|^2+|G_{bc'}|^2+|G_{b'c}|^2+|G_{cc'}|^2)\lesssim I(\widehat\cF,\cG)\bm1(\cG\in \Omega)N^\fo\Phi.
\end{split}\end{align}
The claim \eqref{e:refined_bound} then follows from \eqref{e:ccbb}. In fact, by \eqref{e:Bsmall}, we have
\begin{align}\begin{split}\label{e:Eexp2}
&\phantom{{}={}}\left|\frac{(d-1)^{  3\ell\sum_{j=1}^{p-1} r_{j1}}}{Z_\cF}\sum_{\bfi}I(\cF, \cG)\bm1(\cG\in \Omega)R_\bfi\right|\\
&\lesssim\frac{(d-1)^{ 3\ell\sum_{j=1}^{p-1} r_{j1}}}{Z_\cF}\sum_{\bfi} I(\cF, \cG)\bm1(\cG\in \Omega)N^{-(r-2)\fb} |G_{cc'}^{(bb')}|^2\prod_{j=1}^{p-1}|\cW_j|.
\end{split}\end{align}
We can first bound $(d-1)^{  3\ell\sum_{j=1}^{p-1} r_{j1}}\prod_{j=1}^{p-1}|\cW_j|$ by \eqref{e:Sbound}, then the average over $b,b', c,c'$ using \eqref{e:ccbb}. This gives \eqref{e:refined_bound}.


Next, we prove \eqref{e:refined_bound2}  when $R_\bfi$ contains a term $G^{(b b')}_{c c'}$ with $(b,c)\neq (b',c')\in \cC^\circ$
\begin{align}\label{e:Riexp}
    R_\bfi=G^{(b b')}_{c c'}\prod_{j=1}^{r-1}B_j\prod_{j=1}^{p-1} \cW_j\in \Adm(r,\bmr, \cF,\cG),
\end{align}
where for each $1\leq j\leq r-1$, $B_j$ is a factor of the form \eqref{e:defcE1}, and each $B_j$ does not contain indices from both $\{b,c\}$ and $\{b',c'\}$.  The other cases can be proven similarly, so we omit the details. 

In this case, $(b,c)\neq (b',c')\in \cC^\circ$ are two distinct connected components of $\cF$.  We recall the expression of $G_{cc'}^{(bb')}$ from \eqref{e:Gbbcc}. For the error terms $\OO(|G_{bb'}|^2+|G_{b'b'}|^2+|G_{bc'}|^2+|G_{cb'}|^2+N^{-2})$, we apply the same argument as for \eqref{e:refined_bound}. For the four main terms in \eqref{e:Gbbcc},  each of them contains a unique factor that involves indices from both $\{b,c\}$ and $\{b',c'\}$, and they can be analyzed similarly. Therefore, we focus on the term $G_{cc'}$. More explicitly, let $\widehat R_\bfi=G_{c c'}\prod_{j=1}^{r-1}B_j\prod_{j=1}^{p-1} \cW_j$, which is obtained from $R_\bfi$ by replacing $G_{cc'}^{(bb')}$ with $G_{cc'}$, we will show
\begin{align}\label{e:refined_bound3}
        \frac{(d-1)^{3\ell\sum_{j=1}^{p-1} r_{j1}}}{Z_\cF}\sum_{\bfi}I(\cF, \cG)\bm1(\cG\in \Omega)\widehat R_\bfi =\OO\left(\left( \frac{1}{N\eta}+\Phi\right)\frac{\Pi(z,\bmr)}{N^{(r-1)\fb} } \right),
    \end{align}
and the claim \eqref{e:refined_bound2} follows.
    
We first show that we can replace the indicator function $I(\cF,\cG)$ by $I(\widehat\cF, \cG)A_{bc}A_{b'c'}$ (recall from \eqref{e:hatcF}) up to negligible error: 
\begin{align}\begin{split}\label{e:relaxbc}
    &\phantom{{}={}}\frac{(d-1)^{  3\ell\sum_{j=1}^{p-1} r_{j1}}}{Z_\cF}\sum_{\bfi}I(\cF, \cG)\bm1(\cG\in \Omega)\widehat R_\bfi \\
    &= \frac{(d-1)^{  3\ell\sum_{j=1}^{p-1} r_{j1}}}{Z_\cF}\sum_{\bfi}\bm1(\cG\in \Omega)I(\widehat\cF, \cG)A_{bc}A_{b'c'} \widehat R_\bfi +\OO(N^{-2\fc+1}N^{-r\fb} \Pi(z,\bmr)).
\end{split}\end{align}
Indeed, thanks to \eqref{e:hatcF}, we have  $|I(\cF,\cG)-I(\widehat\cF, \cG)A_{bc}A_{b'c'}|=I(\widehat\cF, \cG)A_{bc}A_{b'c'}|1-I_{cc'}|$.
Then 
\begin{align*}\begin{split}
    &\phantom{{}={}}\frac{(d-1)^{  3\ell\sum_{j=1}^{p-1} r_{j1}} }{Z_\cF}\sum_{\bfi}|I(\cF,\cG)-I(\widehat\cF, \cG)A_{bc}A_{b'c'}|\bm1(\cG\in \Omega)|\widehat R_\bfi|\\
    &=
   \frac{(d-1)^{  3\ell\sum_{j=1}^{p-1} r_{j1}}(Nd)^2}{Z_\cF}\sum_{\widehat\bfi}\frac{I(\widehat \cF,\cG)}{(Nd)^2}\sum_{b\sim c \atop b'\sim c'}|1-I_{cc'}|\bm1(\cG\in \Omega)|\widehat R_\bfi|\\
   &\lesssim
   \frac{(Nd)^2}{Z_\cF}\sum_{\widehat\bfi}\frac{I(\widehat \cF,\cG)}{(Nd)^2}\sum_{b\sim c \atop b'\sim c'}|1-I_{cc'}| N^{-r\fb}\Pi(z,\bmr)\lesssim N^{-2\fc+1}N^{-r\fb} \Pi(z,\bmr),
\end{split}\end{align*}
where in the second statement we used \eqref{e:Sbound} and \eqref{e:Bsmall}; in the third statement, we used
\eqref{e:indicator}.

With the above reductions, we will prove \eqref{e:refined_bound3} by categorizing factors of $\prod_{j=1}^{r-1}B_j\prod_{j=1}^{p-1} \cW_j$ based on their relationship with $\{b, c\}$ and $\{b', c'\}$. 
We will now decompose \eqref{e:Riexp} as follows
\begin{align}\label{e:bfi}
    R_{\bfi}=\frac{1}{(Nd)^{|\bV_0|}} \sum_{u_j\sim v_j: j\in \bV_0} \left(U_{bc}V_{b'c'}G_{c c'}R'_{\widehat\bfi}\right),
\end{align}
where $\{u_j, v_j\}_{1\leq j\leq p-1}$ are the special edges from the definition of $\cW_j$; $\bV_0$ is the collection of indices $j\in \{1,2,\cdots, p-1\}$ such that $\cW_j$ involves both $\{b,c\}$, and $\{b',c'\}$,  and $U_{bc}=\widehat U_{bc} \widehat U'_{bc}$, $V_{b'c'}=\widehat V_{b'c'} \widehat V'_{b'c'}$, $R'_{\widehat\bfi}$ are given as below 
\begin{enumerate}
    \item For any factor $B_j$ with $1\leq j\leq r-1$, by our assumption, it does not contain indices from both $\{b,c\}$ and $\{b',c'\}$.
    \begin{enumerate}
        \item If $B_j$ does not involve $\{b,c,b',c'\}$, we place it in $R'_{\widehat\bfi}$. 
        \item If $B_j$ involves $b,c$, we place it in $\widehat U_{b c}$.
        \item If $B_j$ involves $b',c'$, we place it in $\widehat V_{b' c'}$.
    \end{enumerate}
    \item We divide $\{j: 1\leq j\leq p-1\}$ into two subsets $\bV_0\cup\bV_1$.
    For any special edge $\{u_j, v_j\}$ with $r_{j0}\in\{1,3\}$, the factor $\cW_j$ is placed in $R'_{\wh\bfi}$ and  $j$ is placed in $\bV_1$. 
    Otherwise $r_{j0}=2$. We recall from \Cref{def:pgen} that
     \begin{align*}
         \cW_j=\frac{1}{Nd}\sum_{u_j\sim v_j\in \qq{N}} \{1-\del_1 Y_\ell, -t\del_2 Y_\ell\}\times F_{u_j v_j},
     \end{align*}
     and $F_{u_j v_j}$ contains at least two factors of the form $L_{sw},  G^{\circ}_{s w}, (\Av G^{\circ})_{o'w}, (\Av L)_{o'w}$ with $s\in \cK$, $(i',o')\in \cC\setminus \cC^\circ$ and $w\in \{u_j, v_j\}$, and possibly some factors of the form $G_{u_j v_j}, 1/G_{v_j v_j}$.
     \begin{enumerate}
         \item 
         If these factors $L_{sw},  G^{\circ}_{s w}$ do not involve $\{b,c,b', c'\}$, then $\cW_j$ is placed in $R'_{\wh\bfi}$, and $j$ is placed in $\bV_1$.
    
        \item Otherwise, if all factors $L_{sw},  G^{\circ}_{s w}$ satisfy $s\not\in \{b', c'\}$,  $j$ goes to $\bV_1$, and $\cW_j$ is placed in $\widehat U_{b c}$. If all factors $L_{sw},  G^{\circ}_{s w}$ satisfy  $s\not\in \{b, c\}$,
         $j$ goes to $\bV_1$, and $\cW_j$ is placed in $\widehat V_{b' c'}$
    \item The remaining indices $j$ are collected in $\bV_0$, factors $L_{sw},  G^{\circ}_{s w}$  with $s\in \{b, c\}$, are placed in $\widehat U'_{b c}$;  and remaining factors, namely $1-\del_1 Y_\ell, -t\del_2 Y_\ell, (\Av G^{\circ})_{o'w}, (\Av L)_{o'w}, G_{u_j v_j}, 1/G_{v_j v_j}$ and $L_{sw},  G^{\circ}_{s w}$ with $s\in \{b',c'\}$, are placed in $\widehat V'_{b' c'}$.
\end{enumerate}
   \end{enumerate}

In this way, $U_{bc}$ and $ V_{b'c'}$ collect factors involving $b,c$ and $b', c'$ respectively. $R'_{\widehat\bfi}$ only depends on the indices $\widehat \bfi$, and does not depend on indices $b, c, b', c'$. We have the following relation
\begin{align}\begin{split}\label{e:UVbound}
    &|U_{bc}||V_{b'c'}||R'_{\wh \bfi}|
     \lesssim N^{-(r-1)\fb} |\widehat U'_{bc}|  |\widehat V'_{b'c'}| \prod_{j\in \bV_1}|\cW_j|.
\end{split}\end{align}

With the above notation, we can rewrite the right-hand side of \eqref{e:relaxbc} as 
\begin{align}\begin{split}\label{e:sumbound1}
&\phantom{{}={}}\frac{(d-1)^{  3\ell\sum_{j=1}^{p-1} r_{j1}}}{Z_\cF}\sum_{\bfi}\bm1(\cG\in \Omega)I(\widehat\cF, \cG)A_{bc}A_{b'c'} \widehat R_\bfi \\
  &=\frac{(d-1)^{  3\ell\sum_{j=1}^{p-1} r_{j1}}}{Z_{\cF}} \sum_{\widehat\bfi}\frac{\bm1(\cG\in \Omega)I(\widehat\cF, \cG)}{(Nd)^{|\bV_0|}} \sum_{u_j\sim v_j:j\in \bV_0} R'_{\widehat\bfi}\sum_{b\sim c\atop b'\sim c'} U_{bc}G_{c c'}  V_{b'c'}.
\end{split}\end{align}
For the summation over $b\sim c, b'\sim c'$ in \eqref{e:sumbound1}, we can view $U_{bc}$, $V_{b'c'}$ as two vectors indexed by $(b,c), (b',c')$ respectively.
Because $\|G\|_{\spec}\leq\eta^{-1}$, then 
\begin{align}\label{eq:spectralbound}
    \left|\sum_{b\sim c\atop b'\sim c'}U_{bc} G_{cc'}V_{b'c'}\right|
    \lesssim 
    \frac{1}{\eta} \sqrt{\sum_{b\sim c}|U_{bc}|^2 \sum_{b'\sim c'} |V_{b'c'}|^2}.
\end{align}

By plugging \eqref{eq:spectralbound} into \eqref{e:sumbound1}, we get
\begin{align}\begin{split}\label{e:sumbound2}
&\phantom{{}={}}\left|\frac{\bm1(\cG\in \Omega)I(\widehat\cF, \cG)}{(Nd)^{|\bV_0|}} \sum_{u_j\sim v_j:j\in \bV_0} R'_{\widehat\bfi}\sum_{b\sim c\atop b'\sim c'} U_{bc}G_{c c'}  V_{b'c'}\right|\\
 &\lesssim    \frac{\bm1(\cG\in \Omega)I(\widehat\cF, \cG)}{(Nd)^{|\bV_0|}}\sum_{u_j\sim v_j: j\in \bV_0}|R'_{\wh\bfi}|\frac{1}{\eta}\sqrt{\sum_{b\sim c}|U_{bc}|^2 \sum_{b'\sim c'} |V_{b'c'}|^2}\\
 &\lesssim    \frac{1}{\eta}\frac{N^{-(r-1)\fb} \bm1(\cG\in \Omega)I(\widehat\cF, \cG) }{(Nd)^{|\bV_0|}}\prod_{j\in \bV_1}|\cW_j|\sum_{u_j\sim v_j: j\in \bV_0}\sqrt{\sum_{b\sim c}|\widehat U'_{bc}|^2 \sum_{b'\sim c'} |\widehat V'_{b'c'}|^2},
\end{split}\end{align}
where in the last line we used \eqref{e:UVbound}. For the last line of \eqref{e:sumbound2}, a Cauchy-Schwarz inequality implies
\begin{align}\begin{split}\label{e:sumbound3}
    &\phantom{{}={}}\sum_{u_j\sim v_j: j\in \bV_0}\sqrt{\sum_{b\sim c}|\widehat U'_{bc}|^2 \sum_{b'\sim c'} |\widehat V'_{b'c'}|^2}
    \lesssim 
    \sqrt{\sum_{b\sim c}\sum_{u_j\sim v_j: j\in \bV_0}|\widehat U'_{bc}|^2\sum_{b'\sim c'}\sum_{u_j\sim v_j: j\in \bV_0}|\widehat V'_{b'c'}|^2}\\
    &\lesssim 
    (Nd)^{|\bV_0|+1}\prod_{j\in \bV_0, r_{j2}=0}N^{-(r_{j1}-2)\fb}N^\fo\Upsilon \Phi\prod_{j\in \bV_0, r_{j2}\geq1}{N^{-\max\{r_{j1}-1,0\}\fb} }N^\fo\Upsilon\sqrt{\frac{\Phi}{N}},
\end{split}\end{align}
where the last line follows from \eqref{e:k1}, \eqref{e:k2} and \eqref{e:k3}. Also,
\begin{align}\begin{split}\label{e:sumbound4}
    \prod_{j\in \bV_1}|\cW_j|&\lesssim \prod_{j:r_{j0}=1}|Q_t-Y_t|\prod_{j\in \bV_1:r_{j0}=2, r_{j2}=0}N^{-(r_{j1}-2)\fb} \Upsilon N^\fo\Phi\\
    &\times\prod_{j\in \bV_1:r_{j0}=2, r_{j2}\geq1}{N^{-\max\{r_{j1}-1,0\}\fb}\Upsilon N^\fo}\sqrt{\frac{\Phi}{N}}
    \prod_{j:r_{j0}=3}\frac{(d-1)^\ell\Upsilon}{N}.
\end{split}\end{align}
By plugging \eqref{e:sumbound2} and \eqref{e:sumbound3} and \eqref{e:sumbound4} into \eqref{e:sumbound1}, we conclude that
\begin{align*}
    &\phantom{{}={}}\frac{(d-1)^{3\ell\sum_{j=1}^{p-1} r_{j1}}}{Z_\cF}\sum_{\bfi}\bm1(\cG\in \Omega)I(\widehat\cF, \cG)A_{bc}A_{b'c'} \widehat R_\bfi\lesssim \frac{\bm1(\cG\in \Omega)}{N\eta} \frac{1}{N^{(r-1)\fb}} \prod_{j:r_{j0}=1}|Q_t-Y_t|\\
    &\times\prod_{j:r_{j0}=2, r_{j2}=0}(d-1)^{8\ell} \Upsilon \Phi\prod_{j:r_{j0}=2, r_{j2}\geq1}N^{-\fb}\Upsilon \Phi
    \prod_{j:r_{j0}=3}N^{-\fb}\Upsilon \Phi
    \lesssim \frac{\bm1(\cG\in \Omega)}{N\eta}  \frac{\Pi(z,\bmr)}{N^{(r-1)\fb}},
\end{align*}
where in the last statement we used \eqref{e:Eexp}.
This together with \eqref{e:relaxbc} finish the proof of \eqref{e:refined_bound2}. The claim \eqref{e:refined_bound2_special} follows from the same argument, so we omit its proof.
\end{proof}

\subsection{Estimates of terms involving $Z$}
\label{s:Z_term}

In this section, we prove the following lemma, which addresses the error terms involving the constrained  GOE matrix $Z$ from \eqref{e:Werror}.
\begin{proposition}\label{p:WRbound}
 Adopt the notation of \Cref{s:setting5}. We take $z\in {\bf D}$, an array $\bmr=[r_{jk}]_{1\leq j\leq p-1, 0\leq k\leq 2}$ satisfying \eqref{e:defr_copy}, and $R_\bfi\in \Adm(r,\bmr,\cF,\cG)$ as in \Cref{def:pgen}. Assume $\cG,\tcG\in \Omega$ and $I(\cF^+,\cG)=1$.
 Then the following expectation, with respect to the constrained GOE matrix $Z$, holds:
    \begin{align}\begin{split}\label{e:WRbound}
       &(d-1)^{(6r+3\sum_{j=1}^{p-1} r_{j1})\ell}|\bE_Z[\sqrt t Z_{xz} R_\bfi]|\leq N^{-\fb}\bE_Z[\Psi_p],\text{ for }x,z\in \bT,\\ 
       &(d-1)^{(6r+3\sum_{j=1}^{p-1} r_{j1})\ell}|\bE_Z[\sqrt t  (Z \widetilde G^{(\bT)})_{x z} R_\bfi]|\leq N^{-\fb}\bE_Z[\Psi_p],\text{ for }x\in \bT, z\not\in \bT,\\
       &(d-1)^{(6r+3\sum_{j=1}^{p-1} r_{j1})\ell}|\bE_Z[t  ((Z\widetilde G^{(\bT)}Z)_{x z}-m_t\delta_{xz}) R_\bfi]|\leq N^{-\fb}\bE_Z[\Psi_p], \text{ for }x,z\in \bT.
    \end{split}\end{align}
\end{proposition}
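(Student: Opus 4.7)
The plan is to exploit the Gaussian nature of $Z$ via the integration by parts formula of Lemma~\ref{p:intbypart}. For each of the three statements, the strategy is to integrate by parts in the explicit $Z$-entry (or $Z$-entries) appearing outside $R_\bfi$, producing a sum of terms in which a derivative $\partial_{Z_{\cdot\cdot}}$ hits $R_\bfi$. Each such derivative replaces a Green's function factor of $R_\bfi$ by a product of two Green's function factors, multiplied by $\sqrt{t}$ coming from the chain rule on $G(z,t)=(H+\sqrt{t}Z-z)^{-1}$. Since $R_\bfi$ is a monomial in at most $\OO(p)$ Green's function factors, the total number of terms produced is bounded and the combinatorics are harmless. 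The smallness we exploit is that each application of $\partial_{Z_{ab}}$ brings out a factor of $\sqrt{t}$ with $t\leq N^{-1/3+\ft}$, and the new Green's function entries produced can be bounded in $L^2$-average via the Ward-type identities \eqref{e:naive-Ward}, \eqref{e:av_naive-Ward} in Lemma~\ref{l:adm_term_bound} and the tree-bounds \eqref{eq:infbound}, \eqref{eq:local_law}.

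For the first statement, write $\sqrt{t}\bE_Z[Z_{xz}R_\bfi]=\sum_{a,b}\sqrt{t}\bE_Z[Z_{xz}Z_{ab}]\bE_Z[\partial_{Z_{ab}}R_\bfi]$, using \eqref{e:intbypart}. The covariance structure contributes a factor of $1/N$ (the $1/N^2$ correction terms from the constraint in \eqref{W_ibp} are even smaller), and $\partial_{Z_{ab}}R_\bfi$ produces, for each factor $B$ of $R_\bfi$ (recall Definition~\ref{def:pgen}), a term with an additional $\sqrt{t}$ and two new Green's function entries. Summing over $a,b$ (which after contraction is a sum over one index $a$) and using Cauchy-Schwarz with the Ward-type identities \eqref{e:naive-Ward}, \eqref{e:av_naive-Ward} produces a factor of order $tN^{\fo}\Phi$. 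The prefactor $(d-1)^{(6r+3\sum r_{j1})\ell}$ is absorbed by noting that, after the derivative acts, the remaining part of $R_\bfi$ is bounded by \eqref{e:Sbound} as an admissible-type product, so the net bound is of order $t N^{\fo}\Phi\cdot \Psi_p$. Since $t\Phi\ll N^{-\fb}$ for $z\in{\bf D}$ and $t\leq N^{-1/3+\ft}$, this gives the claim.

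For the second statement, write $\sqrt{t}(Z\wt G^{(\bT)})_{xz}=\sum_{y\notin\bT}\sqrt{t}Z_{xy}\wt G^{(\bT)}_{yz}$ and apply integration by parts in each $Z_{xy}$. A crucial observation is that, for $x\in\bT$ and $y\notin\bT$, the entry $Z_{xy}$ does not belong to $Z^{(\bT)}$, hence $\wt G^{(\bT)}_{yz}$ does not depend on $Z_{xy}$; the derivative therefore acts only on $R_\bfi$. The resulting terms have two new Green's function entries with indices coupled to $y$, which after summing over $y$ can be bounded using Cauchy-Schwarz together with the second moment bounds in \eqref{e:Werror2} and \eqref{e:naive-Ward}, yielding a factor $t\sqrt{\Im[m_t(z)]/N\eta}\cdot N^{\fo}$ that is again much smaller than $N^{-\fb}\Psi_p$.

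For the third statement, the analogous integration by parts argument is more delicate because it requires a cancellation. One first writes
\begin{align*}
t\bE_Z[(Z\wt G^{(\bT)}Z)_{xz}R_\bfi] &= t\sum_{y\notin\bT}\bE_Z[Z_{xy}(\wt G^{(\bT)}Z)_{yz}R_\bfi],
\end{align*}
and integrates by parts in $Z_{xy}$. The leading term comes from $\partial_{Z_{xy}}$ acting on the $(\wt G^{(\bT)}Z)_{yz}$ factor: the part acting on the explicit $Z$ gives $\sum_{y\notin\bT}\frac{2t}{N}\wt G^{(\bT)}_{yy}\delta_{xz}+\OO(N^{-1})$, which after using \eqref{e:Trace-change} reproduces precisely $m_t(z)\delta_{xz}$ (up to a negligible error). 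This is exactly the subtraction appearing in the third statement. The remaining contributions (from $\partial_{Z_{xy}}$ acting on $\wt G^{(\bT)}_{\cdot y}$ and on $R_\bfi$) are all of the form $t^2$ times products of Green's functions averaged over at least one index, which are again controlled by \eqref{e:Werror2} and the Ward-type identities, giving errors $\OO(t\cdot N^{\fo}\Phi)\cdot\Psi_p\ll N^{-\fb}\Psi_p$.

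The main obstacle is bookkeeping: one must ensure that when $\partial_{Z_{ab}}$ hits a factor of $R_\bfi$ belonging to the ``special-edge'' blocks $\cW_j$ (i.e.\ those containing the averaged $(1/Nd)\sum_{u_j\sim v_j}$), the resulting object is still dominated by $\Psi_p$ after being multiplied by $(d-1)^{(6r+3\sum r_{j1})\ell}$. This is handled by the same case analysis as in the proof of \eqref{e:Sbound}: a derivative either turns a factor in \eqref{e:defcE0} into a new Green's function entry controlled by Ward, or breaks a factor in \eqref{e:defLerror} into a factor with a new index. In every case the extra $\sqrt{t}$ from the chain rule, combined with the smallness of $t\leq N^{-1/3+\ft}$, provides the needed $N^{-\fb}$ gain.
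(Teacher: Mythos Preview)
Your overall strategy---Gaussian integration by parts via Lemma~\ref{p:intbypart}, followed by case analysis on which factor of $R_\bfi$ receives the derivative---is exactly the paper's approach, and your treatment of the first and third statements is correct in outline.

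There is, however, a genuine gap in your argument for the second statement. You claim that since $Z_{xy}$ (with $x\in\bT$, $y\notin\bT$) is not an entry of $Z^{(\bT)}$, the resolvent $\wt G^{(\bT)}_{yz}$ does not depend on it, and hence ``the derivative therefore acts only on $R_\bfi$.'' This reasoning would be valid for the standard GOE, but it fails for the \emph{constrained} GOE. The integration by parts formula \eqref{e:intbypart} produces derivatives $\partial_{Z_{x'y'}}$ for \emph{all} pairs $(x',y')$, weighted by the full covariance \eqref{W_ibp}. When $x',y'\notin\bT$, the derivative $\partial_{Z_{x'y'}}$ \emph{does} act on $\wt G^{(\bT)}_{yz}$. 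You are correct that the leading $1/N$ part of the covariance vanishes here (it forces $x'$ or $y'$ to equal $x\in\bT$), but the $1/N^2$ and $1/N^3$ correction terms coming from the row-sum constraint do not vanish. The paper handles this explicitly in \eqref{e:WG1}, obtaining a contribution of order $N^\fo\sqrt{t}\,\Phi$; combined with the outer $\sqrt{t}$ and the bound $|R_\bfi|\lesssim N^{-r\fb}\Pi(z,\bmr)$ from \eqref{e:Sbound}, this gives a term $\lesssim tN^\fo\Phi\,\Pi(z,\bmr)\leq N^{-\fb}\Psi_p$ (see \eqref{e:ttt1}). So the gap is not fatal---the missing term is small---but your sketch as written asserts it is zero, which is incorrect.

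A secondary point: when $\partial_{Z_{x'y'}}$ hits a factor $\cW_j$ with $r_{j0}=2$ (the averaged special-edge blocks), the bookkeeping is more delicate than your last paragraph suggests. The paper carries out this case in \eqref{e:dWB}--\eqref{e:dWB1}, using $\|G\|_{\rm spec}\leq 1/\eta$ together with a Cauchy--Schwarz over the summation variable $y$ and the special-edge average, rather than the Ward identity alone; this is where the factor $N^{3\fc}\Phi/(N\eta)$ in \eqref{e:dWB} comes from. Your sketch gestures at the right tools but does not make clear why the $(d-1)^{3r_{j1}\ell}$ prefactor is still absorbed after the derivative changes the structure of $\cW_j$.
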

\begin{proof}
We prove the last two statements in \eqref{e:WRbound}. The first statement can be proven in the same way, so we omit its proof. 
    Thanks to the integration by parts formula \Cref{p:intbypart},
\begin{align*}
   \bE_Z[ (Z \widetilde G^{(\bT)})_{x z} R_\bfi]
=\sum_{y\not\in \bT}\sum_{x'y'\in \qq{N}}\bE_Z[Z_{xy}Z_{x'y'}]\bE_Z[\del_{Z_{x'y'}}(\wt G_{yz}^{(\bT)}R_\bfi)].
\end{align*}
There are two cases, if the derivative hits $\widetilde G_{yz}^{(\bT)}$, we get the following
\begin{align}\begin{split}\label{e:WG1}
&\phantom{{}={}}\sum_{yx'y'\not\in \bT}\bE_Z[Z_{xy}Z_{x'y'}]\del_{Z_{x'y'}}  \wt G_{yz}^{(\bT)}
=
    -\sqrt t\sum_{yx'y'\not\in \bT}\bE_Z[Z_{xy}Z_{x'y'}]  \wt G_{yx'}^{(\bT)}  \wt G_{y'z}^{(\bT)}\\
&=\frac{\OO(\sqrt t)}{N^2}\left(\sum_{yx'\not\in \bT}   \wt G_{yx'}^{(\bT)}  \wt G_{yz}^{(\bT)} +\sum_{yy'\not\in \bT}   \wt G_{yy}^{(\bT)}  \wt G_{y'z}^{(\bT)}\right)+\frac{\OO(\sqrt t)}{N^3}\sum_{yx'y'\not\in \bT}  \wt G_{yx'}^{(\bT)}  \wt G_{y'z}^{(\bT)}\\
&\lesssim N^\fo\sqrt t \Phi+\frac{\OO(\sqrt t)}{N^2}\sum_{yy'\not\in \bT}   \wt G_{yy}^{(\bT)}  \wt G_{y'z}^{(\bT)}\lesssim N^\fo\sqrt t \Phi,
\end{split}\end{align}
where  the first two terms on the second line are from the cases $y'=y$ and $x'=y$; to get the last line we used  \eqref{e:use_Ward2}; in the last inequality, we used from \eqref{e:use_Ward2} that $|\sum_{y'\not\in \bT} G_{y'z}^{(\bT)}|\lesssim \ell$. We conclude from \eqref{e:WG1}, \eqref{e:Sbound}, and \eqref{e:bPi1} that
\begin{align}\begin{split}\label{e:ttt1}
    &\phantom{{}+={}}(d-1)^{(6r+3\sum_{j=1}^{p-1} r_{j1})\ell}\sqrt t\sum_{yx'y'}\bE_Z[Z_{xy}Z_{x'y'}]\bE_Z[\del_{Z_{x'y'}}(\widetilde G_{yz}^{(\bT)})R_\bfi]\\
    &\lesssim (d-1)^{(6r+3\sum_{j=1}^{p-1} r_{j1})\ell} t\bE_Z[N^\fo \Phi|R_\bfi|]\lesssim (d-1)^{6r\ell}N^{\fo -r\fb} t \bE_Z[   \Phi\Pi(z,\bmr)]\leq N^{-\fb}\bE_Z[\Psi_p].
\end{split}\end{align}

In the following we focus on the case that the derivative hits $R_\bfi$. Denote
\begin{align*}
    R_\bfi=\prod_{j=1}^r B_j\prod_{j=1}^{p-1} \cW_j\in \Adm(r,\bmr,\cF_+,\cG),
\end{align*}
where the $B_j$ are factors of the form 
\begin{align}\begin{split}\label{e:allterm}
    &\{(G_{c c}^{(b)}-Q_t)\}_{(b,c)\in \cC^\circ}, \quad  \{ G_{c c'}^{(bb')}, G_{b c'}^{(b')}, G_{bb'},G_{cb'}\}_{(b,c)\neq (b',c')\in \cC^\circ},\\
&\{  G^{\circ}_{ss'}\}_{s,s'\in \cK},\quad (Q_t-\msc),\quad t(m_t-\md(z_t)).
\end{split}\end{align}

There are several cases for the derivative $\del_{Z_{x'y'}}R_\bfi$. Next we show that if $\del_{Z_{x'y'}}$ hits a term $B_j$ from \eqref{e:allterm}, then
\begin{align}\label{e:dWB0}
   \sum_{y\not\in \bT}\sum_{x'y'\in \qq{N}}\bE_Z[Z_{xy}Z_{x'y'}]\wt G_{yz}^{(\bT)}\del_{Z_{x'y'}}(B_j)\lesssim N^\fo\sqrt t\Phi.
\end{align}
We start with the case that $B_j=G_{cc'}^{(bb')}$: $\del_{Z_{x'y'}}G_{cc'}^{(bb')}=-\sqrt t G_{cx'}^{(bb')}G_{y'c'}^{(bb')}$, and  
\begin{align}\begin{split}\label{e:WGR1}
    &\phantom{{}={}}\sum_{y\not\in\bT}\sum_{x'y'\not\in\{b,b'\}}\bE_Z[Z_{xy}Z_{x'y'}]\wt G_{yz}^{(\bT)}G_{cx'}^{(bb')}G_{y'c'}^{(bb')}
    =\frac{\OO(1)}{N}\sum_{y\not\in\bT}\wt G_{yz}^{(\bT)}G_{cx}^{(bb')}G_{yc'}^{(bb')}\\
    &+\frac{\OO(1)}{N^2}\sum_{y\not\in\bT}\sum_{x'\not\in\{b,b'\}}(\wt G_{yz}^{(\bT)}G_{cx'}^{(bb')}G_{xc'}^{(bb')}+\wt G_{yz}^{(\bT)}G_{cx'}^{(bb')}G_{yc'}^{(bb')})\\
    &+\frac{\OO(1)}{N^3}\sum_{y\not\in\bT}\sum_{x'y'\not\in\{b,b'\}}\wt G_{yz}^{(\bT)}G_{cx'}^{(bb')}G_{y'c'}^{(bb')}\lesssim N^\fo\Phi.
\end{split}\end{align}
where in the last statement we use \eqref{e:Gest} and \eqref{e:use_Ward2}. 
In all cases of \eqref{e:dWB0}, $\wt G_{yz}^{(\bT)}\del_{Z_{x'y'}}(B_j)$ contains at least two off-diagonal Green's function terms, meaning they can be proven by the same way as in \eqref{e:WGR1}, so we omit the proofs. We conclude from \eqref{e:dWB0}, \eqref{e:Sbound} and \eqref{e:bPi1} that
\begin{align}\begin{split}\label{e:ttt2}
   &\phantom{{}={}} (d-1)^{(6r+3\sum_{j=1}^{p-1} r_{j1})\ell}\sqrt t\sum_{y\not\in \bT}\sum_{x'y'\in\qq{N}}\bE_Z[Z_{xy}Z_{x'y'}]\bE_Z\left[\wt G_{yz}^{(\bT)}\del_{Z_{x'y'}}\left(\prod_{j=1}^r B_j\right) \prod_{j=1}^{p-1}\cW_j\right]\\
    &\lesssim 
    \frac{(d-1)^{6r\ell}N^\fo t}{N^{(r-1)\fb}}\bE_Z\left[ \Phi(d-1)^{3\sum_{j=1}^{p-1} r_{j1}\ell} \prod_{j=1}^{p-1}|\cW_j|\right]
     \lesssim (d-1)^{6\ell}N^{\fo}t\bE_Z[  \Phi\Pi(z,\bmr)]\leq N^{-\fb}\bE_Z[\Psi_p].
\end{split}\end{align}

If $\del_{Z_{x'y'}}$ hits $\cW_j=Q_t-Y_t$ with $r_{j0}=1$, we have 
\begin{align}\label{e:dzQ-Y}
    \del_{Z_{x'y'}}(Q_t-Y_t)= (1-\del_1 Y_\ell)\del_{Z_{x'y'}}Q_t-t\del_2 Y_\ell \del_{Z_{x'y'}}m_t.
\end{align}
Next we show that 
\begin{align}\label{e:dQ-Y}
    \sum_{y\in \bT}\sum_{x'y'\in \qq{N}}\bE_Z[Z_{xy}Z_{x'y'}]\wt G_{yz}^{(\bT)}\times \{\del_{Z_{x'y'}} Q_t,\del_{Z_{x'y'}} m_t \}\lesssim \frac{N^\fo\sqrt t \Phi}{N\eta}.
\end{align}
It follows from combining \eqref{e:dzQ-Y} and \eqref{e:dQ-Y} that 
\begin{align}\label{e:dS1c1}
    \left| \sum_{y\not\in \bT}\sum_{x'y'\in\qq{N}}\bE_Z[Z_{xy}Z_{x'y'}]\wt G_{yz}^{(\bT)} \del_{Z_{x'y'}}(Q_t-Y_t)\right|\lesssim \frac{ \sqrt t }{N\eta}(\Upsilon N^\fo \Phi).
\end{align}
We prove the statement \eqref{e:dQ-Y} for $m_t$, the statement for $Q_t$ can be proven in the same way, so we omit its proof. Notice that $\del_{Z_{x'y'}} m_t=-(\sqrt t /N)\sum_{w\in \qq{N}}G_{wx'}G_{y'w}$, thus
\begin{align}
\begin{split}\label{e:WGR2}
     &\phantom{{}={}}
     \frac{1}{N}\sum_{y\not\in \bT}\sum_{wx'y'\in\qq{N}}\bE_Z[Z_{xy}Z_{x'y'}]\wt G_{yz}^{(\bT)}G_{wx'}G_{y'w}
     =\frac{\OO(1)}{N^2}\sum_{y\not\in \bT}\sum_{w\in\qq{N}}\wt G_{yz}^{(\bT)}G_{wx}G_{yw}\\
    &+\frac{\OO(1)}{N^3}\sum_{y\not\in \bT}\sum_{wx'\in\qq{N}}(\wt G_{yz}^{(\bT)}G_{wx'}G_{xw}+\wt G_{yz}^{(\bT)}G_{wx'}G_{yw})
    +\frac{\OO(1)}{N^4}\sum_{y\not\in \bT}\sum_{wx'y'\in\qq{N}}\wt G_{yz}^{(\bT)}G_{wx'}G_{y'w}.
\end{split}
\end{align}
All four terms on the right-hand side of \eqref{e:WGR2} can be estimated in the same way; we will only explain how to bound the first one. Using $\|G\|_{\spec}\leq 1/\eta$, we have 
\begin{align*}
    \frac{1}{N^2}\sum_{y\not\in \bT}\sum_{w\in\qq{N}}\wt G_{yz}^{(\bT)}G_{wx}G_{yw}\lesssim  \frac{1}{N^2\eta } \sqrt{\sum_{w\in \qq{N}} |G_{wx}|^2\sum_{y\not\in \bT}|\wt G_{yz}^{(\bT)}|^2 }\lesssim \frac{N^\fo \Phi}{N\eta},
\end{align*}
where we used \eqref{e:Gest} and \eqref{e:use_Ward2}. The other three terms on the right-hand side of \eqref{e:WGR2} can all be bounded in the same way, giving \eqref{e:dQ-Y}.

Similarly, if $\del_{Z_{x'y'}}$ hits $\cW_j=(d-1)^\ell\times \{(1-\del_1Y_\ell), \del_2 Y_\ell\}/N$ with $r_{j0}=3$, then 
\begin{align}\begin{split}\label{e:dSj3}
    &\phantom{{}={}}\frac{(d-1)^\ell}{N}\sum_{y\not\in \bT}\sum_{x'y'\in\qq{N}}\bE_Z[Z_{xy}Z_{x'y'}]\wt G_{yz}^{(\bT)}\times \{-\del_1\del_{Z_{x'y'}}Y_\ell, -\del_2 \del_{Z_{x'y'}}Y_\ell\}\\
    &=\frac{(d-1)^\ell}{N}\sum_{y\not\in \bT}\sum_{x'y'\in\qq{N}}\bE_Z[Z_{xy}Z_{x'y'}]\wt G_{yz}^{(\bT)}\times\{\OO(\ell^3) \del_{Z_{x'y'}} Q_t+\OO(\ell^3) \del_{Z_{x'y'}} m_t \}\\
    &\lesssim \frac{(d-1)^\ell\ell^3 \sqrt t \Phi}{N^2\eta}\lesssim \frac{ \sqrt t }{N\eta}(\Upsilon N^\fo \Phi),
\end{split}\end{align}
where in the first statement we used \eqref{e:Yl_derivative}; in the second statement we used \eqref{e:dQ-Y}; in the last statement we used \eqref{e:defPhi}, $\Upsilon\geq \Phi\geq 1/N^{1-2\fc}$.

The next situation is if  $\del_{Z_{x'y'}}$ hits $\cW_j$ with $r_{j0}=2$, which is associated to the special edge $\{u_j,v_j\}$. We recall from \eqref{e:Scase2}
\begin{align*}
    \cW_j=\{1-\del_1 Y_\ell, -t\del_2 Y_\ell\}\times \frac{1}{Nd}\sum_{u_j\sim v_j\in \qq{N}}F_{u_j v_j},
\end{align*}
where $F_{u_j v_j}$ is a product of $r_{j1}$ factors of the form 
   $ \{G^{\circ}_{su_j},G^{\circ}_{sv_j}\}_{s\in \cK},\quad  \{(\Av G^{\circ})_{o'u_j},(\Av G^{\circ})_{o'v_j}\}_{(i',o')\in \cC\setminus \cC^\circ}$;
$r_{j2}$ factors of the form 
  $ \{L_{su_j},L_{sv_j}\}_{s\in \cK}, \quad \{(\Av L)_{o'u_j},(\Av L)_{o'v_j}\}_{(i',o')\in \cC\setminus \cC^\circ}$, 
and an arbitrary number of factors of the form $G_{u_jv_j}, 1/G_{u_ju_j}$.
We will show that
\begin{align}\label{e:dWB}
    (d-1)^{3r_{j1}\ell}\sum_{y\not\in \bT}\sum_{x'y'\in\qq{N}}\bE_Z[Z_{xy}Z_{x'y'}]\wt G_{yz}^{(\bT)}\del_{Z_{x'y'}}(\cW_j)\lesssim \frac{N^{3\fc} \sqrt t}{N\eta}\times (d-1)^{8\ell}\Upsilon\Phi.
\end{align}

There are three cases:
\begin{enumerate}
    \item If $\del_{Z_{x'y'}}$ hits $1-\del_1 Y_\ell$ or $-t\del_2 Y_\ell$, then using \eqref{e:dSj3}, we can bound the term by
    \begin{align*}
       &\phantom{{}={}}(d-1)^{3r_{j1}\ell}\sum_{y\not\in \bT}\sum_{x'y'\in\qq{N}}\bE_Z[Z_{xy}Z_{x'y'}]\wt G_{yz}^{(\bT)}\times \frac{\del_{Z_{x'y'}}\{1-\del_1 Y_\ell, -t\del_2 Y_\ell\}}{Nd}\sum_{u_j\sim v_j\in \qq{N}}F_{u_j v_j}\\
       &\lesssim   \frac{\ell^3 \sqrt t \Phi}{N\eta}
       \times \frac{(d-1)^{3r_{j1}\ell}}{Nd}\sum_{u_j\sim v_j\in \qq{N}}|F_{u_j v_j}|\lesssim \frac{\ell^3 \sqrt t \Phi}{N\eta}\times (d-1)^{8\ell}\Upsilon \Phi,
    \end{align*}
    where in the last statement we use \eqref{e:1_Sbound}.
\item 
We now consider if $\del_{Z_{x'y'}}$ hits a factor $ G^{\circ}_{sw}$ for some $w\in \{u_j,v_j\}$ in $F_{u_j v_j}=G^{\circ}_{sw}\widehat F_{u_j v_j}$, where $\widehat F_{u_jv_j}$ contains $r_{j1}-1$ factors of the form \eqref{e:defcE0}, and $r_{j2}$ factors of the form \eqref{e:defLerror}. Notice that $\del_{Z_{x'y'}}G^{\circ}_{sw}=-\sqrt t G_{sx'}G_{y'w}$. Then,
\begin{align}
\begin{split}\label{e:WGR3}
     &\phantom{{}={}}\sum_{y\not\in \bT}\sum_{x'y'\in\qq{N}}\bE_Z[Z_{xy}Z_{x'y'}]\wt G_{yz}^{(\bT)}G_{sx'}G_{y'w}
     =\frac{\OO(1)}{N}\sum_{y\not\in \bT}\wt G_{yz}^{(\bT)}(G_{sx}G_{yw}+G_{sy}G_{xw})\\
    &+\frac{\OO(1)}{N^2}\sum_{y\not\in \bT}\sum_{x'\in\qq{N}}(\wt G_{yz}^{(\bT)}G_{sx'}G_{xw}+\wt G_{yz}^{(\bT)}G_{sx'}G_{yw}+\wt G_{yz}^{(\bT)}G_{sx}G_{x'w}+\wt G_{yz}^{(\bT)}G_{sy}G_{x'w})\\
    &+\frac{\OO(1)}{N^3}\sum_{y\not\in \bT}\sum_{x'y'\in\qq{N}}\wt G_{yz}^{(\bT)}G_{sx'}G_{y'w}
    = \frac{\OO(1)}{N}\sum_{y\not\in \bT}\wt G_{yz}^{(\bT)}G_{sx}G_{yw}
    +\OO\left(\frac{\sqrt{N^\fo\Phi}}{N} +N^\fo \Phi  |G_{xw}|\right),
\end{split}
\end{align}
where in the last statement we used \eqref{e:Gest}, \eqref{e:use_Ward2}, and $\sum_{x\in\qq{N}}G_{xy}=\OO(1)$ from \eqref{e:Ghao0}. 

By plugging \eqref{e:WGR3} into \eqref{e:dWB}, we get
\begin{align}\begin{split}\label{e:extra_term}
    &\phantom{{}={}}\frac{\OO(\sqrt t\Upsilon)}{dN}\sum_{u_j\sim v_j\in \qq{N}}\left(\frac{1}{N}\sum_{y\not\in \bT}\wt G_{yz}^{(\bT)}G_{sx}G_{yw}+\OO\left(\frac{\sqrt{N^\fo \Phi}}{N}+N^\fo \Phi  |G_{xw}|\right)\right)\widehat F_{u_j v_j}\\
&\lesssim \frac{\sqrt t\Upsilon}{N\eta}\sqrt{\sum_{y\not\in \bT} \frac{|\wt G_{yz}^{(\bT)}|^2}{N^2} \sum_{u_j\sim v_j\in \qq{N}}|\widehat F_{u_j v_j}|^2}\\
&+\frac{\sqrt t\Upsilon}{N} \sqrt{\sum_{u_j\sim v_j\in \qq{N}}\left(\frac{\sqrt{N^\fo \Phi}}{N}+N^\fo \Phi  |G_{xw}|\right)^2 \sum_{u_j\sim v_j\in \qq{N}}|\widehat F_{u_j v_j}|^2}\\
&\lesssim \frac{\sqrt t\Upsilon}{N}\left(\sqrt{\frac{N^{\fo} \Phi}{N\eta^2} \sum_{u_j\sim v_j\in \qq{N}}|\widehat F_{u_j v_j}|^2}+\sqrt{\left(\frac{N^\fo \Phi}{N}+N(N^\fo \Phi)^3  \right) \sum_{u_j\sim v_j\in \qq{N}}|\widehat F_{u_j v_j}|^2}\right)\\
&\lesssim \frac{\sqrt t\Upsilon }{N}\frac{ N^{2\fc+2\fo}\sqrt{ \Phi}}{\eta}\sqrt{ \frac{1}{N}\sum_{u_j\sim v_j\in \qq{N}}|\widehat F_{u_j v_j}|^2},
\end{split}\end{align}
where in the first statement we used that $\|G\|_{ \spec}\leq 1/\eta$ and Cauchy-Schwarz inequality; in the second statement we used \eqref{e:Gest} and \eqref{e:use_Ward2}; in the last statement we used that $\Phi=\Im[m_t]/(N\eta)+1/N^{1-2\fc}\lesssim N^{2\fc}/(N\eta)$.

If $r_{j2}=0$, then $\widehat F_{u_j v_j}$ contains $r_{j1}-1\geq 1$ factors of the form \eqref{e:defcE0}. Thanks to \eqref{e:naive-Ward} and \eqref{e:av_naive-Ward}, we have 
$(1/N)\sum_{u_j\sim v_j\in \qq{N}}|F_{u_j v_j}|^2\lesssim N^{-2(r_{j1}-2)\fb} N^\fo \Phi$;
If $r_{j2}\geq 1$, similarly, we have 
$(1/N)\sum_{u_j\sim v_j\in \qq{N}}|F_{u_j v_j}|^2\lesssim N^{-2\max\{r_{j1}-1,0\}\fb} (\fR/N)$. Thus in both cases, we have the bound $((d-1)^{3r_{j1}\ell}/N)\sum_{u_j\sim v_j\in \qq{N}}|F_{u_j v_j}|^2\lesssim (d-1)^{8\ell}\Phi$. By plugging this estimate into \eqref{e:extra_term}, we conclude 
\begin{align}\label{e:dWB1}
   (d-1)^{3r_{j1}\ell}\times \eqref{e:extra_term} \lesssim
    \frac{\sqrt t N^{3\fc} }{N\eta}\times (d-1)^{8\ell}\Upsilon\Phi.
\end{align}
\item If $\del_{Z_{x'y'}}$ hits a factor $(\Av G^{\circ})_{o'w}$ for some $w\in \{u_j,v_j\}$, the same estimate \eqref{e:dWB1} holds.
\end{enumerate}
All three cases discussed above together give \eqref{e:dWB}.

The three estimates \eqref{e:dS1c1}, \eqref{e:dSj3} and \eqref{e:dWB} together with \eqref{e:Bsmall} imply 
\begin{align}\begin{split}\label{e:ttt3}
    &\phantom{{}={}}(d-1)^{(6r+3\sum_{j=1}^{p-1} r_{j1})\ell}\sqrt t\sum_{yx'y'}\bE_Z[Z_{xy}Z_{x'y'}]\bE_Z\left[\prod_{j=1}^r B_j \wt G_{yz}^{(\bT)}\del_{Z_{x'y'}}\left( \prod_{j=1}^{p-1}\cW_j\right)\right]\\
    &\lesssim (d-1)^{6r\ell}N^{-r\fb}t \bE_Z\left[\frac{N^{3\fc}\sqrt t }{N\eta} ((d-1)^{8\ell}\Upsilon\Phi)  (|Q_t-Y_t|+(d-1)^{8\ell}\Upsilon \Phi)^{p-2}\right]\lesssim N^{-\fb}\bE_Z[\Psi_p].
\end{split}\end{align}

The second statement in \eqref{e:WRbound} follows from combining \eqref{e:ttt1}, \eqref{e:ttt2} and \eqref{e:ttt3}.

In the following, we explain the proof for the third statement in \eqref{e:WRbound} when $x=z\in \bT$. Thanks to the integration by parts formula in \Cref{p:intbypart},  
\begin{align}\begin{split}\label{e:WGW-m}
 &\phantom{{}={}}\bE_Z[ ((Z\widetilde G^{(\bT)}Z)_{x x}-m_t) R_\bfi]=\sum_{y\not\in\bT}\bE_Z[ Z_{xy}(\widetilde G^{(\bT)}Z)_{y x}R_\bfi]-\bE_Z[ m_t R_\bfi]\\
 &=\sum_{y\not\in \bT}\sum_{x'y'\in\qq{N}}\bE_Z[Z_{xy}Z_{x'y'}](\bE_Z[(\del_{Z_{x'y'}}(\widetilde G^{(\bT)}Z)_{yx}R_\bfi]
 +\bE_Z[(\widetilde G^{(\bT)}Z)_{yx}\del_{Z_{x'y'}}R_\bfi])-\bE_Z[ m_t R_\bfi].
\end{split}    
\end{align}
If the derivative hits $(\wt G^{(\bT)}Z )_{yx}$, we get
\begin{align}\begin{split}\label{e:WGW1}
    &\phantom{{}={}}\sum_{y\not\in \bT}\sum_{x'y'\in\qq{N}}\bE_Z[Z_{xy}Z_{x'y'}](\del_{Z_{x'y'}}((\wt G^{(\bT)}Z)_{yx})\\
    &=\sum_{yx'\not\in \bT}\bE_Z[Z_{xy}Z_{x'x}]\wt G_{yx'}^{(\bT)}-\sqrt t\sum_{yx'y'\not\in \bT}\bE_Z[Z_{xy}Z_{x'y'}]\wt G_{yx'}^{(\bT)}(\wt  G^{(\bT)}Z)_{y'x}\\
    &=\frac{1}{N}\sum_{y\not\in \bT}   \wt G_{yy}^{(\bT)}+\frac{\OO(1)}{N^2}\sum_{yx'\not\in \bT}  \wt G_{yx'}^{(\bT)}+\frac{\OO(\sqrt t)}{N^2}\left(\sum_{yx'\not\in \bT}  \wt G_{yx'}^{(\bT)}(\wt  G^{(\bT)}Z)_{yx}+\sum_{yy'\not\in \bT}   \wt G_{yy}^{(\bT)}(\wt  G^{(\bT)}Z)_{y'x}\right)+\\
    &+\frac{\OO(\sqrt t)}{N^3}\sum_{yx'y'\not\in \bT}   \wt G_{yx'}^{(\bT)}( \wt G^{(\bT)}Z)_{y'x}
    = \frac{1}{N}\sum_{y\not\in \bT}   \wt G_{yy}^{(\bT)}+\frac{\OO(\sqrt t)}{N^2}\sum_{yy'\not\in \bT}    \wt G_{yy}^{(\bT)}( \wt G^{(\bT)}Z)_{y'x}+\OO\left(\frac{\ell}{N}\right)
    \\
    &= m_t+\frac{\OO(\sqrt t)}{N}\sum_{y'\not\in \bT} m_t (  \wt G^{(\bT)}Z)_{y'x}+\OO((d-1)^\ell N^\fo\Phi).
\end{split}\end{align}
where the terms on the third line are from the cases $x'=y$ or $y'=y$; for third statement we used \eqref{e:Werror} that $\sum_{x'\not\in \bT}\wt G_{yx'}^{(\bT)}\lesssim \ell$; for the fourth statement we used \eqref{e:use_Ward2}. By plugging \eqref{e:WGW1} into the first term on the right-hand side of \eqref{e:WGW-m}, we conclude
\begin{align}\begin{split}\label{e:firstbb}
   &\phantom{{}={}}(d-1)^{(6r+3\sum_{j=1}^{p-1} r_{j1})\ell} t\left(\sum_{y\not\in \bT}\sum_{x'y'\in \qq{N}}\bE_Z[Z_{xy}Z_{x'y'}]\bE_Z[\del_{Z_{x'y'}}(\widetilde G^{(\bT)}Z)_{yx}R_\bfi]-\bE_Z[ m_t R_\bfi]\right)\\
    &=  (d-1)^{(6r+3\sum_{j=1}^{p-1} r_{j1})\ell} t\left(\frac{\OO(\sqrt t)}{N}\sum_{y'\not\in\bT}\bE_Z[m_t (  Z\wt G^{(\bT)})_{xy'} R_\bfi]+\OO(\bE_Z[(d-1)^\ell N^\fo |R_\bfi|]\right).
\end{split}\end{align}
 Here, although the first term  on the right-hand side of \eqref{e:firstbb} contains an extra $m_t$ factor (compared with the second statement in \eqref{e:WRbound}), the same argument gives the bound $N^{-\fb}\bE_Z[\Psi_p]$; the second term on the right-hand side of \eqref{e:firstbb} is also bounded by $N^{-\fb}\bE_Z[\Psi_p]$ by the same argument as in  \eqref{e:ttt1}. We conclude that 
 \begin{align*}
     \eqref{e:firstbb}\lesssim N^{-\fb}\bE_Z[\Psi_p].
 \end{align*}

The second term 
in the last line of \eqref{e:WGW-m} can be bounded by the same argument as that of \eqref{e:ttt2} and \eqref{e:ttt3}. 
More precisely, using \eqref{e:Werror2} as input, one can check that \eqref{e:dWB0}, \eqref{e:dS1c1}, \eqref{e:dSj3},  \eqref{e:dWB} and  \eqref{e:dWB1} hold if we replace $\wt G_{yz}^{(\bT)}$ by $(\widetilde G^{(\bT)}Z)_{yx}$.  This finishes the proof of the third statement in \eqref{e:WRbound}.
\end{proof}

\section{Proof of the Iteration Scheme}\label{s:proof_main}
In this section we prove \Cref{p:add_indicator_function}, \Cref{p:iteration},  \Cref{p:general} and \Cref{p:track_error1}.
\subsection{Proof of \Cref{p:add_indicator_function}}

\begin{proof}[Proof of \Cref{p:add_indicator_function}]
We recall the set $\overline{\Omega}$ from \eqref{def:omegabar}. First, we show that up to negligible error terms, we can assume the set $\cF$ remains well behaved in the switched graph (namely $I(\cF,\wt\cG)=1$). We split the left-hand side of \eqref{e:maint} into two terms
\begin{align}\begin{split}\label{e:maint0}
&\phantom{{}={}}\bE\left[I(\cF,\cG){\bm1(\cG\in \Omega)}(G_{oo}^{(i)}-Y_t)R_\bfi\right] \\ 
&=\bE\left[I(\cF,\cG)I(\cF,\wt\cG){\bm1(\cG\in \Omega)}\bm1(\tcG\in \oOmega)(G_{oo}^{(i)}-Y_t)R_\bfi\right]\\
&+\bE\left[I(\cF,\cG)\bE_\bfS[1-I(\cF,\wt\cG)\bm1(\tcG\in \oOmega)]{\bm1(\cG\in \Omega)}(G_{oo}^{(i)}-Y_t)R_\bfi\right] \\
&=\bE\left[I(\cF,\cG)I(\cF,\wt\cG){\bm1(\cG\in \Omega)\bm1(\tcG\in \oOmega)}(G_{oo}^{(i)}-Y_t)R_\bfi\right]\\
&+\OO\left(N^{-1+2\fc}\bE\left[I(\cF,\cG){\bm1(\cG\in \Omega)}|(G_{oo}^{(i)}-Y_t)||R_\bfi|\right] \right),
\end{split}\end{align}
where in the third line we used \Cref{lem:configuration}, that 
\begin{align*}
    \bE_\bfS[1-I(\cF,\tcG)\bm1(\tcG\in \oOmega) ]\leq  \bE_\bfS[1-\bm1(\tcG\in \oOmega) ]+\bE_\bfS[1-I(\cF,\tcG) ]\leq N^{-1+2\fc}.
\end{align*} 
After averaging over $\bfi$, the last term in \eqref{e:maint0} is sufficiently small. Namely, 
\begin{align*}
    &\phantom{{}={}}\frac{(d-1)^{(6r+3\sum_{j=1}^{p-1} r_{j1})\ell}}{Z_\cF}\sum_{\bfi}N^{-1+2\fc}\bE\left[I(\cF,\cG){\bm1(\cG\in \Omega)}|(G_{oo}^{(i)}-Y_t)||R_\bfi|\right]\\
    &\lesssim 
     \frac{(d-1)^{6r\ell}}{Z_\cF}\sum_{\bfi}N^{-1+2\fc}\bE\left[I(\cF,\cG){\bm1(\cG\in \Omega)}N^{-(r+1)\fb}(| Q_t- Y_t|+ (d-1)^{8\ell}\Upsilon\Phi)^{p-1}\right]
    =\OO\left(\frac{\bE[\Psi_p]}{N^\fb}\right),
\end{align*}
where in the first statement we used \eqref{e:Bsmall} and \eqref{e:Sbound}.

For the other term on the right-hand side of \eqref{e:maint0}, we now show we can further condition that $\wt{G}\in \Omega$.
\begin{align}\begin{split}\label{e:maint1}
&\phantom{{}={}}\bE\left[I(\cF,\cG)I(\cF,\wt\cG)\bm1(\cG\in \Omega) \bm1(\tcG\in \oOmega) (G_{oo}^{(i)}-Y_t)R_\bfi\right]\\
&=
\bE\left[I(\cF,\cG)I(\cF,\wt\cG){\bm1(\cG,\tcG\in \Omega)}(G_{oo}^{(i)}-Y_t)R_\bfi\right]\\
&+\bE\left[I(\cF,\cG)I(\cF,\wt\cG){\bm1(\cG\in\Omega)\bm1(\tcG\in \oOmega\setminus \Omega)} (G_{oo}^{(i)}-Y_t)R_\bfi\right].
\end{split}\end{align}
We can bound the second term on the right-hand side of \eqref{e:maint1} by \eqref{e:Bsmall}, \eqref{e:Sbound}, and H{\" o}lder's inequality 
\begin{align}\begin{split}\label{e:maint2}
&\phantom{{}={}}\frac{(d-1)^{(6r+3\sum_{j=1}^{p-1} r_{j1})\ell}}{Z_\cF}\sum_{\bfi}\bE\left[I(\cF,\cG)I(\cF,\wt\cG){\bm1(\cG\in\Omega)\bm1(\tcG\in \oOmega\setminus \Omega)} |(G_{oo}^{(i)}-Y_t)||R_\bfi|\right]\\
&\lesssim N^{-\fb} \bE\left[ {\bm1(\cG\in\Omega)\bm1(\tcG\in \oOmega\setminus \Omega)}(| Q_t- Y_t|+ (d-1)^{8\ell}\Upsilon\Phi)^{p-1}\right]\\
&\lesssim 
N^{-\fb} \bE[\bm1(\tcG\in \oOmega\setminus \Omega)]^{\frac{1}{p}}\bE\left[ \bm1(\cG\in\Omega)(| Q_t- Y_t|+ (d-1)^{8\ell}\Upsilon\Phi)^{p}\right]^{\frac{p-1}{p}}
\lesssim N^{-\fb}\bE[\Psi_p],
\end{split}\end{align}
where in the last statement we used  \Cref{thm:prevthm0} that $\bP(\oOmega\setminus \Omega)\leq N^{-\fC'}$ for any $\fC'>0$, provided $N$ is large enough.

The main term is the first term on the right-hand side of \eqref{e:maint1}, we can rewrite it as
\begin{align}\begin{split}\label{e:towrite}
     \bE\left[I(\cF,\cG)I(\cF,\wt\cG)\bm1(\cG,\tcG\in \Omega)G_{oo}^{(i)} R_\bfi\right]-\bE\left[I(\cF,\cG)I(\cF,\wt\cG)\bm1(\cG,\tcG\in \Omega)Y_t R_\bfi\right].
\end{split}\end{align}
Thanks to \Cref{lem:exchangeablepair}, $\cG$ and $\tcG=T_{\bf S}(\cG)$ form an exchangeable pair. We can rewrite the first term of \eqref{e:towrite} as
\begin{align}\begin{split}\label{e:towrite2}
    \bE\left[I(\cF,\cG)I(\cF,\wt\cG)\bm1(\cG,\tcG\in \Omega)G_{oo}^{(i)} R_\bfi\right]=\bE\left[I(\cF,\cG)I(\cF,\wt\cG)\bm1(\cG,\tcG\in \Omega)\wt G_{oo}^{(i)} \wt R_\bfi\right].
\end{split}\end{align}
Similarly, we can rewrite the second term on the right-hand side of \eqref{e:towrite} as
\begin{align}\begin{split}\label{e:towrite3}
    &\phantom{{}={}}\bE\left[I(\cF,\cG)I(\cF,\wt\cG)\bm1(\cG,\tcG\in \Omega)Y_t R_\bfi\right]=\bE\left[I(\cF,\cG)I(\cF,\wt\cG)\bm1(\cG,\tcG\in \Omega)\wt Y_t \wt R_\bfi\right]\\
    &=\bE\left[I(\cF,\cG)I(\cF,\wt\cG)\bm1(\cG,\tcG\in \Omega)Y_t \wt R_\bfi\right]+\bE\left[I(\cF,\cG)I(\cF,\wt\cG)\bm1(\cG,\tcG\in \Omega)(Y_t-\wt Y_t) R_\bfi\right].
\end{split}\end{align}
For the last term on the right-hand side of \eqref{e:towrite3}, we recall that $R_\bfi\in \Adm(r, \bmr, \cF,\cG)$. If $r\geq 1$ or $|\{j: r_{j0}=2,3\}|\geq 1$, thanks to \eqref{e:Sbound}, we can bound it as
\begin{align}\begin{split}\label{e:towrite4}
    &\phantom{{}={}}\left|\frac{(d-1)^{(6r+3\sum_{j=1}^{p-1} r_{j1})\ell}}{Z_\cF}\sum_{\bfi}\bE\left[I(\cF,\cG)I(\cF,\wt\cG)\bm1(\cG,\tcG\in \Omega)(Y_t-\wt Y_t) R_\bfi\right]\right|\\
    &\lesssim
    \frac{(d-1)^{6r\ell}}{Z_\cF}\sum_{\bfi}\bE\left[\bm1(\cG,\tcG\in \Omega)(d-1)^{8\ell}\Phi N^{-r\fb} \Pi(z,\bmr)\right]\lesssim N^{-\fb/2}\bE[\Psi_p],
\end{split}\end{align}
where we used  \eqref{e:bPi2} and the following bound from \eqref{e:Yl_derivative} and \eqref{e:Qtmtbound} 
\begin{align}\label{e:Ytdiff}
    &|\wt Y_t-Y_t|\lesssim |\del_1 Y_\ell||\wt Q_t-Q_t|+|\del_2 Y_\ell||\wt m_t-m_t|)\lesssim \ell (d-1)^{6\ell} N^\fo\Phi\lesssim (d-1)^{8\ell}\Phi.
\end{align}

Otherwise, $R_\bfi=(Q_t-Y_t)^{p-1}$. Because $\cG, \wt \cG$ form an exchangeable pair
\begin{align}\begin{split}\label{e:Ridouble}
    &\phantom{{}={}}\frac{1}{Z_\cF}\sum_{\bfi}\bE\left[I(\cF,\cG)I(\cF,\wt\cG)\bm1(\cG,\tcG\in \Omega)(Y_t-\wt Y_t) R_\bfi\right]\\
    &=\frac{1}{2Z_\cF}\sum_{\bfi}\bE\left[I(\cF,\cG)I(\cF,\wt\cG)\bm1(\cG,\tcG\in \Omega)(Y_t-\wt Y_t) (R_\bfi-\wt R_\bfi)\right].
\end{split}\end{align}
Thanks to \eqref{e:tilde_bound}
\begin{align}\begin{split}\label{e:Rtdiff}
    &\phantom{{}={}}|R_\bfi-\wt R_\bfi|
    \lesssim |(\wt Q_t-\wt Y_t)-(Q_t-Y_t)|(|\wt Q_t-\wt Y_t|+|Q_t-Y_t|)^{p-2}\\
    &\lesssim (d-1)^{8\ell}\Upsilon \Phi
    (|Q_t-Y_t|+(d-1)^{8\ell}\Upsilon \Phi)^{p-2}.
\end{split}\end{align}
By plugging \eqref{e:Ytdiff} and \eqref{e:Rtdiff} into \eqref{e:Ridouble}, we conclude that $\eqref{e:Ridouble}=\OO(N^{-\fb/2}\bE[ \Psi_p])$.

Therefore, in \eqref{e:towrite}, we are left to take the difference of the right-hand side of \eqref{e:towrite2} and the first term of the right-handside of \eqref{e:towrite3}. We claim we can replace the indicator functions $I(\cF, \cG)I(\cF, \wt \cG)$ by $I(\cF^+, \cG)$, and \begin{align}\begin{split}\label{e:fhatF}
    &\phantom{{}={}}\frac{(d-1)^{(6r+3\sum_{j=1}^{p-1} r_{j1})\ell}}{Z_\cF}\sum_{ \bfi}\bE\left[I(\cF,\cG)I(\cF,\wt\cG)\bm1(\cG,\tcG\in \Omega) (\wt G_{oo}^{(i)}-Y_t) \wt R_\bfi\right]\\
    &=\frac{(d-1)^{(6r+3\sum_{j=1}^{p-1} r_{j1})\ell}}{Z_\cF}\sum_{ \bfi}\bE\left[I(\cF^+,\cG)\bm1(\cG,\tcG\in \Omega) (\wt G_{oo}^{(i)}-Y_t) \wt R_\bfi\right]+\OO(N^{-\fb/2}\bE[\Psi_p]).
 \end{split}\end{align}   
This is true as by \Cref{lem:configuration}, the error term is bounded by 
\begin{align*}
    &\phantom{{}={}}\frac{(d-1)^{(6r+3\sum_{j=1}^{p-1} r_{j1})\ell}}{Z_\cF}\sum_{ \bfi}\bE\left[|I(\cF,\cG)I(\cF,\wt\cG)-I(\cF^+,\cG)|\bm1(\cG,\tcG\in \Omega) |\wt G_{oo}^{(i)}-Y_t| |\wt R_\bfi|\right]\\
     &\leq\frac{(d-1)^{6r\ell}}{Z_\cF}\sum_{ \bfi}\bE\left[I(\cF,\cG)\bE_\bfS[|I(\cF,\wt\cG)-I(\cF^+,\cG)|]\bm1(\cG\in \Omega) N^{-(r+1)\fb}  (|Q_t-Y_t|+(d-1)^{8\ell}\Upsilon \Phi)^{p-1}\right]\\
     &=\OO(N^{-\fb/2}\bE[\Psi_p]),
\end{align*}
where we used \eqref{e:Sbound}.
Up to negligible error in \eqref{e:fhatF}, we can pass from the average over $\bfi$ to an average over all possible embeddings of $\cF^+$ in $\cG$,
\begin{align*}
      \eqref{e:fhatF}=\frac{(d-1)^{(6r+3\sum_{j=1}^{p-1} r_{j1})\ell}}{Z_{\cF^+}}\sum_{ \bfi^+}\bE\left[I(\cF^+,\cG)\bm1(\cG,\tcG\in \Omega) (\wt G_{oo}^{(i)}-Y_t) \wt R_\bfi\right]+\OO(N^{-\fb/2}\bE[\Psi_p]),
\end{align*}
this finishes the proof of \Cref{p:add_indicator_function}.

\end{proof}

\subsection{Iteration First Step: Proof of \Cref{p:iteration} and \Cref{p:track_error1}}
\label{s:first_step}

In this section, we prove \Cref{p:iteration} and \Cref{p:track_error1}. We denote
\begin{align*}
    \cW_j=Q_t-Y_t, \quad R_\bfi=\prod_{j=1}^{p-1}\cW_j, \quad 1\leq j\leq p-1.
\end{align*}
Although the values of $\cW_j$ are identical, we associate each $\cW_j$ with a special edge $\{u_j, v_j\}$ (as defined in \eqref{e:special_edge}).
Then we can write \eqref{e:IFIF} as
\begin{align}\begin{split}\label{e:freplace0}
    &\phantom{{}={}}{Z^{-1}_{\cF^+}}\sum_{\bfi^+}\bE\left[I(\cF^+,\cG)\bm1(\cG,\tcG\in \Omega)(\wt G_{oo}^{(i)}-Y_t) (\wt Q_t-\wt Y_t)^{p-1}\right]\\
    &={Z^{-1}_{\cF^+}}\sum_{\bfi^+}\bE\left[I(\cF^+,\cG)\bm1(\cG,\tcG\in \Omega) (\wt G_{oo}^{(i)}-Y_t)\prod_{j=1}^{p-1}  \wt \cW_j\right],
\end{split}\end{align}
where for the factors $\cW_j$, we have denoted the corresponding term for the Green's function of the resampled graph as $\wt \cW_j$. Our next goal is to rewrite the product $(\wt G_{oo}^{(i)}-Y_t)\prod_{j=1}^{p-1}\wt \cW_j$ which depends on the Green's functions of the switched graph $\tcG$, in terms of the Green's functions of the original graph 
$\cG$, see \Cref{fig:first_step}. This is achieved as follows.

\begin{enumerate}

\item \label{i:case1}
By \Cref{lem:diaglem}, we can decompose $\wt G_{oo}^{(i)}-Y_t=\widehat B_0+\cZ_0+\cE_0$, where $\cZ_0, \cE_0$ are $\cZ, \cE$ in \eqref{e:G-Y} and
$\wh B_0$ is given by
\begin{align}\label{e:first_W0}
\wh B_0= \frac{\msc^{2\ell}(z_t)} {(d-1)^{\ell+1}}\sum_{\al\in\sfA_i}(G^{(b_\alpha)}_{c_\alpha c_\alpha}-Q_t)+\frac{\msc^{2\ell}(z_t)} {(d-1)^{\ell+1}}\sum_{\al\neq \beta\in\sfA_i}G^{(b_\alpha b_\beta)}_{c_\alpha c_\beta}
   + \cU_0. 
\end{align}
Here, $\cU_0$ is an $\OO(1)$-weighted sum of terms of the form $(d-1)^{3(h-1)\ell}R_h$, where $h\geq 2$, and $R_h$ is an $S$-product term (see \Cref{d:S-product}) containing at least one factor of the form $( G_{c_\al c_{\al}}^{(b_\al)}-Q_t)$ or $G_{c_\al c_{\beta}}^{(b_\al b_\beta)}$; 

\item \label{i:case2}\label{i:c2}
  For $\cW_j=Y_t-Q_t$, by \Cref{c:Q-Ylemma} we have 
$\wt \cW_j=\widehat \cW_j+\widehat\cE_j$, where $|\widehat\cE_j|\lesssim ((d-1)^{8\ell}\Phi)^2$ and  $\wh \cW_j=(Q_t-Y_t)+\wh \cU_j$, where 
$\wh \cU_j$ is an $\OO(1)$-weighted sum of terms of the following forms
\begin{enumerate}
\item For some $h\geq 0$, and $h_1+h_2\geq 2$ even,
     \begin{align}\label{e:first_special_term1}
     &(d-1)^{3h\ell}R_{h}\times \{(1-\del_1 Y_\ell), -t\del_2 Y_\ell\}\frac{(d-1)^{3h_1\ell} }{Nd}\sum_{u_j\sim v_j} R'_{h_1,h_2} , 
\end{align} 
   where $R_h$ is a W-product term (see \Cref{d:W-product}); $R'_{h_1, h_2}$ contains $h_1$ factors of the form $ G^{\circ}_{su_j},  G^{\circ}_{sv_j}$, $h_2$ factors of the form $L_{su_j}, L_{sv_j}$, with $s\in\{l_\al, a_\al,b_\al, c_\al\}_{\al\in \qq{\mu}}$ and an arbitrary number of factors $G_{u_jv_j}, 1/G_{u_ju_j}$;
  \item  Alternatively, for $h\geq 0$
\begin{align}
\label{e:first_special_term2}
     (d-1)^{3h\ell}R_h\times  \frac{(d-1)^\ell (1-\del_1 Y_\ell)}{N}.
\end{align} 
 where $R_h$ is a W-product term (see \Cref{d:W-product}). 
 \end{enumerate}
\end{enumerate}

\begin{figure}
    \centering
    \includegraphics[scale=0.7]{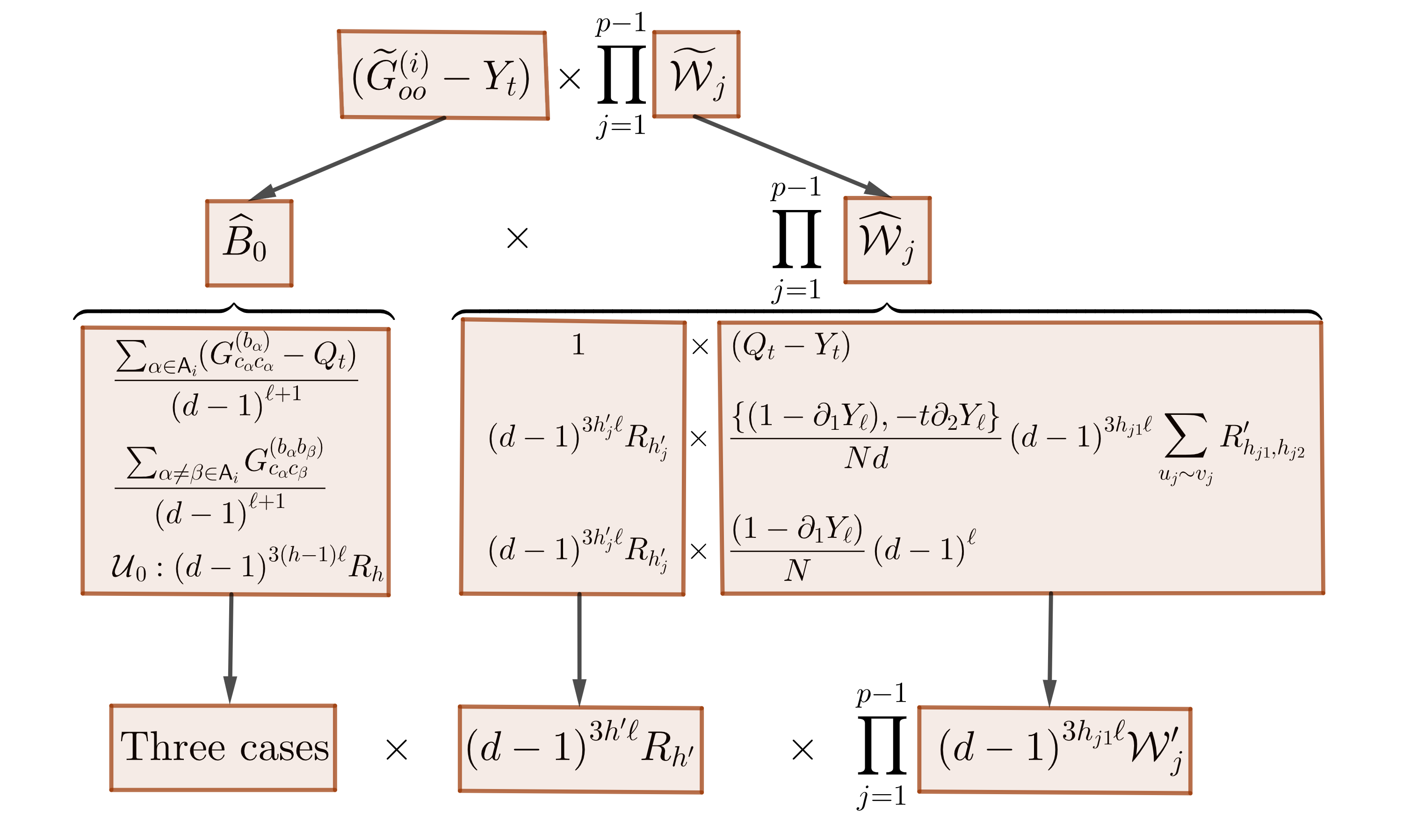}
    \caption{
This figure illustrates the three steps involved in the proof of \Cref{p:iteration}. 
First, we rewrite the product 
$(\wt G_{oo}^{(i)} - Y_t) \prod_{j=1}^{p-1} \wt \cW_j$,
which depends on the Green's functions of the switched graph $\tcG$, in terms of the Green's functions of the original graph $\cG$. This reformulation is expressed as 
$\widehat{B}_0 \prod_{j=1}^{p-1} \widehat{\cW}_j$
in \Cref{l:fyibu}. Second, we regroup the term $\prod_{j=1}^{p-1} \widehat{\cW}_j$ as $(d-1)^{3h'\ell} R_{h'}$ multiplied by an admissible function, see \eqref{e:W_decompose}. Finally, \Cref{p:iteration} follows by discussing the three cases in the decomposition \eqref{e:first_W0} of $\widehat{B}_0$.
    \label{fig:first_step}}
\end{figure}

The errors $(\wt G_{oo}^{(i)}-Y_t)-\widehat B_0=\cZ_0+\cE_0$ and $\wt \cW_j-\widehat \cW_j=\widehat \cE_j$ in \Cref{i:case1} and \Cref{i:case2} are small. The following lemma states that we can replace $(\wt G_{oo}^{(i)}-Y_t)\prod_{j=1}^{p-1}\wt \cW_j$ with $\widehat B_0\prod_{j=1}^{p-1}\widehat \cW_j$, with the overall error from this substitution being negligible. The proof is deferred to \Cref{sec:iterationproofs}.
\begin{lemma}\label{l:fyibu}
Adopt assumptions and notation of \Cref{p:iteration}. We can rewrite \eqref{e:IFIF} as
\begin{align}
\label{e:fhatRi}
  \eqref{e:IFIF}=  \frac{1}{Z_{\cF^+}}\sum_{\bfi^+}\bE\left[I(\cF^+ ,\cG)\bm1(\cG\in \Omega)  \widehat B_0\prod_{j=1}^{p-1}\widehat \cW_j\right]+\cE,
\end{align}
where let $\sfA_i=\{\al\in\qq{\mu}: \dist_{\cG}(i,l_\al)=\ell+1\}$ and 
\begin{align}\begin{split}\label{e:error_E}
    \cE&=\frac{1}{Z_{\cF^+}}\sum_{\al,\beta\in \sfA_i}\sum_{\bfi^+}\bE\left[I(\cF^+,\cG)\bm1(\cG,\wt \cG\in \Omega)\frac{\msc^{2\ell}(z_t)(\wt G_{c_\al c_\beta}^{(\bT)}-G_{c_\al c_\beta}^{(b_\al b_\beta)}) }{(d-1)^{\ell+1}} (Q_t-Y_t)^{p-1}\right]\\
    &+\OO(N^{-\fb/4}\bE[ \Psi_p])
    =\OO((d-1)^{2\ell}\bE[\Psi_p]).
\end{split}\end{align}
\end{lemma}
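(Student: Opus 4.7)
The strategy is to expand the product $(\wt G_{oo}^{(i)}-Y_t)\prod_{j=1}^{p-1}\wt \cW_j$ using the two substitutions $(\wt G_{oo}^{(i)}-Y_t)=\widehat B_0+\cZ_0+\cE_0$ from \Cref{lem:diaglem} together with $\wt \cW_j=\widehat \cW_j+\widehat\cE_j$ from \Cref{c:Q-Ylemma}, obtaining the desired main term $\widehat B_0\prod_j\widehat \cW_j$ together with a sum of cross terms, each of which carries at least one $\cZ_0$, $\cE_0$, or $\widehat\cE_j$ factor. The plan is then to bound each cross term after averaging over $\bfi^+$ by exploiting that (i) the terms in $\widehat B_0$, $\widehat\cW_j$ are admissible (in the sense of \Cref{def:pgen}), so their product is controlled by \Cref{l:Sbound} and \Cref{p:small_Ri}, and (ii) each of $\cZ_0$, $\cE_0$, $\widehat\cE_j$ provides additional smallness.

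First I would handle the contribution of the terms containing $\widehat\cE_j$. Since $|\widehat\cE_j|\lesssim ((d-1)^{8\ell}\Phi)^2$ by \Cref{c:Q-Ylemma}, and each remaining factor $\widehat\cW_k$ (for $k\neq j$) is bounded by $|Q_t-Y_t|+(d-1)^{8\ell}\Upsilon\Phi$ via \Cref{l:Sbound}, while $|\widehat B_0|\lesssim N^{-\fb}$ by \Cref{lem:diaglem} and \Cref{l:adm_term_bound}, these terms are at most $N^{-\fb}((d-1)^{8\ell}\Phi)^2(|Q_t-Y_t|+(d-1)^{8\ell}\Upsilon\Phi)^{p-2}$, which after recalling that $\Phi\leq \Upsilon$ in the definition \eqref{e:defPhi} is bounded by $N^{-\fb/2}\Psi_p$.

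Next I would treat the contribution of $\cZ_0$. Recall from \Cref{lem:diaglem} that $\cZ_0$ is an $\OO(1)$-weighted sum of products of $S$-product terms and factors of the form \eqref{e:W-term} involving the constrained GOE matrix $Z$. After the $\bfi^+$-average, each such contribution is of the form appearing in the hypothesis of \Cref{p:WRbound}; applying that proposition term by term (and combining with \Cref{l:Sbound} for the remaining $\widehat\cW_j$'s) yields a bound $\OO(N^{-\fb}\bE[\Psi_p])$. The main obstacle here is precisely this Gaussian integration-by-parts step: one must verify that whenever a derivative $\partial_{Z_{x'y'}}$ lands on a Green's-function factor appearing in $\widehat\cW_j$ or $\widehat B_0$, the resulting product still admits a Ward-type estimate that beats the $\sqrt{t}$ factor produced by $\partial_Z G$; this is exactly what \Cref{p:WRbound} provides, so the reduction is routine once the Schur and Woodbury decompositions are written in this form.

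Finally I would treat the contribution of $\cE_0$, which by \eqref{e:defCE} decomposes as a principal piece $\frac{\fc}{(d-1)^{\ell+1}}\sum_{\al,\beta\in\sfA_i}(\wt G^{(\bT)}_{c_\al c_\beta}-G^{(b_\al b_\beta)}_{c_\al c_\beta})$ plus a smaller contribution of order $N^{-\fb/2}\sum_{\al,\beta}|\wt G^{(\bT)}_{c_\al c_\beta}-G^{(b_\al b_\beta)}_{c_\al c_\beta}|+N^{-2}$. After multiplying by $\prod_j\widehat\cW_j$, replacing each $\widehat\cW_j$ by $(Q_t-Y_t)$ generates secondary errors controlled again by \Cref{l:Sbound} and \Cref{p:small_Ri}, and the principal piece produces precisely the first term displayed in \eqref{e:error_E}. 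The crude bound $\cE=\OO((d-1)^{2\ell}\bE[\Psi_p])$ then follows from \Cref{lem:task2} applied to the $|\wt G^{(\bT)}_{c_\al c_\beta}-G^{(b_\al b_\beta)}_{c_\al c_\beta}|$ sum, which gives at most $(d-1)^{2\ell}\bE[N^{\fo}\Phi\,\Pi(z,\bmr)]\lesssim (d-1)^{2\ell}\bE[\Psi_p]$ after summing $\OO((d-1)^{2\ell})$ pairs $(\al,\beta)\in\sfA_i\times\sfA_i$. The delicate point is that this leading contribution is not negligible on its own; it must be retained in the stated form so that \Cref{l:first_term} can later extract the next-order correction $\partial_z m_t/N$ needed for the microscopic loop equation.
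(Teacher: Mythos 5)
Your decomposition and your identification of the retained principal term are exactly the paper's: expand via \Cref{lem:diaglem} and \Cref{c:Q-Ylemma}, kill the $\widehat\cE_j$-cross terms pointwise, treat $\cZ_0$ by the Gaussian integration-by-parts estimate, and keep the $\cE_0$-piece $\frac{\msc^{2\ell}(z_t)}{(d-1)^{\ell+1}}\sum_{\al,\beta\in\sfA_i}(\wt G^{(\bT)}_{c_\al c_\beta}-G^{(b_\al b_\beta)}_{c_\al c_\beta})(Q_t-Y_t)^{p-1}$, estimating it through \Cref{lem:task2}. One step, however, would not close as written: the ``secondary errors'' in your treatment of $\cE_0$ --- the cross terms in which $\cE_0$ multiplies at least one $\wh\cU_j$-type factor instead of $(Q_t-Y_t)$ --- cannot be controlled by \Cref{l:Sbound} and \Cref{p:small_Ri}. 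Those results only bound the admissible factors; the remaining factor $|\wt G^{(\bT)}_{c_\al c_\beta}-G^{(b_\al b_\beta)}_{c_\al c_\beta}|$ is, pointwise, only $\OO(N^{-\fb})$ per pair, which is enormous compared with the $\Phi$-type quantities (as small as $N^{-1+2\fc}$) out of which $\Psi_p$ is built. Concretely, for $\eta\asymp N^{-\fo}$ one has $\Phi\asymp N^{-1+2\fc}$, and the pointwise product $(d-1)^{\ell+1}N^{-\fb}\cdot(d-1)^{8\ell}\Upsilon\Phi\,(|Q_t-Y_t|+(d-1)^{8\ell}\Upsilon\Phi)^{p-2}$ is not $\OO((d-1)^{2\ell}\Psi_p)$, let alone $\OO(N^{-\fb/4}\Psi_p)$. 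These cross terms must also be estimated in expectation through \Cref{lem:task2}, now with $\Pi=\bm1(\cG\in\Omega)(d-1)^{8\ell}\Upsilon\Phi(|Q_t-Y_t|+(d-1)^{8\ell}\Upsilon\Phi)^{p-2}$, and then absorbed via \eqref{e:bPi2}; this is precisely how the paper handles them.

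A second, smaller point: \Cref{p:WRbound} requires the companion factor $R_\bfi$ to be admissible, hence free of $Z$, so it applies only to the pieces of $\cZ_0$ carrying exactly one factor of the form \eqref{e:W-term} (after decomposing $\prod_{j}\wh\cW_j$ into admissible functions as in \eqref{e:W_decompose}). The pieces with two or more such factors are handled more simply, by the pointwise bound $|\cZ_0|\lesssim(d-1)^{3\ell}tN^{2\fo}\Phi$ from \eqref{e:Werror2}, which together with the bound on the remaining $\Delta\cW_j$-factors already gives $\OO(N^{-\fb}\bE[\Psi_p])$. With these two adjustments your plan coincides with the paper's proof.
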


We can further decompose $\widehat B_0\prod_{j=1}^{p-1}\widehat \cW_j$ in \eqref{e:fhatRi} as an $\OO(1)$-weighted sum of terms. This corresponds to the decomposition of $\widehat B_0$
  as described in \eqref{e:first_W0}, and 
$\widehat \cW_j$
  as either 
$Q_t-Y_t$
  or terms from \eqref{e:first_special_term1} or \eqref{e:first_special_term2}.

There are two cases; the first is that all $\widehat \cW_j$ are replaced by $(Q_t-Y_t)$. Otherwise some $\widehat \cW_j$ is replaced by \eqref{e:first_special_term1} or \eqref{e:first_special_term2}. 
We analyze the two cases separately.

\noindent \textbf{Case 1.} All $\widehat \cW_j$ are replaced by $(Q_t-Y_t)$.  This case is further divided into three subcases, based on the decomposition of $\widehat{B}_0$ as described in \eqref{e:first_W0}. 
\begin{enumerate}
\item The term from replacing $\widehat{B}_0$ by $\msc^{2\ell}(z_t) {(d-1)^{-(\ell+1)}}\sum_{\al\in\sfA_i}(G^{(b_\alpha)}_{c_\alpha c_\alpha}-Q_t)$ is given by
\begin{align}\label{e:ftt1}
&\frac{\OO(1)}{(d-1)^{\ell}} \sum_{\al\in \sfA_i}   \sum_{\bfi^+} \frac{1}{Z_{\cF^+}} \bE\left[I(\cF^+,\cG)\bm1(\cG\in \Omega)(G_{c_\al c_\al}^{(b_\al)}-Q_t)(Q_t-Y_t)^{p-1}\right].
\end{align}

We can estimate \eqref{e:ftt1} using the following more general result:
Given  $h\geq 0$, $\bmh=[h_{jk}]_{1\leq j\leq p-1, 0\leq k\leq 2}$ satisfying \eqref{e:defr}, consider any   
$\overline R_{\bfi^+} \in \Adm(h,\bmh,\cF^+,\cG)$ (as defined in \eqref{def:pgen}) such that for some specific $\alpha\in\qq{\mu}$, $\overline R_{\bfi^+}$ does not depend on $\{b_\al, c_\al\}$. Then we claim that
 \begin{align}\label{e:huanG-Q}
         \frac{(d-1)^{(6h+3\sum_{j=1}^{p-1} h_{j1})\ell}}{Z_{\cF^+}}\sum_{\bfi^+} \bE\left[I(\cF^+,\cG)\bm1(\cG\in \Omega)(G_{c_\al c_\al}^{(b_\al)}-Q_t)\overline R_{\bfi^+}\right]=\OO(N^{-\fb/2}\bE[ \Psi_p]).
    \end{align}

Assuming \eqref{e:huanG-Q}, by taking $\overline R_{\bfi^+}=(G_{c_\al c_\al}^{(b_\al)}-Q_t)(Q_t-Y_t)^{p-1}$, $h=0$ and $h_{j0}=1$ for all $1\leq j\leq p-1$ in \eqref{e:huanG-Q}, we conclude that
\begin{align*}
    \eqref{e:ftt1}=\OO(N^{-\fb/2}\bE[\Psi_p]).
\end{align*}

    To prove \eqref{e:huanG-Q}, 
      we recall the definition of the indicator function $I(\cF^+,\cG)$ from \eqref{def:indicator}, and 
      define $
    \widehat \cF=(\widehat \bfi, \widehat E)\deq\cF^+\setminus  \{(b_\al,c_\al)\},\quad \widehat \bfi=\bfi^+\setminus\{b_\al, c_\al\}$.
Then 
\begin{align}\label{e:Idecompose}
 I(\cF^+,\cG)=I(\widehat \cF, \cG)A_{c_\al b_\al}I_{c_\al}, 
\end{align}
where $I_{c_\al}=\bm1(c_\al\not\in \bX)$ for 
$\bX$ the set of vertices $v$ such that either there is some $u\in \cB_\ell(v,\cG)$ such that $\cB_\fR(u,\cG)$ is not a tree, or $\dist(v,c)<3\fR$ for some $(b_\al, c_\al)\neq (b,c)\in \cC^+$. Then $I_{c_\al}$ satisfies requirements in \Cref{def:Ic}, and the first statement in  \eqref{e:single_index_sum} (by taking $(b,c)=(b_\al, c_\al)$) gives
\begin{align}\begin{split}\label{e:huanGcb00}
     &\phantom{{}={}}\frac{\bm1(\cG\in \Omega)}{ Z_{\cF^+}}\left|\sum_{b_\al,c_\alpha\in\qq{N}} I(\cF^+,\cG)(G_{c_\al c_\al}^{(b_\al)}-Q_t)\right|\\
     &=
      \frac{\bm1(\cG\in \Omega)I(\widehat \cF,\cG)}{Z_{\cF^+}}\left|\sum_{b_\al,c_\alpha\in\qq{N}}A_{b_\alpha c_\alpha}(G_{c_\al c_\al}^{(b_\al)}-Q_t)I_{c_\al}\right|\lesssim \frac{\bm1(\cG\in \Omega)I(\widehat \cF,\cG)}{Z_{\cF^+}}\frac{Nd}{N^{1-3\fc/2}}\\
      &\lesssim \frac{\bm1(\cG\in \Omega)I(\widehat \cF,\cG)}{Z_{\cF^+}}\frac{\sum_{b_\al,c_\alpha\in \qq{N}} A_{b_\alpha c_\alpha}I_{c_\al}}{N^{1-3\fc/2}}=\frac{\bm1(\cG\in \Omega)}{N^{1-3\fc/2} Z_{\cF^+}}\sum_{b_\al, c_\alpha\in \qq{N}} I(\cF^+,\cG),
\end{split}\end{align} 
and we can bound the left-hand side of  \eqref{e:huanG-Q} as 
\begin{align*}\begin{split}
   &\phantom{{}={}}\frac{(d-1)^{(6h+3\sum_{j=1}^{p-1} h_{j1})\ell}}{N^{1-3\fc/2}  Z_{\cF^+}}\sum_{ \bfi^+}\bE\left[I(\cF^+,\cG)\bm1(\cG\in \Omega)|\overline R_{\bfi^+}|\right]\\
   &\lesssim  \frac{(d-1)^{6h\ell}}{N^{1-3\fc/2}}\bE\left[\bm1(\cG\in \Omega) N^{-h\fb} (|Q_t-Y_t|+\Upsilon(d-1)^{8\ell}\Phi)^{p-1}\right]\lesssim N^{-\fb/2}\bE[ \Psi_p],
    \end{split}\end{align*}
    where we used  $\overline R_{\bfi^+}\in \Adm(h,\bmh,\cF^+,\cG)$ and \eqref{e:Bsmall} for the first statement. This finishes the proof of \eqref{e:huanG-Q}.

     \item 
     The term from replacing $\widehat{B}_0$ by $\msc^{2\ell}(z_t) {(d-1)^{-(\ell+1)}}\sum_{\al\neq \beta\in \sfA_i} G_{c_\al c_\beta}^{(b_\al b_\beta)}$ is given by
\begin{align}\label{e:ftt2}
\frac{\OO(1)}{(d-1)^{2\ell}} \sum_{\al\neq \beta\in \sfA_i}  (d-1)^\ell \times \sum_{\bfi^+}  \frac{1}{Z_{\cF^+}}\bE\left[I(\cF^+,\cG)\bm1(\cG\in \Omega) G_{c_\al c_\beta}^{(b_\al b_\beta)}(Q_t-Y_t)^{p-1}\right].
\end{align}

We can estimate \eqref{e:ftt2} using the following more general result:
Given  $h\geq 0$, $\bmh=[h_{jk}]_{1\leq j\leq p-1, 0\leq k\leq 2}$ satisfying \eqref{e:defr} and  
$\overline R_{\bfi^+} \in \Adm(h,\bmh,\cF^+,\cG)$ (as defined in \eqref{def:pgen}). Suppose $\overline R_{\bfi^+}=\overline R_{\bfi^+}(\bm\chi)$
depends on $\{b_\chi, c_\chi\}_{\chi \in \bm\chi}$ for some subset ${\bm\chi}\subset\qq{\mu}$, and for some $\al\neq \beta\in \qq{\mu}$ with $f_1\deq |\{\al, \beta\}\setminus \bm\chi|\geq 1$. We claim that
\begin{align}\begin{split}\label{e:huanG}
       &\phantom{{}={}}\frac{(d-1)^{(6h+3\sum_{j=1}^{p-1} h_{j1})\ell}}{Z_{\cF^+}}\sum_{\bfi^+}  \bE\left[I(\cF^+,\cG)\bm1(\cG\in \Omega) G_{c_{\al} c_{\beta}}^{(b_{\al} b_{\beta})}\overline R_{\bfi^+}\right]\\
       &=\frac{(d-1)^{(6h+3\sum_{j=1}^{p-1} h_{j1})\ell}}{Z_{\cF^+}}\sum_{\bfi^+}  \bE\left[I(\cF^+,\cG)\bm1(\cG\in \Omega) \frac{\overline R'_{\bfi^+}}{(d-1)^{f_1/2}}\right]+\OO(N^{-\fb/4} \bE[ \Psi_p]),
   \end{split} \end{align}
where $\overline R'_{\bfi^+}\in \Adm(h+f_1, \bmh,\cF^+,\cG)$ is obtained from $G_{c_{\al} c_{\beta}}^{(b_{\al} b_{\beta})}\overline R_{\bfi^+}$ by making the following substitutions: 
\begin{align}\begin{split}\label{e:final_replace1}
 &   G_{c_{\al} c_{\beta}}^{(b_{\al} b_{\beta})}
    \rightarrow 
 G_{b_{\al} c_{\beta}}^{ (b_{\beta})}(G_{c_{\al} c_{\al}}^{(b_{\al})}-Q_t) , 
\quad \al\notin \bm\chi,\quad \beta \in\bm\chi \\
 &   G_{c_{\al} c_{\beta}}^{(b_{\al} b_{\beta})}
    \rightarrow 
 G_{c_{\al} b_{\beta}}^{ (b_{\al})}(G_{c_{\beta} c_{\beta}}^{(b_{\beta})}-Q_t) , 
\quad \al\in \bm\chi,\quad \beta\notin \bm\chi\\
&G_{c_{\al} c_{\beta}}^{(b_{\al} b_{\beta})}
\rightarrow 
G_{b_{\al} b_{{\beta}}}(G_{c_{\al} c_{\al}}^{(b_{\al})}-Q_t) (G_{c_{{\beta}} c_{{\beta}}}^{(b_{{\beta}})}-Q_t) , 
\quad \al, \beta\notin \bm\chi.
\end{split}\end{align} 
Before proving \eqref{e:final_replace1}, let us first use it to show that, up to a negligible error, \eqref{e:ftt2} can be expressed as an $\OO(1)$-weighted sum of terms in the form of \eqref{e:case3}.

    By taking $\overline R_{\bfi^+}=(Q_t-Y_t)^{p-1}$ in \eqref{e:huanG} (in which case $h=0$,  $h_{j0}=1$ for all $1\leq j\leq p-1$, and $f_1=2$), we conclude that \eqref{e:ftt2} is an $\OO(1)$-weighted sum of terms in the form
    \begin{align}\begin{split}\label{e:fhatRiGcc1} 
          &\phantom{{}={}}\frac{(d-1)^\ell}{ Z_{\cF^+}}\sum_{ \bfi^+}\bE\left[I(\cF^+,\cG)\bm1(\cG\in \Omega)G_{c_\al c_\beta}^{(b_\al b_\beta)}(Q_t-Y_t)^{p-1}\right]\\
          &=\frac{(d-1)^\ell}{ (d-1)Z_{\cF^+}}\sum_{ \bfi^+}\bE\left[I(\cF^+,\cG)\bm1(\cG\in \Omega)R'_{\bfi^+}\right]+\OO(N^{-\fb/4} \bE[\Psi_p]).
          \end{split}\end{align}
      where $R'_{\bfi^+}=G_{b_\al b_\beta}(G_{c_\al c_\al}^{(b_\al)}-Q_t) (G_{c_\beta c_\beta}^{(b_\beta)}-Q_t)(Q_t-Y_t)^{p-1}$.
Let $R_{\bfi^+}=(G_{c_\beta c_\beta}^{(b_\beta)}-Q_t)G_{b_\al b_\beta}(Q_t-Y_t)^{p-1}$, we can further rewrite the leading term on the right-hand side of \eqref{e:fhatRiGcc1} as
\begin{align}\label{e:newterm2}
 \frac{(d-1)^{\ell}}{(d-1)Z_{\cF^+}}\sum_{ \bfi^+}\bE\left[I(\cF^+,\cG)\bm1(\cG\in \Omega) (G_{c_\al c_\al}^{(b_\al)}-Q_t) R_{\bfi^+}\right].
 \end{align}
We can further change $(G_{c_\al c_\al}^{(b_\al)}-Q_t)$ in \eqref{e:newterm2} to $(G_{c_\al c_\al}^{(b_\al)}-Y_t)$, 
\begin{align}\begin{split}
\label{e:changeQtoY}
&\phantom{{}={}}\frac{(d-1)^{\ell}}{(d-1)Z_{\cF^+}}\sum_{\bfi^+}\bE\left[I(\cF^+,\cG)\bm1(\cG\in \Omega)  (G_{c_\al c_\al}^{(b_\al)}-Q_t) R_{\bfi^+}\right]\\
&=\frac{(d-1)^{\ell}}{(d-1)Z_{\cF^+}}\sum_{\bfi^+}\bE\left[I(\cF^+,\cG)\bm1(\cG\in \Omega)  \left((G_{c_\al c_\al}^{(b_\al)}-Y_t) R_{\bfi^+}+\OO(|(Q_t-Y_t) R_{\bfi^+}|)\right)\right]
\\
&=\frac{1}{d-1}\frac{(d-1)^{6\times 2\ell}}{(d-1)^{\fq^+\ell/2}Z_{\cF^+}}\sum_{\bfi^+}\bE\left[I(\cF^+,\cG)\bm1(\cG\in \Omega)  (G_{c_\al c_\al}^{(b_\al)}-Y_t) R_{\bfi^+}\right] +\OO\left(N^{-\fb/4} \bE[\Psi_p]\right).
\end{split}\end{align}
These terms are of the form \eqref{e:case3} with $r^+=2, \bmr^+=0, \fq^+=22$. 

The three cases in \eqref{e:final_replace1} can be proven in the same way, so we will only prove \eqref{e:final_replace1} for $\al,\beta\not\in \bm\chi$.  We create the forest $\widehat \cF$ from $\cF$ by removing $\{(b_\al,c_\al), (b_{\beta}, c_{\beta})\}$, so 
$
    \widehat \cF\deq (\widehat \bfi, \widehat E)=\cF^+\setminus  \{(b_\al,c_\al), (b_{\beta}, c_{\beta})\},\quad \widehat \bfi=\bfi^+\setminus\{b_\al, c_\al, b_{\beta}, c_{\beta}\}
$. Then
\begin{align}\label{e:Idecompose2}
I(\cF^+,\cG)=I(\widehat \cF, \cG)A_{b_\al c_\al }A_{b_{\beta} c_{\beta} }I_{c_\al c_\beta},
\end{align}
 where $I_{c_\al c_\beta}=\bm1(c_\al, c_{\beta} \not\in \bX)\bm1(\dist_\cG(c_\al, c_{\beta})\geq 3\fR)$ and 
$\bX$ is the collection of vertices $v$ such that either there exists some $u\in \cB_\ell(v,\cG)$ such that $\cB_\fR(u,\cG)$ is not a tree or $\dist(v,c)<3\fR$ for some $(b,c)\in \cC^+\setminus\{(b_\al, c_\al), (b_{\beta}, c_{\beta})\}$. Then $I_{c_\al c_\beta}$ satisfies the requirements of \Cref{def:Ic}, and the second statement in \eqref{e:Gccbb} (by taking $(b,c,b',c')=(b_\al, c_\al, b_\beta, c_\beta)$) gives
\begin{align}\begin{split}\label{e:huanGcc}
   &\phantom{{}={}}\frac{\bm1(\cG\in \Omega)}{Z_{\cF^+}}\sum_{b_\al,c_\al\atop b_{\beta},c_{\beta}}I(\cF^+,\cG) G_{c_\al c_{\beta}}^{(b_\al b_{\beta})}I_{c_\al c_\beta} 
   =\frac{I(\widehat\cF,\cG)\bm1(\cG\in \Omega)}{Z_{\cF^+}}\sum_{b_\al\sim c_\al\atop b_{\beta}\sim c_{\beta}}G_{c_\al c_{\beta}}^{(b_\al b_{\beta})}I_{c_\al c_\beta} \\
     &=\frac{I(\widehat\cF,\cG)\bm1(\cG\in \Omega)}{Z_{ \cF^+}}\sum_{b_\al\sim c_\al\atop b_{\beta}\sim c_{\beta}}\left(\frac{G_{b_\al b_{\beta}}(G_{c_\al c_\al}^{(b_\al)}-Q_t) (G_{c_{\beta} c_{\beta}}^{(b_{\beta})}-Q_t)}{d-1}+\OO(\cE_{\al\beta}) \right)I_{c_\al c_\beta}\\
      &=\frac{\bm1(\cG\in \Omega)}{Z_{ \cF^+}}\sum_{b_\al, c_\al\atop b_{\beta}, c_{\beta}}I(\cF^+,\cG)\left(\frac{G_{b_\al b_{\beta}}(G_{c_\al c_\al}^{(b_\al)}-Q_t) (G_{c_{\beta} c_{\beta}}^{(b_{\beta})}-Q_t)}{d-1}+\OO(\cE_{\al\beta}) \right),
\end{split}\end{align}     
   where 
   \begin{align*}
       |\cE_{\al \beta}|\lesssim  N^{-\fb/2}  |G_{b_\al b_{\beta}}|\left(\sum_{x\sim b_\al, x\neq c_\al}|G_{c_\al x}^{(b_\al)}|+\sum_{x\sim b_{\beta}, x\neq c_{\beta}}|G_{c_{\beta} x}^{(b_{\beta})}|\right)+N^{-\fb/2} \Phi.
   \end{align*}

We now attempt to write \eqref{e:huanGcc} in the form of \eqref{e:huanG}. The leading term in \eqref{e:huanGcc} gives the first term on the right-hand side of \eqref{e:huanG}, where $\overline R_{\bfi^+}'$ is obtained from $G_{c_\al c_\beta}^{(b_\al b_\beta)}\overline R_{\bfi^+}$ by replacing $G_{c_\al c_\beta}^{(b_\al b_\beta)}$ by $G_{b_{\al} b_{{\beta}}}(G_{c_{\al} c_{\al}}^{(b_{\al})}-Q_t) (G_{c_{{\beta}} c_{{\beta}}}^{(b_{{\beta}})}-Q_t)$.
By \eqref{e:Bsmall}, \eqref{e:Gest} and \eqref{e:sameasdisconnect} (with $\Pi=\bm1(\cG\in \Omega)(|Q_t-Y_t|+\Upsilon(d-1)^{8\ell}\Phi)^{p-1}$), the error term in \eqref{e:huanGcc} is negligible:
\begin{align*}\begin{split}
       &\phantom{{}={}}\frac{(d-1)^{(6h+3\sum_{j=1}^{p-1} h_{j1})\ell}}{Z_{\cF^+}}\sum_{\bfi^+}\bE\left[I(\cG\in \Omega)\bm1(\cF^+,\cG)||\cE_{\al\beta}||\overline R_{\bfi^+}|\right]\\
&\lesssim 
    \frac{(d-1)^{6h\ell} N^{-h\fb}}{Z_{\cF^+}}\sum_{\bfi^+}\bE\left[I(\cG\in \Omega)\bm1(\cG\in \Omega) |\cE_{\al\beta}|(|Q_t-Y_t|+\Upsilon(d-1)^{8\ell}\Phi)^{p-1}\right]
   \lesssim \frac{\bE[\Psi_p]}{N^{\fb/4}}.
\end{split}\end{align*}  
This finishes the proof of \eqref{e:huanG}.

\item In the remaining cases, $\widehat B_0$ is replaced by  $\cU_0$  as specified in the decomposition  \eqref{e:first_W0}.
Recalling the first few terms of $\cU_0$ from  \eqref{e:Uterm}, we can write
\begin{align}\label{e:decomposeI1I2I3}
    &\frac{1}{Z_{\cF^+}}\sum_{\bfi^+} \bE\left[I(\cF^+,\cG)\bm1(\cG\in \Omega)\cU_0 (Q_t-Y_t)^{p-1}\right]
    =J_1+J_2+J_3,
\end{align}
where
\begin{align}
 &\label{e:defI1}J_1=\sum_{\al \in \sfA_i}\sum_{\bfi^+} \frac{\msc^{2\ell}(z_t)L^{(i)}_{l_\al l_\al}}{(d-1)^{\ell+2}Z_{\cF^+}} \bE\left[I(\cF^+,\cG)\bm1(\cG\in \Omega)(G_{c_\al c_\al}^{(b_\al)}-Q_t)^2 (Q_t-Y_t)^{p-1}\right],\\
   &\label{e:defI2}J_2= \sum_{ \al \in \sfA_i,\beta\in\qq{\mu}\atop\al\neq \beta}\sum_{\bfi^+} \frac{\msc^{2\ell}(z_t)(L^{(i)}_{l_\beta l_\beta}+L^{(i)}_{l_\al l_\beta})}{(d-1)^{\ell+2}Z_{\cF^+}}
\bE\left[I(\cF^+,\cG)\bm1(\cG\in \Omega)(G_{c_\al c_\beta}^{(b_\al b_\beta)})^2 (Q_t-Y_t)^{p-1}\right].
\end{align}
And $J_3$ is an $\OO(1)$-weighted sum of terms in the form 
\begin{align}\label{e:fhatRi03}
  \sum_{\bfi^+} \frac{(d-1)^{3(h-1)}}{Z_{\cF^+}} \bE\left[I(\cF^+,\cG)\bm1(\cG\in \Omega)R_h(Q_t-Y_t)^{p-1}\right],
\end{align}
where $R_h$ is an $S$-product term of order $h$ (see \Cref{d:S-product}), which contains $G_{c_\al c_\al}^{(b_\al)}-Q_t$ or $G_{c_\al c_\beta}^{(b_\al b_\beta)}$. Moreover, either $h\geq 3$, or $h=2$ and  $R_h$ (recall from \eqref{e:Uterm}) is one of the following  terms 
\begin{align}\begin{split}\label{e:Rhform}
 &(G_{c_\al c_\al}^{(b_\al)}-Q_t) (G_{c_\beta c_\beta}^{(b_\beta)}-Q_t), \quad   (G_{c_\al c_\al}^{(b_\al)}-Q_t)G_{c_{\al'} c_{\beta'}}^{(b_{\al'} b_{\beta'})},\quad G_{c_{\al} c_{\beta}}^{(b_{\al} b_{\beta})}G_{c_{\al'} c_{\beta'}}^{(b_{\al'} b_{\beta'})},\\
&\{G_{c_\al c_\al}^{(b_\al)}-Q_t, G_{c_\al c_\beta}^{(b_\al b_\beta)}\}\times\{(Q_t-\msc(z_t)), t(m_t-\md(z_t))\},
\end{split} \end{align}
where $\alpha\neq \beta\in\qq{\mu}, \al'\neq \beta'\in \qq{\mu}$ and $ \{\al, \beta\}\neq \{\al', \beta'\}$.

In the following we discuss the three terms $J_1, J_2, J_3$ one by one. For $J_1$, by the same argument as in \eqref{e:changeQtoY}, we can change one copy of $(G_{c_\al c_\al}^{(b_\al)}-Q_t)$ in \eqref{e:defI1} to $(G_{c_\al c_\al}^{(b_\al)}-Y_t)$, and the error is bounded by $\OO(N^{-\fb/2} \bE[\Psi_p])$. After such replacement, we get
\begin{align}\label{e:newterm3}
    J_1=\sum_{\al \in \sfA_i}\sum_{\bfi^+}\frac{\msc^{2\ell}(z_t)L^{(i)}_{l_\al l_\al}}{(d-1)^{\ell+2}Z_{\cF^+}}  \bE\left[I(\cF^+,\cG)\bm1(\cG\in \Omega)(G_{c_\al c_\al}^{(b_\al)}-Y_t) (G_{c_\al c_\al}^{(b_\al)}-Q_t)(Q_t-Y_t)^{p-1}\right],
\end{align}
which is an $\OO(1)$-weighted sum of terms in the form \eqref{e:case1} with $\fq^+=0$.

For $J_2$ as in \eqref{e:defI2}, \eqref{e:refined_bound} gives 
\begin{align}\begin{split}\label{e:Gccerror}
    |J_2|\lesssim \left|\sum_{\al\neq \beta}\frac{1}{(d-1)^\ell Z_{\cF^+}}\sum_{\bfi^+}\bE[I(\cF^+,\cG)\bm1(\cG\in \Omega)(G_{c_\al c_\beta}^{(b_\al b_\beta)})^2(Q_t-Y_t)^{p-1}]\right|\lesssim (d-1)^{2\ell}\bE[\Psi_p],
    \end{split}\end{align}
    meaning it can be incorporated into $\cE$ in \eqref{e:IFIF}.
  
Finally, for $J_3$, we recall that $R_h$ contains either $G_{c_\al c_\al}^{(b_\al)}-Q_t$ or $G_{c_\al c_\beta}^{(b_\al b_\beta)}$. By repeatedly applying statements \eqref{e:huanG-Q} and \eqref{e:huanG}, we obtain that either $\eqref{e:fhatRi03}=\OO(N^{-\fb/2}\bE[ \Psi_p])$, or 
\begin{align}\begin{split}\label{e:huanGcopy}
       \eqref{e:fhatRi03} &=\frac{(d-1)^{3(h-1)\ell}}{(d-1)^{(h'-h)/2}Z_{\cF^+}}\sum_{\bfi^+}  \bE\left[I(\cF^+,\cG)\bm1(\cG\in \Omega) R'_{\bfi^+}\right]+\OO(N^{-\fb/4}\bE[ \Psi_p])\\
        &=\frac{1}{(d-1)^{(h'-h)/2}}\frac{(d-1)^{6(h'-1)\ell}}{(d-1)^{\fq^+\ell/2}Z_{\cF^+}}\sum_{\bfi^+}  \bE\left[I(\cF^+,\cG)\bm1(\cG\in \Omega) R'_{\bfi^+}\right]+\OO(N^{-\fb/4}\bE[  \Psi_p]),
   \end{split} \end{align}
where $R'_{\bfi^+}\in \Adm(h', \bmh,\cF^+,\cG)$ with $h'\geq h\geq 2$, and $\fq^+=6(h'-h)+6(h'-1)\geq 12$. Furthermore, if $b_\al$ or  $c_\al$ appears in $R'_{\bfi^+}$, then they appear in at least two terms within $R'_{\bfi^+}$.

If $h=2$, $R_h$ is given in \eqref{e:Rhform}, either $\eqref{e:fhatRi03}=\OO(N^{-\fb/2}\bE[\Psi_p])$ by applying \eqref{e:huanG-Q}; or we can apply \eqref{e:final_replace1} at least once for $G_{c_{\al'}c_{\beta'}}^{(b_{\al'} b_{\beta'})}$. We conclude that in \eqref{e:huanGcopy}, $h'\geq h+1\geq 3$, and $R'_{\bfi^+}$ contains a factor in the form $G_{c_\al c_\al}^{(b_\al)}-Q_t$. The same as in \eqref{e:changeQtoY}, we can change $(G_{c_\al c_\al}^{(b_\al)}-Q_t)$ to $(G_{c_\al c_\al}^{(b_\al)}-Y_t)$. After such replacement, \eqref{e:huanGcopy} is of the form \eqref{e:case3} with $r^+= h'-1\geq 2, \bmr^+=0, \fq^+=6(h'-h)+6(h'-1)\geq 12$.

If $h\geq 3$, then $h'\geq h\geq 3$. Either $R'_{\bfi^+}$ contains a factor $G_{c_\al c_\al}^{(b_\al)}-Q_t$, or $R'_{\bfi^+}$ contains two factors in the form $G_{c_\al c_\beta}^{(b_\al b_\beta)}$. In the first case, by the same argument as above, \eqref{e:huanGcopy} leads to \eqref{e:case3} with $r^+= h'-1\geq 2, \bmr^+=0, \fq^+=6(h'-h)+6(h'-1)\geq 12$. For the second case,  
 thanks to \eqref{e:refined_bound}, $\eqref{e:huanGcopy}=\OO((d-1)^{6(h'-1)\ell}N^{-(h'-2)\fb} \bE[\Psi_p])=\OO(N^{-\fb/2}\bE[\Psi_p])$.

\end{enumerate}

\noindent \textbf{Case 2.} Some $\widehat \cW_j$ is replaced by \eqref{e:first_special_term1} or \eqref{e:first_special_term2}. Before discussing the second case, we record the following decomposition of  $\prod_{j=1}^{p-1} \widehat \cW_j$ as an $\OO(1)$-weighted sum of terms. This decomposition corresponds to the breakdown of $\widehat W_j$ as either $Q_t-Y_t$ or terms from \eqref{e:first_special_term1} or \eqref{e:first_special_term2}. These three cases are given by
\begin{enumerate}
    \item The term
    \begin{align}\label{e:diyi}
        \cW_j':=Y_t-Q_t.
    \end{align}
    We set 
      $(h_{j0}, h_{j1}, h_{j2})=(1,0,0)$ and $h'_{j}=0$.
    \item 
    A term of the form from \eqref{e:first_special_term1}: for $h_j'\geq 0$, $h_{j1}+h_{j2}\geq 2$ even,
     \begin{align}\label{e:dier}
     &(d-1)^{3h_j'\ell}R_{h_j'}\times (d-1)^{3h_{j1}\ell}\cW_j',\quad \cW_j':= \frac{\{(1-\del_1 Y_\ell), -t\del_2 Y_\ell\} }{Nd}\sum_{u_j\sim v_j\in \qq{N}} R'_{h_{j1},h_{j2}} , 
\end{align} 
   where $R_{h_j'}$ is a W-product term (see \Cref{d:W-product}). Here, $R'_{h_{j1},h_{j2}}$ contains $h_{j1}$ factors of the form $ G^{\circ}_{su_j},  G^{\circ}_{sv_j}$, $h_{j2}$ factors of the form $L_{su_j}, L_{sv_j}$, with $s\in\{l_\al, a_\al,b_\al, c_\al\}_{\al\in \qq{\mu}}$ and some number (that we do not need to track) of factors $G_{u_jv_j}, 1/G_{u_ju_j}$.
   We set $h_{j0}=2$, and note that $\cW_j'$ is in the form of \eqref{e:Scase2} with $(r_{j0}, r_{j1}, r_{j2})=(h_{j0}, h_{j1}, h_{j2})$.  
   
    \item A term of the form from \eqref{e:first_special_term2}: for $h_j'\geq 0$,
\begin{align}\label{e:disan0}
     (d-1)^{3h_j'\ell}R_{h_j'}\times \cW_j', \quad \cW_j':= \frac{(d-1)^\ell (1-\del_1 Y_\ell)}{N},
\end{align} 
 where $R_{h_j'}$ is a W-product term (see \Cref{d:W-product}). We set $(h_{j0}, h_{j1}, h_{j2})=(3,0,0)$ and note that $\cW_j'$ is in the form of \eqref{e:Scase3} with $(r_{j0}, r_{j1}, r_{j2})=(h_{j0}, h_{j1}, h_{j2})$.
\end{enumerate}

The discussion above gives a decomposition of $\prod_{j=1}^{p-1} \widehat \cW_j$ as an $\OO(1)$-weighted sum of terms in the following form: for $h'\geq 0$, and $\bmh=[h_{jk}]_{1\leq j\leq p-1, 0\leq k\leq 2}$ satisfying \eqref{e:defr}, 
\begin{align}\label{e:W_decompose}
(d-1)^{(3h'+3\sum_{j=1}^{p-1} h_{j1})\ell}\widehat R_{\bfi^+}.
\end{align}
Here $\widehat R_{\bfi^+}$ satisfies
\begin{align}\label{e:Rhath}
&\widehat R_{\bfi^+}
=R_{h'}\prod_{i=1}^{p-1}\cW_j'\in \Adm(h',\bmh,\cF^+,\cG), 
\end{align}
where  
 $\cW_j'$ and $\bmh$ are as defined in \eqref{e:diyi}, \eqref{e:dier} or \eqref{e:disan0}. The term $R_{h'}$ includes all other factors $R_{h_j'}$ from  \eqref{e:dier} and \eqref{e:disan0}, with a total count of $h'=\sum_{j=1}^{p-1} h_j'$. 

 In the second case, there exists some $1\leq j'\leq p-1$ such that $\widehat \cW_{j'}$ is replaced by \eqref{e:dier} or \eqref{e:disan0}. Then $h_{j'0}\in\{2,3\}$. 
Again this case can be further divided into three subcases,
depending on the decomposition of $\widehat B_0$ as described in \eqref{e:first_W0}.

\begin{enumerate}
 \item 
The terms from replacing $\widehat{B}_0$ by $\msc^{2\ell}(z_t) {(d-1)^{-(\ell+1)}}\sum_{\al\in\sfA_i}(G^{(b_\alpha)}_{c_\alpha c_\alpha}-Q_t)$ are
in the forms:
for $h'\geq 0$, $\bmh=[h_{jk}]_{1\leq j\leq p-1, 0\leq k\leq 2}$ satisfying \eqref{e:defr} and 
\begin{align}\label{e:fhatRi01}
&\frac{1}{(d-1)^{\ell}} \sum_{\al\in \sfA_i}   \sum_{\bfi^+} \frac{(d-1)^{(3h'+3\sum_{j=1}^{p-1} h_{j1})\ell}}{Z_{\cF^+}} \bE\left[I(\cF^+,\cG)\bm1(\cG\in \Omega)(G_{c_\al c_\al}^{(b_\al)}-Q_t)\widehat R_{\bfi^+}\right],
\end{align}
where $\widehat R_{\bfi^+}\in \Adm(h',\bmh,\cF^+,\cG)$ is as in \eqref{e:Rhath}. 


In \eqref{e:fhatRi01}, by \eqref{e:huanG-Q} (taking $\overline R_{\bfi^+}$ as $\widehat R_{\bfi^+}$) any term $\widehat R_{\bfi^+}$ which does not contain $\{b_\alpha, c_\alpha\}$, has contribution $\OO(N^{-\fb/2}\bE[\Psi_p])$.
Thus up to error $\OO(N^{-\fb/2}\bE[\Psi_p])$, \eqref{e:fhatRi01} can be expressed as an $\OO(1)$-weighted sum of terms where $\{b_\al, c_\al\}$ appear in $\widehat R_{\bfi+}$:
\begin{align}\label{e:Gcc-QR}
    \frac{ (d-1)^{3(h'+\sum_{j=1}^{p-1} h_{j1})\ell}}{(d-1)^{\ell}Z_{\cF^+}}\sum_{\bfi^+}\bE\left[I(\cF^+,\cG)\bm1(\cG\in \Omega) (G_{c_\al c_\al}^{(b_\al)}-Q_t)\widehat R_{\bfi^+}\right].
\end{align}

If $\bmh$ satisfies \eqref{e:feasible} or $h'\geq 1$, then after replacing $(G_{c_\al c_\al}^{(b_\al)}-Q_t)$ in \eqref{e:Gcc-QR} with $(G_{c_\al c_\al}^{(b_\al)}-Y_t)$, \eqref{e:Gcc-QR} is of form \eqref{e:case3} with $r^+=h', \bmr=\bmh, \fq^+\geq 2$. Thanks to \eqref{e:Sbound} and \eqref{e:bPi2}, the error in making this replacement is bounded by 
\begin{align}\begin{split}\label{e:a_error}
        &\phantom{{}={}}\frac{ (d-1)^{3(h'+\sum_{j=1}^{p-1} h_{j1})\ell}}{(d-1)^{\ell}Z_{\cF^+}}\sum_{\bfi^+}\bE\left[I(\cF^+,\cG)\bm1(\cG\in \Omega) |Y_t-Q_t||\widehat R_{\bfi^+}|\right]\\
        &\lesssim 
         (d-1)^{3h'\ell} N^{-h'\fb}\bE\left[\bm1(\cG\in \Omega) |Y_t-Q_t|\Pi(z,\bmh)\right]
        \lesssim N^{-\fb/2}\bE[ \Psi_p].
\end{split}\end{align}

Otherwise $h'=0$ and $\bmh$ satisfies \eqref{e:1_expand} with $h_{j'0}=2, h_{j'1}=2, h_{j'2}=0$, and $\cW_{j'}'$ is from the decomposition of $(\wt Q_t-\wt Y_t)- (Q_t-Y_t)$ given in \eqref{e:first_special_term1}. We recall the first few terms of $(\wt Q_t-\wt Y_t)- (Q_t-Y_t)$ from \eqref{e:refine_QYdiff}. Then in this case, corresponding terms in \eqref{e:fhatRi01} are of the form 
\begin{align}
     &\label{e:aterm}\phantom{{}={}}\sum_{\al\in \qq{\mu}}\sum_{\bfi^+}\frac{ \msc^{2\ell}(z_t)}{(d-1)^{\ell+1}Z_{\cF^+}}\bE\left[I(\cF^+,\cG)\bm1(\cG\in \Omega) (G_{c_\al c_\al}^{(b_\al)}-Q_t)(Q_t-Y_t)^{p-2}\cU_\al\right]\\
     &\label{e:abterm}+\sum_{\al\neq \beta\in \qq{\mu}}\sum_{\bfi^+}\frac{2(p-1)\msc^{2\ell}(z_t)L_{l_\al l_\beta}}{(d-1)^{\ell+2}Z_{\cF^+}}\bE\left[  I(\cF^+,\cG)\bm1(\cG\in \Omega) (G_{c_\al c_\al}^{(b_\al)}-Q_t)(Q_t-Y_t)^{p-2}\cU_{\al\beta}\right],
\end{align}
where we recall $F_{uv}^{(\al)}$ from \eqref{e:Fuv}, and 
\begin{align*}\begin{split}
   \cU_\al&\deq \frac{1}{Nd} \sum_{u\sim v} (1-\del_1 Y_\ell)\left(F_{vv}^{(\al)}-\frac{2G_{uv}}{G_{uu}}F_{uv}^{(\al)}+\left(\frac{G_{uv}}{G_{uu}}\right)^2F_{uu}^{(\al)}\right)+\frac{1}{Nd} \sum_{u\sim v}\del_2 Y_\ell F_{vv}^{(\al)}\\
\cU_{\al\beta}&\deq\frac{1}{Nd} \sum_{u\sim v}(1-\del_1 Y_\ell)
   \left( G^{\circ}_{v c_\al}+\frac{\msc(z_t)  G^{\circ}_{v b_\al}}{\sqrt{d-1}}-\frac{G_{uv}}{G_{uu}}\left( G^{\circ}_{u c_\al}+\frac{\msc(z_t)  G^{\circ}_{u b_\al}}{\sqrt{d-1}}\right)\right)\times\\
    &\times\left( G^{\circ}_{v c_\beta}+\frac{\msc(z_t)  G^{\circ}_{v b_\beta}}{\sqrt{d-1}}-\frac{G_{uv}}{G_{uu}}\left( G^{\circ}_{u c_\beta}+\frac{\msc(z_t)  G^{\circ}_{u b_\beta}}{\sqrt{d-1}}\right)\right)\\
& +\frac{1}{Nd}\sum_{u\sim v}(-t\del_2 Y_\ell)\left( G^{\circ}_{u c_\al}+\frac{\msc(z_t)  G^{\circ}_{u b_\al}}{\sqrt{d-1}}\right)\left( G^{\circ}_{u c_\beta}+\frac{\msc(z_t)  G^{\circ}_{u b_\beta}}{\sqrt{d-1}}\right).
\end{split}\end{align*}

For \eqref{e:aterm}, we replace $(G_{c_\al c_\al}^{(b_\al)}-Q_t)$ by $(G_{c_\al c_\al}^{(b_\al)}-Y_t)$. The same as in \eqref{e:a_error}, by \eqref{e:bPi2}, the error is bounded by $\OO( N^{-\fb/2}\bE[ \Psi_p])$. After such replacement, we get  
\begin{align}\label{e:getI2}
   I_2:=\sum_{\al\in \qq{\mu}}\frac{ \msc^{2\ell}(z_t)}{(d-1)^{\ell+1}Z_{\cF^+}}\sum_{\bfi^+}\bE\left[I(\cF^+,\cG)\bm1(\cG\in \Omega) (G_{c_\al c_\al}^{(b_\al)}-Q_t)(Q_t-Y_t)^{p-2}\cU_\al\right],
\end{align}
 which is an $\OO(1)$-weighted sum of terms in the form \eqref{e:case2}.

Next we estimate \eqref{e:aterm} using the following more general result:
Consider  $h\geq 0$, $\bmh=[h_{jk}]_{1\leq j\leq p-1, 0\leq k\leq 2}$ satisfying \eqref{e:defr} and  
$\overline R_{\bfi^+}=\overline R_h \prod_{j=1}^{p-1}\overline{\cW}_j \in \Adm(h,\bmh,\cF^+,\cG)$ (as defined in \eqref{def:pgen}). Suppose for some $1\leq {j'}\leq p-1$, $h_{{j'}0}=2$, and \begin{align}\label{e:Gsw_single_s}
    \overline\cW_{j'}'=\frac{\{1-\del_1 Y_\ell, -t\del_2 Y_\ell\}}{Nd}\sum_{u_{j'}\sim v_{j'}\in \qq{N}}G_{sw}^\circ \overline R_{h_{{j'}1}-1 h_{{j'}2}}',\quad s\in \cK^+, \quad w\in \{u_{j'}, v_{j'}\},
\end{align}
where $\overline R_{h_{{j'}1}-1 h_{{j'}2}}'$ contains $h_{{j'}1}-1$ factors of the form $ G^{\circ}_{su_{j'}},  G^{\circ}_{sv_{j'}}$, $h_{{j'}2}$ factors of the form $L_{su_{j'}}, L_{sv_{j'}}$, with $s\in \cK^+$ and an arbitrary number of factors $G_{u_{j'}v_{j'}}, 1/G_{u_{j'}u_{j'}}$. Furthermore, we assume that $\overline R_h \prod_{j\neq j'}\overline \cW_{j}'$ (as recalled from \eqref{e:Rhath}) and $\overline R'_{h_{{j'}1}-1 h_{{j'}2}}$ do not contain any index $s'\in \cK^+$ that belongs to the same connected component as $s$ in $\cF^+$. Then we claim that
\begin{align}\begin{split}\label{e:huanGS}
       &\phantom{{}={}}\frac{(d-1)^{(6h+3\sum_{j=1}^{p-1} h_{j1})\ell}}{Z_{\cF^+}}\sum_{\bfi^+}  \bE\left[I(\cF^+,\cG)\bm1(\cG\in \Omega) \overline R_{\bfi^+}\right]=\OO(N^{-\fb} \bE[ \Psi_p]).
   \end{split} \end{align}

Each term in \eqref{e:abterm} is in the form of \eqref{e:Gsw_single_s}, thus \eqref{e:huanGS} implies
\begin{align*}
    \eqref{e:abterm}\lesssim \sum_{\al\neq \beta\in \qq{\mu}}\frac{|L_{l_\al l_\beta}|}{(d-1)^\ell}\OO(N^{-\fb}\bE[ \Psi_p])=\OO( N^{-\fb/2}\bE[ \Psi_p]).
\end{align*}

We now prove \eqref{e:huanGS}. We assume that in \eqref{e:Gsw_single_s} $s=c_\al$ for some $\al\in \qq{\mu}$; other cases can be proven in the same way, so we omit their proofs. 
    We let  the forest $\widehat \cF$ and the indicator function $I_{c_\al}$ be as in \eqref{e:Idecompose}. There are two cases, either $h_{{j'}1}+h_{{j'}2}\geq 4$ or $h_{{j'}1}+h_{{j'}2}=2$.

    \noindent \textbf{Case 1.} If $h_{{j'}1}+h_{{j'}2}\geq 4$, we have 
\begin{align}\begin{split}\label{e:huanGcb0}
     &\phantom{{}={}}\frac{(d-1)^{3\ell \sum_{j=1}^{p-1}h_{j1}}\bm1(\cG\in \Omega)}{ Z_{\cF^+}}\left|\sum_{b_\al, c_\al\in\qq{N}} I(\cF^+,\cG)\prod_{j=1}^{p-1}\overline \cW_j\right|\\
     &\lesssim 
     \frac{(d-1)^{3\ell\sum_{j=1}^{p-1}h_{j1}}\bm1(\cG\in \Omega)I(\widehat \cF,\cG)}{Z_{\cF^+}}\left|\sum_{b_\al\sim c_\al}G^\circ_{c_\al w}I_{c_\al} \right|\frac{\Upsilon}{Nd}\sum_{u_{j'}\sim v_{j'}\in \qq{N}} |\overline R'_{h_{{j'}1}-1, h_{{j'}2}}|\prod_{j\neq j'}^{p-1}|\overline \cW_{j}|\\
      &\lesssim  \frac{\bm1(\cG\in \Omega)I(\widehat \cF,\cG)}{Z_{\cF^+}}\frac{Nd}{N^{1-3\fc/2}} (d-1)^{8\ell} \Upsilon \Phi (|Q_t-Y_t|+(d-1)^{8\ell} \Upsilon \Phi )^{p-2}\\
      &\lesssim   \frac{\bm1(\cG\in \Omega)}{N^{1-3\fc/2}Z_{\cF^+}}(d-1)^{8\ell} \Upsilon \Phi (|Q_t-Y_t|+(d-1)^{8\ell} \Upsilon \Phi )^{p-2}\sum_{b_\al\sim c_\al} I_{c_\al} I(\widehat\cF,\cG) \\
      &\lesssim \frac{1 } {N^{1-3\fc/2} Z_{\cF^+}}\sum_{b_\al, c_\al\in\qq{N}} \bm1(\cG\in \Omega)I(\cF^+,\cG)(|Q_t-Y_t|+(d-1)^{8\ell} \Upsilon \Phi )^{p-1}.
\end{split}\end{align}
The first inequality follows from the decomposition \eqref{e:Gsw_single_s}. For the second inequality,  we used \eqref{e:single_index_sum}, then \eqref{e:1_Sbound} to bound the summation of $\overline R'_{h_{{j'}1}-1 h_{{j'}2}}$ and $\overline \cW_{j}$.
The last two statements follow from a rearrangement of terms. Thanks to \eqref{e:huanGcb0},  we can bound the left-hand side of \eqref{e:huanGS} as
\begin{align*}
   &\phantom{{}={}}   \frac{(d-1)^{(6h+3\sum_{j=1}^{p-1} h_{j1})\ell}}{Z_{\cF^+}}\sum_{ \bfi^+}\bE\left[I(\cF^+,\cG)\bm1(\cG\in \Omega)|\overline R_{\bfi^+}|\right]\\
   &\lesssim\frac{(d-1)^{6h\ell} N^{-h\fb}}{ N^{1-3\fc/2} Z_{ \cF^+}}\sum_{\bfi^+}\bE\left[\bm1(\cG\in \Omega)I(\cF^+,\cG) (|Q_t-Y_t|+(d-1)^{8\ell} \Upsilon \Phi )^{p-1}\right]\lesssim N^{-\fb}\bE[ \Psi_p].
\end{align*}


 

\textbf{Case 2.}  If $h_{{j'}1}+h_{{j'}2}=2$, then
    \begin{align*}\begin{split}
    &\overline R'_{h_{{j'}1}-1, h_{{j'}2}}
    \in \{ G^{\circ}_{sw'}, L_{sw'}, (\Av G^{\circ})_{o'w'},(\Av L)_{o'w'} \}_{w'\in\{u_{j'}, v_{j'}\},s\in \cK^+}\times U^{(u_{j'},v_{j'})},
    \end{split}\end{align*}
    where $U^{(u_{j'},v_{j'})}$ is a product of $G_{u_{j'} v_{j'}}, 1/G_{u_{j'} u_{j'}}$. 
   Without loss of generality, we  give the prove assuming $\overline R'_{h_{{j'}1}-1, h_{{j'}2}}$ is of the form $ G^{\circ}_{s'w'} U^{(u_{j'},v_{j'})}$ with $s'\in \{b,c\}\in(\cC^\circ)^+$, the other cases can be proven in the same way, so we omit their proofs. Let $\widehat \cF=\{\{b,c\}, \{b_\al, c_\al\}\}$ consists of two unused core edges, and define
\begin{align}\begin{split}\label{e:Rsboundal}
 V^{(b,c)}&:=(Nd) \times \frac{(d-1)^{(6h+3\sum_{j\neq j'} h_{j1})\ell}}{Z_{\cF^+}}\sum_{\bfi/\{b,c,b_\al, b_\beta\}}\frac{ I(\cF^+,\cG)}{Z_{\cF^+}}\overline R_{h}\prod_{j\neq j'}\overline \cW_{j}\\
  &\lesssim (Nd) \times (d-1)^{6h\ell} N^{-h\fb}(|Q_t-Y_t|+(d-1)^{8\ell}\Upsilon \Phi)^{p-2}\sum_{\bfi/\{b,c,b_\al, b_\beta\}}\frac{ I(\cF^+,\cG)}{Z_{\cF^+}}\\
  &\lesssim \frac{(|Q_t-Y_t|+(d-1)^{8\ell}\Upsilon \Phi)^{p-2}}{N}I(\widehat \cF,\cG),
\end{split}\end{align}
where the first inequality follows from \eqref{e:Bsmall} and \eqref{e:1_Sbound}, and the second inequality follows as $I(\cF^+,\cG)$ contains the factor $I(\wh\cF,\cG)$.     By plugging in \eqref{e:Rsboundal} and noticing $h_{j'1}=2$, we can rewrite the left-hand side of \eqref{e:huanGS} as  
    \begin{align}\begin{split}\label{e:yigeJal}
     \eqref{e:huanGS} &=\frac{ (d-1)^{6\ell}}{Nd} \sum_{b,c, b_\al, c_\al\in\qq{N}}\bE\left[ \bm1(\cG\in \Omega)I(\wh \cF,\cG)V^{(b,c)}\overline \cW_{j'}\right].
    \end{split}\end{align}
For $\cG\in \Omega$, the summation in \eqref{e:yigeJal} can be bounded by \eqref{e:single_index_term2}
\begin{align}\begin{split}\label{e:Wjsum}
 &\phantom{{}={}}\frac{ (d-1)^{6\ell}}{Nd} \sum_{b,c, b_\al, c_\al\in\qq{N}}I(\wh \cF,\cG)V^{(b,c)}\overline \cW_{j'}\\
 &=
  \frac{\{1-\del_1 Y_\ell, \del_2 Y_\ell\}(d-1)^{6\ell}}{(Nd)^2}\sum_{b,c, b_\al, c_\al\in\qq{N}}I(\widehat \cF,\cG)\sum_{u_{j'}\sim v_{j'}}V^{(b,c)} G^{\circ}_{c_\al w} G^{\circ}_{s'w'} U^{(u_{j'},v_{j'})} \\
  &\lesssim \frac{ \Upsilon (d-1)^{6\ell} N^{3\fc/2+\fo}\sqrt{\sum_{b\sim c} |V^{(b,c)}|^2}}{N^{3/2}\eta}\\
  &\lesssim \frac{ \Upsilon (d-1)^{6\ell} N^{3\fc/2+\fo}(|Q_t-Y_t|+(d-1)^{8\ell}\Upsilon \Phi)^{p-2}}{N^{2}\eta}\lesssim  N^{-\fb}\Psi_p,
\end{split}\end{align}
where in the last line we used \eqref{e:Rsboundal}.
The claim \eqref{e:huanGS} follows.

 \item   The terms from replacing $\widehat{B}_0$ by $\msc^{2\ell}(z_t)(d-1)^{-(\ell+1)}\sum_{\al\neq \beta} G_{c_\al c_\beta}^{(b_\al b_\beta)}$ are in the following form: 
for $h'\geq 0$, and $\bmh=[h_{jk}]_{1\leq j\leq p-1, 0\leq k\leq 2}$ satisfying \eqref{e:defr},
\begin{align}
\label{e:fhatRi02}\frac{1}{(d-1)^{2\ell}} \sum_{\al\neq \beta\in \sfA_i}  (d-1)^\ell \times \sum_{\bfi^+}  \frac{(d-1)^{(3h'+3\sum_{j=1}^{p-1} h_{j1})\ell}}{Z_{\cF^+}}\bE\left[I(\cF^+,\cG)\bm1(\cG\in \Omega) G_{c_\al c_\beta}^{(b_\al b_\beta)}\widehat R_{\bfi^+}\right],
\end{align}
where $\widehat R_{\bfi^+}\in \Adm(h',\bmh,\cF^+,\cG)$ is as in \eqref{e:Rhath}. 

In \eqref{e:fhatRi02}, if $\{b_\al, c_\al\}$  do not appear in $\widehat R_{\bfi^+}$, then thanks to \eqref{e:huanG} (taking $(h,\bmh)$ as $(h',\bmh)$ and $f_1\geq 1$),
    \begin{align}\begin{split}\label{e:fhatRiGcc} 
          &\phantom{{}={}}(d-1)^\ell\frac{(d-1)^{3(h'+\sum_{j=1}^{p-1} h_{j1})\ell}}{Z_{\cF^+}}\sum_{ \bfi^+}\bE\left[I(\cF^+,\cG)\bm1(\cG\in \Omega)G_{c_\al c_\beta}^{(b_\al b_\beta)}\widehat R_{\bfi^+}\right]\\
          &=(d-1)^\ell\frac{(d-1)^{3(h'+\sum_{j=1}^{p-1} h_{j1})\ell}}{(d-1)^{f_1/2}Z_{\cF^+}}\sum_{ \bfi^+}\bE\left[I(\cF^+,\cG)\bm1(\cG\in \Omega)R'_{\bfi^+}\right]+\OO(N^{-\fb/4} \bE[\Psi_p])\\
          &=\frac{1}{(d-1)^{f_1/2}}\frac{(d-1)^{(6(h'+f_1)+\sum_{j=1}^{p-1} h_{j1})\ell}}{(d-1)^{\fq^+\ell/2}Z_{\cF^+}}\sum_{ \bfi^+}\bE\left[I(\cF^+,\cG)\bm1(\cG\in \Omega)R'_{\bfi^+}\right]+\OO(N^{-\fb/4} \bE[\Psi_p]),
          \end{split}\end{align}
      where $R'_{\bfi^+}\in \Adm(h'+f_1,\bmh,\cF^+,\cG)$ with $f_1\geq 1$, and $\fq^+=2(3h'+6f_1-1)\geq 10$. 
      Moreover, from \eqref{e:final_replace1}, $R_{\bfi^+}'$ contains $(G_{c_\al c_\al}^{(b_\al)}-Q_t)$.
We can further change $(G_{c_\al c_\al}^{(b_\al)}-Q_t)$ to $(G_{c_\al c_\al}^{(b_\al)}-Y_t)$ as in \eqref{e:changeQtoY}. Then \eqref{e:fhatRiGcc} leads to \eqref{e:case3} with $r^+= h'+f_1\geq h'+1\geq 1$, $\bmr^+=\bmh$ satisfying \eqref{e:1_expand} or \eqref{e:feasible}, and  $\fq^+\geq 10$.

If $\{b_\beta, c_\beta\}$ do not appear in $\widehat R_{\bfi^+}$,  the same statement holds. 
In the remaining cases, both $\{b_\al, c_\al\}$ and $\{b_\beta, c_\beta\}$ appear in $\widehat R_{\bfi^+}$. These terms in  \eqref{e:fhatRi02} form an $\OO(1)$-weighted sum of terms in the form 
\begin{align}\label{e:fhatRiGcc0} 
          &\phantom{{}={}}(d-1)^\ell\frac{(d-1)^{3(h'+\sum_{j=1}^{p-1} h_{j1})\ell}}{Z_{\cF^+}}\sum_{ \bfi^+}\bE\left[I(\cF^+,\cG)\bm1(\cG\in \Omega)G_{c_\al c_\beta}^{(b_\al b_\beta)}\widehat R_{\bfi^+}\right].
\end{align}
If $\bmh$ satisfies \eqref{e:feasible} or $h'\geq 1$, then thanks to \eqref{e:refined_bound2}, we have $|\eqref{e:fhatRiGcc0}|\lesssim N^{-\fb/2} \bE[\Psi_p]$.
Otherwise, $h'=0$ and $\bmh$ satisfies \eqref{e:1_expand}. We do a similar expansion as from \eqref{e:abterm}; in this case, terms in \eqref{e:fhatRi02} are of the form
\begin{align}\label{e:errorLL}
    \sum_{\al\neq \beta\in \qq{\mu}}\frac{2(p+1)\msc^{2\ell}(z_t)}{(d-1)^{\ell+2}Z_{\cF^+}}\sum_{\bfi^+}\bE\left[L_{l_\al l_\beta} I(\cF^+,\cG)\bm1(\cG\in \Omega) G_{c_\al c_\beta}^{(b_\al b_\beta)}(Q_t-Y_t)^{p-2}\cU_{\al\beta}\right].
\end{align}
We notice that $|L_{l_\al l_\beta}|\lesssim 1$. Each term in \eqref{e:errorLL} is in the following form: for $\al\neq \beta\in \qq{\mu}, x\in \{b_\al, c_\al\}, x'\in \{b_\beta, c_\beta\}, w,w'\in\{u,v\}$,
 \begin{align}\label{e:sumtt0}
  \sum_{\bfi^+}  \bE\left[\frac{\{1-\del_1 Y_\ell, \del_2 Y_\ell\}}{(Nd)Z_{\cF^+} }I(\cF^+,\cG)\bm1(\cG,\tcG\in \Omega)\sum_{u\sim v} G_{c_\al c_\beta}^{(b_\al b_\beta)} G^{\circ}_{x w} G^{\circ}_{x' w'}(Q_t-Y_t)^{p-2}\right].
    \end{align}
Thanks to \eqref{e:refined_bound2_special}, the expression above can be bounded by $\OO(N^\fo\bE[\Psi_p])$. We conclude
\begin{align}\label{e:Triangle_cerror}
     &\phantom{{}={}}\eqref{e:errorLL}\lesssim \sum_{\al\neq \beta\in \qq{\mu}}\frac{1}{(d-1)^\ell}\OO(N^\fo\bE[\Psi_p])\lesssim (d-1)^{2\ell}\bE[\Psi_p].
\end{align}

\item In the remaining cases, $\widehat B_0$ is replaced by  $\cU_0$  as specified in the decomposition  \eqref{e:first_W0}. Recall that $\cU_0$ is an $\OO(1)$-weighted sum of terms in the form $(d-1)^{3(h-1)\ell}R'_{h}$, where $R'_{h}$ is an $S$-product term of order $h$ (see \Cref{d:S-product}) with $h\geq 2$, and contains $G_{c_\al c_\al}^{(b_\al)}-Q_t$ or $G_{c_\al c_\beta}^{(b_\al b_\beta)}$. 
We get an $\OO(1)$-weighted sum of terms in the following form: for $h'\geq 0$, $h\geq 2$ and $\bmh=[h_{jk}]_{1\leq j\leq p-1, 0\leq k\leq 2}$ satisfying \eqref{e:defr},  
\begin{align}\label{e:fhatRi03copy}
  \sum_{\bfi^+} \frac{(d-1)^{(3(h+h'-1)+3\sum_{j=1}^{p-1} h_{j1})\ell}}{Z_{\cF^+}} \bE\left[I(\cF^+,\cG)\bm1(\cG\in \Omega)R'_{h}\widehat R_{\bfi^+}\right],
\end{align}
where where $\widehat R_{\bfi^+}\in \Adm(h',\bmh,\cF^+,\cG)$ is as in \eqref{e:Rhath}.

If $R'_{ h}$ contains a factor $G_{c_\al c_\beta}^{(b_\al b_\beta)}$, since $h\geq 2$, then \eqref{e:refined_bound2} implies that $|\eqref{e:fhatRi03copy}|\lesssim N^{-\fb/2}\bE[\Psi_p]$. 
Otherwise, $R'_{h}$ contains a factor $G_{c_\al c_\al}^{(b_\al)}-Q_t$. In this case, 
\begin{align}\label{e:last_case}
\eqref{e:fhatRi03copy}=\frac{(d-1)^{(6(h+h'-1)+3\sum_{j=1}^{p-1} h_{j1})\ell}}{(d-1)^{\fq^+\ell/2}Z_{\cF^+}}\sum_{\bfi^+}\bE\left[I(\cF^+,\cG)\bm1(\cG\in \Omega)  R'_{\bfi^+}\right], \;\fq^+=6(h+h'-1).
\end{align}
The same as in \eqref{e:changeQtoY}, we can change $(G_{c_\al c_\al}^{(b_\al)}-Q_t)$ in \eqref{e:last_case} to $(G_{c_\al c_\al}^{(b_\al)}-Y_t)$, and the error is bounded by $\OO(N^{-\fb/4}\bE[\Psi_p])$. \eqref{e:last_case} leads to \eqref{e:case3} with $r^+=h'+h-1$, $\bmr^+=\bmh$ satisfying \eqref{e:1_expand} or \eqref{e:feasible}, and $\fq^+=6(h'+h-1)\geq 6$.
 \end{enumerate}

\begin{remark}\label{r:QtoY}
   We note that, to obtain the final expressions in \eqref{e:case1}, \eqref{e:case2}, and \eqref{e:case3}, the last step of the proof of \Cref{p:iteration} requires replacing a copy of $Q_t$ with $Y_t$. The errors introduced by this replacement are bounded by the first term in $\Psi_p$ as defined in \eqref{e:defPhi}. All other errors are bounded by the second term in $\Psi_p$.
\end{remark}
\subsection{Iteration General Case: Proof of \Cref{p:general}}\label{s:general}

We denote $R_\bfi$ in \Cref{p:general}, as $R_\bfi=R_\bfi'\prod_{j=1}^{p-1}\cW_j\in \Adm(r,\bmr,\cF,\cG)$ (as in \Cref{def:pgen}), and $R_\bfi'$ contains $r$ factors in the form of \eqref{e:defcE1}. In the following lemma, we demonstrate that $\wt\cW_j$ is close to $\wh \cW_j$, which depends only on the Green's function of the original graph $\cG$. Furthermore, we provide a decomposition of $\wh \cW_j$, analogous to \eqref{e:diyi}, \eqref{e:dier}, and \eqref{e:disan0}. The proof is deferred to \Cref{sec:iterationproofs}.

\begin{lemma}
    \label{p:tS-Sdiff}
 $(d-1)^{3r_{j1}\ell}\wt \cW_j=(d-1)^{3r_{j1}\ell}\widehat \cW_j+\OO(((d-1)^{8\ell}\Phi)^2)$, where
 \begin{enumerate}
     \item If $r_{j0}=1$, then $r_{j1}=r_{j2}=0$, $\cW_j=Y_t-Q_t$, and $\wh \cW_j=(Y_t-Q_t)+\wh \cU_j$. Here $\wh \cU_j$ is an $\OO(1)$-weighted sum of terms in one of the two following forms:
\begin{enumerate} 
\item For $h_j'\geq 0$,
\begin{align}\label{e:diyicopy}
     (d-1)^{3h_j'\ell}R_{h_j'}\times \cW_j', \quad \cW_j':= \frac{(d-1)^\ell (1-\del_1 Y_\ell)}{N}.
\end{align} 
 Here $R_{h_j'}$ is a W-product term (see \Cref{d:W-product}). We set $(h_{j0}, h_{j1}, h_{j2})=(3,0,0)$ and note that $\cW_j'$ is in the form of \eqref{e:Scase3}.
 
 \item 
For $h_j'\geq 0$, $h_{j1}+h_{j2}\geq 2$ even,
     \begin{align}\label{e:hatFdef}
     &(d-1)^{3h_j'\ell}R_{h_j'}\times (d-1)^{3h_{j1}\ell}\cW_j',\quad \cW_j':= \frac{\{(1-\del_1 Y_\ell), -t\del_2 Y_\ell\} }{Nd}\sum_{u_j\sim v_j\in \qq{N}} R'_{h_{j1},h_{j2}}. 
\end{align} 
   Here $R_{h_j'}$ is a W-product term (see \Cref{d:W-product}). $R'_{h_{j1},h_{j2}}$ contains $h_{j1}$ factors of the form $ G^{\circ}_{xu_j},  G^{\circ}_{xv_j}$, $h_{j2}$ factors of the form $L_{xu_j}, L_{xv_j}$, with $x\in\cK^+$ and some number (that we do not need to track) of factors $G_{u_jv_j}, 1/G_{u_ju_j}$.
   We set $h_{j0}=2$, and note that $\cW_j'$ is in the form of \eqref{e:Scase2}.  
  \end{enumerate}

    \item If $r_{j0}=2$, then $(d-1)^{3r_{j1}\ell}\wh \cW_j$ is an $\OO(1)$-weighted sum of terms of one of the two following forms: 
    \begin{enumerate}
    \item 
    For $h_j'\geq r_{j_1}$,
\begin{align}\label{e:erbufen}
     (d-1)^{6h_j'\ell}R_{h_j'}\times \cW_j', \quad \cW_j':= \frac{(d-1)^\ell (1-\del_1 Y_\ell)}{N},
\end{align} 
 where $R_{h_j'}$ is a W-product term (see \Cref{d:W-product}). We set $(h_{j0}, h_{j1}, h_{j2})=(3,0,0)$ and note that $\cW_j'$ is in the form of \eqref{e:Scase3}.
   
\item 
For $h'_{j}\geq 0$, $h_{j2}\geq r_{j_2}$ and $h_{j1}+h_{j2}\geq r_{j1}+r_{j2}$ even, $g_{j1}\leq h_{j1}, g_{j2}\leq h_{j2}$
    \begin{align}\begin{split}\label{e:yibufen}
        & (d-1)^{6h_j'\ell}R_{h_j'}\times \frac{(d-1)^{3h_{j1}\ell}}{(d-1)^{(g_{j1}+g_{j2})\ell/2}}\sum_{\bm\theta_j} \cW_j'(\bm\theta_j),\\
        &\cW_j'(\bm\theta_j):=\frac{\{1-\del_1 Y_\ell, -t\del_2 Y_\ell\}}{Nd}\sum_{u_j\sim v_j\in \qq{N}}R'_{h_{j1}, h_{j2}},\\
        &R'_{h_{j1}, h_{j2}}=R''_{h_{j1}-g_{j1}, h_{j2}-g_{j2}}\prod_{m=1}^{g_{j1}+g_{j2}} D_{\theta_m},
    \end{split}\end{align}
where $R''_{h_{j1}-g_{j1}, h_{j2}-g_{j2}}$ contains $h_{j1}-g_{j1}$ factors of the following form: $\{ (\Av G^{\circ})_{o'w},   G^{\circ}_{xw}\}$, and 
$h_{j2}-g_{j2}$ factors of the form 
$\{(\Av L)_{o'w},  L_{xw}\}$ with ${ x\in\cK^+, (i',o')\in \cC\setminus \cC^\circ, w\in \{u_j,v_j\}}$; and
\begin{align}\begin{split}\label{e:defD}
  &D_{\theta_m}\in \left\{   G^{\circ}_{b_{\theta_m} w}, G^{\circ}_{c_{\theta_m} w}\right\}_{w\in\{u_j,v_j\}}, \quad 1\leq m\leq g_{j1},\\
  &D_{\theta_m}\in \left\{  L_{b_{\theta_m} w},L_{c_{\theta_m} w}\right\}_{w\in\{u_j,v_j\}}, \quad g_{j1}+1\leq m\leq g_{j1}+g_{j2},
\end{split}\end{align}
the summation for $ \bm\theta_j=(\theta_1, \theta_2,\cdots, \theta_{g_{j1}+g_{j2}})$ is over each $\theta_m$ in one of the sets $\qq{\mu}$, $\sfA_i$ or $\qq{\mu}\setminus \sfA_i$; $R_{h_j'}$ is a product of $h_j'$ factors of the form $\{ G^{\circ}_{ss'}\}_{s,s'\in \cK^+}$. 
 We set $h_{j0}=2$, and note that $\cW_j'=\cW_j'(\bm\theta_j)$ is in the form of \eqref{e:Scase2}.  

 \end{enumerate}

     \item If $r_{j0}=3$, then $r_{j1}=r_{j2}=0$,  $\cW_j= (d-1)^\ell \{1-\del_1Y_\ell, -t\del_2 Y_\ell\}/N$, and $\widehat \cW_j:=\cW_j':=\cW_j$. We set $(h_{j0}, h_{j1}, h_{j2})=(3,0,0)$ and note that $\cW_j'$ is in the form of \eqref{e:Scase3}.
 \end{enumerate}
 For $\cW_j'(\bm\theta_j)$ in \eqref{e:yibufen}, let $f_j$ denote the number of distinct values for $\theta_1, \theta_2,\cdots, \theta_{g_{j1}+g_{j2}}$, then
    \begin{align}\label{e:S'bound2}
    \left|\frac{(d-1)^{(f_j+3h_{j1})\ell}}{(d-1)^{(g_{j1}+g_{j2})\ell/2}}\cW_j'(\bm\theta_j)\right|\lesssim (d-1)^{8\ell}\Upsilon\Phi.
    \end{align}
We observe that $\cW_j'$ from \eqref{e:hatFdef} is a special case of $\cW_j'(\bm\theta_j)$ from \eqref{e:yibufen}, obtaining by setting $\bm\theta_j=\emptyset$. Therefore, \eqref{e:S'bound2} also applies to 
$\cW_j'$
  in \eqref{e:hatFdef}.
\end{lemma}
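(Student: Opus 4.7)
The plan is to split the argument into the three cases $r_{j0}\in\{1,2,3\}$. In each case, starting from the formula for $\wt\cW_j$, I will systematically replace every ``tilde'' factor coming from the switched graph $\tcG$ by its expansion around the corresponding quantity of the original graph, using \Cref{lem:generalQlemma}, \Cref{c:Qmchange} and \Cref{c:Q-Ylemma}. The target error $\OO(((d-1)^{8\ell}\Phi)^2)$ will come from the second-order tail in these expansions, while the leading terms will be regrouped into the announced structure.

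\textbf{Cases $r_{j0}=1$ and $r_{j0}=3$.} For $r_{j0}=1$, we have $\wt\cW_j-\cW_j = -((\wt Q_t-\wt Y_t)-(Q_t-Y_t))$, and \eqref{e:tQY-QYexp} decomposes this into three pieces: a sum over $u\sim v$ of $(1-\del_1 Y_\ell)\cU^{(u,v)}$, a second sum involving $(-t\del_2 Y_\ell)\cU^{(u)}$, and a bulk piece $(d-1)^\ell(1-\del_1 Y_\ell)\cU/N$, all modulo an error $\OO(\ell^3((d-1)^{6\ell}N^{\fo}\Phi)^2)\ll ((d-1)^{8\ell}\Phi)^2$. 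Since $\cU^{(u,v)}$ has the explicit W-product form in \eqref{e:tQ-Qexp} and $\cU^{(u)}$ that in \eqref{e:tm-mexp}, the first two pieces match the form \eqref{e:hatFdef} with $(u_j,v_j)=(u,v)$, while the bulk piece matches \eqref{e:diyicopy}. For $r_{j0}=3$, the factor $\cW_j=(d-1)^\ell\{1-\del_1 Y_\ell,-t\del_2 Y_\ell\}/N$ depends on $\tcG$ only through $\wt Q_t$ and $\wt m_t$; \eqref{e:dtY-dYexp} then gives $\wt\cW_j=\cW_j+\OO((d-1)^{9\ell}\Phi/N)$, and $\Phi\ge N^{-1+2\fc}$ upgrades this to $\OO(((d-1)^{8\ell}\Phi)^2)$.

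\textbf{Case $r_{j0}=2$.} This is the main step. Starting from
$\wt\cW_j = \{1-\del_1\wt Y_\ell,-t\del_2\wt Y_\ell\}(Nd)^{-1}\sum_{u_j\sim v_j}\wt F_{u_jv_j}$, I substitute each factor in $\wt F_{u_jv_j}$ by its expansion: $\wt G^\circ_{sw}$ and $\wt L_{sw}$ via \eqref{e:tG-Gexp1}--\eqref{e:t}; $(\Av\wt G^\circ)_{o'w}$ and $(\Av\wt L)_{o'w}$ via \eqref{e:tAvG-AvGexp1}; and $\wt G_{u_jv_j}, 1/\wt G_{u_ju_j}$ via statements (1)--(2) of \Cref{lem:generalQlemma}. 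Each such expansion decomposes as (a) the ``unswitched'' factor, plus (b) a sum over a switching index $\al\in\qq\mu$, $\sfA_i$, or $\qq\mu\setminus\sfA_i$ of a single $D_\theta$-type term with a $(d-1)^{-\ell/2}$ weight (cf.\ \eqref{e:fengkai}), plus (c) a $\cU$ consisting of W-products of positive order, plus an $\OO(N^{-2})$ error. Expanding the product over all factors of $\wt F_{u_jv_j}$ and grouping by the number $g_{j1}$ of switched $G^\circ$-factors and $g_{j2}$ of switched $L$-factors yields exactly \eqref{e:yibufen}: the multi-index $\bm\theta_j=(\theta_1,\dots,\theta_{g_{j1}+g_{j2}})$ records which switching index each substitution picks up, the $D_{\theta_m}$ are as in \eqref{e:defD}, and the $(d-1)^{-(g_{j1}+g_{j2})\ell/2}$ prefactor collects the $\ell/2$-decay from each substitution. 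The W-products from (c) are absorbed into the $R_{h_j'}$-factor of degree $h_j'\ge r_{j1}$. Finally, $\{1-\del_1\wt Y_\ell,-t\del_2\wt Y_\ell\}$ is replaced by its original counterpart using \eqref{e:dtY-dYexp}; the resulting correction multiplied by the $(d-1)^\ell$-prefactor and summed over $u_j\sim v_j$ is an instance of \eqref{e:erbufen}.

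\textbf{Proof of \eqref{e:S'bound2} and the main obstacle.} To prove \eqref{e:S'bound2}, fix $\bm\theta_j$ with $f_j$ distinct values. Bound the non-switched block $R''_{h_{j1}-g_{j1},h_{j2}-g_{j2}}$ by a Cauchy--Schwarz scheme analogous to \eqref{e:k1}--\eqref{e:k3}: this gives $N^{\fo}\Upsilon\Phi$ when $h_{j2}-g_{j2}=0$ and $N^{\fo}\Upsilon\sqrt{\Phi/N}$ otherwise. The $f_j$ distinct switching vertices among $\{D_{\theta_m}\}$ each carry a $(d-1)^{-\ell/2}$, while the sum over each $\theta_m$-value contributes the balancing factor $(d-1)^{f_j\ell}$. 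The inequality $f_j\le g_{j1}+g_{j2}$ combined with the prefactor $(d-1)^{(f_j+3h_{j1})\ell}/(d-1)^{(g_{j1}+g_{j2})\ell/2}$ then closes the bound at $(d-1)^{8\ell}\Upsilon\Phi$. The principal obstacle throughout is the combinatorial bookkeeping in case $r_{j0}=2$: carrying out each substitution inside $\wt F_{u_jv_j}$ while tracking (i) the precise $(d-1)$-powers, (ii) the index sets $\qq\mu$ vs.\ $\sfA_i$ vs.\ $\qq\mu\setminus\sfA_i$ in each new sum, and (iii) the correct grouping of every residual W-product $\cU$ into $R_{h_j'}$, so that the final assembly lies exactly in \eqref{e:erbufen} or \eqref{e:yibufen}.
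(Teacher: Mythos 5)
Your overall strategy is the same as the paper's (expand every switched factor via \Cref{lem:generalQlemma}, \Cref{c:Qmchange}, \Cref{c:Q-Ylemma}, regroup, and absorb second-order terms into $\OO(((d-1)^{8\ell}\Phi)^2)$), and your treatment of the cases $r_{j0}=1,3$ is fine. However, in the main case $r_{j0}=2$ there is a genuine gap: you expand the factors of $\wt F_{u_jv_j}$ but never address the fact that the average $\frac{1}{Nd}\sum_{u_j\sim v_j}$ defining $\wt\cW_j$ runs over the edges of the \emph{switched} graph, i.e.\ with $\wt A_{u_jv_j}$ in place of $A_{u_jv_j}$. These differ on the $\OO((d-1)^\ell)$ resampled edges $\{l_\al,a_\al\},\{b_\al,c_\al\},\{l_\al,c_\al\},\{a_\al,b_\al\}$, and on those edges the summand is only $\OO(1)$ (e.g.\ the $L$-factors are not small there), so the discrepancy is of size up to $(d-1)^\ell\Upsilon/N$, which is \emph{not} $\OO(((d-1)^{8\ell}\Phi)^2)$ since $\Phi\gtrsim N^{-1+2\fc}$ and $(d-1)^{\ell}\leq N^{\fb}$ with $\fb\ll\fc$. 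In the paper this edge-set change (cf.\ \eqref{e:tF-Fdiff2}) is precisely the source of the \eqref{e:erbufen}-class terms with $h_j'\geq r_{j1}$: the boundary-edge contributions are shown to be $\OO(1)$-weighted sums of $(d-1)^{6h\ell}R_h\times (d-1)^\ell(1-\del_1 Y_\ell)/N$. Your proposal instead attributes \eqref{e:erbufen} to the replacement of $\{1-\del_1\wt Y_\ell,-t\del_2\wt Y_\ell\}$ by the unswitched derivatives; that correction has no such structure — by \eqref{e:dtY-dYexp} combined with \eqref{e:1_Sbound} it is simply bounded and absorbed into the error $\OO(((d-1)^{8\ell}\Phi)^2)$, which is how the paper handles it. So as written, the boundary-edge terms are unaccounted for and the claimed decomposition does not close.

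A secondary imprecision concerns \eqref{e:S'bound2}: closing the bound with only $f_j\leq g_{j1}+g_{j2}$ is lossy by a factor $(d-1)^{\ell/2}$ for every additional distinct $L$-type index, and this does not obviously close uniformly when $g_{j2}$ is large. The paper's argument uses the structural fact that, because the core edges carrying the $L$-factors are $3\fR$-separated while $u_j\sim v_j$, a product of $L$-type $D_{\theta_m}$'s with two distinct centers vanishes identically; hence when $g_{j2}\geq 1$ one has $f_j\leq g_{j1}+1$, and the bound then follows from the Cauchy--Schwarz estimates \eqref{e:k1}--\eqref{e:k3} exactly as you outline. You should either invoke this vanishing/coincidence of the $L$-indices or otherwise justify the case of many distinct $L$-centers before asserting \eqref{e:S'bound2}.
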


\Cref{p:tS-Sdiff} gives a decomposition of $\prod_{j=1}^{p-1} (d-1)^{3r_{j1}\ell}\widehat \cW_j$ as an $\OO(1)$-weighted sum of terms in the following form: for $h'\geq 0$, $\bmh=[h_{jk}]_{1\leq j\leq p-1, 0\leq k\leq 2}$ satisfying \eqref{e:defr} and for each $1\leq j\leq p-1$ either $h_{j0}>r_{j0}$ or $h_{j0}=r_{j0}$ and $h_{j1}+h_{j2}\geq r_{j1}+r_{j2}, h_{j2}\geq r_{j2}$, 
\begin{align}\begin{split}\label{e:decomposeWj}
& (d-1)^{6h'\ell}R_{h'}\times \frac{1}{(d-1)^{k_4 \ell /2}}\sum_{\bm\theta}    (d-1)^{3\sum_{j=1}^{p-1} h_{j1}\ell}\prod_{j=1}^{p-1}\cW_j',
\end{split}\end{align}
where  
 $\cW_j'$ and $\bmh$ are as defined in \Cref{p:tS-Sdiff}; the term $R_{h'}$ with $h'=\sum_{j=1}^{p-1} h_j'$ includes the $R_{h_j'}$ factors in \eqref{e:diyicopy}, \eqref{e:hatFdef}, \eqref{e:erbufen} and \eqref{e:yibufen}; The array $\bm\theta$ represents the concatenation of all $\bm\theta_j$ terms from \eqref{e:yibufen}, with a total length of $k_4=\sum_{j=1}^{p-1}(g_{j1}+g_{j2})$.

In the following we prove \Cref{p:general} assuming $\fq=0$. The case with $\fq\geq 1$ follows from simply multiplying $(d-1)^{-\fq \ell/2}$. 
In order to use our various propositions from Section \ref{sec:expansions} and \Cref{p:tS-Sdiff} that allow us to reduce $(\wt G_{oo}^{(i)}-Y_t) \wt R'_\bfi \prod_{j=1}^{p-1}\wt \cW_j$ to terms of the unswitched graph $\cG$, we need to classify the factors of $R'_\bfi$ based on their dependence on $i,o$. Let 
\begin{align}\label{e:introB_j}
     B_0=(G_{oo}^{(i)}-Y_t),\quad R'_{\bfi}=\prod_{j=1}^{r}B_j, \quad R_\bfi=R_\bfi'\prod_{j=1}^{p-1}\cW_j,
\end{align}
where $B_j$ is of one of the following forms
\begin{align}\begin{split}\label{e:Blist}
&\{G_{cc}^{(b)}-Q_t\}_{(b,c)\in \cC^\circ},\quad \{G^\circ_{ss'}\}_{s,s'\in \{i,o\}},\quad
\{G_{oc}^{(ib)}, G_{ic}^{(b)}, G_{ib}, G_{ob}, G_{ob}^{(i)}\}_{(i,o)\neq (b,c)\in \cC^\circ},\\
&\{ G_{cc'}^{(bb')}, G_{bc'}^{(b')}, G_{bb'}, G_{cb'}\}_{(i,o)\neq (b,c), (b',c')\in \cC^\circ},\quad
\{G^\circ_{sw}\}_{ s\in \cK,w\in \cK\setminus\{i,o\}},\\
 &\{\Av G^{\circ})_{o'w} \}_{(i',o')\in \cC\setminus \cC^\circ, w\in \cK\setminus\{i,o\}},\quad  (Q_t-\msc(z_t)),\quad t(m_t-\msc(z_t)).
\end{split}\end{align}
We note that the term $\{\Av G^{\circ})_{o'w} \}_{(i',o')\in \cC\setminus \cC^\circ, w\in \cK}$ appears only in the expansions in \Cref{i5} and \Cref{i6} below. Specifically, the terms $\{\Av G^{\circ})_{o'w} \}_{(i',o')\in \cC\setminus \cC^\circ, w\in \{o,i\}}$ never appear.

We can write \eqref{e:higher_case3} (with $\fq=0$) as
\begin{align}\begin{split}\label{e:sreplace0}
    &\phantom{{}={}}\frac{(d-1)^{(6r+3\sum_{j=1}^{p-1} r_{j1})\ell}}{Z_{\cF^+}}\sum_{\bfi^+}\bE\left[I(\cF^+,\cG)\bm1(\cG,\tcG\in \Omega)(\wt G_{oo}^{(i)}-Y_t) \wt R_\bfi\right]\\
    &=\frac{(d-1)^{(6r+3\sum_{j=1}^{p-1} r_{j1})\ell}}{Z_{\cF^+}}\sum_{\bfi^+}\bE\left[I(\cF^+,\cG)\bm1(\cG,\tcG\in \Omega) (\wt G_{oo}^{(i)}-Y_t)\prod_{j=1}^r \wt B_j \prod_{j=1}^{p-1}  \wt \cW_j\right].
\end{split}\end{align}
In the following we discuss the terms $B_j$ and $\cW_j$  as in \eqref{e:introB_j} after the local
resampling.  In \Cref{i1}-\Cref{i7} below, for any $h\geq 0$, we denote $R_h$ a product of $h$ factors of the form 
\begin{align}\label{e:list_factor}
    (G_{cc}^{(b)}-Q_t),  G_{cc'}^{(bb')}, G_{bc'}^{(b')}, G_{bb'},G_{cb'},\quad  (Q_t-\msc(z_t)), t(m_t-\md(z_t)), \quad G^{\circ}_{ss'},\quad  (\Av G^{\circ})_{ow},
\end{align}
where $(b,c)\neq (b',c')\in \cK^+$,  $s,s'\in \cK^+$ and $w\in \cK\setminus\{i,o\}$.
They are terms from \Cref{lem:diaglem}, \Cref{lem:offdiagswitch}, \Cref{lem:generalQlemma} and \Cref{c:Qmchange}.
\begin{enumerate}
\item \label{i1}
By \Cref{lem:diaglem}, $\wt G_{oo}^{(i)}-Y_t=\widehat B_0+\cZ_0+\cE_0$, where $\wh B_0$ is given by
\begin{align}\label{e:fcase1W0}
\wh B_0= \frac{\msc^{2\ell}(z_t)} {(d-1)^{\ell+1}}\sum_{\al\in\sfA_i}(G^{(b_\alpha)}_{c_\alpha c_\alpha}-Q_t)+\frac{\msc^{2\ell}(z_t)} {(d-1)^{\ell+1}}\sum_{\al\neq \beta\in\sfA_i}G^{(b_\alpha b_\beta)}_{c_\alpha c_\beta}
   + \cU_0. 
\end{align}
Here, $\cU_0$ is an $\OO(1)$-weighted sum of terms of the form $(d-1)^{3(h-1)\ell}R_h$ with $h\geq 2$, and each term contains at least one factor of the form  $( G_{c_\al c_{\al}}^{(b_\al)}-Q_t)$ or $G_{c_\al c_{\beta}}^{(b_\al b_\beta)}$; $\cZ_0, \cE_0$ are $\cZ, \cE$ in the first statement of \Cref{lem:diaglem}.

    \item \label{i2}
For $B_j=(G_{oo}^{(i)}-Q_t)$  we can first rewrite $\wt B_j$ as
\begin{align*}
    \wt B_j=(\wt G_{oo}^{(i)}-\wt Q_t)=(\wt G_{oo}^{(i)}-Y_t)+(Y_t-\wt Q_t),
\end{align*}
then expand according to \Cref{lem:diaglem}. This gives that $\wt B_j=\wh B_j+\cZ_j+\cE_j$, where
\begin{align}\label{e:zhankai1}
\wh B_j&=\frac{\fc} {(d-1)^{\ell+1}}\sum_{\al\in\sfA_i}(G^{(b_\alpha)}_{c_\alpha c_\alpha}-Q_t)+\frac{\fc} {(d-1)^{\ell+1}}\sum_{\al\neq \beta\in\sfA_i}G^{(b_\alpha b_\beta)}_{c_\alpha c_\beta}+\cU_j.
\end{align}
Here $\cU_j$ is an $\OO(1)$-weighted sum of terms of the form $(d-1)^{3(h-1)\ell}R_h$ with $h\geq 2$; 
$\cZ_j, \cE_j$ are $\cZ, \cE+( (Y_t-Q_t) +
    (Q_t-\wt Q_t))$ in the first statement of \Cref{lem:diaglem}.

For $B_j\in \{G^\circ_{ss'}\}_{s,s'\in \{i,o\}}$, we have a similar expansion $\wt B_j=\wh B_j+\cZ_j+\cE_j$, where
\begin{align}\begin{split}\label{e:zhankai2}
   \wh B_j
    &=\frac{1}{(d-1)^\ell}\sum_{\al\in \qq{\mu}}\fc_1(\bm1(\al \in \sfA_i))(G^{(b_\alpha)}_{c_\alpha c_\alpha}-Q_t)\\
    &+\frac{1}{(d-1)^\ell}\sum_{\al\neq \beta\in\qq{\mu}}\fc_2(\bm1(\al \in \sfA_i), \bm1(\beta\in \sfA_i)) G^{(b_\alpha b_\beta)}_{c_\alpha c_\beta}
  +\cU_j.
\end{split}\end{align}
Here $\cU_j$ is an $\OO(1)$-weighted sum of terms of the form $(Q_t-\msc)$ or $(d-1)^{3(h-1)\ell}R_h$ with $h\geq 2$; $\cZ_j, \cE_j$ are $\cZ, \cE$ in the second statement of \Cref{lem:diaglem}.

\item \label{i3}
For $B_j\in \{G_{oc}^{(ib)}, G_{ic}^{(b)}\}_{(i,o)\neq (b,c)\in \cC^\circ}$, by \Cref{lem:offdiagswitch}, we have $\wt B_j=\wh B_j+\cZ_j+\cE_{j}$, where
\begin{align}\label{e:zhankai3}
  \wh B_j=(d-1)^{-\ell/2}\sum_{\al\in \qq{\mu}}\fc_1(\bm1(\al\in \sfA_i)) G_{c_\al c}^{(b_\al b)}+\cU_j.
\end{align}
For $B_j\in \{ G_{ib}, G_{ob}, G_{ob}^{(i)}\}_{(i,o)\neq (b,c)\in \cC^\circ}$, we have a similar expansion $\wt B_j=\wh B_j+\cZ_j+\cE_{j}$, where
\begin{align}\label{e:zhankai4}
  \wh B_j=(d-1)^{-\ell/2}\sum_{\al\in \qq{\mu}}\fc_1(\bm1(\al \in \sfA_i)) G_{c_\al b}^{(b_\al )}+\cU_j.
\end{align}
In both cases, $\cU_j$ is an $\OO(1)$-weighted sum of terms of the form $(d-1)^{3(h-1)\ell}R_h$ with $h\geq 2$; $\cZ_j, \cE_j$ are $\cZ, \cE$ in the first and second statements of \Cref{lem:offdiagswitch}.

\item \label{i4}
For $B_j\in \{ G_{cc}^{(b)}, G_{cc'}^{(bb')}, G_{bc'}^{(b')}, G_{bb'}\}_{(b,c)\neq  (b',c')\in \cC^\circ}$, by \Cref{lem:offdiagswitch}, we have  $\widetilde B_j=\widehat B_j+\cE_j$, where 
 $   \widehat B_j=\cU_j=B_j$,
and $\cE_j$ is $\cE$ in the third statements of \Cref{lem:offdiagswitch}.

\item \label{i5}
For $B_j\in\{ G^\circ_{sw}\}_{s\in \cK, w\in \cK\setminus\{i,o\}}$, by the third statement in \Cref{lem:generalQlemma}, we have $\wt B_j=\widehat B_j+\cE_j $, where 
\begin{align}\begin{split}\label{e:tG-Gexp1c}
\widehat B_j&=\cU_j, \quad s\in \cK\setminus\{i,o\},\\
\widehat B_j&= \frac{\sum_{\sfJ\in\{b,c\}}\sum_{\al\in \qq{\mu}} \fc(\sfJ, \bm1(\al\in \sfA_i))  G^{\circ}_{\sfJ_\al w}}{(d-1)^{\ell/2}}+\cU_j,\quad  s\in \{i,o\}.
\end{split}\end{align}
Here  $\cU_j$ is an $\OO(1)$-weighted sum of terms of the form $(d-1)^{3(h-1)\ell}R_h$ with $h\geq 1$; and $|\cE_j|\lesssim N^{-2}$.

\item\label{i6} For 
$B_j\in\{(\Av G^{\circ})_{o'w} \}_{(i',o')\in \cC\setminus \cC^\circ, w\in \cK\setminus\{i,o\}}$, by \Cref{lem:generalQlemma}, we have $\wt B_j=\widehat B_j+\cE_j $, where
$
    \widehat B_j=\cU_j,
$
and  $\cU_j$ is an $\OO(1)$-weighted sum of terms of the form $(d-1)^{3(h-1)\ell}R_h$ with $h\geq 1$;  and $|\cE_j|\lesssim N^{-2}$.

\item \label{i7}
For $B_j\in \{(Q_t-\msc(z_t)), t(m_t-\msc(z_t))\}$, we have by the fourth statement in \Cref{c:Qmchange},
$\wt B_j=\widehat B_j+\cE_j$, where 
$
\widehat B_j=\cU_j=B_j
$,
and $|\cE_j|\lesssim (d-1)^{6\ell}N^{\fo}\Phi$.

\item \label{i8} For $\cW_j$, by \Cref{p:tS-Sdiff} we have 
$\wt \cW_j=\widehat \cW_j+\widehat\cE_j$, where  $|\widehat\cE_j|\lesssim ((d-1)^{8\ell}\Phi)^2$ and $\widehat \cW_j$ are as given in \Cref{p:tS-Sdiff}. 
\end{enumerate}

Analogous to \Cref{l:fyibu},  we have the following lemma, which  states that we can replace $(\wt G_{oo}^{(i)}-Y_t)\prod_{j=1}^r \wt B_j\prod_{j=1}^p\wt \cW_j$ in \eqref{e:sreplace0} with $\prod_{j=0}^r \widehat B_j\prod_{j=1}^p\widehat \cW_j$, with the overall error from this substitution being negligible. The proof is deferred to \Cref{sec:iterationproofs}.
\begin{lemma}\label{l:yibu}
Adopt the notation and assumptions in \Cref{p:general}, we can rewrite \eqref{e:sreplace0} as
\begin{align}
\label{e:hatRi}
  \eqref{e:sreplace0}=  \frac{(d-1)^{(6r+3\sum_{j=1}^{p-1} r_{j1})\ell}}{Z_{\cF^+}}\sum_{\bfi^+}\bE\left[I(\cF^+ ,\cG)\bm1(\cG\in \Omega) \prod_{j=0}^r \widehat B_j\prod_{j=1}^p\widehat \cW_j\right]+\OO(N^{-\fb/2}\bE[ \Psi_p]).
\end{align}
\end{lemma}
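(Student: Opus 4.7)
The plan is to mirror the argument for \Cref{l:fyibu} but account for the general admissible function $R_\bfi$. I will write $\wt G^{(i)}_{oo} - Y_t = \wh B_0 + \cZ_0 + \cE_0$ as in item \eqref{i1}, $\wt B_j = \wh B_j + \cZ_j + \cE_j$ for $j = 1, \dots, r$ via items \eqref{i2}--\eqref{i7}, and $\wt\cW_j = \wh\cW_j + \wh\cE_j$ for $j = 1, \dots, p-1$ via item \eqref{i8}. Then the difference
\begin{equation*}
(\wt G_{oo}^{(i)} - Y_t)\prod_{j=1}^{r}\wt B_j \prod_{j=1}^{p-1}\wt\cW_j \;-\; \prod_{j=0}^{r}\wh B_j \prod_{j=1}^{p-1}\wh\cW_j
\end{equation*}
telescopes into a sum of terms, each containing at least one factor from $\{\cZ_j\}_{j=0}^{r}$, $\{\cE_j\}_{j=0}^{r}$, or $\{\wh\cE_j\}_{j=1}^{p-1}$, with all remaining factors being either $\wt$-quantities or $\wh$-quantities. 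Since every such leftover factor obeys the same bounds as the corresponding $\wh$-factor (by \Cref{thm:prevthm}, \Cref{c:expectationbound}, \Cref{p:WtGbound} and \eqref{e:tilde_bound}), I can then apply \Cref{l:adm_term_bound} and \Cref{l:Sbound} to control the product of the leftover factors by the appropriate admissible bound $N^{-(r-1)\fb}\Pi(z,\bmr)$ (up to logarithmic losses).

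I would then bound each of the three error types separately. The $\cE_j$ terms from items \eqref{i1}--\eqref{i4} are weighted sums of $|\wt G^{(\bT)}_{c_\al c_\beta} - G^{(b_\al b_\beta)}_{c_\al c_\beta}|$ and $|\wt G^{(\bT b)}_{c_\al c} - G^{(b_\al b)}_{c_\al c}|$, whose averages over $\bfi^+$ are handled by \Cref{lem:task2} with $\Pi = \Pi(z,\bmr)$, giving $\OO((d-1)^\ell N^\fo \Phi\cdot \Pi(z,\bmr))$, which is $\OO(N^{-\fb/2}\bE[\Psi_p])$ by \eqref{e:bPi1}. The $\cE_j$ for items \eqref{i5}--\eqref{i6} are $\OO(N^{-2})$ and those from \eqref{i7} are $\OO((d-1)^{6\ell}N^{\fo}\Phi)$, which fit comfortably inside $N^{-\fb/2}\bE[\Psi_p]$. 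For the $\wh\cE_j$ from item \eqref{i8}, the bound $|\wh\cE_j| \lesssim ((d-1)^{8\ell}\Phi)^2$ combined with $\Phi\lesssim N^{-2\fc}$ (from \eqref{e:defPhi}, since $z\in \bf D$) and $\ell \ll \log_{d-1}N$ means that after multiplication by the remaining admissible product, the total is $\OO(N^{-\fb/2}\bE[\Psi_p])$.

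The subtler contributions are the $\cZ_j$ terms, each of which is an $\OO(1)$-weighted sum (with combinatorial factor at most $(d-1)^{3\ell(h+h')}$) of products containing at least one factor of the form in \eqref{e:W-term} or \eqref{e:Werror3}. Naively bounding $|\sqrt t Z_{xy}|, |\sqrt t (Z\wt G^{(\bT)})_{xy}|$ using \Cref{p:WtGbound} only gives $N^\fo\sqrt{t\Phi}$ per factor, which is insufficient. Instead, I will apply the Gaussian integration-by-parts estimates in \Cref{p:WRbound}: after recognizing the leftover product as lying in (or bounded by) an admissible class $\Adm(h,\bmh,\cF^+,\cG)$, the $Z$-expectation of $\cZ_j \cdot (\text{leftover})$ is bounded by $N^{-\fb}\bE_Z[\Psi_p]$. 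Summing the $\OO((d-1)^{6\ell p})$ terms and averaging over $\bfi^+$ then yields $\OO(N^{-\fb/2}\bE[\Psi_p])$, as required.

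The main obstacle will be verifying that after each telescoping substitution, the product of the remaining factors fits into the admissible framework of \Cref{def:pgen} (possibly with enlarged parameters $(h,\bmh)$), so that \Cref{p:WRbound} and the $\Pi(z,\bmr)$ bounds apply verbatim. This is where the large $(d-1)^{6\ell}$ prefactors absorbed into the admissible weighting are essential: each $\wh B_j$ or $\wh \cW_j$ in \eqref{e:zhankai1}--\eqref{e:yibufen} is itself an $\OO(1)$-weighted sum of admissible factors, and combining them produces an admissible function of the right form only after carefully tracking how the indices $b_\al, c_\al, l_\al, a_\al$ interact with the special edges and the existing forest $\cF$. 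Aside from this bookkeeping, the argument is a direct enhancement of the one used for \Cref{l:fyibu}.
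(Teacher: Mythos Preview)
Your approach is essentially the same as the paper's, which explicitly states that the proof of this lemma is identical to that of \Cref{l:fyibu} and omits the details. Your telescoping decomposition, the use of \Cref{lem:task2} for the $\cE_j$ terms, and the handling of the $\cZ_j$ terms via \Cref{p:WRbound} (naively bounding when two or more $Z$-factors are present, integrating by parts when only one is) are all exactly right.

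There is, however, one bookkeeping slip in your second paragraph worth flagging. You claim the $\cE_j$ contribution is $\OO(N^{-\fb/2}\bE[\Psi_p])$ ``by \eqref{e:bPi1}'', but \eqref{e:bPi1} alone only yields $\Phi\,\Pi(z,\bmr)\lesssim\Psi_p$, which after the $(d-1)^{\OO(\ell)}$ prefactors gives $(d-1)^{\OO(\ell)}\bE[\Psi_p]$, not $N^{-\fb/2}\bE[\Psi_p]$. This is precisely why in \Cref{l:fyibu} (where $r=0$ and all $r_{j0}=1$) the error is $\OO((d-1)^{2\ell}\bE[\Psi_p])$ rather than $\OO(N^{-\fb/2}\bE[\Psi_p])$. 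The stronger bound here comes from the hypotheses of \Cref{p:general}: in each of its three cases either $r\geq 1$, so the remaining $\Delta B_j$ factors contribute the additional $N^{-r\fb/2}$ you correctly identified in your first paragraph but did not carry into the second, or else $\bmr$ satisfies \eqref{e:1_expand} or \eqref{e:feasible}, in which case you must invoke \eqref{e:bPi2} rather than \eqref{e:bPi1} to extract the extra $N^{-3\fb/4}$. Once these factors are tracked consistently through the $\cE_j$ estimate, the argument goes through exactly as you describe.
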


We can further decompose $(d-1)^{(6r+3\sum_{j=1}^{p-1} r_{j1})\ell}\prod_{j=0}^{r} \widehat B_j\prod_{j=1}^{p-1}\widehat \cW_j^{p-1}$ in \eqref{e:hatRi}. The decomposition of $(d-1)^{3\sum_{j=1}^{p-1} r_{j1}\ell}\prod_{j=1}^{p-1}\widehat \cW_j^{p-1}$ is given in \eqref{e:decomposeWj}. The discussions in \Cref{i1}--\Cref{i7} provide the decomposition of $(d-1)^{6r\ell}\prod_{j=0}^{r} \widehat B_j$ as an $\OO(1)$-weighted sum of terms of the following form:
\begin{align}\begin{split}\label{e:oneterm0}
&\frac{(d-1)^{6h''\ell}}{(d-1)^{(k_1 +(k_2+k_3)/2)\ell}} \sum_{\bm\al, \bm\beta, \bm\gamma}  \prod_{m=1}^{k_1}  A_{\al_  m}  \prod_{  m=1}^{k_2}  B_{\beta_m} 
\prod_{m=1}^{k_3/2}  C_{\gamma_{2m-1} \gamma_{2m}}  R_{h''+1-k_1-k_2-k_3/2}.
\end{split}\end{align}
Here the summation for $\bm\al, \bm\beta, \bm\gamma$ runs through each $\al_m, \beta_m, \gamma_m$ in one of the sets $\qq{\mu}$, $\sfA_i$ or $\qq{\mu}\setminus \sfA_i$.
The factors in \eqref{e:oneterm0} are defined as follows:
\begin{align}\begin{split}\label{e:defABC}
  &A_\al=   ( G_{c_\al c_\al}^{(b_\al)}-Q_t), \\
  &B_\beta\in \left\{ G_{c_\beta c}^{(b_\beta b)},  G_{c_\beta b}^{(b_\beta)},   G^{\circ}_{b_\beta s}, G^{\circ}_{c_\beta s}\right\},\quad (b,c)\in \cC^\circ,\quad s\in \cK\setminus\{i,o\},\\
   &C_{\gamma \gamma'}=  G_{c_\gamma c_{\gamma'}}^{(b_\gamma b_{\gamma'})}.
\end{split}\end{align}
 Here $A_\al$ originates from $G_{c_\al c_\al}^{(b_\al)}-Q_t$ in \eqref{e:fcase1W0}, \eqref{e:zhankai1}, \eqref{e:zhankai2}. $B_\beta$ comes from $G_{c_\al c}^{(b_\al b)}, G_{c_\al c}^{(b_\al)},  G^{\circ}_{\sfJ_\al s'}$ in \eqref{e:zhankai3}, \eqref{e:zhankai4},\eqref{e:tG-Gexp1c}.  $C_{\gamma \gamma'}$ are from $G_{c_\al  c_\al}^{(b_\al b_\beta)}$ in \eqref{e:fcase1W0}, \eqref{e:zhankai1}, \eqref{e:zhankai2}. $R_{h''+1-k_1-k_2-k_3/2}$ collects all $R_h$ factors from $\cU_j$ as in \Cref{i1}--\Cref{i7}, with a total count of ${h''+1-k_1-k_2-k_3/2}$. Thus, the summand in \eqref{e:oneterm0} consists of $h''+1\geq r+1$ factors in total. Finally the coefficient $(d-1)^{6h'\ell}$ arises from the following crucial observation: in the replacements outlined in \Cref{i1}--\Cref{i7}, each term $(d-1)^{6\ell}B_j$ is replaced by one of the terms $(d-1)^{6\ell}A_\al, (d-1)^{6\ell}A_\beta, (d-1)^{6\ell}C_{\gamma\gamma'}$ as in \eqref{e:defABC}, or a factor $R_h$ (as in \eqref{e:list_factor}) for $h\geq 1$ with coefficient at most $(d-1)^{6h\ell}$.

The decomposition of $(d-1)^{6r\ell}\prod_{j=0}^{r} \widehat B_j$ in \eqref{e:oneterm0}, the decompositon of $\prod_{j=1}^{p-1}(d-1)^{3r_{j1}\ell} \widehat \cW_j$ in \eqref{e:decomposeWj}, and the explicit expressions of $\cW_j'$ from \eqref{e:yibufen} and \eqref{e:defD} together mean that \eqref{e:hatRi} can be written as an $\OO(1)$-weighted sum of terms of the following form:
For $\widehat h\geq r$, and $\bmh=[h_{jk}]_{1\leq j\leq p-1, 0\leq k\leq 2}$ satisfying \eqref{e:defr} and for each $1\leq j\leq p-1$ either $h_{j0}>r_{j0}$ or $h_{j0}=r_{j0}$ and $h_{j1}+h_{j2}\geq r_{j1}+r_{j2}, h_{j2}\geq r_{j2}$, 
\begin{align}\begin{split}\label{e:oneterm}
&\frac{1}{(d-1)^{(k_1 +(k_2+k_3+k_4)/2)\ell}} \sum_{\bm\al, \bm\beta, \bm\gamma, \bm\theta}   \sum_{\bfi^+} \frac{(d-1)^{(6\widehat h+3\sum_{j=1}^{p-1} h_{j1})\ell}}{Z_{\cF^+}}\times \bE\left[I(\cF^+,\cG)\bm1(\cG\in \Omega) \widehat R_{\bfi^+}\right],\\
&\widehat R_{\bfi^+}
=R_{\widehat h+1}\prod_{j=1}^{p-1}\cW_j'
=\frac{1}{(Nd)^{p-1}}\sum_{u_1\sim v_1,\cdots, u_{p-1}\sim v_{p-1}} \left(\prod_{m=1}^{k_1}  A_{\al_  m}  \prod_{m=1}^{k_2}  B_{\beta_m} 
\prod_{m=1}^{k_3/2}  C_{\gamma_{2m-1} \gamma_{2m}} \prod_{m=1}^{k_4}  D_{\theta_{m}}  E(\bm\chi)\right).
\end{split}\end{align}
Here $\widehat h=h'+h''$ is the sum of  $h'$ from 
\eqref{e:decomposeWj} and $h''$ from \eqref{e:oneterm0},  $R_{\widehat h+1}$ is the product of $R_{h'}$ from \eqref{e:decomposeWj} and the summand in \eqref{e:oneterm0}, 
$\cW_j'$ and $\bmh$ are from \eqref{e:decomposeWj}. 
In this way $\widehat R_{\bfi^+}\in \Adm(\widehat h+1,\bmh,\cF^+,\cG)$.
The summation for $\bm\al, \bm\beta, \bm\gamma,\bm\theta$ is over each $\al_m, \beta_m, \gamma_m,\theta_m$ in one of the sets $\qq{\mu}$, $\sfA_i$ or $\qq{\mu}\setminus \sfA_i$;
 and the factors in \eqref{e:oneterm} are given by \eqref{e:defABC}, 
\begin{align*}
  D_\theta\in \left\{   G^{\circ}_{b_\theta w}, G^{\circ}_{c_\theta w},L_{b_\theta w},L_{c_\theta w}\right\}, \quad w\in \cV,
\end{align*}
are from $D_{\theta_m}$ in \eqref{e:yibufen} and \eqref{e:defD}; and $E(\bm\chi)$ is a product of the remaining terms in $R_{\widehat h+1}\prod_{j=1}^{p-1}\cW_j'$; $E(\bm\chi)$ depends on $\{b_\chi, c_\chi\}_{\chi \in \bm\chi}$ for some ${\bm\chi}\subset\qq{\mu}$.

The summation over $\bm\al, \bm\beta, \bm\gamma,\bm\theta$ in \eqref{e:oneterm} results in $(d-1)^{(k_1+k_2+k_3+k_4)\ell}$ terms in the form \eqref{e:case3_copy}. Note that the number of these terms is much larger than the normalization factor $(d-1)^{(k_1 +(k_2+k_3+k_4)/2)\ell}$ in the denominator. However, as we will show in \Cref{l:erbu}, most of these terms can be shown almost immediately to be negligible. 
Before stating \Cref{l:erbu}, we need to introduce some notation. 

We view $\bm\al, \bm\beta, \bm\gamma,\bm\theta, \bm\chi$ as words, which are sequences of indices in $\qq{\mu}$. In particular $(\bm\al, \bm\beta, \bm\gamma,\bm\theta)$ is a word with length $k_1+k_2+k_3+k_4$.
Given $\bm\chi$, we partition words $ \bm\omega\in \qq{\mu}^{k_1+k_2+k_3+k_4}$ into equivalence classes.  Two words $\bm\omega\sim \bm\omega'$ are equivalent if there is a bijection on $\qq{\mu}$ which preserves $\bm\chi$ and maps $\bm\omega$ to $\bm\omega'$. We remark that the expectation in \eqref{e:oneterm} depends only on the equivalence class of $(\bm\al, \bm\beta, \bm\gamma,\bm\theta)$. 

For any $f_0, f_1\geq 0$, let $\mathsf W(f_0, f_1)$ denote a set of representatives for equivalence classes of $\qq{\mu}^{k_1+k_2+k_3+k_4}$. Here, for a word $\bm\omega\in \mathsf W(f_0, f_1)$, $f_0$ is the number of distinct indices (ignoring multiplicity)  that do not appear in 
$\bm\chi$, and $f_1$ is the number of these indices appearing exactly once in $\bm\omega$. The length of $\bm\omega$ is $k_1+k_2+k_3+k_4$, and $f_0-f_1$ of these distinct indices appear at least twice in $\bm\omega$. This implies
\begin{align}\label{e:f0f1}
    k_1+k_2+k_3+k_4 -f_1\geq  2(f_0-f_1)\Rightarrow k_1+k_2+k_3+k_4\geq 2f_0-f_1.
\end{align} 

The expectation in \eqref{e:oneterm} depends only on the equivalence class of $(\bm\al, \bm\beta, \bm\gamma,\bm\theta)$. Moreover, for fixed $f_0$ and $f_1$, the summation of $(\bm\al, \bm\beta,\bm\gamma,\bm\theta)\sim \bm\omega$, contains $\OO((d-1)^{f_0\ell})$ terms. Thus for the summation over $(\bm\al, \bm\beta, \bm\gamma,\bm\theta)$ in \eqref{e:oneterm}, we can first sum over the equivalence classes. For this, we will write our sum over
\begin{align}\label{e:partition2}
   \sum_{\bfi^+} \frac{(d-1)^{(6\widehat h+3\sum_{j=1}^{p-1} h_{j1})\ell}}{Z_{\cF^+}}\bE\left[I(\cF^+,\cG)\bm1(\cG\in \Omega) \widehat R_{\bfi^+}(\bm\al, \bm\beta, \bm\gamma, \bm\theta,\bm\chi)\right].
\end{align}
We then have
\begin{align}\begin{split} \label{e:partition}
    \eqref{e:oneterm}&=\frac{1}{(d-1)^{(k_1 +(k_2+k_3+k_4)/2)\ell}} \sum_{\bm\al, \bm\beta, \bm\gamma, \bm\theta}   \eqref{e:partition2}\\
    &=\frac{1}{(d-1)^{(k_1 +(k_2+k_3+k_4)/2)\ell}} \sum_{f_0,f_1}\sum_{\bm\omega\in \mathsf W(f_0,f_1)}\sum_{(\bm\al, \bm\beta, \bm\gamma, \bm\theta)\sim \bm\omega}   \eqref{e:partition2},\\
    &=\sum_{f_0,f_1}\sum_{(\bm\al, \bm\beta, \bm\gamma, \bm\theta)\in \mathsf W(f_0,f_1)}\frac{\OO((d-1)^{f_0\ell})}{(d-1)^{(k_1 +(k_2+k_3+k_4)/2)\ell}} \times   \eqref{e:partition2}.
\end{split}\end{align}

The following lemma states that for given $\bm\al, \bm\beta, \bm\gamma,\bm\theta$, the summands in the last term of \eqref{e:partition} is either negligible, or it can be reduced to a term as in \eqref{e:oneterm2}, where each index in $\bm\al, \bm\beta, \bm\gamma,\bm\theta$ appears at least twice in $R_{\bfi^+}'$.

Let $f_0'$ denote the number of distinct values in $\bm\theta$. Then $f_0-f_0'\leq k_1+k_2+k_3$ and \eqref{e:S'bound2} implies
 \begin{align}\label{e:S'bound3_main}
     \frac{(d-1)^{f_0'\ell+3\ell\sum_{j=1}^{p-1}h_{j1}}}{(d-1)^{k_4 \ell/2}}\prod_{j=1}^{p-1}|\cW_j'|\lesssim (|Q_t-Y_t|+(d-1)^{8\ell}\Upsilon \Phi)^{p-1}.
 \end{align}
By using \eqref{e:S'bound3_main} as input, the proof of  \Cref{l:erbu} is parallel to those of the statements \eqref{e:huanG-Q}, \eqref{e:huanG} and \eqref{e:huanGS}. We defer its proof to \Cref{s:tree_computation}.

\begin{lemma}\label{l:erbu}
Fix $0\leq f_1\leq f_0$ satisfying \eqref{e:f0f1}, and a word $(\bm\al, \bm\beta, \bm\gamma, \bm\theta)\in \mathsf W(f_0, f_1)$.
Let $\mathsf I_{\rm single}\subset \qq{\mu}$ with $|\mathsf I_{\rm single}|=f_1$, denote the set of indices that appear only once among 
$(\bm\al, \bm\beta, \bm\gamma,\bm\theta)$, and do not appear in $\bm\chi$. Then 
\begin{align}\begin{split}\label{e:oneterm1}
&\frac{(d-1)^{f_0\ell}}{(d-1)^{(k_1 +(k_2+k_3+k_4)/2)\ell}}   \sum_{\bfi^+} \frac{(d-1)^{(6\widehat h+3\sum_{j=1}^{p-1} h_{j1})\ell}}{Z_{\cF^+}}\times \bE\left[I(\cF^+,\cG)\bm1(\cG\in \Omega) \widehat R_{\bfi^+}(\bm\al, \bm\beta, \bm\gamma, \bm\theta,\bm\chi)\right]
\end{split}\end{align}
satisfies
\begin{enumerate}
    \item If there exists $\al_m\in \mathsf I_{\rm single}$, or $\theta_m\in  \mathsf I_{\rm single}$, or $B_{\beta_m}\in \{ G^{\circ}_{b_{\beta_m} s},  G^{\circ}_{c_{\beta_m} s}\}_{s\in \cK\setminus\{i,o\}}$ with $\beta_m\in  \mathsf I_{\rm single}$, then $\eqref{e:oneterm1}=\OO(N^{-\fb/4}\bE[\Psi_p])$.
\item If none of the above cases apply, then 
\begin{align}\begin{split}\label{e:oneterm2}
\eqref{e:oneterm1}=\frac{(d-1)^{(6(\widehat h+f_1)+3\sum_{j=1}^{p-1} h_{j1})\ell}}{(d-1)^{\fq^+\ell/2}Z_{\cF^+}}
\sum_{\bfi^+}  \bE\left[I(\cF^+,\cG)\bm1(\cG\in \Omega)  (d-1)^{-f_1/2}R'_{\bfi^+}\right]+\OO(N^{-\fb/4}\bE[\Psi_p]),
\end{split}\end{align}
where $\fq^+=k_1\ell/2+(k_1 +k_2+k_3+k_4 -2 f_0+12f_1)\geq 0$, and $R'_{\bfi^+}\in \Adm(\widehat h+f_1+1, \bmh,\cF^+,\cG)$ is obtained from $\widehat R_{\bfi^+}$ by making the following substitutions: 
\begin{align}\begin{split}\label{e:final_replace}
&G_{c_{\beta_m} c}^{(b_{\beta_m} b)}\rightarrow G_{b_{\beta_m} c}^{(b)}(G_{c_{\beta_m} c_{\beta_m}}^{(b_{\beta_m})}-Q_t),
\quad 
G_{c_{\beta_m} b}^{(b_{\beta_m})}\rightarrow G_{b_{\beta_m} b}(G_{c_{\beta_m} c_{\beta_m}}^{(b_{\beta_m})}-Q_t), \quad \beta_m\in \mathsf I_{\rm single},
\\
 &   G_{c_{\gamma_{2m-1}} c_{\gamma_{2m}}}^{(b_{\gamma_{2m-1}} b_{\gamma_{2m}})}
    \rightarrow 
 G_{b_{\gamma_{2m-1}} c_{\gamma_{2m}}}^{ (b_{\gamma_{2m}})}(G_{c_{\gamma_{2m-1}} c_{\gamma_{2m-1}}}^{(b_{\gamma_{2m-1}})}-Q_t) , 
\quad \gamma_{2m-1}\in \mathsf I_{\rm single},\quad \gamma_{2m}\not\in \mathsf I_{\rm single},\\
 &   G_{c_{\gamma_{2m-1}} c_{\gamma_{2m}}}^{(b_{\gamma_{2m-1}} b_{\gamma_{2m}})}
    \rightarrow 
 G_{c_{\gamma_{2m-1}} b_{\gamma_{2m}}}^{ (b_{\gamma_{2m-1}})}(G_{c_{\gamma_{2m}} c_{\gamma_{2m}}}^{(b_{\gamma_{2m}})}-Q_t) , 
\quad \gamma_{2m-1}\notin \mathsf I_{\rm single},\quad \gamma_{2m}\in \mathsf I_{\rm single},\\
&G_{c_{\gamma_{2m-1}} c_{\gamma_{2m}}}^{(b_{\gamma_{2m-1}} b_{\gamma_{2m}})}
\rightarrow 
G_{b_{\gamma_{2m-1}} b_{{\gamma_{2m}}}}(G_{c_{\gamma_{2m-1}} c_{\gamma_{2m-1}}}^{(b_{\gamma_{2m-1}})}-Q_t) (G_{c_{{\gamma_{2m}}} c_{{\gamma_{2m}}}}^{(b_{{\gamma_{2m}}})}-Q_t), 
\quad \gamma_{2m-1}, \gamma_{2m}\in \mathsf I_{\rm single}.
\end{split}\end{align}
\end{enumerate}

\end{lemma}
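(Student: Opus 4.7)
The plan is to proceed by induction on the indices in $\mathsf I_{\rm single}$, handling one single index at a time via the averaging identities of Proposition~\ref{p:upbb}, in the same spirit as the proofs of \eqref{e:huanG-Q}, \eqref{e:huanG}, and \eqref{e:huanGS} already carried out for the first iteration step. Fix a single index $\alpha\in\mathsf I_{\rm single}$, corresponding to some edge $(b_\alpha,c_\alpha)\in \cC^+$. Define the reduced forest $\widehat\cF=\cF^+\setminus\{(b_\alpha,c_\alpha)\}$, so that by \Cref{def:indicator} we may decompose $I(\cF^+,\cG)=I(\widehat\cF,\cG)\,A_{b_\alpha c_\alpha}I_{c_\alpha}$ with $I_{c_\alpha}$ an indicator of the form in \Cref{def:Ic}. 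The first reduction is to express $\widehat R_{\bfi^+}$ as a product in which the dependence on $(b_\alpha,c_\alpha)$ is isolated in a single factor, using the case analysis of whether $\alpha$ appears in $\bm\al$, in $\bm\theta$, or in one of the $B_{\beta_m}, C_{\gamma_{2m-1}\gamma_{2m}}$ factors.

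For part (1), the argument is that the problematic single-index factors are, after averaging over $b_\alpha\sim c_\alpha$ with the indicator $I_{c_\alpha}$, small to order $N^{-1+3\fc/2}$. Concretely, if $\alpha\in\bm\al$, one directly invokes the first identity in \eqref{e:single_index_sum} to obtain a factor of $N^{-1+3\fc/2}$ times the remaining sum, which combined with the bound on admissible terms from \Cref{l:adm_term_bound}, together with the estimate \eqref{e:S'bound3_main} for $\prod_j\cW_j'$ and the fact that $(d-1)^{f_0\ell}/(d-1)^{(k_1+(k_2+k_3+k_4)/2)\ell}\lesssim (d-1)^{f_1\ell/2}(d-1)^{2p\ell}$ (via \eqref{e:f0f1}), yields $\OO(N^{-\fb/4}\bE[\Psi_p])$. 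If $\theta_m\in\mathsf I_{\rm single}$ with $D_{\theta_m}\in\{G^\circ_{b_{\theta_m}w},G^\circ_{c_{\theta_m}w},L_{b_{\theta_m}w},L_{c_{\theta_m}w}\}$, we run the $s\in\{i,o\}$ analogue already used in the Case~2 argument of \eqref{e:huanGS}, bounding the sum of $G^\circ_{c_{\theta_m}w}$ over $b_{\theta_m}\sim c_{\theta_m}$ by $N^\fo\sqrt{\Phi/N}$ from a Cauchy–Schwarz together with \eqref{e:naive-Ward}, and using \eqref{e:Gccbb_youyige} for the $L$ case; either way we gain a factor $N^{-\fb}$ beyond the admissible bound. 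For the case $B_{\beta_m}\in\{G^\circ_{b_{\beta_m}s},G^\circ_{c_{\beta_m}s}\}$ with $\beta_m\in\mathsf I_{\rm single}$, the identity $\frac{1}{Nd}\sum_{b\sim c}G^\circ_{cs}I_c=\OO(N^{-1+3\fc/2})$ (after writing $G^\circ=G-L$ and using \eqref{e:single_index_sum} together with \eqref{e:qiuhe1}) produces the required smallness.

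For part (2), in the remaining case every single index $\alpha\in\mathsf I_{\rm single}$ appears either inside a factor $B_{\beta_m}\in\{G^{(b_{\beta_m}b)}_{c_{\beta_m}c},G^{(b_{\beta_m})}_{c_{\beta_m}b}\}$ or inside a $C_{\gamma\gamma'}$ factor, i.e.\ precisely the configurations in \eqref{e:final_replace}. We perform one simple-switching expansion per single index via the second identity in \eqref{e:Gccbb} (for the $C$ terms) and the analogous expansion for $B$ terms derived from \eqref{e:Gccbb1} and \eqref{e:Gccbb_youyige}: averaging over $b_\alpha\sim c_\alpha$ with $I_{c_\alpha}$ and discarding error terms that are bounded by $N^{-\fb/2}\Phi\times(\text{rest})$ using \eqref{e:refined_bound} and \eqref{e:refined_bound2}, we arrive at precisely the substitution prescribed by \eqref{e:final_replace}. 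Each such substitution converts a factor of size $\OO(N^{-\fb})$ into the product of a $G^{(\bX)}_{b_{\star} \cdot}$ or $G_{b_\star b_\star'}$ and a diagonal term $(G^{(b_\star)}_{c_\star c_\star}-Q_t)$, raising the admissible order by one and producing a combinatorial factor $(d-1)^{-1/2}$ per single index, which after iteration over the $f_1$ single indices gives the overall $(d-1)^{-f_1\ell/2}$. The accounting of $\fq^+=k_1+k_2+k_3+k_4-2f_0+12f_1$ follows by tracking the prefactors: the $(d-1)^{f_0\ell}/(d-1)^{(k_1+(k_2+k_3+k_4)/2)\ell}$ coming from the summation over the equivalence class (see \eqref{e:partition}) must be matched against the target normalization $(d-1)^{6(\widehat h+f_1)\ell}$ in \eqref{e:oneterm2}, and the bookkeeping is exactly that already carried out in \eqref{e:huanG}.

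The main obstacle will be the combinatorial accounting when several single indices interact through shared $C_{\gamma_{2m-1}\gamma_{2m}}$ factors, since in that regime we must apply \eqref{e:Gccbb} multiple times and verify that after all substitutions the resulting object still lies in $\Adm(\widehat h+f_1+1,\bmh,\cF^+,\cG)$, with the correct $\bmh$ inherited from the input. This is essentially a bookkeeping exercise that mirrors the already established proof of \eqref{e:huanG} for $f_1=2$, but must be iterated and combined with the part-(1) estimates for any remaining single indices that appear in problematic positions. Once this bookkeeping is done carefully, the two statements of the lemma follow by induction on $f_1$, with the base case $f_1=0$ being trivial (no substitution is needed).
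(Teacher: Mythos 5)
Your skeleton is the same as the paper's: isolate each index of $\mathsf I_{\rm single}$, decompose $I(\cF^+,\cG)=I(\widehat\cF,\cG)A_{b_\alpha c_\alpha}I_{c_\alpha}$, and run the averaging machinery of \Cref{p:upbb} exactly as in \eqref{e:huanG-Q}, \eqref{e:huanG} and \eqref{e:huanGS}; your part (2) (expand the lone $C_{\gamma\gamma'}$ and $B_\beta$ factors via \eqref{e:Gccbb} and its one-sided analogue, discard errors, obtain the substitutions \eqref{e:final_replace}) is the paper's argument, up to minor bookkeeping slips (your $\fq^+$ drops the $k_1\ell/2$ summand, and the per-index gain is $(d-1)^{-1/2}$ totalling $(d-1)^{-f_1/2}$, not $(d-1)^{-f_1\ell/2}$).

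The genuine gap is in your treatment of a single index $\theta_m$ sitting inside a special-edge factor $\cW_{j'}'$. You claim to bound $\sum_{b_{\theta_m}\sim c_{\theta_m}}G^\circ_{c_{\theta_m}w}$ by ``$N^\fo\sqrt{\Phi/N}$ from Cauchy--Schwarz together with \eqref{e:naive-Ward}''; Cauchy--Schwarz with the Ward bound only yields $N^{\fo}\sqrt{\Phi}$ for that edge-average (the Ward sum runs over the second index, not over the resampled edge), and more importantly this route does not close. The paper must split according to $h_{j'1}+h_{j'2}\geq 4$ versus $h_{j'1}+h_{j'2}=2$. In the first case one pulls out the exact cancellation $\frac{1}{Nd}\sum_{b_\theta\sim c_\theta}G^\circ_{c_\theta w}I_{c_\theta}=\OO(N^{-1+3\fc/2})$ from \eqref{e:single_index_sum} and the remaining (at least two) off-diagonal factors still produce a full factor $\Phi$. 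In the second case only one companion factor remains, whose average over the special edge is merely $\OO(N^\fo\sqrt{\Phi})$; the resulting bound $\Upsilon\, N^{-1+3\fc/2}\sqrt{\Phi}\,(|Q_t-Y_t|+(d-1)^{8\ell}\Upsilon\Phi)^{p-2}$ (times the surplus $(d-1)^{\ell/2}$ from the single index) exceeds $N^{-\fb/4}\Psi_p$ by roughly $N^{1/2}$ when $\Phi\asymp N^{2\fc-1}$, so neither your Cauchy--Schwarz bound nor the naive cancellation suffices. What is needed there is the spectral-norm estimate \eqref{e:single_index_term2}, which sums over the $(b_\theta,c_\theta)$ edge and the special edge $(u_{j'},v_{j'})$ simultaneously using $\|G\|_{\rm spec}\le\eta^{-1}$ and yields the extra $N^{3\fc/2}/(N^{3/2}\eta)$ gain; this is precisely the ``Case 2 argument of \eqref{e:huanGS}'' that you cite by name but replace, in substance, with an estimate that would fail. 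Without making this case distinction and actually invoking \eqref{e:single_index_term2}, part (1) of the lemma is not proved for single indices in $\bm\theta$.
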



We remark that in  \eqref{e:oneterm2}, if $\fq^+\geq 1$, we gain an additional factor of  $(d-1)^{-\ell/2}$. We do not obtain this extra factor only if $k_1=0$ and each index in $(\bm\al, \bm\beta, \bm\gamma, \bm\theta)$ appears exactly twice without appearing in $\bm\chi$.

\begin{proof}[Proof of \eqref{e:higher_case3} in \Cref{p:general}] 
Up to a negligible error, the expression \eqref{e:higher_case3} can be rewritten as an $\OO(1)$-weighted sum of terms in the form of  \eqref{e:oneterm1}. We also refer back to the more explicit expression given in \eqref{e:oneterm}. 
If the assumptions in the first statement in \Cref{l:erbu} hold, there is nothing to prove. So in the rest of the proof we can focus on the second case \eqref{e:oneterm2}.

There are several cases in which we can apply \eqref{e:oneterm2}, based on the decomposition of  $\widehat B_0$ (as in \eqref{e:fcase1W0}) using terms $\msc^{2\ell}(z_t)(d-1)^{-(\ell+1)}\sum_{\al\in \sfA_i} (G_{c_\al c_\al}^{(b_\al)}-Q_t)$, $\msc^{2\ell}(z_t)(d-1)^{-(\ell+1)}\sum_{\al\neq\beta\in \sfA_i} (G_{c_\al c_\beta}^{(b_\al b_\beta)}-Q_t)$ or a term in $\cU_0$. We treat each of these separately.

\begin{enumerate}

\item  Assume $\widehat R_{\bfi^+}$ in \eqref{e:oneterm} contains the factor $\msc^{2\ell}(z_t)(d-1)^{-(\ell+1)}\sum_{\al\in\sfA_i} (G_{c_\al c_\al}^{(b_\al)}-Q_t)$ from the decomposition of $\widehat  B_0$, then in \eqref{e:oneterm2}, $k_1\geq 1$ and $R'_{\bfi^+}$ contains a factor $(G_{c_\al c_\al}^{(b_\al)}-Q_t)$. Let $R'_{\bfi^+}= (G_{c_\al c_\al}^{(b_\al)}-Q_t)R_{\bfi^+}$, then $R_{\bfi^+}\in\Adm(\widehat h+f_1, \bmh, \cF^+,\cG)$.  We claim that replacing $Q_t$ with $Y_t$ yields a negligible error. To see this, we write 
    \begin{align}\begin{split}\label{e:finaleq}
       &\phantom{{}={}}\frac{(d-1)^{(6\widehat h+6f_1+3\sum_{j=1}^{p-1} h_{j1})\ell}}{(d-1)^{\fq^+\ell/2} Z_{\cF^+}}
       \sum_{\bfi^+}\bE\left[I(\cF^+,\cG)\bm1(\cG\in \Omega)  (G_{c_\al c_\al}^{(b_\al)}-Q_t)R_{\bfi^+}\right]\\
       &=\frac{(d-1)^{(6\widehat h+6f_1+3\sum_{j=1}^{p-1} h_{j1})\ell}}{(d-1)^{\fq^+\ell/2} Z_{\cF^+}}\sum_{\bfi^+}\bE\left[I(\cF^+,\cG)\bm1(\cG\in \Omega)  (G_{c_\al c_\al}^{(b_\al)}-Y_t)R_{\bfi^+}\right]\\
       &+
       \frac{(d-1)^{(6\widehat h+6f_1+3\sum_{j=1}^{p-1} h_{j1})\ell}}{(d-1)^{\fq^+\ell/2} Z_{\cF^+}}\sum_{\bfi^+}\bE\left[I(\cF^+,\cG)\bm1(\cG\in \Omega)  (Q_t-Y_t)R_{\bfi^+}\right].
    \end{split}\end{align}
    Note that $k_1\geq 1$, so $\fq^+\geq 1$, and the second term on the right-hand side of \eqref{e:finaleq} is bounded as
\begin{align*}\begin{split}
        &\phantom{{}={}}\frac{(d-1)^{(6\widehat h+6f_1+3\sum_{j=1}^{p-1} h_{j1})\ell}}{(d-1)^{\fq^+\ell/2} Z_{\cF^+}}\sum_{\bfi^+}\bE\left[I(\cF^+,\cG)\bm1(\cG\in \Omega)  |Q_t-Y_t||R_{\bfi^+}|\right]\\
        &\lesssim 
         \frac{(d-1)^{6(\widehat h+f_1)\ell}N^{-(\widehat h+f_1)\fb}}{(d-1)^{\fq^+\ell/2} }\bE\left[\bm1(\cG\in \Omega)  |Q_t-Y_t|\Pi(z,\bmh)\right]\lesssim N^{-\fb/2}\bE[ \Psi_p],
    \end{split}\end{align*}
    where in the first inequality we used \eqref{e:Bsmall} and \eqref{e:Sbound}; in the second inequality we used \eqref{e:bPi2}.
    The first term on the right-hand side of \eqref{e:finaleq} is in the form of \eqref{e:case3_copy}, by setting $r^+= \widehat h+f_1\geq r$ and $\bmr^+=\bmh$.

\item \label{ii:offab} If $\widehat R_{\bfi^+}$ in \eqref{e:oneterm} contains the factor $\msc^{2\ell}(z_t)(d-1)^{-(\ell+1)}\sum_{\al\neq \beta\in\sfA_i} G_{c_\al c_{\beta}}^{(b_\al b_{\beta})}$ from the decomposition of $\widehat  B_0$, then $k_3\geq 2$ in \eqref{e:oneterm2}. Recall that by our assumptions, we either have $\bmr$ satisfies \eqref{e:feasible}, $r\geq 1$ and $\bmr$ satisfies \eqref{e:1_expand}, or $r\geq 2$. There are several sub-cases:
\begin{enumerate}
    \item If $\bmh$ satisfies \eqref{e:feasible}, then \eqref{e:refined_bound2} implies that \eqref{e:oneterm2} is bounded by $\OO(N^{-\fb/2}\bE[ \Psi_p])$.
    \item If $\bmh$ satisfies \eqref{e:1_expand}, then $\bmr$ does not satisfy \eqref{e:feasible}, and $r\geq 1$. Since $\widehat h+f_1\geq r\geq 1$, \eqref{e:refined_bound2} again implies that \eqref{e:oneterm2} is bounded by $\OO(N^{-\fb/2}\bE[ \Psi_p])$. 
    
    \item In the remaining cases, $\widehat h+f_1\geq r\geq 2$ and $h_{j0}=1$ for all $1\leq j\leq p-1$. In this case, the reduction of $\widehat R_{\bfi^+}$ only uses expansions from \Cref{i1}, 
     \eqref{e:zhankai1}, \Cref{i3}, 
     \Cref{i4} and \Cref{i7}. In particular, $\widehat R_{\bfi^+}$ does not contain $G^\circ$ or $L$.
    
    If in  $\widehat R_{\bfi^+}$ (from \eqref{e:oneterm}) $\al\in \mathsf I_{\rm single}$, then $f_1 \geq 1$ in \eqref{e:oneterm2}. Also, \eqref{e:final_replace} implies that $R'_{\bfi^+}$ contains $G_{c_\al c_\al}^{(b_\al)}-Q_t$. By the same argument as in \eqref{e:finaleq}, this leads to \eqref{e:case3_copy} by setting $r^+= \widehat h+f_1\geq r+1$ and $\bmr^+=\bmh$. The same conclusion holds if $\beta\in \mathsf I_{\rm single}$.

    In the remaining cases $\al, \beta\not\in \mathsf I_{\rm single}$. If \eqref{e:oneterm} contains at least two terms in the form $\{G_{cc'}^{(bb')}, G_{c b'}^{(b)}, G_{bb'}, G_{cb'}\}_{(b,c)\neq (b',c')\in \cK^+}$, so does \eqref{e:oneterm2}. Then  \eqref{e:refined_bound} and $\widehat h+f_1\geq 2$ implies that \eqref{e:oneterm2} is bounded by $\OO(N^{-\fb/2}\bE[\Psi_p])$. 

    Otherwise \eqref{e:oneterm} contains factors $(G_{c_\al c_\al}^{(b_\al)}-Q_t), (G_{c_{\beta} c_{\beta}}^{(b_{\beta})}-Q_t)$. Then they are either contained in $E(\bm\chi)$ or $k_1\geq 1$. In both cases $\fq^+\geq 1$ in \eqref{e:oneterm2}. The same argument as in \eqref{e:finaleq} leads to  \eqref{e:case3_copy}, by setting $r^+= \widehat h+f_1\geq r$ and $\bmr^+=\bmh$.
\end{enumerate}

\item In the remaining case, $\widehat R_{\bfi^+}$ in \eqref{e:oneterm} contains a factor $(d-1)^{3(h-1)\ell}R_{h}$ from $\cU_0$ in the decomposition  \eqref{e:fcase1W0} of $\widehat  B_0$. Here $h\geq 2$, $R_{h}$ is an $S$-product term (recall from \Cref{d:S-product}), and it contains at least one factor of the form  $( G_{c_\al c_{\al}}^{(b_\al b_\al)}-Q_t)$ or $G_{c_\al c_{\beta}}^{(b_\al b_\beta)}$. Then in this case the factor $R_{h}$ is included in $E(\bm\chi)$ in \eqref{e:oneterm}, and $\widehat h\geq r+h-1\geq r+1\geq 1$.

If $R_{h}$ contains at least one term of the form $(G_{c_\al c_\al}^{(b_\al)}-Q_t)$, by the same argument as in \eqref{e:finaleq}, \eqref{e:oneterm2} leads to \eqref{e:case3_copy} with $r^+=\widehat h+f_1\geq \widehat h\geq r+1$. 

In the other cases, $R_{h}$ contains at least one term of the form $G_{c_\al c_\beta}^{(b_\al b_\beta)}$. By the same argument as in \Cref{l:erbu}, if $\{b_\al, c_\al\}$ do not appear in other terms of $\widehat R_{\bfi^+}$, we can replace $G_{c_\al c_\beta}^{(b_\al b_\beta)}$ by $
      G_{b_\al c_{\beta}}^{(b_{\beta})}(G_{c_\al c_\al}^{(b_\al)}-Q_t)/\sqrt{d-1} 
$ (as in \eqref{e:final_replace}); and if $\{b_\al, c_\al, b_\beta, c_\beta\}$ do not appear in other terms of $\widehat R_{\bfi^+}$, we can replace it by $
      G_{b_\al b_{\beta}}(G_{c_\al c_\al}^{(b_\al)}-Q_t) (G_{c_{\beta} c_{\beta}}^{(b_{\beta})}-Q_t) /\sqrt{d-1}$. Moreover, the errors from such replacements are bounded by $\OO(N^{-\fb/4}\bE[\Psi_p])$. 
Then by the same argument as in \Cref{ii:offab}, either \eqref{e:oneterm2} is bounded by $\OO(N^{-\fb/4}\bE[\Psi_p])$, or \eqref{e:oneterm2} leads to  \eqref{e:case3_copy} with $r^+\geq \widehat h+f_1\geq \widehat h\geq r+1$.
\end{enumerate}
\end{proof}

\begin{proof}[Proof of \eqref{e:higher_case1} in \Cref{p:general}] 
Up to a negligible error, the expression \eqref{e:higher_case1} can also be rewritten as an $\OO(1)$-weighted sum of terms in the form of  \eqref{e:oneterm1}. We also refer back to the more explicit expression given in \eqref{e:oneterm}. 

We recall that in \eqref{e:higher_case1}, $r=1$ and $B_1=(G_{oo}^{(i)}-Q_t)$. If $\widehat h+f_1\geq 2$ (from \eqref{e:oneterm2}), or $h_{j0}\in \{2,3\}$ for some $1\leq j\leq p-1$ in \eqref{e:oneterm2}, we can proceed in exactly the same manner as in the proof of \eqref{e:higher_case3}.
Otherwise, $\widehat h=1$, $f_1=0$, and $h_{j0}=1$ for all $1\leq j\leq p-1$. We assume this scenario in the following discussion.
\begin{enumerate}
    \item 

Assume $\widehat R_{\bfi^+}$ in \eqref{e:oneterm} contains $\msc^{2\ell}(z_t)(d-1)^{-(\ell+1)}\sum_{\al\in\sfA_i} (G_{c_\al c_{\al}}^{(b_\al)}-Q_t)$ from the decomposition \eqref{e:fcase1W0} of $\widehat  B_0$. 
Since $\widehat h=1, f_1=0$, $\widehat R_{\bfi^+}$ also contains $\msc^{2\ell}(z_t)(d-1)^{-(\ell+1)}\sum_{\al\in\sfA_i} (G_{c_\al c_{\al}}^{(b_\al)}-Q_t)$ from the decomposition of $\widehat  B_1$ (we recall the precise coefficients from \eqref{e:Uterm}). Then \eqref{e:oneterm2} is of the form 
       \begin{align}\label{e:main1}
       \frac{\msc^{4\ell}(z_t)}{(d-1)^{2(\ell+1)}}\sum_{\al\in \sfA_i}\sum_{\bfi^+} \frac{1}{Z_{\cF^+}} \bE[\bm1(\cG\in \Omega)I(\cF^+, \cG)(G_{c_\al c_\al}^{(b_\al)}-Q_t)^2 (Q_t-Y_t)^{p-1}].
    \end{align}
After replacing a copy of $(G_{c_\al c_\al}^{(b_\al)}-Q_t)$ by $(G_{c_\al c_\al}^{(b_\al)}-Y_t)$, \eqref{e:main1} is an $\OO(1)$-weighted sum of terms in the form \eqref{e:case1} with $\fq^+=2$, and the error is bounded by $\OO(N^{-\fb/2}\bE[\Psi_p])$. 

\item If $\widehat R_{\bfi^+}$ in \eqref{e:oneterm}  contains $\msc^{2\ell}(z_t)(d-1)^{-(\ell+1)}\sum_{\al\neq \beta\in\sfA_i} G_{c_\al c_{\beta}}^{(b_\al b_{\beta})}$ from the decomposition \eqref{e:fcase1W0} of $\widehat B_0$. Since $\widehat h=1, f_1=0$, $\widehat R_{\bfi^+}$ also contains $\msc^{2\ell}(z_t)(d-1)^{-(\ell+1)}\sum_{\al\neq \beta\in\sfA_i} G_{c_\al c_{\al}}^{(b_\al b_\beta)}$ from the decomposition of $\widehat  B_1$ (we recall the precise coefficients from \eqref{e:Uterm}). Then \eqref{e:oneterm2} is from 
 \begin{align}\label{e:main2}
       \frac{2\msc^{4\ell}(z_t)}{(d-1)^{2(\ell+1)}}\sum_{\al\neq\beta\in \sfA_i}\sum_{\bfi^+} \frac{1}{Z_{\cF^+}} \bE[\bm1(\cG\in \Omega)I(\cF^+, \cG)(G_{c_\al c_\beta}^{(b_\al b_\beta)})^2 (Q_t-Y_t)^{p-1}].
    \end{align}
Thanks to \eqref{e:use_Ward}, \eqref{e:main2} is bounded by $\OO(N^\fo\bE[\Psi_p])$.

\item
In the remaining case, $\widehat R_{\bfi^+}$ in \eqref{e:oneterm} contains a factor $(d-1)^{3(h-1)\ell}R_{h}$ with $h\geq 2$ from $\cU_0$ in the decomposition \eqref{e:fcase1W0} of $\widehat  B_0$. Then $\widehat h\geq r+1\geq 2$. 
\end{enumerate}

\end{proof}

\begin{proof}[Proof of \eqref{e:higher_case2} in \Cref{p:general}] 
Up to a negligible error, the expression \eqref{e:higher_case2} can also be rewritten as an $\OO(1)$-weighted sum of terms in the form of  \eqref{e:oneterm1}. We also refer back to the more explicit expression given in \eqref{e:oneterm}.

We recall that in \eqref{e:higher_case2} $r=0$. If in \eqref{e:oneterm2}, $\widehat h+f_1\geq 1$, or $\bmh$ satisfies \eqref{e:feasible} in \eqref{e:oneterm2}, we can proceed in exactly the same manner as in the proof of \eqref{e:higher_case3}.
Otherwise, $\widehat h=f_1=0$, $f_1=0$, and $\bmh$ satisfies \eqref{e:1_expand}. Therefore, we assume this scenario in the following discussion.

\begin{enumerate}
    \item 

Suppose $\widehat R_{\bfi^+}$ in \eqref{e:oneterm} contains $\msc^{2\ell}(z_t)(d-1)^{-(\ell+1)}\sum_{\al\in\sfA_i} (G_{c_\al c_{\al}}^{(b_\al)}-Q_t)$ from the decomposition \eqref{e:fcase1W0} of $\widehat  B_0$. 
Given that $\widehat h=f_1=0$ and $\bmh$ satisfies \eqref{e:1_expand}, \eqref{e:oneterm2} is obtained from \eqref{e:higher_case2} by making the following substitutions. For reference, we recall the precise coefficients from \eqref{e:Uterm} and \eqref{e:fengkai}. For $\al \in \sfA_i$,
\begin{align}\begin{split}\label{e:sub0}
    &\wt G_{oo}^{(i)}-Y_t \rightarrow \frac{\msc^{2\ell}(z_t)}{(d-1)^{\ell+1}}(G_{c_\al c_\al}^{(b_\al)}-Q_t), \\
    & \wt G^{\circ}_{sw}\rightarrow  -\frac{L_{s l_\al}}{\sqrt{d-1}}\left( G^{\circ}_{c_\al w}+\frac{ G^{\circ}_{b_\al w}}{\sqrt{d-1}}\right), \quad s\in \{i,o\}, \quad w\in \{u_j,v_j\}.
\end{split}\end{align}
In this way $f_1=0$. We recall that $|L_{s l_\al}|\lesssim (d-1)^{\ell/2}$ from \eqref{e:Gtreemkm}.  After replacing $(G_{c_\al c_\al}^{(b_\al)}-Q_t)$ by $(G_{c_\al c_\al}^{(b_\al)}-Y_t)$, terms obtained after the substitution \eqref{e:sub0} form an $\OO(1)$-weighted sum of terms in the form \eqref{e:case2} with $\fq^+=2$.

\item

Suppose $\widehat R_{\bfi^+}$ in \eqref{e:oneterm} contains $\msc^{2\ell}(z_t)(d-1)^{-(\ell+1)}\sum_{\al\neq \beta\in\sfA_i} G_{c_\al c_{\beta}}^{(b_\al b_{\beta})}$ from the decomposition of $\widehat  B_0$. 
Given that $\widehat h=f_1=0$ and $\bmh$ satisfies \eqref{e:1_expand}, then \eqref{e:oneterm2} results from making the following substitutions in \eqref{e:higher_case2}. For reference, we recall the precise coefficients from \eqref{e:Uterm} and \eqref{e:fengkai}. For $\al\neq \beta\in \sfA_i$,
\begin{align}\begin{split}\label{e:sub1}
    &\wt G_{oo}^{(i)}-Y_t \rightarrow \frac{\msc^{2\ell}(z_t)}{(d-1)^{\ell+1}}G_{c_\al c_\beta}^{(b_\al b_\beta)}  \text{ or }
    \frac{\msc^{2\ell}(z_t)}{(d-1)^{\ell+1}}G_{c_\beta c_\al}^{(b_\beta b_\al)}, \\
    & \wt G^{\circ}_{sw}\rightarrow  -\frac{L_{s l_\al}}{\sqrt{d-1}}\left( G^{\circ}_{c_\al w}+\frac{ G^{\circ}_{b_\al w}}{\sqrt{d-1}}\right), \quad s\in\{i,o\},\quad w\in \{u,v\},\\
    &\wt G^{\circ}_{s'w'}\rightarrow  -\frac{L_{o l_\beta}}{\sqrt{d-1}}\left( G^{\circ}_{c_\beta w'}+\frac{ G^{\circ}_{b_\beta w'}}{\sqrt{d-1}}\right), \quad s'\in\{i,o\},\quad w'\in \{u,v\}.
\end{split}\end{align}
In this way $f_1=0$. We recall that $|L_{s l_\al}|,|L_{o l_\beta}|\lesssim (d-1)^{\ell/2}$ from \eqref{e:Gtreemkm}. Terms obtained after the substitution \eqref{e:sub1} are $\OO(1)$-weighted sum of terms of the following form: for $\al\neq \beta\in \qq{\mu}, x\in \{b_\al, c_\al\}, x'\in \{b_\beta, c_\beta\}, w,w'\in\{u,v\}$,
 \begin{align}\label{e:sumtt}
      \sum_{\bfi}  \bE\left[\frac{\{1-\del_1 Y_\ell, \del_2 Y_\ell\}}{(Nd) Z_{\cF^+}}I(\cF^+,\cG)\bm1(\cG,\tcG\in \Omega)\sum_{u\sim v\in \qq{N}} G_{c_\al c_\beta}^{(b_\al b_\beta)} G^{\circ}_{x w} G^{\circ}_{x' w'}(Q_t-Y_t)^{p-2}\right].
    \end{align}
By \eqref{e:refined_bound3}, the expression in \eqref{e:sumtt} can be bounded by $\OO(N^\fo\bE[\Psi_p])$.

\item 
In the remaining case, $\widehat R_{\bfi^+}$ in \eqref{e:oneterm} contains a factor $(d-1)^{3(h-1)\ell}R_{h}$ from $\cU_0$ in the decomposition of $\widehat  B_0$. Then $\widehat h\geq r+1\geq 1$. 
\end{enumerate}
\end{proof}

\begin{proof}[Proof of \Cref{p:track_error2}]

We recall $I_1$ from \eqref{e:case1_term}. Conditioned on $I(\cF^+,\cG)=1$, the expectation in \eqref{e:case1_term} does not depend on $\al$.  Moreover, $|\sfA_i|=(d-1)^{\ell+1}$, and by \eqref{e:Gtreemsc}, $L_{l_\al l_\al}^{(i)}=\md(z_t)(1-(-\msc(z_t)/\sqrt{d-1})^{2\ell+2})$.
We denote $(i,o)=(b_\al,c_\al)$ and $(\cF, \bfi)=(\cF^+, \bfi^+)$, and rewrite $I_1$ from \eqref{e:case1_term} as
\begin{align}\label{e:I1_rewrite}
\left(1-\left(\frac{\msc(z_t)}{\sqrt{d-1}}\right)^{2\ell+2}\right)\frac{\md(z_t)\msc^{2\ell}(z_t) }{(d-1)Z_{\cF}}\sum_{\bfi}\bE\left[I(\cF,\cG)\bm1(\cG\in \Omega) (G_{oo}^{(i)}-Y_t)(G_{oo}^{(i)}-Q_t)(Q_t-Y_t)^{p-1}\right],
\end{align}
which is in the form of \eqref{e:higher_case1}, up to the constant. 

 From the proof of \eqref{e:higher_case1} in \Cref{p:general}, the errors from expanding \eqref{e:higher_case1} are either bounded by $\OO(N^{-\fb/4} \bE[\Psi_p])$, or given by \eqref{e:main2}. 
 Thus, the error from expanding \eqref{e:I1_rewrite} 
is given by 
\begin{align*}
   &\left(1-\left(\frac{\msc(z_t)}{\sqrt{d-1}}\right)^{2\ell+2}\right)   \frac{2\md(z_t)\msc^{6\ell}(z_t)}{(d-1)^{2\ell+3}}\times\\
      &\times \sum_{\al\neq\beta\in \sfA_i}\sum_{\bfi^+} \frac{1}{Z_{\cF^+}} \bE[\bm1(\cG\in \Omega)I(\cF^+, \cG)(G_{c_\al c_\beta}^{(b_\al b_\beta)})^2 (Q_t-Y_t)^{p-1}]
      +\OO(N^{-\fb/4} \bE[\Psi_p]).
\end{align*}
This finishes the proof of \eqref{e:track_error2}.


We recall $I_2$ from \eqref{e:case2_term}. Once again, conditioned on  $I(\cF^+,\cG)=1$, the expectation in \eqref{e:case2_term} does not depend on $\al$, and $|\sfA_i|=(d-1)^{\ell+1}$.
We denote $(i,o)=(b_\al, c_\al)$ and $(\cF, \bfi)=(\cF^+, \bfi^+)$, and rewrite $I_2$ from \eqref{e:case2_term} as
\begin{align}\begin{split}\label{e:dierxiang}
     &\phantom{{}={}}\frac{\msc^{2\ell}(z_t)}{Z_{\cF}}\sum_{\bfi}\bE\left[ I(\cF,\cG)\bm1(\cG\in \Omega)(G_{oo }^{(i)}-Q_t)(Q_t-Y_t)^{p-2}\right.\\
     &\times \left.\frac{1}{Nd}\sum_{u\sim v}\left((1-\del_1Y_\ell)\left(F_{vv}-\frac{2G_{uv}}{G_{uu}}F_{uv}+\frac{G^2_{uv}}{G^2_{uu}}F_{uu}\right)+(-t\del_2 Y_\ell) F_{vv} \right)\right],
\end{split}\end{align}
where 
\begin{align*}  F_{uv}=\left(\frac{1}{\sqrt{d-1}}+\frac{2\md(z_t) \msc(z_t)}{(d-1)\sqrt{d-1}}\right)( G^{\circ}_{uo} G^{\circ}_{vi}+ G^{\circ}_{ui} G^{\circ}_{vo})+\frac{2\md(z_t)}{d-1}\left( G^{\circ}_{ui} G^{\circ}_{vi}+ G^{\circ}_{uo} G^{\circ}_{vo}\right).
\end{align*}

The expression \eqref{e:dierxiang} is an $\OO(1)$-weighted sum of terms in the form \eqref{e:higher_case2}. From the proof of \eqref{e:higher_case2} in \Cref{p:general}, the errors from expanding \eqref{e:higher_case2} are either bounded by $\OO(N^{-\fb/4} \bE[\Psi_p])$, or given by substitutions as in \eqref{e:sub1}.

Next we show that after the substitutions as described in \eqref{e:sub1}, terms $F_{vv}$, $F_{uv}$ and $F_{uu}$ in \eqref{e:dierxiang} all vanishes. We check this for $F_{uv}$, the other two cases can be proven in the same way, so we omit their proofs. 
By the substitutions listed in \eqref{e:sub1}, $F_{uv}$ transforms to
\begin{align}\begin{split}\label{e:Fuv2}
    &\phantom{{}={}}\left( G^{\circ}_{c_\al u}+\frac{ G^{\circ}_{b_\al u}}{\sqrt{d-1}}\right)\left( G^{\circ}_{c_\beta v}+\frac{ G^{\circ}_{b_\beta v}}{\sqrt{d-1}}\right)\times \\
    &\times \left(\left(\frac{1}{\sqrt{d-1}}+\frac{2\md(z_t) \msc(z_t)}{(d-1)\sqrt{d-1}}\right)(L_{ol_\al}L_{il_\beta}+L_{il_\al}L_{o l_\beta})+\frac{2\md(z_t)}{d-1}\left(L_{i l_\al }L_{il_\beta}+L_{ol_\al}L_{ol_\beta}\right)\right).
\end{split}\end{align}
The key observation is that for $\al\in \sfA_i$, $L_{il_\al}=(-\msc/\sqrt{d-1})L_{ol_\al}$. The equivalent statement holds for $\beta\in \sfA_i$. Thus we can rewrite the second line of \eqref{e:Fuv2} as
\begin{align*}
  &\phantom{{}={}}\left(\frac{1}{\sqrt{d-1}}+\frac{2\md(z_t) \msc(z_t)}{(d-1)\sqrt{d-1}}\right)(L_{ol_\al}L_{il_\beta}+L_{il_\al}L_{o l_\beta})+\frac{2\md(z_t)}{d-1}\left(L_{i l_\al } L^{\circ}_{il_\beta}+L_{ol_\al}L_{ol_\beta}\right)\\
  &=
  -\frac{2\msc(z_t)}{\sqrt{d-1}}\left(\frac{1}{\sqrt{d-1}}+\frac{2\md(z_t) \msc(z_t)}{(d-1)\sqrt{d-1}}\right)L_{ol_\al}L_{ol_\beta}+\frac{2\md(z_t)}{d-1}\left(1+\frac{\msc^2(z_t)}{d-1}\right)L_{ol_\al}L_{ol_\beta}\\
  &=
  \left(\frac{2(\md(z_t)-\msc(z_t))}{d-1}-\frac{2\md(z_t) \msc^2(z_t)}{(d-1)^2}\right)L_{ol_\al}L_{ol_\beta},
\end{align*}
where the coefficient vanishes, as
\begin{align*}
    \md(z_t)=\frac{\msc(z_t)}{1-\msc^2(z_t)/(d-1)}, \quad \frac{2(\md(z_t)-\msc(z_t))}{d-1}-\frac{2\md(z_t) \msc^2(z_t)}{(d-1)^2}=0.
\end{align*}
We conclude that after the substitutions given in \eqref{e:sub1},  \eqref{e:dierxiang} vanishes.

\end{proof}

\subsection{Proofs from \Cref{s:first_step} and \Cref{s:general}}\label{sec:iterationproofs}

In this section, we prove \Cref{l:fyibu}, \Cref{p:tS-Sdiff} and \Cref{l:yibu}.
The proofs of \Cref{l:fyibu} and \Cref{l:yibu} are essentially the same, so we will only prove \Cref{l:fyibu}.
\begin{proof}[Proof of \Cref{l:fyibu}]
We notice that the error $\cE$ in \eqref{e:fhatRi} comes from
replacing $(\widetilde G_{oo}^{(i)}-Y_t)$ and $\widetilde \cW_j$ in \eqref{e:freplace0} by $\wh B_0$ and $\widehat \cW_j$ respectively. 
$\cE$ is an $\OO(1)$-weighted sum of terms in the following form 
\begin{align}\begin{split}\label{e:freplace2}
{Z^{-1}_{\cF^+}}\sum_{\bfi^+}\bE\left[I(\cF^+,\cG)\bm1(\cG,\tcG\in \Omega) \Delta B_0\prod_{j=1}^{p-1} \Delta \cW_j\right].
    \end{split}
\end{align}
where $\Delta B_0\in\{\wh B_0, \cZ_0, \cE_0\}, \Delta \cW_j\in\{\wh \cW_j, \widehat\cE_j\}$, where either $\Delta B_0\neq \wh B_0$ or $\Delta \cW_j\neq\wh \cW_j$ for some $1\leq j\leq p-1$.  With this notation, thanks to \eqref{eq:local_law}, \eqref{e:Werror2} and \eqref{e:Bsmall}, we have 
$|\Delta B_0|\lesssim N^{-\fb/2} $. It follows from \eqref{e:Bsmall} and \eqref{e:1_Sbound}, $|\Delta \cW_j|\lesssim |Q_t-Y_t|+(d-1)^{8\ell}\Upsilon \Phi$ for $1\leq j\leq p-1$. There are several cases.

If $\Delta \cW_{j'}=\wh\cE_j$ for some $1\leq j'\leq p-1$, noticing that $|\wh\cE_{j'}|\lesssim ((d-1)^{8\ell}\Phi)^2$, we conclude that
\begin{align*}
    |\eqref{e:freplace2}|&\lesssim {Z^{-1}_{\cF^+}}\sum_{\bfi^+}\bE\left[I(\cF^+,\cG)\bm1(\cG,\tcG\in \Omega) N^{-\fb/2} |\widehat \cE_{j'}|\prod_{j\in\qq{p-1}\setminus\{j'\}} |\Delta \cW_{j}|\right]\\
    &\lesssim N^{-\fb/2} \bE[((d-1)^{8\ell}\Phi)^2(|Q_t-Y_t|+(d-1)^{8\ell}\Upsilon \Phi)^{p-2}]
    \lesssim N^{-\fb/4}\bE[\Psi_p].
\end{align*}
In the remaining cases, $\Delta \cW_j=\wh \cW_j$ for all $1\leq j\leq p-1$.
If $\Delta B_0=\cE_0$ and $\Delta \cW_j=(Q_t-Y_t)$ for all $1\leq j\leq p-1$, then we can write
\begin{align}\begin{split}\label{e:error1}
\eqref{e:freplace2}
&={Z^{-1}_{\cF^+}}\sum_{\bfi^+}\bE\left[I(\cF^+,\cG)\bm1(\cG,\tcG\in \Omega)\cE_0 (Q_t-Y_t)^{p-1}\right].
\end{split}\end{align}
We recall $\cE_0$ from \eqref{e:defCE}. The right-hand side of \eqref{e:error1} is given by
\begin{align}\begin{split}\label{e:Eterm1} &\phantom{{}={}}\sum_{\bfi^+}\bE\left[\frac{I(\cF^+,\cG)\bm1(\cG,\tcG\in \Omega)}{Z_{\cF^+}}\left(\frac{\msc^{2\ell}(z_t)} {(d-1)^{\ell+1}}\sum_{\al,\beta\in\sfA_i}(\wt G^{(\bT)}_{c_\alpha c_\beta}-G^{(b_\alpha b_\beta)}_{c_\alpha c_\beta})\right) (Q_t-Y_t)^{p-1}\right]\\
&+\OO\left(\sum_{\bfi^+}\bE\left[\frac{I(\cF^+,\cG)\bm1(\cG,\tcG\in \Omega)}{Z_{\cF^+}}\left(\frac{1}{N^{\fb/2}} \sum_{\al,\beta\in \qq{\mu}}|\wt G^{(\bT)}_{c_\alpha c_\beta}-G^{(b_\alpha b_\beta)}_{c_\alpha c_\beta}|+\frac{1}{N^2}\right) (Q_t-Y_t)^{p-1}\right]\right).
\end{split}\end{align}
By \Cref{lem:task2} and \eqref{e:bPi1}, by taking $\Pi=\bm1(\cG\in \Omega)(|Q_t-Y_t|+(d-1)^{8\ell}\Upsilon \Phi)^{p-1}$, we can bound the two terms in \eqref{e:Eterm1} as $\OO((d-1)^{2\ell}\bE[\Psi_p])$ and $\OO(N^{-\fb/4}\bE[\Psi_p])$ respectively.


Otherwise, if $\Delta B_0=\cE_0$ and $\Delta \cW_j\neq (Q_t-Y_t)$ for some $1\leq j\leq p-1$, then $|\Delta \cW_j|\lesssim ((d-1)^{8\ell}\Upsilon \Phi)$, and 
\begin{align}\label{e:DeltaS_product}
 \prod_{j=1}^{p-1} |\Delta \cW_j|
    &\lesssim (d-1)^{8\ell}\Upsilon \Phi(|Q_t-Y_t|+(d-1)^{8\ell}\Upsilon \Phi)^{p-2}.
\end{align}
In this case, we can bound \eqref{e:freplace2} by taking $\Pi=\bm1(\cG\in \Omega) (d-1)^{8\ell}\Upsilon \Phi(|Q_t-Y_t|+(d-1)^{8\ell}\Upsilon \Phi)^{p-2}$ in \Cref{lem:task2}, and using \eqref{e:bPi2},
\begin{align*}
\eqref{e:freplace2}\lesssim {Z^{-1}_{\cF^+}}\sum_{\bfi^+}\bE\left[I(\cF^+,\cG)\bm1(\cG,\tcG\in \Omega)|\cE_0|\Pi\right]
\lesssim N^{-\fb/2}\bE[ \Psi_p].
\end{align*}

If $\Delta B_0=\cZ_0$ and $\cZ_0$ contains at least two terms of the form of \eqref{e:W-term}, then thanks to \eqref{e:Werror2}, with overwhelmingly high probability over $Z$
\begin{align*}
   |\cZ_0|\lesssim (d-1)^{3\ell}t N^{2\fo}\Phi.  
\end{align*}
Together with \eqref{e:DeltaS_product}, we conclude that \eqref{e:freplace2} is bounded by $N^{-\fb}\bE[\Psi_p]$. If $\cZ_0$ contains exact one term of the form of \eqref{e:W-term}, we recall from \eqref{e:W_decompose} that $\prod_{j=1}^{p-1} \widehat \cW_j$ decomposes as an $\OO(1)$-weighted sum of terms of the form $(d-1)^{(3h+3\sum_{j=1}^{p-1} h_{j1})\ell}\widehat R_{\bfi^+}$, where $R_{\bfi^+}\in \Adm(h,\bmh,\cF^+,\cG)$ with  $h\geq 0$, and $\bmh=[h_{jk}]_{1\leq j\leq p-1, 0\leq k\leq 2}$ satisfying \eqref{e:defr}. Thanks to \Cref{p:WRbound}, we have 
\begin{align*}
{Z^{-1}_{\cF^+}}\sum_{\bfi^+}\bE\left[I(\cF^+,\cG)\bm1(\cG,\tcG\in \Omega) \cZ_0\prod_{j=1}^{p-1} \widehat \cW_j\right] \lesssim N^{-\fb}\bE[\Psi_p].
\end{align*}
This finishes the proof of \Cref{l:fyibu}.
\end{proof}

\begin{proof}[Proof of \Cref{p:tS-Sdiff}]
The first and third statements follow from  \Cref{c:Q-Ylemma}. In the rest, we discuss the case that $r_{j0}=2$, and $\cW_j$ is of the following form
\begin{align*}
   \{1-\del_1 Y_\ell, -t\del_2 Y_\ell\}\times \frac{1}{dN}\sum_{u_j,v_j}A_{u_j v_j}F_{u_jv_j},
\end{align*} 
where $F_{u_jv_j}$ is a product of $r_{j1}$ factors of the form $\{ G^{\circ}_{su_j}, G^{\circ}_{sv_j}\}_{s\in \cK}$, or $\{(\Av G^{\circ})_{o'u_j}, (\Av G^{\circ})_{o'v_j}\}_{(i',o')\in \cC\setminus \cC^\circ}$, $r_{j2}$ factors of the form $\{L_{su_j}, L_{sv_j}\}_{s\in \cK}$, or $\{(\Av L)_{o'u_j}, (\Av L)_{o'v_j}\}_{(i',o')\in \cC\setminus \cC^\circ}$, and an arbitrary number of factors of the form $G_{u_jv_j}, 1/G_{u_ju_j}$.

In the following we compute $(d-1)^{3r_{j1}\ell}\wt \cW_j$:
\begin{align}\begin{split}\label{e:tS-Sdiff}
    (d-1)^{3r_{j1}\ell}\wt \cW_j
    &=\{1-\del_1  Y_\ell, -t\del_2  Y_\ell\}\times\frac{1}{dN}\sum_{u_j,v_j}\wt A_{u_j v_j}(d-1)^{3r_{j1}\ell}\wt F_{u_jv_j}\\
    &+\{-(\del_1 \wt Y_\ell-\del_1 Y_\ell), -t(\del_2 \wt Y_\ell-\del_2 Y_\ell)\}\times\frac{1}{dN}\sum_{u_j,v_j} \wt A_{u_j v_j} (d-1)^{3r_{j1}\ell}\wt F_{u_jv_j}.
\end{split}\end{align}
By \eqref{e:dtY-dYexp}, 
$|\del_1 \wt Y_\ell-\del_1 Y_\ell|,|\del_2 \wt Y_\ell-\del_2 Y_\ell|\lesssim (d-1)^{8\ell} \Phi$. Thus by \eqref{e:1_Sbound}, the second line in \eqref{e:tS-Sdiff} is bounded by $((d-1)^{8\ell} \Phi)^2$.

In the following we analyze the first term on the right-hand of \eqref{e:tS-Sdiff}; we first split it as
\begin{align}\begin{split}\label{e:tF-Fdiff2}
   \sum_{u_j,v_j\in\qq{N}}\wt A_{u_jv_j} (d-1)^{3r_{j1}\ell}\wt F_{u_jv_j}
&=\sum_{\{u_j,v_j\}\notin \{\{l_\al, a_\al\},\{b_\al, c_\al\}\}_{\al\in\qq{\mu}} }A_{u_jv_j}(d-1)^{3r_{j1}\ell}\wt F_{u_jv_j}\\
&+\sum_{\{u_j,v_j\}\in \{\{l_\al, c_\al\},\{a_\al, b_\al\}\}_{\al\in\qq{\mu}} }(d-1)^{3r_{j1}\ell}\wt F_{u_jv_j}.
\end{split}\end{align}

In the following, for any $h\geq 0$, we denote $R_h$ a product of $h$ factors of the form $\{ G^{\circ}_{ss'}\}_{s,s'\in \cK^+}$ (W-product from  \Cref{d:W-product}).
There are several cases for the factors of $(d-1)^{3r_{j1}\ell}F_{u_jv_j}$:
\begin{enumerate}
   \item \label{i:sa}
For $s\in \cK, \{i',o'\}\in \cC\setminus \cC^\circ, w\in \{u_j, v_j\}$, the terms $ G^{\circ}_{s w}$ and $(\Av G^\circ)_{o'w}$ each contributes one to $r_{j1}$. By the third and fourth statements in \Cref{lem:generalQlemma}, we have
\begin{align}\begin{split}\label{e:tG-Gexp1copy}
&\phantom{{}={}} (d-1)^{3\ell}\wt G^{\circ}_{sw}
=(d-1)^{3\ell}G^{\circ}_{sw}+\cU+\OO(N^{-2})\\
&+\bm1(s\in\{i,o\})\left(\frac{\sum_{\sfJ\in\{b,c\}}\sum_{\al\in \qq{\mu}} \fc(\sfJ, \bm1(\al\in \sfA_i))  (d-1)^{3\ell}G^{\circ}_{\sfJ_\al w}}{(d-1)^{\ell/2}}+(d-1)^{3\ell}(\Av G^{\circ})_{ow}\right),\\
&\phantom{{}={}} (d-1)^{3\ell}(\Av  \wt G^{\circ})_{o'w}=(d-1)^{3\ell}(\Av  G^\circ)_{o'w}+\cU+\OO(N^{-2}),
\end{split}\end{align}
where $\cU$ is an $\OO(1)$-weighted sum of terms in the form $(d-1)^{3h\ell}R_h \times (d-1)^{3\ell}G^{\circ}_{xw}$, or $(d-1)^{6h\ell}R_h L_{xw}$  with $x\in\cK^+, w\in\{u_j, v_j\}$ and $h\geq 1$. 

Also for $s\in \cK, \{i',o'\}\in \cC\setminus \cC^\circ, w\in \{u_j, v_j\}$, the terms  $L_{sw}$ and $ (\Av L)_{o'w}$ each contributes one to $r_{j2}$, and we have, by the third and fourth statements in \Cref{lem:generalQlemma},
\begin{align}\begin{split}\label{e:tL-Lexp1copy}
    &\wt L_{sw}=L_{sw}+\left(\frac{ \sum_{\sfJ\in \{b,c\}}\sum_{\al\in \qq{\mu}} \fc(\sfJ, \bm1(\al\in \sfA_i)) L_{\sfJ_\al w}}{(d-1)^{\ell/2}}+(\Av L)_{ow}\right)\bm1(s\in \{i,o\}),\\ 
    &(\Av \wt L)_{o'w}=(\Av L)_{o'w}.
\end{split}\end{align}

\item \label{i:sb}
For $G_{u_j v_j}, 1/G_{u_ju_j}$, by the second statement in \Cref{lem:generalQlemma},
we have 
\begin{align}\begin{split}\label{e:1/tG-1/Gexp1copy}
   &\tG_{u_jv_j}=G_{u_j v_j}-\md(z_t) \msc(z_t)(\wt H-H)_{u_j v_j}+\cU+\OO(N^{-2}),\\
    &1/\tG_{u_j u_j}=1/G_{u_j u_j}+\cU+\OO(N^{-2}),
\end{split}\end{align}
where $\cU$ is an $\OO(1)$-weighted sum of terms in the form
$(d-1)^{3h\ell}R_h (d-1)^{3h_1\ell}R'_{h_1h_2}$, with $h\geq 0$ and $ h_1+h_2\geq 2$ even; $R'_{h_1, h_2}$ contain $h_1$ factors of the form $ G^{\circ}_{xu_j},  G^{\circ}_{xv_j}$, $h_2$ factors of the form $L_{xu_j}, L_{xv_j}$, with $x\in\cK^+$ and an arbitrary number of factors $G_{u_jv_j}, 1/G_{u_ju_j}$;
\end{enumerate}
Based on this decomposition, we find that $(d-1)^{3r_{j1}\ell}\wt F_{u_jv_j}$ is an $\OO(1)$-weighted sum of terms of the form \eqref{e:yibufen}: 
\begin{align}\begin{split}\label{e:Fudecomp}
        & (d-1)^{6h_j'\ell}R_{h_j}'\times \frac{(d-1)^{3h_{j1}\ell}}{(d-1)^{(g_{j1}+g_{j2})\ell/2}}\sum_{\bm\theta_j} \frac{\{1-\del_1 Y_\ell, -t\del_2 Y_\ell\}}{Nd}\sum_{u_j\sim v_j\in \qq{N}}R''_{h_{j1}-g_{j1}, h_{j2}-g_{j2}}\prod_{m=1}^{g_{j1}+g_{j2}} D_{\theta_m}.
    \end{split}\end{align}
    Here $R_{h_j'}$ is the product of these $R_h$ from $\cU$ in \eqref{e:tG-Gexp1copy} and \eqref{e:1/tG-1/Gexp1copy}, with a total count $h_j$. 
    Here $D_{\theta_m}$ are from $ G^{\circ}_{\sfJ_\al w}$ in \eqref{e:tG-Gexp1copy} with count $g_{j1}$, or $L_{\sfJ_\al w}$ in \eqref{e:tL-Lexp1copy} with count $g_{j2}$. $R''_{h_{j1}-g_{j1}, h_{j2}-g_{j2}}$ contains the remaining $h_{j1}-g_{j1}$ factors of the following form: $\{ (\Av G^{\circ})_{o'w},   G^{\circ}_{xw}\}_{ x\in \cK^+, (i',o')\in \cC\setminus \cC^\circ, w\in \{u_j,v_j\}}$, 
$h_{j2}-g_{j2}$ factors of the form 
$\{(\Av L)_{o'w},  L_{xw}\}_{ x\in \cK^+, (i',o')\in \cC\setminus \cC^\circ, w\in \{u_j,v_j\}}$, and an arbitrary  number of $G_{u_jv_j}, 1/G_{u_j u_j}$; Then we have $h_{j1}+h_{j2}\geq r_{j1}+r_{j2}$ is even, and $ h_{j2}\geq r_{j2}$.

Next we show that \eqref{e:yibufen} with the summation over $\{u_j,v_j\}\in \{\{l_\al, a_\al\},\{b_\al, c_\al\}\}_{\al\in\qq{\mu}}$, and the last term in \eqref{e:tF-Fdiff2} are $\OO(1)$-weighted sum of terms in \eqref{e:erbufen}. We notice that in both cases, $\{u_j,v_j\}\in \{\{l_\al, a_\al\},\{b_\al, c_\al\}, \{l_\al, c_\al\},\{a_\al, b_\al\}\}_{\al\in\qq{\mu}}$.
Then for $s\in \cK$, $\{i',o'\}\in \cC\setminus \cC^\circ$, $w\in \{u_j, v_j\}$, 
$(d-1)^{3\ell}(\Av G^{\circ})_{o'w},   (d-1)^{3\ell} G^{\circ}_{sw}, (\Av L)_{o'w},  L_{sw}$ are of form $\OO((d-1)^{6h\ell})R_h$ with $h=0,1$.
\eqref{e:tG-Gexp1copy} implies that, up to error $\OO(N^{-2})$,
$(d-1)^{3\ell} \wt G^{\circ}_{s w}, (d-1)^{3\ell}(\Av  \wt G^{\circ})_{o'w}$ are $\OO(1)$-weighted sum of terms of the form $ 
(d-1)^{6h\ell}R_h$ with $h\geq 1$; \eqref{e:tL-Lexp1copy} implies that 
$\wt L_{sw}=L_{sw}+\OO(1), (\Av \wt L )_{ow}=(\Av L )_{ow}+\OO(1)$, which are bounded numbers. Moreover, \eqref{e:1/tG-1/Gexp1copy} implies that, up to error $\OO(N^{-2})$, $(\wt G_{u_j v_j}-G_{u_j v_j}), (1/\wt G_{u_j u_j}-1/G_{u_j u_j})$ are $\OO(1)$-weighted sums of terms of the form $ 
(d-1)^{3h\ell}R_h$ with $h\geq 0$. 
This yields \eqref{e:erbufen} with $h_j'\geq r_{j1}$, where $h_j'$ is given by the sum of the values of $h$ from the preceding discussion.

Next we prove  \eqref{e:S'bound2}. 
If $g_{j2}\geq 1$ in \eqref{e:S'bound2}, then $R'_{h_{j1}, h_{j2}}$ is nonzero only if $\theta_{g_{j1}+1}=\theta_{g_{j1}+2}=\cdots=\theta_{g_{j1}+g_{j2}}=\theta$. In this case $f_j\leq g_{j1}+1$, so we can bound \eqref{e:S'bound2} as
\begin{align*}
    &\phantom{{}={}}\frac{ \Upsilon}{dN}\frac{(d-1)^{(f_j+3h_{j1})\ell}}{(d-1)^{(g_{j1}+g_{j2})\ell/2}}\sum_{u_j\sim v_j}|R'_{h_{j1} h_{j2}}|
    \lesssim \frac{ \Upsilon}{dN}(d-1)^{(3h_{j1}+(2+g_{j1}-g_{j2})/2)\ell}\sum_{u_j\sim v_j}|R'_{h_{j1} h_{j2}}|\\
    &\lesssim \Upsilon(d-1)^{(3h_{j1} +(2+g_{j1}-g_{j2})/2)\ell} N^{-\max\{h_{j1}-1,0\}\fb} N^\fo\sqrt{\Phi/N}\lesssim (d-1)^{8\ell}\Upsilon\sqrt{\Phi/N},
\end{align*}
where in the first inequality we used $f_j\leq g_{j1}+1$; in the second inequality, we used \eqref{e:Sjbound}; in the third inequality, we used  that $g_{j2}\geq 1$ and $g_{j1}\leq h_{j1}$. 
If $g_{j2}=0$ and $h_{j2}=0$ in \eqref{e:hatFdef}, then $h_{j1}\geq 2$ and $f_j\leq g_{j1}$. By \eqref{e:Sjbound},  \eqref{e:S'bound2} is bounded by
\begin{align*}
    \Upsilon(d-1)^{(3h_{j1} +g_{j1}/2)\ell} N^{-(h_{j1}-2)\fb} N^\fo\Phi\lesssim (d-1)^{8\ell}\Upsilon \Phi,
\end{align*}
where we used that $g_{j1}\leq h_{j1}$. If $g_{j2}=0$ and $h_{j2}\geq 1$, similarly, we have \eqref{e:hatFdef} is bounded by $(d-1)^{8\ell}\Upsilon \sqrt{\Phi/N}$.

\end{proof}

\section{Refined Analysis}
\label{s:refined}
In this section, we will prove \Cref{lem:deletedalmostrandom} and \Cref{lem:task2} in \Cref{s:removeonevertex}, as well as \Cref{l:first_term} in \Cref{s:first_term}. Before proceeding, we collect some refined estimates on the Schur complement formula in \Cref{s:Schur_revisit}.

\subsection{Setting and notation}
\label{s:setting7}
In this section, let $d\geq 3$ and $\cG$ be a $d$-regular graph on $N$ vertices. Let $\cF = (\bfi, E)$ be a forest as in \eqref{e:cFtocF+}, with switching edges $\cK$, core edges $\cC$ and unused core edges $\cC^\circ$. We view $\cF$ as a subgraph of $\cG$. We construct $\cF^+ = (\bfi^+, E^+)$ (as in \eqref{e:cF++}) by performing a local resampling around $(i, o) \in \cC^\circ$ with resampling data ${\bf S}=\{(l_\al, a_\al), (b_\al, c_\al)\}_{\al\in \qq{\mu}}$ where $\mu=d(d-1)^{\ell}$. We denote $\cT=\cB_\ell(o,\cG)$ with vertex set $\bT$, $\bW=\{b_1, b_2, \cdots, b_\mu\}$ and  the switched graph $\widetilde \cG = T_\bfS(\cG)$. 
We recall the sets $\Omega$ and $\oOmega$ of $d$-regular graphs from \Cref{def:omegabar} and \Cref{thm:prevthm0}, and the indicator function $I(\cF,\cG)$ from \eqref{def:indicator}. In this section, we will typically consider the forests $\cF=\{i,o\}$, or $\cF=\{\{i,o\}, \{b,c\}\}$.

Fix $d\geq 3$. We recall the spectral domain $\bf D$ from \eqref{e:D}, and parameters $\fo\ll \ft\ll\fb\ll\fc\ll\fg$ from \eqref{e:parameters}. Fix time $t\leq N^{-1/3+\ft}$. We recall $\varrho_d(x,t)$, $\md(z,t)$ and $E_t$ from \eqref{e:defrhodt} and \eqref{e:edgeeqn2}. For any parameter $z\in \bf D$ we denote $\eta=\Im[z]$, $\kappa=\min\{|\Re[z]-E_t|, |\Re[z]+E_t|\}$, and $z_t=z+t\md(z,t)=z+t\md(z_t)$ (recall from \eqref{e:defw}). 

We recall the matrix $H(t)$, its Green's function $G(z,t)$, its Stieltjes transform $m_t(z)$, the quantities  $Q_t(z)$,  $Y_t(z)=Y_\ell(Q_t(z),z+tm_t(z))$, and   $X_t(z)=X_\ell(Q_t(z),z+tm_t(z))$ from \eqref{e:Ht} , \eqref{e:Gt}, \eqref{def_mtz}, \eqref{e:Qsum}, and  \eqref{e:defYt}. 
We recall the control parameters $\Phi(z), \Upsilon(z)$ and $\Psi_p(z)$ from \eqref{e:defPhi} and \eqref{eq:phidef}. We recall the local Green's function $L(z,t)=L(z,t,\cF^+,\cG)$ from \eqref{e:local_Green}, and $G^\circ(z,t)=G(z,t)-L(z,t)$ from \eqref{e:G-L}. We denote the corresponding quantities for the switched graph $\tcG$ as $\wt H(t)$, $\wt G(z,t)$, $\wt m_t(z)$, $\wt Q_t(z)$,  $\wt Y_t(z)$, $\wt X_t(z)$,$\wt \Phi(z)$, $\wt \Upsilon(z)$,$\wt \Psi_p(z)$, $\wt L(z,t)$ and $\wt G^\circ(z,t)$. If the context is clear, we may omit the dependence on $z$ and $t$.

\subsection{Schur complement formula revisit}
\label{s:Schur_revisit}
Adopt the notation of \Cref{s:setting7}. Then the normalized adjacency matrix $\widetilde H^{(\bT)}$ of $\tcG^{(\bT)}$ and $Z^{(\bT)}$ are in the block form
\begin{align*}
    \widetilde H^{(\bT)}=
    \left[
    \begin{array}{cc}
        \wt H^{(\bT)}_{\bW} & \wt B^\top\\
        \wt B & \widetilde H^{(\bT)}_{\bW^\complement}
    \end{array}
    \right],\quad
  Z^{(\bT)}=
    \left[
    \begin{array}{cc}
        Z^{(\bT)}_{\bW} & Z^{(\bT)}_{\bW\bW^\complement}\\
        Z^{(\bT)}_{\bW^\complement\bW} & Z^{(\bT)}_{\bW^\complement}
    \end{array}
    \right].
\end{align*}
We also denote the Green's function of $\cG^{(\bT)}$ and $\wt\cG^{(\bT)}$ as $ G^{(\bT)}$ and $\wt G^{(\bT)}$ respectively.

In this section, we investigate the error from replacing $\wt G^{(\bT)}_{c_\al c_\beta}$ with $ G_{c_\al c_\beta}^{(b_\al b_\beta)}$. 
 Then $ G_{c_\al c_\beta}^{(b_\al b_\beta)}$ can be obtained from $\wt G^{(\bT)}_{c_\al c_\beta}$ through the following steps. First, we remove $\bW=\{b_1, b_2, \cdots, b_\mu\}$, which gives $G_{c_\al c_\beta}^{(\bT \bW)}$; we then add $\bW\setminus \{b_\al, b_\beta\}$ back, which gives $G_{c_\al c_\beta}^{(\bT b_\al b_\beta)}$; finally we add $\bT$ back, which gives $G_{c_\al c_\beta}^{(b_\al b_\beta)}$. The errors from these replacements are explicit, thanks to the Schur complement formulas \eqref{e:Schur1}:
\begin{align}
\label{e:removeW}&\wt G^{(\bT)}_{c_\al c_\beta}-G^{(\bT\bW)}_{c_\al c_\beta}=(G^{(\bT\bW)}(\wt B+\sqrt{t} Z^{(\bT)}_{\bW^\complement \bW})  \wt G^{(\bT)}|_{\bW} (\wt B^\top+\sqrt{t} Z^{(\bT)}_{\bW\bW^\complement }) G^{(\bT\bW)})_{c_\al c_\beta},\\
 \label{e:addW}&G^{(\bT\bW)}_{c_\al c_\beta}-G^{(\bT b_\al b_\beta)}_{c_\al c_\beta}=-(G^{(\bT\bW)}(G^{(\bT b_\al b_\beta)}|_{\bW\backslash\{b_\alpha,b_\beta\}})^{-1}   G^{(\bT b_\al b_\beta)})_{c_\al c_\beta},\\
 \label{e:schur_removeT}
&G_{c_\al c_\beta}^{(b_\al b_\beta)}-G^{(\bT b_\al b_\beta)}_{c_\al c_\beta}=(G^{(b_\al b_\beta)}(G^{(b_\al b_\beta)}|_\bT)^{-1}G^{(b_\al b_\beta)})_{c_\al c_\beta}.
\end{align}

The following lemma provide leading order terms for  the replacement errors associated with the equations \eqref{e:removeW}, \eqref{e:addW} and \eqref{e:schur_removeT}. 
\begin{lemma}\label{l:diffG1}
Adopt the notation of \Cref{s:setting7} with $z\in \bf D$, $\cF=\{i,o\}$ and $\cT=\cB_\ell(o,\cG)$ having the vertex set $\bT$. We assume that $\cG, \tcG\in \Omega$ and $I(\cF^+,\cG)=1$. For any indices $\al,\beta\in\qq{\mu}$, the following holds with overwhelmingly high probability over $Z$: 
\begin{enumerate}
\item   
    The difference $\wt G^{(\bT)}_{c_\al c_\beta}-G^{(\bT\bW)}_{c_\al c_\beta}$, is given by
    \begin{align}\label{e:diffG1}
    \wt G^{(\bT)}_{c_\al c_\beta}-G^{(\bT\bW)}_{c_\al c_\beta}=\frac{\md(z_t)}{d-1}\sum_{\gamma\in\qq{\mu}}\left( \sum_{x\in \cN_\gamma}G_{c_\al x}^{(\bT\bW)}\right)\left(\sum_{x\in \cN_\gamma}G_{c_\beta x}^{(\bT\bW)}\right)+\cE,
    \end{align}
   where $\cN_\gamma=\{x\neq c_\gamma: x\sim b_\gamma \text{ in }\cG\}\cup \{a_\gamma\}$, which enumerates the adjacent vertices of $b_\gamma$ in $\tcG$, and
    \begin{align*}
     |\cE|\lesssim N^{-\fb/2}\sum_{\gamma\in\qq{\mu}}\sum_{x\in \cN_\gamma}(|G_{c_\al x}^{(\bT\bW)}|^2+|G_{c_\beta x }^{(\bT\bW)}|^2)+N^{-\fb }\Phi.
    \end{align*}
    Moreover,
    \begin{align}\label{e:total_diffG1}
    |\wt G^{(\bT)}_{c_\al c_\beta}-G^{(\bT\bW)}_{c_\al c_\beta}|\lesssim \sum_{\gamma\in\qq{\mu}} \sum_{x\in \cN_\gamma}(|G_{c_\al x}^{(\bT\bW)}|^2+|G_{c_\beta x}^{(\bT\bW)}|^2)+N^{-\fb }\Phi.
    \end{align}
\item
    The difference $G^{(\bT\bW)}_{c_\al c_\beta}-G^{(\bT b_\al b_\beta)}_{c_\al c_\beta}$ is given by
    \begin{align}\label{e:diffG2}
    G^{(\bT\bW)}_{c_\al c_\beta}-G^{(\bT b_\al b_\beta)}_{c_\al c_\beta}=-\frac{1}{\md(z_t)}\sum_{\gamma\in\qq{\mu}\setminus\{\al,\beta\}}G^{(\bT b_\al b_\beta )}_{c_\al b_\gamma}  G^{(\bT b_\al b_\beta)}_{b_{\gamma} c_\beta}+\cE,
    \end{align}
where
\begin{align*}
|\cE|
\lesssim N^{-\fb/2} \sum_{\gamma\in\qq{\mu}\setminus\{\al,\beta\}}|G^{(\bT b_\al b_\beta )}_{c_\al b_\gamma}|^2+|G^{(\bT b_\al b_\beta)}_{ c_\beta b_{\gamma}}|^2.
\end{align*}
Moreover,  
\begin{align}\label{e:total_diffG2}
    |G^{(\bT\bW)}_{c_\al c_\beta}-G^{(\bT b_\al b_\beta)}_{c_\al c_\beta}|\lesssim \sum_{\gamma\in\qq{\mu}\setminus\{\al,\beta\}}(|G^{(\bT b_\al b_\beta )}_{c_\al b_\gamma}|^2+ |G^{(\bT b_\al b_\beta)}_{c_\beta b_{\gamma} }|^2).
\end{align}
\item
The difference $G^{(\bT b_\al b_\beta)}_{c_\al c_\beta}-G_{c_\al c_\beta}^{(b_\al b_\beta)}$ is given by 
\begin{align}\label{e:diffG3}
G^{(\bT b_\al b_\beta)}_{c_\al c_\beta}-G_{c_\al c_\beta}^{(b_\al b_\beta)}=-\sum_{\dist(x,o)=\ell\atop  x\sim x'\in \bT}\left(\frac{1}{\sqrt{d-1}}G_{c_\al x}^{(b_\al b_\beta)}G^{(b_\al b_\beta)}_{x' c_\beta}
+\frac{1}{\msc(z_t)}G_{c_\al x}^{(b_\al b_\beta)}G^{(b_\al b_\beta)}_{x c_\beta}\right)+\cE,
\end{align}
where
    \begin{align*}
       |\cE|
       \lesssim N^{-\fb/2}\sum_{x\in \bT}(|G^{(b_\al b_\beta)}_{c_\al x}|^2+|G^{(b_\al b_\beta)}_{ c_\beta x}|^2+|(ZG^{(b_\al b_\beta)})_{x c_\beta}|^2 ).
    \end{align*}
Moreover,
\begin{align}\label{e:total_diffG3} 
|G^{(\bT b_\al b_\beta)}_{c_\al c_\beta}-G_{c_\al c_\beta}^{(b_\al b_\beta)}|\lesssim 
\sum_{x\in \bT}(|G_{c_\al x}^{(b_\al b_\beta)}|^2+|G^{(b_\al b_\beta)}_{c_\beta x}|^2).
\end{align}
\item The following holds 
\begin{align}\begin{split}\label{e:Greplace}
&|G^{(\bT \bW)}_{c_\al a_\beta}-G_{c_\al a_\beta}^{(b_\al l_\beta)}|\lesssim \sum_{\gamma\in\qq{\mu}\setminus\{\al\}}(|G^{(\bT b_\al )}_{c_\al b_\gamma}  |^2+|G^{(\bT b_\al )}_{ a_\beta b_{\gamma}}|^2)+\sum_{y\in \bT\setminus\{l_\beta\}}(|G_{c_\al y}^{(b_\al l_\beta)}|^2+|G^{(b_\al l_\beta)}_{ a_\beta y}|^2),
 \\
    &|G^{(\bT \bW)}_{c_\al x}-G_{c_\al x}^{(b_\al b_\beta)}|\lesssim \sum_{\gamma\in\qq{\mu}\setminus\{\al,\beta\}}(|G^{(\bT b_\al b_\beta )}_{c_\al b_\gamma}  |^2+|G^{(\bT b_\al b_\beta)}_{b_{\gamma} c_\beta}|^2)
    +\sum_{y\in \bT}(|G_{c_\al y}^{(b_\al b_\beta)}|^2+|G^{(b_\al b_\beta)}_{ x y}|^2),\text{ for } x\sim b_\beta,
\\
    &|G^{(\bT b_\al b_\beta)}_{c_\al b_\theta}-G_{c_\al b_\theta}^{(b_\al)}|\lesssim (|G^{(\bT b_\al )}_{c_\al b_\beta}  |^2+|G^{(\bT b_\al )}_{b_\theta b_\beta}|^2)
    +\sum_{y\in \bT}(|G_{c_\al y}^{(b_\al )}|^2+|G^{(b_\al)}_{ b_\theta y}|^2), \text{ for } \theta \in\qq{\mu}\setminus\{\al,\beta\}.
\end{split}\end{align}
\end{enumerate}
\end{lemma}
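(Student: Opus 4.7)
The plan is to prove the three differences (1)--(3) sequentially by applying the appropriate Schur complement formulas \eqref{e:removeW}, \eqref{e:addW} and \eqref{e:schur_removeT}, and to derive (4) by telescoping through intermediate Green's functions. In each of the first three parts the key reduction is the same: the middle matrix that has to be inverted is supported on a vertex set ($\bW$ for (1)--(2), $\bT$ for (3)) whose neighborhoods in the relevant graph are well-separated trees, so by the local law \eqref{eq:local_law} together with the tree-Green's function identities \eqref{e:Gtreemkm} and \eqref{e:Pijbound} this matrix is diagonal to leading order with an explicit diagonal entry. Extracting that diagonal and sandwiching with the boundary operators $\wt B,\wt B^\top$ (or the tree Laplacian term in (3)) produces the stated main term; everything else is a Cauchy--Schwarz bookkeeping exercise.

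Concretely, for (1) I would substitute \eqref{e:removeW} and observe that since each $b_\gamma$ has a radius-$\fR$ tree neighborhood in $\tcG^{(\bT)}$ and distinct $b_\gamma,b_{\gamma'}$ are at graph distance at least $3\fR$, the local law and \eqref{e:Pijbound} give $\wt G^{(\bT)}_{b_\gamma b_\gamma}=\md(z_t)+\OO(N^{-\fb})$ and $|\wt G^{(\bT)}_{b_\gamma b_{\gamma'}}|\lesssim (d-1)^{-\dist(b_\gamma,b_{\gamma'})/2}$. The operator $\wt B$ pulls each $b_\gamma$-column into $(1/\sqrt{d-1})\sum_{x\in\cN_\gamma}(\cdot)$, so the diagonal part of $\wt G^{(\bT)}|_{\bW}$ yields exactly the leading expression of \eqref{e:diffG1}. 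The off-diagonal remainder, the $\OO(N^{-\fb})$ correction to the diagonal, and the cross-terms carrying a factor of $\sqrt t Z$ are then controlled by $N^{-\fb/2}\sum_{\gamma}\sum_{x\in\cN_\gamma}(|G^{(\bT\bW)}_{c_\al x}|^2+|G^{(\bT\bW)}_{c_\beta x}|^2)+N^{-\fb}\Phi$ using Cauchy--Schwarz together with the $Z$-estimates \eqref{e:Werror} and the Ward-type averages \eqref{e:use_Ward2}. The crude bound \eqref{e:total_diffG1} then follows from AM--GM on the product of the two boundary sums together with $|\md(z_t)|\lesssim 1$.

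Part (2) uses the same template on \eqref{e:addW}: the inverse $(G^{(\bT b_\al b_\beta)}|_{\bW\setminus\{b_\al,b_\beta\}})^{-1}$ is approximately $\md(z_t)^{-1}\mathbb I$, which after substitution and a Cauchy--Schwarz on the off-diagonal remainder produces \eqref{e:diffG2} and \eqref{e:total_diffG2}. For part (3), \eqref{e:schur_removeT} requires the inverse of $G^{(b_\al b_\beta)}|_\bT$; by the defining identity \eqref{e:defP} and the local law this inverse equals $H_\bT-z_t$ plus a boundary weight concentrated on the level-$\ell$ vertices of $\cT$ carrying the coefficient $-\msc(z_t)^{-1}$, and this is precisely where the two-term leading expression in \eqref{e:diffG3} comes from, the $1/\sqrt{d-1}$ coming from the $H_\bT$-term and the $1/\msc(z_t)$ from the tree extension. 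The residual Cauchy--Schwarzes over $\bT$, combined with $\sum_{x\in\bT}|(ZG^{(b_\al b_\beta)})_{x c_\beta}|^2\lesssim N^{2\fo}\Phi$ from \eqref{e:Werror}, produce the claimed error. Finally, the three estimates in \eqref{e:Greplace} are obtained by decomposing each difference through an intermediate Green's function (removing $\bW$ in one step, then adjusting $\bT$) and chaining the already-established total bounds \eqref{e:total_diffG1}, \eqref{e:total_diffG2}, \eqref{e:total_diffG3}.

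The main obstacle is the careful extraction of the precise coefficients $\md(z_t)$, $\md(z_t)^{-1}$, $1/\sqrt{d-1}$ and $\msc(z_t)^{-1}$ across three layers of Schur complements: one must verify at each step that the off-diagonal residuals of the small inverse matrices, the $\OO(N^{-\fb})$ corrections to their diagonal entries, and the $Z$-induced cross-terms can all be absorbed either into the explicit quadratic error terms stated in the lemma or into the universal $N^{-\fb}\Phi$ bound via the Ward-type identities, without disturbing the leading coefficient. The rest is combinatorial bookkeeping.
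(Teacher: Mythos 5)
Your proposal follows essentially the same route as the paper: expand via \eqref{e:removeW}, \eqref{e:addW}, \eqref{e:schur_removeT}, use the local law and the well-separated tree neighborhoods to replace $\wt G^{(\bT)}|_{\bW}$ (resp.\ its inverse, resp.\ $(G^{(b_\al b_\beta)}|_\bT)^{-1}$) by $\md(z_t)\mathbb I$ (resp.\ $\md(z_t)^{-1}\mathbb I$, resp.\ $H_\bT-z_t-\msc(z_t)\mathbb I^{\del}$), and absorb off-diagonal, $\OO(N^{-\fb})$ and $Z$-induced residuals by Cauchy--Schwarz, \eqref{e:Werror} and Ward-type bounds, with \eqref{e:Greplace} obtained by rerunning the same arguments through intermediate Green's functions. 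One minor bookkeeping point: in part (3) the coefficient $1/\msc(z_t)$ does not come from the boundary weight alone but from combining the diagonal $-z_t$ with the weight $-\msc(z_t)$ via the self-consistent equation $-(z_t+\msc(z_t))=1/\msc(z_t)$, while the interior rows reduce to the $\sqrt t\,Z$- and $t\md$-terms that feed the stated error.
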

\begin{proof}[Proof of \Cref{l:diffG1}]
We can decompose the right-hand side of \eqref{e:removeW} as the sum of the following three terms $\eqref{e:removeW}=I+II+III$:
\begin{align*}
&I:=\frac{1}{d-1}\sum_{\gamma,\gamma'\in \qq{\mu}}\sum_{x\in \cN_\gamma\atop y\in \cN_{\gamma'}}G_{c_\al x}^{(\bT\bW)} \wt G^{(\bT)}_{b_\gamma b_{\gamma'}}   G^{(\bT\bW)}_{yc_\beta }\\
&II:=\frac{\sqrt t}{\sqrt{d-1}}\sum_{\gamma,\gamma'\in \qq{\mu}}\left(\sum_{y\in \cN_{\gamma'}}(G^{(\bT\bW)} Z)_{c_\al b_\gamma}\wt G^{(\bT)}_{b_\gamma b_{\gamma'}}  G^{(\bT\bW)}_{yc_\beta }+\sum_{x\in \cN_{\gamma}}G_{c_\al x}^{(\bT\bW)} \wt G^{(\bT)}_{b_\gamma b_{\gamma'}}  (ZG^{(\bT\bW)})_{b_{\gamma'}c_\beta }\right)
\\
&III:=t\sum_{\gamma,\gamma'\in \qq{\mu}} (G^{(\bT\bW)}Z)_{c_\al b_\gamma}\wt G^{(\bT)}_{b_\gamma b_{\gamma'}} (Z  G^{(\bT\bW)})_{b_{\gamma'} c_\beta},
\end{align*}
where $\cN_\gamma=\{x\neq c_\gamma: x\sim b_\gamma \text{ in }\cG\}\cup \{a_\gamma\}$.

Since $\tcG\in \Omega$ and $I(\cF^+,\cG)=1$, \eqref{eq:local_law} gives that for $\gamma\neq \gamma'$, $|\wt G^{(\bT)}_{b_\gamma b_{\gamma}}|\lesssim 1$ and $|\wt G^{(\bT)}_{b_\gamma b_{\gamma'}}|\lesssim N^{-\fb} $. The leading order term of $I$ is then for pairs $\gamma=\gamma'$, giving
\begin{align}\label{e:II_approx}
I= \frac{\md(z_t)}{d-1}\sum_{\gamma\in\qq{\mu}} \left(\sum_{x\in \cN_\gamma}G_{c_\al x}^{(\bT\bW)}\right)\left(\sum_{x\in \cN_\gamma}G_{c_\beta x}^{(\bT\bW)}\right)+\OO\left(N^{-\fb} \sum_{\gamma, \gamma'\in\qq{\mu}}\sum_{x\in \cN_\gamma\atop y\in \cN_{\gamma'}}|G_{c_\al x}^{(\bT\bW)}||G_{yc_\beta }^{(\bT\bW)}|\right).
\end{align}


For $II$ and $III$, thanks to \eqref{e:Werror} with overwhelmingly high probability over $Z$
\begin{align*}
    \left|(ZG^{(\bT\bW)})_{b_\gamma c_\al}\right|,\left|(ZG^{(\bT\bW)})_{b_{\gamma'} c_\beta}\right|\lesssim N^\fo \sqrt{\Phi}.
\end{align*}
It follows that 
\begin{align}\label{e:GWbound}
    |II|\lesssim N^\fo\sqrt t \Phi^{1/2}\sum_{x\in \cN_\gamma}(|G^{(\bT \bW)}_{c_\al x}|+|G^{(\bT \bW)}_{x c_\beta}|), \quad
    |III|\lesssim (d-1)^\ell N^{2\fo} t \Phi.
\end{align}
The claim \eqref{e:diffG1} follows from combining \eqref{e:II_approx} and \eqref{e:GWbound}.

To prove \eqref{e:diffG2}, we start with \eqref{e:addW}. Consider
\begin{align}\begin{split}\label{e:defcE1beta}
(G^{(\bT b_\al b_\beta)}(G^{(\bT b_\al b_\beta)}|_{\bW\backslash\{b_\alpha,b_\beta\}})^{-1}   G^{(\bT b_\al b_\beta)})_{c_\al c_\beta}
=\sum_{\gamma, \gamma'\in\qq{\mu}\setminus\{\al,\beta\}}
G^{(\bT b_\al b_\beta)}_{c_\al b_\gamma}(G^{(\bT b_\al b_\beta)}|_{\bW\backslash\{b_\alpha,b_\beta\}})^{-1}_{b_\gamma b_{\gamma'}}   G^{(\bT b_\al b_\beta)}_{b_{\gamma'} c_\beta}.
\end{split}\end{align}
The leading order term in \eqref{e:defcE1beta} is given by those with $\gamma=\gamma'$,
\begin{align}\begin{split}\label{e:IVerror}
&\eqref{e:defcE1beta}= \frac{1}{\md(z_t)}\sum_{\gamma\in\qq{\mu}\setminus\{\al,\beta\}}G^{(\bT b_\al b_\beta )}_{c_\al b_\gamma}  G^{(\bT b_\al b_\beta)}_{b_{\gamma} c_\beta}+\cE,\\
&\cE:=\sum_{\gamma,\gamma'\in\qq{\mu}\setminus\{\al,\beta\}}
G^{(\bT b_\al b_\beta )}_{c_\al b_\gamma}((G^{(\bT b_\al b_\beta)}|_{\bW\backslash\{b_\alpha,b_\beta\}})^{-1}_{b_\gamma b_{\gamma'}} -\delta_{\gamma\gamma'} /\md(z_t)) G^{(\bT b_\al b_\beta)}_{b_{\gamma'} c_\beta}.
\end{split}\end{align}
Thanks to \eqref{eq:local_law}, with overwhelmingly high probability over $Z$, $|G^{(\bT b_\al b_\beta)}_{b_\gamma b_{\gamma'}}-\md(z_t)\delta_{\gamma\gamma'}|\lesssim N^{-\fb}$. Thus 
$|(G^{(\bT b_\al b_\beta)}|_{\bW\backslash\{b_\alpha,b_\beta\}})^{-1}_{b_\gamma b_{\gamma'}} -\delta_{\gamma\gamma'} /\md(z_t)|\lesssim N^{-3\fb/4} $, and  the error $\cE$ in \eqref{e:IVerror} is bounded as
\begin{align}\label{e:IVerror2}
   |\cE|\lesssim N^{-3\fb/4} \sum_{\gamma,\gamma'\in\qq{\mu}\setminus\{\al,\beta\}}|G^{(\bT b_\al b_\beta )}_{c_\al b_\gamma}||G^{(\bT b_\al b_\beta)}_{b_{\gamma'} c_\beta}|\lesssim N^{-\fb/2} \sum_{\gamma\in\qq{\mu}\setminus\{\al,\beta\}}|G^{(\bT b_\al b_\beta )}_{c_\al b_\gamma}|^2+|G^{(\bT b_\al b_\beta)}_{ c_\beta b_{\gamma}}|^2.
\end{align}
The claim \eqref{e:diffG2} follows from combining \eqref{e:IVerror} and \eqref{e:IVerror2}.

To prove \eqref{e:diffG3}, we can rewrite the right-hand side of \eqref{e:schur_removeT} explicitly as
\begin{align}\label{e:schur_removeT2}
 -(G^{(b_\al b_\beta)}(G^{(b_\al b_\beta)}|_\bT)^{-1}G^{(b_\al b_\beta)})_{c_\al c_\beta}
    =-\sum_{x,y\in \bT} G^{(b_\al b_\beta)}_{c_\al x}(G^{(b_\al b_\beta)}|_\bT)^{-1}_{xy}G^{(b_\al b_\beta)}_{y c_\beta}.
\end{align}
Since $I(\cF^+;\cG)=1$, by \eqref{eq:local_law},  with overwhelmingly high probability over $Z$, for $x,y\in \bT$, $|G^{(b_\al b_\beta)}_{xy}-(H_\bT-z_t-\msc(z_t)\mathbb{I}^{\del})^{-1}_{xy}|\lesssim N^{-\fb}$, where $\mathbb{I}^\del_{xy}=\bm1(\dist_\cG(o,x)=\ell)\delta_{xy}$. Thus we have
\begin{align*}
|(G^{(b_\al b_\beta)}|_\bT)_{xy}^{-1}-(H^{(b_\al b_\beta)}-z_t-\msc(z_t)\mathbb{I}^{\del})_{xy}|\lesssim N^{-3\fb/4}, \text{ for }x,y\in \bT,
\end{align*}
and 
\begin{align}\label{e:diffG4}
    \eqref{e:schur_removeT2}=  -\sum_{x,y\in \bT}G^{(b_\al b_\beta)}_{c_\al x}(H-z_t-\msc(z_t)\mathbb{I}^\del)_{xy}G^{(b_\al b_\beta)}_{y c_\beta}+\OO\left(N^{-3\fb/4} \sum_{x,y\in \bT}|G^{(b_\al b_\beta)}_{c_\al x}||G^{(b_\al b_\beta)}_{y c_\beta}|\right).
\end{align}

For the summation over $x,y\in \bT$ in \eqref{e:diffG4},  if $x\in \bT$ but $\dist_\cG(x, o)<\ell$, then $\mathbb{I}^\del_{xy}=0$, and  we have
\begin{align}\begin{split}\label{e:HGexp}
&\phantom{{}={}}\sum_{y\in \bT}(H-z_t-\msc(z_t)\mathbb{I}^\del)_{xy}G^{(b_\al b_\beta)}_{y c_\beta}
=\sum_{y\in \bT}(H-z_t)_{xy}G^{(b_\al b_\beta)}_{y c_\beta}\\
&=\sum_{y\in \qq{N}\setminus\{b_\al, b_\beta\}}(H+\sqrt tZ -z)_{xy}G^{(b_\al b_\beta)}_{y c_\beta}
-\sqrt t(ZG^{(b_\al b_\beta)})_{x c_\beta}
-t\md(z,t) G^{(b_\al b_\beta)}_{x c_\beta}\\
&=-\sqrt t(ZG^{(b_\al b_\beta)})_{x c_\beta}
-t\md(z,t) G^{(b_\al b_\beta)}_{x c_\beta},
\end{split}\end{align} 
where for the second inequality, we used that $z_t=z+t\md(z,t)$; for the last equality, we used that by the definition of the Green's function, the first term on the second line is $\delta_{xc_\beta}=0$.
Thus by plugging \eqref{e:HGexp} into \eqref{e:diffG4}, it follows that 
\begin{align}\begin{split}\label{e:interior_term}
    &\phantom{{}={}}\left|\sum_{\dist(x,o)<\ell\atop y\in \bT}G_{c_\al x}^{(b_\al b_\beta)}(H-z_t-\msc(z_t)\mathbb{I}^\del)_{xy}G^{(b_\al b_\beta)}_{y c_\beta}\right|\\
    &=\left|\sum_{\dist(x,o)<\ell}G_{c_\al x}^{(b_\al b_\beta)}\left(\sqrt t(ZG^{(b_\al b_\beta)})_{x c_\beta}
+t\md(z,t) G^{(b_\al b_\beta)}_{x c_\beta}\right)\right|\\
&\lesssim \sum_{\dist(x,o)<\ell}|G_{c_\al x}^{(b_\al b_\beta)}(\sqrt{t}|(ZG^{(b_\al b_\beta)})_{x c_\beta}|+t|G^{(b_\al b_\beta)}_{x c_\beta}|)\\
&\lesssim \sqrt t \sum_{x\in \bT}(|G^{(b_\al b_\beta)}_{c_\al x}|^2+|G^{(b_\al b_\beta)}_{ c_\beta x}|^2+|(ZG^{(b_\al b_\beta)})_{x c_\beta}|^2 ).
\end{split}\end{align}

If $x\in \bT$ and $\dist(x, o)=\ell$, we denote the parent node of $x$ as $x'$. We then have, by the self-consistent equation of $\msc$,
\begin{align}\begin{split}\label{e:boundary_term}
&\phantom{{}={}}\sum_{y\in \bT}G_{c_\al x}^{(b_\al b_\beta)}(H-z_t-\msc(z_t)\mathbb{I}^\del)_{xy}G^{(b_\al b_\beta)}_{y c_\beta}\\
&=\frac{1}{\sqrt{d-1}}G_{c_\al x}^{(b_\al b_\beta)}G^{(b_\al b_\beta)}_{x' c_\beta}
-(z_t+\msc(z_t))G_{c_\al x}^{(b_\al b_\beta)}G^{(b_\al b_\beta)}_{x c_\beta}\\
&=\frac{1}{\sqrt{d-1}}G_{c_\al x}^{(b_\al b_\beta)}G^{(b_\al b_\beta)}_{x' c_\beta}
+\frac{1}{\msc(z_t)}G_{c_\al x}^{(b_\al b_\beta)}G^{(b_\al b_\beta)}_{x c_\beta}.
\end{split}\end{align} 
The claim \eqref{e:diffG3} follows from plugging \eqref{e:interior_term} and \eqref{e:boundary_term} into \eqref{e:diffG4}.

The claims \eqref{e:Greplace} follow from the same arguments as in \eqref{e:diffG2} and \eqref{e:diffG3}, so we omit the proof. 
\end{proof}
\subsection{Proof of \Cref{lem:deletedalmostrandom} and \Cref{lem:task2}}
\label{s:removeonevertex}
\begin{proof}[Proof of \Cref{lem:deletedalmostrandom}]
Take $\cF=\{i,o\}$, we can replace the indicator function $I_o$ by $I(\cF,\cG)$, and the error is negligible
\begin{align}\begin{split}\label{e:replace_indicator}
      \frac{1}{N}\sum_{o\in \qq{N}}\bE[|I_o-I(\cF,\cG)| |G^{(o)}_{ij}|^2 \Pi]&\lesssim \frac{1}{N}\sum_{o\in \qq{N}}\bE[|I_o-I(\cF,\cG)|N^{-2\fb}  \Pi]\\
      &\lesssim \bE[N^{-1+2\fc} N^{-2\fb}\Pi]\lesssim N^{-2\fb}\bE[\Phi\Pi],
\end{split}\end{align}
where in the first statement we used $|G^{(o)}_{ij}|\lesssim N^{-2\fb}$ from \eqref{eq:local_law}.
In the following we show 
\begin{align}\label{e:new_bound}
\frac{1}{Z_\cF}\sum_{\bfi}\bE\left[\bm1(\cG\in \Omega) I(\cF,\cG) |G^{(o)}_{ij}|^2 \Pi\right]\lesssim \bE[N^\fo\Phi\Pi],
\end{align}
and the claim \eqref{e:sameasdisconnect} follows from combining \eqref{e:replace_indicator} and \eqref{e:new_bound}.

We can perform a local resampling around the vertex $o$. We will first show that 
\begin{align}\begin{split}\label{e:switching}
&\phantom{{}={}}\frac{1}{Z_\cF}\sum_{\bfi}\bE\left[\bm1(\cG\in \Omega) I(\cF,\cG) |G^{(o)}_{ij}|^2 \Pi\right]\\
&=\frac{1}{Z_{\cF}}\sum_{\bfi}\bE\left[I(\cF,\cG)I(\cF,\wt \cG)\bm1(\cG,\wt \cG\in \Omega) | G^{(o)}_{ij}|^2  \Pi\right]+\OO(N^{-\fb}\bE[\Phi \Pi]).
\end{split}\end{align}

We split the left-hand side of \eqref{e:switching} into two terms
\begin{align}\begin{split}\label{e:switching2}
&\phantom{{}={}}\bE\left[I(\cF,\cG){\bm1(\cG\in \Omega)}|\wt G^{(o)}_{ij}|^2 \wt \Pi\right] \\ 
&=\bE\left[I(\cF,\cG){\bm1(\cG\in \Omega)}\left(I(\cF,\wt\cG)\bm1(\tcG\in \oOmega)+\bE_\bfS[1-I(\cF,\wt\cG)\bm1(\tcG\in \oOmega)]\right)| G^{(o)}_{ij}|^2  \Pi\right]\\
&=\bE\left[I(\cF,\cG)I(\cF,\wt\cG){\bm1(\cG\in \Omega)\bm1(\tcG\in \oOmega)}| G^{(o)}_{ij}|^2  \Pi\right]+\OO\left(N^{-1+2\fc}\bE\left[I(\cF,\cG){\bm1(\cG\in \Omega)}| G^{(o)}_{ij}|^2  \Pi\right] \right),
\end{split}\end{align}
where in the third line we used \Cref{lem:configuration}, that 
\begin{align}\label{e:changeF}
    \bE_\bfS[1-I(\cF,\tcG)\bm1(\tcG\in \oOmega) ]\leq  \bE_\bfS[1-\bm1(\tcG\in \oOmega) ]+\bE_\bfS[1-I(\cF,\tcG) ]\leq N^{-1+2\fc}.
\end{align} 
After averaging over $\bfi$, using that $|G_{ij}^{(o)}|\lesssim N^{-\fb}$ (from \eqref{eq:local_law}), the last term in \eqref{e:switching2} is sufficiently small. Namely,
\begin{align*}
    \frac{1}{Z_\cF}\sum_{\bfi}N^{-1+2\fc}\bE\left[I(\cF,\cG){\bm1(\cG\in \Omega)}| G^{(o)}_{ij}|^2 \Pi\right]=\OO(N^{-\fb}\bE[\Phi\Pi]).
\end{align*}

By \Cref{thm:prevthm0}, $\bP(\oOmega\setminus \Omega)\leq N^{-\fC'}$ for any $\fC'>0$, provided $N$ is large enough. Together with our assumption $\Pi=\bm1(\cG\in\Omega)\widehat \Pi$ and $N^{-\fC}\leq \widehat\Pi\leq N^{\fC}$, we get
\begin{align}\begin{split}\label{e:replace_oOmega}
&\phantom{{}={}}\frac{1}{Z_\cF}\sum_{\bfi}\bE\left[I(\cF,\cG)I(\cF,\wt\cG){\bm1(\cG\in\Omega)\bm1(\tcG\in \oOmega\setminus \Omega)} | G^{(o)}_{ij}|^2  \Pi\right]\lesssim N^{-\fb}\bE[\Phi\Pi].
\end{split}\end{align}
The estimates \eqref{e:switching2} and \eqref{e:replace_oOmega} give \eqref{e:switching}. 

Since $\tcG, \cG$ form an exchangeable pair, we can exchange $\cG$ and $\tcG$ in \eqref{e:switching}, and get 
\begin{align}\begin{split}\label{e:switching_Pi}
    &\phantom{{}={}}\frac{1}{Z_\cF}\sum_{\bfi}\bE\left[\bm1(\cG\in \Omega) I(\cF,\cG) |G^{(o)}_{ij}|^2 \Pi\right]\\
    &=\frac{1}{Z_\cF}\sum_{\bfi}\bE\left[I(\cF,\cG)I(\cF,\wt \cG)\bm1(\cG,\wt \cG\in \Omega) |\wt G^{(o)}_{ij}|^2  \wt \Pi\right]+\OO(N^{-\fb}\bE[\Phi\Pi] )\\
    &\lesssim\frac{1}{Z_{\cF^+}}\sum_{\bfi^+}\bE\left[\bm1(\cG, \wt \cG\in \Omega) I(\cF^+,\cG) |\wt G^{(o)}_{ij}|^2  \Pi\right]+\OO(N^{-\fb}\bE[\Phi \Pi]).
\end{split}\end{align}
where in the second statement, we used our assumption $ \bm1(\cG,\wt \cG\in \Omega) \wt \Pi
   \lesssim  \bm1(\cG,\wt \cG\in \Omega)\Pi$, replaced the indicator functions $I(\cF,\cG)I(\cF,\wt \cG)$ with $I(\cF^+,\cG)$ using \eqref{e:changeF}, and rewrote the expression as an average over all possible embeddings of $\cF^+$ in $\cG$.


We are now left to write the Green's function of the switched graph in terms of the original graph. Recall $\cT$ from \Cref{s:local_resampling}. $L:=P(\cT,z_t,\msc(z_t))$. Here $L$ is consistent with the local Green's function  (as defined in \eqref{e:local_Green}) on $\cT$, and we use the same symbols to represent them.  We notice that since $i,j$ are distinct neighbors of $o$, so $i,j$ are in different connected components of $\cT^{(o)}$. Thus $L^{(o)}_{ij}=0$, and $\wt G^{(o)}_{ij}=\wt G^{(o)}_{ij}-L^{(o)}_{ij}$. We will use the same argument as in the proof of \Cref{lem:diaglem}.   In the rest, we condition on that $\cG, \tcG\in \Omega$ and $I(\cF^+,\cG)=1$. Then by the same argument as for \eqref{eq:resolventexp}, we have
\begin{align}\label{e:GooY2}
    \wt G^{(o)}_{ij}&=\left(L^{(o)}\sum_{k=1}^\fp \left((\widetilde  B^\top(\tG^{(\bT)}-\msc(z_t))\widetilde  B+t(m_t-\md(z_t)\mathbb I)+\cD_2)  L^{(o)}\right)^k\right)_{ij}+\OO(N^{-2}),
\end{align}
where $\cD_2=-\sqrt tZ_\bT^{(o)}+ \sqrt t  Z_{\bT \bT^\complement}^{(o)}\widetilde G^{(\bT)}\widetilde  B+ \sqrt t \widetilde  B^\top\widetilde G^{(\bT)}Z^{(o)}_{\bT^\complement \bT}
+t (Z_{\bT \bT^\complement}^{(o)}\widetilde G^{(\bT)}Z_{\bT \bT^\complement}^{(o)}- m_t\mathbb I)$ is as defined in \eqref{e:defcE}. 

For any $1\leq k\leq \fp$, the $k$-th term in \eqref{e:GooY2} is an $\OO(1)$-weighted sum of terms of the following form
\begin{align}\label{e:PUP2}
   \sum_{x_1, x_2,\cdots, x_{2k}\in \bT}L^{(o)}_{i x_1} V_{x_1 x_2} L^{(o)}_{x_2 x_3} V_{x_3 x_4} 
   L^{(o)}_{x_4 x_5}\cdots V_{x_{2k-1} x_{2k}}L^{(o)}_{x_{2k} j}.
\end{align}
Here (with a slight abuse of notation) $V_{x_{2j-1}x_{2j}}$ can represent one of the following three terms  $t(m_t-\md(z_t)){\mathbb I})_{x_{2j-1}x_{2j}}$, $(\cD_2)_{x_{2j-1}x_{2j}}$, or $(\wt B^\top (\wt G^{(\bT)}-\msc(z_t))\wt B)_{x_{2j-1}x_{2j}}$.
Recall that for any $x,y\in \bT$,     
\Cref{p:WtGbound} implies that with overwhelmingly high probability over $Z$,
$|(\cD_2)_{xy}|\lesssim N^{\fo}\sqrt{t\Phi}$. Using this together with \eqref{eq:local_law} gives that  $|V_{xy}|\lesssim N^{-\fb} $ for any $x, y\in \bT$. 

We recall that $i,j$ are in different connected components of $\cT^{(o)}$. For the sequence of indices $x_0:=i, x_1, x_2,\cdots, x_{2k}, x_{2k+1}:=j$, there exists some pair of consecutively listed vertices that are in different connected components of $\cT^{(o)}$. If for some $0\leq m\leq k$, $x_{2m}, x_{2m+1}$ are in different connected components of $\cT^{(o)}$, then $L^{(o)}_{x_{2m}x_{2m+1}}=0$ and \eqref{e:PUP2} vanishes. Thus we only need to consider the case that for some $1\leq m\leq k$, $x_{2m-1}, x_{2m}$ are in different connected components of $\cT^{(o)}$. There are several cases:
\begin{enumerate}
   
    \item $V_{x_{2m-1}, x_{2m}}=(t(m_t-m_d(z_t))_{x_{2m-1}, x_{2m}}=0$, and \eqref{e:PUP2} vanishes.

    \item $V_{x_{2m-1}, x_{2m}}=(\cD_2)_{x_{2m-1}, x_{2m}}$, and we can bound \eqref{e:PUP2} as
\begin{align}\begin{split}\label{e:sumx123}
     &\phantom{{}={}}(N^\fo\sqrt{t \Phi})N^{-(k-1)\fb} \sum_{x_1, x_2,\cdots, x_{2k}\in \bT}|L^{(o)}_{i x_1}| |L^{(o)}_{x_2 x_3} |
   |L^{(o)}_{x_4 x_5}|\cdots |L^{(o)}_{x_{2k} j}|\\
   &\lesssim
   (N^\fo\sqrt{t \Phi})N^{-(k-1)\fb} 
   (d-1)^{\ell}(\ell(d-1)^\ell)^{k-1}\lesssim (d-1)^{\ell}N^\fo\sqrt{t \Phi},
\end{split}\end{align}
where we used \eqref{e:sum_Pbound}.
 \item $V_{x_{2m-1}, x_{2m}}=(\wt B^\top (\wt G^{(\bT)}-\msc(z_t))\wt B)_{x_{2m-1}, x_{2m}}
 =\sum_{\al, \beta: l_\al=x_{2m-1}, l_\beta=x_{2m}}\wt G^{(\bT)}_{c_\al c_\beta}
 $. If $k=1$, then $m=1$ and we can compute \eqref{e:PUP2} as
 \begin{align}\begin{split}\label{e:k=1Pxx}
     &\phantom{{}={}}\frac{1}{d-1}\sum_{\al\neq \beta\in \qq{\mu}}L^{(o)}_{i l_\al }\wt G^{(\bT)}_{c_\al c_\beta} L^{(o)}_{l_\beta j}=\frac{\msc^{2\ell}(z_t)}{(d-1)^{\ell}}\sum_{\dist_\cT(i, l_\al)=\ell-1\atop \dist_\cT(j, l_\beta)=\ell-1}\wt G^{(\bT)}_{c_\al c_\beta},
\end{split}\end{align}
where we used \eqref{e:sum_Pbound}, and $|\fc|=\OO(1)$.
For $k\geq 2$, similar to \eqref{e:sumx123}, we can bound \eqref{e:PUP2} as
\begin{align}\begin{split}\label{e:sumx1234}
     &\phantom{{}={}}N^{-(k-1)\fb} \sum_{\al\neq \beta}|\wt G^{(\bT)}_{c_\al c_\beta}|\sum_{x_1, x_2,\cdots, x_{2m-2}\in \bT\atop x_{2m+1}, x_{2m+2},\cdots, x_{2k}\in \bT}|L^{(o)}_{i x_1}| 
   \cdots |L^{(o)}_{x_{2m-2} l_\al}||L^{(o)}_{l_\beta x_{2m+1}}| \cdots |L^{(o)}_{x_{2k} j}|\\
   &\lesssim
   N^{-(k-1)\fb} \sum_{\al\neq \beta}|\wt G^{(\bT)}_{c_\al c_\beta}|
   (d-1)^{\ell}(\ell(d-1)^\ell)^{k-1}\lesssim 
    \frac{1}{(d-1)^{7(k-1)\fb/8}}\sum_{\al\neq \beta}|\wt G^{(\bT)}_{c_\al c_\beta}|,
\end{split}\end{align}
where we used $N^{\fb} \geq (d-1)^{20\ell}$.
\end{enumerate}
The estimates \eqref{e:sumx123}, \eqref{e:k=1Pxx} and \eqref{e:sumx1234} together lead to the following estimate for \eqref{e:PUP2} 
\begin{align}\label{e:firstbound}
   \eqref{e:PUP2}= \frac{\fc}{(d-1)^{\ell}}\sum_{\al\in \sfA_i\atop \beta\in \sfA_j}\wt G^{(\bT)}_{c_\al c_\beta}
    +\OO\left(\frac{1}{N^{7\fb/8}}\sum_{\al\neq \beta\in\qq{\mu}}|\wt G_{c_\al c_\beta}^{(\bT)}|+(d-1)^\ell N^\fo \sqrt{t\Phi}\right),
\end{align}
and by plugging \eqref{e:GooY2},\eqref{e:PUP2} and \eqref{e:firstbound} back into \eqref{e:switching_Pi} we conclude that
\begin{align}\begin{split}\label{e:IIGU2}
    &\frac{1}{Z_\cF}\sum_{\bfi}\bE\left[\bm1(\cG\in \Omega) I(\cF,\cG) |G^{(o)}_{ij}|^2 \Pi\right]\lesssim J_1+J_2+\OO(N^{-\fb}\bE[\Phi \Pi])\\
    &J_1:= \frac{1}{Z_{\cF^+}}\sum_{\bfi^+}\bE\left[\bm1(\cG, \wt \cG\in \Omega) I(\cF^+,\cG)  \left|\frac{1}{(d-1)^{\ell}}\sum_{\al\neq \beta\in\qq{\mu}}\wt G^{(\bT)}_{c_\al c_\beta} \right|^2 \Pi\right]\\
    &|J_2|\lesssim \frac{1}{Z_{\cF^+}}\sum_{\bfi^+}\bE\left[\bm1(\cG, \wt \cG\in \Omega) I(\cF^+,\cG) N^{-3\fb/4} \left(\sum_{\al\neq \beta\in\qq{\mu}}|\wt G^{(\bT)}_{c_\al c_\beta}|^2+ \Phi\right)\Pi\right].
\end{split}\end{align}

Next, we estimate $J_1$ and $J_2$ as in \eqref{e:IIGU2}. We need to express $\wt G_{c_\al c_\beta}^{(\bT)}$ in terms of the Green's function of the graph $\cG$. In this process, any term that can be bounded by $\OO(N^{-\oo(1)}\Phi)$ is considered negligible, since it contributes to an error $\bE\left[{\bm1(\cG\in \Omega)}N^{-\oo(1)}\Phi  \Pi\right]=\OO(N^{-\oo(1)}\bE[\Phi\Pi])$.

Thanks to \Cref{l:diffG1} (combining \eqref{e:total_diffG1}, \eqref{e:total_diffG2} and \eqref{e:total_diffG3}), we have
\begin{align}\label{e:replacealpha}
    \tG^{(\bT)}_{c_\al c_\beta}=G^{(b_\al b_\beta)}_{c_\al c_\beta}+\cE_{\al \beta},
\end{align}
where 
\begin{align}\begin{split}\label{e:cEbound}
    |\cE_{\al \beta}|&\lesssim  \sum_{\gamma\in\qq{\mu}, x\in \cN_\gamma}(|G_{c_\al x}^{(\bT\bW)}|^2+|G_{c_\beta x}^{(\bT\bW)}|^2)+\sum_{\gamma\in\qq{\mu}\setminus\{\al,\beta\}}(|G^{(\bT b_\al b_\beta )}_{c_\al b_\gamma}|^2+ |G^{(\bT b_\al b_\beta)}_{c_\beta b_{\gamma} }|^2)\\
    &+\sum_{x\in \bT}(|G_{c_\al x}^{(b_\al b_\beta)}|^2+|G^{(b_\al b_\beta)}_{c_\beta x}|^2)+N^{-\fb }\Phi,
\end{split}\end{align}
and $N_\gamma=\{x\neq c_\gamma: x\sim b_\gamma\}\cup\{a_\gamma\}$.

In the following we show that for $\cG\in \Omega$, and $\al\neq \beta, \al'\neq \beta'$
\begin{align}\begin{split}\label{e:smallterm}
   &\phantom{{}={}}\frac{1}{Z_{\cF^+}}\sum_{\bfi^+}I(\cF^+,\cG) (|\cE_{\al \beta}|| G^{(b_{\al'} b_{\beta'})}_{c_{\al'} c_{\beta'}}|+|G^{(b_\al b_\beta)}_{c_\al c_\beta}||\cE_{\al' \beta'}|
    +|\cE_{\al \beta}|^2)\\
    &\lesssim N^{-\fb} \sum_{\gamma\in\{\al, \beta, \al',\beta'\}\atop x\sim b_\gamma, 
x\neq c_\gamma}  \frac{1}{Z_{\cF^+}}\sum_{\bfi^+}I(\cF^+,\cG) |G^{(b_\gamma)}_{c_\gamma x}|^2+\frac{\Phi}{N^{3\fb/4 }},
\end{split}\end{align}
and thus
\begin{align}\begin{split}\label{e:GGGxy}
    &\phantom{{}={}}\frac{1}{Z_{\cF^+}}\sum_{\bfi^+}I(\cF^+,\cG)(G^{(b_\al b_\beta)}_{c_\al c_\beta}+\cE_{\al \beta})(\overline G^{(b_{\al'} b_{\beta'})}_{c_{\al'} c_{\beta'}}+\overline\cE_{\al' \beta'}) \\
    &=\frac{1}{Z_{\cF^+}}\sum_{\bfi^+}I(\cF^+,\cG)G^{(b_\al b_\beta)}_{c_\al c_\beta}\overline G^{(b_{\al'} b_{\beta'})}_{c_{\al'} c_{\beta'}}+\OO\left(\sum_{\gamma\in\{\al, \beta, \al',\beta'\}\atop x\sim b_\gamma, 
x\neq c_\gamma} \frac{N^{-\fb}}{ Z_{\cF^+}}\sum_{\bfi^+}I(\cF^+,\cG) |G^{(b_\gamma)}_{c_\gamma x}|^2+ \frac{\Phi}{N^{3\fb/4 }}\right).
\end{split}\end{align}

To prove \eqref{e:smallterm}, we start by plugging in the bound of $\cE_{\alpha \beta}$ from \eqref{e:cEbound} into the left-hand side of \eqref{e:smallterm}, after which each term contains three Green's function entries as factors. We can bound one of them by $N^{-\fb} $ using \eqref{eq:local_law}, and the remaining two can be bounded by terms in the form $\{|G_{c_\gamma x}^{(b_\gamma)}|^2\}$ with ${\gamma\in \{\al, \beta, \al', \beta'\}}, x\sim b_\gamma, x\neq c_\gamma$ or can be bounded by $N^\fo\Phi$ using \eqref{e:Gest} and \eqref{e:use_Ward}.  In the following we estimate the following term from \eqref{e:smallterm}, and the other terms can be bounded in the same way, so we omit arguments about them.
\begin{align}\label{e:yizhong}
\frac{1}{Z_{\cF^+}}\sum_{\bfi^+}I(\cF^+,\cG)\sum_{\gamma\in\qq{\mu}, x\in \cN_\gamma}|G_{c_\al x}^{(\bT\bW)}|^2|G^{(b_{\al'}b_{\beta'})}_{c_{\al'}c_{\beta'}}|\lesssim 
\frac{1}{Z_{\cF^+}}\sum_{\bfi^+}I(\cF^+,\cG)\sum_{\gamma\in\qq{\mu}, x\in \cN_\gamma}N^{-\fb}|G_{c_\al x}^{(\bT\bW)}|^2,
\end{align}
where we bound $|G^{(b_{\al'}b_{\beta'})}_{c_{\al'}c_{\beta'}}|\lesssim N^{-\fb} $ by \eqref{eq:local_law}. 
  We recall the definition of the indicator function $I(\cF^+,\cG)$ from \eqref{def:indicator}, and define the forest $\widehat \cF$ from $\cF^+$ by removing $\{(b_\al,c_\al)\}$: $
    \widehat \cF=(\widehat \bfi, \widehat E)=\cF^+\setminus  \{(b_\al,c_\al)\},\quad \widehat \bfi=\bfi^+\setminus\{b_\al, c_\al\}$.
Then 
\begin{align*}
 I(\cF^+,\cG)=I(\widehat \cF, \cG)A_{c_\al b_\al}I_{c_\al}, 
\end{align*}
where $I_{c_\al}=\bm1(c_\al\not\in \bX)$ and 
$\bX$ is the collection of vertices $v$ such that either there exists some $u\in \cB_\ell(v,\cG)$ such that $\cB_\fR(u,\cG)$ is not a tree; or $\dist(v,c)<3\fR$ for some $(b_\al, c_\al)\neq (b,c)\in \cC^+$. Thus $I_{c_\al}$ satisfies the requirements in \Cref{def:Ic}. Then we can rewrite the right-hand side of \eqref{e:yizhong} as
\begin{align}\begin{split}\label{e:yizhong1}
     \frac{1}{Z_{\cF^+}}\sum_{\widehat\bfi}I(\widehat\cF,\cG)\sum_{b_\al\sim c_\al } \sum_{\gamma\in\qq{\mu}, x\in \cN_\gamma}N^{-\fb}I_{c_\al} |G_{c_\al x}^{(\bT\bW)}|^2.
\end{split}\end{align}
If $\gamma\neq \al$, thanks to \eqref{e:use_Ward} and $\sum_{c_\al\in \qq{N}}|1-I_{c_\al}|\lesssim N^{3\fc/2}$, we have 
\begin{align}\label{e:yizhong2}
\eqref{e:yizhong1}
\lesssim \frac{1}{Z_{\cF^+}}\sum_{\widehat\bfi}I(\widehat\cF,\cG)\sum_{\gamma\in\qq{\mu}\setminus\{\al\}\atop x\in \cN_\gamma}N^{-\fb}\sum_{b_\al\sim c_\al}(|G_{c_\al x}^{(\bT\bW)}|^2+|1-I_{c_\al}|)
\lesssim N^{-\fb}N^\fo \Phi\lesssim N^{-3\fb/4}\Phi.
\end{align}

Thus we can reduce \eqref{e:yizhong} to the case $\gamma=\al$ 
\begin{align}
    \label{e:GtGG}
\frac{1}{Z_{\cF^+}}\sum_{\bfi^+}I(\cF^+,\cG)N^{-\fb}\left(|G_{c_\al a_\al}^{(\bT \bW)}|^2+\sum_{
x\sim b_\alpha\atop
x\neq c_\alpha} |G^{(\bT \bW)}_{c_\alpha x}|^2\right).
\end{align} 
The terms involving $|G_{c_\al a_\al}^{(\bT \bW)}|^2$ can be bounded by the same way as in \eqref{e:yizhong2}. 
Next, we show that we can replace $G_{c_\al x}^{(\bT \bW)}$ in \eqref{e:GtGG} by $G_{c_\al x}^{(b_\al)}$.
\begin{align}\begin{split}\label{e:yizhong3}
    \eqref{e:GtGG}&= \frac{1}{Z_{\cF^+}}\sum_{\bfi^+}I(\cF^+,\cG)N^{-\fb}\sum_{
x\sim b_\alpha\atop
x\neq c_\alpha} |G^{(b_\al)}_{c_\alpha x}|^2+\cE,\\
|\cE|&\lesssim \frac{1}{Z_{\cF^+}}\sum_{\bfi^+}I(\cF^+,\cG)N^{-\fb}\sum_{
x\sim b_\alpha\atop
x\neq c_\alpha} |G^{(\bT\bW)}_{c_\alpha x}-G^{(b_\al)}_{c_\alpha x}| +\OO(N^{-3\fb/4}\Phi),
\end{split}\end{align}
where for the bound of $\cE$, we used that $|G^{(\bT\bW)}_{c_\alpha x}|,|G^{(b_\al)}_{c_\alpha x}|\lesssim 1$ from \eqref{eq:local_law}.

Thanks to \eqref{e:Greplace}, we can bound the difference $|G_{c_\al x}^{(\bT \bW)}-G_{c_\al x}^{(b_\al)}|$ by
\begin{align}\label{e:yizhong4}
\left|G^{(\bT \bW)}_{c_\alpha x}-G^{(b_\alpha)}_{c_\alpha x}\right|
\leq \sum_{\gamma\in\qq{\mu}\setminus\{\al\}}(|G^{(\bT b_\al )}_{c_\al b_\gamma}|^2+|G^{(\bT b_\al)}_{x b_{\gamma}}|^2)+\sum_{y\in \bT}(|G^{(b_\al)}_{c_\al y}|^2+|G^{(b_\al)}_{xy}|^2).
\end{align}
By plugging \eqref{e:yizhong4} into \eqref{e:yizhong3}, by the same argument as in \eqref{e:yizhong2}, we can bound $\cE$ in \eqref{e:yizhong3} as
\begin{align}\label{e:yizhong5} \frac{N^{-\fb}}{Z_{\cF^+}}\sum_{\bfi^+}I(\cF^+,\cG)\left(\sum_{\gamma\in\qq{\mu}\setminus\{\al\}}(|G^{(\bT b_\al )}_{c_\al b_\gamma}|^2+|G^{(\bT b_\al)}_{x b_{\gamma}}|^2)+\sum_{y\in \bT}(|G^{(b_\al)}_{c_\al y}|^2+|G^{(b_\al)}_{xy}|^2)\right)
    \lesssim N^{-3\fb/4}\Phi.
\end{align}
The claim \eqref{e:smallterm} follows from plugging \eqref{e:yizhong2}, \eqref{e:yizhong3} and \eqref{e:yizhong5} into \eqref{e:yizhong}.

For the first term on the right-hand side of \eqref{e:GGGxy}, we recall that $\al \neq \beta$ and $\al'\neq \beta'$. Then either $\{\al, \beta\}=\{\al', \beta'\}$, or some indices, say $\al,\al'$, only appears once (namely, $\al\neq \al', \beta'$ and $\al'\neq \al, \beta$). Then we can sum over $(b_\al, c_\al)$ and $(b_{\al'}, c_{\al'})$ separately (as in \eqref{e:yizhong1}), using \eqref{e:Gccbb_youyige} and \eqref{e:Gest}
\begin{align}\begin{split}\label{e:fbound1}
    &\phantom{{}={}}\frac{1}{Z_{\cF^+}}\sum_{\bfi^+} I(\cF^+,\cG)G^{(b_\al b_\beta)}_{c_\al c_\beta} \overline G^{(b_{\al'} b_{\beta'})}_{c_{\al'} c_{\beta'}} \\
&=\frac{1}{Z_{\cF^+}}\sum_{\bfi^+\setminus \{b_\al, c_\al, b_{\al'}, c_{\al'}\}}\sum_{c_\al\sim b_\al}\sum_{c_{\al'}\sim b_{\al'}} I(\cF^+,\cG) G^{(b_\al b_\beta)}_{c_\al c_\beta} \overline G^{(b_{\al'} b_{\beta'})}_{c_{\al'} c_{\beta'}} 
\\
    &\lesssim  \frac{1}{Z_{\cF^+}}\sum_{\bfi^+} I(\cF^+,\cG)N^{-\fb/2}(|G^{( b_{\beta})}_{b_{\al} c_{\beta}}|+\Phi)|N^{-\fb/2}(|G^{( b_{\beta'})}_{b_{\al'} c_{\beta'}}|+\Phi)|\lesssim N^{-\fb} N^\fo\Phi.
\end{split}\end{align}
Otherwise if $\{\al, \beta\}=\{\al', \beta'\}$, by the same argument as in \eqref{e:yizhong1}, \eqref{e:use_Ward} gives
\begin{align}\label{e:fbound2}
    \frac{1}{Z_{\cF^+}}\sum_{\bfi^+}I(\cF^+,\cG) |G^{(b_\al b_\beta)}_{c_\al c_\beta}|^2\lesssim N^\fo\Phi.
\end{align}

We recall that  $J_1$ in \eqref{e:IIGU2} is obtained by averaging \eqref{e:GGGxy} over  $\alpha \neq \beta \in \qq{\mu}$and  $\alpha' \neq \beta' \in \qq{\mu}$. By substituting \eqref{e:GGGxy}, \eqref{e:fbound1}, and \eqref{e:fbound2}, we conclude that:
\begin{align}\begin{split}\label{e:J1bound}
    J_1&=\sum_{\al\neq\beta\in\qq{\mu}}\frac{1}{(d-1)^{2\ell}Z_{\cF^+}}\sum_{\bfi^+}\bE\left[\bm1(\cG, \wt \cG\in \Omega) I(\cF^+,\cG)  |G^{(b_\al b_\beta)}_{c_\al c_\beta}|^2 \Pi\right]\\
    &+\frac{1}{Z_{\cF^+}}\sum_{\bfi^+}\bE\left[\bm1(\cG,\wt\cG\in \Omega) I(\cF^+,\cG) \left(\frac{(d-1)^{2\ell}}{N^{\fb}}\sum_{\al\in\qq{\mu}}\sum_{
x\sim b_\alpha,
x\neq a_\alpha} |G^{(b_\al)}_{c_\al x}|^2 +\frac{\Phi}{N^{\fb/2}}\right)\Pi\right],\\
&\lesssim \frac{1}{Z_{\cF^+}}\sum_{\bfi^+}\bE\left[\bm1(\cG,\wt\cG\in \Omega) I(\cF^+,\cG) \left(\frac{1}{N^{3\fb/4}}\sum_{
x\sim b_\alpha,
x\neq a_\alpha} |G^{(b_\al)}_{c_\al x}|^2 +N^\fo \Phi\right)\Pi\right].
\end{split}\end{align}
where in the last statement we used \eqref{e:fbound2} and the permutation invariance of the vertices, so that the expectation does not depend on $\al$. By the same argument we can also bound $J_2$ in \eqref{e:IIGU2} as, 
\begin{align}\label{e:J2bound}
    J_2\lesssim \frac{1}{N^{3\fb/4}Z_{\cF^+}}\sum_{\bfi^+}\bE\left[\bm1(\cG,\wt\cG\in \Omega) I(\cF^+,\cG) \left(\frac{1}{N^{3\fb/4}}\sum_{
x\sim b_\alpha,
x\neq a_\alpha} |G^{(b_\al)}_{c_\al x}|^2 +(d-1)^{2\ell}N^\fo \Phi\right)\Pi\right].
\end{align}

By plugging \eqref{e:J1bound} and \eqref{e:J2bound} into \eqref{e:IIGU2}, we conclude
\begin{align}\begin{split}\label{e:IIGU3}
    \eqref{e:IIGU2}
    &\lesssim \frac{1}{Z_{\cF^+}}\sum_{\bfi^+}\bE\left[\bm1(\cG,\wt\cG\in \Omega) I(\cF^+,\cG) \left(\frac{1}{N^{3\fb/4}}\sum_{
x\sim b_\alpha,
x\neq a_\alpha} |G^{(b_\al)}_{c_\al x}|^2 +N^\fo \Phi\right)\Pi\right] \\
  &\lesssim 
   \frac{1}{N^{3\fb/4} Z_{\cF^+}}\sum_{\bfi^+}\bE\left[\bm1(\cG\in \Omega) I(\cF^+,\cG) \sum_{
x\sim b_\alpha,
x\neq a_\alpha} |G^{(b_\al)}_{c_\al x}|^2\Pi\right]+\bE[N^\fo\Phi\Pi]\\
&\lesssim \frac{1}{N^{3\fb/4}(Nd)} \sum_{b_\al\sim c_\al}  \bE\left[ I(\{c_\al,b_\al\},\cG)\sum_{
x\sim b_\alpha,
x\neq a_\alpha}  |G^{(b_\al)}_{c_\al x}|^2\Pi\right] +\bE[N^\fo\Phi\Pi]\\
&\lesssim \frac{1}{N^{1+3\fb/4}} \sum_{o\in\qq{N}}   \bE\left[ I(\{i,o\},\cG) |G^{(o)}_{ij}|^2\Pi\right] +\bE[N^\fo\Phi\Pi],
\end{split}\end{align}
where in the second statement, we dropped the indicator function $\bm1(\wt \cG\in \Omega)$; in the third statement sum over $\bfi^+\setminus\{b_\al,c_\al\}$; for the last statement, we used the permutation invariance of the vertices, so that $G_{ij}^{(o)}$ and $G_{c_\al x}^{(b_\al)}$ have the same distribution. 

Thus \eqref{e:IIGU2} and \eqref{e:IIGU3} together leads to the following bound 
    \begin{align*}
\frac{1}{N} \sum_{o\in\qq{N}} \bE[I(\{i,o\},\cG) |G_{ij}^{(o)}|^2\Pi]\lesssim  \frac{1}{N^{1+3\fb/4}} \sum_{o\in\qq{N}} \bE[I(\{i,o\},\cG) |G_{ij}^{(o)}|^2\Pi]+\bE[N^\fo\Phi\Pi],
\end{align*}
and the claim \eqref{e:sameasdisconnect} follows from rearranging.

To prove \eqref{e:GiGi}, we can proceed in the same way as \eqref{e:sameasdisconnect}. The same as in \eqref{e:switching}, we have
\begin{align}\begin{split}\label{e:step1}
 &\phantom{{}={}}\frac{1}{N}\sum_{o\in \qq{N}}\bE[I_o\bm1(\cG\in \Omega)(G_{i j}^{(o)})^2(Q_t-Y_t)^{p-1}]\\
    &=\frac{1}{Z_{\cF}}\sum_{\bfi}\bE\left[I(\cF,\cG)I(\cF,\wt \cG)\bm1(\cG,\wt \cG\in \Omega) | G^{(o)}_{ij}|^2 (Q_t-Y_t)^{p-1}\right]+\OO(N^{-\fb}\bE[\Psi_p])\\
     &=\frac{1}{Z_{\cF}}\sum_{\bfi}\bE\left[I(\cF,\cG)I(\cF,\wt \cG)\bm1(\cG,\wt \cG\in \Omega) | G^{(o)}_{ij}|^2 (\wt Q_t-\wt Y_t)^{p-1}\right]+\OO(N^{-\fb}\bE[\Psi_p])\\
     &+\OO\left(\frac{1}{Z_{\cF}}\sum_{\bfi}\bE\left[I(\cF,\cG)\bm1(\cG,\in \Omega) | G^{(o)}_{ij}|^2 (d-1)^{8\ell}\Upsilon \Phi
    (|Q_t-Y_t|+(d-1)^{8\ell}\Upsilon \Phi)^{p-2}\right]\right),
\end{split}\end{align}
where the last statement follows from replacing $(Q_t-Y_t)^{p-1}$ by $(\wt Q_t-\wt Y_t)^{p-1}$ and using \eqref{e:tilde_bound} that 
\begin{align*}
|(\wt Q_t-\wt Y_t)-(Q_t-Y_t)|(|\wt Q_t-\wt Y_t|+|Q_t-Y_t|)^{p-2}\lesssim (d-1)^{8\ell}\Upsilon \Phi
    (|Q_t-Y_t|+(d-1)^{8\ell}\Upsilon \Phi)^{p-2}.
\end{align*}
Using \eqref{e:sameasdisconnect} with $\Pi= \bm1(\cG\in \Omega)(d-1)^{8\ell}\Upsilon \Phi(|Q_t-Y_t|+(d-1)^{8\ell}\Upsilon \Phi)^{p-2}$, we can then bound the last term in \eqref{e:step1} by $\OO(N^{-\fb/2}\bE[\Psi_p])$. Then using the exchangeability of $\cG, \tcG$, by the same argument as in \eqref{e:switching_Pi}, \eqref{e:step1} leads to 
\begin{align}\begin{split}\label{e:changetilde}
 &\phantom{{}={}}\frac{1}{N}\sum_{o\in \qq{N}}\bE[I_o\bm1(\cG\in \Omega)(G_{i j}^{(o)})^2(Q_t-Y_t)^{p-1}]\\
        &=\frac{1}{Z_{\cF^+}}\sum_{\bfi^+}\bE[\bm1(\tcG,\cG\in \Omega)I(\cF^+,\cG) (\wt G_{i j}^{(o)})^2( Q_t- Y_t)^{p-1}]+\OO(N^{-\fb/2}\bE[\Psi_p]).
\end{split}\end{align}

Starting from \eqref{e:changetilde}, \eqref{e:k=1Pxx}, \eqref{e:IIGU2}, the first statement in \eqref{e:J1bound} , \eqref{e:J2bound} and \eqref{e:IIGU3} (recall that error $\bE[N^\fo \Phi \Pi]$ in \eqref{e:J1bound} is from \eqref{e:fbound2}) together give
\begin{align*}
    &\phantom{{}={}}\frac{1}{N}\sum_{o\in \qq{N}}\bE[I_o\bm1(\cG\in \Omega)(G_{i j}^{(o)})^2(Q_t-Y_t)^{p-1}]\\
    &=\frac{\msc^{4\ell}(z_t)}{(d-1)^{2\ell}}\sum_{\dist_\cT(i, l_\al)=\ell-1\atop \dist_\cT(j, l_\beta)=\ell-1}\frac{1}{Z_{\cF^+}}\sum_{\bfi^+}\bE[\bm1(\tcG,\cG\in \Omega)I(\cF^+,\cG) (G^{(b_\al b_\beta)}_{c_\al c_\beta})^2(Q_t-Y_t)^{p-1}]\\
    &+\OO\left(\frac{1}{N^{1+3\fb/4}}\sum_{o}\bE\left[\bm1(\cG\in\Omega) I(\{i,o\},\cG) |G^{(o)}_{ij}|^2 |Q_t-Y_t|^{p-1} \right] +N^{-\fb/2}\bE[\Phi |Q_t-Y_t|^{p-1}]\right)\\
    &=\frac{\msc^{4\ell}(z_t)}{Z_{\cF^+}}\sum_{\bfi^+}\bE[\bm1(\cG\in \Omega)I(\cF^+,\cG) (G^{(b_\al b_\beta)}_{c_\al c_\beta})^2(Q_t-Y_t)^{p-1}]+\OO\left(N^{-\fb/2}\bE[\Psi_p]\right).
\end{align*}
In the last statement, we used the permutation invariance of the vertices, ensuring that the expectation does not depend on $\al, \beta$ (as long as $\al\neq \beta$). Additionally, there are $(d-1)^\ell$ indices such that $\dist_\cT(i, l_\al)=\ell-1$ or $\dist_\cT(j, l_\beta)=\ell-1$.
We also bound the error term using
\eqref{e:sameasdisconnect} with $\Pi= \bm1(\cG\in \Omega)(|Q_t-Y_t|+(d-1)^{8\ell}\Upsilon \Phi)^{p-1}$.

The proofs of \eqref{e:GiGjGk} and \eqref{e:GiGj} follow from similar arguments as that of \eqref{e:sameasdisconnect}. We will only sketch the proof of the first statement in \eqref{e:GiGj} here. 

Take $\cF=\{\{i,o\}, \{b,c\}\}$. Similar to \eqref{e:replace_indicator}, we can replace the indicator function $I_{oc}$ by $I(\cF,\cG)$, and the error is negligible.
We perform a local resampling around $o$. The same argument as in \eqref{e:changetilde} using the $(\cG, \tcG)$ form an exchangeable pair, gives
\begin{align}\begin{split}\label{e:GGexp}
&\phantom{{}={}}\frac{1}{Z_{\cF}}\sum_{\bfi}\bE\left[\bm1(\cG\in \Omega) I(\cF,\cG)G_{i c}^{( b o)}G_{j c}^{( b o)}(Q_t-Y_t)^{p-1}\right]\\
&=\frac{1}{Z_{\cF^+}}\sum_{\bfi^+}\bE\left[I(\cF^+,\cG)\bm1(\cG,\wt \cG\in \Omega)\wt G_{i c}^{( b o)}\wt G_{j c}^{( b o)}( Q_t- Y_t)^{p-1}\right] +\OO( N^{-\fb/2}\bE[ \Psi_p]),
\end{split}\end{align}

Conditioned on that $\cG, \tcG\in \Omega$ and $I(\cF^+,\cG)=1$, by the Schur complement formula \eqref{e:Schur1}, 
\begin{align}\begin{split}\label{e:GGexp2}
   &\phantom{{}={}}\wt G_{i c}^{( b o)}\wt G_{j c}^{( b o)}
   =\sum_{x\in \bT\setminus\{o\}}\wt G_{ix}^{(ob)}((H+\sqrt{t}Z) \wt G^{(\bT b)})_{xc}\sum_{y\in \bT\setminus\{o\}}\wt G_{jy}^{(ob)}((H+\sqrt{t}Z) \wt G^{(\bT b)})_{yc}\\
   &=\frac{1}{(d-1)}\left(\sum_{\al\in \qq{\mu}}\wt G_{il_\al  }^{( b o)}\wt G_{c_\al c  }^{( b \bT)}+\OO ((d-1)^\ell N^\fo\sqrt{t\Phi})\right)\left(\sum_{\beta\in \qq{\mu}}\wt G_{j l_\beta }^{( b o)}\wt G_{c_\beta c}^{( b \bT)}+\OO ((d-1)^\ell N^\fo\sqrt{t\Phi})\right)\\
   &=
   \frac{1}{(d-1)}\sum_{\al,\beta\in \qq{\mu}}\wt G_{il_\al  }^{( b o)}\wt G_{c_\al c  }^{( b \bT)}\wt G_{j l_\beta }^{( b o)}\wt G_{c_\beta c}^{( b \bT)}
   +\OO \left(N^{-2\fb} \sum_{\al\in \qq{\mu}}(|\wt G^{(b\bT)}_{c_\al c}|^2+\Phi)\right),
\end{split}\end{align}
where in the second statement, we use for $x,y\in \bT$, $|(Z \wt G^{(\bT b)})_{xc}|, |(Z \wt G^{(\bT b)})_{yc}|\leq N^\fo\sqrt{\Phi}$ from \eqref{e:Werror2}; in the third statement, we use for $\tcG\in \Omega$,
$|\wt G_{c_\al c  }^{( b \bT)}|, |\wt G_{c_\beta c}^{( b \bT)}|\lesssim N^{-\fb}$ from \eqref{eq:local_law}. Similar to \eqref{e:yizhong2}, by \eqref{e:use_Ward2}, we have
\begin{align}\label{e:erzhong1}
    \frac{1}{Z_{\cF^+}}\sum_{\bfi^+}I(\cF^+,\cG)N^{-2\fb}\left(\Phi+\sum_{\al\in \qq{\mu}}|\wt G^{(b\bT)}_{c_\al c}|^2\right)\lesssim N^{-2\fb}(\Phi+N^\fo \Phi +N^{2\fc-1})\lesssim N^{-\fb/2}\Phi.
\end{align}
Thus by plugging \eqref{e:GGexp2} and \eqref{e:erzhong1} into \eqref{e:GGexp} and conditioning on $\cG, \tcG\in \Omega$, we get
\begin{align*}
\frac{1}{Z_{\cF^+}}\sum_{\bfi^+}I(\cF^+,\cG)\wt G_{i c}^{( b o)}\wt G_{j c}^{( b o)}=\sum_{\bfi^+}\sum_{\al, \beta\in \qq{\mu}}\frac{I(\cF^+,\cG)}{(d-1)Z_{\cF^+}}\wt G_{il_\al  }^{( b o)}\wt G_{c_\al c  }^{( b \bT)}\wt G_{j l_\beta }^{( b o)}\wt G_{c_\beta c}^{( b \bT)}+\OO\left( \frac{\Phi}{N^{\fb/2}}\right).
\end{align*}
We recall from \eqref{eq:local_law}, that $\wt G_{il_\al  }^{( b o)}=\wt L^{(o)}_{i l_\al}+\OO(N^{-\fb})= L^{(o)}_{i l_\al}+\OO(N^{-\fb})$ and $,\wt G_{jl_\beta  }^{( b o)}=\wt L^{(o)}_{j l_\beta}+\OO(N^{-\fb})=L^{(o)}_{j l_\beta}+\OO(N^{-\fb})$. Similar to \eqref{e:replacealpha}, we can replace $\wt G_{c_\al c}^{( b \bT)}, \wt G_{c_\beta c}^{( b \bT)}$ by $G_{c_\al c}^{(b_\al b )}, G_{c_\beta c}^{( b_\beta b)}$, as
\begin{align}\label{e:erzhong2}
\frac{1}{Z_{\cF^+}}\sum_{\bfi^+}I(\cF^+,\cG)\wt G_{i c}^{( b o)}\wt G_{j c}^{( b o)}
=\frac{1}{Z_{\cF^+}}\sum_{\bfi^+}\sum_{\al, \beta\in \qq{\mu}}\frac{ L^{(o)}_{i l_\al} L^{(o)}_{j l_\beta} }{d-1} I(\cF^+,\cG) G_{c_\al c}^{(b_\al b )} G_{c_\beta c}^{( b_\beta b)}+\OO\left( \frac{\Phi}{N^{\fb/2}}\right).
\end{align}
We recall that $i\neq j$, so $L^{(o)}_{i l_\al}L^{(o)}_{j l_\beta}$ is nonzero only if $\dist_\cG(i, l_\al)=\ell-1, \dist_\cG(j, l_\beta)=\ell-1$. In particular $\al\neq \beta$, and we can sum over $(b_\al, c_\al)$ and $(b_\beta, c_\beta)$ separately. Thanks to \eqref{e:Gccbb_youyige} and \eqref{e:use_Ward}
\begin{align}\begin{split}\label{e:erzhong3}
  &\phantom{{}={}}\sum_{\al\neq \beta\in \qq{\mu}}\frac{ L^{(o)}_{i l_\al} L^{(o)}_{j l_\beta} }{d-1}\frac{1}{Z_{\cF^+}}\sum_{\bfi^+} I(\cF^+,\cG) G_{c_\al c}^{(b_\al b )} G_{c_\beta c}^{( b_\beta b)}\\
  &\lesssim \sum_{\al\neq \beta\in \qq{\mu}}\frac{1 }{(d-1)^{\ell}}\frac{1}{Z_{\cF^+}}\sum_{\bfi^+} I(\cF^+,\cG) N^{-\fb/2}(|G_{b_\al c}^{( b )}|+\Phi) N^{-\fb/2}(|G_{b_\beta c}^{( b )}|+\Phi)
  =\OO(N^{-\fb/2} \Phi).
\end{split}\end{align}
The claim \eqref{e:GiGj} follows from plugging \eqref{e:erzhong2} and \eqref{e:erzhong3} into \eqref{e:GGexp}.
\end{proof}

\begin{proof}[Proof of Proposition \ref{lem:task2}]
We will only prove the first statement in \eqref{eq:task2} with $\mathbb X=\emptyset$ as the other statements are similar. Thanks to \Cref{l:diffG1}, we have
\begin{align*}
    \tG^{(\bT)}_{c_\al c_\beta}=G^{(b_\al b_\beta)}_{c_\al c_\beta}+\cE_{\al \beta},
\end{align*}
where $\cE_{\al \beta}$ as in \eqref{e:cEbound}. 
The statement follows from showing 
\begin{align}\begin{split}\label{e:G2bound}
     &\sum_{\gamma\in \qq{\mu}}\sum_{x\in \cN_\gamma}\frac{1}{Z_{\cF^+}}\sum_{\bfi^+}\bE[I(\cF^+,\cG)\bm1(\cG\in \Omega)|G_{c_\al x}^{(\bT\bW)}|^2\Pi]\lesssim (d-1)^\ell \bE[N^\fo\Pi].\\
     &\sum_{\gamma\in\qq{\mu}\setminus\{\al,\beta\}}\frac{1}{Z_{\cF^+}}\sum_{\bfi^+}\bE[I(\cF^+,\cG)\bm1(\cG\in \Omega)|G_{c_\al b_\gamma}^{(\bT b_\al b_\beta)}|^2\Pi]\lesssim (d-1)^\ell \bE[N^\fo\Pi],\\
     &\sum_{x\in\bT}\frac{1}{Z_{\cF^+}}\sum_{\bfi^+}\bE[I(\cF^+,\cG)\bm1(\cG\in \Omega)|G_{c_\al x}^{( b_\al b_\beta)}|^2\Pi]\lesssim (d-1)^\ell \bE[N^\fo\Pi],
\end{split}\end{align}
where  $N_\gamma=\{x\neq c_\gamma: x\sim b_\gamma\}\cup\{a_\gamma\}$.

In the following we prove the first statement in \eqref{e:G2bound}, the others are similar, so we omit their proofs.
If $x\in \cN_\gamma$ with $\gamma\neq \al$, or $x=a_\al$, we can first sum over $b_\al\sim c_\al$, and  \eqref{e:use_Ward} gives
\begin{align}\label{e:qinkuang1}
   \frac{1}{Z_{\cF^+}}\sum_{\bfi^+}I(\cF^+,\cG)\bm1(\cG\in \Omega)|G_{c_\al x}^{(\bT\bW)}|^2\lesssim N^\fo\Phi.
\end{align}
Otherwise $x\sim b_\al, x\neq c_\al$. We recall the upper bound on $|G^{(\bT \bW)}_{c_\alpha x}-G^{(b_\alpha)}_{c_\alpha x}|$ from \eqref{e:yizhong4} and \eqref{e:yizhong5}
\begin{align}\label{e:qinkuang2}
 \frac{1}{Z_{\cF^+}}\sum_{\bfi^+}I(\cF^+,\cG)\bm1(\cG\in \Omega)G^{(\bT \bW)}_{c_\alpha x}
 =\frac{1}{Z_{\cF^+}}\sum_{\bfi^+}I(\cF^+,\cG)\bm1(\cG\in \Omega)G^{(b_\alpha)}_{c_\alpha x}
 +\OO(N^{-\fb/2}\Phi).
\end{align}
By combining \eqref{e:qinkuang1} and \eqref{e:qinkuang2}, we conclude
    \begin{align}\begin{split}\label{e:decomp}
    &\phantom{{}={}}\sum_{\gamma\in \qq{\mu}}\sum_{x\in \cN_\gamma}\frac{1}{Z_{\cF^+}}\sum_{\bfi^+}\bE[I(\cF^+,\cG)\bm1(\cG\in \Omega)|G_{c_\al x}^{(\bT\bW)}|^2\Pi]\\
    &\lesssim 
    \sum_{x\sim b_\al, x\neq c_\al}\frac{1}{Z_{\cF^+}}\sum_{\bfi^+}\bE[I(\cF^+,\cG)\bm1(\cG\in \Omega)|G_{c_\al x}^{(b_\al)}|^2\Pi]
    +(d-1)^{\ell}\bE[N^\fo\Phi\Pi]
    \lesssim (d-1)^{\ell}\bE[N^\fo\Phi\Pi].
    \end{split}\end{align}
where in the last line we used \eqref{e:sameasdisconnect} to bound the first term; The first claim in \eqref{e:G2bound} follows from combining  \eqref{e:qinkuang1} and \eqref{e:decomp}.
\end{proof}

\subsection{Proof of \Cref{l:first_term}}
\label{s:first_term}

The following lemma collects some explicit computations related to the Green's functions.

\begin{lemma}\label{l:second_term}
Adopt the notation of \Cref{s:setting7} with $\cF=\{i,o\}$ and $z\in \bf D$ such that $|z-E_t|\leq N^{-\fg}$. 
We assume that $I(\cF^+,\cG)=1$, denote the Green's functions $L=P(\cT, z_t,\msc(z_t))$ and $L^{(i)}=P^{(i)}(\cT, z_t,\msc(z_t))$, and the set $\sfA_i := \{ \alpha \in \qq{\mu} : \dist_{\cT}(i, l_\al) = \ell+1 \}$.
 The following holds 

\begin{align}\begin{split}\label{e:schur_two1}
&\sum_{\al \in \sfA_i, \beta\in\qq{\mu}\atop\al\neq \beta}\frac{\msc^{2\ell}(z_t)L_{l_\beta l_\beta}^{(i)} }{(d-1)^{\ell+2}}
=\left(\frac{d+1}{d-2}-\frac{d(d-1)^{\ell}}{d-2}  \right)+\OO\left((d-1)^{-\ell}\right),
\\
&\sum_{\al \in \sfA_i, \beta\in\qq{\mu}\atop\al\neq \beta}\frac{\msc^{2\ell}(z_t)L_{l_\al l_\beta}^{(i)} }{(d-1)^{\ell+2}}
=-\left(\ell+1-\frac{1}{d-2}\right)+\OO\left((d-1)^{-\ell}\right),\\
&\sum_{\al\neq \beta\in \sfA_i}
  \frac{2\msc^{2\ell}(z_t)L_{l_\al l_\beta}}{(d-1)^{\ell+2}}=-2(\ell+1)
+\OO\left((d-1)^{-\ell}\right),\\
&\sum_{\al\neq\beta\in \sfA_i} \frac{2\md(z_t)\msc^{6\ell}(z_t)}{(d-1)^{2\ell+3}}=-\frac{2}{d-2}+\OO\left((d-1)^{-\ell}\right),
\end{split}\end{align}
and for the diagonal term, we have
\begin{align}\label{e:schur_two3}
\sum_{\al\in \sfA_i}\frac{\msc^{2\ell}(z_t)L_{ l_\al  l_\al}^{(i)}}{(d-1)^{\ell+2}} =-\frac{1}{d-2}+\OO((d-1)^{-\ell}).
\end{align}
If $|z+E_t|\leq N^{-\fg}$,  analogous statements hold after multiplying the right-hand sides of \eqref{e:schur_two1} and \eqref{e:schur_two3} by $-1$. 
\end{lemma}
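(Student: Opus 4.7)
The plan is to reduce each sum to an explicit arithmetic expression by using the closed-form Green's function formulas \eqref{e:Gtreemkm} for the infinite $d$-regular tree and \eqref{e:Gtreemsc} for the infinite $(d-1)$-ary tree. Since $I(\cF^+,\cG)=1$, the ball $\cT$ is a genuine truncated $d$-regular tree and the boundary-weighted Green's function $L$ (resp.\ $L^{(i)}$) agrees on $\bT$ with the Green's function of the infinite $d$-regular tree (resp.\ with the infinite $(d-1)$-ary tree rooted at $o$, plus disjoint infinite $(d-1)$-ary trees rooted at the children of $i$). Throughout we write $\tau=-\msc(z_t)/\sqrt{d-1}$, so $L_{xy}=m_d(z_t)\,\tau^{\dist(x,y)}$ and, by \eqref{e:Gtreemsc}, on each component of $\cT^{(i)}$ we have $L^{(i)}_{xy}=m_d(z_t)\bigl(1-\tau^{2\,\mathrm{anc}(x,y)+2}\bigr)\tau^{\dist(x,y)}$ with $\mathrm{anc}$ measured to that component's root.

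The first step is the combinatorial bookkeeping. I record $|\sfA_i|=(d-1)^{\ell+1}$ and $|\qq{\mu}\setminus\sfA_i|=(d-1)^{\ell}$ (each of the $(d-1)$ non-$i$ children of $o$ produces a subtree with $(d-1)^{\ell-1}$ leaves, each of which contributes $d-1$ boundary indices; the children of $i$ produce analogously). For $\beta\in\sfA_i$ the leaf $l_\beta$ sits at depth $\ell$ from the root $o$ of its component, so $\mathrm{anc}(l_\beta,l_\beta)=\ell$ and $L^{(i)}_{l_\beta l_\beta}=m_d(z_t)(1-\tau^{2\ell+2})$; for $\beta\notin\sfA_i$ the leaf sits at depth $\ell-2$ from its component root (a child of $i$), so $L^{(i)}_{l_\beta l_\beta}=m_d(z_t)(1-\tau^{2\ell-2})$. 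For off-diagonal pairs in $\sfA_i^2$ I classify by the depth $k\in\{0,1,\ldots,\ell-1\}$ of the common ancestor in the $o$-component, which gives precisely $(d-1)^{2\ell-k+1}(d-2)$ ordered index pairs with common ancestor at that depth (including the case $k=0$), plus $(d-1)^{\ell+1}(d-2)$ pairs sharing the same leaf.

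Next I carry out the algebraic simplification. For the second and third sums, both $L^{(i)}_{l_\al l_\beta}$ and $L_{l_\al l_\beta}$ (for a pair at common-ancestor depth $k$) contain a factor $\tau^{2(\ell-k)}=\msc^{2(\ell-k)}/(d-1)^{\ell-k}$; the power of $(d-1)$ exactly cancels the counting factor $(d-1)^{2\ell-k+1}$, leaving $(d-1)^{\ell+1}m_d\msc^{2(\ell-k)}(d-2)$ per depth-class. Summing over $k$ and dividing by $(d-1)^{\ell+2}$ reduces everything to the geometric series $\sum_{j=1}^{\ell}\msc^{2j}$ (evaluated to $\ell$ at the edge) and $\sum_{k=0}^{\ell-1}(d-1)^{-k-1}=(1-(d-1)^{-\ell})/(d-2)$. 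The first sum requires an extra step: because $L^{(i)}_{l_\beta l_\beta}$ takes different values on $\sfA_i$ and on its complement, the combination $(d-1)^{\ell+1}\tau^{2\ell+2}=\msc^{2\ell+2}$ and $(d-1)^{\ell}\tau^{2\ell-2}=(d-1)\msc^{2\ell-2}$ both produce $O(1)$ contributions after the $\msc\to-1$ limit, and it is the $(d-1)\msc^{2\ell-2}$ term that upgrades the constant from $1/(d-2)$ to $(d+1)/(d-2)$. The fourth and diagonal sums are immediate one-line computations once the factor $m_d(z_t)\msc^{2\ell}(z_t)/(d-1)$ is identified.

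The remaining task is to control the error incurred by replacing $\msc(z_t)$ by $-1$ and $m_d(z_t)$ by $-(d-1)/(d-2)$. By \eqref{e:medge_behavior} and \eqref{e:relation_zt_z}, $\sqrt{z_t-2}=O(t+\sqrt{|z-E_t|})=O(N^{-\fg/2}+N^{-1/3+\ft})$ uniformly under the hypothesis $|z-E_t|\leq N^{-\fg}$, hence $\msc^{2\ell}(z_t)-1=O(\ell\sqrt{z_t-2})$ and $m_d(z_t)+(d-1)/(d-2)=O(\sqrt{z_t-2})$. Using $\ell/\log_{d-1}N\ll\fb\ll\fc\ll\fg$ from \eqref{e:parameters}, these are $O(N^{-\fg/3})\ll(d-1)^{-\ell}=N^{-c}$ with $c=\ell/\log_{d-1}N$, so every deviation from the edge-limit values is absorbed into the $O((d-1)^{-\ell})$ error after multiplication by the counting factors $(d-1)^{\ell+1}$ etc. The only slightly delicate point I anticipate is that for $\beta\notin\sfA_i$ the correction from $\tau^{2\ell-2}$ is a factor $(d-1)$ larger than $\tau^{2\ell+2}$, but this is exactly compensated by the fact that the complement $\qq{\mu}\setminus\sfA_i$ has $(d-1)^{\ell}$ elements rather than $(d-1)^{\ell+1}$, so the net contribution still has the claimed size. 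Finally, when $|z+E_t|\leq N^{-\fg}$, the edge limit becomes $\msc(z_t)\to +1$ and $m_d(z_t)\to (d-1)/(d-2)$ (with the opposite sign of the square-root branch); this flips the overall sign of each leading expression, producing the stated $(-1)$-multiplication.
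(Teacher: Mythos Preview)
Your proposal is correct and follows essentially the same approach as the paper's proof: both use the explicit Green's function formulas \eqref{e:Gtreemkm}--\eqref{e:Gtreemsc}, classify the pairs $(\alpha,\beta)$ by common-ancestor depth (equivalently, the paper's parameter $r=\ell-k$), compute the resulting geometric sums exactly, and then pass to the edge limit $\msc(z_t)\to -1$, $\md(z_t)\to -(d-1)/(d-2)$ using \eqref{e:medge_behavior} and the parameter ordering $\ell/\log_{d-1}N\ll\fb\ll\fg$. The only cosmetic difference is that the paper indexes by $r=\dist(l_\alpha,l_\beta)/2$ rather than by the ancestor depth $k$, and absorbs the same-leaf case $r=0$ directly into the single formula $(d-2)(d-1)^r$ rather than treating it separately; the arithmetic is otherwise identical.
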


When $I(\cF^+,\cG)=1$, vertex $o$ has radius $\fR$ tree neighborhood. Then $\mu=d(d-1)^{\ell}$, $|\sfA_i|=(d-1)^{\ell+1}$, $L^{(i)}_{l_\al l_\beta}$ and $L_{l_\al l_\beta}$ are given explicitly by the Green's function of trees from \Cref{greentree}. \Cref{l:second_term} follows from direct computation. Moreover, $\msc(z_t)=\pm 1+\OO(N^{-\fg/2})$ if $|z\pm E_t|\leq N^{-\fg}$. We postpone its proof to \Cref{s:tree_computation}.

The following lemma states that averages of Green's functions (with some vertices removed) can be written in terms of $m_t(z)$. Later, it will be used to prove \Cref{l:first_term}. 

\begin{lemma}\label{l:error_term}
Adopt the notation of \Cref{s:setting7}. Fix $z\in \bf D$ such that $\min\{|z-E_t|, |z+E_t|\}\leq N^{-\fg}$, and assume $\cG\in \Omega$. We recall the indicator function $I_{cc'}$ from \Cref{def:Ic}. The following holds with overwhelmingly high probability over $Z$:
\begin{align}\begin{split}\label{e:remove_indices}
&\frac{1}{(Nd)^2}\sum_{b\sim c \atop b'\sim c'}(G_{c c' }^{(b)})^2 I_{cc'}
=\frac{1}{\cA}\frac{\del_z m_t(z)}{N}+\OO(N^{-\fb/2} \Phi),\\
&\frac{1}{(Nd)^2}\sum_{b\sim c \atop b'\sim c'}(G_{c c' }^{(b b')})^2 I_{cc'}
=\frac{1}{\cA^2}\frac{\del_z m_t(z)}{N}+\OO(N^{-\fb/2} \Phi).
\end{split}\end{align}
Moreover, for dummy variables $x,x'\in \{b, c\}, y,y'\in \{b', c'\}, w,w'\in \{u, v\}$, with $\theta=\bm1(x\neq x')+\bm1(y\neq y')+\bm1(w\neq w')$ we have
\begin{align}\label{e:sumGGG}
&\phantom{{}={}}\frac{1}{(Nd)^3}\sum_{b\sim c \atop b'\sim c'}\sum_{u\sim v}
G_{xy'}G_{yw'}G_{wx'}= \left(\frac{2\sqrt{d-1}}{d}\right)^\theta\frac{\del^2_z m_t(z)}{2N^2}+\OO\left(\frac{N^{-\fb/2} \Phi}{N\eta}\right).
\end{align}
Moreover, take a forest $\cF=\{\{b,c\}, \{b',c'\}\}$ consisting of two unused core edges, and denote $I(\cF,\cG)=I_{cc'}$ (which satisfies the requirements in \eqref{def:Ic}), the following estimates also hold,
\begin{align}\begin{split}\label{e:GIGG1}
&\frac{1}{(Nd)^3}\sum_{u\sim v}\sum_{b\sim c\atop b'\sim c'} G_{c c'}^{(b b')} I_{cc'}
    \left( G^{\circ}_{v c}+\frac{\msc(z_t)  G^{\circ}_{v b}}{\sqrt{d-1}}-\frac{G_{uv}}{G_{uu}}\left( G^{\circ}_{u c}+\frac{\msc(z_t)  G^{\circ}_{u b}}{\sqrt{d-1}}\right)\right)\\
    &\phantom{\frac{1}{(Nd)^3}\sum_{u\sim v}\sum_{b, c\atop b', c'}}\times\left( G^{\circ}_{v c'}+\frac{\msc(z_t)  G^{\circ}_{v b'}}{\sqrt{d-1}}-\frac{G_{uv}}{G_{uu}}\left( G^{\circ}_{u c'}+\frac{\msc(z_t)  G^{\circ}_{u b'}}{\sqrt{d-1}}\right)\right)
= \frac{\del^2_z m_t(z)}{2\cA^3N^2}+\OO\left(\frac{N^{-\fb/2} \Phi}{N\eta}\right),\\
&\frac{1}{(Nd)^3}\sum_{u\sim v}\sum_{b\sim c\atop b'\sim c'} G_{c c'}^{(b b')} I_{cc'}\left( G^{\circ}_{u c}+\frac{\msc(z_t)  G^{\circ}_{u b}}{\sqrt{d-1}}\right)\left( G^{\circ}_{u c'}+\frac{\msc(z_t)  G^{\circ}_{u b'}}{\sqrt{d-1}}\right)=\frac{\del^2_z m_t(z)}{2\cA^2N^2}+\OO\left(\frac{N^{-\fb/2} \Phi}{N\eta}\right).
\end{split}\end{align}
\end{lemma}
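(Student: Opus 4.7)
The proofs rest on three core inputs: (i) the Ward--type identity $\partial_z G = G^2$, which gives $\sum_j G_{ij}^2 = \partial_z G_{ii}$ and, summing over $i$, $\sum_{i,j}G_{ij}^2 = N\partial_z m_t$, with analogous formulas for removed Green's functions and the cubic identity $\sum_{i,j,k}G_{ij}G_{jk}G_{ki} = (N/2)\partial_z^2 m_t$; (ii) the row--sum identity $\sum_{c\sim b}G_{cx} = \sqrt{d-1}(zG_{bx}+\delta_{bx}-\sqrt{t}(ZG)_{bx})$, which at $z\approx 2$ yields the factor $2\sqrt{d-1}/d$ per edge--averaging; and (iii) the relation $\partial_z Q_t \approx \partial_z m_t/\cA$ near the edge, which follows from $m_t-\md(z_t)\approx \cA(Q_t-\msc(z_t))$ (see \eqref{e:mQrelation}) together with $m_d'(z_t)/\msc'(z_t) = \cA$ from \eqref{e:medge_behavior}. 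The indicator $I_{cc'}$ and the $\sqrt{t}(ZG)$ terms contribute negligible errors by \eqref{e:Gest}, \eqref{e:use_Ward}, and \eqref{e:Werror}.

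For the first line of \eqref{e:remove_indices}, after discarding $I_{cc'}$ at negligible cost, I would use that $(G^{(b)}_{cc'})^2$ is independent of $b'$, so summing over $b'\sim c'$ produces a factor $d$; then input (i) converts $\sum_{c'\neq b}(G^{(b)}_{cc'})^2$ to $\partial_z G^{(b)}_{cc}$, and averaging over the edge $(b,c)$ gives $\partial_z Q_t/N$. Invoking (iii) yields $\partial_z m_t/(\cA N)$. For the second line, the dependence on $b'$ forces a Schur--complement analysis: writing $G^{(bb')}_{cc'} = G^{(b)}_{cc'}-G^{(b)}_{cb'}G^{(b)}_{b'c'}/G^{(b)}_{b'b'}$ and applying (ii) to the two row sums $\sum_{c'\sim b'}G^{(b)}_{cc'}$ and $\sum_{c'\sim b'}G^{(b)}_{b'c'}$ gives the key identity $\sum_{c'\sim b'}G^{(bb')}_{cc'}\approx -\sqrt{d-1}G^{(b)}_{cb'}/G^{(b)}_{b'b'} \approx ((d-2)/\sqrt{d-1})G^{(b)}_{cb'}$. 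Since the tree structure around $b'$ enforced by $I_{cc'}$ means only one of the $d$ neighbors $c'$ carries the path from $c$ through $b'$, only one term in $\sum_{c'\sim b'}(G^{(bb')}_{cc'})^2$ is of leading size, yielding $((d-2)^2/(d-1))(G^{(b)}_{cb'})^2$; summing over $b'$ gives $((d-2)^2/(d-1))\partial_z G^{(b)}_{cc}$, and averaging then produces $((d-2)^2/(d(d-1)))\partial_z Q_t/N = \partial_z m_t/(\cA^2 N)$.

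For \eqref{e:sumGGG}, I would note that when all three pairs agree ($\theta = 0$), the sum collapses directly to $(1/N^3)\mathrm{Tr}(G^3) = \partial_z^2 m_t/(2N^2)$ by (i) and the fact that $\sum_{c\sim b}1 = d$ decouples each edge sum. Each mismatch (e.g.\ replacing $G_{bx}$ by $G_{cx}$ with $b\sim c$) forces an application of (ii): $\sum_{c\sim b}G_{cx} \approx z\sqrt{d-1}G_{bx}$, so averaging over the edge contributes the extra factor $z\sqrt{d-1}/d \approx 2\sqrt{d-1}/d$. Iterating gives the clean $(2\sqrt{d-1}/d)^\theta$ prefactor. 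For \eqref{e:GIGG1}, I would expand each $G^\circ = G - L$ using the explicit tree formula for $L$ from \Cref{greentree}, verify that the surviving top--order terms are those matching the structure of \eqref{e:sumGGG} above, and use the identities between $\msc(z_t)$, $\md(z_t)$, and $\cA$ near the edge to collapse the local edge coefficients to $1/\cA^3$ and $1/\cA^2$ respectively.

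The main obstacle will be the second identity in \eqref{e:remove_indices}: one must justify the ``only one of $d$ neighbors contributes'' heuristic in the random regular setting. For this I would exploit $I_{cc'}$ to guarantee a radius--$\fR$ tree neighborhood of $b'$; within this ball the Schur identity becomes an exact tree identity, while the contributions from $c'$ not on the geodesic from $c$ into the ball are governed by the off--diagonal decay provided by \eqref{eq:local_law} combined with \eqref{e:Gest}/\eqref{e:use_Ward}, yielding bounds smaller than $N^{-\fb/2}\Phi$. A closely related delicacy arises in \eqref{e:GIGG1}, where one must cleanly separate the diagonal tree contribution absorbed into $\msc(z_t), \md(z_t)$ factors from the bulk $\partial_z^2 m_t$ contribution; this is the step I expect will require the most careful bookkeeping of cancellations in the $G^\circ$ expansion.
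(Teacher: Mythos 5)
Your treatment of \eqref{e:sumGGG} (handle $\theta=0$ via $\Tr G^3$, then peel off one factor $2\sqrt{d-1}/d$ per mismatched index using the eigenvalue-equation row sum) and your plan for \eqref{e:GIGG1} (drop $L$-terms and the indicator, replace $G_{uv}/G_{uu}$ by $1/\sqrt{d-1}$, reduce to \eqref{e:sumGGG}) are essentially the paper's argument. The gaps are in \eqref{e:remove_indices}. First, your step (iii) is not justified: \eqref{e:mQrelation} is an approximate identity between functions, and turning it into $\partial_z Q_t\approx \partial_z m_t/\cA$ with error $\OO(N^{1-\fb/2}\Phi)$ requires differentiating it, which via a Cauchy estimate costs a factor $1/\eta$; with only \eqref{e:equation_est}-type control on $m_t-X_t$ this does not give the stated error uniformly over the range $\eta\geq N^{-1+\fg}$ covered by the lemma (for $\eta$ near $N^{-1+\fg}$ the loss $1/\eta$ overwhelms $N^{-\fb/2}\Phi$). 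The paper never passes through $Q_t$ at all: it Schur-expands $G^{(b)}_{cc'}$ (and $G^{(bb')}_{cc'}$ via \eqref{e:Gbbcc}) into entries of the full $G$ with explicit near-edge tree coefficients ($G_{cb}/G_{bb}\approx -1/\sqrt{d-1}$, etc.), and then every quadratic term is evaluated through $\Tr G^2/N=\partial_z m_t$ together with the row-sum identity \eqref{e:sum_neighbor}; the factor $1/\cA$ comes out of the coefficients, and the relation $\partial_z Q_t\approx\partial_z m_t/\cA$ is a byproduct of this computation rather than an admissible input.

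Second, for the $(G^{(bb')}_{cc'})^2$ identity, the ``only one of the $d$ neighbors $c'$ carries the path from $c$ through $b'$'' mechanism is false: on the event $I_{cc'}=1$ the vertex $c$ is at distance at least $\fR/2$ from $b'$, so the $d$ entries $G^{(bb')}_{cc'}$, $c'\sim b'$, are all comparable fluctuating quantities of size $\sim\sqrt{\Phi}$, and $\sum_{c'\sim b'}(G^{(bb')}_{cc'})^2$ is in no sense the square of a single dominant term. If you carry out the correct computation (write $G^{(bb')}_{cc'}=G^{(b)}_{cc'}-G^{(b)}_{cb'}G^{(b)}_{b'c'}/G^{(b)}_{b'b'}$, keep $G^{(b)}_{b'c'},G^{(b)}_{b'b'}$ at their tree values, and use \eqref{e:sum_neighbor} on the cross term), the coefficient $(d-2)^2/(d-1)$ arises as the cancellation $d-4+\tfrac{d}{d-1}$ among the diagonal, cross, and local-square contributions of \emph{all} $d$ neighbors --- i.e. exactly the Schur-plus-row-sum bookkeeping of the paper's proof, not a geodesic/decay argument; your proposed fix via the tree structure inside the radius-$\fR$ ball does not address this, since the relevant correlations are between bulk (delocalized) Green's function entries, not local ones. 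That your heuristic happens to reproduce the right constant is an artifact of it being consistent with the averaged identity you are trying to prove, so as written the argument is circular at this step.
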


\begin{proof}[Proof of \Cref{l:error_term}]

Without loss of generality, we assume that  $|z-E_t|\leq N^{-\fg}$. 
By \eqref{e:xi_behavior} and \eqref{e:relation_zt_z}, $z_t=z+t\md(z,t)$ satisfies
\begin{align}\label{e:z-2}
    \sqrt{z_t-2}=\OO(t+\sqrt{z-E_t})\lesssim N^{-\fg},\quad |z-2|\lesssim |E_t-2|+\OO(N^{-\fg})=\OO(N^{-\fg}),
\end{align}
where we used $t\leq N^{-1/3+\ft}$. We recall from \eqref{e:medge_behavior}
\begin{align}\begin{split}\label{e:mscmd}
\msc(z_t)&=-1+\OO(\sqrt{|z_t-2|})=-1+\OO(N^{-\fg/2}), \\
 \md(z_t)&=-\frac{d-1}{d-2}+\OO(\sqrt{|z_t-2|})=-\frac{d-1}{d-2}+\OO(N^{-\fg/2}).
\end{split}\end{align}
Conditioned on $A_{bc}A_{b'c'}I_{cc'}=1$, by \eqref{eq:infbound} and \eqref{e:mscmd}, the  terms
\begin{align}\begin{split}\label{e:centered_term}
   &G_{bc}-\frac{\sqrt{d-1}}{d-2},\quad  G_{b'c'}-\frac{\sqrt{d-1}}{d-2}, \quad 
   G_{c b'},\quad  G_{c b'},\quad  G_{c b'},\quad G_{c b'}\\
   &G_{bb}+\frac{d-1}{d-2},\quad  G_{c c}+\frac{d-1}{d-2}, \quad  G_{b'b'}+\frac{d-1}{d-2}, \quad 
 G_{c'c'}+\frac{d-1}{d-2},
\end{split}\end{align}
are all bounded by $\OO(N^{-\fb})$.
The Schur complement formula \eqref{e:Schurixj} and \eqref{e:Gbbcc} imply
\begin{align}\begin{split}\label{e:remove_one}
    &G_{c c'}^{(b)}
    =G_{c b'}-\frac{G_{c b}G_{b b'}}{G_{b b}}=
G_{c b'}-\frac{G_{b b'}}{\sqrt{d-1}}+\OO(N^{-\fb} |G_{b b'}|)\\
   &G_{c c'}^{(b b')}
    =G_{cc'}-\frac{G_{cb}G_{bc'}}{G_{bb}}-\frac{G_{cb'}G_{b'c'}}{G_{b'b'}} +\frac{G_{cb}G_{bb'}G_{b'c'}}{G_{bb}G_{b'b'}}+ \OO\left(|G_{bb'}|^2+|G_{b'b'}|^2+|G_{bc'}|^2+|G_{cb'}|^2+\frac{1}{N^2}\right) \\ 
&
    = G_{c c'}-\frac{ G_{b c'}}{\sqrt{d-1}}-\frac{ G_{c b'}}{\sqrt{d-1}}+\frac{ G_{b b'}}{d-1} +\cE,
\end{split}\end{align}
where $|\cE|\lesssim N^{-\fb} (|G_{c b'}|+|G_{c b'}|+|G_{c b'}|+|G_{c b'}|)+1/{N^2}$.

By plugging \eqref{e:remove_one} into the first statement in \eqref{e:remove_indices}, we get
\begin{align}\begin{split}\label{e:square1}
   &\phantom{{}={}}\frac{1}{(Nd)^2}\sum_{b\sim c \atop b'\sim c'}(G_{c c' }^{(b)})^2 I_{cc'} \\
   &=\frac{1}{(Nd)^2}\sum_{b\sim c \atop b'\sim c'}\left(G^2_{c b'}+\frac{G^2_{b b'}}{d-1}-\frac{2G_{c b'}G_{b b'}}{\sqrt{d-1}} +\OO\left(\frac{(|G_{bb'}|^2+|G_{cb'}|^2}{N^{\fb}}+\frac1{N^2}\right)\right)I_{cc'}\\
    &=\frac{1}{N}\frac{d \Tr[G^2]}{d-1}-\frac{1}{(Nd)^2}\sum_{c, b'\sim c'}\frac{2G_{c b'}\sum_{x\sim c}G_{x b'}}{d\sqrt{d-1}} +\OO(N^{-\fb/2}\Phi+N^{-1+3\fc/2}),
\end{split}\end{align}
where in the last statement we used \eqref{e:Gest} and $\sum_{cc'}|1-I_{cc'}|\lesssim N^{1+3\fc/2}$.
Noticing that $(HG)_{c b'}+\sqrt t (ZG)_{cb'}=(H(t)G)_{c b'}=zG_{c b'}+\delta_{c b'}=2G_{c b'}+((z-2)G_{c b'}+\delta_{c b'})$,  it follows that
\begin{align}\label{e:sum_neighbor}
    \frac{1}{\sqrt{d-1}}\sum_{x\sim c}G_{x b'}
    &=2G_{c b'}+\cE_{cb'},\quad \cE_{cb'}=\OO(N^{-\fb}|G_{c b'}|+\delta_{c b'}+\sqrt{t}|(ZG)_{c b'}|),
\end{align}
where we used $|z-2|\lesssim N^{-\fg}\leq N^{-\fb}$ from \eqref{e:z-2}.
Thus, using \eqref{e:sum_neighbor} we can rewrite the second term on the right-hand side of \eqref{e:square1} as
\begin{align}\begin{split}\label{e:square2}
    &\phantom{{}={}}-\frac{1}{(Nd)^2}\sum_{c, b'\sim c'}\frac{2G_{c b'}}{d}\frac{\sum_{x\sim c}G_{x b'}}{\sqrt{d-1}}\\
    &=-\frac{4\Tr[G^2]}{Nd}+\frac{1}{N^2}\sum_{c, b'\sim c'}\OO(|G_{c b'}|(\sqrt{t}|(ZG)_{c b'}|+N^{-\fb} |G_{c b'}|+\delta_{c b'})),
\end{split}\end{align}
where thanks to \eqref{e:Gest} and \eqref{e:Werror}, the second term on the right-hand side of \eqref{e:square2} is bounded by $\OO(N^{-\fb} N^\fo \Phi)=\OO(N^{-\fb/2}\Phi)$. The first statement in \eqref{e:remove_indices} follows from combining \eqref{e:square1} and \eqref{e:square2}, and noticing $\Tr[G^2]/N=\del_z m_t(z)$. The second statement in \eqref{e:remove_indices} can be proven in the same way, so we omit its proof. 

Next we prove \eqref{e:sumGGG}. If $\theta=0$ then \eqref{e:sumGGG} simplifies to 
\begin{align}\label{e:GGGtheta0}
    \frac{1}{N^3}\sum_{x,y,w}G_{xy}G_{yw}G_{wx}=\frac{\Tr G^3}{N^3}=\frac{\del_z^2 m_t(z)}{2N^2}.
\end{align}
Next we show that the cases $\theta\geq 1$ can be reduced to \eqref{e:GGGtheta0}. Assume $\theta\geq 1$ and $(w, w')=(u,v)$,  we can first sum over $v$ with $ v\sim u$, and \eqref{e:sum_neighbor} implies
\begin{align}\begin{split}\label{e:sumGGG2}
&\phantom{{}={}}\frac{1}{(Nd)^3}\sum_{b\sim c \atop b'\sim c'}\sum_{u\in\qq{N}}
G_{xy'}G_{ux'}\sum_{v:v\sim u}G_{yv}\\
&=\frac{2\sqrt{d-1}}{d}\frac{1}{N(Nd)^2}\sum_{b\sim c \atop b'\sim c'}\sum_{u\in\qq{N}}
G_{xy'}G_{ux'}G_{yu}+\frac{\sqrt{d-1}}{(Nd)^3}\sum_{b'\sim c'}\sum_{b\sim c, u\in\qq{N}}
G_{xy'}G_{ux'}
\cE_{yu}.
\end{split}\end{align}
For the second term on the right-hand side of \eqref{e:sumGGG2}, using that $\|G\|_{\rm spec}\leq 1/\eta$ and $\cE_{yu}=\OO(N^{-\fb}|G_{yu}|+\delta_{yu}+\sqrt{t}|(ZG)_{yu}|)$ from \eqref{e:sum_neighbor}, similar to \eqref{e:trianglesvcu0}, we can bound it as
\begin{align*}
    \frac{1}{(Nd)^3\eta}\sum_{b'\sim c'} \sqrt{\sum_{b\sim c}
|G_{xy'}|^2}
\sqrt{\sum_{u\in\qq{N}} \left(N^{-\fb}|G_{yu}|+\delta_{yu}+\sqrt{t}|(ZG)_{yu}|\right)^2}
\lesssim \frac{(N^{-\fb}+\sqrt t )N^{2\fo}\Phi}{N\eta},
\end{align*}
where we used \eqref{e:Gest} and \eqref{e:Werror}. Thus it is bounded by $N^{-\fb/2} \Phi/(N\eta)$. 

The first term on the right-hand side of \eqref{e:sumGGG2} has the same form as in \eqref{e:sumGGG}, but with $\theta$ reduced by $1$ and an additional factor $2\sqrt{d-1}/d$. By repeating this procedure, we will end at the expression \eqref{e:GGGtheta0} with a factor $(2\sqrt{d-1}/d)^\theta$.

Finally we prove \eqref{e:GIGG1}.
The expression in \eqref{e:GIGG1} decomposes into terms in the following form: $x\in \{b, c\}, y\in \{b', c'\}, w,w'\in \{u,v\}$ and $0\leq f\leq 2$.
\begin{align}\begin{split}\label{e:sumGGG3}
&\phantom{{}={}}\frac{1}{(Nd)^3}\sum_{b\sim c \atop b'\sim c'}\sum_{u\sim v}
G_{c c'}^{(b b')}I_{c c'} G^{\circ}_{xw} G^{\circ}_{yw'}\left(\frac{G_{uv}}{G_{uu}}\right)^{f}\\
&=\frac{1}{(Nd)^3}\sum_{b\sim c \atop b'\sim c'}\sum_{u\sim v}
\left(G_{c c'}-\frac{ G_{b c'}}{\sqrt{d-1}}-\frac{ G_{c b'}}{\sqrt{d-1}}+\frac{ G_{b b'}}{d-1} +\cE\right)I_{c c'} G^{\circ}_{xw} G^{\circ}_{yw'}\left(\frac{G_{uv}}{G_{uu}}\right)^{f},
\end{split}\end{align}
where the second line uses \eqref{e:remove_one}.
We begin by simplifying \eqref{e:sumGGG3}.
\begin{enumerate}

\item We notice that we can expand $ G^{\circ}_{xw} G^{\circ}_{yw'}=G_{xw}G_{yw'}-G_{xw}L_{yw'}-L_{xw}G_{yw'}+L_{xw}L_{yw'}$. First we show that up to negligible error, we can replace $ G^{\circ}_{xw} G^{\circ}_{yw'}$ in \eqref{e:sumGGG3} by $G_{xw}G_{yw'}$.  
Since $x\in \{b,c\}$ and $y\in \{b',c'\}$ belong to two different core edges, for $u\sim v$, $L_{xw} L_{yw'}=0$.
For $G_{xw}L_{yw'}$, using $\|G\|_{\rm spec}\leq 1/\eta$,\eqref{e:use_Ward}  and  \eqref{e:naive-Ward}
\begin{align}\begin{split}\label{e:triangle_bound0}
    &\phantom{{}={}}\frac{1}{(Nd)^3}\sum_{b'\sim c'}\left|\sum_{ b\sim c}\sum_{u\sim v}
G_{c c'}^{(b b')}I_{c c'}G_{xw}L_{yw'}\left(\frac{G_{uv}}{G_{uu}}\right)^{f}\right|\\
&\lesssim \frac{1}{(Nd)^3\eta}\sum_{b' \sim c'} 
\sqrt{\sum_{b\sim c}|G_{c c'}^{(b b')}|^2I_{c c'}}\sqrt{\sum_{u\sim v} |L_{yw'}|^2}\lesssim \frac{\sqrt{\fR N^\fo\Phi/N}}{N\eta}
\lesssim \frac{N^{-\fb} \Phi}{N\eta}.
\end{split}\end{align}
For $L_{xw}G_{yw'}$, we have exactly the same bound as in \eqref{e:triangle_bound0}, by first summing over $b'\sim c', u\sim v$. 
\item Next we show that we can remove the error term $\cE$ in \eqref{e:sumGGG3}. In fact, by the same argument as in \eqref{e:triangle_bound0}, we have
\begin{align}\begin{split}\label{e:triangle_bound}
    &\phantom{{}={}}\frac{1}{(Nd)^3}\sum_{b'\sim c'}\left|\sum_{ b\sim c}\sum_{u\sim v}
\cE I_{c c'}G_{xw}G_{yw'}\left(\frac{G_{uv}}{G_{uu}}\right)^{f}\right|\\
&\lesssim \frac{1}{(Nd)^3\eta}\sum_{b' \sim c'} 
\sqrt{\sum_{b\sim c}|\cE|^2I_{c c'}}\sqrt{\sum_{u\sim v} |G_{yw'}|^2}
\lesssim \frac{N^{-\fb/2} \Phi}{N\eta},
\end{split}\end{align}
where 
$|\cE|\lesssim N^{-\fb} (|G_{c b'}|+|G_{c b'}|+|G_{c b'}|+|G_{c b'}|)+1/{N^2}$.
 \item  Since $(Nd)^{-2}\sum_{b\sim c \atop b'\sim c'}|1-I_{cc'}|\leq N^{-1+3\fc/2}$, we can remove the indicator function $I_{cc'}$ in \eqref{e:sumGGG3}, and the error is bounded by
 \begin{align*}
     \frac{1}{(Nd)^3}\sum_{b\sim c \atop b'\sim c'}\sum_{u\sim v}
|1-I_{c c'}|| G_{xw} ||G_{yw'}|\lesssim \frac{N^\fo \Phi}{N^{1-3\fc/2}}\lesssim \frac{N^{-\fb} \Phi}{N\eta},
 \end{align*}
 where we used \eqref{e:Gest}, and $\eta\leq N^{-\fg}\leq N^{-2\fc}$.  
    \item Finally if $\{u,v\}$ has a radius $\fR$ tree neighborhood, then thanks to \eqref{eq:local_law} and \eqref{e:mscmd}, we will have that $|G_{uv}/G_{uu}-1/\sqrt{d-1}|\lesssim N^{-\fb} $.
    Moreover, since $\cG\in \Omega$, the number of edges which  do not have a radius $\fR$ tree neighborhood is bounded by $N^\fc$. Thus
    up to error $\OO(N^{-\fb} \Phi/N\eta)$, we can replace $G_{uv}/G_{uu}$ in \eqref{e:sumGGG3} by $1/\sqrt{d-1}$.
\end{enumerate}
After applying the above procedure, and ignoring negligible errors of size $\OO(N^{-\fb/2} \Phi/N\eta)$, we simplify \eqref{e:sumGGG3} as
\begin{align*}
     \frac{1}{(d-1)^{f/2}(Nd)^3}\sum_{b\sim c \atop b'\sim c'}\sum_{u\sim v}
\left(G_{c c'}-\frac{ G_{b c'}}{\sqrt{d-1}}-\frac{ G_{c b'}}{\sqrt{d-1}}+\frac{ G_{b b'}}{d-1}\right)G_{xw}G_{yw'}.
\end{align*}
This expression can then be computed using \eqref{e:sumGGG}. We can now compute the first statement in \eqref{e:GIGG1}
\begin{align*}
    &\phantom{{}={}}\frac{1}{(Nd)^3}
    \sum_{u\sim v}\sum_{b\sim c\atop b'\sim c'} \left(G_{c c'}-\frac{ G_{b c'}+G_{c b'}}{\sqrt{d-1}}+\frac{ G_{b b'}}{d-1}\right)\left(G_{v c}-\frac{ G_{v b}+G_{u c}}{\sqrt{d-1}}+\frac{ G_{u b}}{d-1}\right)\\
    &\times\left(G_{v c'}-\frac{ G_{v b'}+G_{u c'}}{\sqrt{d-1}}+\frac{G_{u b'}}{d-1}\right)= \frac{1}{\cA^3}\frac{\del_z m_t(z)}{2N^2}+\OO\left(\frac{N^{-\fb/2} \Phi}{N\eta}\right).
\end{align*}
Similarly, the second statement in \eqref{e:GIGG1} follows from 
\begin{align*}
&\phantom{{}={}}\frac{1}{(Nd)^3}\sum_{u\sim v}\sum_{b\sim c\atop b'\sim c'} \left(G_{c c'}-\frac{ G_{b c'}+G_{c b'}}{\sqrt{d-1}}+\frac{ G_{b b'}}{d-1}\right)\left(G_{u c}-\frac{G_{u b}}{\sqrt{d-1}}\right)\left(G_{u c'}-\frac{G_{u b'}}{\sqrt{d-1}}\right)\\
&= \frac{1}{\cA^2}\frac{\del_z m_t(z)}{2N^2}+\OO\left(\frac{N^{-\fb/2} \Phi}{N\eta}\right).
\end{align*}
\end{proof}

\begin{proof}[Proof of \Cref{l:first_term}]
Without loss of generality, we assume that  $|z-E_t|\leq N^{-\fg}$. The other case that $|z+E_t|\leq N^{-\fg}$ can be proven in the same way, so we omit its proof.

  We notice that $|\sfA_i|=(d-1)^{\ell+1}$, and the expectation in \eqref{e:first_term0} depends only on $\al=\beta$ or $\al\neq \beta$. We can decompose \eqref{e:first_term0} as 
    \begin{align}\label{e:decompose}
    \frac{1}{Z_{\cF^+}}\sum_{\bfi^+}\frac{\msc^{2\ell}(z_t)}{(d-1)^{\ell+1}}\sum_{\al,\beta\in \sfA_i}\bE\left[\bm1(\cF^+,\cG)I(\cF^+,\cG) (\wt G^{(\bT)}_{c_\al c_\beta}-G^{(b_\al b_\beta)}_{c_\al c_\beta})(Q_t-Y_t)^{p-1}\right]=:J_1+J_2+J_3,
    \end{align}
where $J_1, J_2, J_3$ are given by
    \begin{align}\begin{split}\label{e:defJ123}
       &J_1= \sum_{\beta\in \sfA_i}\frac{\msc^{2\ell}(z_t)}{Z_{\cF^+}}\sum_{\bfi^+}\bE\left[\bm1(\cF^+,\cG)I(\cF^+,\cG) (\wt G^{(\bT)}_{c_\al c_\beta}-G^{(\bT \bW)}_{c_\al c_\beta})(Q_t-Y_t)^{p-1}\right],\\
       &J_2= \sum_{\beta\in \sfA_i}\frac{\msc^{2\ell}(z_t)}{Z_{\cF^+}}\bE\left[\bm1(\cF^+,\cG)I(\cF^+,\cG) ( G^{(\bT \bW)}_{c_\al c_\beta}-G^{(\bT)}_{c_\al c_\beta})(Q_t-Y_t)^{p-1}\right],\\
       &J_3= \sum_{\beta\in \sfA_i}\frac{\msc^{2\ell}(z_t)}{Z_{\cF^+}}\sum_{\bfi^+}\bE\left[\bm1(\cF^+,\cG)I(\cF^+,\cG) (G^{(\bT)}_{c_\al c_\beta}-G^{(b_\al b_\beta)}_{c_\al c_\beta})(Q_t-Y_t)^{p-1}\right].
    \end{split}\end{align}

For $J_1$ in \eqref{e:defJ123}, thanks to \eqref{e:diffG1}, we have
\begin{align}\begin{split}\label{e:Ggamma}
&\phantom{{}={}}\frac{1}{Z_{\cF^+}}\sum_{\bfi^+}I(\cF^+,\cG) (\wt G^{(\bT)}_{c_\al c_\beta}-G^{(\bT \bW)}_{c_\al c_\beta})=\OO\left(\frac{\Phi}{N^{\fb/4}}\right)\\
&+ \sum_{\bfi^+} \sum_{\gamma\in\qq{\mu}} \frac{\md(z_t)I(\cF^+,\cG) }{(d-1)Z_{\cF^+}}\left(\sum_{c_\gamma\neq x\sim b_\gamma}G_{c_\al x}^{(\bT\bW)} +G^{(\bT\bW)}_{c_\al a_\gamma}\right)\left(\sum_{c_\gamma\neq x\sim b_\gamma}G_{c_\beta x}^{(\bT\bW)} +G^{(\bT\bW)}_{c_\beta a_\gamma}\right),
\end{split}\end{align}
where, the above error term is from 
\begin{align*}
    \frac{1}{Z_{\cF^+}}\sum_{\bfi^+}I(\cF^+,\cG) \left(N^{-\fb/2}\sum_{\gamma\in\qq{\mu}}\sum_{x\in \cN_\gamma}(|G_{c_\al x}^{(\bT\bW)}|^2+|G_{c_\beta x }^{(\bT\bW)}|^2)+N^{-\fb }\Phi\right)\lesssim \frac{\Phi}{N^{\fb/4}},
\end{align*}
which follows from \eqref{e:use_Ward}.
Thanks to \eqref{e:Greplace}, up to negligible error $\OO(N^{-\fb/2} \bE[\Psi_p])$, we can further replace $ G_{c_\al x}^{(\bT \bW)},G^{(\bT\bW)}_{c_\al a_\gamma}, G_{c_\beta x}^{(\bT \bW)},G^{(\bT\bW)}_{c_\beta a_\gamma}$ in \eqref{e:Ggamma} by $ G_{c_\al x}^{( b_\al b_\gamma)},G^{(b_\al l_\gamma)}_{c_\al a_\gamma}, G_{ c_\beta x}^{( b_\beta b_\gamma)},G^{(b_\beta l_\gamma)}_{c_\beta a_\gamma}$ respectively, giving
\begin{align}\label{e:Ggamma2}
 \sum_{\bfi^+} \sum_{\gamma\in\qq{\mu}} \frac{\md(z_t)I(\cF^+,\cG) }{(d-1)Z_{\cF^+}}\left(\sum_{c_\gamma\neq x\sim b_\gamma}G_{c_\al x}^{(b_\al b_\gamma)} +G^{(b_\al l_\gamma)}_{c_\al a_\gamma}\right)\left(\sum_{c_\gamma\neq x\sim b_\gamma}G_{c_\beta x}^{(b_\beta b_\gamma)} +G^{(b_\beta l_\gamma)}_{c_\beta a_\gamma}\right).
\end{align}

If $\al\neq \beta$, then either $\gamma\neq \al$ or $\gamma\neq \beta$. If $\gamma\neq \al$, we can first sum over $(b_\al, c_\al)$ in \eqref{e:Ggamma2}, and \eqref{e:Gccbb_youyige} implies that $\eqref{e:Ggamma2}=\OO(N^{-\fb/4} \Phi)$. The same estimate holds if $\gamma\neq \beta$. 
For the remaining terms $\al= \beta$, then 
\begin{align}\begin{split}\label{e:Ggamma3}
    &\phantom{{}={}}\sum_{\bfi^+}\frac{1}{Z_{\cF^+}}\bE\left[\bm1(\cG\in \Omega) I(\cF^+,\cG) \left(\sum_{c_\gamma\neq x\sim b_\gamma}G_{c_\al x}^{(b_\al b_\gamma)}+G^{(b_\al l_\gamma)}_{c_\al a_\gamma}\right)^2(Q_t-Y_t)^{p-1}\right]\\
    &=\sum_{\bfi^+}\frac{1}{Z_{\cF^+}}\bE\left[\bm1(\cG\in \Omega) I(\cF^+,\cG) \left(\sum_{c_\gamma\neq x\sim b_\gamma}(G_{c_\al x}^{(b_\al b_\gamma)})^2 +(G^{(b_\al l_\gamma)}_{c_\al a_\gamma})^2\right)(Q_t-Y_t)^{p-1}\right]+\OO\left(\frac{\bE[\Psi_p]}{N^{\fb/4}} \right)\\
    &=d\bE\left[\bm1(\cG\in \Omega) \frac{\del_z m_t(z)}{\cA^2 N}(Q_t-Y_t)^{p-1}\right]+\OO\left(\frac{\bE[\Psi_p]}{N^{\fb/4}} \right),
\end{split}
\end{align}
where in the first statement we use \eqref{e:GiGjGk} to bound cross terms $G_{c_\al x}^{(b_\al)}G_{c_\al x'}^{(b_\al)}$ from $\gamma=\al$, and \eqref{e:GiGj} to bound cross terms $G_{c_\al x}^{(b_\al b_\gamma)}G^{(b_\al l_\gamma)}_{c_\al a_\gamma}$, and 
$G_{c_\al x}^{(b_\al b_\gamma)}G_{c_\al x'}^{(b_\al b_\gamma)}$ with $\gamma\neq \al$; in the second statement we use \eqref{e:GiGi} to compute $(G_{c_\al x}^{(b_\al)})^2$ from $\gamma= \al$; and \eqref{e:remove_indices} to compute $(G_{c_\al a_\gamma }^{(b_\al l_\gamma)})^2$ and $(G_{c_\al x}^{(b_\al b_\gamma)})^2$ with $\gamma\neq \al$.
By plugging \eqref{e:Ggamma}, \eqref{e:Ggamma2} and \eqref{e:Ggamma3} into $J_1$ in \eqref{e:defJ123}, we conclude
\begin{align}\begin{split}\label{e:Gt1}
    J_1
    &=\frac{d\md(z_t)}{d-1}\sum_{\gamma\in\qq{\mu}}\bE\left[\bm1(\cG\in \Omega) \frac{\del_z m_t(z)}{\cA^2 N}(Q_t-Y_t)^{p-1}\right]+\OO\left(\frac{\bE[\Psi_p]}{(d-1)^\ell} \right)\\
    &=\md(z_t) d^2(d-1)^{\ell-1}\bE\left[\bm1(\cG\in \Omega) \frac{\del_z m_t(z)}{\cA^2 N}(Q_t-Y_t)^{p-1}\right]+\OO\left(\frac{\bE[\Psi_p]}{(d-1)^\ell} \right)\\
    &=- d(d-2)(d-1)^{\ell-1}\bE\left[\bm1(\cG\in \Omega) \frac{\del_z m_t(z)}{\cA N}(Q_t-Y_t)^{p-1}\right]+\OO\left(\frac{\bE[\Psi_p]}{(d-1)^\ell} \right),
\end{split}\end{align}
where in the second statement we used $\mu=d(d-1)^\ell$; in the third statement we used \eqref{e:mscmd} that $\md(z_t)=-(d-1)/(d-2)+\OO(N^{-\fg/2})$ and $\cA=d(d-1)/(d-2)^2$ from \eqref{e:edge_behavior}.

For $J_2$ in \eqref{e:defJ123}, thanks to \eqref{e:diffG2}, we have
 \begin{align}\label{e:diffG}
\frac{1}{Z_{\cF^+}}\sum_{\bfi^+}I(\cF^+,\cG) (G^{(\bT \bW)}_{c_\al c_\beta}-G^{(\bT)}_{c_\al c_\beta})= - \sum_{\gamma\in\qq{\mu}\setminus\{\al,\beta\}}\sum_{\bfi^+} \frac{I(\cF^+,\cG)}{\md(z_t) Z_{\cF^+}} G^{(\bT b_\al b_\beta )}_{c_\al b_\gamma}  G^{(\bT b_\al b_\beta)}_{b_{\gamma} c_\beta}+\OO\left(\frac{\Phi}{N^{\fb/4}}\right).
    \end{align}
Here, the error term above is from 
\begin{align*}
    \frac{1}{Z_{\cF^+}}\sum_{\bfi^+}I(\cF^+,\cG) \left(N^{-\fb/2}\sum_{\gamma\in\qq{\mu}\setminus\{\al,\beta\}}(|G_{c_\al b_\gamma}^{(\bT b_\al b_\beta)}|^2+|G_{c_\beta b_\gamma }^{(\bT b_\al b_\beta)}|^2)\right)\lesssim \frac{\Phi}{N^{\fb/4}},
\end{align*}
which follows from \eqref{e:use_Ward}.
By \eqref{e:Greplace}, up to negligible error $\OO(N^{-\fb/2} \bE[\Psi_p])$,
we can then replace $ G_{c_\al b_\gamma}^{(\bT b_\al b_\beta)}, G_{b_\gamma c_\beta}^{( \bT b_\al b_\beta)}$ in \eqref{e:diffG} by $ G_{c_\al b_\gamma}^{( b_\al)}, G_{b_\gamma c_\beta}^{( b_\beta)}$ respectively, giving
\begin{align}\label{e:diffG_term2}
    - \sum_{\gamma\in\qq{\mu}\setminus\{\al,\beta\}}\sum_{\bfi^+} \frac{I(\cF^+,\cG)}{\md(z_t) Z_{\cF^+}}\sum_{\gamma\in\qq{\mu}\setminus\{\al,\beta\}}G^{( b_\al  )}_{c_\al b_\gamma}  G^{(b_\beta )}_{b_{\gamma} c_\beta}.
    \end{align}

If $\al\neq \beta$, we can sum over $(b_\al, c_\al)$ using \eqref{e:Gccbb_youyige}, then $\eqref{e:diffG_term2}=\OO(N^{-\fb/4} \bE[\Psi_p])$. For the remaining terms we have $\al= \beta$. By plugging \eqref{e:diffG} and \eqref{e:diffG_term2} into the expression of $J_2$ in \eqref{e:defJ123},  we get
\begin{align}\begin{split}\label{e:Gt2}
    J_2
    &=-\sum_{\gamma\neq \al}\frac{\msc^{2\ell}(z_t)}{\md(z_t)}\sum_{\bfi^+}\frac{1}{Z_{\cF^+}}\bE\left[I(\cF^+,\cG) \bm1(\cG\in \Omega)(G^{(b_\al  )}_{c_\al b_\gamma})^2 (Q_t-Y_t)^{p-1}\right]+\OO\left(\frac{\bE[\Psi_p]}{(d-1)^\ell} \right)\\
    &=\frac{d-2}{d-1}(d(d-1)^\ell-1)\bE\left[\bm1(\cG\in \Omega) \frac{\del_z m_t(z)}{\cA N}(Q_t-Y_t)^{p-1}\right]+\OO\left(\frac{\bE[\Psi_p]}{(d-1)^\ell} \right),
    \end{split}\end{align}
 where in the second statement,  we use   \eqref{e:mscmd} and \eqref{e:remove_indices}.

For $J_3$ in \eqref{e:defJ123}, thanks to \eqref{e:diffG3}, we have
\begin{align}\begin{split}\label{e:disan}
&\phantom{{}={}}\frac{1}{Z_{\cF^+}}\sum_{\bfi^+}I(\cF^+,\cG) (G^{(\bT)}_{c_\al c_\beta}-G^{(b_\al b_\beta)}_{c_\al c_\beta})\\
&=-\frac{1}{Z_{\cF^+}}\sum_{\bfi^+}I(\cF^+,\cG) \sum_{\dist(x,o)=\ell\atop  x\sim x'\in \bT}\left(\frac{G_{c_\al x}^{(b_\al b_\beta)}G^{(b_\al b_\beta)}_{x' c_\beta}}{\sqrt{d-1}}
+\frac{G_{c_\al x}^{(b_\al b_\beta)}G^{(b_\al b_\beta)}_{x c_\beta}}{\msc(z_t)}\right)
+\OO\left(\frac{\Phi}{N^{\fb/4}}\right).
\end{split}\end{align}
Here, the error term above is from 
\begin{align*}
    \frac{1}{Z_{\cF^+}}\sum_{\bfi^+}I(\cF^+,\cG) \left(N^{-\fb/2}\sum_{x\in \bT}(|G_{c_\al x}^{(\bT b_\al b_\beta)}|^2+|G_{c_\beta x }^{(\bT b_\al b_\beta)}|^2+|(ZG^{(b_\al b_\beta)})_{x c_\beta}|^2)\right)\lesssim \frac{\Phi}{N^{\fb/4}}.
\end{align*}
which follows from \eqref{e:use_Ward} and \eqref{e:Werror}.
Thanks to the Schur complement formula \eqref{e:Schurixj}, up to error $\OO(N^{-\fb/2} \bE[\Psi_p])$, we can replace $G_{c_\al x}^{(b_\al b_\beta)}, G_{ x' c_\beta}^{(b_\al b_\beta)}, G^{(b_\al b_\beta)}_{x c_\beta}$ in \eqref{e:disan} with $G_{c_\al x}^{(b_\al )}, G_{x' c_\beta }^{(b_\beta )}, G^{( b_\beta)}_{x c_\beta}$ respectively,
\begin{align}\label{e:boundaryTGG}
&-\frac{1}{Z_{\cF^+}}\sum_{\bfi^+}I(\cF^+,\cG) \sum_{\dist(x,o)=\ell\atop  x\sim x'\in \bT}\left(\frac{G_{c_\al x}^{(b_\al)}G^{(b_\beta)}_{x' c_\beta}}{\sqrt{d-1}}
+\frac{G_{c_\al x}^{(b_\al )}G^{( b_\beta)}_{x c_\beta}}{\msc(z_t)}\right).
\end{align}
If $\al\neq \beta$, we can sum over $(b_\al, c_\al)$ using \eqref{e:Gccbb_youyige}, then  $\eqref{e:boundaryTGG}=\OO(N^{-\fb/4} \bE[\Psi_p])$. 

For the remaining cases $\al=\beta$, and \eqref{e:boundaryTGG} reduces to 
\begin{align}\label{e:Gt30}
-\frac{\msc^{2\ell}(z_t)}{Z_{\cF^+}}\sum_{\bfi^+}\bE\left[\bm1(\cG\in\Omega)I(\cF^+,\cG)\sum_{\dist(x,o)=\ell\atop  x\sim x'\in \bT}\left(\frac{G_{c_\al x}^{(b_\al)}G^{(b_\al)}_{x' c_\al}}{\sqrt{d-1}}
+\frac{(G_{c_\al x}^{(b_\al)})^2}{\msc(z_t)}\right)(Q_t-Y_t)^{p-1}\right].
\end{align}

For the first term in \eqref{e:Gt30}, we can first average over $x'\sim x$, and noticing that \begin{align*}
    zG^{(b_\al)}_{x c_\al}=(H(t)G^{(b_\al)})_{x c_\al}
=\sum_{x'\sim x}\frac{1}{\sqrt{d-1}}G^{(b_\al)}_{x' c_\al}+\sqrt t(ZG^{(b_\al)})_{x c_\al}.
\end{align*}
This gives
\begin{align}\begin{split}\label{e:boundaryT2}
&\phantom{{}={}}\frac{1}{Z_{\cF^+}}\sum_{\bfi^+}\bE\left[\bm1(\cG\in\Omega)I(\cF^+,\cG)\frac{1}{\sqrt{d-1}}G_{c_\al x}^{(b_\al)}G^{(b_\al)}_{x' c_\al}(Q_t-Y_t)^{p-1}\right]\\
&=\sum_{x'\sim x}\frac{1}{dZ_{\cF^+}}\sum_{\bfi^+}\bE\left[\bm1(\cG\in\Omega)I(\cF^+,\cG)\frac{1}{\sqrt{d-1}}G_{c_\al x}^{(b_\al)}G^{(b_\al)}_{x' c_\al}(Q_t-Y_t)^{p-1}\right]\\
&=\frac{1}{dZ_{\cF^+}}\sum_{\bfi^+}\bE\left[\bm1(\cG\in\Omega)I(\cF^+,\cG)G^{(b_\al)}_{c_\al x }(z G^{(b_\al)}_{x c_\al}-\sqrt{t}(ZG^{(b_\al)})_{x c_\al}))(Q_t-Y_t)^{p-1}\right]\\
&=\frac{z}{d }\frac{1}{Z_{\cF^+}}\sum_{\bfi^+}\bE\left[\bm1(\cG\in\Omega)I(\cF^+,\cG)(G_{c_\al x}^{(b_\al)})^2(Q_t-Y_t)^{p-1}\right]+\OO(N^{-\fb} \bE[\Psi_p]),
\end{split}\end{align}
where in the last statement, we use \eqref{e:use_Ward} and \eqref{e:Werror}.

By plugging \eqref{e:disan}, \eqref{e:boundaryTGG}, \eqref{e:Gt30} and \eqref{e:boundaryT2} into $J_3$ in \eqref{e:defJ123}, we get
\begin{align}\begin{split}\label{e:Gt3}
J_3&=-\sum_{ \dist(x,o)=\ell}\sum_{\bfi^+}\frac{\msc^{2\ell}(z_t)}{Z_{\cF^+}}\bE\left[\bm1(\cG\in \Omega) I(\cF^+,\cG) \left(\frac{z}{d}+\frac{1}{\msc(z_t)}\right)(G_{c_\al x}^{(b_\al)})^2(Q_t-Y_t)^{p-1}\right]+\OO\left(\frac{\bE[\Psi_p]}{(d-1)^\ell} \right)\\
&= (d-2)(d-1)^{\ell-1}\bE\left[ \bm1(\cG\in \Omega) \frac{\del_z m_t(z)}{\cA N}(Q_t-Y_t)^{p-1}\right]+\OO\left(\frac{\bE[\Psi_p]}{(d-1)^\ell} \right),
\end{split}\end{align}
where in the second statement,  we use  \eqref{e:mscmd} and \eqref{e:remove_indices}.

Plugging \eqref{e:Gt1}, \eqref{e:Gt2} and \eqref{e:Gt3} into \eqref{e:decompose}, we conclude 
\begin{align*}\begin{split}
       \eqref{e:first_term0}&=J_1+J_2+J_3\\
       &=\left((d-2)(d-1)^{\ell-1}-\frac{d-2}{d-1}\right)\bE\left[ \frac{\del_z m_t(z)}{\cA N}(Q_t-Y_t)^{p-1}\right]+\OO\left(\frac{\bE[\Psi_p]}{(d-1)^\ell} \right),\\
       &=\frac{1}{\cA^2}\left(\frac{d(d-1)^{\ell}}{d-2}-\frac{d}{d-2}\right)\bE\left[ \frac{\del_z m_t(z)}{N}(Q_t-Y_t)^{p-1}\right]+\OO\left(\frac{\bE[\Psi_p]}{(d-1)^\ell} \right).
    \end{split}\end{align*}
This finishes the proof of \eqref{e:first_term}.

    For \eqref{e:refine_Gccerror}, we notice that the expectation does not depend on $(\al, \beta)$. Thus we we can first sum over $\al\in \sfA_i, \al\neq  \beta$ using \eqref{e:schur_two1} to get
    \begin{align*}\begin{split}
   \eqref{e:second_term0} &=\left(\frac{d+2}{d-2}-\frac{d(d-1)^{\ell}}{d-2}-(\ell+1)+\OO((d-1)^{-\ell})\right)\times \\
   &\times \sum_{\bfi^+}\frac{1}{Z_{\cF^+}}\bE\left[I(\cF^+,\cG)\bm1(\cG\in \Omega) (G_{c_\al c_\beta}^{(b_\al b_\beta)})^2(Q_t-Y_t)^{p-1}\right]\\
    &=\left(\frac{d+2}{d-2}-\frac{d(d-1)^{\ell}}{d-2}-(\ell+1)\right)\frac{1}{\cA^2}\bE\left[\bm1(\cG\in \Omega)\frac{\del_z m_t(z)}{N}(Q_t-Y_t)^{p-1}\right]+\OO\left( \frac{\bE[\Psi_p]}{(d-1)^{\ell}}\right),
\end{split}\end{align*}
where the last line follows from \eqref{e:remove_indices}. This gives \eqref{e:refine_Gccerror}.

For \eqref{e:GIGG2}, we can similarly first sum over $\al\neq \beta\in \sfA_i$, which is computed in \eqref{e:schur_two1}. The remaining expectations are computed in \eqref{e:GIGG1}.
\begin{align}\label{e:third_term_copy}
        \begin{split}
\eqref{e:third_term0}
&=\frac{-2(p+1)(\ell+1) +\OO((d-1)^{-\ell})}{Z_{\cF^+}}
\sum_{\bfi^+}\bE\left[ I(\cF^+,\cG)\bm1(\cG\in \Omega)
 G_{c_\al c_\beta}^{(b_\al b_\beta)}(Q_t-Y_t)^{p-2}\right.\\
 &\times \frac{1}{Nd}\sum_{u\sim v}\left((-t\del_2 Y_\ell)\left( G^{\circ}_{u c_\al}+\frac{\msc(z_t)  G^{\circ}_{u b_\al}}{\sqrt{d-1}}\right)\left( G^{\circ}_{u c_\beta}+\frac{\msc(z_t)  G^{\circ}_{u b_\beta}}{\sqrt{d-1}}\right)
    \right.\\
    &+(1-\del_1 Y_\ell)\left( G^{\circ}_{v c_\al}+\frac{\msc(z_t)  G^{\circ}_{v b_\al}}{\sqrt{d-1}}-\frac{G_{uv}}{G_{uu}}\left( G^{\circ}_{u c_\al}+\frac{\msc(z_t)  G^{\circ}_{u b_\al}}{\sqrt{d-1}}\right)\right) \\
    &\times\left.\left.\left( G^{\circ}_{v c_\beta}+\frac{\msc(z_t)  G^{\circ}_{v b_\beta}}{\sqrt{d-1}}-\frac{G_{uv}}{G_{uu}}\left( G^{\circ}_{u c_\beta}+\frac{\msc(z_t)  G^{\circ}_{u b_\beta}}{\sqrt{d-1}}\right)\right)\right)\right]\\ 
    &=-   \frac{2(p-1)(\ell+1)}{\cA^2}\bE\left[\left(\frac{1-\del_1Y_\ell}{\cA}-t\del_2 Y_\ell\right)\bm1(\cG\in \Omega)\frac{\del^2_z m_t(z)}{2N^2} (Q_t-Y_t)^{p-2}\right]\\
    &+\OO\left((d-1)^{-\ell} \bE\left[\bm1(\cG\in \Omega)\frac{\Upsilon}{N\Im[z]}\Phi|Q_t-Y_t|^{p-2}\right]\right)=\OO\left( \frac{\bE[\Psi_p]}{(d-1)^{\ell}}\right).
\end{split}\end{align}
This gives \eqref{e:GIGG2}.

For \eqref{e:GIGG3}, we can first sum over $\al\neq \beta\in \sfA_i$, which is computed in \eqref{e:schur_two1}.
\begin{align*}
 \eqref{e:track_error2}&=\left(-\frac{2}{d-2}+\OO((d-1)^{-\ell})\right)\sum_{\bfi^+} \bE[\bm1(\cG\in \Omega)I(\cF^+, \cG)(G_{c_\al c_\beta}^{(b_\al b_\beta)})^2 (Q_t-Y_t)^{p-1}]\\
          &=-\frac{2}{d-2}\frac{1}{\cA^2}\bE\left[\bm1(\cG\in \Omega)\frac{\del_z m_t(z)}{N}(Q_t-Y_t)^{p-1}\right]+\OO\left( \frac{\bE[\Psi_p]}{(d-1)^{\ell}}\right),
\end{align*}
where the last line follows from \eqref{e:remove_indices}. This gives \eqref{e:GIGG3}.

\end{proof}

\appendix

\section{Properties of the Green's Functions}

Throughout this paper, we repeatedly use some (well-known) identities for Green's functions,
which we collect in this appendix.

\subsection{Resolvent identity}

The following well-known identity is referred as resolvent identity:
for two invertible matrices $A$ and $B$ of the same size, we have
\begin{equation} \label{e:resolv}
  A^{-1} - B^{-1} = A^{-1}(B-A)B^{-1}=B^{-1}(B-A)A^{-1}.
\end{equation}

\subsection{Schur complement formula}

Given an $N\times N$ matrix $H$ and an index set $\bT \subset \qq{N}$, recall that we denote by
$H|_\bT$ the $\bT \times \bT$-matrix obtained by restricting $H$ to $\bT$,
and that by $H^{(\bT)} = H|_{\bT^\complement}$ the matrix obtained by removing
the rows and columns corresponding to indices in $\bT$.
Thus, for any $\bT \subset \qq{N}$,
any symmetric matrix $H$ can be written (up to rearrangement of indices) in the block form
\begin{equation*}
  H = \begin{bmatrix} A& B^\top\\ B &D  \end{bmatrix},
\end{equation*}
with $A=H|_{\bT}$ and $D=H^{(\bT)}$.
The Schur complement formula asserts that, for any $z\in \bC^+$,
\begin{equation} \label{e:Schur}
 G=(H-z)^{-1}= \begin{bmatrix}
   (A-B^\top G^{(\bT)} B)^{-1} & -(A-B^\top G^{(\bT)} B)^{-1}B^\top G^{(\bT)} \\
   -G^{(\bT)} B(A-B^\top G^{(\bT)} B)^{-1} & G^{(\bT)}+G^{(\bT)} B(A-B^\top G^{(\bT)} B)^{-1}B^\top G^{(\bT)} 
 \end{bmatrix},
\end{equation}
where $G^{(\bT)}=(D-z)^{-1}$.
Throughout the paper, we often use the following special cases of \eqref{e:Schur}:
\begin{align} \begin{split}\label{e:Schur1}
  G|_{\bT} &= (A-B^\top G^{(\bT)} B)^{-1},\\
  G|_{\bT\bT^\complement}&=-G|_{\bT}B^\top G^{(\bT)},\\
   G|_{\bT^\complement}&=G^{(\bT)}+G|_{\bT^\complement\bT}(G|_{\bT})^{-1}G|_{\bT\bT^\complement}=G^{(\bT)}-G^{(\bT)}BG|_{\bT\bT^\complement},
  \end{split}
\end{align}
as well as the special case
\begin{equation} \label{e:Schurixj}
G_{ij}^{(k)} = G_{ij}-\frac{G_{ik}G_{kj}}{G_{kk}}
=G_{ij}+(G^{(k)}H)_{ik} G_{kj}.
\end{equation}

\subsection{Woodbury formula}
Let $A+UCV^\top$ be a rank $r$ perturbation of $A$. Namely, $U, V\in \bR^{N\times r}$ and $C\in \bR^{r\times r}$. Then, the Woodbury formula  gives us 
\begin{align}\label{e:woodbury}
(A+UCV^\top)^{-1}-A^{-1}=-A^{-1}U(C^{-1}+V^\top A^{-1} U)^{-1}V^\top A^{-1}.
\end{align}

\subsection{Ward identity}

For any symmetric $N\times N$ matrix $H$, its Green's function $G(z)=(H-z)^{-1}$ satisfies
the Ward identity
\begin{equation} \label{e:Ward}
  \sum_{j=1}^{N} |G_{ij}(z)|^2= \frac{\Im G_{jj}(z)}{\eta},
\end{equation}
where $\eta=\Im [z]$. This can be deduced from using \eqref{e:resolv} on $G-G^*$. This identity provides a bound for the sum $\sum_{j=1}^{N} |G_{ij}(z)|^2$
in terms of the diagonal entries of the Green's function.

\section{Proofs of Results from \Cref{s:preliminary}}
\label{app:Green}

\begin{proof}[Proof of \Cref{l:relation_zt_z}]
    We recall from \eqref{e:edgeeqn2} and \eqref{e:mdzt}, that $E _t=\xi _t-t\md(\xi _t)$ and $z=z_t-t\md(z,t)=z_t-t\md(z_t)$. 
    Recall from \eqref{e:md_equation} that in a small neighborhood of $2$, $w-tm_d(w)$ is an analytic function of $\sqrt{w-2}$. Thus, we can introduce an analytic  function 
    \begin{align*}
        F(\sqrt{w-2}):=w-t\md(w)
        =2-t\cA\sqrt{w-2}+(1+\OO(t))(\sqrt{w-2})^2+\OO(t)(\sqrt{w-2})^3\cdots.
    \end{align*}
    The defining relation of $\xi_t$ (as in \eqref{e:edgeeqn1}) states that $\sqrt{\xi_t-2}$ is a critical point of $F$, thus
    \begin{align}\begin{split}\label{e:z-E_texp}
        z-E_t&=(z_t-t\md(z_t))-(\xi _t-t\md(\xi _t))
        =F(\sqrt{z_t-2})-F(\sqrt{\xi_t-2})\\
        &=\frac{F''(\sqrt{\xi_t-2})}{2}\left(\sqrt{z_t-2}-\sqrt{\xi _t-2}\right)^2
        +\frac{F'''(\sqrt{\xi_t-2})}{3!}\left(\sqrt{z_t-2}-\sqrt{\xi _t-2}\right)^3+\cdots,\\
        &=:\frac{F''(\sqrt{\xi_t-2})}{2}\left((\sqrt{z_t-2}-\sqrt{\xi _t-2})G(\sqrt{z_t-2}-\sqrt{\xi _t-2})\right)^2,
    \end{split}\end{align}
    where $F''(\sqrt{\xi_t-2})/2=1+\OO(t)$, $F^{(k)}(\sqrt{\xi_t-2})=\OO(t)$ for $k\geq 3$,  and $G(w)$ is an analytic function with $G(w)=1+\OO(t|w|)$ when $|w|\ll 1$.

    By taking the square root of both sides of \eqref{e:z-E_texp}, we get
    \begin{align}\begin{split}\label{e:zjianEt0}
       \sqrt{ z-E_t}
       &= (1+\OO(t))(\sqrt{z_t-2}-\sqrt{\xi _t-2})G(\sqrt{z_t-2}-\sqrt{\xi _t-2})\\
       &=(\sqrt{z_t-2}-\sqrt{\xi _t-2}) + \OO(t\sqrt{|z_t-2|}+t^2),
    \end{split}\end{align}
   where we used that $\xi _t=2+\cA^2 t^2/4+\OO(t^3)$ from \eqref{e:xi_behavior}.
The above relation \eqref{e:zjianEt0} gives $\sqrt{|z_t-2|}\lesssim t+\sqrt{|z-E _t|}$, and it follows that
\begin{align*}
    \sqrt{z_t-2}
    =\sqrt{\xi_t-2}+\sqrt{z-E _t}+\OO\left(t\sqrt{|z-E _t|}+t^2\right).
\end{align*}
This finishes the proof of \eqref{e:relation_zt_z}, by noticing $\xi_t=2+\cA^2t^2/4+\OO(t^3)$ from \eqref{e:xi_behavior}.

We notice that $G(w)$ is a real analytic function, namely $G(x)\in \bR$ when $x\in \bR$. Then  $G(\Re[w])$ is real, and we can Taylor expand $G(w)$ around $\Re[w]$ to get $\Im[G(w)]=\Im[G(\Re[w])]+\OO(|w-\Re[w]|)=\OO(\Im[w])$. It follows that 
\begin{align}\label{e:imwG}
\Im[wG(w)]=\Im[w]\Re[G(w)]+\Re[w]\Im[G(w)]\lesssim \Im[w].
\end{align}
For \eqref{e:square_root_behavior}, we the take imaginary part on both sides of the first statement of \eqref{e:zjianEt0}, giving
\begin{align}\begin{split}\label{e:Imz-E}
\Im[\sqrt{ z-E_t}]
       &= (1+\OO(t))\Im[(\sqrt{z_t-2}-\sqrt{\xi _t-2})G(\sqrt{z_t-2}-\sqrt{\xi _t-2})]\\
       &\asymp \Im[(\sqrt{z_t-2}-\sqrt{\xi _t-2}]=\Im[\sqrt{z_t-2}],
\end{split}\end{align}
where the second statement follows from taking $w=\sqrt{z_t-2}-\sqrt{\xi_t-2}$ in \eqref{e:imwG}, and the last statement follows from noticing that $\xi_t-2>0$ is a real number.

Finally the square root behavior \eqref{eq:square_root_behave} of $\msc(z_t)$ and $m_d(z_t)$ follows from \eqref{e:Imz-E} and \eqref{e:medge_behavior}.

\end{proof}

\begin{proof}[Proof of Proposition \ref{p:recurbound}]
The proofs for $X_\ell$ and $Y_\ell$ are identical, so we will only provide the proof for $Y_\ell$.
We denote $\cH=\cB_\ell(o, \cY)$, which is the truncated $(d-1)$-ary tree at level $\ell$. We denote its vertex set as $\bH$ and normalized adjacency matrix as $H$. We denote $\bI$ and $\bI^\del$ the diagonal matrices, such that for $x,y\in \bH$,  $\bI_{xy}=\delta_{xy}$ and $\bI^\del_{xy}=\bm1(\dist_\cH(x,o)=\ell)\delta_{xy}$. Then \eqref{e:defP} gives that
\begin{align*}
Y_\ell( \Delta, w)=P(\cH,w,\Delta)
=\left(H-w-\Delta \mathbb I^\del\right)^{-1},\quad P=P(\cH,z,\msc(z))=(H-z-\msc(z)\bI)^{-1}.
\end{align*}
In the rest of the proof, we will simply write $\msc=\msc(z), \md=\md(z)$. We can compute the Green's function $P(\cH, w, \Delta)$ by a perturbation argument,
\begin{align}\begin{split}\label{e:expansionGP}
P(\cH, w, \Delta)
&=\left(H-w-\Delta \mathbb I^\del\right)^{-1}=\left(H-z-\msc\mathbb I^\del-((w-z)+(\Delta-\msc)\mathbb I^\del)\right)^{-1}\\
&=P+P\sum_{k\geq 1}(((w-z)+(\Delta-\msc)\mathbb I^\del)P)^k.
\end{split}\end{align}
With the explicit expression of $P$ as given in \eqref{e:Gtreemsc2}, we can compute
\begin{align}\label{e:PBPoo}
\left(P((w-z)+(\Delta-\msc)\mathbb I^\del)P\right)_{oo}
&=\msc^{2\ell+2}(\Delta-\msc)+(\msc^2+\msc^4+\cdots+\msc^{2\ell+2})(w-z).
\end{align}
Moreover, for $k\geq 2$ we will show the following two relations for $P$,
\begin{align}
   \label{e:Pboundary} &\left(P\mathbb I^\del P\mathbb I^\del P\right)_{oo}=\msc^{2\ell+2}\md\left(\frac{1-\msc^{2\ell+2}}{d-1}+\frac{d-2}{d-1}\frac{1-\msc^{2\ell+2}}{1-\msc^2}\right) ,\\
    \label{e:Ptotalsum}&(|P|^k)_{oo}\lesssim (C\ell)^{2k-3},
\end{align}
where for each $i,j\in \bH$, $|P|_{ij}\deq |P_{ij}|$.

The relation \eqref{e:Pboundary} follows from explicit computation using \eqref{e:Gtreemsc} and \eqref{e:Gtreemsc2}
\begin{align*}
&\phantom{{}={}}\sum_{l, l'}P_{ol}P_{ll'}P_{l'o}=\sum_{l,l'}\msc^2 \left(-\frac{\msc}{\sqrt{d-1}}\right)^{2\ell}\md\left(1-\left(-\frac{\msc}{\sqrt{d-1}}\right)^{2+2{\rm anc}(l, l')}\right)\left(-\frac{\msc}{\sqrt{d-1}}\right)^{\dist_{\cH}(l,l')}\\
&= \frac{\msc^{2\ell+2}}{(d-1)^{2\ell}}\sum_{l,l'} \md\left(1-\left(\frac{\msc}{\sqrt{d-1}}\right)^{2+2{\rm anc}(l, l')}\right)\left(-\frac{\msc}{\sqrt{d-1}}\right)^{\dist_{\cH}(l,l')}\\
&=\msc^{2\ell+2}\md\left(1-\left(\frac{\msc}{\sqrt{d-1}}\right)^{2+2\ell}
+\sum_{r=1}^\ell \left(1-\left(\frac{\msc}{\sqrt{d-1}}\right)^{2+2(\ell-r)}\right)\left(\frac{\msc}{\sqrt{d-1}}\right)^{2r}(d-2)(d-1)^{r-1}
\right)\\
&=\msc^{2\ell+2}\md\left(\frac{1-\msc^{2\ell+2}}{d-1}+\frac{d-2}{d-1}\frac{1-\msc^{2\ell+2}}{1-\msc^2}\right),
\end{align*}
where the summation in the first line is over $l,l'$ such that $\dist_{\cH}(l,o)=\dist_{\cH}(l',o)=\ell$; in the second to last line we used that for a given $l$, there are $(d-2)(d-1)^{r-1}$ values of $l'$ such that $\dist_{\cH}(l, l')=2r, {\rm anc}(l, l')=\ell-r$ for $1\leq r\leq \ell$. When $l=l'$, it holds $\dist_{\cH}(l, l')=0, {\rm anc}(l, l')=\ell$.

To prove \eqref{e:Ptotalsum}, we show by induction that for any vertex $i$ such that $\dist_{\cH}(o,i)\leq \ell$,
\begin{align}\label{e:inductionPk}
    (|P|^k)_{o i}\lesssim \frac{(C\ell)^{2k-2}}{(d-1)^{\dist_{\cH}(o,i)/2}}.
\end{align}
The statement for $k=1$ follows from \eqref{e:Gtreemsc}. Assume the statement \eqref{e:inductionPk} holds for $k-1$, we prove it for $k$,
\begin{align}\label{e:inductionPk2}
   (|P|^k)_{o i}= \sum_{j\in \bH} (|P|^{k-1})_{o j}|P|_{ji}
   \lesssim\sum_{j\in \bH} \frac{(C\ell)^{2k-4}}{(d-1)^{\dist_{\cH}(o,j)/2}}\frac{1}{(d-1)^{\dist_{\cH}(j,i)/2}}.
\end{align}
We denote the path from $o$ to $i$ as $\cP$, then $|\{j\in \bH: \dist_{\cH}(j,\cP)=r\}|\leq \ell(d-1)^{r}$, and for $\dist_{\cH}(j,\cP)=r$, we have
\begin{align*}
    \frac{1}{(d-1)^{\dist_{\cH}(o,j)/2}}\frac{1}{(d-1)^{\dist_{\cH}(j,i)/2}}\lesssim \frac{1}{(d-1)^r}\frac{1}{(d-1)^{\dist_{\cH}(o,i)/2}}.
\end{align*}
In this way, we can reorganize the sum over $j$ in \eqref{e:inductionPk2} according to its distance to $\cP$,
\begin{align*}
     (|P|^k)_{o i}&= \sum_{r=0}^\ell \sum_{j\in \bH:\dist_{\cH}(j,\cP)=r }\frac{(C\ell)^{2k-4}}{(d-1)^{\dist_{\cH}(o,j)/2}}\frac{1}{(d-1)^{\dist_{\cH}(j,i)/2}}\\
     &\lesssim \sum_{r=0}^\ell|\{j: \dist_{\cH}(j,\cP)=r\}|\frac{(C\ell)^{2k-4}}{(d-1)^r}\frac{1}{(d-1)^{\dist_{\cH}(o,i)/2}}\lesssim \frac{(C\ell)^{2k-2}}{(d-1)^{\dist_{\cH}(o,i)/2}},
\end{align*}
which shows \eqref{e:inductionPk}. The claim \eqref{e:Ptotalsum} is a consequence of \eqref{e:inductionPk}
\begin{align*}
    (|P|^k)_{oo}
    =\sum_{i\in \bH} (|P|^{k-1})_{oi}|P|_{io}
    \lesssim \sum_{i\in \bH} \frac{(C\ell)^{2k-4}}{(d-1)^{\dist_{\cH}(o,i)/2}}\frac{1}{(d-1)^{\dist_{\cH}(i,o)/2}}
    \lesssim (C\ell)^{2k-3}.
\end{align*}

The claim \eqref{e:Yl_derivative} follows from reading the coefficients in front of $(w-z)$ and $\Delta-\msc$ in \eqref{e:expansionGP}, and using the upper bounds \eqref{e:PBPoo}, \eqref{e:Pboundary} and \eqref{e:Ptotalsum}.

By \eqref{e:Pboundary} and \eqref{e:Ptotalsum}, we have
\begin{align}\begin{split}\label{e:PBPoo2}
&\phantom{{}={}}P(((w-z)+(\Delta-\msc)\mathbb I^\del)P)^2
=\left(P(\mathbb I^\del P)^2\right)_{oo}(\Delta-\msc)^2
+\OO(\ell^3(|w-z|^2 +|\Delta-\msc||w-z|))
\\
&=\msc^{2\ell+2}\md\left(\frac{1-\msc^{2\ell+2}}{d-1}+\frac{d-2}{d-1}\frac{1-\msc^{2\ell+2}}{1-\msc^2}\right)(\Delta-\msc)^2+\OO(\ell^3(|w-z|^2 +|\Delta-\msc||w-z|)).
\end{split}\end{align}
For $k\geq 3$
\begin{align}\label{e:PBPoo3}
\left(P(((w-z)+(\Delta-\msc)\mathbb I^\del)P)^k\right)_{oo}=\OO(\ell^{2k-1}(|w-z|^k +|\Delta-\msc|^k)).
\end{align}
The claim \eqref{e:recurbound} follows from plugging \eqref{e:PBPoo}, \eqref{e:PBPoo2} and \eqref{e:PBPoo3} into \eqref{e:expansionGP}, and use $\ell^2|\Delta-\msc|\ll1$.
\end{proof}

\begin{proof}[Proof of \Cref{thm:prevthm}]
For $t=0$, we notice that the first statement in \eqref{eq:infbound0} implies that for $\Im[z]\geq (\log N)^{300}/N$, $\Im[G_{ii}(z,0)]\lesssim 1$. It follows that eigenvectors are delocalized \begin{align*}
    \|\bmu_\al(0)\|^2_\infty\lesssim \max_{1\leq i\leq N}((\log N)^{300}/N)\Im[G_{ii}(\lambda_\alpha+\ri (\log N)^{300}/N,0)]\lesssim (\log N)^{300}/N\ll N^{\fo/2-1}.
\end{align*} 
Moreover, for $\Im[z]\geq N^{-1+\fg}$, thanks to \eqref{e:defepsilon}, \begin{align}\label{e:epsilon_bound}
    \varepsilon(z)\leq \sqrt{\varepsilon'(z)}\leq N^{-4\fb}.
\end{align}  
Thus \Cref{thm:prevthm} for $t=0$ follows from \Cref{thm:prevthm0}.

Next we prove \Cref{thm:prevthm} for $0< t\leq N^{-1/3+\ft}$. It was proven in \cite[Lemma 4.2]{bauerschmidt2017bulk} that if all
eigenvectors of $H(0)$ are uniformly delocalized, then with
overwhelmingly high probability over $Z$ they remain delocalized under the constrained Dyson Brownian motion \eqref{e:Ht}. The claim \eqref{e:eig_delocalization} follows from \cite[Lemma 4.2]{bauerschmidt2017bulk} (by taking $(B,\xi)=((\log N)^{200}, N^{\fo/4})$).

Recall that $m_t(z)=\Tr[(H(t)-z)^{-1}]/N=\Tr[(H+\sqrt t Z-z)^{-1}]/N$. We next we show that for $|z|\leq1/\fg, \Im[z]\geq N^{-1+\fg}$, with overwhelmingly high probability over $Z$
\begin{align}\label{e:mtdiff}
|m_t(z)-m_d(z,t)|\lesssim N^{-2\fb}.
\end{align} 
We denote the free convolution of the Stieltjes transform of the empirical eigenvalue distribution of $H$ with the semicircle distribution of variance $t$ as $\widehat m_t(z)$ (recall from \Cref{s:fc}). It follows from \cite[Theorem 3.1, Corollary 3.2]{huang2019rigidity} (with $M=(\log N)^4$) that for $|z|\leq 1/\fg,\Im[z]\geq N^{-1+\fg}$,
\begin{align}\label{e:rigidity}
|m_t(z)-\widehat m_t(z)|\leq \frac{(\log N)^4}{N\Im[z]}\leq \frac{1}{N^{4\fb} },
\end{align}
with overwhelmingly high probability over $Z$. Moreover, it follows from \eqref{e:Gfreeconvolution2} that
\begin{align*}
\widehat m_t(z)=m_0(z+t\widehat m_t(z)), \quad  m_d(z,t)=m_d(z+t m_d(z,t)).
\end{align*}
By taking the difference of the above two expressions, we get
\begin{align}\begin{split}\label{e:mtdiff2}
|\widehat m_t(z)-m_d(z,t)|
&\leq |m_0(z+t\widehat m_t(z))-m_d(z+t \widehat m_t(z))|+|m_d(z+t\widehat m_t(z))-m_d(z+t m_d(z,t))|\\
&\lesssim \frac{1}{N^{4\fb} }+\sqrt{t|\widehat m_t(z)-m_d(z,t)|},
\end{split}\end{align}
where in the second inequality we used the last statement in \eqref{eq:infbound0} (noticing that $m_0=m_N$), $|\varepsilon(z)|\lesssim N^{-4\fb}$ from \eqref{e:epsilon_bound}, and $m_d(z)$ is H\"{o}lder $1/2$. The claim \eqref{e:mtdiff} follows from rearranging \eqref{e:mtdiff2}, and noticing $t\leq N^{-1/3+\ft}\ll N^{-4\fb}$.

Next we prove the first statement in \eqref{eq:infbound}.
The entrywise estimates on Green's function of $H(t)$ with $t>0$ follow from an argument similar to the proof of \cite[Theorem 2.1]{bourgade2017eigenvector} (with $\psi=N^{\fo}$): For any 
\begin{align}\label{e:entrywise_law}
|G_{ij}(z,t)-G_{ij}(z+t\widehat m_t(z),0)|\leq \frac{N^{2\fo}}{\sqrt{N\Im[z]}}\leq \frac{1}{N^{4\fb} },\quad \Im[z]\geq N^{-1+\fg},
\end{align}
with overwhelmingly high probability over $Z$. 

We remark that in the statement of \cite[Theorem 2.1]{bourgade2017eigenvector}, it assumed that $\Im[m_0]$ is bounded from below and that $t\gg \eta_*$.
However, in the proof of \cite[Theorem 2.1]{bourgade2017eigenvector}, these assumptions are only used to show that $|m_t(z)-\widehat m_t(z)|$ is small.
With the required estimate of $|m_t(z)-\widehat m_t(z)|$ already established by \eqref{e:rigidity},
the remaining part of \cite[Theorem 2.1]{bourgade2017eigenvector} does not use that $\Im[m_0]$ is bounded from below or that $t\gg \eta_*$.
Therefore, with \eqref{e:rigidity} given,
the remaining proof of \cite[Theorem 2.1]{bourgade2017eigenvector} applies and gives the above result \eqref{e:entrywise_law} on the entrywise estimates of Green's function of $H(t)$.

Take $z_t= z+t \md(z,t)$ and $\widehat z_t= z+t\widehat m_t(z)$. Then  \eqref{e:rigidity} implies that $|z_t-\widehat z_t|\lesssim tN^{-4\fb} $, and 
\begin{align*}
&\phantom{{}={}}|G_{ij}(z,t)-P_{ij}(\cB_{\fR/100}(\{i,j\},\cG),z_t,\msc(z_t))|
\leq |G_{ij}(z,t)-G_{ij}(\widehat z_t,0)|\\
&+|P_{ij}(\cB_{\fR/100}(\{i,j\},\cG),z_t,\msc(z_t))-P_{ij}(\cB_{\fR/100}(\{i,j\},\cG),\widehat z_t,\msc(\widehat z_t))|\\
&+|G_{ij}(\widehat z_t,0)-P_{ij}(\cB_{\fR/100}(\{i,j\},\cG),\widehat z_t,\msc(\widehat z_t))|\lesssim \frac{1}{N^{4\fb} } +\sqrt{|z_t-\widehat z_t|}+\frac{1}{N^{4\fb} }\lesssim\frac{1}{N^{2\fb} },
\end{align*}
where in the last line we used \eqref{e:entrywise_law} for the first term,   the fact that  $P_{ij}(\cB_{\fR/100}(\{i,j\},\cG),z,\msc(z))$ is H\"{o}lder $1/2$ in $z$ for the second term, and the first statement in \eqref{eq:infbound0} for the third term.

Finally for the second statement in \eqref{eq:infbound}, using the first statement in \eqref{eq:infbound} we get
\begin{align}\begin{split}\label{e:Qtbound}
&\phantom{{}={}}Q_t(z)
=\frac{1}{Nd}\sum_{i\sim j}G^{(i)}_{jj}(z,t)
=\frac{1}{Nd}\sum_{i\sim j}\left(G_{jj}(z,t)-\frac{G^2_{ij}(z,t)}{G_{ii}(z,t)}\right)\\
&=\frac{1}{Nd}\sum_{i\sim j}\left(P_{jj}(\cB_{\fR/100}(j,\cG), z_t, \msc(z_t))-\frac{P^2_{ij}(\cB_{\fR/100}(\{i,j\},\cG), z_t, \msc(z_t))}{P_{ii}(\cB_{\fR/100}(i,\cG), z_t, \msc(z_t))}\right)+\OO(N^{-2\fb} ).
\end{split}\end{align}
If the edge $\{i,j\}$ has radius $\fR$ tree neighborhood, the expression above simplifies to \begin{align}\begin{split}\label{e:PP}
    &\phantom{{}={}}P_{jj}(\cB_{\fR/100}(j,\cG), z_t, \msc(z_t))-\frac{P^2_{ij}(\cB_{\fR/100}(\{i,j\},\cG), z_t, \msc(z_t))}{P_{ii}(\cB_{\fR/100}(i,\cG), w, \msc(w))}\\
    &=\md(z_t)-\frac{1}{\md(z_t)}\left(-\md(z_t)\frac{\msc(z_t)}{\sqrt{d-1}}\right)^2=\msc(z_t),
\end{split}\end{align}
where we used \eqref{e:Gtreemkm} for the first equality, and the relation \eqref{e:md_equation} between $\md$ and $\msc$ for the second equality. If the edge $\{i,j\}$ does not have a radius $\fR$ tree neighborhood, we can simply bound \eqref{e:PP} by $\OO(1)$. Since $\cG\in \overline{\Omega}$ (recall from \eqref{def:omegabar}), there are at most $\OO(N^\fc)$ such edges. Thus by plugging \eqref{e:PP} into \eqref{e:Qtbound}, we conclude that 
$\eqref{e:Qtbound}=\msc(z_t)+\OO\left(N^{-2\fb} \right)$.
\end{proof}

    \begin{proof}[Proof of \Cref{l:basicG}]

    The Schur complement formula \eqref{e:Schur1} establishes a relationship between 
$G(z,t)$ and $G^{(\bX)}(z,t)$. By analyzing this relationship and using the estimates in \eqref{eq:infbound}, one can show that $G^{(\bX)}(z,t)$ also satisfies the tree approximation in \eqref{eq:local_law}, albeit with a slightly larger error. The detailed proof is provided in \cite[Propositions 5.1 and 5.18]{huang2024spectrum}, where one can set 
$\varepsilon(z)=N^{-2\fb} $.

We will prove the statements in \eqref{e:Gest}, \eqref{e:Trace-change},\eqref{e:Ghao0} and \eqref{e:use_Ward},
only for $\bX=\bT$. The proofs for other cases follow similarly and are therefore omitted. We will simply write $m_t(z), \wt m_t(z),  G(z,t), \wt G(z,t)$ as $m_t, \wt m_t, G, \wt G$.

    For the first statement in \eqref{e:Gest}, we start with $\bX=\emptyset$, then  \eqref{e:eig_delocalization} implies
    \begin{align}\label{e:smallGij}
        |\Im[G_{xy}]|=\left|\sum_{\al=1}^N\frac{\eta (\bmu_\al(t) \bmu^\top_\al(t))_{xy}}{|\la_\al-z|^2}\right|
        \leq N^{\fo/2-1}\sum_{\al=1}^N\frac{\eta }{|\la_\al-z|^2}=N^{\fo/2} \Im[m_t].
    \end{align}
    From the Schur complement formula \eqref{e:Schur1}, we have
    \begin{align}\label{e:GT}
       G^{(\bT)}=G-G(G|_{\bT})^{-1}G.
    \end{align}
    By taking imaginary part on both sides of \eqref{e:GT}, we get
    \begin{align}\begin{split}\label{e:Gimaginary}
        \Im[G_{xy}^{(\bT)}]
        &=\Im[G_{xy}]-\Im[(G (G|_{\bT})^{-1}G)_{xy}]\lesssim \Im[G_{xy}]+\sum_{j\in \bT}\Im[G_{xj}] |(G|_{\bT})^{-1}G)_{jy}|\\
        &+\sum_{j\in \bT}|(G(G|_{\bT})^{-1})_{xj}||\Im[G_{jy}] |
        +\sum_{j,k\in \bT}|G_{xj}|\Im[(G|_{\bT})^{-1}_{jk}]|G_{ky}|.
    \end{split}\end{align}
In the following we show the following estimates
\begin{align}\label{e:sumG}
    \sum_{j\in \bT}(|G_{xj}|+|(G(G|_{\bT})^{-1})_{xj}|+|G_{jy}|+|(G(G|_{\bT})^{-1})_{jy}|)
    \lesssim \ell, \quad \max_{j,k\in \bT}\Im[(G|_{\bT})^{-1}_{jk}]\lesssim \ell^2 N^{\fo/2}\Im[m_t].
\end{align}
Then the first statement in \eqref{e:Gest} follows from plugging \eqref{e:smallGij} and \eqref{e:sumG} into \eqref{e:Gimaginary}.

We start with the first statement in \eqref{e:sumG}.
   We recall $\cT=\cB_\ell(o,\cG)$ and denote $P=P(\cT, z_t, \msc(z_t))$. Then \eqref{e:defP} gives $P^{-1}=H-z_t-\msc(z_t)\bI^\del$, where the diagonal matrix $I^\del_{ij}=\bm1(\dist_{\cT}(o,i)=\ell)\delta_{ij}$  for $i,j\in \bT$. Moreover, \eqref{eq:infbound} implies $i,j\in \bT$, $|G_{ij}-P_{ij}|\leq N^{-2\fb} $.
    Then we can perform a resolvent expansion according to \eqref{e:resolv} and rewrite $(G|_{\bT})^{-1}_{ij}$ as 
\begin{align*}\begin{split}
(G|_{\bT})^{-1}_{ij}-P^{-1}_{ij}=\left(
\sum_{k=1}^\fp P^{-1}\left((P-G|_{\bT})P^{-1}\right)^k
\right)_{ij}+\OO\left(\frac{1}{N}\right),
\end{split}\end{align*}
for some large enough $\fp\geq 1$. 
From the above expression, we get
\begin{align}\label{e:sum_Ginverse}
\sum_{j\in \bT}|(G|_\bT)^{-1}_{ij}| \lesssim \sum_{j\in \bT}|P^{-1}_{ij}| +\OO(N^{-\fb} )\lesssim 1.   
\end{align}
Since the vertex $o$ has radius $\fR/2$ tree neighborhood,  for any $x\not\in \bT$ and $0\leq r\leq 2\ell$, $|\{j\in \bT: \dist_\cG(x,j)=r\}|\lesssim (d-1)^{r/2}$. As a consequence, \eqref{eq:infbound} and \eqref{e:Gtreemkm} together imply that 
\begin{align}\label{e:sumGix}
    \sum_{j\in \bT} |G_{xj}|\lesssim 
    \sum_{0\leq r\leq 2\ell}\frac{|\{j\in \bT: \dist(x,j)=r\}|}{(d-1)^{r/2}} +\OO(1)\lesssim \ell.
\end{align}
The first statement in \eqref{e:sumG} follows from the estimates \eqref{e:sum_Ginverse} and \eqref{e:sumGix}
\begin{align*}
     \sum_{j\in \bT}(|G_{xj}|+|(G(G|_{\bT})^{-1})_{xj}|)
     \lesssim \sum_{j\in \bT}|G_{xj}|\left(1+\sum_{i\in \bT}|(G|_{\bT})^{-1}_{ji}|\right)\lesssim \ell.
\end{align*}

    Next we prove the second statement in \eqref{e:sumG}. For the imaginary part of a symmetric matrix, we have the following identity
    \begin{align*}
        \Im[A^{-1}]= -A^{-1}\Im[A] \overline{A^{-1}}.
    \end{align*}
    Thus 
    \begin{align}\label{e:Im_Ginverse}
        \Im[(G|_\bT)^{-1}_{xy}]
        =-((G|_\bT)^{-1}\Im[G|_\bT] \overline{(G|_\bT)^{-1}})_{xy}.
    \end{align}
The second statement in \eqref{e:sumG} follows from plugging \eqref{e:smallGij} and \eqref{e:sum_Ginverse} into \eqref{e:Im_Ginverse}.

The second statement in \eqref{e:Gest} follows from a Ward identity \eqref{e:Ward}, and the first statement in \eqref{e:Gest}:
\begin{align*}
    \frac{1}{N}\sum_{x\notin \bX}|G_{xy}^{(\bT)}|^2=\frac{\Im[G_{yy}^{(\bT)}]}{N\eta}\lesssim \frac{N^\fo\Im[m_t]}{N\eta}. 
\end{align*}

To prove \eqref{e:Trace-change}, we take the trace on both sides of \eqref{e:GT}
\begin{align*}
   &\phantom{{}={}}\left|\frac{1}{N}\Tr[G^{(\bT)}]-m_t\right|
   \leq \frac{1}{N}\sum_{x}\sum_{j,k\in \bT}|G_{xj}||(G|_{\bT})_{jk}^{-1}||G_{ky}|
   \\
   &\lesssim \frac{1}{N}\sum_{j,k\in \bT}|(G|_{\bT})_{jk}^{-1}|\sqrt{\sum_{x}|G_{xj}|^2\sum_{x}|G_{xk}|^2}
   \lesssim \frac{N^\fo\Im[m_t]}{N\eta}\sum_{j,k\in \bT}|(G|_{\bT})_{jk}^{-1}|
   \lesssim \frac{N^\fo(d-1)^\ell \Im[m_t]}{N\eta},
\end{align*}
where the second inequality follows from Cauchy-Schwarz inequality; the third inequality follows from \eqref{e:Gest}; the last inequality follows from \eqref{e:sum_Ginverse}.

The first relation in \eqref{e:Ghao0} follows from the fact that $(1,1,\cdots,1)$ is an eigenvector of $H(t)$ with eigenvalue $d/\sqrt{d-1}$. For the second relation, we can use \eqref{e:GT} and \eqref{e:sumG},
\begin{align*}
     \left|\sum_{x\not\in \bT} G_{xy}^{(\bT)}\right|=\left|\sum_{x\in \qq{N}}G_{xy}-\sum_{x\in \qq{N}}\sum_{j\in \bT}G_{xj}((G|_{\bT})^{-1}G)_{jy}\right|
     \lesssim 1+ \sum_{j\in \bT}|((G|_{\bT})^{-1}G)_{jy}|\lesssim \ell.
\end{align*}

For \eqref{e:use_Ward}, thanks to the Schur complement formula \eqref{e:Schurixj},
\begin{align}\label{e:Gbound}
    |G_{u y}^{(v \bX)}|=\left|G_{u y}^{( \bX)}-\frac{G_{u v}^{(\bX)}G_{v y}^{(\bX)}}{G_{vv}^{(\bX)}}\right|\lesssim |G_{u y}^{( \bX)}|+|G_{v y}^{(\bX)}|,
\end{align}
where in the second statement we used \eqref{eq:local_law} to bound $|G_{u v}^{(\bX)}|\lesssim 1, |G_{vv}^{(\bX)}|\gtrsim 1$. By plugging \eqref{e:Gbound} into \eqref{e:use_Ward} we get
\begin{align*}
    \frac{1}{Nd}\sum_{v\sim u\not\in \bX}|G_{u y}^{(v \bX)}|^2\lesssim  \frac{1}{Nd}\sum_{v\sim u\not\in \bX}|G_{u y}^{( \bX)}|^2+|G_{v y}^{(\bX)}|^2\lesssim \frac{ N^\fo \Im[m_t ]}{N\eta},
\end{align*}
where the second statment follows from \eqref{e:Gest}.

\end{proof}

\begin{proof}[Proof of \Cref{c:expectationbound}]
In the proof, we will simply write $m_t(z), \wt m_t(z),  G(z,t), \wt G(z,t)$ as $m_t, \wt m_t, G, \wt G$.
For \eqref{e:tmmdiff}, thanks to the resolvent identity \eqref{e:resolv}
        \begin{align*}
            \wt m_t =\frac{1}{N}\Tr \wt G=\frac{1}{N}\Tr  G +\frac{1}{N}\Tr[\wt G( H- \wt H)G]=m_t +\frac{1}{N}\sum_{j\in \qq{N}}\sum_{x,y\in \qq{N}}\wt G_{jx}(H- \wt H)_{xy}G_{yj}.
        \end{align*}
It follows by rearranging the above expression that
\begin{align}\begin{split}\label{e:tmmdiff2}
    |\wt m_t - m_t |
    &\lesssim \frac{1}{N}\sum_{x,y\in \qq{N}} |(H- \wt H)_{xy}\sum_{j\in \qq{N}}|\wt G_{yj}||G_{jx}|
    \lesssim 
    \frac{1}{N}\sum_{x,y\in \qq{N}} |(H- \wt H)_{xy}\sqrt{\sum_{j\in \qq{N}}|\wt G_{yj}|^2\sum_j |G_{jx}|^2}\\
    &\lesssim \sum_{x,y\in \qq{N}} |(H- \wt H)_{xy}|
    \frac{N^\fo \sqrt{\Im[\wt m_t]\Im[m_t]}}{N\eta}
    \lesssim 
    \frac{(d-1)^\ell N^\fo \sqrt{\Im[\wt m_t]\Im[m_t]}}{N\eta},
\end{split}\end{align}
where the second inequality follows from Cauchy-Schwarz inequality; the third inequality follows from \eqref{e:Gest}; the fourth inequality follows from that $\tcG$ and $\cG$ differ by $\OO((d-1)^\ell)$ edges, so the sum over $x,y$ is bounded by $\OO((d-1)^\ell)$.
Since $N\eta\geq N^\fg\gg (d-1)^\ell N^\fo$, the claim \eqref{e:tmmdiff} follows from rearranging \eqref{e:tmmdiff2}. 

For \eqref{e:use_Ward2}, since $\tcG\in \Omega$, \eqref{e:Gest}  gives
\begin{align*}\begin{split}
 \frac{1}{N}\sum_{x\notin\bX} |\wt G^{(\bX)}_{xy}|^2\lesssim \frac{ N^\fo \Im[\wt m_t ]}{N\eta} \lesssim \frac{ N^\fo \Im[ m_t ]}{N\eta},
\end{split}\end{align*}
where the last second inequality follows from \eqref{e:tmmdiff} by noticing $N\eta\gg (d-1)^\ell N^\fo$. The other statements in \eqref{e:use_Ward2} can be proven in the same way, by using \eqref{e:Trace-change} and \eqref{e:Ghao0}.
\end{proof}

\begin{proof}[Proof of \Cref{p:WtGbound}]
The upper bound $|Z_{xy}|\leq N^\fo/\sqrt{N}$ follows from that $Z_{xy}$ is Gaussian with variance $\OO(1/N)$. 

In the following we prove \eqref{e:Werror} for $\bX=\bT$. The other cases can be proven in the same way, so we omit its proof.   We recall from \eqref{e:GOEproject} that the constrained GOE matrix 
$Z$ is obtained by projecting the GOE matrix
$M$ onto the subspace of symmetric matrices with row and column sums equal to zero. 
  
  Fix the index $x\in \qq{N}$, we introduce the matrix
  $\widehat Z\in \bR^{N\times N}$ (depending on $x$), which is the projection 
of $Z$ onto the subspace of symmetric matrices with the $x$-th row and column equal to zero. For $i\in \qq{N}$, $\widehat Z_{xi}=\widehat Z_{ix}=0$; and for $i,j\neq x$
    \begin{align}\label{e:GOEproject2}
        \widehat Z_{ij}:=Z_{ij}-(z_i+z_j),  \quad z_i=-\frac{1}{N-1}\left( Z_{xi}+\frac{Z_{xx}}{2(N-1)}\right),
    \end{align}
    In this formulation, the row sums and column sums of $\widehat Z$ are all zero, which can also be obtained by projecting the GOE matrix $M$ onto the subspace of symmetric matrices, with row and column sums and the $x$-th row and column equal to zero. It follows that $\widehat Z$ is independent of the random variables $\{Z_{x1}, Z_{x2}, \cdots, Z_{xN}, g_1, \cdots, g_N\}$ and $\{M_{x1}, M_{x2}, \cdots, M_{xN}\}$. 

    We introduce the following Green's function 
    \begin{align*}
        \widehat G=\widehat G(z,t)=(H+\sqrt{t}\widehat Z-z)^{-1},
    \end{align*}
    and denote the vector $\bmz=(z_1, z_2, \cdots, z_{x-1}, 0, z_{x+1},\cdots, z_N)^\top \in \bR^N$. We recall that $x\in \bX=\bT$. Then thanks to \eqref{e:GOEproject2}, $Z^{(\bT)}-\widehat Z^{(\bT)}=\bmz^{(\bT)} (\bm1^{(\bT)})^\top+\bm1^{(\bT)}(\bmz^{(\bT)})^\top=:U V^\top$ where $U=[\sqrt N \bmz^{(\bT)} ,\bm1^{(\bT)}/\sqrt N]$ and $V=[\bm1^{(\bT)}/\sqrt N,\sqrt N\bmz^{(\bT)}]$. By the Woodbury formula \eqref{e:woodbury}
    \begin{align}\label{e:GW1}
        G^{(\bT)}-\widehat G^{(\bT)}=- G^{(\bT)}U (t^{-1/2}\bI_2-V^\top  G^{(\bT)} U)^{-1}V^\top  G^{(\bT)},\\
        \label{e:GW2}
        G^{(\bT)}-\widehat G^{(\bT)}=-\widehat G^{(\bT)}U (t^{-1/2}\bI_2+V^\top \widehat G^{(\bT)} U)^{-1}V^\top \widehat G^{(\bT)},
    \end{align}
    where $\bI_2$ is the $2\times 2$ identity matrix.

    Next we will first use \eqref{e:GW1} to get a coarse estimate for $\widehat G^{(\bT)}$.
We have the following estimates
\begin{align}\begin{split}\label{e:coarse}
    \sum_{x'\not\in \bT}G_{x'y}^{(\bT)}&\lesssim \ell,\quad \sum_{x'\not\in \bT}z_{x'} G^{(\bT)}_{x'y}
    \lesssim \ell\sqrt{\sum_{x'\not\in \bT}z^2_{x'} \sum_{x'\not\in \bT}|G^{(\bT)}_{x'y}|^2}
    \lesssim \frac{N^\fo}{\sqrt N}\sqrt{\frac{\Im[ m_t]}{N\eta}}, \\
     \sum_{x',y'\not\in \bT}z_{x'} G^{(\bT)}_{x'y'}
     &\lesssim \sum_{x'\not\in \bT}|z_{x'}| 
    \lesssim \frac{N^\fo}{\sqrt N},\\
    \sum_{x',y'\not\in \bT}z_{x'}G^{(\bT)}_{x'y'}z_{y'}
     &\lesssim \sqrt{\sum_{x',y'\not\in \bT} z_{x'}^2 z_{y'}^2 \sum_{x',y'\not\in \bT}|G^{(\bT)}_{x'y'}|^2}
     \lesssim \frac{N^\fo}{N} \sqrt{\frac{\Im[ m_t]}{N\eta}},
\end{split}\end{align}
where we used $\sum_{x'\not\in \bT}z_{x'}^2\lesssim N^\fo/N^2$ from the expression \eqref{e:GOEproject2}, as well as  \eqref{e:Gest} and \eqref{e:Ghao0}. By plugging \eqref{e:coarse} into \eqref{e:GW1}, we get for any $i,j\neq \bT$
\begin{align}\begin{split}\label{e:MatrixTerms}
&(G^{(\bT)}U)_{i\cdot}= \left[
    \begin{array}{cc}
      \OO (N^\fo\sqrt{\Im[m_t]/(N\eta)})  &   \OO(\ell/\sqrt N) 
    \end{array}
    \right],\\
&(V^\top G^{(\bT)})_{\cdot j}= \left[
    \begin{array}{cc}
    \OO(\ell/\sqrt N) \\
      \OO (N^\fo\sqrt{\Im[m_t]/(N\eta)})  
    \end{array}
    \right],\\
& V^\top G^{(\bT)}U=
    \left[
    \begin{array}{cc}
      \OO(N^\fo/\sqrt N)   &  \OO(\ell)\\
      \OO (N^\fo\sqrt{\Im[m_t]/(N\eta)}  &   \OO(N^\fo/\sqrt N) 
    \end{array}
    \right],\\
&(t^{-1/2}\bI_2-V^\top  G^{(\bT)} U)^{-1}
=
    \left[
    \begin{array}{cc}
      \OO(\sqrt t)   &  \OO(\ell t)\\
      \OO (N^\fo t\sqrt{\Im[m_t]/(N\eta)}  &   \OO(\sqrt t) 
    \end{array}
    \right].
\end{split}\end{align}
By plugging \eqref{e:MatrixTerms} into \eqref{e:GW1}, we conclude that for any $i,j\not\in \bT$
\begin{align}\label{e:GGhatdiff}
   |G^{(\bT)}_{ij}-\widehat G^{(\bT)}_{ij}|\lesssim N^{2\fo}\sqrt t\frac{\Im[m_t]}{N\eta}.
\end{align}
As a consequence of \eqref{e:GGhatdiff}, \eqref{eq:local_law} also holds for $\widehat G^{(\bT)}$, and \eqref{e:Trace-change} implies,
\begin{align}\label{e:change_trace2}
    \widehat m_t^{(\bT)}=\frac{\Tr[\widehat G^{(\bT)}]}{N}=\frac{\Tr[ G^{(\bT)}]}{N}+\OO\left(N^{2\fo}\sqrt t\frac{\Im[m_t]}{N\eta}\right)=m_t+\OO\left(\frac{N^{\fo}(d-1)^\ell\Im[m_t]}{N\eta}\right),
\end{align}
and in particularly, $\Im[\widehat m_t^{(\bT)}]\lesssim \Im[m_t]$.
Using \eqref{e:GGhatdiff} as input, by the same argument as in \eqref{e:Ghao0}, we also have
    \begin{align}\label{e:sum1}
        \left|\sum_{x'\not\in \bT}\widehat G^{(\bT)}_{x'y}\right|\lesssim \ell.
        \quad 
    \end{align}

     Next we use \eqref{e:GW2} to prove the first three statements of \eqref{e:Werror}. We first recall that  $\widehat G,\widehat Z$ and $\{Z_{x1}, Z_{x2}, \cdots, Z_{xN}\}$ are independent.
    Let $(h_1,h_2,\cdots, h_N)$ be a centered Gaussian vector with covariance $I_N/N$ independent with $\widehat Z$, and recall the covariance of $(Z_{x1}, Z_{x2}, \cdots, Z_{xN})$ from \eqref{W_ibp}: for $i,j\neq x$, 
    \begin{align*}
        \bE[Z_{xi}Z_{xj}]=\frac{N-1}{N^2}\left(\delta_{ij}-\frac{1}{N}\right).
    \end{align*}
    Then for $i\neq x$, we have the coupling between $Z_{xi}$ and $\{h_1, h_2,\cdots, h_N\}$,
    \begin{align}\begin{split}\label{e:GOEproject3}
       Z_{xi}&=\sqrt{\frac{N-1}{N}}\left(h_i-\frac{\sum_{j=1}^N h_j}{N}\right)=\sqrt{\frac{N-1}{N}}h_i +\OO\left(\frac{N^\fo}{N}\right),
       \\
       z_i&=-\sqrt{\frac{1}{(N-1)N}}h_{i} +\sqrt{\frac{1}{(N-1)N}}\frac{\sum_{j=1}^N h_{j}}{N} -\frac{Z_{xx}}{2(N-1)^2}=-\sqrt{\frac{1}{(N-1)N}}h_{i} +\OO\left(\frac{N^\fo}{N^2}\right).
    \end{split}\end{align}
    where the second statement follows from \eqref{e:GOEproject2}.
    With the coupling \eqref{e:GOEproject3}, we also have that $\widehat G, \widehat Z$ are independent from $\{h_1, h_2,\cdots, h_N\}$.
    By the concentration of measure for Gaussian vectors (see \cite[Theorem 7.7]{erdHos2017dynamical}), \eqref{e:change_trace2} and \eqref{e:sum1}, we have that with overwhelmingly high probability over $\{h_1, h_2,\cdots, h_N\}$
    \begin{align}\begin{split}\label{e:hG}
        &\sum_{x'\not\in \bT}h_{x'}\widehat G^{(\bT)}_{x'y}\lesssim N^\fo\sqrt{\frac{\Im[\widehat m_t^{(\bT)}]}{N\eta}}\lesssim N^\fo\sqrt{\frac{\Im[ m_t]}{N\eta}},\quad  \sum_{x',y'\not\in \bT}h_{x'}\widehat G^{(\bT)}_{x'y'}\lesssim N^{\fo/2},\\
        & \sum_{x',y'\not\in \bT}h_{x'}\widehat G^{(\bT)}_{x'y'}h_{y'}-\frac{\Tr[\widehat G^{(\bT)}]}{N}\lesssim N^\fo\sqrt{\frac{\Im[\widehat m_t^{(\bT)}]}{N\eta}}\lesssim N^\fo\sqrt{\frac{\Im[ m_t]}{N\eta}}.
    \end{split}\end{align}
Thanks to \eqref{e:change_trace2}, \eqref{e:GOEproject3} and \eqref{e:hG}
\begin{align}\begin{split}\label{e:refine}
    \sum_{x'\not\in \bT}z_{x'}\widehat G^{(\bT)}_{x'y}
    &=\frac{\OO(1)}{N} \sum_{x'\not\in \bT}h_{x'}\widehat G^{(\bT)}_{x'y} +\frac{\OO(N^{\fo/2})}{N^2}\sum_{x'\not\in \bT}\widehat G^{(\bT)}_{x'y}
    \lesssim \frac{N^\fo}{N}\sqrt{\frac{\Im[ m_t]}{N\eta}},\\
     \sum_{x',y'\not\in \bT}z_{x'}\widehat G^{(\bT)}_{x'y'}
     &=\frac{\OO(1)}{N} \sum_{x',y'\not\in \bT}h_{x'}\widehat G^{(\bT)}_{x'y'} +\frac{\OO(N^{\fo/2})}{N^2}\sum_{x',y'\not\in \bT}\widehat G^{(\bT)}_{x'y'}
    \lesssim \frac{N^\fo}{N},\\
    \sum_{x',y'\not\in \bT}z_{x'}\widehat G^{(\bT)}_{x'y'}z_{y'}
     &=\frac{1}{(N-1)N} \sum_{x',y'\not\in \bT}h_{x'}\widehat G^{(\bT)}_{x'y'}h_{y'} +\frac{\OO(N^{\fo/2})}{N^3} \sum_{x',y'\not\in \bT}h_{x'}\widehat G^{(\bT)}_{x'y'}+\frac{\OO(N^{\fo})}{N^4}\sum_{x',y'\not\in \bT}\widehat G^{(\bT)}_{x'y'}\\
    &=\frac{\widehat m_t^{(\bT)}}{N^2}+ \OO\left(\frac{N^\fo}{N^2}\sqrt{\frac{\Im[ m_t]}{N\eta}}\right)=\frac{ m_t}{N^2}+ \OO\left(\frac{N^\fo}{N^2}\sqrt{\frac{\Im[ m_t]}{N\eta}}\right).
\end{split}\end{align}
Similarly,
\begin{align}\begin{split}\label{e:refine2}
    &\sum_{x'\not\in \bT}Z_{xx'}\widehat G^{(\bT)}_{x'y}
    \lesssim {N^\fo}\sqrt{\frac{\Im[m_t]}{N\eta}},\quad 
    \sum_{x',y'\not\in \bT}Z_{xx'}\widehat G^{(\bT)}_{x'y'}z_{y'}
    =\frac{\widehat m_t^{(\bT)}}{N}+ \OO\left(\frac{N^\fo}{N}\sqrt{\frac{\Im[ m_t]}{N\eta}}\right),\\
    &\sum_{x',y'\not\in \bT}Z_{xx'}\widehat G^{(\bT)}_{x'y'}
    \lesssim {N^\fo},\quad  \sum_{x',y'\not\in \bT}Z_{xx'}\widehat G^{(\bT)}_{x'y'}Z_{y'x}
    = m_t+ \OO\left({N^\fo}\sqrt{\frac{\Im[ m_t]}{N\eta}}\right).
\end{split}\end{align}
Now we can plug \eqref{e:refine} and \eqref{e:refine2} into \eqref{e:GW2}, for $y\not\in \bT$
\begin{align*}
&(ZG^{(\bT)}U)_{x\cdot}= \left[
    \begin{array}{cc}
      \OO (1/\sqrt N)  &   \OO(N^\fo/\sqrt N) 
    \end{array}
    \right], \quad
   (V^\top G^{(\bT)})_{\cdot y}= \left[
    \begin{array}{c}
    \OO(\ell/\sqrt N) \\
      \OO ((N^\fo/\sqrt N)\sqrt{\Im[m_t]/(N\eta)})  
    \end{array}
    \right],\\
&V^\top G^{(\bT)}\bm1^{(\bT)}= \left[
    \begin{array}{c}
      \OO (\ell \sqrt N)  \\  \OO(N^\fo/\sqrt N) 
    \end{array}
    \right], \quad
   (V^\top G^{(\bT)}Z)_{\cdot x}= \left[
    \begin{array}{c}
    \OO(N^\fo/\sqrt N) \\
      \OO (1/\sqrt N)  
    \end{array}
    \right],\\
  &  V^\top \widehat G^{(\bT)}U=
    \left[
    \begin{array}{cc}
      \OO(N^\fo/ N)   &  \OO(\ell)\\
      \OO (1/N)  &   \OO(N^\fo/ N) 
    \end{array}
    \right],\quad (t^{-1/2}\bI_2-V^\top  \widehat G^{(\bT)} U)^{-1}
=
    \left[
    \begin{array}{cc}
      \OO(\sqrt t)   &  \OO(\ell t)\\
      \OO (t/N)  &   \OO(\sqrt t) 
    \end{array}
    \right],  
\end{align*}
and conclude
\begin{align*}
    \sum_{x'\not\in \bT}Z_{xx'} G^{(\bT)}_{x'y}
    &=\sum_{x'\not\in \bT}Z_{xx'} \widehat G^{(\bT)}_{x'y}
    - \sum_{x'\not\in \bT}Z_{xx'} (\widehat G^{(\bT)}U (t^{-1/2}\bI_2+V^\top \widehat G^{(\bT)} U)^{-1}V^\top \widehat G^{(\bT)})_{x'y}\\
    &\lesssim 
    {N^\fo}\sqrt{\frac{\Im[m_t]}{N\eta}}+\frac{\sqrt t N^{2\fo}}{N}\lesssim {N^\fo}\sqrt{\frac{\Im[m_t]}{N\eta}},\\
    \sum_{x',y'\not\in \bT}Z_{xx'} G^{(\bT)}_{x'y'}Z_{y'x}
    &=\sum_{x',y'\not\in \bT}Z_{xx'} \widehat G^{(\bT)}_{x'y'}Z_{y'x}
    - \sum_{x',y'\not\in \bT}Z_{xx'} (\widehat G^{(\bT)}U (t^{-1/2}\bI_2+V^\top \widehat G^{(\bT)} U)^{-1}V^\top \widehat G^{(\bT)})_{x'y'}Z_{y'x}\\
    &= m_t+ \OO\left({N^\fo}\sqrt{\frac{\Im[m_t]}{N\eta}}+\frac{\sqrt{t}N^{2\fo}}
    {N}\right)
    =m_t+ \OO\left({N^\fo}\sqrt{\frac{\Im[m_t]}{N\eta}}\right),\\
  \sum_{x',y'\not\in \bT}Z_{xx'} G^{(\bT)}_{x'y'}
    &=\sum_{x',y'\not\in \bT}Z_{xx'} \widehat G^{(\bT)}_{x'y'}
    - \sum_{x',y'\not\in \bT}Z_{xx'} (\widehat G^{(\bT)}U (t^{-1/2}\bI_2+V^\top \widehat G^{(\bT)} U)^{-1}V^\top \widehat G^{(\bT)})_{x'y'}\\
    &\lesssim N^\fo+N^\fo\sqrt t\lesssim N^\fo.
\end{align*}
This finishes the first three statements in \eqref{e:Werror}. 

For the last statement in \eqref{e:Werror}, if $x\in \bX$, the claim follows from the first statement in \eqref{e:Werror}. Otherwise, the Schur complement formula \eqref{e:Schurixj}, and the first statement in \eqref{e:Werror} imply
\begin{align}\begin{split}\label{e:ZG}
    &\phantom{{}={}}(Z G^{(\bX)})_{x y}
    =Z_{xx} G^{(\bX)}_{x y}+\sum_{x'}Z_{xx'} \left(G^{(x\bX)}_{x' y}-(G^{(x\bX)}(H+\sqrt t Z))_{x'x}G^{(\bX)}_{xy}\right)\\
    &=(Z_{xx}-\sqrt t (Z G^{(x\bX)}Z)_{xx}) G^{(\bX)}_{x y}
    +\OO(N^{\fo}\sqrt{\Im[m_t]/(N\eta)})
    \lesssim  |G^{(\bX)}_{x y}|+N^{\fo}\sqrt{\Im[m_t]/(N\eta)}.
\end{split}\end{align}
We conclude from \eqref{e:ZG} and \eqref{e:Gest} that
\begin{align*}
&\frac{1}{N}\sum_y|(Z G^{(\bX)})_{x y} |^2
\lesssim \frac{1}{N}\sum_y| G^{(\bX)}_{x y} |^2
+\frac{N^{2\fo} \Im[m_t]}{N\eta}
\lesssim \frac{N^{2\fo} \Im[m_t]}{N\eta}.
\end{align*}

Finally, the claims in \eqref{e:Werror2} follow from the same argument as those in \eqref{e:Werror}, by using \eqref{e:use_Ward2}. So we omit their proofs.
 
\end{proof}

\section{Proofs of \Cref{thm:eigrigidity} and  \Cref{c:rigidity} }
\label{s:proofrigidity}
In this section, we adopt the notation from \Cref{s:setting3}. 
For $z\in \bf D$, we denote $z_t=z+t\md(z,t)$. 
Let $\eta=\Im[z]$ and $\kappa:=\min\{|\Re[z]-E_t|, |\Re[z]+E_t|\}$.
We recall from \eqref{e:relation_zt_z}, if $|z-E_t|\ll 1$, then 
 \begin{align}\begin{split}\label{e:ztform}
    \sqrt{z_t-2}
    =\frac{\cA t}{2}+\sqrt{z-E _t}+\OO\left(t^2+t\sqrt{|z-E _t|}\right).
\end{split}\end{align}
The estimate \eqref{e:ztform} together with \eqref{e:medge_behavior} give us
\begin{align}\begin{split}\label{e:edge_behavior2}
&\msc(z_t)=-1+\frac{\cA t}{2}+\sqrt{z-E _t}+\OO(t^2+|z-E_t|)=-1+\OO(t+\sqrt{|z-E_t|}),\\
&
m_d(z_t)=-\frac{d-1}{d-2}+\cA\left(\frac{\cA t}{2}+\sqrt{z-E _t}\right)+\OO(t^2+|z-E_t|)=-\frac{d-1}{d-2}+\OO(t+\sqrt{|z-E_t|}),\\
&1-\msc(z_t)^2=2\sqrt{z-E_t}+\cA t+\OO(t^2+|z-E_t|).
\end{split}\end{align}
We recall the square root behavior of $\Im[\msc(z_t)],\Im[m_d(z_t)]$ from \eqref{eq:square_root_behave}. 
\begin{align}\begin{split}\label{eq:square_root_behave_copy}
&\Im[\msc(z_t)],\Im[m_d(z_t)]\asymp\left\{\begin{array}{ll}
\sqrt{\kappa+\eta}&\text{ for }  |\Re[z]|\leq E_t,\\
\eta/\sqrt{\kappa+\eta}&\text{ for }  |\Re[z]|\geq E_t.
\end{array}
\right. 
\end{split}
\end{align}

We observe that for any $z\in\bf D$, $|\msc(z_t)|\asymp 1$. We can chose $\ell$ satisfying $\ft\ll \ell/\log_{d-1}N \ll \fb$, and $|1+\msc^2(z_t) +\msc^4(z_t)+\cdots+\msc^{2\ell}(z_t) |\gtrsim 1$. This choice of $\ell$ will be used consistently throughout the remainder of this section.

We compile some estimates below that will be used later.
\begin{lemma}
For any $z\in \mathbf D$, we denote $\eta=\Im[z]$, $\kappa=\min\{|\Re[z]-E_t|, |\Re[z]+E_t|\}$ and $z_t=z+t\md(z,t)$. The following holds:
\begin{align}\begin{split}\label{e:1m}
    &1-\del_1 Y_\ell(\msc(z_t), z_t)=(1-\msc^{2}(z_t))(1+\msc^2(z_t)+\msc^4(z_t)+\cdots+\msc^{2\ell}(z_t)),\\
    &\del_2 Y_\ell(\msc(z_t), z_t)=\msc^2(z_t)(1+\msc^2(z_t)+\msc^4(z_t)+\cdots+\msc^{2\ell}(z_t)),
\end{split}\end{align}
and 
\begin{align}\begin{split}
\label{e:1m2}
&1-\msc^2(z_t)-t\cA\msc^2(z_t) \asymp \sqrt{\kappa+\eta},\\
    &1-\msc^2(z_t)-\frac{td\md^2(z_t) \msc^{2\ell+2}(z_t)}{d-1}\asymp \sqrt{\kappa+\eta}.
\end{split}\end{align}  
\end{lemma}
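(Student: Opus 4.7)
The plan is to prove \eqref{e:1m} by reading the first-order Taylor coefficients of the expansion \eqref{e:recurbound} in Proposition \ref{p:recurbound}, and to prove \eqref{e:1m2} by a direct expansion near each spectral edge in which the $t\cA\msc^2$ correction cancels the $\cA t$ contribution from $1-\msc^2(z_t)$.

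For \eqref{e:1m}, I would use the fact that $Y_\ell(\msc(w),w)=\msc(w)$ for all $w$ is the fixed-point relation for the $(d-1)$-ary tree Green's function. Applying Proposition \ref{p:recurbound} around the point $(\Delta,w)=(\msc(z),z)$ and retaining the first-order terms gives
\[
\del_1 Y_\ell(\msc(z),z)=\msc^{2\ell+2}(z),\qquad \del_2 Y_\ell(\msc(z),z)=\msc^2(z)+\msc^4(z)+\cdots+\msc^{2\ell+2}(z).
\]
Substituting $z\to z_t$ and factoring $1-\msc^{2\ell+2}(z_t)=(1-\msc^2(z_t))(1+\msc^2(z_t)+\cdots+\msc^{2\ell}(z_t))$ yields both identities in \eqref{e:1m} immediately.

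For \eqref{e:1m2}, the key algebraic observation is that the semicircle equation $\msc^2+z\msc+1=0$ gives $z_t+2\msc(z_t)=\sqrt{z_t^2-4}$, hence $1-\msc^2(z_t)=-\msc(z_t)\sqrt{z_t^2-4}$. Near the right edge, i.e.\ $|z-E_t|\ll 1$, combining this with $\msc(z_t)=-1+O(t+\sqrt{|z-E_t|})$ from \eqref{e:edge_behavior2} and $\sqrt{z_t-2}=\cA t/2+\sqrt{z-E_t}+O(t^2+t\sqrt{|z-E_t|})$ from \eqref{e:relation_zt_z} gives
\[
1-\msc^2(z_t)=2\sqrt{z-E_t}+\cA t+O\bigl(t^2+|z-E_t|+t\sqrt{|z-E_t|}\bigr),
\]
while $t\cA\msc^2(z_t)=t\cA+O(t^2+t\sqrt{|z-E_t|})$. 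The $\cA t$ terms cancel, leaving $1-\msc^2(z_t)-t\cA\msc^2(z_t)=2\sqrt{z-E_t}+O(t^2+|z-E_t|+t\sqrt{|z-E_t|})$. Since $|z-E_t|\asymp\kappa+\eta$ in this regime and $t^2\leq N^{-2/3+2\ft}\ll N^{-1/2+\fg/2}\leq\sqrt{\eta}\leq\sqrt{\kappa+\eta}$, the leading $2\sqrt{z-E_t}$ dominates, giving the required $\asymp\sqrt{\kappa+\eta}$. For the second identity, near this edge $d\md^2(z_t)\msc^{2\ell+2}(z_t)/(d-1)=\cA+O(t+\sqrt{|z-E_t|})$ by plugging $\md(z_t)=-(d-1)/(d-2)+O(\cdots)$ from \eqref{e:edge_behavior2} and using the algebraic identity $d(d-1)/(d-2)^2=\cA$, so the second correction agrees with $t\cA\msc^2(z_t)$ up to subleading terms and the same conclusion follows.

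The near-left-edge case $|z+E_t|\ll 1$ is handled symmetrically using the analog of \eqref{e:edge_behavior2} around $z_t=-2$, where $\msc(z_t)\approx 1$ and $\sqrt{z_t^2-4}\approx 2i\sqrt{z_t+2}$. The deep-bulk case, where $\min(|z-E_t|,|z+E_t|)\gtrsim 1$, is trivial: both $|1-\msc^2(z_t)|$ and $\sqrt{\kappa+\eta}$ are bounded above and below by constants, while $|t\cA\msc^2(z_t)|\lesssim t=o(1)$ is a negligible perturbation. The main point to verify carefully is uniformity of the comparison constants across the three regimes, which just amounts to bookkeeping of the error terms of the explicit formulas \eqref{e:msc_equation}–\eqref{e:md_equation}; I do not anticipate any real obstacle.
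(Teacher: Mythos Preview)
Your proposal is correct and follows essentially the same approach as the paper: \eqref{e:1m} is read off from the linear coefficients in \eqref{e:recurbound}, and \eqref{e:1m2} is proved by expanding near the edge via \eqref{e:edge_behavior2} and \eqref{e:relation_zt_z}, observing the $\cA t$ cancellation, and using $t^2\ll N^{-(1-\fg)/2}\leq\sqrt{\kappa+\eta}$ to control the remainder, with the bulk case being trivial.
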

\begin{proof}
    The claim \eqref{e:1m} follows from \eqref{e:recurbound}. For the first statement in \eqref{e:1m2}, if $\kappa+\eta\asymp 1$, both sides are of order $1$, there is nothing to prove. Otherwise, we assume $|z-E_t|\ll1$ (the other case $|z+E_t|\ll1$ can be proven in the same way, so we omit). We have
    \begin{align*}
       1-\msc^2(z_t)-t\cA\msc^2(z_t) 
       &=2\sqrt{z-E_t}+\cA t+\OO(t^2+|z-E_t|)-t\cA(1+\OO(t+\sqrt{|z-E_t|}))\\
       &=2\sqrt{z-E_t}+\OO(t^2+t\sqrt{|z-E_t|}+|z-E_t|)\asymp \sqrt{\kappa+\eta},
    \end{align*}
   where in the first statement we used using \eqref{e:edge_behavior2}; in the second statement we used $t^2\leq N^{-2/3+2\ft}\ll N^{-(1-\fg)/2}\leq \sqrt{\kappa+\eta}$.

   The second statement in \eqref{e:1m2} follows from the same argument by noticing $d\md^2(z_t)/(d-1)=\cA(t+\sqrt{|z-E_t|})$.
\end{proof}




The following stability proposition states that if $|m_t(z)-X_t (z)|$ and $|Q_t(z)-Y_t (z)|$ are small, so are $|Q_t(z)-\msc(z_t)|$ and $|m_t(z)-\md(z_t)|$.

\begin{proposition}\label{p:stable}
For any $z\in \mathbf D$, let $z_t=z+t\md(z,t)$. We denote $\eta=\Im[z]$, $\kappa=\min\{|\Re[z]-E_t|, |\Re[z]+E_t|\}$, and
\begin{align}\label{e:defdelta}
|Q_t(z)-Y_t(z)|\lesssim \delta_1,\quad |m_t(z)-X_t(z)|\lesssim \delta_2.
\end{align}
We assume $\delta_1, \delta_2\leq \ell^{-10}$, and one of the following holds:
\begin{enumerate}
    \item either $|Q_t(z)-\msc(z_t)|\ll \sqrt{\kappa+\eta}$, and $\delta_1+\ell^3(t\delta_2+t^3)\ll \kappa+\eta$,
    \item or $\delta_1+\ell^3(t\delta_2+t^3)\gtrsim\kappa+\eta$.
\end{enumerate}
  Then the following holds
\begin{align}\label{e:Q-msc}
&|Q_t(z)-\msc(z_t)|\lesssim \frac{\delta_1+\ell^5(t\delta_2+t^3)}{\sqrt{\kappa+\eta+(\delta_1+\ell^5(t\delta_2+t^3))}},\\
&|m_t(z)-m_d(z_t)|\lesssim \delta_2 +\frac{\delta_1+\ell^5(t\delta_2+t^3)}{\sqrt{\kappa+\eta+(\delta_1+\ell^5(t\delta_2+t^3))}}.
\end{align}
\end{proposition}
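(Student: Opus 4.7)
\textbf{Proof plan for Proposition \ref{p:stable}.}

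The plan is to reduce the stability statement to a quadratic inequality for the unknown $u := Q_t(z) - \msc(z_t)$, by applying Proposition \ref{p:recurbound} to expand $Y_t$ and $X_t$ around the fixed point. First I would apply \eqref{e:recurbound} and \eqref{e:Xrecurbound} with the substitution $(\Delta, w, z) = (Q_t(z), z + tm_t(z), z_t)$, noting the key identity $w - z_t = t(m_t(z) - \md(z_t)) =: tv$. This gives the two expansions
\begin{align*}
Y_t(z) - \msc(z_t) &= \msc^{2\ell+2}(z_t)\,u + \bigl(\msc^2(z_t)+\cdots+\msc^{2\ell+2}(z_t)\bigr)\,t v + \mathrm{Quad}_Y, \\
X_t(z) - \md(z_t) &= \tfrac{d}{d-1}\md^2(z_t)\msc^{2\ell}(z_t)\, u + \mathrm{Rem}_X,
\end{align*}
where $\mathrm{Quad}_Y$ collects the explicit $u^2$-term together with the remainder $\OO(\ell^5(|tv|^2 + |tv||u| + |u|^3))$, and $\mathrm{Rem}_X = \OO(\ell^3(t|v| + u^2))$.

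Combining these with $|Q_t - Y_t| \leq \delta_1$ and $|m_t - X_t| \leq \delta_2$ produces the linear-with-corrections system
\begin{align*}
(1 - \msc^{2\ell+2})\,u - (\msc^2+\cdots+\msc^{2\ell+2})\,t v &= \OO(\delta_1) + \mathrm{Quad}_Y, \\
v - \tfrac{d}{d-1}\md^2\msc^{2\ell}\,u &= \OO(\delta_2) + \mathrm{Rem}_X.
\end{align*}
Substituting the second equation into the first eliminates $v$ and produces the scalar identity $C(z)\,u = \OO(\delta_1) + \OO(t\ell \delta_2) + \mathrm{Err}$, with
\begin{align*}
C(z) = \frac{1 - \msc^{2\ell+2}(z_t)}{1 - \msc^2(z_t)}\Bigl[(1-\msc^2(z_t)) - \tfrac{t d\,\md^2(z_t)\msc^{2\ell+2}(z_t)}{d-1}\Bigr].
\end{align*}
Using our choice $|1 + \msc^2 + \cdots + \msc^{2\ell}| \gtrsim 1$ together with the second estimate of \eqref{e:1m2} yields the crucial lower bound $|C(z)| \asymp \sqrt{\kappa + \eta}$, and bounding the remainder $\mathrm{Err}$ by inserting $|v| \lesssim |u| + \delta_2 + \ell^3 u^2$ (obtained by iterating the second equation) gives $|\mathrm{Err}| \lesssim \ell^5 u^2 + \ell^5(t^2 \delta_2^2 + t^3)$, using $t \leq N^{-1/3+\ft}$ to absorb purely $t$-dependent quadratic terms into the factor $t^3$.

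Setting $\Delta := \delta_1 + \ell^5(t\delta_2 + t^3)$, the resulting quadratic inequality reads $|u|\sqrt{\kappa+\eta} \lesssim \ell^5 u^2 + \Delta$. This has the standard dichotomy: either (a) $\ell^5 u^2 \leq \Delta$, in which case both $|u| \lesssim \sqrt{\Delta}$ and $|u| \lesssim \Delta/\sqrt{\kappa+\eta}$ hold, or (b) $|u| \gtrsim \sqrt{\kappa+\eta}/\ell^5$. Under assumption (1) of the proposition, $|u| \ll \sqrt{\kappa+\eta}$ excludes branch (b); combined with $\Delta \ll \kappa+\eta$ this yields the tight estimate $|u| \lesssim \Delta/\sqrt{\kappa+\eta} \asymp \Delta/\sqrt{\kappa+\eta+\Delta}$. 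Under assumption (2), $\Delta \gtrsim \kappa+\eta$ so $\sqrt{\kappa+\eta+\Delta}\asymp\sqrt{\Delta}$, and using the explicit quadratic-formula bound $|u| \lesssim \sqrt{\Delta} + \sqrt{\kappa+\eta}/\ell^5 \lesssim \sqrt{\Delta}$ again matches the claim. The bound on $|m_t - \md(z_t)|$ then follows by substituting the bound on $|u|$ into $v = \tfrac{d}{d-1}\md^2\msc^{2\ell}\,u + \OO(\delta_2 + \mathrm{Rem}_X)$, which cleanly separates a direct $\delta_2$ contribution from the amplified $u$ contribution.

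The main obstacle I expect is in Step~6, namely keeping the $\ell$-powers honest while ruling out the spurious large-$|u|$ branch of the quadratic inequality. The hypothesis $|Q_t - \msc(z_t)| \ll \sqrt{\kappa+\eta}$ in case~(1) is exactly what rules this out; in case~(2) the proposition allows a bound weaker than the small-branch answer, and the explicit quadratic-formula estimate suffices provided the absorbed $t^3$ in the definition of $\Delta$ correctly dominates every piece arising from $\mathrm{Quad}_Y$ and $\mathrm{Rem}_X$ after the identification $|v| \lesssim |u| + \delta_2$. Tracking these polynomial-in-$\ell$ constants is routine but bookkeeping-heavy, and aligning the $\ell^3$ appearing in the hypotheses with the $\ell^5$ appearing in the conclusion requires the assumed smallness $\delta_1, \delta_2 \leq \ell^{-10}$.
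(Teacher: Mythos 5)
Your overall route coincides with the paper's: expand $Y_t$ and $X_t$ via Proposition \ref{p:recurbound} at $(\Delta,w,z)=(Q_t,z+tm_t,z_t)$, use \eqref{e:mt-Xt1} to eliminate $m_t-\md(z_t)$, obtain a quadratic relation for $u=Q_t-\msc(z_t)$ whose linear coefficient is $\asymp\sqrt{\kappa+\eta}$ by \eqref{e:1m2}, run a dichotomy, and back-substitute for $m_t-\md(z_t)$. The genuine gap is in your exclusion of the large branch in case (1). By absorbing the quadratic term into a generic error $\ell^5 u^2$ and dividing by $C(z)$, your inequality reads $|u|\sqrt{\kappa+\eta}\lesssim \ell^5u^2+\Delta$, whose ``bad'' branch is only $|u|\gtrsim\sqrt{\kappa+\eta}/\ell^5$. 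The hypothesis $|Q_t-\msc(z_t)|\ll\sqrt{\kappa+\eta}$ does \emph{not} rule this out: the window $\sqrt{\kappa+\eta}/\ell^5\lesssim|u|\ll\sqrt{\kappa+\eta}$ is left untouched, and on it your dichotomy yields no contradiction, so the claimed bound $|u|\lesssim\Delta/\sqrt{\kappa+\eta+\Delta}$ does not follow. You flagged ``keeping the $\ell$-powers honest'' as the main obstacle, but the assertion that the hypothesis ``exactly rules this out'' is false for the inequality as you have set it up.

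The missing ingredient is the cancellation exploited in the paper's proof: writing the exact quadratic $a u^2+bu+c=0$ with the coefficients of \eqref{e:coeff}, both the leading part of $a$ and all of $b$ contain the common factor $1+\msc^2(z_t)+\cdots+\msc^{2\ell}(z_t)$ (which is $\gtrsim 1$ by the choice of $\ell$), so the ratio satisfies $|b/a|\asymp\sqrt{\kappa+\eta}$ with no $\ell$-loss (this is \eqref{e:abratio}). Consequently, when $\delta_1+\ell^5(t\delta_2+t^3)\ll\kappa+\eta$ the two roots are genuinely separated at the scales $|x_1|\asymp\sqrt{\kappa+\eta}$ and $|x_2|=\OO(|c/b|)$ as in \eqref{e:cc1}; since $u$ is a root, the hypothesis $|u|\ll\sqrt{\kappa+\eta}$ then pins it to $x_2$, giving the stated bound. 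So you should not discard the quadratic structure before using it: either keep the exact equation and argue through its roots as the paper does, or at least track that the $u^2$-coefficient, after dividing out the common factor from the linear coefficient, is $\OO(1)$ (up to corrections $\OO(\ell^5(|u|+t))$ that must themselves be controlled) rather than $\OO(\ell^5)$. Your treatment of case (2) and the back-substitution for $m_t-\md(z_t)$ are fine and match the paper.
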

\begin{proof}
In the proof, for the simplicity of  notation, we write $Q_t(z),m_t(z), Y_t(z),  X_t(z), \msc(z_t), \md(z_t)$ as $Q_t,m_t, Y_t,  X_t, \msc, \md$.   We have from \eqref{e:recurbound} that
\begin{align}\begin{split}\label{e:Q-YQ4}
&\phantom{{}={}}Q_t-Y_t=(Q_t-\msc)-(Y_t-\msc)\\
&=(1-\msc^{2\ell+2})(Q_t-\msc)-\msc^{2\ell+2}\md\left(\frac{1-\msc^{2\ell+2}}{d-1}+\frac{d-2}{d-1}\frac{1-\msc^{2\ell+2}}{1-\msc^2}\right)(Q_t-\msc)^2\\
&-\frac{\msc^2(1-\msc^{2\ell+2})}{1-\msc^2}t(m_t-\md)+\OO(\ell^5(t^2|m_t-\md|^2 +t|Q_t-\msc||m_t-\md|+|Q_t-\msc|^3)),
\end{split}
\end{align}
and  
\begin{align}\begin{split}\label{e:mt-Xt1}
m_t-X_t
&=(m_t-m_d)-\frac{d}{d-1}\md^2\msc^{2\ell}(Q_t-\msc)+\OO\left(\ell^3(t|m_t-\md|+|Q_t-\msc|^2)\right),\\
m_t-m_d
&=m_t-X_t+\frac{d}{d-1}\md^2\msc^{2\ell}(Q_t-\msc)+\OO\left(\ell^3(t|m_t-X_t|+t|Q_t-\msc|+|Q_t-\msc|^2)\right).
\end{split}\end{align}

By plugging \eqref{e:mt-Xt1} into \eqref{e:Q-YQ4}, we obtain that
\begin{align}\begin{split}\label{e:Q-YQ5}
Q_t-Y_t&=-\msc^{2\ell+2}\md\left(\frac{1-\msc^{2\ell+2}}{d-1}+\frac{d-2}{d-1}\frac{1-\msc^{2\ell+2}}{1-\msc^2}\right)(Q_t-\msc)^2\\
&+\left(1-\msc^{2\ell+2}-\frac{d}{d-1}\frac{t\md^2 \msc^{2\ell+2}(1-\msc^{2\ell+2})}{1-\msc^2}\right)(Q_t-\msc)\\
&+\OO(\ell^5(t|m_t-X_t| +t^2|Q_t-\msc|+t|Q_t-\msc|^2+|Q_t-\msc|^3)).
\end{split}
\end{align}

We consider the quadratic equation $ax^2+bx+c=0$, with
\begin{align}\begin{split}\label{e:coeff}
&a=-\msc^{2\ell+2}\md\left(\frac{1-\msc^{2\ell+2}}{d-1}+\frac{d-2}{d-1}\frac{1-\msc^{2\ell+2}}{1-\msc^2}\right)+\OO\left(\ell^5(|Q_t -\msc|+t)\right),\\
&b=1-\msc^{2\ell+2}-\frac{d}{d-1}\frac{t\md^2 \msc^{2\ell+2}(1-\msc^{2\ell+2})}{1-\msc^2}
=\frac{(1-\msc^{2\ell+2})}{1-\msc^2}\left(1-\msc^2-\frac{td\md^2 \msc^{2\ell+2}}{d-1}\right),\\
& 
c=Y_t-Q_t+\OO(\ell^5(t|m_t-X_t| +t^3)).
\end{split}\end{align}
Recall $\delta_1, \delta_2$ from \eqref{e:defdelta}, and we have  $|c|\lesssim \delta_1+\ell^5(t\delta_2+t^3)$.

We recall that by our choice of $\ell$, it holds $|1+\msc ^2+\msc ^4+\cdots+\msc ^{2\ell}|\gtrsim 1$. Also, we can rewrite $a$ from \eqref{e:coeff} as
\begin{align*}
    a=-\msc^{2\ell+2}\md\frac{d-1-\msc^{2}}{d-1}\frac{1-\msc^{2\ell+2}}{1-\msc^2}+\OO\left(\ell^5(|Q_t -\msc|+t)\right).
\end{align*}
It follows that $1\lesssim |a|\asymp |1+\msc ^2+\msc ^4+\cdots+\msc ^{2\ell}|\lesssim \ell$. 
Moreover, using \eqref{e:1m2}, we have
\begin{align}\label{e:abratio}
\left|\frac{b}{a}\right|\asymp \left|
    1-\msc^2-\frac{td\md^2 \msc^{2\ell+2}}{d-1}\right|\asymp \sqrt{\kappa+\eta},
\end{align}
and 
$\sqrt{\kappa+\eta}\lesssim |b|\lesssim\ell \sqrt{\kappa+\eta}$. 

We discuss the two assumptions separately,
\begin{enumerate}
    \item If $\delta_1+\ell^5(t\delta_2+t^3)\ll \kappa+\eta$, then  $|ac/b^2|\lesssim |c|/(\kappa+\eta)\ll1$.
 The two roots of $ax^2+bx+c=0$ satisfies:$|x_1|=|b/a|+\OO(|c/b|)\asymp |b/a|\gtrsim \sqrt{\kappa+\eta}$ and 
\begin{align}\label{e:cc1}
|x_2|=\OO(|c/b|)\lesssim \frac{|c|}{\sqrt{\kappa+\eta}}\lesssim \frac{\delta_1+\ell^5(t\delta_2+t^3)}{\sqrt{\kappa+\eta+(\delta_1+\ell^5(t\delta_2+t^3))}}.
\end{align}
In this case, our assumption states that $|Q_t-\msc|\ll \sqrt{\kappa+\eta}$, it is necessary that 
\begin{align*}
    |Q_t-\msc|=|x_2|\lesssim \frac{\delta_1+\ell^5(t\delta_2+t^3)}{\sqrt{\kappa+\eta+(\delta_1+\ell^5(t\delta_2+t^3))}}.
\end{align*}
    \item If $\delta_1+\ell^5(t\delta_2+t^3)\gtrsim \kappa+\eta$, then using \eqref{e:abratio}, we have
    \begin{align*}
        |x_1|, |x_2|=\OO(|b/a|+\sqrt{|c/a|})\lesssim \sqrt{\kappa+\eta}+\sqrt{|c|}\lesssim\sqrt{\delta_1+\ell^5(t\delta_2+t^3)} \lesssim \frac{\delta_1+\ell^5(t\delta_2+t^3)}{\sqrt{\kappa+\eta+(\delta_1+\ell^5(t\delta_2+t^3))}}.
    \end{align*}
\end{enumerate}

The first statement in \eqref{e:Q-msc} follows.
The second statement in \eqref{e:Q-msc} follows from plugging the first statement into \eqref{e:mt-Xt1}
\begin{align*}
\begin{split}
|m_t-m_d|&\lesssim |m_t-X_t|+|Q_t -\msc |\lesssim \delta_2 +\frac{\delta_1+\ell^5(t\delta_2+t^3)}{\sqrt{\kappa+\eta+(\delta_1+\ell^5(t\delta_2+t^3))}}.
\end{split}
\end{align*}
\end{proof}




An easy reformulation of \Cref{t:recursion} gives a useful bound, as stated in the following corollary. 
\begin{corollary}\label{cor:QYQbound}
Adopt the notation of Theorem \ref{t:recursion}, the following holds:
\begin{align}\begin{split}\label{e:QYQbound}
&\bE[\bm1(\GG\in \Omega)|Q_t-Y_t|^{2p}]
\lesssim N^{p\fc}\bE\left[\bm1(\GG\in \Omega)\left(\sqrt{\frac{\Upsilon \Phi}{N\eta}}+\Phi\right)^{2p}\right],\\
&\bE[\bm1(\GG\in \Omega)|m_t-X_t|^{2p}]
\lesssim N^{p\fc}\bE\left[\bm1(\GG\in \Omega)\left(\sqrt{\frac{ \Phi}{N\eta}}+\Phi\right)^{2p}\right].
\end{split}\end{align}
If we further assume that $\min\{|z-E_t|, |z+E_t|\}\leq N^{-\fg}$, we have the following improved estimate
\begin{align}\label{e:QYQbound2}
    &\bE[\bm1(\GG\in \Omega)|Q_t-Y_t|^{2p}]
\lesssim \bE\left[\bm1(\GG\in \Omega)\left((\ell \Phi)^{2p}+\left(\frac{\ell \widetilde \Upsilon \Phi}{N\eta}\right)^p+\left(\frac{\Upsilon \Phi}{(d-1)^{\ell/4}N\eta}\right)^{p}\right)\right],
\end{align}
where 
\begin{align*}
    \wt \Upsilon= |1-\del_1 Y_\ell(Q_t,z+tm_t)-\cA \del_2 Y_\ell(Q_t,z+tm_t)|.
\end{align*}
\end{corollary}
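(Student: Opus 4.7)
The plan is to reduce the corollary directly from \Cref{t:recursion} (and its refined version \eqref{e:Qrefined_bound}) via Young's inequality, exploiting the identity $|Q_t(z)-Y_t(z)|^{2p} = (Q_t(z)-Y_t(z))\prod_{j=1}^{2p-1}(Q_t(z_j)-Y_t(z_j))$ with $z_j\in\{z,\bar z\}$ split so that $p$ copies are conjugated (using $Q_t(\bar z) = \overline{Q_t(z)}$ and likewise for $Y_t$). An analogous identity holds for $|m_t-X_t|^{2p}$.

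For \eqref{e:QYQbound}, I apply \Cref{t:recursion} with $2p$ in place of $p$, giving $\bE[\bm1(\cG\in\Omega)|Q_t-Y_t|^{2p}]\lesssim (d-1)^{2\ell}\bE[\Psi_{2p}]$. Expanding $\Psi_{2p}$ according to \eqref{eq:phidef} and then expanding $(|Q_t-Y_t|+N^\fb\Upsilon\Phi)^{2p-1}$ and $(|Q_t-Y_t|+N^\fb\Upsilon\Phi)^{2p-2}$ binomially, the summands take the shape $X\cdot |Q_t-Y_t|^{2p-k}$ for various $k\geq 1$, with $X$ a monomial in $\Phi$, $\Upsilon\Phi$, $\Upsilon/(N\eta)$, and $N^\fb$. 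The term $|Q_t-Y_t|^{2p}/N^{\fb/8}$ is absorbed on the left using the built-in $N^{-\fb/8}$ factor. For every other term, Young's inequality $XY^{2p-k}\le \varepsilon Y^{2p} + C_\varepsilon X^{2p/k}$ transfers the $|Q_t-Y_t|^{2p-k}$ piece to the left, leaving a pure-coefficient contribution that is dominated by $(\sqrt{\Upsilon\Phi/(N\eta)}+\Phi)^{2p}$ (since $\Phi\cdot \Upsilon/(N\eta)=(\sqrt{\Upsilon\Phi/(N\eta)})^2$ and $\Phi\cdot N^\fb\Upsilon\Phi \le N^\fb \Upsilon\Phi^{2}$ is controlled by $(\ldots)^{2p}$ after Young's). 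All extra $N^\fb$ and $(d-1)^{2\ell}$ factors fit under a single $N^{p\fc}$ since $\fb\ll\fc$ and $(d-1)^{2\ell}\le N^\fc$ by \eqref{e:parameters}. The bound for $|m_t-X_t|^{2p}$ is identical, starting instead from \eqref{e:mz} with $\fX_{2p}=\Xi_{2p}(z;m_t-X_t)$; since $\Xi_{p}$ has no $\Upsilon$ factors (they are replaced by $1$), the resulting bound features $\sqrt{\Phi/(N\eta)}$ in place of $\sqrt{\Upsilon\Phi/(N\eta)}$.

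For the edge estimate \eqref{e:QYQbound2}, the starting point is \eqref{e:Qrefined_bound} applied with $2p$ and all $z_j=z$ or $\bar z$. After dividing by $\cA^2/(\ell+1)$ and moving the two correction terms to the right, I get
\begin{align*}
\bE[\bm1(\cG\in\Omega)|Q_t-Y_t|^{2p}] &\lesssim \ell\,\bE\!\left[\left|\tfrac{\del_z m_t}{N}\right||Q_t-Y_t|^{2p-1}\right] + \ell\,\bE\!\left[\wt\Upsilon\left|\tfrac{\del_z^2 m_t}{N^2}\right||Q_t-Y_t|^{2p-2}\right]\\
&\quad + \frac{\ell\, N^\fo}{(d-1)^{\ell/2}}\bE[\Psi_{2p}],
\end{align*}
using that the coefficient $|(1-\del_1 Y_\ell)/\cA - t\del_2 Y_\ell|$ appearing in \eqref{e:Qrefined_bound} equals $\wt\Upsilon/\cA$ and that $\del_{z_j}[(m_t(z)-m_t(z_j))/(z-z_j)]$ becomes $\tfrac12\del_z^2 m_t$ in the coincidence limit. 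The spectral-average bounds $|\del_z m_t|/N\le \Im[m_t]/(N\eta)\le \Phi$ and $|\del_z^2 m_t|/N^2\le 2\Im[m_t]/(N^2\eta^2)\le 2\Phi/(N\eta)$ then convert each correction into $\ell\Phi|Q_t-Y_t|^{2p-1}$ and $2\ell\wt\Upsilon\Phi/(N\eta)\cdot|Q_t-Y_t|^{2p-2}$; Young's inequality produces $(\ell\Phi)^{2p}$ and $(\ell\wt\Upsilon\Phi/(N\eta))^p$ respectively. The residual term $(d-1)^{-\ell/2}\Psi_{2p}$ is treated exactly as in the first part, the extra factor $(d-1)^{-\ell/2}$ propagating through each Young reduction and producing the last contribution $(\Upsilon\Phi/((d-1)^{\ell/4}N\eta))^p$ (taking the $p$-th root of a $2p$-fold Young split gives a $(d-1)^{\ell/4}$, which is looser than the available $(d-1)^{\ell/2}$ and therefore valid).

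The main obstacle is purely bookkeeping: exhaustively classifying the binomial summands of $\Psi_{2p}$ and $\Xi_{2p}$ and verifying that the chosen Young exponents simultaneously (i) absorb the $|Q_t-Y_t|^{2p}$ pieces on the left, (ii) match each residual monomial to the target on the right, and (iii) dump all $N^\fb$ factors and the $(d-1)^{2\ell}$ prefactor into the allowed $N^{p\fc}$ (which is possible exactly because $\fo\ll\ft\ll\fb\ll\fc$ in \eqref{e:parameters}). No new analytic input is required beyond \Cref{t:recursion}, \Cref{l:relation_zt_z} (for the edge-regime shape of $\Phi$, $\Im[m_t]$, and related quantities), and the elementary bounds on $\del_z m_t$ and $\del_z^2 m_t$ recorded above.
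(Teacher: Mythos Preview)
Your proposal is correct and follows essentially the same route as the paper: apply \Cref{t:recursion} (respectively \eqref{e:Qrefined_bound}) with $2p$ in place of $p$ and $z_j\in\{z,\bar z\}$, bound the correction terms via $|\del_z m_t|/N\le \Phi$ and $|\del_z^2 m_t|/N^2\le 2\Phi/(N\eta)$, and then close with Young's inequality to absorb the $|Q_t-Y_t|^{2p}$ pieces on the left. One small streamlining the paper uses that you might adopt: near the edge it first invokes \eqref{e:Upsilon_bound} to get $N^\fb\Upsilon\ll 1$, which collapses $\Psi_{2p}$ to the simpler form \eqref{e:Phi_bound} before applying Young---this avoids the full binomial expansion of $(|Q_t-Y_t|+N^\fb\Upsilon\Phi)^{2p-2}$ and makes the $(d-1)^{-\ell/4}$ bookkeeping more transparent.
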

\begin{proof}[Proof of \Cref{cor:QYQbound}]
We will only prove the statement \eqref{e:QYQbound2}. The two weaker estimates in \eqref{e:QYQbound} can be proven in the same way using \eqref{e:QY} and \eqref{e:mz}, so we omit their proofs.

Assume $z=z_1=z_2=\cdots=z_p$ and $\overline{z}=z_{p+1}=z_{p+1}=\cdots z_{2p}$ in \eqref{e:Qrefined_bound}. Notice that
\begin{align*}
    \frac{|\del_z m_t(z)|}{N}\leq \frac{\Im[m_t(z)]}{N\eta}\leq \Phi(z), \quad \frac{1}{N^2}\left|\del_{z_j}\left(\frac{m_t(z)-m_t(z_j)}{z-z_j} \right)\right|\leq \frac{\Im[m_t(z)]}{N^2\eta^2}\leq \frac{\Phi(z)}{N\eta}.
\end{align*}
By substituting this into \eqref{e:Qrefined_bound}, we obtain
\begin{align}\begin{split}\label{e:QYbound1}
  & \phantom{{}={}}\bE\left[\bm1(\cG\in \Omega)|Q_t(z)-Y_t(z)|^{2p}\right]\lesssim 
\bE\left[\bm1(\cG\in \Omega)\ell \Phi(z) |Q_t(z)-Y_t(z)|^{2p-1}\right] \\
&+    \bE\left[\bm1(\cG\in \Omega)\frac{\ell \widetilde \Upsilon(z) \Phi(z)}{N\eta }|Q_t(z)-Y_t(z)|^{2p-2}\right]+(d-1)^{-\ell/2}\ell N^\fo\bE[\Psi_{2p}(z)].
\end{split}\end{align}

We recall Young's inequality: for any  $\lambda>0$ and $A, B\geq 0$, we can bound
\begin{align}\begin{split}\label{e:Young}
    A^{k} B^{2p-k}
    \leq \frac{2p-k}{2p}\lambda B^{2p}  + \frac{k}{2p}\lambda^{-\frac{2p-k}{k}} A^{2p}\leq \lambda B^{2p}  + \lambda^{-\frac{2p-k}{k}} A^{2p}.
\end{split}\end{align}
By taking $(A,B, k)=(\ell\Phi(z),|Q_t(z)-Y_t(z)|,1)$ and $(A,B,k)=((\ell \widetilde \Upsilon(z)\Phi(z)/(N\eta))^{1/2},|Q_t(z)-Y_t(z)|, 2)$, we can bound the first two terms on the right-hand side of \eqref{e:QYbound1} as
\begin{align}\begin{split}\label{e:QYbound2}
&\phantom{{}={}}\bE\left[\bm1(\cG\in \Omega)\ell \Phi(z) |Q_t(z)-Y_t(z)|^{2p-1}\right] +    \bE\left[\bm1(\cG\in \Omega)\frac{\ell \widetilde \Upsilon(z)\Phi(z)}{N\eta }|Q_t(z)-Y_t(z)|^{2p-2}\right]\\
&\leq 2\lambda \bE[\bm1(\cG\in \Omega)|Q_t(z)-Y_t(z)|^{2p}]
+\bE\left[\bm1(\cG\in \Omega)\left(\lambda^{1-2p}(\ell\Phi(z))^{2p}
+\lambda^{1-p}\left(\frac{\ell \widetilde \Upsilon(z)\Phi(z)}{N\eta}\right)^{p}\right)\right].
\end{split}\end{align}

For the last term on the right-hand side of \eqref{e:QYbound1}, we notice that
\begin{align}\begin{split}\label{e:Upsilon_bound}
    &\phantom{{}={}}\Upsilon(z)
    =|1-\del_1 Y_\ell(Q_t(z),z+tm_t(z))|+t|\del_2 Y_\ell(Q_t(z),z+tm_t(z))|\\
    &=|1-\del_1 Y_\ell(\msc(z_t),z_t)|+t|\del_2 Y_\ell(\msc(z_t),z_t)|
    +\OO(\ell^3(|Q_t(z)-\msc(z_t)|+t|m_t(z)-\md(z_t)|))\\
    &\lesssim \ell(\sqrt{\kappa+\eta}+t)
    +\ell^3(|Q_t(z)-\msc(z_t)|+t|m_t(z)-\md(z_t)|),
\end{split}\end{align}
where we used \eqref{e:Yl_derivative} in the first statement; and we used \eqref{e:edge_behavior2} and \eqref{e:1m} in the second statement.

For $\cG\in \Omega$ and $|z-E_t|\leq N^{-\fg}$, \eqref{eq:infbound} and \eqref{e:Upsilon_bound} imply that with overwhelmingly high probability over $Z$, we have $\Upsilon(z)\lesssim \ell^3 N^{-2\fb}$ and $N^\fb\Upsilon(z)\ll 1$. We can simplify  $\Psi_{2p}(z)$
from \eqref{eq:phidef} as
\begin{align}\label{e:Phi_bound}
    \Psi_{2p}(z)\lesssim \bm1(\cG\in \Omega)\Bigg[|Q_t(z)-Y_t(z)|^{2p}+\Phi^{2p}(z) +\frac{\Upsilon(z)\Phi(z)}{N\eta}(\Phi^{2p-2}(z)+|Q_t(z)-Y_t(z)|^{2p-2})
    \Bigg].
\end{align}
We can bound the last term in \eqref{e:Phi_bound} taking $(A,B,k)=(( \Upsilon(z)\Phi(z)/((d-1)^{\ell/4}N\eta))^{1/2},|Q_t(z)-Y_t(z)|, 2)$ in \eqref{e:Young}, and get
\begin{align}\begin{split}\label{e:QYbound3}
    \frac{\ell N^\fo}{(d-1)^{\ell/2}}\bE[\Psi_{2p}(z)]&\leq \bE\left[\bm1(\cG\in \Omega)\frac{|Q_t(z)-Y_t(z)|^{2p}}{(d-1)^{\ell/4}}+\Phi^{2p}(z) \right]+\\
    &+\lambda\bE[\bm1(\cG\in \Omega)|Q_t(z)-Y_t(z)|^{2p}]
+\bE\left[\bm1(\cG\in \Omega)\lambda^{1-p}\left(\frac{ \Upsilon(z)\Phi(z)}{(d-1)^{\ell/4}N\eta}\right)^{p}\right].
\end{split}\end{align}

Now, we substitute \eqref{e:QYbound2} and \eqref{e:QYbound3} into \eqref{e:QYbound1}, choosing 
$\lambda$ sufficiently small so that the total coefficient of $\bE[\bm1(\cG\in \Omega)|Q_t(z)-Y_t(z)|^{2p}]$  on the right-hand side becomes less than 
$1/2$. By rearranging, we obtain \eqref{e:QYQbound2}.
\end{proof}

In the rest of this section, we prove \Cref{thm:eigrigidity} and \Cref{c:rigidity} using  \Cref{cor:QYQbound} as input.

\begin{lemma}\label{p:iterate}
For any $z\in \mathbf D$, let $z_t=z+t\md(z,t)$ and we denote $\eta=\Im[z]$, $\kappa=\min\{|\Re[z]-E_t|, |\Re[z]+E_t|\}$. We assume that there exists some deterministic control parameter $\Lambda$ such that the following holds with overwhelmingly high probability
\begin{align}\label{e:defLambda}
\bm1(\cG\in \Omega)|m_t(z)-m_d(z_t)|, \bm1(\cG\in \Omega)|Q_t(z)-   \msc(z_t)|\leq\Lambda.
\end{align}
Then the following improved estimates hold with overwhelmingly high probability:
\begin{enumerate}
    \item 

If   $N\eta\sqrt{\kappa+\eta}\geq N^{6\fc}$ and $\Lambda \leq \sqrt{\kappa+\eta}/N^\fo$, then 
\begin{align}\label{e:weakbb1}
\bm1(\cG\in \Omega)|Q(z)-\msc(z_t)|, \bm1(\cG\in \Omega)|m(z)-m_d(z_t)|\lesssim  \frac{N^{2\fc}}{N \eta}.
\end{align}
\item 
If  $N\eta\sqrt{\kappa+\eta}\leq N^{8\fc}$ and $\Lambda \leq N^{10\fc}/N\eta$, then 
\begin{align}\label{e:weakbb2}
\bm1(\cG\in \Omega)|Q(z)-\msc(z_t)|, \bm1(\cG\in \Omega)|m(z)-m_d(z_t)|\lesssim  \frac{N^{8\fc}}{N \eta}.
\end{align}
\item 
If $\max\{|z-E_t|, |z+E_t|\}\leq N^{-\fg}$, $N\eta\sqrt{\kappa+\eta}\geq N^{6\fo}$, and $\Lambda \leq \sqrt{\kappa+\eta}/N^\fo$ then
\begin{align}\label{e:bb1}
\bm1(\cG\in \Omega)|Q(z)-\msc(z_t)|, \bm1(\cG\in \Omega)|m(z)-m_d(z_t)|\lesssim  \frac{N^{2\fo}}{N \eta}.
\end{align}
and if in addition $|\Re[z]|\geq E_t$, then
\begin{align}\label{e:bb2}
\bm1(\cG\in \Omega)|Q(z)-\msc(z_t)|, \bm1(\cG\in \Omega)|m(z)-m_d(z_t)|\lesssim \frac{N^{2\fo}}{\sqrt{\kappa+\eta}}\left(\frac{1}{N\sqrt{\eta}}+\frac{1}{(N\eta)^2}\right).
\end{align}

\item If $\max\{|z-E_t|, |z+E_t|\}\leq N^{-\fg}$, $N\eta\sqrt{\kappa+\eta}\leq N^{8\fo}$, and $\Lambda \leq N^{10\fo}/N\eta$ then
\begin{align}\label{e:bb3}
\bm1(\cG\in \Omega)|Q(z)-\msc(z_t)|, \bm1(\cG\in \Omega)|m(z)-m_d(z_t)|\lesssim  \frac{N^{8\fo}}{N \eta}.
\end{align}

\end{enumerate}

\end{lemma}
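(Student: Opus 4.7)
}

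The proof is a bootstrap argument: we feed the a priori bound $\Lambda$ into the high-moment estimates of \Cref{cor:QYQbound}, extract pointwise bounds on $|Q_t(z)-Y_t(z)|$ and $|m_t(z)-X_t(z)|$ via Markov's inequality, and then convert these via the stability proposition \Cref{p:stable} into improved bounds on $|Q_t(z)-\msc(z_t)|$ and $|m_t(z)-\md(z_t)|$. The four parts of the lemma correspond to the two choices of moment bound (\eqref{e:QYQbound} away from the edge vs.\ \eqref{e:QYQbound2} near the edge) crossed with the two regimes of \Cref{p:stable} (the stable regime $\delta_1+\ell^5(t\delta_2+t^3)\ll\kappa+\eta$, and the near-critical regime $\delta_1+\ell^5(t\delta_2+t^3)\gtrsim\kappa+\eta$).

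Concretely, I would first use the hypothesis \eqref{e:defLambda} together with \eqref{eq:square_root_behave_copy} to control the control parameters $\Phi$ and $\Upsilon$: since $\Im[m_t]\leq \Im[\md(z_t)]+\Lambda$, we obtain $\Phi\lesssim (\Im[\md(z_t)]+\Lambda)/(N\eta)+N^{-1+2\fc}$, and by Taylor expansion \eqref{e:Upsilon_bound} we get $\Upsilon\lesssim \ell(\sqrt{\kappa+\eta}+t)+\ell^3\Lambda$. Choosing $2p$ large enough depending on the desired high probability, Markov's inequality applied to \eqref{e:QYQbound} yields the pointwise estimates $\delta_1\lesssim N^{\fc/2+\fo}(\sqrt{\Upsilon\Phi/(N\eta)}+\Phi)$ and $\delta_2\lesssim N^{\fc/2+\fo}(\sqrt{\Phi/(N\eta)}+\Phi)$. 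In case (1), substituting the hypothesis $\Lambda\leq \sqrt{\kappa+\eta}/N^{\fo}$ and $N\eta\sqrt{\kappa+\eta}\geq N^{6\fc}$ shows $\delta_1\lesssim N^{\fc}\sqrt{\kappa+\eta}/(N\eta)$ and $\delta_2\lesssim N^{\fc}(\kappa+\eta)^{1/4}/(N\eta)$, so the stable-regime hypothesis of \Cref{p:stable} holds and case (a) of that proposition yields $|Q_t-\msc(z_t)|\lesssim \delta_1/\sqrt{\kappa+\eta}\lesssim N^{2\fc}/(N\eta)$, which is \eqref{e:weakbb1}. In case (2), the hypothesis $N\eta\sqrt{\kappa+\eta}\leq N^{8\fc}$ places us in the near-critical regime, and case (b) of \Cref{p:stable} then gives $|Q_t-\msc(z_t)|\lesssim\sqrt{\delta_1+\ell^5(t\delta_2+t^3)}\lesssim N^{8\fc}/(N\eta)$, producing \eqref{e:weakbb2}.

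Cases (3) and (4) are analogous but invoke the sharper moment bound \eqref{e:QYQbound2}, which is available precisely because the hypothesis $\max\{|z-E_t|,|z+E_t|\}\leq N^{-\fg}$ places $z$ in the edge regime where the microscopic loop equation provides an additional factor of $(d-1)^{-\ell/4}$ and replaces $\Upsilon$ by the refined quantity $\wt\Upsilon$. The key point is that \eqref{e:QYQbound2} contains no $N^{p\fc}$ prefactor, so the same bootstrap with $\fc$ replaced by $\fo$ (together with the bound $\ell\leq N^\fo$) produces estimates of the form $N^{2\fo}/(N\eta)$ in case (3) and $N^{8\fo}/(N\eta)$ in case (4). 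For the sharper bound \eqref{e:bb2} outside the spectrum, we exploit the cancellation $\Im[\md(z_t)]\asymp \eta/\sqrt{\kappa+\eta}$ from \eqref{eq:square_root_behave_copy}, which gives $\Phi\lesssim 1/(N\sqrt{\kappa+\eta})+\Lambda/(N\eta)$; substituting this into the $(\sqrt{\ell\wt\Upsilon\Phi/(N\eta)})^{2p}$ term of \eqref{e:QYQbound2} and dividing by $\sqrt{\kappa+\eta}$ after applying \Cref{p:stable} yields the two-term bound $N^{2\fo}(1/(N\sqrt\eta)+1/(N\eta)^2)/\sqrt{\kappa+\eta}$.

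The main technical obstacle is verifying the hypotheses of \Cref{p:stable} uniformly over $z\in\mathbf D$ and carefully tracking the $t$-dependent error $\ell^5(t\delta_2+t^3)$ in the statement of \Cref{p:stable}. Since $t\leq N^{-1/3+\ft}$ and $\ft\ll\fg$, the term $\ell^5 t^3$ is dominated by $(\kappa+\eta)\eta$ when $\eta\geq N^{-1+\fg}$, and the mixed term $\ell^5 t\delta_2$ is controlled by the estimate on $\delta_2$ derived in Step 2; the bookkeeping is tedious but direct. The other delicate point is the sharpness of \eqref{e:bb2}: one must verify that the $\wt\Upsilon$-term in \eqref{e:QYQbound2} is the dominant one (rather than the $(\ell\Phi)^{2p}$ or $(d-1)^{-\ell/4}$ terms) in precisely the subregime $|\Re z|\geq E_t$ with $N\eta\sqrt{\kappa+\eta}\geq N^{6\fo}$, which requires carefully comparing the sizes of the three terms in \eqref{e:QYQbound2} under the hypothesis on $\Lambda$.
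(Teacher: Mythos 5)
Your overall strategy is exactly the paper's: bound $\Phi,\Upsilon,\wt\Upsilon$ using the a priori $\Lambda$, apply Markov's inequality to the moment estimates \eqref{e:QYQbound} (cases (1)--(2)) and \eqref{e:QYQbound2} (cases (3)--(4)) to get pointwise bounds $\delta_1,\delta_2$ on $|Q_t-Y_t|$ and $|m_t-X_t|$, and then invoke \Cref{p:stable} in its stable or near-critical regime. Cases (1)--(3) and (4) as you sketch them go through essentially as in the paper.

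There is, however, a genuine gap in your treatment of \eqref{e:bb2}. With only the hypothesis $\Lambda\leq\sqrt{\kappa+\eta}/N^{\fo}$ and $\Im[\md(z_t)]\asymp\eta/\sqrt{\kappa+\eta}$, a single pass through your scheme gives
\begin{align*}
|Q_t(z)-\msc(z_t)|\lesssim \frac{\delta_1}{\sqrt{\kappa+\eta}}\lesssim \frac{N^{2\fo}\sqrt{\eta+\sqrt{\kappa+\eta}\,\Lambda}}{N\eta\sqrt{\kappa+\eta}},
\end{align*}
and the $\Lambda$-dependent part of this is of order $N^{3\fo/2}/(N\eta)$, which is far larger than the claimed second term $N^{2\fo}/((N\eta)^2\sqrt{\kappa+\eta})$: their ratio is at least $N^{-5\fo/2}\,N\eta\sqrt{\kappa+\eta}\geq N^{7\fo/2}\gg1$ under the standing assumption $N\eta\sqrt{\kappa+\eta}\geq N^{6\fo}$. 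So your claim that ``substituting $\Phi\lesssim 1/(N\sqrt{\kappa+\eta})+\Lambda/(N\eta)$ \dots and dividing by $\sqrt{\kappa+\eta}$ after applying \Cref{p:stable} yields the two-term bound'' is not correct as stated. The paper closes this by a self-improving iteration: the output bound is itself of the form required of $\Lambda$ (it is $\leq\sqrt{\kappa+\eta}/N^{\fo}$), so one takes it as the new $\Lambda$ and repeats the Markov-plus-stability step; iterating \eqref{e:outS0} finitely many times drives the $\Lambda$-dependent contribution down to its fixed point and produces \eqref{e:outS}, i.e.\ \eqref{e:bb2}. You need to add this bootstrap (and note that the overwhelming-probability events can be intersected over the $\OO(1)$ iterations); without it the exponent in \eqref{e:bb2} is not attained.
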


\begin{proof}

We recall the moment bounds of $Q_t-Y_t$ and $m_t-X_t$ from \Cref{cor:QYQbound},
\begin{align}\begin{split}\label{e:Q-YQ}
&\bE[\bm1(\GG\in \Omega)|Q_t-Y_t|^{2p}]
\lesssim N^{p\fc}\bE\left[\bm1(\GG\in \Omega)\left(\sqrt{\frac{\Upsilon \Phi}{N\eta}}+\Phi\right)^{2p}\right],\\
&\bE[\bm1(\GG\in \Omega)|m_t-X_t|^{2p}]
\lesssim N^{p\fc}\bE\left[\bm1(\GG\in \Omega)\left(\sqrt{\frac{ \Phi}{N\eta}}+\Phi\right)^{2p}\right],
\end{split}\end{align}
and for $\min\{|z-E_t|, |z+E_t|\}\leq N^{-\fg}$
\begin{align}\label{e:goodQ-YQ}
&\bE[\bm1(\GG\in \Omega)|Q_t-Y_t|^{2p}]
\lesssim \bE\left[\bm1(\GG\in \Omega)\left((\ell \Phi)^{2p}+\left(\frac{\ell \widetilde \Upsilon \Phi}{N\eta}\right)^p+\left(\frac{\Upsilon \Phi}{(d-1)^{\ell/4}N\eta}\right)^{p}\right)\right].
\end{align}

Using the control parameter $\Lambda$ from \eqref{e:defLambda}, we can estimate
\begin{align}\begin{split}\label{e:Immz}
&\Im[m_t(z)]\leq \Im[\md(z_t)]+|m_t(z)- m_d(z_t)|\leq \Im[\md(z_t)] +\Lambda ,\\
&\Phi(z)=\frac{\Im[m_t(z)]}{N\eta}+\frac{1}{N^{1-2\fc}}\leq \frac{\Im[m_d(z_t)]+\Lambda}{N\eta}+\frac{1}{N^{1-2\fc}}\lesssim \frac{\sqrt{\kappa+\eta}+\Lambda}{N\eta}+\frac{1}{N^{1-2\fc}},
\end{split}\end{align}
where we used \eqref{eq:square_root_behave_copy}, and
\begin{align}\begin{split}\label{e:Upz}
\Upsilon(z)&=
|1-\del_1 Y_\ell(\msc(z_t), z_t)|+t|\del_2 Y_\ell(\msc(z_t),z_t)|+\OO(\ell^3(|Q_t-\msc|+|m_t-\md|))+\\
&+(d-1)^{8\ell}\Phi\lesssim \ell(\sqrt{\kappa+\eta} +t)
+\ell^3\Lambda\lesssim \ell^3(\sqrt{\kappa+\eta} +t+\Lambda),\\
\wt \Upsilon(z)&= |1-\del_1 Y_\ell(\msc(z_t),z_t)-t\cA \del_2 Y_\ell(\msc(z_t),z_t)| +\OO(\ell^3(|Q_t-\msc|+|m_t-\md|))\\
&\lesssim \ell\sqrt{\kappa+\eta}+\ell^3\Lambda
\lesssim \ell^3(\sqrt{\kappa+\eta}+\Lambda),
\end{split}\end{align}
where we used \eqref{e:Yl_derivative}, \eqref{e:1m} and \eqref{e:1m2}.

Noticing that when $\min\{|z-E_t|, |z+E_t|\}\leq N^{-\fg}$, we have $\Phi(z)\lesssim (\Im[\md(z_t)]+\Lambda) /N\eta$. Plugging \eqref{e:Immz} and \eqref{e:Upz} into  \eqref{e:goodQ-YQ}, we get
\begin{align}\begin{split}\label{e:Q-YQ2}
&\phantom{{}={}}\bE[\bm1(\GG\in \Omega)|Q_t-Y_t|^{2p}]\\
&\lesssim \ell^{4p}\bE\left[\bm1(\GG\in \Omega)\left(\frac{\sqrt{(\Im[m_d]+\Lambda)(\sqrt{\kappa+\eta}+\Lambda)}}{N\eta}+\frac{\sqrt{(\Im[m_d]+\Lambda) t}}{(d-1)^{\ell/4}N\eta}\right)^{2p}\right].
\end{split}\end{align}
Thus a Markov inequality tells us the following holds with overwhelmingly high probability:
\begin{align}\begin{split}\label{e:Q-YQ3}
    \bm1(\GG\in \Omega)|Q_t-Y_t|
    &\lesssim N^{2\fo}\left(\frac{\sqrt{(\Im[m_d]+\Lambda)(\sqrt{\kappa+\eta}+\Lambda)}}{N\eta}+\frac{\sqrt{(\Im[m_d]+\Lambda) t}}{(d-1)^{\ell/4}N\eta}\right).
\end{split}\end{align}

In general, $\Phi(z)\lesssim N^{2\fc}(\Im[\md(z_t)]+\Lambda) /N\eta$, so, in the same way, \eqref{e:Q-YQ} gives 
\begin{align}\begin{split}\label{e:m-XQ2}
   &\bm1(\GG\in \Omega)|Q_t-Y_t|
    \lesssim N^{2\fc}\left(\frac{\sqrt{(\Im[m_d]+\Lambda)(\sqrt{\kappa+\eta}+\Lambda)}}{N\eta}\right), \\
    &\bm1(\GG\in \Omega)|m_t-X_t|
    \lesssim  N^{2\fc}\left(\frac{\sqrt{\Im[m_d]+\Lambda}}{N\eta}\right).
\end{split}\end{align}

In the following we prove \eqref{e:bb1}, \eqref{e:bb2} and \eqref{e:bb3}. The claims \eqref{e:weakbb1} and \eqref{e:weakbb2} can be proven in the same way by using \eqref{e:m-XQ2} as input, so we omit their proofs.

We start with \eqref{e:bb1}. We recall from \eqref{eq:square_root_behave_copy} that $\Im[m_d]\lesssim \sqrt{\kappa+\eta}$. This,  \eqref{e:Q-YQ3}, and \eqref{e:m-XQ2} simplify to the following statement: conditioned on $\cG\in \Omega$, with overwhelmingly high probability the following holds:
\begin{align}\label{e:diyibuaa}
     &|Q_t-Y_t|\lesssim\frac{N^{2\fo}(\sqrt{\kappa+\eta}+\Lambda+(d-1)^{-\ell/2}t)}{N\eta},\quad |m_t-X_t|
    \lesssim \frac{N^{2\fc}(\sqrt{\kappa+\eta}+\Lambda)^{1/2}}{N\eta}.
\end{align}

If $N\eta\sqrt{\kappa+\eta}\geq N^{6\fo}$, then $(d-1)^{-\ell/2}t\ll N^{-1/3}\leq \sqrt{\kappa+\eta}\leq N^{-\fg/2}$. Also, our assumption  $\Lambda\leq \sqrt{\kappa+\eta}/N^\fo$ verifies the assumption in \Cref{p:stable} with $\delta_1=N^{2\fo}\sqrt{\kappa+\eta}/N\eta\ll \kappa+\eta$ and $\delta_2=N^{2\fc}(\kappa+\eta)^{1/4}/N\eta$, and we have
\begin{align*}
\delta_2\lesssim \frac{N^{2\fc}(\kappa+\eta)^{1/4}}{N\eta}\lesssim \frac{1}{N\eta}, \quad \ell^5(t\delta_2+t^3)\lesssim \ell^5(N^{-1/3+\ft}\delta_2+N^{-1+3\ft})\ll \delta_1.
\end{align*}
Then \Cref{p:stable} implies 
\begin{align}\label{e:Qm_bound1}
     |Q_t-\msc|, |m_t-\md|\lesssim \delta_2+\frac{\delta_1+\ell^5(t\delta_2+t^3)}{\sqrt{\kappa+\eta}}
     \lesssim \frac{1}{N\eta}+\frac{\delta_1}{\sqrt{\kappa+\eta}}\lesssim \frac{N^{2\fo}}{N\eta}.
\end{align}

Next we show \eqref{e:bb3}. If $N\eta\sqrt{\kappa+\eta}\leq N^{8\fo}$, then $\eta\leq N^{-2/3+6\fo}$, and $\sqrt{\kappa+\eta}\leq N^{8\fo}/N\eta$. Moreover, our assumption gives $\Lambda\leq N^{10\fo}/N\eta$.  The assumptions in \Cref{p:stable} hold if we take 
$\delta_1=N^{16\fo}/(N\eta)^2$ and 
$\delta_2=N^{2\fc+6\fo}/(N\eta)^{3/2}$, and notice
\begin{align*}
    \delta_1=N^{16\fo}/(N\eta)^2\geq {\kappa+\eta},\quad \delta_2\leq 1/N\eta, \quad \ell^5(t\delta_2+t^3)\lesssim \ell^5(N^{-1/3+\ft}\delta_2+N^{-1+3\ft})\ll \delta_1.
\end{align*}
Then \Cref{p:stable} implies 
\begin{align}\label{e:Qm_bound2}
     |Q_t-\msc|, |m_t-\md|\lesssim \delta_2+\frac{\delta_1+\ell^5(t\delta_2+t^3)}{\sqrt{\kappa+\eta}}
     \lesssim \frac{1}{N\eta}+\sqrt{\delta_1}\lesssim \frac{N^{8\fo}}{N\eta}.
\end{align}

Finally, we show \eqref{e:bb2}. We recall that  $|\Re[z]|\geq E_t$, $N\eta\sqrt{\kappa+\eta}\geq N^{6\fo}$, and $\Lambda\leq \sqrt{\kappa+\eta}/N^\fo $. 
In this case $\Im[m_d]\asymp\eta/\sqrt{\kappa+\eta}$, $(d-1)^{-\ell/2}t\ll N^{-1/3}\leq \sqrt{\kappa+\eta}$ and  we can simplify \eqref{e:Q-YQ3} and \eqref{e:m-XQ2} to
\begin{align*} 
|Q_t-Y_t|\lesssim \frac{N^{2\fo}\sqrt{\eta+\sqrt{\kappa+\eta}\Lambda}}{N\eta}=:\delta_1,\quad
     |m_t-X_t|
    \lesssim\frac{N^{2\fc}\sqrt{\eta+\sqrt{\kappa+\eta}\Lambda}}{(\kappa+\eta)^{1/4}N\eta}=:\delta_2.
\end{align*}
Moreover, we have 
\begin{align*}
\delta_1\ll \kappa+\eta, \quad \delta_2\lesssim\delta_1/\sqrt{\kappa+\eta}, \quad \ell^5(t\delta_2+t^3)\lesssim \ell^5(N^{-1/3+\ft}\delta_2+N^{-1+3\ft})\ll \delta_1,
\end{align*}
and \Cref{p:stable} implies
\begin{align}\label{e:outS0}
     |Q_t-\msc|, |m_t-\md|\lesssim \delta_2+\frac{\delta_1+\ell^5(t\delta_2+t^3)}{\sqrt{\kappa+\eta}}
     \lesssim \frac{\delta_1}{\sqrt{\kappa+\eta}}\lesssim \frac{N^{2\fo}\sqrt{\eta+\sqrt{\kappa+\eta}\Lambda}}{N\eta\sqrt{\kappa+\eta}}.
\end{align}
We remark that using $N\eta\sqrt{\kappa+\eta}\geq N^{6\fo}$, the right-hand side of \eqref{e:outS0} is smaller than ${\sqrt{\kappa+\eta}}/{N^\fo}$.
We can take the new $\Lambda$ as the right-hand side of \eqref{e:outS0}, by iterating \eqref{e:outS0}, we get
\begin{align}\label{e:outS}
 \bm1(\GG\in \Omega)|Q_t-\msc|,  \bm1(\GG\in \Omega)|m_t-m_d|\lesssim  \frac{N^{2\fo}}{\sqrt{\kappa+\eta}}\left(\frac{1}{N\sqrt{\eta}}+\frac{1}{(N\eta)^2}\right).
\end{align}
This finishes the proof of \eqref{e:bb2}.
\end{proof}

\begin{proof}[Proof of  \Cref{c:rigidity}]
We will only prove \eqref{e:mbond} and \eqref{e:equation_est}, the other statements can be proven in the same way by using \Cref{p:iterate} as input, so we omit their proofs. 
We take a lattice grid
\begin{align*}
    \mathbf L=\{E+\ri\eta\in \mathbf D: E\in N^{-2}\bZ,\eta\in N^{-2}\bZ \}.
\end{align*}
For $\cG\in \Omega$ and $z\in \mathbf D$, $m_t(z), Q_t(z), \msc(z_t), \md(z_t)$ are all Lipschitz with Lipschitz constant at most $\OO(N)$.
Thus, if we can show that conditioned on $\cG\in\Omega$, with overwhelmingly high probability, for any $z\in \mathbf L$, 
\begin{align}\label{e:mbondcopy}
\bm1(\cG\in \Omega)|Q_t(z)-\msc(z_t)|, \bm1(\cG\in \Omega)|m_t(z)-m_d(z_t)|\lesssim \frac{N^{8\fc}}{N\eta},
\end{align}
 then \eqref{e:mbond} follows.

First for $z\in \mathbf L$ with $\Im[z]\asymp N^{-\fo}$, \eqref{eq:local_law} implies that
\begin{align*}
   \bm1(\cG\in \Omega)|m_t(z)-m_d(z_t)|,\quad \bm1(\cG\in \Omega)|Q_t(z)-   \msc(z_t)|\lesssim \frac{1}{N^{\fb}}\leq \frac{\sqrt{\kappa+\eta}}{N^{\fo}}.
\end{align*}
This verifies the assumptions of  \eqref{e:weakbb1} and \eqref{e:weakbb2}. We conclude that \eqref{e:mbondcopy} holds for $z\in \mathbf L$ with $\Im[z]\asymp N^{-\fo}$. Then inductively we can show that if \eqref{e:mbondcopy} holds for $z\in \mathbf L$ with $\Im[z]\geq (k+1)/N^2$, then it also holds for $z\in \mathbf L$ with $\Im[z]\geq k/N^2$. More precisely, for $z=E+\ri \eta\in \mathbf L$ with $\Im[z]\geq k/N^2$, we denote $z'=z+(\ri/N^2)\in \mathbf L$. By the inductive hypothesis and the fact that the Lipschitz constants of $m(z), \md(z_t)$ are at most $\OO(N)$ 
\begin{align*}
    \bm1(\cG\in \Omega)|m_t(z)-m_d(z_t)|
    =\bm1(\cG\in \Omega)|m_t(z')-m_d(z_t')| +\OO(1/N)
    \lesssim \frac{N^{8\fc}}{N\eta}\ll \frac{N^{10\fc}}{N\eta}.
\end{align*}
Thus \eqref{e:weakbb1} and \eqref{e:weakbb2} imply that \eqref{e:mbondcopy} holds for $z$. Finally the claim \eqref{e:equation_est} follows from plugging \eqref{e:mbondcopy} into the second statement of \eqref{e:diyibuaa} (taking $\Lambda=N^{8\fc}/N\eta$).

\end{proof}

\begin{proof}[Proof of  \Cref{thm:eigrigidity}]
Optimal rigidity \eqref{e:optimal_rigidity} follows from a standard argument using the Stieltjes transform estimates \Cref{c:rigidity} as an input, see \cite[Section 11]{erdHos2017dynamical}.  For didactic purposes, we show here how to show that, conditioned on $\Omega$, $\lambda_2\leq 2+N^{-2/3+10\fo}$ holds with overwhelmingly high probability. It has been proven in {\cite[Theorem 1.3]{huang2024spectrum}} that conditioned on $\Omega$, with overwhelmingly high probability, 
\begin{align}\label{e:la2bound}
    \lambda_2\leq 2+N^{-\oo(1)}.
\end{align}
In the following we show that with overwhelmingly high probability, there is no eigenvalue on the interval $[2+N^{-2/3+10\fo}, 2+N^{-\fg}]$. It, together with \eqref{e:la2bound}, implies that $\lambda_2\leq 2+N^{-2/3+10\fo}$.

In the following we take time $t=0$, then $z_0=z$ and $E_0=2$. We also denote the Stieltjes transform $m_N(z)=m_0(z)$.

We take $z=2+\kappa+\ri\eta$, with $ N^{-2/3+10\fo}\leq\kappa\leq N^{-\fg}$ and $\eta=N^{6\fo}/(N\sqrt\kappa)\leq N^{-8\fo}\kappa$. With this choice, one can check that $N\eta\sqrt{\kappa+\eta}\geq N^{6\fo}$, $z\in {\mathbf  D}$ and $|z-E_t|\lesssim N^{-\fg}$. We recall from \eqref{eq:square_root_behave_copy}, $\Im[\md(z)]\asymp \eta/\sqrt{\kappa+\eta}\ll 1/{N\eta}$.
In  this regime,   \eqref{e:mtimproved} implies that, conditioned on $\Omega$, the following holds with overwhelmingly high probability,
\begin{align*} \begin{split}
\left|m_N(z) - \md(z)\right|
&\leq N^{2\fo}\left(\frac{1}{N\sqrt{\eta(\kappa+\eta)}}+\frac{1}{(N\eta)^2\sqrt{\kappa+\eta}}\right)\lesssim \frac{N^{2\fo}}{N^{4\fo} (N\eta)}\ll \frac{1}{N\eta}.
\end{split}\end{align*}
Thus it follows that 
\begin{align*}
    \Im[m_N(z)]\leq |m_N(z)+\md(z)|+\Im[\md(z)]\ll \frac{1}{N\eta}.
\end{align*}
If there is an eigenvalue on the interval $\lambda_i\in [2+\kappa-\eta, 2+\kappa+\eta]$, then 
\begin{align*}
    \Im[m_N(z)]
    =\frac1N\sum_{i=1}^N \Im\left[\frac{1}{\lambda_i-z}\right]\geq\frac{1}{N}\frac{\eta}{|\lambda_i-(2+\kappa)|^2+\eta^2}\geq \frac{1}{2N\eta},
\end{align*}
which is impossible. It follows that conditioned on $\cG\in \Omega$, with overwhelmingly high probability, it is the case that $\lambda_2\leq 2+N^{-2/3+10\fo}$.
\end{proof}

\section{Proofs of \Cref{l:erbu} and \Cref{l:second_term}}\label{s:tree_computation}

\begin{proof}[Proof of \Cref{l:erbu}]

 We consider the cases in \Cref{l:erbu} one by one. 
 Let $f_0'$ denote the number of distinct values in $\bm\theta$. We recall from \eqref{e:S'bound3_main} that
 \begin{align}\label{e:S'bound3}
     \frac{(d-1)^{f_0'\ell+3\ell\sum_{j=1}^{p-1}h_{j1}}}{(d-1)^{k_4 \ell/2}}\prod_{j=1}^{p-1}|\cW_j'|\lesssim (|Q_t-Y_t|+(d-1)^{8\ell}\Upsilon \Phi)^{p-1}.
 \end{align}
By using \eqref{e:S'bound3} as input, the proof of  \Cref{l:erbu} is parallel to those of the statements \eqref{e:huanG-Q}, \eqref{e:huanG} and \eqref{e:huanGS}. 

 Assume $\widehat R_{\bfi^+}$ contains a term $A_\al=( G_{c_\al c_\al}^{(b_\al)}-Q_t)$ with $\al\in \mathsf I_{\rm single}$. Let $\widehat R_{\bfi^+}=( G_{c_\al c_\al}^{(b_\al)}-Q_t)\widehat R_{\bfi^+}'$. 
    We let  the forest $\widehat \cF$, the indicator function $I_{c_\al}$ as in \eqref{e:Idecompose}, and recall the estimate \eqref{e:huanGcb00}
\begin{align*}\begin{split}
     &\phantom{{}={}}\frac{\bm1(\cG\in \Omega)}{ Z_{\cF^+}}\sum_{b_\al, c_\alpha\in\qq{N}} I(\cF^+,\cG)(G_{c_\al c_\al}^{(b_\al)}-Q_t)\lesssim\frac{\bm1(\cG\in \Omega)}{N^{1-3\fc/2} Z_{\cF^+}}\sum_{b_\al, c_\alpha\in\qq{N}} I(\cF^+,\cG).
\end{split}\end{align*}
Then we can bound \eqref{e:oneterm1} as 
\begin{align*}\begin{split}
  \eqref{e:oneterm1} &\lesssim \frac{(d-1)^{f_0\ell}}{(d-1)^{(k_1 +(k_2+k_3+k_4)/2)\ell}}   \frac{(d-1)^{(6\widehat h+3\sum_{j=1}^{p-1} h_{j1})\ell}}{N^{1-3\fc/2}  Z_{\cF^+}}\sum_{ \bfi^+}\bE\left[I(\widehat\cF,\cG)\bm1(\cG\in \Omega)|\widehat R_{\bfi^+}|\right]\\
   &\lesssim  \frac{(d-1)^{(f_0-f_0')\ell}}{(d-1)^{(k_1 +(k_2+k_3)/2)\ell}}  \frac{(d-1)^{6\widehat h\ell}}{N^{1-3\fc/2}}\bE\left[\bm1(\cG\in \Omega) N^{-\widehat h\fb} (|Q_t-Y_t|+(d-1)^{8\ell}\Upsilon\Phi)^{p-1}\right]\\
    &\lesssim \frac{(d-1)^{(6\widehat h+(k_2+k_3)/2)\ell} N^{-\widehat h\fb} }{N^{1-3\fc/2}}\bE\left[\bm1(\cG\in \Omega)(|Q_t-Y_t|+(d-1)^{8\ell}\Upsilon\Phi)^{p-1}\right]\lesssim N^{-\fb/2}\bE[ \Psi_p],
    \end{split}\end{align*}
    where we used \eqref{e:S'bound3}, $\widehat R'_{\bfi^+}\in \Adm(\widehat h,\bmh,\cF^+,\cG)$ and \eqref{e:Bsmall} for the second statement, and $f_0-f_0'\leq k_1+k_2+k_3$ for the third statement.


If $\widehat R_{\bfi^+}$ contains $   G^{\circ}_{b_\beta s}$ or $G^{\circ}_{c_\beta s}$ for $s\in \cK\setminus\{o,i\}$, and the index $\beta\in \mathsf I_{\rm single}$, then the  same argument gives the desired bound.

We now consider if $\widehat R_{\bfi^+}$ contains a term in $ \{G^{\circ}_{b_\theta w},G^{\circ}_{c_\theta w},  L_{b_\theta w}, L_{c_\theta w}\}$ with $\theta\in \mathsf I_{\rm single}$ and $w$ belonging to a special edge $\{u_j,v_j\}$.
We assume that $\widehat R_{\bfi^+}$ contains a term $ G^{\circ}_{c_\theta w}$; other cases can be proven in the same way, so we omit their proofs. Then there is some $j'\in\qq{p-1}$, $h_{j'0}=2$, $h_{j'1}+h_{j'2}\geq 2$ even, 
such that 
\begin{align}\label{e:Sj'}
    \cW_{j'}'=\frac{\{1-\del_1 Y_\ell, \del_2 Y_\ell\}}{Nd}\sum_{u_j\sim v_j\in \qq{N}} G^{\circ}_{c_\theta w}
  R''_{h_{j'1}-1, h_{j'2}}.
\end{align}

There are two cases: either $h_{j'1}+h_{j'2}\geq 4$ or $h_{j'1}+h_{j'2}=2$. We discuss them separately.

\noindent\textbf{Case 1.} If $h_{j'1}+h_{j'2}\geq 4$, we consider  the forest $\widehat \cF$ and the indicator function $I_{c_\theta}$ as in \eqref{e:Idecompose} (taking $\al$ to be $\theta$). Then 
\begin{align}\begin{split}\label{e:huanGcb}
     &\phantom{{}={}}\left|\frac{(d-1)^{(f_0'+3\sum_{j=1}^{p-1}h_{j1})\ell}}{(d-1)^{k_4 \ell/2}}\frac{\bm1(\cG\in \Omega)}{ Z_{\cF^+}}\sum_{b_\theta, c_\theta\in\qq{N}} I(\cF^+,\cG)\prod_{j=1}^{p-1}\cW'_j\right|\\
     &\lesssim 
     \frac{(d-1)^{(f_0'+3\sum_{j=1}^{p-1}h_{j1})\ell}}{(d-1)^{k_4 \ell/2}} \frac{\bm1(\cG\in \Omega)I(\widehat \cF,\cG)}{Z_{\cF^+}}\left|\sum_{b_\theta\sim c_\theta}G^\circ_{c_\theta w}I_{c_\theta} \right|\frac{\Upsilon}{Nd}\sum_{u_j\sim v_j} |R''_{h_{j'1}-1, h_{j'2}}|\prod_{j\neq j'}|\cW'_{j}|\\
      &\lesssim  \frac{\bm1(\cG\in \Omega)I(\widehat \cF,\cG)}{Z_{\cF^+}}\frac{Nd}{N^{1-3\fc/2}} (d-1)^{10\ell} \Upsilon \Phi (|Q_t-Y_t|+(d-1)^{8\ell} \Upsilon \Phi )^{p-2}\\
      &\lesssim   \frac{\bm1(\cG\in \Omega)}{N^{1-3\fc/2}Z_{\cF^+}}(d-1)^{10\ell} \Upsilon \Phi (|Q_t-Y_t|+(d-1)^{8\ell} \Upsilon \Phi )^{p-2}\sum_{b_\theta\sim c_\theta\in\qq{N}} I_{c_\theta} I(\widehat\cF,\cG) \\
      &\lesssim \frac{(d-1)^{2\ell} } {N^{1-3\fc/2} Z_{\cF^+}}\sum_{b_\theta, c_\theta} \bm1(\cG\in \Omega)I(\cF^+,\cG)(|Q_t-Y_t|+(d-1)^{8\ell} \Upsilon \Phi )^{p-1},
\end{split}\end{align}
where the second statement follows from the decomposition \eqref{e:Sj'}; for the third statement we used \eqref{e:single_index_sum}, and \eqref{e:S'bound2} to bound the summation of $R''_{h_{j1}-1 h_{j2}}$, and $\cW'_{j'}$.
The last two statements follow from rearranging. Thanks to \eqref{e:huanGcb},  we can bound \eqref{e:oneterm1} as
\begin{align}\begin{split}\label{e:hala1}
   &\phantom{{}={}}\frac{(d-1)^{f_0\ell}}{(d-1)^{(k_1 +(k_2+k_3+k_4)/2)\ell}}   \frac{(d-1)^{(6\widehat h+3\sum_{j=1}^{p-1} h_{j1})\ell}}{Z_{\cF^+}}\sum_{ \bfi^+}\bE\left[I(\cF^+,\cG)\bm1(\cG\in \Omega)|\widehat R_{\bfi^+}|\right]\\
   &\lesssim\frac{(d-1)^{(f_0-f_0'+6\widehat h+2)\ell} N^{-(\widehat h+1)\fb}}{(d-1)^{(k_1 +(k_2+k_3)/2)\ell}N^{1-3\fc/2} Z_{ \cF^+}}\sum_{\bfi^+}\bE\left[\bm1(\cG\in \Omega)I(\cF^+,\cG) (|Q_t-Y_t|+(d-1)^{8\ell} \Upsilon \Phi )^{p-1}\right]\\
   &\lesssim\frac{(d-1)^{(6\widehat h+2+(k_2+k_3)/2)\ell }N^{-(\widehat h+1)\fb} }{N^{1-3\fc/2} }\bE\left[\bm1(\cG\in \Omega) (|Q_t-Y_t|+(d-1)^{8\ell} \Upsilon \Phi )^{p-1}\right]\lesssim \bE[N^{-\fb} \Psi_p],
\end{split}\end{align}
where to get the third statement, we used that $f_0-f_0'\leq k_1+k_2+k_3$; to get the last statement we used $\widehat h\geq k_1+k_2+k_3$.


 

\noindent 
\textbf{Case 2.}  If $h_{{j'}1}+h_{{j'}2}=2$, then
    \begin{align*}\begin{split}
    &R''_{h_{{j'}1}-1, h_{{j'}2}}
    \in \{ G^{\circ}_{sw'}, L_{sw'}, (\Av G^{\circ})_{o'w'},(\Av L)_{o'w'} \}_{w'\in\{u_{j'}, v_{j'}\},s\in \cK^+}\times U^{(u_{j'},v_{j'})},
    \end{split}\end{align*}
    where $U^{(u_{j'},v_{j'})}$ is a product of $G_{u_{j'} v_{j'}}, 1/G_{u_{j'} u_{j'}}$. 
    Without loss of generality, we assume that $ R'_{h_{{j'}1}-1, h_{{j'}2}}$ is of the form $ G^{\circ}_{sw'} U^{(u_{j'},v_{j'})}$ with $s\in \{b,c\}\in\cC^+$. The other cases can be proven in the same way, so we omit their proofs. Let $\widehat \cF=\{\{b,c\}, \{b_\theta, c_\theta\}\}$ consist of two unused core edges, and define
    \begin{align}\begin{split}\label{e:Rsbound}
 &\phantom{{}={}}V^{(b,c)}:=\frac{(d-1)^{f_0\ell} (d-1)^{(6\widehat h+3\sum_{j\neq j'} h_{j1})\ell}}{(d-1)^{(k_1 +(k_2+k_3+k_4)/2)\ell}Z_{\cF^+}}\sum_{ \bfi/\{b,c,b_\theta, c_\theta\}}\frac{(Nd) I(\cF^+,\cG)}{Z_{\cF^+}}R_{\widehat h+1}\prod_{j\neq j'}\cW'_{j}\\
  &\lesssim\frac{(d-1)^{(f_0-f_0'+2)\ell} (d-1)^{6\widehat h\ell}}{(d-1)^{(k_1 +(k_2+k_3)/2)\ell}}N^{-(\widehat h+1)\fb}(|Q_t-Y_t|+(d-1)^{8\ell}\Upsilon \Phi)^{p-2}\sum_{ \bfi/\{b,c,b_\theta, c_\theta\}}\frac{(Nd) I(\cF^+,\cG)}{Z_{\cF^+}}\\
  &\lesssim \frac{(|Q_t-Y_t|+(d-1)^{8\ell}\Upsilon \Phi)^{p-2}}{N}I(\widehat \cF,\cG),
\end{split}\end{align}
where the second statement follows from \eqref{e:S'bound3} by noticing that $\cW_{j'}$ contributes at most $2$ to $f_0'$; in the third statement we used that $f_0-f_0'\leq k_1+k_2+k_3$, $\widehat h\geq k_1+k_2+k_3$, and
$I(\cF^+,\cG)$ contains the factor $I(\wh\cF,\cG)$.  

By plugging in \eqref{e:Rsbound} and noticing $h_{j'1}=2$,  we can rewrite \eqref{e:oneterm1} as  
    \begin{align}\begin{split}\label{e:yigeJtheta}
      \eqref{e:oneterm1}
      =\frac{ (d-1)^{6\ell}}{Nd} \sum_{b,c, b_\theta, c_\theta\in\qq{N}}\bE\left[ \bm1(\cG\in \Omega)I(\wh \cF,\cG)V^{(b,c)} \cW_{j'}\right].
    \end{split}\end{align}
For $\cG\in \Omega$, the same as in \eqref{e:Wjsum}, the summation in \eqref{e:yigeJtheta} can be bounded by \eqref{e:single_index_term2}
\begin{align}\begin{split}\label{e:hala2}
 &\phantom{{}={}}\frac{ (d-1)^{6\ell}}{Nd} \sum_{b,c, b_\theta, c_\theta\in\qq{N}}I(\wh \cF,\cG)V^{(b,c)}\cW_{j'}\lesssim  N^{-\fb}\Psi_p.
\end{split}\end{align}

The two cases \eqref{e:hala1} and \eqref{e:hala2} both give that $|\eqref{e:oneterm1}|\lesssim N^{-\fb}\bE[\Psi_p]$ as claimed.

The four cases in \eqref{e:final_replace} can be proven in the same way, so we will only prove the last case.  We let  the forest $\widehat \cF$, the indicator function $I_{c_\gamma c_{\gamma'}}$ (by taking $(\al, \beta)$ as $(\gamma,\gamma')$) as in \eqref{e:Idecompose2}, and recall the estimate \eqref{e:huanGcc}
\begin{align}\begin{split}\label{e:huanGcccopy}
   &\phantom{{}={}}\frac{\bm1(\cG\in \Omega)}{Z_{\cF^+}}\sum_{b_\gamma,c_\gamma\atop b_{\gamma'},c_{\gamma'}}I(\cF^+,\cG) G_{c_\gamma c_{\gamma'}}^{(b_\gamma b_{\gamma'})}I_{c_\gamma c_{\gamma'}} \\
      &=\frac{\bm1(\cG\in \Omega)}{Z_{ \cF^+}}\sum_{b_\gamma, c_\gamma\atop b_{\gamma'}, c_{\gamma'}}I(\cF^+,\cG)\left(\frac{G_{b_\gamma b_{\gamma'}}(G_{c_\gamma c_\gamma}^{(b_\gamma)}-Q_t) (G_{c_{\gamma'} c_{\gamma'}}^{(b_{\gamma'})}-Q_t)}{d-1}+\OO(\cE_{\gamma\gamma'}) \right),
\end{split}\end{align}     
   where 
   \begin{align*}
       |\cE_{\gamma \gamma'}|\lesssim  N^{-\fb/2}  |G_{b_\gamma b_{\gamma'}}|\left(\sum_{x\sim b_\gamma, x\neq c_\gamma}|G_{c_\gamma x}^{(b_\gamma)}|+\sum_{x\sim b_{\gamma'}, x\neq c_{\gamma'}}|G_{c_{\gamma'} x}^{(b_{\gamma'})}|\right)+N^{-\fb/2} \Phi.
   \end{align*}

By plugging \eqref{e:huanGcccopy} back into \eqref{e:oneterm1}, the leading term in \eqref{e:huanGcccopy} gives first term on the right-hand side of \eqref{e:oneterm2}, where $R_{\bfi^+}'$ is obtained from $\widehat R_{\bfi^+}$ by replacing $G^{(b_\gamma b_\gamma')}_{c_\gamma c_{\gamma'}}$ by $G_{b_{\gamma} b_{{\gamma'}}}(G_{c_{\gamma} c_{\gamma}}^{(b_{\gamma})}-Q_t) (G_{c_{{\gamma'}} c_{{\gamma'}}}^{(b_{{\gamma'}})}-Q_t)$.
Thanks to \eqref{e:sameasdisconnect} and \eqref{e:bPi1}, the error term in \eqref{e:huanGcccopy} is negligible
\begin{align*}\begin{split}
       &\phantom{{}={}}\frac{(d-1)^{(f_0+6\widehat h+3\sum_{j=1}^{p-1} h_{j1})\ell}}{(d-1)^{(k_1 +(k_2+k_3+k_4)/2)\ell}Z_{\cF^+}}\sum_{\bfi^+}\bE\left[I(\cG\in \Omega)\bm1(\cF^+,\cG)||\cE_{\gamma\gamma'}
|N^{-\widehat h\fb} \prod_{j}|\cW_j'|\right]\\
&\lesssim 
    \frac{(d-1)^{(f_0-f_0'+6\widehat h)\ell} N^{-\widehat h\fb}}{(d-1)^{(k_1+(k_2+k_3)/2)\ell}}\bE\left[\bm1(\cG\in \Omega) |\cE_{\gamma\gamma'}|(|Q_t-Y_t|+\Upsilon(d-1)^{8\ell}\Phi)^{p-1}\right]
   \lesssim  N^{-\fb/4}\bE[\Psi_p].
\end{split}\end{align*}

We can repeat the above procedures for each index in $\mathsf I_{\rm single}$. Either $\eqref{e:oneterm1}=\OO(N^{-\fb/4}\bE[\Psi_p])$, or up to an error of size $\OO(N^{-\fb/4}\bE[\Psi_p])$, we can write \eqref{e:oneterm1} as 
\begin{align}\begin{split}\label{e:oneterm4}
\frac{(d-1)^{(6(\widehat h+f_1)+3\sum_{j=1}^{p-1} h_{j1})\ell}}{(d-1)^{(k_1 +(k_2+k_3+k_4)/2)\ell}} \frac{(d-1)^{f_0\ell} }{(d-1)^{6f_1\ell}}\sum_{\bfi^+}R'_{\bfi^+}=\frac{(d-1)^{(6(\widehat h+f_1)+3\sum_{j=1}^{p-1} h_{j1})\ell}}{(d-1)^{\fq^+\ell/2}}
\sum_{\bfi^+}R'_{\bfi^+},
\end{split}\end{align}
where $R'_{\bfi^+}$ contains $f_1=|\mathsf I_{\rm single}|$ additional factors compared to $\widehat R_{\bfi^+}$. Thus $R'_{\bfi^+}\in \Adm(\widehat h+f_1, \bmh,\cF^+,\cG)$. The claim \eqref{e:oneterm2} follows from rearranging \eqref{e:oneterm4}.

\end{proof}

\begin{proof}[Proof of \Cref{l:second_term}]
Without loss of generality, we assume that  $|z-E_t|\leq N^{-\fg}$. The other case $|z+E_t|\leq N^{-\fg}$ can be established in the same way, so we omit its proof. 

For the first statement in \eqref{e:schur_two1}, given $\alpha$, there are two cases for $\beta$: 
if $\beta\in \sfA_i$ and $\dist(l_\beta, i)=\ell+1$, there are $(d-1)^{\ell+1}-1$ such $\beta$, and $L_{l_\beta l_\beta}^{(i)}= \md(z_t) (1-\msc^{2\ell+2}(z_t)/(d-1)^{\ell+1})$;
if $\beta\notin \sfA_i$ and $\dist(l_\beta, i)=\ell-1$, there are $(d-1)^{\ell}$ such $\beta$, and
$
L_{l_\beta l_\beta}^{(i)}= \md(z_t) (1-\msc^{2\ell-2}(z_t)/(d-1)^{\ell-1}).
$
We get
\begin{align}\begin{split}\label{e:term2}
&\phantom{{}={}}\sum_{\al \in \sfA_i, \beta\in\qq{\mu}\atop\al\neq \beta}\frac{\msc^{2\ell}(z_t)L_{l_\beta l_\beta}^{(i)} }{(d-1)^{\ell+2}}
= 
\frac{\msc^{2\ell}(z_t)\md(z_t)}{(d-1)^{\ell+2}} \left(1-\frac{\msc^{2\ell+2}(z_t)}{(d-1)^{\ell+1}}\right)\times (d-1)^{\ell+1} ((d-1)^{\ell+1}-1)\\
&+\frac{\msc^{2\ell}(z_t)\md(z_t)}{(d-1)^{\ell+2}}\left(1-\frac{\msc^{2\ell-2}(z_t)}{(d-1)^{\ell-1}}\right)  (d-1)^{\ell} (d-1)^{\ell+1} \\
&=
- \frac{\msc^{2\ell}(z_t)\md(z_t)}{d-1} \left((1-\frac{\msc^{2\ell+2}(z_t)}{(d-1)^{\ell+1}})((d-1)^{\ell+1}-1)+(1-\frac{\msc^{2\ell-2}(z_t)}{(d-1)^{\ell-1}})(d-1)^{\ell}\right)\\
&=
- \frac{\msc^{2\ell}(z_t)\md(z_t)}{d-1} \left(d(d-1)^\ell-1-\msc^{2\ell+2}-(d-1)\msc^{2\ell-2}(z_t) +\frac{\msc^{2\ell+2}(z_t)}{(d-1)^{\ell+1}}\right)\\
&= 
- \frac{1}{d-2} \left(d(d-1)^{\ell}-1-1-(d-1)\right)+\OO\left((d-1)^{-\ell}\right)\\
&=-\left(\frac{d(d-1)^{\ell}}{d-2} -\frac{d+1}{(d-2)} \right) +\OO\left((d-1)^{-\ell}\right),
\end{split}\end{align}
where in the last two lines we used that 
$\msc(z_t)=-1+\OO(\sqrt{|z_t-2|})$ and $\md(z_t)=-(d-1)/(d-2)+\OO(\sqrt{|z_t-2|})$ from \eqref{e:mscmd}, and $|z_t-2|\lesssim N^{-2\fg}$ from \eqref{e:z-2}.

For the second statement in \eqref{e:schur_two1}, we notice that for a given $\al\in \sfA_i$, $L^{(i)}_{l_\al l_\beta}\neq 0$ only if $\beta\in \sfA_i$. Moreover, there are $(d-2)(d-1)^r$ values of $\beta$ such that $\dist(l_\al, l_\beta)=2r$ and ${\rm anc}(l_\al, l_\beta)=\ell-r$, for $0\leq r\leq \ell$. Thus, 
\begin{align*}
&\phantom{{}={}}\sum_{\al \in \sfA_i, \beta\in\qq{\mu}\atop\al\neq \beta}\frac{\msc^{2\ell}(z_t)L_{l_\al l_\beta}^{(i)} }{(d-1)^{\ell+2}}\\
&= \frac{\msc^{2\ell}(z_t)}{(d-1)^{\ell+2}}\sum_{\al \in \sfA_i, \beta\in\qq{\mu}\atop\al\neq \beta}\md(z_t)\left(1-\left(-\frac{\msc(z_t)}{\sqrt{d-1}}\right)^{2+2{\rm anc}( l_\al,  l_\beta)}\right)\left(-\frac{\msc(z_t)}{\sqrt{d-1}}\right)^{\dist( l_\al  l_\beta)}\\
&=\frac{\md(z_t)\msc^{2\ell}(z_t)}{(d-1)^{\ell+2}}(d-1)^{\ell+1}\sum_{r=0}^\ell \left(1-\left(-\frac{\msc(z_t)}{\sqrt{d-1}}\right)^{2+2(\ell-r)}\right)\left(-\frac{\msc(z_t)}{\sqrt{d-1}}\right)^{2r}(d-2)(d-1)^{r}\\
&=\frac{(d-2)\md(z_t)\msc^{2\ell}(z_t)}{d-1}\left(\sum_{r=0}^\ell \msc^{2r}(z_t) -\msc^{2+2\ell}(z_t)\frac{1-(d-1)^{-\ell-1}}{d-2} \right)\\
&=-\left(\ell+1-\frac{1}{d-2}\right)
+\OO\left((d-1)^{-\ell}\right).
\end{align*}

For the third statement in \eqref{e:schur_two1}, we notice that for a given $\al\in \sfA_i$, there are $(d-2)(d-1)^r$ values of $\beta$ such that $\dist(l_\al, l_\beta)=2r$, for $0\leq r\leq \ell$, giving
\begin{align*}
&\phantom{{}={}}\sum_{\al\neq \beta\in \sfA_i}
  \frac{2\msc^{2\ell}(z_t)L_{l_\al l_\beta}}{(d-1)^{\ell+2}}= \frac{2\msc^{2\ell}(z_t)}{(d-1)^{\ell+2}} \sum_{\al\neq \beta\in \sfA_i}
   \md(z_t) \left(-\frac{\msc(z_t)}{\sqrt{d-1}}\right)^{\dist(l_\al, l_\beta)}\\
  &=\frac{\md(z_t)\msc^{2\ell}(z_t)}{d-1}\left(\sum_{r=0}^\ell \left(-\frac{\msc(z_t)}{\sqrt{d-1}}\right)^{2r}(d-2)(d-1)^{r}\right)\\
&=\frac{(d-2)\md(z_t)\msc^{2\ell}(z_t)}{d-1}\sum_{r=0}^\ell \msc^{2r}=-2(\ell+1)
+\OO\left((d-1)^{-\ell}\right).
\end{align*}
The last statement of \eqref{e:schur_two1} follows by noticing that there are $(d-1)^{\ell+1}((d-1)^{\ell+1}-1)$ values of $\al\neq \beta\in \sfA_i$.

Finally for \eqref{e:schur_two3}, we similarly have 
\begin{align*}
&\phantom{{}={}}\frac{\msc^{2\ell}(z_t)}{(d-1)^{\ell+2}}\sum_{\al\in \sfA_i}L_{ l_\al  l_\al}^{(i)}
=\frac{\msc^{2\ell}(z_t)}{(d-1)^{\ell+2}}\sum_{\al \in \sfA_i}\md(z_t)\left(1-\left(-\frac{\msc(z_t)}{\sqrt{d-1}}\right)^{2+2{\rm anc}( l_\al,  l_\al)}\right)\\
&=\frac{\md(z_t)\msc^{2\ell}(z_t)}{d-1}\left(1-\frac{\msc^{2+2\ell}(z_t)}{(d-1)^{1+\ell}}\right)
=-\frac{1}{d-2}+\OO((d-1)^{-\ell}).
\end{align*}
\end{proof}

\bibliography{ref}{}
\bibliographystyle{abbrv}

\end{document}